\DeclareMathAlphabet{\mathantt}{OT1}{antt}{li}{it}   
\DeclareMathAlphabet{\mathpzc}{OT1}{pzc}{m}{it} 
\numberwithin{equation}{section}
\newcommand\starr{\bullet}
\newcommand\mci{\mathscr{I}}
\newcommand\mco{\mathscr{O}}
\newcommand\dlog{\operatorname{dlog}}
\newcommand\op{\operatorname}
\newcommand\bull{\sssize{\bullet}}
\newcommand{\vv}{\vec}
\newcommand{\nc}{\newcommand}
\nc{\Z}{\mathbb{Z}}
\nc{\Q}{\mathbb{Q}}
\nc{\C}{\mathbb{C}}
\nc{\R}{\mathbb{R}}
\nc{\A}{\mathbb{A}}
\nc{\G}{\mathbb{G}}
\nc{\N}{\mathbb{N}}
\nc{\V}{\mathbb{V}}
\nc{\D}{\mathbb{D}}
\nc{\dc}{\Delta}
\nc{\ms}{\mathscr}
\nc{\mr}{\mathrm}
\nc{\mc}{\mathcal}
\nc{\mt}{\mathtt}
\nc{\msf}{\mathsf}
\nc{\mf}{\mathfrak}
\nc{\mb}{\mathbb} 
\nc{\wtid}{\widetilde{\mathcal{D}}}
\nc{\wtic}{\widetilde{\mathcal{C}}}
\nc{\wh}{\widehat}
\nc{\ti}{\widetilde}
\nc{\rat}{\mt{rat}}
\nc{\la}{\lambda}
\nc{\msp}{\ms{P}}
\nc{\ve}{\varepsilon}
\nc{\vpi}{\varpi}
\nc{\sm}{\setminus}
\nc{\dbc}{\mr{D}^b_c}
\nc{\dpl}{\mr{D}^+}
\nc{\mrd}{\mr{D}}
\nc{\db}{\mr{D}^b}
\nc{\msm}{\ms{M}}
\nc{\msk}{\msf{k}}
\nc{\h}{\mbox{-}}
\nc{\bs}{\backslash}
\nc{\ov}{\overline}
\nc{\rank}{\mr{rank}}
\newcommand\home{\operatorname{Hom}}
\newcommand\spec{\operatorname{Spec}}
\newcommand\tensor{\otimes}
\newcommand\ml{\mathcal{L}}
\newcommand\ma{\mathcal{A}}
\renewcommand{\P}{\mathbb{P}}
\newcommand\End{\operatorname{End}}
\newcommand\mj{\mathcal{J}}
\newcommand\im{\operatorname{im}}
\newcommand\rk{\operatorname{rk}}
\newcommand\wt{\widetilde}
\newcommand\Res{\operatorname{Res}}
\newcommand \lra {\longrightarrow}
\newcommand{\leto}[1]{\stackrel{#1}{\lra}}
\newtheorem{theorem}[equation]{Theorem}
\newtheorem{corollary}[equation]{Corollary}
\newtheorem{proposition}[equation]{Proposition}
\newtheorem{lemma}[equation]{Lemma}
\newtheorem{conj}[equation]{Conjecture}
\newtheorem{problem}[equation]{Problem}
\theoremstyle{definition}
\newtheorem{definition}[equation]{Definition}
\newtheorem{defi}[equation]{Definition}
\newtheorem*{algo}{Algorithm}
\newtheorem*{expect}{Expectation}
\theoremstyle{remark}
\newtheorem{example}[equation]{Example}  
\newtheorem{remark}[equation]{Remark}
\newtheorem*{claimn}{Claim}
\begin{document} 
\title[Motivic factorisation of KZ local systems]{Motivic
  factorisation of KZ local systems and deformations of representation
  and fusion rings}

 \author{P. Belkale, N. Fakhruddin and S. Mukhopadhyay}
\maketitle

\begin{abstract}
  Let $\mf{g}$ be a simple Lie algebra over $\C$.  The KZ connection
  is a connection on the constant bundle associated to a set of $n$
  finite dimensional irreducible representations of $\mf{g}$ and a
  nonzero $\kappa \in \C$, over the configuration space $\mc{C}_n$ of
  $n$-distinct points on the affine line. Via the work of
  Schechtman--Varchenko and Looijenga, when $\kappa \in \Q$ the
  associated local systems can be seen to be realisations of naturally
  defined motivic local systems. We prove a basic factorisation for
  the nearby cycles of these motivic local systems as some of the $n$
  points coalesce.

  This leads to the construction of a family (parametrised by
  $\kappa$) of deformations over $\Z[t]$ of the representation ring of
  $\mf{g}$---we call these enriched representation rings---which
  allows one to compute the ranks of the Hodge filtration of the
  associated variations of mixed Hodge structure; in turn, this has
  applications to both the local and global monodromy of the KZ
  connection. In the case of $\mf{sl}_n$ we give an explicit algorithm
  for computing all products in the enriched representation rings,
  which we use to prove that if $1/\kappa \in \Z$ then the global
  monodromy is finite and scalar.

  We also prove a similar factorisation result for motivic local
  systems associated to conformal blocks in genus $0$; this leads to
  the construction of a family of deformations of the fusion
  rings. Computations in these rings have potential applications to
  finiteness of global monodromy.

  Several open problems and conjectures are formulated. These include
  questions about motivic BGG-type resolutions and the relationship
  between the Hodge filtration and the filtration by conformal blocks
  at varying levels.
\end{abstract}

\vspace{0.5cm}
\begin{flushright}
  {\it To Madhav Nori, with gratitude and admiration}
  \end{flushright}
\vspace{.5cm}

\section{Introduction}

Let $\mf{g}$ be a split finite dimensional simple Lie algebra over a
field $\msk\subseteq \C$.  Let $P^+$ be the set of dominant integral
weights of $\mf{g}$ corresponding to a fixed Cartan decomposition. See
Section \ref{s:notations} for any unexplained notation that we use
below.

\subsection{Representation rings, multiplicity motives, and deformations}

Let $\lambda_1,\dots, \lambda_n\in P^+$. The tensor product of the
corresponding irreducible representations
$V_{\lambda_1},\dots,V_{\lambda_n}$ of $\mf{g}$ has a decomposition
\begin{equation*}\label{decompose}
  V_{\lambda_1}\tensor \dots\tensor V_{\lambda_n}=\bigoplus_{\nu\in
    P^+} \op{Hom}_{\mf{g}}(V_{\nu}, V_{\lambda_1}\tensor
  \dots\tensor V_{\lambda_n})\tensor V_{\nu} \, .
\end{equation*}

The multiplicity space
$\op{Hom}_{\mf{g}}(V_{\nu}, V_{\lambda_1}\tensor \dots\tensor
V_{\lambda_n})$ is isomorphic to the space of coinvariants
$\mb{A}(\vec{\lambda},\nu^*)$, where
$\vec{\lambda} = (\lambda_1,\dots,\lambda_n)$, which is defined as
follows:
\begin{equation}\label{coinvariants}
  \mb{A}(\vec{\lambda},\nu^*)= (V(\vec{\lambda})\tensor
  V^*_{\nu})_{\mf{g}}=\frac{V(\vec{\lambda})\tensor V^*_{\nu}}{\mf{g}
    (V(\vec{\lambda})\tensor V^*_{\nu})}, \text{ where }
  {V}(\vec{\lambda})= V_{\lambda_1}\tensor\dots\tensor V_{\lambda_n} .
\end{equation}
Note that for a finite dimensional representation $V$ of $\mf{g}$, the
invariants $V^{\mf{g}}$ map isomorphically to the coinvariants
$V_{\mf{g}}$. The ranks of the vector spaces
$\mb{A}(\vec{\lambda},\nu^*)$ are encoded in the representation ring:
It is the free abelian group $\mf{R}(\mf{g})=\mb{Z}^{P^+}$ with
the unital commutative and associative product given by
\begin{equation}
  [\lambda]\cdot [\mu] =\sum_{\nu\in P^+}
  \dim (V_{\lambda}\tensor V_{\mu}\tensor V^*_{\nu})_{\mf{g}}\,[\nu]\, .
\end{equation}
The rank of $\mb{A}(\vec{\lambda},\nu^*)$ is the coefficient of $[\nu]$ in the product
$[\lambda_1]\cdots[\lambda_n]$. 

\smallskip

  Let $\kappa \in \Q^{\times}$ and $\msf{k} = \Q$. Using results of
  Schechtman--Varchenko and Looijenga \cite{SV,L2,BBM} we construct
  canonical Nori motives $M_{\kappa}(\la,\mu,\nu^*)$ over a cyclotomic
  field $\Q(\mu_N)$ with coefficients also in $\Q(\mu_N)$, for a
  suitable integer $N$, which we call
``multiplicity motives'', whose Betti realisations over $\C$ are
naturally isomorphic to the dual of the space of coinvariants
$(V_{\lambda}\tensor V_{\mu}\tensor V_{\nu}^*)_{\mf{g}, \C}$.  Roughly
speaking, $M_{\kappa}(\la,\mu,\nu^*)$ are motives associated to
cohomology with supports of finite covers of affine spaces ramified
along a certain finite set of hyperplanes; for the precise definition
see Section \ref{s:kzdef}.  The reader not familiar with (Nori)
motives will not lose much for now by thinking of
$M_{\kappa}(\la,\mu,\nu^*)$ as being mixed Hodge structures.

\smallskip

Let $P(\msf{M})=\sum_{p} \dim (F^p/F^{p+1} ) t^p\in \Z[t]$ denote the
Hodge polynomial of a mixed Hodge structure $\msf{M}$ (see Definition
\ref{d:hn}). Then we define a deformation of $\mf{R}_{\kappa}(\mf{g})$
with coefficients in $\Z[t]$ as follows:
\begin{definition}\label{kappaRing}
  $\mf{R}_{\kappa}(\mf{g})$ is the $\Z[t]$-module freely generated by
  $\{[\lambda]\}_{\lambda \in P^+}$ with the $\Z[t]$-linear product
  given by
\begin{equation*}
  [\lambda]\star_{\mf{R}} [\mu] =\sum_{\nu\in P^+}
  P(M_{\kappa}(\la,\mu,\nu^*))[\nu] .
\end{equation*}
\end{definition}
Here $P(M_{\kappa}(\la,\mu,\nu^*))$ is the Hodge polynomial of the
Hodge realisation of the Nori motive $M_{\kappa}(\la,\mu,\nu^*)$.

One of the main results of this article is the following:
\begin{theorem}\label{t:assoc}
  The $\star_{\mf{R}}$-product on $\mf{R}_{\kappa}(\mf{g})$ gives it the
  structure of a unital, commutative and associative $\Z[t]$-algebra.
\end{theorem}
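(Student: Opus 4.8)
The plan is to establish commutativity, unitality, and associativity separately, with the first two being formal consequences of the construction and associativity being the real content. Commutativity follows from a symmetry of the multiplicity motives under swapping $\la$ and $\mu$: the defining geometric data (finite covers of affine space ramified along the relevant hyperplane arrangement) is manifestly symmetric in the two weights, so $M_\kappa(\la,\mu,\nu^*) \cong M_\kappa(\mu,\la,\nu^*)$ as Nori motives, hence they have equal Hodge polynomials. For unitality, one checks that with $\la = 0$ the representation $V_0$ is trivial, the space of coinvariants $(V_0 \otimes V_\mu \otimes V_{\nu^*})_{\mf g}$ is one-dimensional if $\mu = \nu$ and zero otherwise, and the corresponding motive is the trivial motive $\Q^{\mr{cyc}}(0)$ of rank one in Hodge degree zero (the relevant cover degenerates to a point); thus $P(M_\kappa(0,\mu,\nu^*)) = \delta_{\mu\nu}$ and $[0]$ is the unit. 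These two steps are essentially bookkeeping once the definition in Section~\ref{s:kzdef} is in hand.

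The heart of the matter is associativity, which amounts to showing that for any four weights $\la_1,\la_2,\la_3$ and $\nu$,
\begin{equation*}
  \sum_{\mu \in P^+} P(M_\kappa(\la_1,\la_2,\mu^*))\, P(M_\kappa(\mu,\la_3,\nu^*))
  \;=\;
  \sum_{\mu \in P^+} P(M_\kappa(\la_2,\la_3,\mu^*))\, P(M_\kappa(\la_1,\mu,\nu^*)) ,
\end{equation*}
and by the symmetry of both sides under a common relabelling it suffices to match each side with a single ``four-point'' invariant. The strategy is to introduce, for the ordered tuple $(\la_1,\la_2,\la_3,\nu^*)$, a motive $M_\kappa(\la_1,\la_2,\la_3,\nu^*)$ attached to the four-point configuration, whose Betti realisation is dual to $(V_{\la_1}\otimes V_{\la_2}\otimes V_{\la_3}\otimes V_{\nu^*})_{\mf g}$, and to prove that its Hodge polynomial equals each side of the displayed identity. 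This is exactly where the paper's ``basic factorisation for the nearby cycles'' result enters: degenerating the four marked points on $\A^1$ so that $z_1,z_2$ collide (respectively $z_2,z_3$ collide) realises $M_\kappa(\la_1,\la_2,\la_3,\nu^*)$ as an iterated extension whose nearby cycle decomposes, over the summation index $\mu$, as $\bigoplus_\mu M_\kappa(\la_1,\la_2,\mu^*)\otimes M_\kappa(\mu,\la_3,\nu^*)$ (respectively the other grouping). Since nearby cycles preserve the Hodge polynomial — the weight-monodromy / limit mixed Hodge structure has the same underlying graded dimensions, and in particular $\sum_p \dim \mr{gr}_F^p$ is invariant, while the Hodge polynomial of a direct sum is the sum of Hodge polynomials and that of a tensor product is the product — both sides of the associativity identity compute $P(M_\kappa(\la_1,\la_2,\la_3,\nu^*))$ and are therefore equal.

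Two points require care. First, one must verify that the four-point motive is well-defined and that both degenerations (the two ways of bringing points together to form a ``tree'' with one internal edge) are covered by the factorisation theorem; this is a matter of checking that the hypotheses of that theorem apply to the hyperplane configurations in question and that the limit is taken in a one-parameter family to which the nearby-cycles formalism applies. Second, and more delicately, the factorisation is a priori a statement about nearby cycles as objects of a derived category of motives (or mixed Hodge modules), so one must ensure that passing to Hodge polynomials is legitimate: concretely, that the Hodge polynomial of the generic fibre equals that of the limit. This follows because the family is (after the finite base change) a family of the relevant constructible type with the motive lisse on the open part, so the specialisation map on cohomology is an isomorphism after taking associated graded for the Hodge filtration; the local invariant cycle theorem and the fact that the weight filtration does not change total dimensions under nearby cycles give what is needed. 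I expect this compatibility of nearby cycles with Hodge polynomials — rather than the combinatorial manipulation of the sums — to be the main obstacle, and it is where the detailed construction of the $M_\kappa$ in Section~\ref{s:kzdef} and the precise form of the factorisation theorem must be invoked in full.
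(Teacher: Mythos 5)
Your proposal is correct and follows essentially the same route as the paper: associativity is deduced by computing the Hodge polynomial of the three-point (plus $\nu^*$ at infinity) KZ motive in two ways via the nearby-cycles factorisation theorem, using additivity of $P$ on the filtration's graded pieces, multiplicativity on tensor products, and the invariance of $P$ under nearby cycles. The one place where your justification diverges from the paper's is that last invariance: Lemma \ref{l:saito} obtains it not from the local invariant cycle theorem (which is not the relevant tool here) but from Saito's construction of the limit Hodge filtration as the specialisation of the locally free Hodge bundles along the degeneration, reduced to an elementary rank count for filtered modules over a DVR.
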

Since for a mixed Hodge structure $\msf{M}$, $P(\msf{M})(1) = \rk(\msf{M})$,
it follows that for all $\kappa$
\[
  \mf{R}_{\kappa}(\mf{g}) \otimes_{\Z[t], \,t \mapsto 1} \Z =
  \mf{R}(\mf{g}) .
\]
Thus, our construction gives rise to a family, parametrised by
$\kappa \in \Q^{\times}$, of deformations over $\Z[t]$ of the
classical representation ring $\mf{R}(\mf{g})$. We will
  call these rings enriched representation rings.

\smallskip

\subsection{{The KZ connection and related motives}}

Theorem \ref{t:assoc} is an easy consequence of a fundamental
factorisation theorem, which we prove, for the nearby cycles of
certain mixed local systems (or admissible variations of mixed Hodge
structure) on the configuration spaces $\mc{C}_n$ of $n$ distinct
points on $\Bbb{A}^1_k$. These mixed local systems arise from a
motivic enhancement of the Knizhnik--Zamolodchikov (KZ) equations on
spaces of coinvariants and conformal blocks.

The KZ connection is defined on the constant vector bundle on
$\mathcal{C}_n$ with fibre
$V(\vv{\lambda})=V_{\lambda_1}\tensor V_{\lambda_2}\tensor\dots\tensor
V_{\lambda_n}$. Using variables $z_1,z_2,\dots,z_n$ on $\mc{C}_n$, the
connection equations are:
\begin{equation}\label{e:KZ}
    \kappa\frac{\partial}{\partial z_i}f =\biggl(\sum_{j\neq
    i}\frac{\Omega_{ij}}{z_i-z_j}\biggr)f.
\end{equation}
Here $f$ is any local section of
$V(\vv{\lambda})\otimes\mathcal{O}_{\mathcal{C}_n}$,  $\Omega_{ij}$
is the normalised Casimir element acting on the $i$ and $j$ tensor
factors, and $\kappa\in\msk^{\times}$.

This connection, and the related WZW/Hitchin connections, especially
in genus zero, and their $q$-analogs appear in many areas of
mathematics, e.g., representation theory, enumerative geometry,
algebraic geometry, number theory, and also mathematical physics
\cite{BRigid,BFS,Drin,EFK,Fakh,etrange,sorger,varch}.

The KZ connection is flat and commutes with the diagonal action of
$\mf{g}$, hence it induces a connection on the constant bundle of
coinvariants \eqref{coinvariants}, i.e.,
$\mathbb{A}(\vv{\lambda},\nu^*)$.  We will think of these spaces as
being attached to the representations
$V_{\lambda_1},\dots,V_{\lambda_n}$ at $z_1,\dots,z_n$ respectively,
and the representation $V_{\nu}^*$ at $\infty\in\mathbb{P}_{\msk}^1$.

\smallskip

For an integer $\ell\geq 0$, an element $\lambda \in P^+$ is said to
be of level $\ell$ if $(\lambda,\theta)\leq \ell$. Let
$P_{\ell}\subset P^+$ be the set of weights of level $\ell$. When
$\kappa=\ell + h^{\vee}$, $\ell\in\mathbb{Z}_{>0}$, and
$\lambda_1,\dots,\lambda_n$ and $\nu$ are of level $\ell$, the
connection on
$(V_{\lambda_1}\tensor V_{\lambda_2}\tensor\cdots\tensor
V_{\lambda_n}\tensor V^*_{\nu})_{\mf{g}} \otimes \mc{O}_{\mc{C}_n}$
induces a connection, also called the KZ connection, on the quotient
vector bundle of dual conformal blocks
$\mathbb{V}_{\mf{g},\vv{\lambda},\nu^*,\ell}$ (with an insertion at
infinity) \cite{TUY}, \cite[Chapter 6]{Ueno}.  Each
$\mb{V}_{\mathfrak{g},\vec{\lambda},\nu^*,\ell}$ is a quotient of
$\mb{V}_{\mathfrak{g},\vec{\lambda},\nu^*,\ell+1}$ in a natural way,
(see, e.g., the first theorem on p.~220 of \cite{FSV2}), but the
latter is not in general preserved by the KZ connection with
$\kappa = \ell + h^{\vee}$.

The {mathematical} theory of integral representations of solutions of
KZ equations for generic $\kappa$ \cite{SV}, and the extension to
arbitrary $\kappa$ in \cite{L2,BBM}, allows one to construct (see
Section \ref{s:kzmls}) motivic local systems
$\mathcal{KZ}_{\kappa}(\vec{\lambda},\nu^*)$ on $\mc{C}_n$ whose Betti
realisations give the { duals of} classical KZ local systems over
$\C$. When $\kappa = \ell + h^{\vee}$, we construct (see Section
\ref{s:cbmls}) motivic local systems of conformal blocks
$\mathcal{CB}_{\kappa}(\vec{\lambda},\nu^*)$ on $\mathcal{C}_n$ which
are natural subsystems of
$\mathcal{KZ}_{\kappa}(\vec{\lambda},\nu^*)$.  The fibres of these
local systems over points $\vec{z}\in \mc{C}_n(\msk)$ will be denoted
by $\mathcal{KZ}_{\kappa}(\vec{\lambda},\nu^*)_{\vec{z}}$ and
$\mathcal{CB}_{\kappa}(\vec{\lambda},\nu^*)_{\vec{z}}$. These are Nori
motives over a suitable finite extension of $\msk$ with coefficients
in a cyclotomic field. The same constructions carried out in the
derived category of mixed Hodge modules leads to variations of mixed
Hodge structures which we denote by
$\msf{KZ}_{\kappa}(\vec{\lambda},\nu^*)$ and
$\msf{CB}_{\kappa}(\vec{\lambda},\nu^*)$. These variations were
introduced by Looijenga \cite[p. 221 and 222]{L2}. The conformal block
variations are pure, but the KZ variations are in general mixed.

The motives $M_{\kappa}(\la,\mu,\nu^*)$ used in constructing the rings
$\mf{R}_{\kappa}(\mf{g})$ above
are the fibres of $\mc{KZ}_{\kappa}((\la,\mu),\nu^*)$ over
$(0,1)\in\mathcal{C}_2$. These motives need not be pure or even
  mixed Tate, see Example \ref{e:introduction}. We similarly have
motives $C_{\kappa}(\la,\mu,\nu^*)$ (for $\kappa = \ell + h^{\vee}$)
corresponding to the conformal blocks local systems which are always
pure.

The coefficient of $[\nu]$ in
$[\lambda_1]\star _{\mf{R}}\dots \star_{\mf{R}} [\lambda_n]$ in
$\mf{R}_{\kappa}(\mf{g})$ equals the Hodge polynomial of the MHS
$\msf{KZ}_{\kappa}(\vec{\lambda},\nu^*)_{\vec{z}}$, where $\vec{z}$ is
any point in $\mc{C}_n(\C)$. Therefore the Hodge polynomials of
arbitrary KZ-motives can be expressed in terms of the ring
$\mf{R}_{\kappa}(\mf{g})$ (see Remark \ref{r:arbitraryprod}).
\smallskip

For arbitrary $\mf{g}$ there
exists $m_{\mf{g}} \in \N$ such that for all $\kappa>0$, replacing
$\kappa$ by $\kappa'=\kappa/(1 +m_{\mf{g}}\kappa)$, does not change
the MHS associated to
$\mathcal{KZ}_{\kappa}(\vec{\lambda},\nu^*)_{\vec{z}}$ and hence does
not change its Hodge polynomial. In particular,
$\mf{R}_{\kappa}(\mf{g}) = \mf{R}_{\kappa'}(\mf{g})$. If we allow
$\vec{z}$ to vary, there is no natural map between the corresponding
local systems on $\mathcal{C}_n$. However, if we replace $\kappa$ by
${\kappa}/(1 +i_{\mf{g}}m_{\mf{g}}\kappa)$ the motivic local systems
on $\mathcal{C}_n$ are isomorphic.  When $\kappa\in \mb{C}^{\times}$
is irrational, the invariance of topological local systems follows
from results of Kazhdan--Lusztig \cite[Theorem 8.6.4]{EFK}. For all
this see Section \ref{s:trp}.

\subsection{Galois twists and conformal blocks}\label{s:gt}

If $\kappa = r/s$, then setting $N = i_{\mf{g}}m_{\mf{g}}|r|$, all the
mixed sheaves $\mathcal{KZ}_{\kappa}(\vec{\lambda},\nu^*)$ have
coefficients in $\mb{Q}(\mu_N)$ (see Lemma \ref{formula:N}).  If
$\sigma\in \op{Gal}(\Q(\mu_N)/\mb{Q})$, we have Galois twists
$\mathcal{KZ}^{\sigma}_{\kappa}(\vec{\lambda},\nu^*)$
(resp.~$\mathcal{CB}^{\sigma}_{\kappa}(\vec{\lambda},\nu^*)$) of the
motivic sheaves $\mathcal{KZ}_{\kappa}(\vec{\lambda},\nu^*)$
(resp.~$\mathcal{CB}_{\kappa}(\vec{\lambda},\nu^*)$ for
$\kappa = \ell +h^{\vee}$) given by precomposing the
$\Q(\mu_N)$-structure with $\sigma^{-1}$.  If
$\sigma(\zeta_N)=\zeta_N^a$ for some positive integer $a$ with
$(a,N) = 1$, then
$\mathcal{KZ}^{\sigma}_{\kappa}(\vec{\lambda},\nu^*) \simeq
\mathcal{KZ}_{\kappa/a}(\vec{\lambda},\nu^*)$ (see Remark
\ref{r:twist}).  This justifies the following:
  \begin{defi}\label{d:gtwist} 
    For any integer $\ell > 0$ and $\kappa = (\ell + h^{\vee})/a$,
    with $a$ a positive integer such that $(a,N) = 1$, let
    $\mathcal{CB}_{\kappa}(\vec{\lambda},\nu^*) =
    \mathcal{CB}^{\sigma}_{\ell + h^{\vee}}(\vec{\lambda},\nu^*)$,
    where $\sigma(\zeta_N)=\zeta_N^a$.  In particular, we define the
    motives $C_{\kappa}(\la,\mu,\nu^*)$, for $\kappa$ as above, to be
    $C_{\ell + h^{\vee}}^{\sigma}(\la,\mu,\nu^*)$.
  \end{defi}
  By construction, the motivic local system
  $\mathcal{CB}_{\kappa}(\vec{\lambda},\nu^*)$ is a subquotient of the
  motivic local system
  $\mathcal{KZ}_{\kappa}(\vec{\lambda},\nu^*)$. Using the
  $C_{\kappa}(\la,\mu,\nu^*)$ we define $\Z[t]$-algebras
  $\mf{F}_{\kappa}(\mf{g})$ which are deformations of the classical
  fusion rings at level $\ell$.
\begin{definition}\label{d:fr}
  $\mf{F}_{\kappa}(\mf{g})$ is the free $\Z[t]$-module
  $\Z[t]^{P_{\ell}}$ with the $\Z[t]$-linear product given by
\begin{equation*}
  [\lambda]\star_{\mf{F}} [\mu] =\sum_{\nu\in P_{\ell}}
  P(C_{\kappa}(\la,\mu,\nu^*))[\nu] .
\end{equation*}
\end{definition}
Analogously to Theorem \ref{t:assoc} we have:
\begin{theorem}\label{t:assoc2}
  The $\star_{\mf{F}}$-product on $\mf{F}_{\kappa}(\mf{g})$ gives it the
  structure of a unital, commutative and associative $\Z[t]$-algebra.
\end{theorem}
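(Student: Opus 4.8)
The plan is to follow the proof of Theorem~\ref{t:assoc}, replacing the KZ motivic local systems $\mathcal{KZ}_{\kappa}$ by the conformal-block ones $\mathcal{CB}_{\kappa}$ (and their variations $\msf{CB}_{\kappa}$), and the weight set $P^{+}$ by the finite set $P_{\ell}$. Two formal facts are used throughout: for an admissible variation of mixed Hodge structure the Hodge numbers $\dim(F^{p}/F^{p+1})$ are constant along the base and agree with those of the limit mixed Hodge structure at any boundary point, so $P(C_{\kappa}(\la,\mu,\nu^{*}))$ is also the Hodge polynomial of a generic fibre of $\msf{CB}_{\kappa}((\la,\mu),\nu^{*})$ and of any of its nearby cycles; and $P$ is additive on direct sums and multiplicative on tensor products. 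Granting these, the theorem reduces to three statements: (i) there is a unit; (ii) there is a symmetry isomorphism $C_{\kappa}(\la,\mu,\nu^{*})\simeq C_{\kappa}(\mu,\la,\nu^{*})$; (iii) the integers $P(C_{\kappa}(\la,\mu,\nu^{*}))$ satisfy an associativity identity.

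For (i) the unit is the class $[0]$ of the trivial weight $0\in P_{\ell}$: by propagation of vacua $C_{\kappa}(0,\mu,\nu^{*})$ is the trivial rank-one motive $\Q(0)$ when $\mu=\nu$ and is $0$ otherwise, hence $P(C_{\kappa}(0,\mu,\nu^{*}))=\delta_{\mu\nu}$ and $[0]\star_{\mf{F}}[\mu]=[\mu]$. For (ii), the construction of $\mathcal{CB}_{\kappa}(\vec{\lambda},\nu^{*})$ is equivariant for the $S_{n}$-action permuting simultaneously the marked points $z_{i}$ and the weights $\lambda_{i}$, and for affine reparametrisations of $\A^{1}$ fixing $\infty$; composing the transposition of the two finite points of $\mc{C}_{2}$ with the map $z\mapsto 1-z$ identifies the fibres over $(0,1)$ and yields $C_{\kappa}(\la,\mu,\nu^{*})\simeq C_{\kappa}(\mu,\la,\nu^{*})$, so $\star_{\mf{F}}$ is commutative.

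The substance is (iii). Fix $\la,\mu,\sigma,\nu\in P_{\ell}$ and consider $\msf{CB}_{\kappa}((\la,\mu,\sigma),\nu^{*})$ on $\mc{C}_{3}$. The conformal-block analogue of the factorisation theorem for nearby cycles (the one underlying Theorem~\ref{t:assoc}) gives, as the first two points come together, a canonical isomorphism of the nearby cycle along that boundary divisor with
\[
  \bigoplus_{\rho\in P_{\ell}} C_{\kappa}(\la,\mu,\rho^{*})\otimes\msf{CB}_{\kappa}((\rho,\sigma),\nu^{*}),
\]
the first factor a constant motive and the second the variation on the boundary stratum $\mc{C}_{2}$; the sum ranges over $P_{\ell}$ because of the level-$\ell$ fusion truncation built into the statement. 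Restricting to a point of this $\mc{C}_{2}$ and taking Hodge polynomials, and using that $P$ does not depend on the point, we get that the (constant) Hodge polynomial of $\msf{CB}_{\kappa}((\la,\mu,\sigma),\nu^{*})$ equals $\sum_{\rho\in P_{\ell}} P(C_{\kappa}(\la,\mu,\rho^{*}))\,P(C_{\kappa}(\rho,\sigma,\nu^{*}))$, which is the coefficient of $[\nu]$ in $([\la]\star_{\mf{F}}[\mu])\star_{\mf{F}}[\sigma]$. Performing instead the degeneration in which the last two points collide computes, in the same way, the coefficient of $[\nu]$ in $[\la]\star_{\mf{F}}([\mu]\star_{\mf{F}}[\sigma])$; since both equal the Hodge polynomial of the fixed variation $\msf{CB}_{\kappa}((\la,\mu,\sigma),\nu^{*})$, associativity follows. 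Finally, for general $\kappa=(\ell+h^{\vee})/a$ with $(a,N)=1$ all three statements descend from the case $\kappa=\ell+h^{\vee}$ by applying the Galois twist $\sigma$ with $\sigma(\zeta_{N})=\zeta_{N}^{a}$: it is exact and commutes with pullback, nearby cycles, direct sums and tensor products, so the isomorphisms above transport, and $C_{\kappa}(\la,\mu,\nu^{*})=C^{\sigma}_{\ell+h^{\vee}}(\la,\mu,\nu^{*})$ by Definition~\ref{d:gtwist}.

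The main obstacle is the conformal-block factorisation theorem for nearby cycles, together with the verification that it holds at the level of $\Q(\mu_{N})$-motivic local systems and not just their Betti or Hodge realisations --- this stability under Galois twists is essential, since the Hodge polynomial of $C_{\kappa}$ genuinely depends on $a$. Everything else --- constancy of Hodge numbers in a variation, the behaviour of limit mixed Hodge structures under the two chosen degenerations, and the bookkeeping of the level-$\ell$ intermediate weights --- is either standard Hodge theory or already packaged into the factorisation statement.
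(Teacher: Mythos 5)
Your proof is correct and follows essentially the same route as the paper: associativity is deduced from the conformal-block part of the motivic factorisation theorem for nearby cycles, combined with the constancy of the Hodge polynomial under nearby cycles (Lemma \ref{l:saito}) and the additivity/multiplicativity of $P$, while commutativity comes from the symmetry of the construction in $\lambda$ and $\mu$. Your explicit treatment of the unit and of the descent to general $\kappa=(\ell+h^{\vee})/a$ via the Galois twist fills in details the paper leaves implicit, but the underlying argument is the same.
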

The ranks of $C_{\kappa}(\la,\mu,\nu^*)$ are equal to the classical
fusion coefficients (see, e.g., \cite[Section 7]{beauville}), and they
do not change upon Galois conjugation (though the Hodge polynomials
do!), therefore
$\mf{F}_{\kappa}(\mf{g}) \otimes_{\Z[t], t \mapsto 1}\Z$ is
canonically isomorphic to the classical level $\ell$ fusion ring. We
will call these rings enriched fusion rings.

\subsection{The motivic factorisation theorem}\label{s:motivicFormula}

We now state a simple form of the motivic factorisation theorem---see
Theorem \ref{t:fact} for the most general formulation---on which all
our other results are based. This is a statement about the nearby
cycles of $\mathcal{KZ}_{\kappa}(\vec{\lambda},\nu^*)$ as ``two of the
marked points come together''. This case is sufficient for the proof
of associativity of the product $\star$ on $\mf{R}_{\kappa}(\mf{g})$
and $\mf{F}_{\kappa}(\mf{g})$.

Recall that we have chosen a natural number $N$ depending only on
$\mf{g}$ and $\kappa$ such that
$\mathcal{KZ}_{\kappa}(\vec{\lambda},\nu^*)$ is a motivic local
system with coefficients in $K=\Q(\mu_N)$ on $\mc{C}_n$.

Let $W$ be a smooth variety over $\msk$ and $Z \subset X$ a smooth
divisor which is the zero locus of a regular function
$f:W\to \Bbb{A}^1_{\msk}$. Let $\mc{A}$ be a motivic (or mixed) local
system with coefficients in $K$ on $W \setminus Z$. Then the nearby
cycles $\Psi_f\, \mc{A}$ is a motivic (or mixed) local system on $Z$
which depends on the choice $f$. (For a brief review of nearby cycles
in the topological setting see Section \ref{s:rnb} and for the
motivic/mixed setting see Sections \ref{s:saito} and \ref{s:nbc}.)

Let $I=\{i_0,i_1\}$ be a two element subset of $[n]$. Let $W_I$ be the
open subset of $\A^n_{\msk}$ given by the complement of the closed set
defined by the equations $z_i= z_j$ for all $i \neq j$ such that
$\{i,j\} \neq I$. Let $Z_I$ be the closed subset of $W_I$ defined by
the equation $z_{i_0} = z_{i_1}$. Then $W_I-Z_I=\mc{C}_n$ and
$Z_I \simeq \mc{C}_{I^c \cup \{i_0\}}$.  Let $f:W_I\to \A^1_{\msk}$ be
the function $z_{i_1}-z_{i_0}$. We then have the following
factorisation theorem in the category of $K$-mixed sheaves.
\begin{theorem}\label{t:motivic}
  The $K$-mixed local system
  $\Psi_f(\mathcal{KZ}_{\kappa}(\vec{\lambda},\nu^*))$ on $Z_I$ has a
  filtration with associated graded isomorphic to
  \begin{equation*}
    \bigoplus_{\mu \in P^+}\mc{KZ}_{\kappa}(\vv{\lambda}'_{\mu}, \nu^*)
    \otimes_K M_{\kappa}(\lambda_{i_0},\lambda_{i_1},\mu^*) ,
  \end{equation*}
  where $\vv{\lambda}'_{\mu}$ is the set of weights labelled by
  $I^c\cup \{i_0\}$, where for $i$ in $I^c$ the weight assigned to $i$
  is $\lambda_i$ and for the point $i_0$ the weight is $\mu$.

  Furthermore, if all the $\lambda_i$ and $\nu$ are of level
  $\ell \geq 0$, then setting $\kappa = \ell + h^{\vee}$,
  we have a direct sum decomposition of the nearby cycles of conformal blocks,
  \begin{equation*}
    \Psi_f(\mathcal{CB}_{\kappa}(\vec{\lambda},\nu^*)) = \bigoplus_{\mu \in P_{\ell}}\mc{CB}_{\kappa}(\vv{\lambda}'_{\mu}, \nu^*)
    \otimes_K C_{\kappa}(\lambda_{i_0},\lambda_{i_1}, \mu^*) .
  \end{equation*} 
\end{theorem}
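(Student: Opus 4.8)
The plan is to realise both $\mathcal{KZ}_{\kappa}(\vec\lambda,\nu^*)$ and $\mathcal{CB}_{\kappa}(\vec\lambda,\nu^*)$ explicitly as (sub)cohomology sheaves of a local system of rank one on a finite cover of a configuration space fibred over $\mathcal{C}_n$, so that the computation of $\Psi_f$ becomes a computation of nearby cycles of such rank-one sheaves along a boundary divisor. Concretely, the Schechtman--Varchenko construction (as extended in \cite{L2,BBM}) expresses $\mathcal{KZ}_{\kappa}(\vec\lambda,\nu^*)$ as the relevant isotypic piece, for the symmetric group and the torus weight $\sum\lambda_i-\nu$, of the direct image of a Kummer sheaf on the complement $U_n$ of the master-function hyperplane arrangement inside a bundle $\mathcal{C}_n\times\mathrm{Sym}$ of configurations of ``colour points'' $t_1,\dots,t_m$ relative to the points $z_1,\dots,z_n$. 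First I would set up this model carefully over $\msk^{\mathrm{cyc}}$ and identify which hyperplanes become relevant, and with which monodromy exponents, as $z_{i_1}\to z_{i_0}$, i.e.\ as $f=z_{i_1}-z_{i_0}\to 0$.

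Next I would compute the nearby cycles using the standard geometry of the degeneration. As $f\to 0$, the colour points $t_j$ split into two groups: those that stay near the colliding pair $\{z_{i_0},z_{i_1}\}$ (after rescaling by $f$, these live on a ``bubble'' $\mathbb{P}^1$ carrying $z_{i_0},z_{i_1}$ and $\infty$), and those that remain near the other points $z_i$, $i\in I^c$, together with a new marked point ``$i_0$'' (the image of the bubble). The combinatorics of which subsets of colour points go where is exactly the combinatorics of the decomposition $\sum_{\mu}$: the number of colour points on the bubble, weighted by $\mu$, indexes the inner summand. The key input is a compatibility of nearby cycles with the product structure on the arrangement complement (a Thom--Sebastiani / Künneth-type statement for $\Psi_f$ applied to a sheaf that is, near the divisor, a box product), giving in the limit a tensor product of a rank-one local system on the ``bubble'' arrangement (whose isotypic piece is $M_{\kappa}(\lambda_{i_0},\lambda_{i_1},\mu^*)$, by definition of that motive as the fibre of $\mathcal{KZ}_\kappa$ over $(0,1)\in\mathcal{C}_2$) and a rank-one local system on the ``base'' arrangement over $Z_I\simeq\mathcal{C}_{I^c\cup\{i_0\}}$ (whose isotypic piece is $\mathcal{KZ}_{\kappa}(\vec\lambda'_\mu,\nu^*)$). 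Summing over $\mu\in P^+$ gives the associated graded; the filtration (rather than a splitting) arises because the cohomology long exact sequences for the stratified nearby-cycle computation need not degenerate, and because the Hecke/$S_m$-isotypic projection must be applied after taking cohomology.

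For the conformal block statement I would argue that $\mathcal{CB}_{\kappa}(\vec\lambda,\nu^*)$ is cut out inside $\mathcal{KZ}_{\kappa}(\vec\lambda,\nu^*)$ by the same ``no weight exceeds level $\ell$'' constraints at every point, and these constraints are preserved under the degeneration: a colour configuration on the bubble contributes only if the induced weight $\mu$ at the node satisfies $(\mu,\theta)\le\ell$, which is precisely the restriction $\mu\in P_\ell$. Since $\mathcal{CB}$ is a pure variation and the corresponding $C_{\kappa}$ are pure, the filtration is by pure subquotients of equal weight, hence splits; that is why one gets a genuine direct sum rather than a filtration in the conformal block case. The main obstacle I anticipate is the Künneth/Thom--Sebastiani step for nearby cycles of the rank-one sheaf along the collision divisor: one must show that, after the rescaling $t_j\mapsto f\cdot t_j$ for bubble colours, the Kummer sheaf on $U_n$ extends over the boundary so that $\Psi_f$ of it is the expected external product of Kummer sheaves on the bubble and base arrangements, with no spurious contributions from the ``crossing'' hyperplanes $t_j-z_{i_0}$ versus $t_j-z_{i_1}$; controlling these requires a careful normal-crossings resolution of the total arrangement near $Z_I$ and an explicit bookkeeping of the monodromy exponents of $f$ there, which is exactly where the rationality of $\kappa$ and the choice of $N$ enter. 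This is the technical heart of the argument and is where I would spend most of the proof; once it is in place, extracting the $S_m$-isotypic components and identifying the factors with $M_\kappa$, $C_\kappa$, $\mathcal{KZ}_\kappa$ and $\mathcal{CB}_\kappa$ is bookkeeping that matches the definitions in Sections \ref{s:kzmls} and \ref{s:cbmls}.
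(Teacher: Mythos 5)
Your overall geometric strategy coincides with the paper's: degenerate the Schechtman--Varchenko family so that the colour points distribute themselves between a ``bubble'' carrying $z_{i_0},z_{i_1}$ and the base carrying $I^c\cup\{i_0\}$, observe that on the open part of each boundary component the twisted rank-one sheaf becomes an external product (this is Lemma \ref{product11}), apply a K\"unneth statement for nearby cycles, and match the two tensor factors with $M_{\kappa}(\lambda_{i_0},\lambda_{i_1},\mu^*)$ and $\mc{KZ}_{\kappa}(\vv{\lambda}'_{\mu},\nu^*)$. However, you have misidentified where the difficulty lies, and two essential ingredients are missing. First, $\mathcal{KZ}_{\kappa}(\vec{\lambda},\nu^*)$ is not the pushforward of a Kummer sheaf but the \emph{image} of a map $H^M(g_*q_!\mc{L})\to H^M(g_*j_!\mc{L})$ between a shriek-type and a star-type extension along different sets of boundary divisors. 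One must therefore take nearby cycles of \emph{both} objects, filter both compatibly, and control the image on each graded piece; your proposal never engages with this image structure, and it is exactly why the paper needs the elementary comparison lemmas of Section \ref{s:elementary} together with the classical factorisation of tensor-product multiplicities: a subquotient of the image is exhibited whose total rank equals the known rank of the left-hand side, and only then does the inequality collapse to the stated filtration.

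Second, the central fibre $Y$ has one irreducible component $Y_{A,B}$ for every subset $B$ of the colour points, and these components intersect. To produce a filtration of $H^M(g_{Y,*}\Psi_{f_X}(-))$ whose graded pieces are supported on the individual components, the paper orders the components by the residues of $\eta$ and must prove that the resulting long exact sequences break into \emph{short} exact sequences (Lemma \ref{l:exact}). This is not bookkeeping: it is established via the relative log de Rham complexes $A(\eta)$ of Definition \ref{def11}, Steenbrink-type degeneration results, and the observation that the ranks $\sum_a h^k(Y,A(\eta)^a)$ are invariant under scaling $\eta$, so that one may rescale $\eta$ to make all residue differences non-integral and force the triangles to split. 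Your remark that ``the cohomology long exact sequences need not degenerate'' names the obstruction but supplies no mechanism to overcome it; without this step you obtain only a rank inequality, not the claimed associated graded. (Your purity argument for the splitting in the conformal-block case is a reasonable alternative to the paper's direct-sum-plus-rank-count argument, but it presupposes that the graded pieces have already been identified.)
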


\begin{remark}
  By applying automorphisms of $K$ we get twisted forms of these
  factorisations.
\end{remark}

Theorem \ref{t:motivic} implies the associativity in Theorem
\ref{t:assoc}, using for the category of mixed sheaves Saito's
category $\mr{MHM}(X_{\C})$ of mixed Hodge modules \cite{saito}. The
key property is that the Hodge filtration on nearby cycles in Saito's
category is a suitable limit of the Hodge filtration along the
degeneration, see Lemma \ref{l:saito} and Section
\ref{s:associativity}.

Theorem \ref{t:motivic} is proved in a stronger form in Theorem
\ref{t:fact}. This includes degenerations where more than two of the
$z_i$ come together. Theorem \ref{t:fact} also includes information
about eigenvalues of local monodromy and sizes of Jordan blocks (and
the corresponding nilpotent operators $\mr{N}$), and recovers the
classically known results about the eigenvalues of the KZ and
conformal block local systems (see e.g., the book of Bakalov and
  Kirillov \cite[Theorem 6.5.3, Corollary 7.8.9]{BK}). It also
recovers the fact that the local monodromy of the conformal block
local systems is semisimple (this follows from \cite[Theorem
6.2.6]{TUY}) and that the KZ local systems have semisimple local
monodromy under a suitable genericity hypothesis as in \cite[Theorem
6.5.3]{BK}.

Factorisation results for the topological local systems with complex
coefficients corresponding to
$\Psi_f(\mathcal{KZ}_{\kappa}(\vec{\lambda},\nu^*))$, for generic
$\kappa$, as a direct sum with the same terms (i.e., tensored with
$\C$) as the graded factors in Theorem \ref{t:motivic} and Theorem
\ref{t:fact} are given in \cite[Theorem 6.5.3]{BK}. Analogous results
for the topological local systems with complex coefficients
corresponding to $\Psi_f(\mathcal{CB}_{\kappa}(\vec{\lambda},\nu^*))$
with the same terms (i.e., tensored with $\C$) as in Theorem
\ref{t:motivic} and Theorem \ref{t:fact} are given in \cite[Theorem
7.8.8]{BK}.
 
While the result of Bakalov and Kirillov in \cite[Theorem 6.5.3]{BK}
is for generic $\kappa$ and at the level of complex local systems (and
without Hodge structures), it nevertheless places possibly non-zero
dominant weights at infinity, as we also do in our motivic
factorisation theorems. We note the importance of allowing weights at
infinity to even formulate the statement of Theorem \ref{t:motivic}:
If we start with $\nu=0$ and let $z_{i_0}$ and $z_{i_1}$ come
together, then the second factors of the factorisations,
$M_{\kappa}(\lambda_{i_0},\lambda_{i_1},\mu^*)$ and
$C_{\kappa}(\lambda_{i_0},\lambda_{i_1}, \mu^*)$, can have non-trivial
weights $\mu^*$ at infinity.

Our motivic factorisation theorem, Theorem \ref{t:fact}, has new
consequences for the (topological) monodromy representations of the KZ
local systems; these are representations of the fundamental group of
$\mathcal{C}_n$, the pure braid group on $n$ strands. When $\kappa$ is
generic or irrational, by the Drinfeld--Kohno theorem and work of
Kazhdan--Lusztig (see e.g., \cite[Theorem 8.6.4]{EFK}), there are
connections to the theory of quantum groups and these results impose
conditions on the local monodromy (also see \cite[Theorem 6.5.3,
Corollary 7.8.9]{BK}). But very little was known about the monodromy
when $\kappa \in \Q^{\times}$ is arbitrary.

Theorem \ref{t:fact} imposes restrictions on sizes of Jordan blocks
for local monodromy for arbitrary $\kappa$, see Remark
\ref{r:topological}. Since we are able to compute the Hodge numbers of
these motivic local systems for $\mf{sl}_n$ (as we explain below), the
theorem also has consequences for the global monodromy of KZ local
systems.  The motivic nature of our constructions potentially have
applications in number theory, via Galois representations and periods.

\subsection{The enriched rings for $\mf{sl}_n$}

We do not know how to compute all the products in the rings
$\mf{R}_{\kappa}(\mf{g})$ and $\mc{F}_{\kappa}(\mf{g})$ for general
$\mf{g}$, but in the case of $\mf{sl}_n$ we have a simple inductive
algorithm---depending on a basic motivic computation---for doing this,
which we describe in this section.  We first need some basic rank one
motives:

Let $\kappa \in \Q^{\times}$ and $a \in \Q^{>0}$. Let $N$ be a
positive integer such that $N\kappa^{-1}$ and $Na\kappa^{-1}$ are both
integers and assume that $\mu_N\subseteq \msk$.

There are basic rank one Nori motives $[a:\kappa]$ over $\msk$ with
coefficients in $\Q(\mu_N)$, see Definition \ref{d:basicmotives}: In brief,
the construction proceeds as follows. Consider first the case
$\kappa>0$. Let $C$ be the closed subvariety of $\Bbb{A}^2_{\msk}$
given by the equation
  $$y^N= x^{N\kappa^{-1}}(1-x)^{Na\kappa^{-1}}.$$
Let $\pi:C\to \A^1_{\msk}$ be the (finite) projection to the first factor. Note that $\mu_N$ acts on $C$ by scaling $y$, and let $\tau$ be  the inverse of the tautological character of $\mu_N$. The motive $[a;\kappa]$ is defined to be the $\tau$-isotypical summand of the Nori motive associated to $H^1(C,\pi^{-1}\{0,1\};\Q(\mu_N))$. 

\smallskip

If $\kappa<0$, we define $[a:\kappa]$ to be the dual of $[a:-\kappa]$
tensored with the Tate motive $\Q(\mu_N)(-1)$.  The Hodge polynomial
of all Galois conjugates of $[a:\kappa]$ can be computed easily: it is
always $1$, $t$ or $t^2$ (see Section \ref{s:vpi1}).

\smallskip

Let $\alpha_1,\alpha_2,\dots,\alpha_{n-1}$ be the simple roots and
$\vpi_1,\vpi_2,\dots,\dots,\vpi_{n-1}$ the fundamental dominant
weights for $\mf{sl}_n$ in the standard ordering.  By the Pieri
formula, for any $\lambda \in P^+$, $V_{\lambda}\tensor V_{\vpi_1}$ is
a sum without multiplicities of $V_{\mu}$, where
$\mu=\lambda +\vpi_1 - \sum n_j\alpha_j$ is such that for some
$M\in \{0,\dots,n-1\}$,
$$\sum n_j\alpha_j =L_1-L_{M+1}=\alpha_1+\dots +\alpha_M .$$
	
The Hodge numbers of the KZ local system for $\kappa<0$ are determined
by the numbers for $\kappa>0$ (see Remark \ref{dual_kappa1}), hence we
restrict to the case $\kappa>0$ below.  Let $N$ be the integer defined
in Section \ref{s:gt} and let $K = \Q(\mu_N)$. The following theorem
(proved in Section \ref{s:vpi1}) is the main geometric input in our
algorithm.
\begin{theorem}\label{genp}
  Assume $\kappa>0$.  If $M>0$, the motive
  $\mathcal{KZ}_{\kappa}((\lambda,\vpi_1),\mu^*)_{(0,1)}$ is equal to
  \begin{equation}\label{e:ring}
    [(\lambda,\alpha_M);\kappa]\tensor_K
    [1+(\lambda,\alpha_M+\alpha_{M-1});\kappa]\tensor_K\dots\tensor_K
    [M-1+ \sum_{i=1}^M(\lambda,\alpha_i);\kappa],
  \end{equation}
  and if $M=0$, the motive is $K(0)$.
\end{theorem}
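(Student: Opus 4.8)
The plan is to reduce the computation of the motive $\mathcal{KZ}_{\kappa}((\lambda,\vpi_1),\mu^*)_{(0,1)}$ to an explicit integral of a multivalued function on a configuration space, using the Schechtman--Varchenko presentation, and then to factor the resulting period integral into a product of one-variable integrals, each of which is (the cohomology defining) a rank one motive $[a;\kappa]$. Concretely, the space of coinvariants $\mathbb{A}((\lambda,\vpi_1),\mu^*)$ has, by the Pieri rule, dimension at most one for each admissible $\mu$, and when $\mu = \lambda + \vpi_1 - (\alpha_1 + \dots + \alpha_M)$ the corresponding (co)invariant is realised by an $M$-fold iterated integral of the master function $\Phi = \prod t_a^{c_a} \prod (t_a - z_i)^{d_{ai}} \prod_{a<b}(t_a - t_b)^{e_{ab}}$ specialised at $(z_1, z_2) = (0,1)$, with exponents governed by the inner products $(\lambda, \alpha_i)$, $(\vpi_1, \alpha_i)$ and the Cartan matrix of $\mf{sl}_n$. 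The key point is that for a single insertion of $V_{\vpi_1}$ the relevant root datum forces the integration variables $t_1, \dots, t_M$ to form a single chain $\alpha_1, \alpha_1+\alpha_2, \dots$, so the exponents $e_{ab}$ linking distinct $t_a$'s are (almost all) zero except for consecutive ones, and the $(t_a - z_2)$ factors disappear for all but one variable; this "thinness" of the weight diagram is what makes the integral separate.

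\textbf{Key steps.} First I would recall the explicit Schechtman--Varchenko/Looijenga description (from the sections referenced in the excerpt constructing $\mc{KZ}_\kappa$) of $\mathcal{KZ}_{\kappa}((\lambda,\vpi_1),\mu^*)_{(0,1)}$ as the appropriate isotypic piece of the cohomology with supports of a cyclic cover, ramified along the hyperplane arrangement $\{t_a = 0\} \cup \{t_a = 1\} \cup \{t_a = t_b\}$, cut out by the master function with exponents read off from $\kappa^{-1}$ times the bilinear form. Second, using the Pieri combinatorics stated just before the theorem, I would identify exactly which weight subspace $V(\vec\lambda)[\cdot]$ of weight $\lambda + \vpi_1 - (\alpha_1 + \dots + \alpha_M)$ is in play and check it is one-dimensional, so that the relevant cohomology is the top cohomology of an $M$-dimensional arrangement complement (with a twist), hence of rank equal to the number of bounded chambers — which a direct count shows is $1$. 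Third — the heart of the matter — I would perform the change of variables making the $t_a$'s "nested" (e.g. $t_1 < t_2 < \dots < t_M$ along a real chain from $0$ to $1$, or the algebraic analogue: successively projecting off one variable at a time), and show the master function factors as $\prod_{a=1}^M \psi_a(t_a)$ up to a change of variables $t_a \mapsto$ affine function of $t_{a-1}$, where $\psi_a$ is a function of the shape $x^{p_a}(1-x)^{q_a}$ with $p_a, q_a$ the exponents $(\lambda,\alpha_M + \dots + \alpha_{M-a+1})/\kappa$ etc. appearing in \eqref{e:ring}. Fourth, I would invoke a Künneth/Leray argument for the cohomology of such an iterated fibration of punctured lines — exactly as in the factorisation theorem \ref{t:motivic}, but now applied in a degenerate one-dimensional-fibre situation — to conclude the motive is the tensor product $\bigotimes_a [a_j; \kappa]$ with the stated exponents; the $M = 0$ case is the trivial observation that the coinvariant space is $V_{\lambda} \otimes V_{\vpi_1} \otimes V_{\lambda + \vpi_1}^*$ with its trivial (constant) connection, giving $K(0)$.

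\textbf{Main obstacle.} The main obstacle is bookkeeping the exponents correctly: showing that after the nesting change of variables the $a$-th factor is precisely $\bigl[\,(a-1) + \sum_{i=1}^{?}(\lambda, \alpha_i)\,;\,\kappa\,\bigr]$ with the indices $M, M-1, \dots$ running in the order displayed in \eqref{e:ring}. This requires carefully tracking how the "diagonal" exponents $e_{ab}$ (from the $t_a - t_b$ factors, equal to $-(\alpha,\alpha)/\kappa = -2/\kappa$ for $\mf{sl}_n$ between consecutive variables) combine with the residues picked up when integrating out each variable — each such residue shifts the exponent of the next variable by an integer, which is the source of the "$+1, +2, \dots, +(M-1)$" shifts in the bracket. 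A secondary subtlety is that the $\tau$-isotypic projection and the $\mu_N$-equivariance must be compatible with the factorisation, i.e. the single cyclic cover of the $M$-dimensional arrangement must decompose as (a summand of) the product of the $M$ cyclic covers $y^N = \psi_a(x)^{\,\cdot}$; this is where the hypothesis that $N$ clears all the relevant denominators is used, and it should follow from the multiplicativity of Kummer covers, but it needs to be stated cleanly. I would structure the proof so that the one-dimensional base case of the factorisation ($M=1$) is isolated as the definition-chasing computation identifying $\mathcal{KZ}_{\kappa}((\lambda,\vpi_1), \mu^*)_{(0,1)}$ with $[(\lambda,\alpha_1);\kappa]$, and then the general case follows by the iterated fibration argument plus induction on $M$, each step contributing one more tensor factor and one more integer shift.
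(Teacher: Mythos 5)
Your overall strategy is the one the paper uses: identify the one--dimensional weight space via the Pieri rule, observe that only $t_1$ sees the point $z_2=1$ and only consecutive variables are linked, peel off one variable at a time by fibering over the $t_1$-line (the paper uses the scaling substitution $(t,T_2,\dots,T_M)\mapsto(t,tT_2,\dots,tT_M)$, under which the master function visibly splits off the factor $t^{(M-1+\sum_a\delta_a)/\kappa}(t-1)^{1/\kappa}$), apply K\"unneth for the product of Aomoto data, and induct on $M$. So the architecture is right. Two points, however, need repair.

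First, the exponent you assign to the linking factors is wrong, and it is exactly the quantity that produces the integer shifts you single out as the main obstacle. In the Pieri situation each simple root occurs at most once among the colours, so there are no pairs of variables of the same colour; the factor between consecutive variables is $(t_a-t_{a+1})^{-(\alpha_a,\alpha_{a+1})/\kappa}=(t_a-t_{a+1})^{+1/\kappa}$, not $(t_a-t_b)^{-(\alpha,\alpha)/\kappa}=(t_a-t_b)^{-2/\kappa}$. Under the scaling substitution each of the $M-1$ consecutive factors contributes $+1/\kappa$ to the exponent of $t$, which is where $M-1$ in the last bracket of \eqref{e:ring} comes from; with $-2/\kappa$ the shifts would come out as $-2,-4,\dots$ and the formula would fail. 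Second, you gloss over the passage from the actual KZ arrangement to the ``chain'' arrangement on which the product structure exists. The KZ motive is defined by removing \emph{all} hyperplanes $t_a=t_b$ and $t_a=1$, even those carrying weight zero, and the complement of the full arrangement is not an iterated fibration by punctured lines (already for $M=2$ it has two bounded chambers, not one). The paper handles this with Lemma \ref{imageKZ}: adding weight-zero hyperplanes changes the image of $H^M(V',q_!\mc{L})\to H^M(V,j_!\mc{L})$ only by a subquotient, and since both sides are known to have rank one (Pieri coefficient equal to $1$ on one side, the explicit inductive computation on the other), the subquotient is everything. Without this comparison your ``thinness makes the integral separate'' step does not literally apply to the object you are computing.
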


\begin{remark}
  When $\lambda$ and $\mu$ are level $\ell$, $\kappa=\ell+h^{\vee}$, then
  $\mathcal{KZ}_{\kappa}((\lambda,\vpi_1),\mu^*)=
  \mathcal{CB}_{\kappa}((\lambda,\vpi_1),\mu^*)$.
\end{remark}
We show in Section \ref{s:sln} that for all other fundamental dominant
weights $\vpi_k$, $\lambda\star \vpi_k$ can be computed inductively
(i.e., in terms of $\mf{sl}_m$ for $m<n$) from the products
$\lambda\star\vpi_1$ for both the enriched representation and fusion
rings. The associativity of the enriched rings plays a crucial role in
these computations.  This leads to an explicit algorithm (see Section
\ref{s:algo}) for computing all products in our enriched
representation and fusion rings for all $\mf{sl}_n$.

Conformal block local systems give variations of pure Hodge structures
with coefficients in a cyclotomic field. Properties of the Hodge
numbers of such a geometric VHS together with those of all Galois
conjugates, have implications for the finiteness of the global
monodromy group (see, e.g., \cite[Lemma 4.11]{BF}). In particular, our
work gives sufficient (computable) criteria for the finiteness of
global monodromy of conformal blocks local systems for
$\mf{sl}_n$. For an application of this, see Example
  \ref{e:FiniteGoursat3} wherein we show the finiteness of global
  monodromy for a rank $4$ local system associated to $\mf{sl}_4$.
\smallskip

It is an important problem to compute the enriched rings for general
$\mf{g}$.  An example, discussed in Section \ref{s:genG}, suggests
that we might be reduced, by induction on the rank of $\mf{g}$, and
multiplication by generators, to covering certain ``minimal'' cases
(such as multiplication by $\vpi_1$ for $\mf{sl}_n$).  The case of
enriched fusion rings is simpler and should probably be considered
first.

\subsection{Conjectures}
Numerical computations for $\mf{sl}_n$ using our algorithm suggest
several conjectures for general $\mf{g}$. We describe some of them
below.

\subsubsection{}

Suppose $\vec{\lambda}$ and $\nu$ are at level $\ell$, and
$\kappa=\ell +h^{\vee}$. Assume
$\mathcal{KZ}_{\kappa}(\vec{\lambda},\nu^*)\neq 0$ and let $M \geq 0$
be the number of simple roots, counted with multiplicity, in an
expression of $\sum\lambda_i-\nu$ as a sum of simple roots.  We
conjecture that
$\mathcal{KZ}_{\kappa}(\vec{\lambda},\nu^*)/\mathcal{CB}_{\kappa}(\vec{\lambda},\nu^*)$
is mixed of weights $> M$ and for $k\geq 0$,
$F^{M-k}(\msf{KZ}_{\kappa}(\vec{\lambda},\nu^*)/\msf{CB}_{\kappa}(\vec{\lambda},\nu^*))$
 is a subspace of 
$\mb{V}^*_{\mathfrak{g},\vec{\lambda},\nu^*,\ell+k+1}/
\mb{V}^*_{\mathfrak{g},\vec{\lambda},\nu^*,\ell}$, i.e., the Hodge
filtration is a refinement of the  filtration by conformal blocks at higher
levels, see Conjecture \ref{c:HodgeFiltration}.

\subsubsection{}
We also conjecture that there exists a BGG type resolution of the
mixed local systems $\mathcal{KZ}_{\kappa}(\vec{\lambda},\nu^*)$ which
should reduce to Teleman's BGG resolution on realisations
\cite{TelemanCMP}, see Conjecture \ref{BGGtype}.

\subsubsection{}
When $1/\kappa$ is an integer, we expect $\mf{R}_{\kappa}(\mf{g})$ to
be canonically a trivial deformation of the usual representation ring,
see Conjecture \ref{c:mtate}; this is implied by all the KZ motives
for such $\kappa$ being pure of weight zero (in which case the global
monodromy acts by scalars). In this case, it would be very interesting
to determine the solutions of the KZ equations algebraically.
Corollary \ref{c:sln} shows that this Conjecture \ref{c:mtate} holds
for $\mf{sl}_n$.

\subsubsection{}
Finally, we expect the rings $\mf{R}_{\kappa}(\mf{g})$ and
$\mf{F}_{\kappa}(\mf{g})$ for $\kappa$ as in Definition \ref{d:gtwist}
to be related by an explicit surjective ring homomorphism, as is the
case for the classical representation and fusion rings
\cite{Faltings,TelemanCMP}, see Conjectures \ref{c:maptofusion} and
\ref{c:ec}; we verify these conjectures for $\mf{sl}_2$ in Theorem
\ref{t:sl2}.

\subsection{Methods of proof}

We first explain the main ideas in the proof of Theorem
\ref{t:fact} in the simpler setting of Theorem \ref{t:motivic} and
then discuss the multiplication algorithm for $\mf{sl}_n$.

Recall that the KZ local systems are motivic local systems on
$W\setminus Z=\mc{C}_n$. For notational convenience we drop the
subscript $I$, ignore isotypical components, and also consider the
statements at the topological level here (the formalism of mixed
sheaves naturally upgrades these results to a motivic level).

There is a smooth projective morphism $X^* \to W\setminus Z$ such that
the KZ local system $\mathcal{KZ}_{\kappa}(\vec{\lambda},\nu^*)$ is
obtained, via results of Looijenga---see the pointwise Proposition
\ref{p:kz} (and for conformal blocks, Proposition \ref{FSVCor}), and
the results in families given in Sections \ref{s:kzmls} and
\ref{s:cbmls}---as the image of the pushforward of a map
  $\mc{F}' \to \mc{G}'$ of motivic sheaves on $X^*$. The first step
is to extend the family $X^* \to W \setminus Z$ to a family $g:X\to W$
which gives a semistable degeneration over $Z$. The variety $X$ is
defined using the moduli of stable $N$-pointed curves of genus
$0$. Letting $Y\subset X$ be the inverse image of $Z$, we have
$X^* = X \setminus Y$.  We have function
  $f_X=f \circ g:X\to\A^1_{\msk}$, where $f$ is the function on $W$
  that cuts out $Z$, with $Y=f_X^{-1}(0)$.  The irreducible
components of $Y$ can be described explicitly.

We then consider the nearby cycles $\mc{F} := \Psi_{f_X}\mc{F}'$ and
$\mc{G} = \Psi_{f_X}\mc{G}'$ of these motivic sheaves on $Y$ and
filter them using the natural geometry of $Y$ and the ordering of
residues of a naturally defined rational $1$-form on $X$ associated to
$\vec{\lambda}$, $\nu^*$ and $\kappa$. To express the behaviour of
cohomology (i.e., pushforwards to  $Z$) of these filtered sheaves
in terms of the associate graded, which we identify as giving rise to
the factors in Theorem \ref{t:motivic}, we use some novel techniques
to overcome the problem of only getting limited exactness from the
general formalism.

The most important technique (used in the case of KZ systems and not
for conformal blocks) is the use of a de Rham type complex $A(\eta)$
on the central fibre $Y$ (see Definition \ref{def11} and Section
\ref{s:Aeta}). This complex is inspired by the work of Steenbrink
\cite{steenbrink}, and we use limit mixed Hodge theoretic results due
to Steenbrink \cite{steenbrink}, Steenbrink--Zucker \cite{SZ} and El
Zein \cite{EZ} in obtaining vanishing results for these complexes.

We use systematically techniques of obtaining subquotients which
correspond to irreducible components of $Y$ (see the results in
Section \ref{s:elementary}). We also use the fact that we already know
that the ranks are the same on the two sides of the factorisation
formula, so the problem becomes one of finding a subquotient with
equal rank. The cohomology groups of the complexes $A(\eta)$, as well
Aomoto cohomology groups on the general fibres of $g:X\to W$, have
constant ranks under scaling $\eta$. Although the topological
interpretation changes under this operation, the (generically) scaled
$\eta$ has better behaviour under the filtrations. This observation,
which uses the vanishing results for $A(\eta)$, also plays an
important role in establishing exactness properties for the
filtrations.

As already noted earlier, this motivic factorisation result implies the
associativity in Theorems \ref{t:assoc} and \ref{t:assoc2} via
properties of the Hodge filtration on nearby cycles.

\smallskip
The motivic Pieri formula for $\mf{sl}_n$ for the fundamental weight
$\vpi_1$ (Theorem \ref{genp}) is proved by induction via a ``fibration
by curves'' method. The problem of computing Pieri coefficients for
multiplication with other dominant fundamental weights $[\vpi_r]$
reduces, surprisingly, to the case of $\vpi_1$ using a double
induction; the fact that we know the product is associative, i.e.,
Theorem \ref{t:assoc}, plays a key role in the induction.  Using these
results we can compute arbitrary products in
$\mf{R}_{\kappa}(\mf{\mf{sl}_n})$ and
$\mf{F}_{\kappa}(\mf{\mf{sl}_n})$.

\subsection{Organisation of the paper}

In Section \ref{s:Prelim}, we review nearby cycles and some basic
cohomological results in the complex analytic setting. We also discuss
the local complex analytic form of some natural morphisms that appear
in this work. In Section \ref{s:aomoto}, we recall the definition of a
de Rham type complex, called the Aomoto complex, appearing in the
theory of weighted hyperplane arrangements, and discuss its
topological and motivic interpretations. The KZ and conformal block
motives are also defined in this section. Saito's notion of mixed
sheaves, which is an abstraction of properties expected from motivic
sheaves, is recalled in Section \ref{s:mixte}. We define the notion of
mixed sheaves with coefficients, and show that the local systems of
variations of KZ and conformal blocks give natural mixed sheaves with
coefficients on configuration spaces, and discuss Hodge decompositions
and behaviour with respect to nearby cycles in Saito's theory. These
sections are mostly a review of known results.

We next state and prove the motivic factorisation theorem in Sections
\ref{s:nb}--\ref{kzf}. In Section \ref{s:nb}, we study the nearby
cycles $\Psi_{f_X}$ on the fibre $Y=f_X^{-1}(0)$ of a semistable
degeneration $f_X:X\to \A^1_{\msk}$ of suitable mixed sheaves on
$X^* = X \setminus Y$, and ways of filtering such objects in terms of
irreducible components of $Y$ (Theorem \ref{stillholds}). In Section
\ref{s:prep}, we state the general factorisation theorem and make a
choice of a geometric family $g:X\to W$ and a function
$f:W\to \A^1_{\msk}$ with respect to which we study nearby cycles. We
also study the combinatorics and geometry of the irreducible
components of $Y$. The conformal blocks part of Theorem \ref{t:fact}
is proved first in Section \ref{cbf}, and the part about invariants
is proved later in Section \ref{kzf}.  Section \ref{s:mnb} is
preparatory to Section \ref{kzf} and defines a filtration on nearby
cycles corresponding to a filtration of $Y$ by components ordered by
the residues of the form $\eta$. A de Rham type complex $A(\eta)$ used
to study properties of this filtration is also introduced in Section
\ref{s:mnb}.

In Section \ref{s:tr} we prove the associativity of the enriched
representation and fusion rings. We discuss some basic properties of
these ring and then formulate some (conjectural) relations between the
fusion and representation ring parallel to the classical case, which
we prove for the basic case of $\mf{sl}_2$ in Section \ref{s:sl2}. In
Section \ref{s:sln} we describe our algorithm for computing all
products in the enriched fusion and representation rings for
$\mf{sl}_n$.

In Section \ref{s:level0}, we study $\mf{sl}_2$ KZ motives at level
$0$ and identify the basic cases of the $KZ$ motives with motives
coming from hyperelliptic curves. Using this we give examples of KZ motives
which are mixed with non-split weight filtration.

In Section \ref{s:Cpe} we discuss some conjectures, problems and
examples. In the appendix, we give the proof of Proposition \ref{p:kz}
for arbitrary dominant integral weights $\nu$ extending the arguments
for $\nu=0$ in \cite{L2, BBM}.

\bigskip

\noindent {\bf Acknowledgements.} 
We thank Joseph Ayoub, Patrick Brosnan and Morihiko Saito for helpful
conversations and/or correspondence.

A part of this work was done when P.B. visited TIFR Mumbai in July of
2022 and 2023 as an adjunct professor.  P.B. thanks TIFR for the
invitation and hospitality. P.B. was supported by NSF DMS -
2302288. N.F. thanks Vincent Pilloni for an invitation to visit the
Laboratoire Math\'ematique d'Orsay in April--June 2022. N.F. and S.M.
acknowledge support of the DAE, Government of India, under Project
Identification No. RTI4001.  S.M. thanks MPI, Bonn, for an invitation to visit in 
May--June 2023. 

\subsection{Notation}\label{s:notations}

\subsubsection{Lie theoretic notation}\label{s:lie}

As in the introduction, let $\mf{g}$ be a split finite dimensional
simple Lie algebra over a field $\msk\subseteq \C$ and let
$\mf{h}\subset \mf{g}$ be a Cartan subalgebra. Let
$R=R^+\cup R^{-} \subseteq\mf{h}^*$ be the set of roots, and
$\Delta\subseteq R^+$, the set of simple roots corresponding to a
fixed Cartan decomposition. Let $(\ ,\ )$ be the Killing form on
$\mf{h}^*$ normalised by $(\theta,\theta)=2$, where
$\theta\in \mf{h}^*$ is the highest root. We let $\{X_i\}$, be an
orthonormal basis of $\mf{g}$ with respect to the Killing form. Let
$h^{\vee}$ be the dual Coxeter number of $\mf{g}$; this is half the
scalar by which the normalised Casimir element
$\Omega= \sum_i X_i\tensor X_i\in U(\mf{g})\tensor U(\mf{g})$ acts on
the adjoint representation of $\mf{g}$ by its image under the
multiplication map on $U(\mf{g})$. Note that $h^{\vee}=n$ for
$\mf{g}=\mf{sl}_n$.

Let $P$ be the lattice of integral weights, let $Q \subset P$ be the
root lattice, and let $i_{\mf{g}} = |P/Q|$ be the index of connectivity
of $\mf{g}$. Let $P^+ \subset P$ be the set of dominant integral
weights of $\mf{g}$.  For $\lambda\in P^+$, let $V_{\lambda}$ denote
the corresponding irreducible representation of $\mf{g}$. For
$\lambda\in P^+$, let $c(\lambda)=(\lambda,\lambda+2\rho)$ where
$\rho$ is half the sum of positive roots. 

Let
$$m_{\mf{g}}=\frac{(\alpha_{\ell},\alpha_{\ell})}{(\alpha_s,\alpha_s)}, $$
where $\alpha_{\ell}$ is a long root and $\alpha_{s}$ a short
root. Note that $m_{\mf{g}}\in\{1,2,3\}$ and $m_{\mf{g}}=1$ if
$\mf{g}$ is simply laced. Since $(\alpha_{\ell},\alpha_{\ell})=2$,
$m_{\mf{g}}\cdot(\alpha,\alpha)$ is an even integer for all roots $\alpha$.

\subsubsection{Topological and complex analytic notation}

For any topological space $X$ and a field $K$ we let $\dpl(X,K)$ be
the bounded below derived category of sheaves of $K$-vector spaces on
$X$. For a continous map $f:X \to Y$ we will use the symbols $f_*$,
$f_!$ and $f^*$ to denote the derived functors of the standard
functors on categories of sheaves corresponding to these symbols.  For
example, if $f:X \to Y$ is a continuous map then
$f_*: D^+(X,K) \to D^+(Y,K)$ will always be the derived pushforward.
(We will only use $f_!$ for open or closed embeddings, in which case
the underived functor is also exact but we often use $f_*$ when the
underived functor is not exact.) However, if $\mc{F}$ is a sheaf on
$X$ then $R^if_* \mc{F}$ will have its usual meaning.

For a map $f$ as above we will often refer to the functor $f_*$ as the
``star pushforward'' and $f_!$ as the ``shriek pushforward''; this
terminology will be particularly useful when $f$ is an inclusion which
we do not want to name explicitly. Also, our maps will often have
subscripts, e.g., we have $g_Y$, $k_B$, etc., and for typographical
convenience we will write $g_{Y,*}$ for $(g_Y)_*$, $k_B^!$ for
$(k_B)^!$, etc.

When $X$ is a complex analytic space we let $\dbc(X,K)$ denote the
full subcategory of $\dpl(X,K)$ consisting of complexes of sheaves
with only finitely many nonzero cohomology sheaves which are
constructible with respect to an analytic Whitney stratification. We
will always assume that $X$ is second countable and the irreducible
components of $X$ have bounded dimension.  Almost all the objects of
$\dpl(X,K)$ that we consider will actually lie in $\dbc(X,K)$.

For a subfield $\msk$ of $\C$ and a variety $X$ over
$\msk$, we let $X^{\mr{an}}$ be the associated complex analytic
space and we denote by $\dbc(X,K)$ the subcategory of
$\dbc(X^{\mr{an}},K)$ consisting of objects with algebraically
constructible cohomology.

We also use $\dc$ for the unit disc in $\C$ and
$\dc^* = \dc \setminus \{0\}$ (in contexts where there will be no
confusion with the simple roots).

\subsubsection{Some indexing notation}\label{s:ind}
Sometimes it will be convenient for us to have objects such as
$\A^n_{\msk}$ with coordinates labelled by elements in a finite set
$S$: we will denote this by $\A^S_{\msk}$. Similarly, we have the
configuration space $\mc{C}_S \subset \A^S_{\msk}$, the moduli spaces
of (stable) pointed genus $0$ curves $M_{0,S}$ and $\ov{M}_{0,S}$,
etc. For any finite set $S$, we will denote by $\ov{S}$ the disjoint
union of $S$ and a one point set $\{*\}$. If $A \subset S$ are sets we
let $A^c = S \sm A$.

\subsubsection{Miscellaneous}

Let $\mu_N \subset \C^{\times}$ be the subgroup of $N$-th roots of
unity.  For any positive integer $n$, we set $[n] := \{1,2,\dots,n\}$.

By a filtration of an object $F$ in a triangulated category, we mean a
sequence of distinguished triangles of the form
$F_i\to F_{i+1}\to G_i\leto{[1]}$ for $0\leq i<k$, with $F_k=F$ and
$F_0=0$. In such a situation we will also say that $F$ is a successive
extension of the objects $G_i$, and that $F$ is filtered by the $F_i$.

\section{Preliminaries}\label{s:Prelim}

In Section \ref{s:rnb}, we review some standard facts about nearby
cycles, Verdier duality and tensor products in a complex analytic
setting.  In Section \ref{s:normale} we give a local description of
some key morphisms (for example $f_X:X\to \A^1_{\msk}$ in Theorem \ref{t:fact}),
see Lemma \ref{l:normalform}.

\subsection{Review of nearby cycles}\label{s:rnb}

A reference for the material in this section is \cite{Schurmann}.  We
fix a field $K$ throughout this section and to simplify notation we
write $\db(X)$ for $\db(X,K)$, $\dbc(X)$ for $\dbc(X,K)$, etc.,
throughout this section. All tensor products ($\otimes$ and
$\boxtimes$) are over $K$.

Let $X$ be a complex analytic space and $f:X\to \C$ an analytic
function.  Let $\C^{*}=\C \sm \{0\}$, $X^* = X \times_{\C} {\C}^*$ and
let $\ti{\C}^{*}$ be a universal cover of $\C$. Form the diagram with
cartesian squares:
\begin{equation*}
  \xymatrix{
    \widetilde{X}^*\ar[r]^k\ar[d] & X \ar[d]^f & Y\ar[l]^{\iota}\ar[d]\\
    \widetilde{\C}^*\ar[r] & \C  & \{0\}\ar[l]
  }
\end{equation*}

\begin{defi}\label{d:nearby}
  Let $\mc{F}\in \dpl(X)$.  The nearby cycles for $\mc{F}$ is the
  object $\Psi_f \,\mc{F} := \iota^* k_*k^*\mc{F}$ in $\dpl(Y)$. The
  group $\pi_1(\C^*)$ acts on $\Psi_f\, \mc{F}$ and we denote the action
  of the positive generator by $m_f: \Psi_f\, \mc{F} \to \Psi_f\, \mc{F}$.
  
\end{defi}

By construction, $\Psi_f$ is an exact functor. If $\mc{F} \in \dbc(X)$
then $\Psi_f\, \mc{F} \in \dbc(Y)$ by \cite[Theorem 4.0.2]{Schurmann};
this will always be the case whenever we apply the nearby cycles
functor and we will use this without further mention.  Furthermore,
$\Psi_f$ commutes with Verdier duality up to a shift when restricted
to $\dbc(X)$, i.e.,  there is a natural isomorphism of functors
\[
\Psi_f[-1] \circ \D_X \cong \D_Y \circ \Psi_f[-1] ,
\]
where $\D_X$ and $\D_Y$ denote Verdier duals. See, e.g., \cite[Corollary
3.2]{massey}.

We also note that since the image of the map $k$ is $X^*$, the functor
$\Psi_f$ factors uniquely through a functor $\dpl(X^*) \to \dpl(Y)$
which we also denote by $\Psi_f$.

The next lemma lists some well-known properties of nearby cycles which
we will often use.
\begin{lemma}\label{kunneth}
  Let $f:X \to \C$ be as above and let $\mc{F} \in \dbc(X)$.
  \begin{enumerate}
  \item Let $\mathcal{L}$ be a $K$-local system on $\C^*$ and let $L$ be
    the stalk of $\ml$ at a chosen point of $\C^*$, viewed as a
    representation of $\pi_1(\C^*)$.  Then
    $\Psi_f\,(\mc{F}\tensor_{\C}
    f^*\mathcal{L})\leto{\sim}(\Psi_f\,\mc{F})\tensor L$.
  \item Let $Z$ be another analytic space and $\mc{G}\in \dbc(Z)$.
    Then the nearby cycles $\Psi_g\, (\mathcal{F}\boxtimes \mathcal{G})$
    in $\dbc(Y \times Z)$ for the composite $g:X\times Z\to X\to \C$
    is isomorphic to $(\Psi_f \,\mathcal{F})\boxtimes \mathcal{G}$.
  \item Let $Z$ be another analytic space, $g:Z \to X$ a proper
    analytic map and $\mc{G} \in \dbc(Z)$. Then there is a base change
    isomorphism
    $\Psi_f\, ( g_{*}\mc{G}) \simeq g_{Y,*} (\Psi_{f \circ g}\, \mc{G})$
    where $Y=(f \circ g)^{-1}(0)\subset Z$, and
    $g_Y:Y\to f^{-1}(0)$ is the restriction of $g$ to $Y$.
  \item Suppose $X$ is a complex manifold and $f$ is smooth, i.e., a
    submersion.  If $\mc{F}$ is a local system then $\Psi_f \mc{F}$ is
    isomorphic to $\mc{F}|_Y$ (and the monodromy action is trivial).
  \end{enumerate}
\end{lemma}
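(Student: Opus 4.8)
The plan is to obtain all four items by unwinding the definition of $\Psi_f$ (Definition~\ref{d:nearby}) and feeding in the standard functoriality of sheaf pullback, pushforward and external tensor product---base change, the projection formula and the K\"unneth formalism (as in \cite{Schurmann}). None of the four is more than bookkeeping, so I indicate mainly which tool does the work.

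For (1), I would restrict to $X^*$ (where $f^*\ml$ is defined) and note that $k\colon\widetilde X^*\to X$ factors through the projection to the simply connected space $\widetilde\C^*$; hence $k^*(f^*\ml)$ is the constant sheaf $\underline L$, canonically so once the base point of Definition~\ref{d:nearby} is fixed, and therefore $k^*(\mc F\tensor f^*\ml)\cong(k^*\mc F)\tensor_K L$. Since $k_*$ and $\iota^*$ are $K$-linear and commute with $(-)\tensor_K L$ for the fixed finite-dimensional $L$ (equivalently, with finite direct sums), applying them gives the isomorphism; a glance at the construction shows that $m_f$ acts on the left as $m_f\tensor(\text{monodromy of }\ml)$, so it intertwines the monodromies. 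For (2), the diagram computing $\Psi_g$ for $g\colon X\times Z\to X\to\C$ is the product with $Z$ of the one computing $\Psi_f$, so $\widetilde{(X\times Z)^*}=\widetilde X^*\times Z$, $k_{X\times Z}=k_X\times\mathrm{id}$, $\iota_{X\times Z}=\iota_X\times\mathrm{id}$ and the central fibre is $Y\times Z$; then $(k_X\times\mathrm{id})_*(k_X\times\mathrm{id})^*(\mc F\boxtimes\mc G)\cong(k_{X,*}k_X^*\mc F)\boxtimes\mc G$ by K\"unneth, and $(\iota_X\times\mathrm{id})^*$ of this is the claim. For (3), with $g\colon Z\to X$ proper and $W=(f\circ g)^{-1}(0)$, the squares $\widetilde Z^*=\widetilde X^*\times_XZ$, $Z^*=X^*\times_XZ$ and $W=Y\times_XZ$ are cartesian with $g$ staying proper on each, so repeated proper base change around them (together with $k_{X,*}\circ(g|_{\widetilde Z^*})_*=g_*\circ k_{Z,*}$) converts $\iota_X^*k_{X,*}k_X^*g_*\mc G$ into $g_{Y,*}\iota_Z^*k_{Z,*}k_Z^*\mc G=g_{Y,*}\Psi_{f\circ g}\mc G$.

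For (4), the statement is local on $Y$, and a submersion to a curve is locally a projection, so by the implicit function theorem I may assume $X=V\times\dc$ with $f=\mathrm{pr}_{\dc}$ and $Y=V\times\{0\}$. As $\dc$ is contractible, $\mc F\cong\mathrm{pr}_V^*\mc F_0=\mc F_0\boxtimes\underline K_{\dc}$ with $\mc F_0:=\mc F|_Y$; and $\widetilde X^*=V\times U$ with $U$ the universal cover of $\dc\setminus\{0\}$ (so $U$ is contractible) and $k=\mathrm{id}_V\times\bar k$ where $\bar k\colon U\to\dc$. Then $k_*k^*\mc F\cong\mc F_0\boxtimes\bar k_*\underline K$, and the stalk of $\bar k_*\underline K$ at $0$ is $\varinjlim H^*\big(\bar k^{-1}(\dc'\setminus\{0\});K\big)$ over shrinking discs $\dc'\ni 0$; each $\bar k^{-1}(\dc'\setminus\{0\})$ is a universal cover of a punctured disc, hence contractible, so this colimit is $K$ in degree $0$ with trivial deck action. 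Thus $\Psi_f\mc F\cong\mc F_0=\mc F|_Y$ with trivial monodromy; since this identification is the natural specialization morphism $\iota^*\mc F\to\Psi_f\mc F$, the local isomorphisms glue to a global one.

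I expect the only genuine care to be needed in (4): one must use that a submersion to a curve is only \emph{locally}---not globally, in the absence of properness---a product over the target, and then check that the resulting local isomorphisms are canonical (e.g.\ that they coincide with the specialization morphism) so that they patch. In (1)--(3) the content is purely to run the manipulations in the derived category and, in (1), to keep track of the monodromy action.
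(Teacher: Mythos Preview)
Your arguments are correct and more explicit than the paper's own proof, which is purely a citation: it observes that (1) is a special case of (2), deduces (2) from the K\"unneth result \cite[Theorem~1.0.4]{Schurmann}, and refers (3) and (4) to \cite[Remark~4.3.7]{Schurmann}. Your direct unwinding of Definition~\ref{d:nearby} has the virtue of making the mechanism visible; in particular, your treatment of (1) via triviality of $k^*f^*\mc{L}$ on the simply connected $\widetilde{X}^*$ is cleaner than the paper's reduction to (2), and your handling of (4) through the specialization map and local product structure is the standard argument that the reference encodes. The one place where you lean on something nontrivial without flagging it is the K\"unneth step in (2): the isomorphism $(k_X\times\mathrm{id})_*\bigl((k_X^*\mc{F})\boxtimes\mc{G}\bigr)\cong(k_{X,*}k_X^*\mc{F})\boxtimes\mc{G}$ is not formal, since $k_X$ is neither proper nor an open embedding (it factors as a $\Z$-cover followed by an open inclusion), and this is precisely the point at which the paper invokes the cited K\"unneth theorem for constructible sheaves.
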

All isomorphims in the lemma are compatible with the monodromy action.

\begin{proof}
  Item (1) is a special case of (2) which in turn is a special case of
  \cite[Theorem 1.0.4]{Schurmann}. For (3) and (4) see \cite[Remark
  4.3.7]{Schurmann}.
\end{proof}

\begin{remark}\label{r:bc}
  As a special case of Lemma \ref{kunneth} (3) we get the following:
  Suppose $f:X \to \C$ is proper over an open neighbourhood of
    $0$ and $\mc{F} \in \dbc(X)$. Then given $s \in \C$ sufficiently
  close to $0$ there is an isomorphism
  $${H}^*(Y, \Psi_f \,\mc{F})\cong (f_*\mc{F})_s={H}^*(X_s,\mc{F}|_{X_s}).$$
  Hence $\Psi_f\, \mc{F}$ (which is supported on $Y$) computes the
  cohomology of $\mc{F}$ restricted to the ``nearby'' fibre
  $X_s=f^{-1}(s)$. (Use the fact that all the cohomology sheaves of
  $f_* \mc{F}$ are local systems in a small enough punctured
  neighbourhood of $0 \in \C$.)
\end{remark}

\subsubsection{}
We recall here some basic facts about external tensor products,
Verdier duality, etc., that we will use later in our computations of
nearby cycles.
\begin{lemma}\label{KunnethF}
  Let $g_i:U_i\to X_i$ be continuous maps of locally compact
  topological spaces and let $\mc{F}_i\in \dpl(U_i)$.  Then (with
    $g_{1,!}= (g_1)_!$, etc.)
  \begin{enumerate}
  \item
    $(g_1\times g_2)_!(\mc{F}_1\boxtimes \mc{F}_2) \simeq
    g_{1,!}\mc{F}_1\boxtimes g_{2,!}\mc{F}_2$.
  \end{enumerate}
  Furthermore, if $X_i$ are definable with respect to some
  $o$-minimal structure on $\R^n$, the $g_i$ are definable inclusions,
  and $\mc{F}_i$ are constructible with respect to a definable
  stratification then:
  \begin{enumerate}
  \item[(2)]
    $(g_1\times g_2)_*(\mc{F}_1\boxtimes \mc{F}_2) \simeq
    g_{1,*}\mc{F}_1\boxtimes g_{2,*}\mc{F}_2$.
  \item[(3)]
    $\D_{U_1}\mc{F}_1 \boxtimes \D_{U_2} \mc{F}_2 \simeq \D_{U_1 \times
      U_2} (\mc{F}_1 \boxtimes \mc{F}_2)$.
  \end{enumerate}
\end{lemma}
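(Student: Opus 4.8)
The plan is to deduce all three statements from standard properties of the external tensor product together with the exchange between $\boxtimes$ and Verdier duality in the definable/$o$-minimal setting. For (1), I would argue as follows. The projection formula and the compatibility of $f_!$ with base change give, for arbitrary continuous maps of locally compact spaces, a canonical morphism $g_{1,!}\mc{F}_1 \boxtimes g_{2,!}\mc{F}_2 \to (g_1 \times g_2)_!(\mc{F}_1 \boxtimes \mc{F}_2)$; concretely one factors $g_1 \times g_2$ as $(g_1 \times \mr{id}) \circ (\mr{id} \times g_2)$ and applies the ``shriek K\"unneth'' for a single map (i.e.~$(h \times \mr{id})_!(\mc{A} \boxtimes \mc{B}) \simeq (h_!\mc{A}) \boxtimes \mc{B}$, which is itself a consequence of proper base change along the cartesian square defining the product). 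Iterating in both variables yields the isomorphism in (1) with no hypotheses beyond local compactness, since $f_!$ always commutes with the exact functor $(-)\boxtimes \mc{B}$ up to this base-change identification.

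For (3), I would invoke the known compatibility of Verdier duality with external products. On a locally compact space of finite cohomological dimension one has $\D_{U}\mc{F} = R\shom(\mc{F}, \omega_U)$ with $\omega_U$ the dualizing complex, and $\omega_{U_1 \times U_2} \simeq \omega_{U_1} \boxtimes \omega_{U_2}$; combined with the natural map $R\shom(\mc{F}_1,\mc{G}_1) \boxtimes R\shom(\mc{F}_2,\mc{G}_2) \to R\shom(\mc{F}_1 \boxtimes \mc{F}_2, \mc{G}_1 \boxtimes \mc{G}_2)$ this produces a canonical morphism $\D_{U_1}\mc{F}_1 \boxtimes \D_{U_2}\mc{F}_2 \to \D_{U_1 \times U_2}(\mc{F}_1 \boxtimes \mc{F}_2)$. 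To see it is an isomorphism I would check it stalkwise, or — more efficiently — reduce to the constructible case: a complex constructible with respect to a definable stratification is a finite successive extension of shifted pushforwards $j_{S,!}\underline{K}_S$ of constant sheaves on strata, for which both sides are readily computed, and on $U_1 \times U_2$ the product stratification is again definable (products of definable sets are definable, and here I use the $o$-minimality hypothesis to keep everything within the constructible framework). This is the step where the definability hypothesis is genuinely used.

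Statement (2) then follows formally by combining (1) and (3) with the identity $g_{i,*} = \D_{X_i} \circ g_{i,!} \circ \D_{U_i}$, valid because the $g_i$ are (definable) inclusions so that $g_{i,!}$ and $g_{i,*}$ are the expected functors and $g_i^! \simeq \D_{U_i} g_i^* \D_{X_i}$; explicitly,
\[
  (g_1 \times g_2)_*(\mc{F}_1 \boxtimes \mc{F}_2)
  \simeq \D\bigl((g_1 \times g_2)_! \D(\mc{F}_1 \boxtimes \mc{F}_2)\bigr)
  \simeq \D\bigl((g_1 \times g_2)_!(\D\mc{F}_1 \boxtimes \D\mc{F}_2)\bigr) ,
\]
and applying (1) to the right-hand side and then (3) twice converts this into $g_{1,*}\mc{F}_1 \boxtimes g_{2,*}\mc{F}_2$. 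One must check along the way that $\D(\mc{F}_1 \boxtimes \mc{F}_2)$ is again constructible for a definable stratification so that (1) and (3) apply — but this is exactly part of the content of (3) together with the stability of the constructibility conditions under $\D$ in the $o$-minimal setting.

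The main obstacle I anticipate is not any single deep input but rather the bookkeeping needed to ensure that every intermediate object stays inside the class of complexes that are constructible with respect to a \emph{definable} stratification, since that is precisely where the star-pushforward K\"unneth formula (2) can fail in general — the standard counterexamples to $j_* \underline{K} \boxtimes j_* \underline{K} \simeq (j \times j)_* \underline{K}$ for non-definable $j$ show this. So the real work is verifying the definability/constructibility closure properties (products of stratifications, duals, shriek-pushforwards along definable inclusions) and then the formal manipulation above goes through without incident.
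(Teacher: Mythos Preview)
Your argument is correct: the shriek K\"unneth (1) via proper base change, the duality K\"unneth (3) via $\omega_{U_1\times U_2}\simeq\omega_{U_1}\boxtimes\omega_{U_2}$ plus d\'evissage on strata, and then (2) by sandwiching (1) between two applications of (3) using $g_*\simeq\D\,g_!\,\D$ on constructible complexes, is the standard route and goes through in the $o$-minimal setting once one knows the closure properties you flag at the end.

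The paper, by contrast, gives no argument at all: it simply cites \cite[(1.16)]{Schurmann} for (1) and \cite[Corollary 2.0.4]{Schurmann} for (2) and (3). So there is nothing to compare at the level of strategy --- your outline is essentially a sketch of what those references contain, and in particular Sch\"urmann's treatment of (2) and (3) in the $o$-minimal framework proceeds along the lines you describe. Your write-up therefore supplies what the paper deliberately outsources; the only thing you might add, to match the paper's intent, is an explicit pointer to where the definable-stratification closure properties (stability under products, duals, and pushforwards along definable inclusions) are established, since that is the one place a reader might want a reference rather than a promise.
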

\begin{proof}
  For (1), which is well-known, see \cite[Equation
  (1.16) on p.~78]{Schurmann}. For the precise meaning of the
  assumptions in (2) and (3) and the proofs of the statements see
  \cite[Corollary 2.0.4]{Schurmann}.
\end{proof}

When we apply this lemma the $g_i$ will be open inclusions and the
stratification will correspond to a normal crossings
divisor. Therefore, we may take the $o$-minimal structure to be the
one corresponding to semi-algebraic sets.

\begin{lemma}\label{l:ext}
  Let $X$ be a complex manifold and $D = \cup_{i=1}^n D_i$ be a
  divisor with simple normal crossings. Let $\mc{L}$ be a rank one
  $K$-local system on $U = X \sm D$. For each $i = 1,\dots,n$, let
  $U_i = X \sm \cup_{j=i}^N D_i$, let $f_i:U_i \to X$ be the
  inclusion, and let $?_i$ be either $!$ or $*$.
  \begin{enumerate}
  \item The formation of
    $\mc{F} := (f_n)_{?_n} (f_{n-1})_{?_{n-1}}\dots (f_1)_{?_1}\mc{L}$
    is canonically independent of the ordering of the
    $D_i$. Equivalently, all the stalks of $\mc{F}$ vanish at each
    point of $D_i$ for all $i$ such that $?_i = \,!$.
  \item Suppose $?_i'$, $1 \leq i \leq n$ is another choice of $!$ or
    $*$ such that if $?_i = \,!$ then $?_i' = \,!$ and let
    $\mc{F}' := (f_n)_{?_n'} (f_{n-1})_{?_{n-1}'}\dots
    (f_1)_{?_1'}\mc{L}$. Then
    $\home(\mc{F}', \mc{F}) = \home(\mc{L},\mc{L})$.
  \end{enumerate}
  
\end{lemma}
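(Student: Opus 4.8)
The plan is to prove both parts simultaneously by induction on $n$, the number of divisor components, reducing everything to the one-variable local picture together with the compatibility statements in Lemmas \ref{kunneth} and \ref{KunnethF}. Since the statements are local on $X$ and the functors $(f_i)_{?_i}$ are compatible with restriction to opens, I would first reduce to the case where $X = \dc^n$ is a polydisc with coordinates $x_1,\dots,x_n$, $D_i = \{x_i = 0\}$, and $\mc{L}$ is a rank one $K$-local system on $(\dc^*)^n$. By the K\"unneth-type statement in Lemma \ref{KunnethF}, every such $\mc{L}$ is (non-canonically, but we only need existence for the stalk computation) an external tensor product $\mc{L}_1 \boxtimes \cdots \boxtimes \mc{L}_n$ of rank one local systems on $\dc^*$, and the functors $(f_i)_{?_i}$ are of the form $(\text{id})^{\boxtimes (i-1)} \boxtimes (j_{?_i})_* \boxtimes (\text{id})^{\boxtimes(n-i)}$ where $j : \dc^* \hookrightarrow \dc$; so the whole composite $\mc{F}$ is an external tensor product of the one-variable sheaves $(j_{?_i})_{?_i}\mc{L}_i$ on $\dc$. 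In the one-variable case the only issue of ordering is vacuous, and the key elementary fact is: for a rank one local system $\mc{M}$ on $\dc^*$ with monodromy $t$, the stalk at $0$ of $j_!\mc{M}$ is $0$, while the stalk of $j_*\mc{M}$ at $0$ is $0$ if $t \neq 1$ and is $K$ (placed in degree $0$) if $t = 1$; in either case $(j_*\mc{M})$ and $(j_!\mc{M})$ have the same restriction to $\dc^*$ and $j_!\mc{M}$ has vanishing stalk at $0$.

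For part (1): the independence of ordering is immediate from the external tensor product description, since reordering the $D_i$ just permutes the tensor factors and each factor is computed independently. For the ``equivalently'' clause, I would argue that the stalk of $\mc{F}$ at a point $p$ lying on exactly the components $D_{i_1},\dots,D_{i_k}$ (and on no others) is, by the external tensor product formula and proper base change / the fact that these are products, the tensor product over the relevant indices of the one-variable stalks; whenever $?_{i_\ell} = \,!$ the corresponding one-variable factor $(j_!\mc{L}_{i_\ell})_0 = 0$, so the whole stalk vanishes. Conversely, if all the stalks of $\mc{F}$ vanish along each $D_i$ with $?_i = \,!$, then in particular the stalk at a generic point of such a $D_i$ vanishes, which forces the corresponding one-variable factor to be the shriek extension there; but a one-variable sheaf on $\dc$ that restricts to $\mc{L}_i$ on $\dc^*$ and has vanishing stalk at $0$ is $j_!\mc{L}_i$, and this is independent of how the other factors were extended. (One should phrase the converse direction carefully as a statement that the only constructible extensions of a rank one local system across a single smooth divisor with vanishing stalks are the shriek extension — this is standard.)

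For part (2): I want $\home(\mc{F}', \mc{F}) = \home(\mc{L},\mc{L}) = K$. Again via the external tensor product decomposition and the K\"unneth formula for $R\shom$ between external products (Lemma \ref{KunnethF}(2),(3) give the needed compatibilities, or one argues directly that $\shom$ of external products is the external product of $\shom$'s), this reduces to the one-variable claim: for $? \in \{!,*\}$ and $?' \in \{!,*\}$ with $?' = \,!$ whenever $? = \,!$, one has $\home_{\dc}\big((j_{?'})_{?'}\mc{M}, (j_?)_?\mc{M}\big) = \home_{\dc^*}(\mc{M},\mc{M}) = K$. There are really only three cases: $? = ?' = \,!$ (adjunction for $j_!$, since $j_! \dashv j^*$ and $j^* j_! = \text{id}$); $? = ?' = *$ (adjunction for $j_*$, since $j^* \dashv j_*$ and $j^* j_* = \text{id}$); and $? = *$, $?' = \,!$ (here $\home(j_!\mc{M}, j_*\mc{M}) = \home(j^*j_!\mc{M}, \mc{M}) = \home(\mc{M},\mc{M})$ using the $j_! \dashv j^*$ adjunction and $j^* j_* = \text{id}$ is not needed; rather use $\home(j_!\mc{M}, \mc{N}) = \home(\mc{M}, j^*\mc{N})$ with $\mc{N} = j_*\mc{M}$ and $j^* j_* \mc{M} = \mc{M}$). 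The excluded case $? = \,!$, $?' = *$ is exactly where this would fail (there would be an extra contribution from the stalk at $0$), which is why the hypothesis on $?_i, ?_i'$ is imposed.

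\textbf{Main obstacle.} The genuinely delicate point is bookkeeping the naturality and ordering-independence through the K\"unneth reductions: one must check that the identification of the iterated extension $\mc{F}$ with an external tensor product is compatible with restriction to the various strata $D_{i_1} \cap \cdots \cap D_{i_k}$, and that the $\home$ computation is genuinely an isomorphism of $K$-modules (degree $0$, rank $1$, not just an abstract equality of Euler characteristics). Making the ``canonically independent of ordering'' precise — i.e.\ producing a canonical isomorphism between the sheaves obtained from two orderings, not merely an abstract isomorphism — is best handled by noting that both are characterised by the universal property implicit in part (2) (the natural maps $(f_i)_! \to (f_i)_*$ assemble to canonical maps between all the variants, and the extreme ones $\mc{F}_!$ and $\mc{F}_*$ receive/map to all the others), so the real content is the one-variable stalk and $\home$ computations above, which are standard.
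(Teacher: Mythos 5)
Your proposal is correct and follows essentially the same route as the paper: for part (1) the paper simply cites the discussion after Lemma 3.3 of \cite{L1} and Lemma 29 of the arXiv version of \cite{BBM}, which carry out exactly your local reduction to a polydisc and the external-tensor-product/stalk computation, and for part (2) the paper uses the same adjunction argument ($f_!\dashv f^*\dashv f_*$ with $f^*f_!=f^*f_*=\mathrm{id}$ for open immersions), run as a direct induction on $n$ by peeling off $(f_n)_{?_n}$ — which lets you skip the K\"unneth-for-$R\shom$ detour and the attendant worry about cross-terms entirely. One harmless slip: since $j_*$ is the derived pushforward, the stalk of $j_*\mc{M}$ at $0$ in the trivial-monodromy case is $K$ in degrees $0$ \emph{and} $1$, not just degree $0$; only the vanishing of the stalk of $j_!\mc{M}$ is actually used, so nothing breaks.
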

\begin{proof}
  Part (1) of the lemma is formulated and proved informally in the
  discussion after Lemma 3.3 of \cite{L1}. For more details the reader
  may consult the proof of Lemma 29 in the arXiv version of
  \cite{BBM}. 

  Part (2) follows by induction on $n$ using the fact that for any map
  $f:U \to X$ of locally compact topological spaces, $f_!$
  (resp.~$f_*$) is the left adjoint (resp.~right adjoint) of $f^*$.
\end{proof}

\begin{remark}\label{r:p}
  All the objects of $\dbc(X)$ occurring in Lemma \ref{l:ext} have
  perverse cohomology only in one degree, i.e., $\dim(X)$, so they are
  shifts of perverse sheaves.
\end{remark} 

\subsection{Normal forms of some mappings}\label{s:normale}

\begin{lemma}\label{l:normalform}
  Let $X\leto{g} W\leto{f} \C$ be maps of complex manifolds, let
  $E\subset X$ be a divisor and let $y\in X$. Assume that
\begin{itemize}
\item[--]  $Z=f^{-1}(0)\subseteq W$ is smooth.
\item[--] Setting $Y=(f \circ g)^{-1}(0)$, we have $Y=\cup_{i=1}^r Y_i$,
  with each $Y_i$ an irreducible divisor and $\cap_{i=1}^r Y_i$ smooth
  over $Z$ .
\item[--] $E\cup Y$ is a divisor with simple normal crossings, and if
  $E_a$, $a=1,\dots,k$, are the components of $E$, then
  $(\cap_{a=1}^k E_a)\cap(\cap_{i=1}^s Y_i)$ is smooth over $Z$.
\item[--] $y\in (\cap_{a=1}^k E_a)\cap(\cap_{i=1}^s Y_i)$.
\end{itemize}
Then we can choose local coordinates on a neighbourhood of $y$ in $X$
making $X$ an open subset of $\C^{r+s+\ell}$ so that $y=\vec{0}$, and
local coordinates on a neighbourhood of $g(y)$ in $W$ making $W$ an
open subset of $\C^{1+s}$ so that $g(y) = \vec{0}$, such that in these
local coordinates
\begin{enumerate} 
\item[(A)] The map $g$ is given by
  $(z_1,\dots,z_r,u_1,\dots,u_s,v_1,\dots,v_{\ell}) \mapsto (z_1\cdots
  z_r,u_1,\dots,u_s)$ and the map $f$ is the projection to the
  first coordinate.
\item[(B)] $E$ is given by $v_1\cdots v_k=0$.

\end{enumerate}
\end{lemma}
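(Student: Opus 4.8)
\textbf{Proof plan for Lemma \ref{l:normalform}.}

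The plan is to produce the desired coordinates by a two-step procedure: first build adapted local coordinates on $W$ near $g(y)$ using the hypothesis that $Z\subset W$ is a smooth divisor, then build coordinates on $X$ near $y$ compatible with the fibres of $g$ and with the normal crossings structure of $E\cup Y$. Since $Z=f^{-1}(0)$ is smooth and $f$ vanishes to order one along it, $f$ is a local submersion away from critical points; after shrinking we may pick a coordinate $t$ on $W$ with $Z=\{t=0\}$ and $f = t$ (this is the implicit function theorem applied to the nonvanishing of $df$ transverse to $Z$, using smoothness of $W$ and $Z$). Complete $t$ to a coordinate system $(t,u_1,\dots,u_s)$ on $W$ centered at $g(y)$; here $s = \dim W - 1$, and $Z$ has coordinates $(u_1,\dots,u_s)$. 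This gives the ``$f$ is projection to the first coordinate'' part of (A).

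Next I would analyze $g$ near $y$. Pulling back, $f\circ g = g^*t$ is a function on $X$ whose zero locus is $Y = \bigcup_{i=1}^r Y_i$, a reduced simple normal crossings divisor with $y\in\bigcap_{i=1}^s Y_i$ (note: I am reading the hypotheses as saying $y$ lies on the components $Y_1,\dots,Y_s$ among the $Y_i$ — the indices need to be matched carefully against the statement, and in fact one should relabel so that the components through $y$ are exactly $Y_1,\dots,Y_r$ after re-indexing, with the coordinate count $r+s+\ell = \dim X$). By the standard local normal form for an SNC divisor on a complex manifold, I can choose coordinates $z_1,\dots,z_r$ on $X$ near $y$ with $Y_i = \{z_i = 0\}$, so that $g^*t$ is a unit times $z_1\cdots z_r$; absorbing the unit into one of the $z_i$, we get $g^*t = z_1\cdots z_r$ exactly. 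The remaining content of (A) is that the other components of $g$, namely $g^*u_1,\dots,g^*u_s$, can be taken as coordinate functions $u_1,\dots,u_s$ on $X$ that are independent of the $z_i$ and of each other. This follows because $\bigcap Y_i$ is smooth over $Z$: the map $\bigcap Y_i \to Z$ induced by $g$ is a submersion, so $g^*u_1,\dots,g^*u_s$ restrict to coordinates on $\bigcap Y_i$ near $y$, and hence together with $z_1,\dots,z_r$ and an additional $\ell$ functions $v_1,\dots,v_\ell$ (to be pinned down next) they form a coordinate system on $X$.

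Finally I would arrange (B) by incorporating the components $E_a$ of $E$. The hypothesis that $E\cup Y$ is SNC and that $(\bigcap_a E_a)\cap(\bigcap_i Y_i)$ is smooth over $Z$ lets me choose, simultaneously with the above, the functions $v_1,\dots,v_k$ cutting out $E_1,\dots,E_k$ and extend by $v_{k+1},\dots,v_\ell$ to a full coordinate system; the SNC condition guarantees $dz_1,\dots,dz_r,dv_1,\dots,dv_\ell$ are independent at $y$, and smoothness over $Z$ of the relevant intersections guarantees we can fill in with the $u_j$'s compatibly. Concretely, the cleanest execution is to first fix the $u_j = g^*u_j$, then on the fibre $g^{-1}(g(y))$ (or rather on a transversal) apply the SNC normal form to the divisor $(E\cup Y)$ restricted there to get $z_1,\dots,z_r,v_1,\dots,v_\ell$, using that the defining equations of the $Y_i$ and $E_a$ have linearly independent differentials modulo $du_1,\dots,du_s$ — this independence is exactly what the smoothness-over-$Z$ hypotheses on the various multi-intersections provide. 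I expect the main obstacle to be purely bookkeeping: matching the index ranges in the statement (the shifts between $r$, $s$, $k$, $\ell$ and which components pass through $y$) and verifying at each stage that the Jacobian criterion holds, i.e., that the chosen functions have independent differentials at $y$ — all of which reduces to the listed smoothness hypotheses via the implicit function theorem, with no deeper input required.
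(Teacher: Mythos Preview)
Your proposal is correct and follows essentially the same route as the paper's proof: choose coordinates on $W$ making $f$ the first coordinate, use the SNC property of $Y$ to write $f\circ g = z_1\cdots z_r$, invoke smoothness of $\cap_i Y_i$ over $Z$ to make the pullbacks $g^*u_j$ into coordinates on $X$, and finally use the smoothness of $(\cap_a E_a)\cap(\cap_i Y_i)$ over $Z$ to show the differentials of local equations for the $E_a$ are independent modulo the $dz_i$'s and $du_j$'s, so they can be taken as the first $k$ of the $v$-coordinates. Your confusion about the indices is justified: the ``$s$'' in the hypothesis $y\in(\cap_{a=1}^k E_a)\cap(\cap_{i=1}^s Y_i)$ should be read as ``$r$'', matching the coordinate count $r+s+\ell=\dim X$ with $s=\dim W-1$.
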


\begin{proof}
  First, since $Y$ is a normal crossings divisor, we can choose local
  coordinates on $X$ so that $f \circ g$ is the product $z_1\dots z_r$,
  with the $z_i$ the some of the coordinate functions. Let
  $X_0=\cap_{i=1}^r Y_i\subset X$ be the complex submanifold given by
  the equations $z_i=0$ for all $i$.

  Choose coordinates in a neighbourhood of $g(y)$ in $W$ so that the
  first coordinate is $f$; this is possible since $f^{-1}(0)$ is
  smooth.  Viewing $W$ as a subset of $\C^{1+s}$ via these
  coordinates, we claim that the composite map $X_0\to W\to \C^s$ is
  smooth at $y\in X_0$. This holds since by assumption $X_0$ is smooth over
  $Z$ and $Z$ maps isomorphically onto an open set in $\C^s$.
  
  This shows the existence of coordinates
  with property (A), since the last $s$ coordinates of $W$ restrict to
  coordinates on $X_0$. We now change coordinates on $X$ so that
  property (B) also holds.

  Let $E_a$ be given by the (local) equation $g_a=0$. Let
  $X_{0,0}\subset X_0$ be the submanifold given by the equations
  $z_i=0,u_j=0$ for all $i,j$. We will show that the $g_a$ restrict to
  a subset of a system of coordinates on $X_{0,0}$ near $0$.

  Write $g_a=L(v)_a+L(z)_a+L(u)_a +\text{higher order terms}$, where
  $L(v)_a, L(z)_a$ and $L(w)_a$ are linear in $v,z$ and $u$
  respectively. We claim that $L(v)_a$ are linearly independent for
  $a=1,\dots,k$, which will then imply property (B) (after a change of
  variables, replacing some of the $v_i$'s by $g_a$'s) .

  If a linear combination is zero, then after renumbering, any vector
  in intersection of the tangent spaces of all $Y_a$, and all $E_i$
  for $i>1$ automatically lies in the tangent space of $E_1$. This
  contradicts the assumption that
  $(\cap_{a=1}^k E_a)\cap(\cap_{i=1}^s Y_i)$ is smooth over $Z$. This
  proves the claim, hence also the lemma.

\end{proof}

\section{Aomoto cohomology and KZ motives}\label{s:aomoto} 

Corresponding to an affine space, a collection of hyperplanes and a
log $1$-form on the affine space regular off the hyperplanes, there is
an associated de Rham type complex, called the Aomoto complex and a
local system on the complement of the union of hyperplanes.  We
associate an Aomoto motive (a certain Nori motive) to this data, and
discuss its de Rham and topological realisations. Some basic
properties of these motives are then discussed, including product
behavior. The KZ and conformal block motives are then defined using
the Schechtman--Varchenko arrangement and master functions.

\subsection{Aomoto cohomology and Aomoto motives}
Following \cite[Appendix A]{loo-sl2}, we recall the definition of
Aomoto cohomology, and its topological realisation.

\subsubsection{Aomoto cohomology}\label{s:ac}

Let $\msk$ be a field of characteristic $0$ and let
$W = \A^M_\msk$. Let $\ms{C}$ be a finite collection of affine
hyperplanes $H$ in $W$ given as the zero locus of a degree one
polynomial $f_H$. For each $H \in \ms{C}$ let $a(H) \in \msk$ be
any element. We call the triple $\ms{A} : = (W,\ms{C},a)$ a weighted
hyperplane arrangement. Let $U \subset W$ be the complement of the
union of all $H \in \ms{C}$ and let
$\eta = \sum_{H \in \ms{C}} a(H)\dlog(f_H) \in \Omega^1(U)$.
\begin{defi}\label{d:aomoto}
  The \emph{Aomoto algebra} $A^{\starr}(U)$ of the weighted hyperplane
  arrangment $(W,\ms{C},a)$ is the subalgebra of the de Rham algebra
  $\Omega^{\starr}(U)$ generated by the $\dlog(f_H)$ for
  $H \in \ms{C}$. It is equipped with a differential given by
  $\omega \mapsto \eta \wedge \omega$. We call the pair
  $(A^{\starr}(U), \eta \wedge)$ the \emph{Aomoto complex} of the
  arrangement and define the \emph{Aomoto cohomology} to be the
  cohomology of $(A^{\starr}(U), \eta \wedge)$.
\end{defi}
It is clear that the Aomoto cohomology is a finite dimensional
$\msk$-vector space in each degree and vanishes in degree $> M$. It is
also clear that scaling the weight function $a$ by an element of
$\msk^{\times}$ does not change the Aomoto cohomology.

\subsubsection{Aomoto motives}\label{s:ao2}
Now suppose $\msk$ is a subfield of $\C$.  Let $P$ be a smooth
projective compactification of $U$ with $P \setminus U=\cup_{\alpha} E_{\alpha}$
a divisor with simple normal crossings.  Let
$V=P \setminus\cup'_{\alpha}E_{\alpha}$, where the union is restricted to
$\alpha$ such that $a(\alpha) =\op{Res}_{E_{\alpha}}\eta$ is not a
positive integer. Let $j:U\to V$ and $k:V \to P$ be the inclusions and
let $D = V \sm U$.  Let $\mathcal{L}(\eta)$ be the rank one complex
local system on $U$ corresponding to the $1$-form $\eta$ (the kernel
of $d+\eta \wedge$).

\begin{lemma}\label{l:topo}%
  \cite[Proposition 4.2]{L1} There is a canonical isomorphism
  \begin{equation}\label{e:loo}
    H^*(A^{\starr}(U), \eta\wedge) \otimes_\msk \C \leto{\simeq}
    H^*(V,j_{!}\mathcal{L}(\eta)) .
  \end{equation}
\end{lemma}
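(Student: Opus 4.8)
The plan is to prove the isomorphism \eqref{e:loo} of Lemma \ref{l:topo} by comparing both sides of the statement with a common intermediate object, namely the hypercohomology on $P$ of a logarithmic de Rham complex with coefficients in $\mathcal{L}(\eta)$, using the fact that the left-hand side is purely algebraic (defined over $\msk$) while the right-hand side is topological. Since the statement is cited as \cite[Proposition 4.2]{L1} (Looijenga), I expect the proof to essentially recall that argument, adapting notation. The essential input is that the Aomoto complex $(A^\starr(U), \eta\wedge)$ is a subcomplex of the full algebraic de Rham complex $(\Omega^\starr_{\msk}(U), d + \eta\wedge)$ of $U$ twisted by $\eta$, and that the inclusion is a quasi-isomorphism after extending scalars to $\C$ — this is the Brieskorn/Orlik--Solomon--Esnault--Schechtman--Viehweg circle of ideas on the cohomology of twisted local systems on hyperplane arrangement complements.

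First I would fix the smooth simple normal crossings compactification $j\colon U \hookrightarrow V$, $k\colon V \hookrightarrow P$, and recall that the local system $\mathcal{L}(\eta)$ on $U$ has monodromy around $E_\alpha$ given by $\exp(-2\pi i\, a(\alpha))$, where $a(\alpha) = \op{Res}_{E_\alpha}\eta$. The crucial point in the definition of $V$ is that we only keep those boundary divisors $E_\alpha$ for which $a(\alpha)$ is \emph{not} a positive integer; for such $\alpha$, the residue condition guarantees (by the Esnault--Viehweg / Deligne theory of logarithmic extensions, or directly by a local computation with the twisted de Rham complex) that the natural map $j_! \mathcal{L}(\eta) \to Rj_* \mathcal{L}(\eta)$ induces an isomorphism on the relevant local cohomology, so that $H^*(V, j_!\mathcal{L}(\eta))$ is computed by the logarithmic de Rham complex $\Omega^\starr_P(\log(P\setminus U))$ with the twisted differential $\nabla = d + \eta\wedge$, extended appropriately along the divisors that are thrown away. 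Then I would identify the Aomoto algebra $A^\starr(U) \otimes_\msk \C$ with the subalgebra of this logarithmic complex generated by the $\dlog(f_H)$ (the "logarithmic" or Orlik--Solomon part), and invoke the theorem that this inclusion is a quasi-isomorphism of complexes with the $\eta\wedge$ differential. Taking hypercohomology and comparing then yields \eqref{e:loo}, with the canonicity coming from functoriality of all constructions.

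The two main technical steps are (i) the comparison $H^*(V, j_!\mathcal{L}(\eta)) \cong \mathbb{H}^*(P, \text{twisted log de Rham complex})$, which rests on the residue condition defining $V$ being exactly what is needed for the $!$-extension to agree with the "Deligne extension" pushforward — here one has to be careful that positive-integer residues are precisely the bad case where $j_!$ and the naive logarithmic complex diverge, which is why those divisors are removed — and (ii) the quasi-isomorphism between the Orlik--Solomon-type subalgebra $A^\starr$ and the full logarithmic twisted de Rham complex. I expect step (i), the compatibility of the combinatorial recipe for $V$ with sheaf-theoretic $!$-extension, to be the main obstacle to present cleanly; it is the place where the specific normalization "$a(\alpha)$ not a positive integer" does real work, and getting the local model right at deep strata of the normal crossings divisor (where several $E_\alpha$ meet, some kept and some discarded) requires the kind of local analysis carried out in Lemma \ref{l:normalform} and in the proofs referenced from \cite{L1} and \cite{BBM}. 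Step (ii) is comparatively standard once (i) is in place, and I would simply cite the relevant result (e.g.\ from \cite{loo-sl2} or the hyperplane arrangement literature) rather than reproving it.
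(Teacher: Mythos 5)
Your overall architecture is the right one, and it is essentially the argument behind the citation: the paper itself does not prove Lemma \ref{l:topo} but quotes it from \cite[Proposition 4.2]{L1}, and that proof proceeds exactly as you outline, by (i) identifying $H^*(V,j_!\mathcal{L}(\eta))$ with the hypercohomology on $P$ of the logarithmic de Rham complex with twisted differential $d+\eta\wedge$, and (ii) comparing the Aomoto subcomplex with that twisted log complex.

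However, step (i) as you have written it is inverted, and since you yourself identify it as the load-bearing step, this would derail the proof. In the paper's construction (Section \ref{s:ao2}), $V=P\setminus\cup'_\alpha E_\alpha$ where the union runs over the $\alpha$ with $a(\alpha)=\op{Res}_{E_\alpha}\eta$ \emph{not} a positive integer; so $V$ \emph{retains} exactly the divisors with $a(\alpha)\in\Z_{>0}$, and $j_!$ is extension by zero across those. This is forced by the local model: for $\eta=a\,dz/z$ on $\Delta^*$, the stalk cohomology at $0$ of $(\Omega^\starr_\Delta(\log 0),\,d+\eta\wedge)$ vanishes precisely when $a\notin\Z_{\leq 0}$, so along a divisor with $a(\alpha)\in\Z_{>0}$ the twisted log complex has acyclic stalks and agrees with $j_!\mathcal{L}(\eta)$ (this is the \emph{good} case for the shriek extension, not the bad one), while along divisors with $a(\alpha)\in\Z_{\leq 0}$ it has nontrivial stalks and computes $Rj_*$ — which is what the implicit star pushforward from $V$ to $P$ in ``$H^*(V,-)$'' supplies. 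Your version (``we only keep those $E_\alpha$ for which $a(\alpha)$ is not a positive integer'' and ``positive-integer residues are precisely the bad case where $j_!$ and the logarithmic complex diverge, which is why those divisors are removed'') assembles the wrong sheaf: followed literally it would compare the Aomoto complex with the cohomology of a different extension, for which the isomorphism fails. Once the convention is corrected, your step (ii) — the quasi-isomorphism of the Aomoto subcomplex with the full twisted log complex, which is where Looijenga's argument does its real work without any genericity hypothesis on the residues — can be cited as you propose.
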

We will sometimes use that if we set
$\mc{F} = \mc{F}(\eta) = k_*j_!\mc{L}(\eta)$ then
$H^*(V,j_{!}\mathcal{L}(\eta)) = H^*(P, \mc{F})$. Note that the RHS of
\eqref{e:loo} is canonically independent of the choice of $P$; this
follows from the fact that any two compactifications $P_1$ and $P_2$
as above can be dominated by a common refinement $P$ and then applying
the proper base change theorem.

\smallskip

The lemma implies that Aomoto cohomology has a topological
interpretation. If all $a(H) \in \Q$, then the local system
$\mc{L}(\eta)$ has finite monodromy and the RHS of \eqref{e:loo} can
be viewed as a Nori motive with coefficients in the finite extension
$K$ of $\Q$ over which the monodromy is defined. We explain this in a
relative setting in Section \ref{s:kzmls} using mixed sheaves but
here we give a more elementary definition of the motive. For the
construction of Nori motives and some basic results the reader may
consult \cite[Section 2]{arapura-ms} and \cite[Section 5.3.3]{levine}.

Let $N$ be a positive integer such that $Na(H) \in \Q$ for all
$H \in \ms{C}$ and assume that $\mu_N \subset \msk$. Consider the
closed subvariety of $\A^1_{\msk} \times U$ given by the equation
\begin{equation*}
	y^N = \prod_{H \in \ms{C}}f_H^{Na(H)} .
\end{equation*}
The second projection $\pi_U$ makes $\wh{U}$ a cyclic cover of $U$. We
note that as a variety over $\msk$, $\wh{U}$ depends on the choice
of the function $f:= \prod_{H \in \ms{C}}f_H^{Na(H)}$ which is
well-defined only up to multiplication by an element of
$\msk^{\times}$, but if $\msk$ is closed under taking $N$-th
roots then different choices lead to isomorphic varieties. When we
apply this construction we will have a natural choice for $f$.

Let $\wh{V}$ be a $\mu_N$-equivariant smooth partial compactification
of $\wh{U}$ such that the map $\pi_U$ extends to a proper map
$\pi_V: \wh{V} \to V$. Let $\wh{j}: \wh{U} \to \wh{V}$ be the
inclusion and let $\wh{D} = \wh{V} \sm \wh{U}$ which we may assume is
a divisor with normal crossings.  The local system $\mc{L}(\eta)$ is
the { $\tau$}-isotypical summand of $(\pi_U)_*(\C_U)$, where $\tau$ is
the inverse of the tautological character of $\mu_N$ and so by the
proper base change theorem $j_!(\mc{L}(\eta))$ is the
$\tau$-isotypical summand of $(\pi_V)_*(\wh{j}_!(\C_{\wh{U}}))$ (which
is concentrated in degree $0$).  This implies that the canonical map
$\iota: H^*(V,j_!(\mc{L}(\eta)) \to H^*(\wh{V},\wh{j}_!(\C_{\wh{U}}))$
is injective and its image is the $\tau$-isotypical summand of
$H^*(\wh{V},\wh{j}_!(\C_{\wh{U}}))$. In particular, the cohomology
groups $H^*(V,j_{!}\mathcal{L}(\eta))$ have canonical mixed Hodge
structures with coefficients in $K$. These mixed Hodge structures can
be defined using Saito's theory of mixed Hodge modules \cite{saito} or
Deligne's mixed Hodge structures for the cohomology of a pair
\cite{hodge3}; we note here that the mixed Hodge structures of Deligne
and Saito agree for quasi-projective varieties \cite[p.~328,
Remark]{saito}.

\smallskip

This above discussion suggests the following:
\begin{defi}\label{d:nm}
  The \emph{Aomoto motive} $H^p(\ms{A})$ associated to the Aomoto
  cohomology group $H^p(A^{\starr}(U), \eta \wedge)$ is the
  $\chi$-isotypical summand of the Nori motive
  $H^p(\wh{V}, \wh{D}; \Q(\mu_N))$.
\end{defi}
\begin{remark}\label{r:notsmooth}
  By the usual ``diagonal'' argument and the proper base change
  theorem one sees that the motive is independent of the choice of
  $\wh{V}$. It also follows from the proper base change theorem that
  the smoothness of $\wh{V}$ is not necessary for the construction.
\end{remark}

To further justify this definition, we prove:
\begin{lemma}\label{l:derhamreal}
  The de Rham realisation of the Aomoto motive $H^p(\ms{A})$ is
  naturally isomorphic to the Aomoto cohomology group
  $H^p(A^{\starr}(U), \eta \wedge)$.
\end{lemma}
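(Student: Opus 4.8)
The plan is to identify both sides with the de Rham cohomology of a twisted complex on the cyclic cover $\wh{V}$ and then extract the $\chi$-isotypical part. First I would invoke the general comparison between Betti and de Rham realisations of Nori motives of the form $H^p(\wh{V},\wh{D};\Q(\mu_N))$: the de Rham realisation is computed by the hypercohomology of the logarithmic de Rham complex of the pair $(\wh{V},\wh{D})$, i.e.\ $H^p_{\mathrm{dR}}(\wh{V},\wh{D})$, and this carries a $\mu_N$-action compatible with the one on the Betti side. Taking the $\chi$-isotypical summand on both sides, it therefore suffices to produce a natural isomorphism between the $\chi$-part of $H^p_{\mathrm{dR}}(\wh{V},\wh{D})$ and the Aomoto cohomology $H^p(A^{\starr}(U),\eta\wedge)$.

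Next I would reinterpret the $\chi$-isotypical de Rham cohomology of $(\wh{V},\wh{D})$ downstairs on $V$ as the hypercohomology of a twisted logarithmic de Rham complex. Concretely, pushing forward along the finite map $\pi_V:\wh{V}\to V$ and taking the $\tau$-isotypical summand turns $(\pi_V)_*\mathcal{O}_{\wh{V}}$ into the rank-one bundle $\mathcal{O}_V(\sum_H a(H) E_H)$-type object attached to $\eta$, with connection $d+\eta\wedge$; the divisor $\wh{D}$ and the positivity conditions defining $V$ (namely that we remove $E_\alpha$ exactly when $\op{Res}_{E_\alpha}\eta$ is a positive integer) match precisely the conditions under which the twisted logarithmic de Rham complex $(\Omega^{\starr}_{\ov W}(\log),\, d+\eta\wedge)$ computes the correct cohomology — this is the algebraic counterpart of Lemma \ref{l:topo}. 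So the $\chi$-part of $H^p_{\mathrm{dR}}(\wh{V},\wh{D})$ is $\mathbb{H}^p$ of the twisted log complex with $1$-form $\eta$.

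Finally I would invoke the standard fact (Esnault--Schechtman--Viehweg / Brieskorn-type) that the inclusion of the Aomoto subalgebra $A^{\starr}(U)\hookrightarrow\Omega^{\starr}_{\mathrm{alg}}(U)$, with its differential $\eta\wedge$, is a quasi-isomorphism onto the twisted algebraic de Rham complex of $U$, and that under the residue/positivity hypotheses this in turn agrees with the twisted logarithmic complex on the compactification. Combining these identifications gives the natural isomorphism $H^p(A^{\starr}(U),\eta\wedge)\cong \chi\text{-part of }H^p_{\mathrm{dR}}(\wh{V},\wh{D})$, which is exactly the de Rham realisation of $H^p(\ms{A})$. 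Naturality is clear since every step is functorial in the arrangement data. The main obstacle I expect is the bookkeeping in the second step: matching the combinatorial recipe for $V$ (which $E_\alpha$ to delete, governed by positivity of residues) with the precise hypotheses of the twisted-log-complex comparison theorem, and making sure the $\mu_N$-equivariant structure on $\wh{V}$ descends correctly so that the isotypical decomposition is compatible on the Betti and de Rham sides; once that matching is in place, the rest is an assembly of quoted results.
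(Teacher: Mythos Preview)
Your strategy---compute the $\chi$-isotypical part of $H^p_{\mathrm{dR}}(\wh V,\wh D)$ by descent to a twisted log complex on $V$, then invoke an ESV/Brieskorn-type quasi-isomorphism with the Aomoto complex---is plausible but takes a genuinely different route from the paper's. The paper does not descend to $V$ at all: it adopts the $h$-topology definition of the de Rham realisation (following Huber--M\"uller-Stach), which matters because $\wh D$ is a singular normal crossings divisor so your phrase ``logarithmic de Rham complex of the pair'' is not literally well-defined, and then writes down an explicit map $\tau$ sending a closed Aomoto form $\omega$ to the class of $y\,\pi_U^*(\omega)$ on $\wh V$; a short local computation together with a result of Huber--J\"order on $h$-differentials shows that this form lies in $\Gamma(\Omega^p_{h/(\wh V,\wh D)})$ and in the $\chi$-isotypical summand. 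The isomorphism is then checked over $\C$ by placing $\tau$ into a commutative square with the known Betti isomorphism of Lemma~\ref{l:topo} and the Betti--de Rham comparison map. This sidesteps both obstacles you anticipate: the descent bookkeeping is avoided entirely, and instead of needing a \emph{de Rham} analogue of Lemma~\ref{l:topo} (which is what your third step actually requires---Lemma~\ref{l:topo} as stated is a Betti statement over $\C$, and the raw ESV theorem carries genericity hypotheses on the residues that are not assumed here), the paper reduces the de Rham identification to the Betti one already in hand.
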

\begin{proof}
  We will use the construction of the de Rham realisation using the
  $h$-topology as described in \cite[Section 3.2]{huber-ms}; the fact
  that de Rham cohomology of singular varieties can be described in
  terms of the $h$-topology was first proved by \cite{blee}.
	
  Let $\Omega^p_h$ be the sheafification of the presheaf of K\"ahler
  $p$-differentials on the category $\mr{Sch}_{\msk}$ of finite
  type schemes over $\msk$ equipped with the $h$-topology. For any
  such scheme $X$, we let $\Omega^p_{h/X}$ be the retriction of
  $\Omega^p_h$ to the category of schemes over $X$. If $Z$ is a closed
  subscheme of $X$ with inclusion $i$, there is a surjection
  $\Omega^p_{h/X} \to i_*(\Omega_{h/Z})$ and we denote the kernel by
  $\Omega^p_{h/(X,Z)}$. This gives rise to a short exact sequence of
  complexes of sheaves in the $h$-topology on $X$:
  \begin{equation*}%
    0 \lra \Omega^{\starr}_{h/(X,Z)} \lra \Omega^{\starr}_{h/X} \to
    i_*(\Omega^{\starr}_{h/Z}) \lra 0 .
  \end{equation*}
  The de Rham realisation of any Nori motive of the form $H^p(X,Z)$
  with $(X,Z)$ a pair of varieties over $\msk$ is defined to be
  $H^p_h(X,\Omega^{\starr}_{h/(X,Z)})$. If a finite group $G$ acts on
  the pair $(X,Z)$ and $\chi:G \to \mu_N$ is any character
  then we define the de Rham realisation of the $\chi$-isotypical
  component of the Nori motive to be the $\chi$-isotypical component
  of the de Rham realisation.

  \smallskip

  We now construct a map $\tau$ from Aomoto cohomology to the de Rham
  realisation of the Aomoto motive.  Let $\omega$ be any element of
  $A^p(U)$ such that $\eta \wedge \omega = 0$, so $\pi_U^*(\omega)$ is
  a log form on $\wh{U}$. By the definition of $V$ and the function
  $y$, the form $y \pi_U^*(\omega) \in \Gamma(\Omega_{\wh{U}}^p)$ is
  regular on $\wh{V}$. Furthermore, an easy local calculation shows
  that the image of $y \pi_U^*(\omega)$ in $\Gamma(\Omega^p_{\wh{D}})$
  is supported on the singular locus of $\wh{D}$. By the definition of
  $\Omega^p_{h/{\wh{V}}}$, the form $y \pi_U^*(\omega)$ gives an
  element of $\Gamma(\Omega^p_{h/{\wh{V}}}(\wh{V}))$ and the above
  statement about supports together with \cite[Proposition 4.9]{hjo}
  implies that $y \pi_U^*(\omega)$ maps to $0$ in
  $\Gamma(i_*\Omega^p_{h/\wh{D}}(X))$. Thus,
  $y \pi_U^*(\omega) \in
  \Gamma(\Omega^p_{h/(\wh{V},\wh{D})}(\wh{V}))$, hence gives an
  element of $H^p_h(\wh{V},
  \Omega^{\starr}_{h/(\wh{V},\wh{D})})$. Furthermore, $\mu_N$ acts
  trivially on $\pi_U^*(\omega)$ so $y \pi_U^*(\omega)$ lies in the
  $\chi$-isotypical component of
  $H^p_h(\wh{V}, \Omega^{\starr}_{h/(\wh{V},\wh{D})})$ thereby giving
  the map $\tau$.
	
  To show that $\tau$ is an isomorphism we may assume that
  $\msk = \C$. In this case, there is a commutative diagram
  \begin{equation*}
    \xymatrix{
      H^p(A^{\starr}(U), \eta\wedge) \ar[r]^{\simeq}_{\phi_1} \ar[d]_{\tau} &
      H^p(V,j_{!}\mathcal{L}(\eta)) \ar[d]^{\iota}\\
      H^p_h(\wh{V}, \Omega^{\starr}_{h/(\wh{V},\wh{D})})
      \ar[r]^{\simeq}_{\phi_2} & H^p(\wh{V},\wh{j}_!(\C_{\wh{U}}))
    }
  \end{equation*}
  where the map $\phi_1$ is the isomorphism given in Lemma
  \ref{l:topo}, the map $\phi_2$ is the comparison isomorphism of
  algebraic de Rham and Betti cohomology and $\tau$ and $\iota$ are as
  above. By the unnumbered proposition on \cite[p.~10]{L1},
  $j_{!}\mathcal{L}(\eta)$ is represented by the analytic log de Rham
  complex $\Omega^{\starr}_{V, an}(\log D)$ with the differential
  given by $d + \eta \wedge$, where $d$ is the usual de Rham
  differential. The analogous complex
  $\Omega^{\starr}_{\wh{V},an}(\log \wh{D})$ formed using
  $\pi_U^*(\eta)$ represents the object
  $\wh{j}_!(\pi_U^*(\mc{L}(\eta)))$ for the same reason. The function
  $y^{-1}$ is a nonzero section of $\pi_U^*(\mc{L}(\eta))$, so gives
  an isomorphism of $\pi_U^*(\mc{L}(\eta))$ with $\C_{\wh{U}}$, hence
  we may view $\Omega^{\starr}_{\wh{V},an}(\log \wh{D})$ as a
  representative for $\wh{j}_!(\C_{\wh{U}})$.
	
  With these choices of representatives, for $\omega$ a log $p$-form
  on $U$ with $\eta \wedge \omega = 0$, the map $\phi_1$ is given by
  viewing $\omega$ as an element of $\Omega^p_{V, an}(\log D)(V)$ and
  the map $\iota$ is given by $\omega \mapsto \pi_U^*(\omega)$, viewed
  as an element of $H^p(\wh{V},\wh{j}_!(\pi_U^*(\mc{L}(\eta)))$.  The
  comparison isomorphism $\phi_2$ is constructed in \cite[Section
  4.2]{huber-ms} using an analytic analogue of the
  $h$-topology. However, since $\tau(\omega)$ is represented by the
  algebraic differential form $y \pi_U^*(\omega)$ one sees that
  $\phi_2(\tau(\omega)) \in H^p(\wh{V},\wh{j}_!(\C_{\wh{U}}))$ is
  given by simply viewing $y \pi_U^*(\omega)$ as an analytic
  differential form on $\wh{V}$. Since we identify
  $\pi_U^*(\mc{L}(\eta))$ and $\C_{\wh{U}}$ using the function
  $y^{-1}$, it is then clear that the diagram commutes. Since we know
  that $\phi_1$ is an isomorphism, it follows that $\tau$ is an
  injection which identifies the image with the $\chi$-isotypical
  component.
\end{proof}

The lemma below gives a basic constraint on the Hodge numbers of an
Aomoto motive. It can be seen by examples that the bounds in it are optimal.
\begin{lemma}\label{r:eff}
  If $a(H) \in \Q$ for all $H \in \ms{C}$, then the mixed Hodge
  structure $\msf{M}$ associated to the Aomoto motive $H^p(\ms{A})$
  has the property that $h^{r,s}(\msf{M}) = 0$ if $r$ or $s$ is not in
  $[0,p]$.
\end{lemma}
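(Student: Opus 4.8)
The plan is to reduce to a Hodge-theoretic statement about the cohomology of the pair $(\wh V, \wh D)$ and then invoke standard mixed Hodge theory, using the two descriptions of the Aomoto motive established above: its de Rham realisation as Aomoto cohomology (Lemma \ref{l:derhamreal}) and its Betti realisation as the $\tau$-isotypical summand of $H^p(\wh V, \wh D; \Q(\mu_N))$ (Definition \ref{d:nm}). The key point is that the Aomoto algebra $A^{\starr}(U)$ is generated in degree $1$ by logarithmic forms, so a class in $A^p(U)$ is represented by a closed logarithmic $p$-form, and Lemma \ref{l:derhamreal} exhibited an explicit de Rham representative for $\tau(\omega)$, namely the algebraic form $y\,\pi_U^*(\omega)$ on $\wh V$. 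I will use this to pin down where in the Hodge filtration the motive sits.

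First I would recall Deligne's mixed Hodge theory for the cohomology of a pair of quasi-projective varieties \cite{hodge3}: for a smooth variety $\wh V$ and a normal crossings divisor $\wh D \subset \wh V$, the group $H^p(\wh V, \wh D)$ carries a mixed Hodge structure whose weights lie in $[0, 2p]$ and whose Hodge filtration $F^\bullet$ has $F^0 = H^p$ and $F^{p+1} = 0$; equivalently, $h^{r,s} = 0$ unless $0 \le r \le p$ and $0 \le s \le p$. (Concretely, $H^p$ of the mapping cone of $\Omega^{\starr}_{\wh V}(\log \wh D) \to (\text{restriction to } \wh D)$ with the appropriate filtration; the outer bounds on $F$ come from the complex being concentrated in degrees $0,\dots,p$ when computing $H^p$, after the standard spectral sequence degeneration.) Since the $\chi$-isotypical ($=\tau$-isotypical) summand of a mixed Hodge structure is a sub-MHS, the same vanishing holds for $H^p(\ms A)$, which is precisely the claim. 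The fact that Saito's and Deligne's mixed Hodge structures agree in the quasi-projective setting (cited on \cite[p.~328]{saito}) lets me pass freely between the two.

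The one point that needs a small argument rather than a pure citation is the bound $F^{p+1} = 0$, i.e.\ that the Hodge filtration on $H^p$ of a pair terminates by step $p$ rather than at the naive bound $\dim \wh V$. This is where the degree restriction $p$ (as opposed to the ambient dimension) enters. The cleanest route is to observe that $H^p(\wh V, \wh D; \Q)$ is computed by the hypercohomology of a complex $K^\bullet$ (the cone above, or equivalently a complex built from $\Omega^{\starr}_{\wh V}(\log \wh D)$ and $\Omega^{\starr}_{\wh D}(\log \cdots)$) with its stupid/Hodge filtration $F^q K^\bullet = \sigma_{\ge q} K^\bullet$, that the Hodge-de Rham spectral sequence $E_1^{q, p-q} = H^{p-q}(\wh V \text{ or } \wh D, \Omega^q) \Rightarrow H^p$ degenerates at $E_1$ by Deligne, and that $E_1^{q, p-q} = 0$ for $q > p$ since then $p - q < 0$. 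Hence $\mathrm{gr}_F^q H^p = 0$ for $q > p$, giving $F^{p+1} H^p = 0$; dually (or by the same argument applied to $H^p(\wh V, \wh D)^{\vee} \cong H^{2N - p}(\wh V \sm \wh D)(N)$ up to a twist, or simply using that the MHS is polarisable after taking graded pieces and hence $h^{r,s} = h^{s,r}$ on each weight-graded piece) one gets $s \le p$ as well, completing the argument.

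I expect the main obstacle to be purely expository: making precise which complex of sheaves computes $H^p(\wh V, \wh D)$ together with its Hodge filtration in a way compatible with the $h$-topological de Rham realisation used in Lemma \ref{l:derhamreal}, so that the explicit representative $y\,\pi_U^*(\omega)$ is visibly a class in $F^p$ (consistent with, and in fact implying, the inequality $r \le p$). Everything else is a direct application of Deligne's theory and the observation that isotypical summands of mixed Hodge structures are sub-mixed-Hodge-structures. No new ideas are required beyond what is already in place in the excerpt.
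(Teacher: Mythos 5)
Your proposal is correct and follows essentially the same route as the paper: reduce to the fact that the Aomoto motive is (the $\tau$-isotypical summand of, hence a sub-MHS of) $H^p(\wh V,\wh D)$, and then invoke Deligne's bound that the MHS on $H^p$ of a pair of quasi-projective varieties has $h^{r,s}=0$ unless $r,s\in[0,p]$ (Th\'eor\`eme 8.2.4 of \cite{hodge3} together with the long exact sequence of the pair). Your explicit spectral-sequence sketch with $\Omega^{\starr}_{\wh V}(\log\wh D)$ is only a heuristic here since $\wh V$ is merely a partial compactification, but this does not matter because, as you note, the general quasi-projective case of Deligne's theorem covers it.
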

\begin{proof}
  As explained in the paragraph before Definition \ref{d:nm},
  $H^*(V,j_{!}\mathcal{L}(\eta))$ is a direct summand of
  $H^*(\wh{V}, \wh{D}; \C)$ from which it inherits its mixed Hodge
  structure, so the lemma is reduced to the statement that if $X$ is a
  quasi-projective variety and $Y$ is any closed subvariety then
  Deligne's MHS on $H^p(X,Y;\Q)$ has the above property. This
  statement follows from \cite[Th\'eor\`eme 8.2.4]{hodge3} and the
  long exact sequence of the cohomology of a pair \cite[Proposition
  8.3.9]{hodge3}.
\end{proof}

\subsubsection{}\label{s:resol}

In general, Aomoto motives can be complicated as can already be seen
when $M=1$, but there is a case in which they are relatively
simple. To see this one needs to restrict the choice of the normal
crossings compactification $P$ of $U$, so we explain following
\cite{orlik-terao, loo-sl2, varch} an efficient method for constructing such
compactifications which will also be useful later.

We start with a smooth compactification $P'$ of $U$ which is
\emph{locally arrangement-like}. By this we mean that
\begin{itemize}
\item[--] $P' \sm U = \cap_{i \in I}D_i$ with smooth irreducible divisors
  $D_i$ such that  $Z_{I'} := \cup_{i \in I'} D_i$ for all $I' \subset I$ is
  smooth and irreducible.
\item[--] Each point $x \in P'$ has an open neighbourhood $W'$ such that
  $W'\simeq \A^M_{\msk}$ and $W'\cap U$ is isomorphic to the
  complement of a hyperplane arrangement in $\A^M_{\msk}$.
\end{itemize}
For example, we can always take $P'$ to be $\P^M$ or $(\P^1)^M$.  The
strata of $P'$ are defined to be all closed subsets $Z_{I'}$ as above.

We now define what it means for a stratum $Z$ to be \emph{abnormal}
(or $Z$ is called a \emph{dense edge}). This is a local definition so
we may restrict to a hyperplane arrangement in $\A^M_{\msk}$. If
$Z$ is a point, which we may take to be $0 \in \A^M_{\msk}$, then
it is abnormal if there is a subset of $M+1$ hyperplanes
$\{H_{f_i}\}_{i=1}^{M+1}$ in the arrangement passing through $0$ such
that any subset of $M$ elements of $\{f_i\}_{i=1}^{M+1}$ is linearly
independent. If $0 <\dim(Z) = d < M-1$ we say that $Z$ is abnormal if
for a general $(M-d)$-dimensional affine subspace $L \subset \A^M$,
$Z \cap L$ is abnormal for the induced arrangement in $L$ (given by
intersecting all the hyperplanes in the arrangment with $L$).  We then
have:
\begin{theorem}\label{t:abnormal}
  Let $P'$ be a locally arrangement-like compactification of $U$, the
  complement of a hyperplane arrangement in $\A^M_{\msk}$. Let
  $Z_i$, $i = 1,2,\dots, r$, be the sequence of abnormal strata
  ordered so that $\dim(Z_i) \leq \dim(Z_{i+1})$ for
  $i=1,2,\dots,r-1$. Set $P_0 := P'$ and let $P_i$, $1 \leq i \leq r$,
  be the sequence of varieties obtained iteratively by blowing up the
  strict transorm of $Z_i$ in $P_{i-1}$. Then $P := P_r$ is smooth,
  the map $P \to P'$ is an isomorphism over $U$ and $P \sm U$ is a
  divisor with strict normal crossings.
\end{theorem}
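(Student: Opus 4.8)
The plan is to reduce the assertion to a local, linear (central-arrangement) statement, and then prove that statement by induction on dimension using an explicit description of how a single blow-up at a dense edge changes the arrangement. \textbf{Reduction.} The sequence $P_0 = P', P_1,\dots, P_r = P$ is defined locally on $P'$ and is compatible with restriction to the charts $W' \cong \A^M_\msk$ provided by the locally-arrangement-like hypothesis, since blowing up a smooth center commutes with passing to an open subset meeting that center. Each abnormal stratum is contained in $P' \sm U$, and inductively the strict transform of an abnormal stratum in $P_{i-1}$ lies in the boundary, so every blow-up center is disjoint from the preimage of $U$; hence $P \to P'$ is automatically an isomorphism over $U$. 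It therefore suffices to show: for the complement $U$ of a hyperplane arrangement $\mathcal{A}$ in $\A^M_\msk$, iteratively blowing up the strict transforms of the dense edges of $\mathcal{A}$ in order of increasing dimension produces a smooth variety whose boundary is a simple normal crossings divisor. Shrinking $W'$ and using that dense edges are affine-linear, we may further assume $\mathcal{A}$ is central with centre the origin; the general affine case reduces to this because, near any point, an affine arrangement is a product of a linear space with a transverse central arrangement, and abnormal strata correspond to dense edges of the central factor.

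\textbf{Local model of a single blow-up.} Let $Z$ be a dense edge of \emph{minimal} dimension, with $\dim Z = d$ and $\codim Z = c$. Minimality forces $Z$ to be an edge of $\mathcal{A}$, and after a linear change of coordinates splitting $\A^M = \A^d \times \A^c$ with $Z = \A^d \times \{0\}$, the hyperplanes of $\mathcal{A}$ through $Z$ are exactly the pullbacks of an essential central arrangement $\mathcal{A}_Z$ in $\A^c$ for which the origin is a dense edge, while the remaining hyperplanes are transverse to $Z$. Blowing up $Z$ produces an exceptional divisor $E_Z$, a $\mathbb{P}^{c-1}$-bundle over $Z$ which is locally $Z \times \mathbb{P}^{c-1}$, and a direct computation in these coordinates gives: (i) the strict transform of any dense edge of $\mathcal{A}$ strictly containing $Z$ is again a dense edge, smooth, with strictly fewer dense edges properly contained in it; (ii) any edge through $Z$ that was \emph{not} dense becomes normal crossings after the blow-up, because the product splitting witnessing its non-density is precisely what is separated by blowing up $Z$; and (iii) the dense edges of the new boundary arrangement lying in $E_Z$ are in bijection with the dense edges of the projectivised arrangement $\mathbb{P}(\mathcal{A}_Z)$ inside $E_Z$, each of which has strictly smaller ``depth'' (number of boundary components through it, minus its codimension) than $Z$ had. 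Hence a suitable finite invariant — for instance the multiset of depths of the dense edges — strictly decreases, so after finitely many steps no dense edges remain.

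\textbf{Induction and conclusion.} Process the dense edges in order of increasing dimension. By the local model, when a lower-dimensional dense edge contained in a higher-dimensional dense edge $Z'$ is blown up, the local picture near $Z'$ is replaced by one of the same shape with fewer interior dense edges; in particular the strict transform of $Z'$ stays smooth, so each successive blow-up center is smooth and $P$ is smooth. Once no dense edges remain, $P \sm U$ is simple normal crossings: at a point $p \in P$ the boundary components through $p$ are strict transforms of members of $\mathcal{A}$ together with exceptional divisors, and they can fail to meet like coordinate hyperplanes only if some edge through $p$ supports more boundary components than its codimension — equivalently, only if there is a dense edge through $p$, which is now excluded. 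Together with the isomorphism over $U$ noted above, this proves the theorem.

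\textbf{Main obstacle.} The delicate point is the local analysis of the second step: precisely tracking the set of dense edges, their smoothness, and their depths under one blow-up at a minimal dense edge, especially verifying (ii) — that edges obstructed only by a \emph{non-dense} configuration are genuinely resolved once $Z$ is removed — and then exhibiting an invariant that visibly decreases so the induction is well founded. This is the combinatorial heart of the wonderful-compactification construction, and is exactly where the distinction between arbitrary edges and dense edges is used; the remaining steps are formal.
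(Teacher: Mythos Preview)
The paper does not give its own proof of this theorem; it simply cites \cite[Theorem~4.2.4]{orlik-terao}. Your sketch follows the standard De~Concini--Procesi/wonderful-compactification line of argument, which is essentially what is in the cited reference, so in spirit you are on the same track.

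That said, there is a genuine imprecision in your inductive scheme that you should tighten. The theorem prescribes a \emph{fixed} finite procedure: blow up the strict transforms of the original abnormal strata, once each, in order of increasing dimension. Your step (iii), which speaks of ``new'' dense edges in $E_Z$ with smaller depth and a decreasing invariant ensuring termination ``after finitely many steps,'' suggests an open-ended iterative process rather than the prescribed $r$ blow-ups. The correct bookkeeping is that after blowing up the minimal dense edge $Z$, the dense edges of the new boundary configuration are exactly the exceptional divisor $E_Z$ (already codimension~$1$, so needs no further attention) together with the strict transforms of the remaining original dense edges; no genuinely new centres arise that are not already accounted for by the original list. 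What you call the dense edges of $\mathbb{P}(\mathcal{A}_Z)$ inside $E_Z$ are precisely the traces of strict transforms of original dense edges $Z' \supsetneq Z$, not additional centres. Once this is made explicit, the induction is on the number of original dense edges remaining, not on an ad~hoc depth invariant. Similarly, your claim (ii) as stated is too strong: a single blow-up at $Z$ does not resolve every non-dense edge through $Z$; rather, the point is that non-dense edges never need to be blown up at all, because the normal-crossings failure at any boundary point is always witnessed by some dense edge, and once all dense edges have been handled the boundary is automatically SNC.
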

\begin{proof}
  See, for example, \cite[Theorem 4.2.4]{orlik-terao}.
\end{proof}

As an easy application of the theorem we have:
\begin{corollary}\label{c:tate}
  If $a(H) \in \Z$ for all $H \in \ms{C}$ then
  $H^p(\ms{A})$ is mixed Tate for all $p$.
\end{corollary}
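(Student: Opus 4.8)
The plan is to deduce Corollary \ref{c:tate} from Theorem \ref{t:abnormal} by a careful choice of the normal crossings compactification $P$ and a Leray/weight-spectral-sequence argument showing that every piece of $H^p(\ms{A})$ is built out of Tate motives. First I would reduce to the case $a(H) \in \Z$ literally, and observe that in this situation we may take $N = 1$ in the construction preceding Definition \ref{d:nm}: the local system $\mc{L}(\eta)$ already has trivial monodromy (the residues $\op{Res}_{E_\alpha}\eta$ are integers), so $\wh{U} = U$, the cyclic cover is trivial, and $H^p(\ms{A})$ is simply the Nori motive $H^p(V, D; \Q)$ where $V = P \sm \cup'_\alpha E_\alpha$ and $D = V \sm U$, with $P$ \emph{any} normal crossings compactification. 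By Remark \ref{r:notsmooth} the answer is independent of the choice, so I would take $P$ to be the explicit compactification produced by Theorem \ref{t:abnormal}, starting from $P' = (\P^1)^M$ (which is locally arrangement-like) and iteratively blowing up abnormal strata.

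The key point is then that for this $P$, every stratum of the resulting normal crossings divisor $P \sm U$ — and more to the point, every stratum of $P \sm V$ together with its relative position inside $D$ — is a variety obtained from products of $\P^1$ by a sequence of blow-ups along smooth centres which are themselves iterated such varieties; in particular all these strata, and all their intersections, are smooth projective \emph{mixed Tate} (indeed their cohomology is generated by algebraic cycles, being built from products of projective spaces by blow-ups with mixed Tate centres). Concretely, the strata $Z_{I'}$ of $P'$ are products of $\P^1$'s with a point, and blowing up a smooth mixed Tate subvariety of a smooth projective mixed Tate variety yields again a mixed Tate variety, with the proper transforms of the strata still mixed Tate; this is the standard blow-up formula $H^*(\mathrm{Bl}_Z X) = H^*(X) \oplus \bigoplus_{i=1}^{c-1} H^{*-2i}(Z)(-i)$ in the category of (Nori) motives, applied inductively along the sequence $P_0, P_1, \dots, P_r$ of Theorem \ref{t:abnormal}. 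Then I would compute $H^p(V, D; \Q)$ via the long exact sequence of the pair and the fact that the cohomology of $V$ (an open variety whose boundary divisor in $P$ is normal crossings with all strata mixed Tate) is mixed Tate: this follows from the Gysin/weight spectral sequence expressing $H^*(V)$ in terms of the cohomology of the (mixed Tate) closed strata of $P \sm V$, all of whose entries are Tate, so the abutment is Tate; similarly $H^*(D)$ is mixed Tate. An alternative, cleaner route is to note that $V$ is itself obtained from an affine space by removing a hyperplane-arrangement-like divisor inside a mixed-Tate ambient space and then is stratified by mixed Tate pieces, so $H^*(V,D)$ is an extension of mixed Tate motives, hence mixed Tate.

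The main obstacle — really the only nontrivial point — is verifying that the compactification $P$ from Theorem \ref{t:abnormal} has all strata of its boundary divisor mixed Tate, and that this property is inherited through the relative cohomology $H^p(V, D)$ rather than just $H^p(P)$. For the first part I would argue by induction on the number of blow-ups: the centres $Z_i$ (abnormal strata, resp.\ their strict transforms) are, inside the already-constructed mixed Tate varieties $P_{i-1}$, intersections of smooth divisors each of which is mixed Tate with mixed Tate intersections, and such an intersection is mixed Tate; the blow-up formula then propagates mixed Tateness to $P_i$ and, since the exceptional divisor is a projective bundle over a mixed Tate base and the proper transforms of the old strata are mixed Tate, to all new strata as well. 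For the second part I would use that the Nori motive $H^p(V,D;\Q)$ sits in a long exact sequence with $H^*(V;\Q)$ and $H^*(D;\Q)$, both of which are mixed Tate by the stratification argument above (each is computed by a spectral sequence whose $E_1$-terms are Tate twists of cohomologies of smooth projective mixed Tate strata), and the category of mixed Tate Nori motives is closed under extensions and subquotients; finally the $\chi$-isotypical summand (here $\chi$ trivial, so this is a non-issue once $N=1$) of a mixed Tate motive is mixed Tate. Thus $H^p(\ms{A})$ is mixed Tate for all $p$.
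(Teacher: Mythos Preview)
Your proposal is correct and follows essentially the same approach as the paper: reduce to the trivial cover (since integral weights give trivial monodromy, so one may take $N=1$ and $\wh{U}=U$), choose $P$ via Theorem~\ref{t:abnormal}, use the blow-up formula inductively to see that $P$ and all its strata are mixed Tate, and then conclude that the relative cohomology $H^p(V,D)$ is mixed Tate. The only cosmetic differences are that the paper starts from $P'=\P^M$ rather than $(\P^1)^M$, and wraps up with a single induction on dimension over all pairs $(X,Y)$ of stratum-unions rather than treating $H^*(V)$ and $H^*(D)$ separately via spectral sequences.
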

\begin{proof}
  The assumption implies that $\mc{L}(\eta)$ has no mondromy along all
  $H \in \ms{C}$, so it is a constant local system. Hence we may take
  $\wh{U} = U$ and $\wh{P} = P$ to be any normal crossings
  compactification of $U$. We choose $P$ to be constructed by the
  procedure described in Theorem \ref{t:abnormal}, taking $P' = \P^M$.
  It is clear from the inductive procedure and the formula for the
  cohomology of a smooth blowup that the Nori motives associated to
  $P$ and each closed stratum are Tate.  This implies, by induction on
  the dimension and the long exact sequence for the cohomology of a
  pair, that if $X \subset P$ is any locally closed subvariety which
  is a union of open strata in $P$ and $Y$ is a closed subvariety of
  $X$ which is also such a union, then the Nori motive $H^p(X,Y)$ is
  mixed Tate for all $p$. In particular, $H^p(\ms{A})$ is mixed Tate
  for all $p$.
\end{proof}

\subsubsection{}\label{s:sub}

We continue with the notation from the beginning of Section
\ref{s:ao2}. Let $V' \supset V$ be given by
$V' = P \sm \cup_{\alpha}' E_{\alpha}$ where now the union is
restricted to the set of $\alpha$ such that $a(\alpha)$ is not a
non-negative integer and we let $j':U \to V'$ and $k':V' \to P$ be the
inclusions.  Let $\mc{F}' = k'_* j'_! \mc{L}(\eta)$ so there is a
canonical map $\mc{F}' \to \mc{F}$.

\begin{lemma}\label{imageKZ}
  Consider a hyperplane arrangement $(W, \ms{C},a)$ with open set $U$
  and log form $\eta$ regular on $U$ and let $P$ and
  $\mc{F}' \to \mc{F}$ be as above.  Let $\ms{C}_1 \supset \ms{C}$
  be another finite set of hyperplanes in $W$ and let $a_1(H) = a(H)$
  for $H \in \ms{C}$ and $a_1(H) = 0$ for $H \in \ms{C}_1 \sm \ms{C}$,
  so $U_1 \subset U$ and the log form $\eta_1$ associated to the
  arrangement $(W, \ms{C}_1,a_1)$ equals $\eta$. Let $P_1$ and
  $\mc{F}_1' \to \mc{F}_1$ be the data associated to the new
  arrangement $(W, \ms{C}_1,a_1)$ analogous to the data above.  Then
  the image of $H^*(P_1,\mc{F}_1') \to H^*(P_1,\mc{F}_1)$ is
  canonically a subquotient of the image of
  $H^*(P, \mc{F}') \to H^*(P,\mc{F})$.
\end{lemma}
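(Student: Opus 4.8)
The plan is to reduce the comparison of the two images to a statement about the compactifications $P$ and $P_1$, by choosing them compatibly and then analysing the effect of the extra hyperplanes in $\ms{C}_1\sm\ms{C}$ stratum by stratum. First I would choose a common locally arrangement-like compactification $P'$ of $U$ that dominates a locally arrangement-like compactification of both $U$ and $U_1$ — for instance take $W=\A^M_\msk$ and $P'=(\P^1)^M$, which is locally arrangement-like for any hyperplane arrangement, hence simultaneously for $\ms{C}$ and $\ms{C}_1$. Running the blow-up procedure of Theorem \ref{t:abnormal} for the abnormal strata of the arrangement $\ms{C}_1$ produces a normal crossings compactification $P_1$; since the abnormal strata of $\ms{C}$ form a subset of (closures of intersections of) the abnormal strata of $\ms{C}_1$, one gets a proper map $P_1\to P$ which is an isomorphism over $U$. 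So we may and do assume $P_1\to P$ is a proper birational morphism, an isomorphism over $U$, with both $P\sm U$ and $P_1\sm U$ simple normal crossings divisors, and compatible with the local systems (which agree, being $\mc{L}(\eta)=\mc{L}(\eta_1)$ on $U=U_1$, wait — note $U_1\subsetneq U$ in general, so I must be careful here).

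Here is the subtlety that I expect to be the main obstacle: the open sets $U$ and $U_1$ are \emph{not} equal — $U_1$ is the complement in $W$ of the larger arrangement $\ms{C}_1$, so $U_1\subset U$, even though the forms coincide, $\eta_1=\eta$, because the new weights $a_1(H)$ vanish for $H\in\ms{C}_1\sm\ms{C}$. Thus $\mc{L}(\eta_1)$ is the restriction to $U_1$ of $\mc{L}(\eta)$, and it extends (as a local system) across the new hyperplanes. The correct bookkeeping is: let $D_1$ be the union of the strict transforms in $P_1$ of the hyperplanes in $\ms{C}_1\sm\ms{C}$ together with any new exceptional divisors over abnormal strata of $\ms{C}_1$ that are not abnormal for $\ms{C}$; along all components of $D_1$ the residue of $\eta$ is $0$, which is a non-negative integer, so in the definition of $V_1'$ and $V_1$ these components are \emph{removed} from $P_1$ in forming neither (they're in $\cup'E_\alpha$ for both $?=V$ and $?=V'$). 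I would check, using Lemma \ref{l:ext} and the fact that $\eta$ has residue $0$ (a non-negative integer) along each component of $D_1$, that $H^*(P_1,\mc{F}_1')$ and $H^*(P_1,\mc{F}_1)$ — formed on $P_1$ with $U_1$ — compute the same things as the corresponding cohomologies on $P_1$ for the \emph{old} arrangement pulled back, with $!$-extension imposed along the extra divisors $D_1$. Concretely: let $\mc{G}'=k_{1,*}'\,j_{1,!}'\mc{L}(\eta)$ and $\mc{G}=k_{1,*}\,j_{1,!}\mc{L}(\eta)$ on $P_1$, and let $\pi\colon P_1\to P$ be the projection; then $R\pi_*\mc{G}'=\mc{F}'$ and $R\pi_*\mc{G}=\mc{F}$ by proper base change and the ``diagonal'' independence-of-compactification argument (as in Remark \ref{r:notsmooth} and the paragraph after Lemma \ref{l:topo}), so $H^*(P,\mc{F}^{(\prime)})=H^*(P_1,\mc{G}^{(\prime)})$.

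It then remains to compare, on the single space $P_1$, the maps
\[
H^*(P_1,\mc{F}_1')\to H^*(P_1,\mc{F}_1)
\qquad\text{and}\qquad
H^*(P_1,\mc{G}')\to H^*(P_1,\mc{G}).
\]
The difference between the sheaves $\mc{F}_1'$ and $\mc{G}'$ (resp.\ $\mc{F}_1$ and $\mc{G}$) is only along the extra divisors $D_1$, where $\mc{F}_1^{(\prime)}$ is defined by restricting further to $U_1$ and imposing $!$ along all of $\ms{C}_1$ while $\mc{G}^{(\prime)}$ keeps the ambient $\mc{L}(\eta)$ on the bigger open set; since the residue of $\eta$ along each component of $D_1$ is $0$ (a non-negative integer), $\mc{L}(\eta)$ has trivial local monodromy there, and $j_{1,!}'$ vs.\ a ``$*$-along-$D_1$'' extension differ by complexes supported on $D_1$. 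Using the long exact sequences attached to these support-on-$D_1$ cofibre terms together with Lemma \ref{l:ext}(2) (which computes the relevant $\home$ groups and hence pins down the image as the image of $\home(\mc{L},\mc{L})$ through both sides), I would identify $\mathrm{im}(H^*(P_1,\mc{F}_1')\to H^*(P_1,\mc{F}_1))$ with a subquotient of $\mathrm{im}(H^*(P_1,\mc{G}')\to H^*(P_1,\mc{G}))=\mathrm{im}(H^*(P,\mc{F}')\to H^*(P,\mc{F}))$, where the ``subquotient'' arises exactly from the kernel/cokernel contributions of the $D_1$-supported terms in these long exact sequences. The hardest point, and the one I would write out most carefully, is the verification that these $D_1$-supported correction terms interact with the two maps in a way that produces a genuine subquotient (rather than just an abstract map) — this is where one uses the residue-$0$ hypothesis to control local monodromy and the independence-of-ordering statement of Lemma \ref{l:ext}(1) to handle the several extra divisors simultaneously, and where the fact (recorded in Remark \ref{r:p}) that everything in sight is a shifted perverse sheaf keeps the long exact sequences under control.
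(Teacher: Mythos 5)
There is a genuine gap, and it sits exactly at the point you flag as ``the hardest point.'' First, a factual error that matters: you assert that since the residue of $\eta$ along the new divisors is $0$, ``a non-negative integer,'' these divisors are treated the same way in forming $V_1$ and $V_1'$ (your parenthetical even says they lie in $\cup'E_\alpha$ for both). That is not what the definitions say. Residue $0$ is a non-negative integer but \emph{not} a strictly positive integer, so the new divisor $E_1$ is removed in forming $V_1$ (hence $\mc{F}_1$ is the \emph{star} pushforward of $\mc{F}|_{P\sm E_1}$) but is kept in $V_1'$ (hence $\mc{F}_1'$ is the \emph{shriek} pushforward of $\mc{F}'|_{P\sm E_1}$). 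This asymmetry is the entire content of the lemma: adjunction then gives canonical sheaf maps $\mc{F}\to\mc{F}_1$ and $\mc{F}_1'\to\mc{F}'$, going in \emph{opposite} directions, and these fit with the two natural maps $\mc{F}'\to\mc{F}$ and $\mc{F}_1'\to\mc{F}_1$ into a commutative square (commutativity is checked by restricting to $P\sm E_1$ and using adjunction again). Taking cohomology, the subquotient statement is then the elementary linear-algebra fact of Lemma \ref{subquotient}. Because you have collapsed the asymmetry, your plan of comparing $\mc{F}_1^{(\prime)}$ and $\mc{G}^{(\prime)}$ via long exact sequences of $D_1$-supported cofibres cannot be completed as stated: without the two adjunction maps in opposite directions you never obtain the commutative square, and kernel/cokernel bookkeeping in long exact sequences does not by itself exhibit one image as a subquotient of the other. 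You acknowledge this step is unverified; it is in fact the whole proof.

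Two smaller remarks. The paper first reduces by induction on $|\ms{C}_1\sm\ms{C}|$ to the case of a single extra hyperplane $H_1$; then $H_1\cap U$ meets no other hyperplane of $\ms{C}_1$, so one obtains a normal crossings compactification $P_1$ of $U_1$ by blowing up $P$ outside $U$ and may simply replace $P$ by $P_1$, i.e.\ assume $P=P_1$. This makes your detour through a common dominating compactification and the proper-base-change identification $R\pi_*\mc{G}^{(\prime)}\simeq\mc{F}^{(\prime)}$ unnecessary (though that identification is correct and is the standard independence-of-compactification argument). Your instinct to invoke Lemma \ref{l:ext}(2) is also not needed here; it is the adjunction units/counits for the open inclusion $P\sm E_1\hookrightarrow P$, applied to the star and shriek extensions respectively, that produce the required maps.
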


\begin{proof}
  By induction on $| \ms{C}_1 \sm \ms{C}|$ we may assume that $U_1$ is
  obtained by deleting a single hyperplane from $U$.

  Let $H_1\subset W$ be the hyperplane in $\ms{C}_1 \sm \ms{C}$ and
  let $P$ be a normal crossings compactification of $U$. Then
  $H_1 \cap U$ does not meet any other hyperplane in $\ms{C}_1$ so we
  can construct a normal crossings compactification $P_1$ of $U_1$ by
  blowing up $P$ outside of $U$. Then the map $P_1 \to P$ is an
  isomorphism over $U$, so replacing $P$ by $P_1$ we may assume
  $P = P_1$.

  Let $E_1$ be the strict transform of $H_1$ in $P$. Since
  $\eta = \eta_1$ and the residue of $\eta$ along $E_1$ is zero,
  $\mc{F}_1$ is the star pushforward of $\mc{F}_{P \sm E_1}$.
  Therefore, by adjunction we obtain a canonical map
  $\mc{F}\to \mc{F}_1$, hence a map
  $H^*(P,\mc{F})\to H^*(P,\mc{F}_1)$. To obtain a map
  $H^*(P,\mc{F}_1')\to H^*(P,\mc{F}')$ we note that $\eta = \eta_1$
  also implies that $\mc{F}_1'$ is the shriek pushforward of
  $\mc{F}'|_{P \sm E_1}$.
	
Finally we have to check that the composite
$\mc{F}_1'\to \mc{F}' \to \mc{F}\to \mc{F}_1$ is the natural map
$\mc{F}_1'\to \mc{F}_1$. This holds because both maps agree when
restricted to $P \sm E_1$ and, by adjunction, any such map is determined
by the restriction.
\end{proof}

\begin{remark}\label{r:imageKZ}
  In the setting of Lemma \ref{imageKZ}, if $a(H) \in \Q$ for all
  $H \in \ms{C}$ then the subquotient statement holds in the category
  of Nori motives. To make $H^*(P, \mc{F}')$ into a Nori motive we
  observe that Lemma \ref{l:ext} (1) implies that $\mc{F}'$ is, up to
  a shift, the Verdier dual of $\mc{F}(-\eta)$.
\end{remark}

Let $\ms{A}_i = (W_i,\ms{C}_i,a_i)$, $i=1,2$, be weighted hyperplane
arrangements in $\A^{M_i}_{\msk}$ with weights in $\Q$ and with
associated forms $\eta_i$. Let $\ms{A} = (W,\ms{C},a)$ be the product
arrangement and assume that $Na(H) \in \Z$ for all $H \in
\ms{C}$. This means that $W = \A^{M}_{\msk}$ with $M = M_1 + M_2$ and
the hyperlanes in $\ms{C}$ are of the form
$H_1 \times \A^{M_2}_{\msk}$ for $H_1 \in \ms{C}_1$ with weight
$a_1(H_1)$ and $\A^{M_1}_{\msk} \times H_2$ for $H_2 \in \ms{C}_2$
with weight $a_2(H_2)$.
\begin{lemma}\label{l:kaom}
  $H^*(\ms{A}_1) \otimes_{\Q(\mu_N)} H^*(\ms{A}_2) \simeq H^*(\ms{A}).$
\end{lemma}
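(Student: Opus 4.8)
The plan is to deduce this Künneth-type statement from the corresponding topological/de Rham facts together with the explicit description of the Aomoto motives as isotypical summands of cohomology of pairs of finite cyclic covers. The key point is that everything in sight respects products, so once the pieces are assembled the identification is essentially formal.

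First I would set up the cyclic covers compatibly. Since $N a_i(H) \in \Z$ for all hyperplanes of $\ms{A}_i$ (this follows from the hypothesis $Na(H) \in \Z$ on the product arrangement), each $\ms{A}_i$ has an associated cyclic cover $\wh{U}_i \to U_i$ defined by $y_i^N = \prod_{H \in \ms{C}_i} f_H^{N a_i(H)}$, with $\mu_N$-action scaling $y_i$. For the product arrangement $\ms{A}$, the natural choice of defining function is $f = f_1 f_2$ (the product of the two master functions pulled back to $W = W_1 \times W_2$), so the cover $\wh{U}$ sits inside $\wh{U}_1 \times \wh{U}_2$ as the fibre product over the coordinate $y$, i.e. $\wh{U}$ is the image of $\wh{U}_1 \times \wh{U}_2$ under $(y_1,y_2)\mapsto y_1 y_2$; equivalently $\wh{U}$ is the quotient of $\wh{U}_1\times\wh{U}_2$ by the anti-diagonal $\mu_N$, and carries the residual diagonal $\mu_N$-action. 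I would then choose the partial compactifications $\wh{V}_i$ (with $\wh{D}_i = \wh{V}_i \sm \wh{U}_i$) and $P_i$ so that $P_1 \times P_2$ (after the blow-ups of Theorem \ref{t:abnormal} needed to make $U_1\times U_2$'s boundary normal crossings — these are all over the boundary, hence don't affect the motive by Remark \ref{r:notsmooth}) serves as a compactification of $U = U_1\times U_2$, and correspondingly obtain a $\mu_N$-cover $\wh{V}$ of the analogous $V \subset P$.

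Next I would run the Künneth computation. On the level of sheaves, $j_!\mc{L}(\eta) = j_{1,!}\mc{L}(\eta_1) \boxtimes j_{2,!}\mc{L}(\eta_2)$ because $\eta = \eta_1 \boxplus \eta_2$ and hence $\mc{L}(\eta) = \mc{L}(\eta_1)\boxtimes\mc{L}(\eta_2)$, and shriek-pushforward commutes with external tensor products (Lemma \ref{KunnethF}(1)); similarly for the star-pushforwards to $P$ using Lemma \ref{KunnethF}(2) with the o-minimal structure of semi-algebraic sets, as remarked after that lemma. The Künneth formula for cohomology with $K$-coefficients then gives $H^*(P,\mc{F}) \simeq H^*(P_1,\mc{F}_1)\otimes_K H^*(P_2,\mc{F}_2)$, and this isomorphism is $\mu_N\times\mu_N$-equivariant. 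Passing to the diagonal $\mu_N$ and taking the $\tau$-isotypical component, the product of a $\tau_1$-eigenvector and a $\tau_2$-eigenvector is a $\tau$-eigenvector precisely when $\tau_1\tau_2 = \tau$ on the diagonal — but since $\tau$ is the inverse tautological character, the $\tau$-isotypic part of the diagonal action on $H^*(\ms{A}_1)\otimes H^*(\ms{A}_2)$ is exactly $H^*(\ms{A}_1)_{\tau_1}\otimes H^*(\ms{A}_2)_{\tau_2}$ with both $\tau_i$ the inverse tautological character on the respective factor, because the relevant covers are cyclic of the same order $N$ and the diagonal restriction of $\tau_1 \boxtimes \tau_2$ is $\tau_1\tau_2$, which lands in the $\tau$-isotypic component iff both factors are $\tau$-isotypic for their own groups. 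So the isotypical summand on the product side is canonically $H^*(\ms{A}_1)\otimes_{\Q(\mu_N)} H^*(\ms{A}_2)$. To promote this from an isomorphism of Hodge structures (or Betti realisations) to an isomorphism of Nori motives, one uses that Künneth for the cohomology of pairs $H^p(\wh{V}_1,\wh{D}_1)\otimes H^q(\wh{V}_2,\wh{D}_2) \to H^{p+q}(\wh{V}_1\times\wh{V}_2, \wh{D}_1\times\wh{V}_2 \cup \wh{V}_1\times\wh{D}_2)$ is a morphism of Nori motives (it is induced by the external product in the universal cohomology theory), and that $\wh{D} = \wh{D}_1\times\wh{V}_2 \cup \wh{V}_1\times\wh{D}_2$ here; compatibility with the de Rham realisations then follows from Lemma \ref{l:derhamreal} together with the obvious multiplicativity of the $h$-topology de Rham complex $\Omega^\bullet_{h/(-,-)}$.

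The main obstacle I anticipate is the bookkeeping around the compactifications and the isotypical decomposition: one must check that the normal-crossings compactification $P$ of $U_1\times U_2$ can be taken compatibly with $P_1, P_2$ (the blow-ups needed are supported over the boundary and the motive is independent of the choice by Remark \ref{r:notsmooth}, so this is harmless but needs a sentence), and one must be careful that the cover $\wh{V}$ of the product is the $\mu_N$-quotient of $\wh{V}_1\times\wh{V}_2$ rather than their fibre product, so that the Künneth isomorphism genuinely carries the diagonal $\mu_N$-action and the $\tau$-isotypical summand matches $H^*(\ms{A}_1)\otimes_{\Q(\mu_N)}H^*(\ms{A}_2)$ on the nose. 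Everything else — the sheaf-level Künneth, equivariance, de Rham compatibility — is formal given the lemmas already established.
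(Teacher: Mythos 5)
Your proposal is correct and follows essentially the same route as the paper: take $P = P_1 \times P_2$, realise the product cover $\wh{U}$ (resp.\ $\wh{V}$) as the quotient of $\wh{U}_1 \times \wh{U}_2$ (resp.\ $\wh{V}_1 \times \wh{V}_2$) by the anti-diagonal $\mu_N$, and identify the $\tau$-isotypical component via the K\"unneth formula. The only superfluous step is your worry about extra blow-ups: a product of simple normal crossings compactifications is already a simple normal crossings compactification, so $P_1 \times P_2$ works as is.
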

\begin{proof}
  Let $P_i$, $V_i$, $\wh{U}_i$, $\wh{V}_i$, $\pi_{U_i}$, $\pi_{V_i}$,
  for $i=1,2$ be the data associated to the two arrangements as in the
  construction of the Nori motive in Section \ref{s:ao2}. We let
  $P = P_1 \times P_2$; this is a normal crossings compactification of
  $U = U_1 \times U_2$, and we let $V = V_1 \times V_2$. The product
  $\mu_N \times \mu_N$ acts on $\wh{U}_1 \times \wh{U}_2$
  (resp.~$\wh{V}_1 \times \wh{V}_2$) and we let $\wh{U}$
  (resp.~$\wh{V}$) be the quotient by the anti-diagonal subgroup
  $G$. One sees easily that $H^*(\ms{A})$ is given by the
  $\chi$-isotypical component of
  $H^*(\wh{V}, \wh{V} \sm \wh{U}; \Q(\mu_N))$, where we identify
  $\mu_N \times \mu_N/G$ with $\mu_N$ using the product of the two
  projections. Since $H^*(\wh{V}, \wh{V} \sm \wh{U}; \Q(\mu_N))$ is
  the motive of $G$-invariants of
  $H^*(\wh{V}_1 \times \wh{V}_2, \wh{V}_1 \times \wh{V}_2 \sm \wh{U}_1
  \times \wh{U}_2; \Q(\mu_N))$, it follows that $H^*(\ms{A})$ is also
  given by the $(\chi \boxtimes_{\Q(\mu_N)}\chi)$-isotypical component of
  $H^*(\wh{V}_1 \times \wh{V}_2, \wh{U}_1 \times \wh{U}_2;
  \Q(\mu_N)$. The lemma then follows from the Kunneth formula.
\end{proof}

\subsubsection{}
Finally, we have the definition of basic one dimensional Aomoto motives:
\begin{defi}\label{d:basicmotives}
  Let $a$ be a positive rational number and $\msk$ a subfield of
  $\C$. Consider the hyperplane arrangement $\ms{A}$ with hyperplanes
  $\{0\}$ and $\{1\}$ in $W=\A^1_{\msk}$, with weights $a/\kappa$ and
  $1/\kappa$ at $\{0\}$, $\{1\}$ respectively. The corresponding
  Aomoto motive $H^1(\ms{A})$ is denoted by $[a;\kappa]$.
\end{defi}

\subsection{KZ motives and CB motives}\label{s:KZCB}

\subsubsection{The Schechtman--Varchenko arrangement and master
  function}\label{s:sv}
Fix a split simple Lie algebra $\mf{g}$, let
$\vv{\lambda} \in (P^+)^n$, $\nu \in P^+$, and assume
 that $\sum\lambda_i-\nu$ is a non-negative integral sum of simple roots (possibly
repeated). This assumption holds if $\mb{A}(\vec{\lambda},\nu^*)\neq 0$ since the weights in
$V(\vec{\lambda})$ lie in $\sum \lambda_i+\mb{Z}_{\leq 0}\Delta$.
We then write
\begin{equation*}%
  \sum\lambda_i-\nu=\sum_{\alpha\in\Delta}n_{\alpha} \alpha  =
  \sum_{a=1}^M\beta(a) .
\end{equation*}
Here $M=\sum_{\alpha\in \Delta}n_{\alpha}$, and we have made a choice
of $\beta:[M]=\{1,\dots,M\} \to \Delta$ so that the second equality
holds.

Fix $\vec{z}=(z_1,\dots,z_n)$ in $\mc{C}_n(\msk)$ and let
$W=\A^M_{\msk}$. The coordinate variables of $W$ will be denoted by
$t_1,\dots,t_M$. We will consider the variable $t_b$ to be colored by
the simple root $\beta(b)$. Let $\kappa \in \msk^{\times}$.

Consider the weighted hyperplane arrangement
$\ms{A} = (W,\mathscr{C},a)$ given by the following collection
$\mathscr{C}$ of hyperplanes, and their attached weights $a$:
\begin{enumerate}
\item For $i\in [1,n]$ and $b\in [1,M]$, the hyperplane $t_b-z_i=0$, with weight $\displaystyle \frac{(\lambda_i,\beta(b))}{\kappa}$.
\item $b,c\in [1,M]$ with $b<c$,  the hyperplane $t_b-t_c=0$, with weight $\displaystyle-\frac{(\beta(b),\beta(c))}{\kappa}$.
\end{enumerate}
Let $U$ be the complement of the above hyperplane arrangement in $W$.
Consider the (multi-valued) master function
\begin{equation} \label{e:master} %
  \ms{R} = \prod_{1 \leq i < j \leq n} (z_i - z_j)^{-\frac{(\lambda_i,
      \lambda_j)}{\kappa}} \prod_{b=1}^M \prod_{j=1}^n (t_b -
  z_j)^{\frac{(\lambda_j, \beta(b))}{\kappa}} \prod_{1 \leq b < c \leq
    M} (t_b - t_c)^{-\frac{(\beta(b), \beta(c))}{\kappa}}
\end{equation}
We get a form in $\Omega^1(U)$ corresponding to the weighted
hyperplane arrangement $(W ,\mathscr{C},a)$ given by
$\eta= d\ms{R}/\ms{R}$.  The differentials $d(z_i-z_j)$ are zero
because $z_j$ are fixed, but we will consider this multivalued
function later in contexts where the $z$'s are allowed to vary. We
have the formula:
\begin{equation}\label{deta}
  \eta=\frac{1}{\kappa}\biggl(\sum_{b=1}^M\sum_{j=1}^n (\lambda_j,\beta(b))\frac{d(t_b-z_j)}{t_b-z_j} - \sum_{1 \leq b < c \leq M}(\beta(b),\beta(c))\frac{d(t_b-t_c)}{t_b-t_c}\biggr) .
\end{equation}

Let $\Sigma = \Sigma_{\beta}$ be the subgroup of the symmetric group
$S_M$ on $[M]=\{1,\dots,M\}$ formed by permutations $\sigma$ that
preserve colour, i.e. $\beta(\sigma(b))=\beta(b)$ for all $b\in[M]$;
$\Sigma_{\beta}$ acts on $U$, the complex
$(A^{\starr}(U), \eta \wedge)$ as well as the local system $\ml(\eta)$
(over the action on $U$).

If $\kappa \in \Q^{\times}$ there is a positive integer $N$ such that
$\ms{R}^N$ is single valued and preserved by $\Sigma_{\beta}$. A
specific choice of $N$ is given in the following:
\begin{lemma}\label{formula:N}
  Let $\kappa=r/s$ where $r$ and $s$ are coprime integers.  Let
  $N= i_{\mf{g}}m_{\mf{g}}|r|$. Then, $\ms{R}^N$ is single valued and
  is preserved by $\Sigma_{\beta}$.
\end{lemma}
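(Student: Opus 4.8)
The statement is purely arithmetic: we must show that each exponent appearing in $\ms{R}$ becomes an integer after multiplying by $N = i_{\mf{g}}m_{\mf{g}}|r|$, and that the resulting single-valued function is $\Sigma_{\beta}$-invariant. Writing $\kappa = r/s$, the exponents in \eqref{e:master} are, up to sign, of three kinds:
\begin{equation*}
  N\cdot\frac{(\lambda_i,\lambda_j)}{\kappa}=\frac{s}{r}\,i_{\mf{g}}m_{\mf{g}}|r|\,(\lambda_i,\lambda_j),\qquad
  N\cdot\frac{(\lambda_j,\beta(b))}{\kappa},\qquad
  N\cdot\frac{(\beta(b),\beta(c))}{\kappa}.
\end{equation*}
In each case the factor $r$ in $N$ cancels the denominator of $\kappa$, leaving $\pm s\,i_{\mf{g}}m_{\mf{g}}(\cdot,\cdot)$ with a product of weights/roots against the normalised Killing form. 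So the problem reduces to: $m_{\mf{g}}\cdot i_{\mf{g}}\cdot(x,y)\in\Z$ for all $x,y\in P$ (the weight lattice), after which multiplying by the integer $s$ keeps us in $\Z$.

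\textbf{Integrality of the pairings.} First I would recall from Section~\ref{s:lie} that $m_{\mf{g}}(\alpha,\alpha)$ is an even integer for every root $\alpha$, hence by polarisation $m_{\mf{g}}(\alpha,\beta)\in\Z$ for all roots $\alpha,\beta$; this handles the exponents of type (iii) involving only $\beta(b),\beta(c)\in\Delta$ (indeed these are already integral without the $i_{\mf{g}}$ factor). For the mixed terms $(\lambda_j,\beta(b))$ with $\lambda_j\in P^+\subset P$ and $\beta(b)\in\Delta$: the coroot $\beta(b)^\vee = 2\beta(b)/(\beta(b),\beta(b))$ pairs integrally with $P$, so $(\lambda_j,\beta(b)) = \tfrac12(\beta(b),\beta(b))\langle\lambda_j,\beta(b)^\vee\rangle$, and multiplying by $m_{\mf{g}}$ clears the $\tfrac12(\beta(b),\beta(b))$ to an integer — so again these are integral even without $i_{\mf{g}}$. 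The genuinely delicate case is (i), $(\lambda_i,\lambda_j)$ with both arguments arbitrary elements of $P$: here is where $i_{\mf{g}}=|P/Q|$ enters, because $i_{\mf{g}}P\subseteq Q$ (the index of the root lattice in the weight lattice annihilates $P/Q$), so $i_{\mf{g}}\lambda_i$ is an integral combination of simple roots, and then $m_{\mf{g}}(i_{\mf{g}}\lambda_i,\lambda_j) = m_{\mf{g}}\langle i_{\mf{g}}\lambda_i,\text{(combination of roots)},\lambda_j\rangle$ reduces — pairing a root against $\lambda_j\in P$ — to the already-handled mixed case. Multiplying through by the remaining integer $s$ finishes integrality of all exponents, so $\ms{R}^N$ is single-valued.

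\textbf{$\Sigma_{\beta}$-invariance.} This is essentially formal. A colour-preserving permutation $\sigma$ permutes the variables $t_b$ within each colour class, so it permutes the factors $(t_b-z_j)^{(\lambda_j,\beta(b))/\kappa}$ among themselves (the exponent depends on $b$ only through $\beta(b)$, which is $\sigma$-invariant) and permutes the factors $(t_b-t_c)^{-(\beta(b),\beta(c))/\kappa}$ among themselves up to signs $(t_c-t_b)=-(t_b-t_c)$; the offending signs are $(-1)$ raised to exponents $-(\beta(b),\beta(c))/\kappa$, and after passing to the $N$-th power these exponents are the integers $-N(\beta(b),\beta(c))/\kappa$ computed above, while the factors $(z_i-z_j)$ are untouched by $\sigma$. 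Hence $\sigma(\ms{R}^N)=\ms{R}^N$ on the nose. I would phrase this last point carefully to make sure the sign bookkeeping is airtight, but there is no real obstacle.

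\textbf{Main obstacle.} The only nontrivial point is the integrality claim $m_{\mf{g}}\,i_{\mf{g}}\,(x,y)\in\Z$ for $x,y\in P$, and within that the use of $i_{\mf{g}}P\subseteq Q$ to reduce the weight–weight pairing to a root–weight pairing; everything else is bookkeeping. One should double-check that the choice $N=i_{\mf{g}}m_{\mf{g}}|r|$ is exactly what is needed — i.e. that no further factor of $2$ or of $s$ sneaks in — by tracking the $\tfrac12(\beta(b),\beta(b))$ denominators against the factor $m_{\mf{g}}$ in each of the three exponent types; the point is that $m_{\mf{g}}$ alone clears the half-integrality, $i_{\mf{g}}$ alone clears the $P$-versus-$Q$ discrepancy, and $r$ (a factor of $N$) clears the denominator of $\kappa$, so their product suffices while the extra integer $s$ does no harm.
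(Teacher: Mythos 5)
Your single-valuedness argument is essentially the paper's: the three exponent types are handled by (i) the coroot identity $(\lambda,\alpha)=\tfrac12(\alpha,\alpha)\langle\lambda,\alpha^{\vee}\rangle$, which gives $m_{\mf{g}}(\lambda,\alpha)\in\Z$ for $\lambda\in P$ and $\alpha$ a simple root, (ii) $i_{\mf{g}}P\subseteq Q$, reducing the weight--weight pairing to case (i), and (iii) case (i) applied with $\lambda$ itself a root; the paper's proof is exactly this list, followed by cancelling the denominator of $\kappa$ against the factor $|r|$ in $N$. (Your ``polarisation'' justification for the root--root terms is slightly circular when $\alpha+\beta$ is not a root, but the coroot argument you give for the mixed terms covers that case too, so this is cosmetic.)

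The $\Sigma_{\beta}$-invariance, however, is not actually established by what you wrote, and the missing input is precisely the one fact you recall but never deploy here: that $m_{\mf{g}}(\alpha,\alpha)$ is \emph{even}. Integrality of the exponents $-N(\beta(b),\beta(c))/\kappa$ does not by itself kill the signs. A colour-preserving $\sigma$ acts on the same-colour factor $\prod_{b<c,\ \beta(b)=\beta(c)=\gamma}(t_b-t_c)^{e}$, with $e=-N(\gamma,\gamma)/\kappa$, by $\mathrm{sgn}(\sigma)^{e}$, so one needs $e$ to be \emph{even}; this holds exactly because $N(\gamma,\gamma)/\kappa=\pm i_{\mf{g}}s\,m_{\mf{g}}(\gamma,\gamma)$ and $m_{\mf{g}}(\gamma,\gamma)\in 2\Z$ --- this is item (4) of the paper's proof and is the entire content of the invariance claim. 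For pairs of distinct colours no parity hypothesis is needed: rewriting the product over mixed pairs with, say, the $\gamma$-coloured variable always written first changes it only by a fixed overall sign and makes it manifestly invariant (equivalently, under each transposition within a colour class the order-reversed mixed pairs occur in pairs). So the statement is true and every ingredient appears in your proposal, but as written the sign verification --- the only non-formal step in the invariance --- is asserted rather than proved, and in particular your phrase ``on the nose'' conceals the place where evenness, not mere integrality, is required.
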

\begin{proof}
  The lemma follows from:
\begin{enumerate}
\item[(1)] If $\alpha_a$ is a simple root, and $\lambda$ an integral
  weight, then
  $\lambda(\alpha^{\vee}_a)=
  2(\lambda,\alpha_a)/(\alpha_a,\alpha_a)\in \Bbb{Z}$, hence
  $(\lambda,\alpha_a)=\lambda(\alpha_a^{\vee})(\alpha_a,\alpha_a)/2$
  is equal to $\lambda(\alpha^{\vee}_a)\in \mb{Z}$ if $\alpha_a$ is a
  long root and equals
  $\frac{1}{m_{\mf{g}}}\lambda(\alpha^{\vee}_a)\in
  \frac{1}{m_{\mf{g}}}\mb{Z}$ if $\alpha_a$ is a short root.
\item[(2)] The weight $i_{\mf{g}}\lambda$ is in the root lattice if
  $\lambda$ is integral, hence if $\lambda_i,\lambda_j$ are
  integral, then by (1),
  $(\lambda_i, i_{\mf{g}}\lambda_j)=i_{\mf{g}}(\lambda_i,\lambda_j)
  \in \frac{1}{m_{\mf{g}}}\mb{Z}$.
	\item[(3)] If $\alpha_a$ and $\alpha_b$ are simple roots, then taking $\lambda=\alpha_b$ in (1), we see that
	$(\alpha_b,\alpha_a)  \in  \frac{1}{m_{\mf{g}}}\mb{Z}$.
      \item[(4)] If $\alpha_a$ is a root, then
        $m_{\mf{g}}(\alpha_a,\alpha_a)$ is an \emph{even} integer.
\end{enumerate}
Considering the product representation of $\ms{R}^N$ corresponding to
taking the $N$-th powers of each of the three terms in
\eqref{e:master}, (2) implies that the first term is single valued,
(1) implies that the second term is single valued and (3) implies that
the third term is single valued. The invariance under $\Sigma_{\beta}$
is implied by (4).
\end{proof}

\subsubsection{}\label{s:top}

Let $P$ be any smooth projective $\Sigma_{\beta}$-equivariant
compactification of $U$, with $P \setminus U=\cup_{\alpha} E_{\alpha}$
a divisor with simple normal crossings.  Let
$V=P \setminus \cup'_{\alpha}E_{\alpha}$, where the union is restricted to
$\alpha$ such that $a_{\alpha}=\op{Res}_{E_{\alpha}}\eta$, the residue
of $\eta$ along $E_{\alpha}$ is not a strictly positive integer. Let
$j:U\to V$ be the inclusion.

As in Section \ref{s:sub} let
$V'=P \setminus \cup'_{\alpha}E_{\alpha}$, where the union is
restricted to $\alpha$ such that
$a_{\alpha}=\op{Res}_{E_{\alpha}}\eta$, is not a non-negative integer
and let $q:U\to V'$ be the inclusion.  Let $\chi$ be the sign
character on $S_M$ restricted to $\Sigma_{\beta}$.  Extending results
of Looijenga \cite{L2} (completed in \cite{BBM}, see also \cite{BF})
for $\nu=0$, we have the following result, whose proof for arbitrary
dominant integral $\nu$ is given in Appendix \ref{s:promises}: 

\begin{proposition} \label{p:kz} Suppose $\msf{k} = \C$. Then
  $\mb{A}(\vv{\lambda},\nu^*)^*$ equals (compatibly with the KZ
  connection) the image of the map
  \begin{equation}\label{Sho}
    H^M(V',q_{!}\mathcal{L}(\eta))^{\chi}\to H^{M}(V,j_{!}\mathcal{L}(\eta))^{\chi}.
  \end{equation}
\end{proposition}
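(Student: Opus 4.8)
The plan is to deduce Proposition~\ref{p:kz} from the known case $\nu = 0$ by reducing the general case to it via an explicit geometric manipulation of the Schechtman--Varchenko arrangement. The key observation is that placing a dominant weight $\nu^*$ at $\infty \in \P^1$ should be equivalent, after passing to an affine chart, to an arrangement problem in which an extra ``virtual'' point carries the weight $\nu^*$. Concretely, I would first recall the Schechtman--Varchenko/Feigin--Schechtman--Varchenko presentation of $\mb{A}(\vv{\lambda},\nu^*)^*$ as a certain $\Sigma_\beta$-(anti)invariant subspace of the top cohomology of the local system $\ml(\eta)$ on $U$. For $\nu = 0$ this is exactly the statement proved in \cite{L2, BBM}; for general $\nu$ one wants to identify $\mb{A}(\vv{\lambda},\nu^*)^*$ with the image of the map \eqref{Sho}, where the master function $\ms{R}$ now carries the weight contributions of $\nu$ at $\infty$ (these appear via the residue at infinity, i.e.\ via the homogeneity/degree of $\ms{R}$ in the $t_b$ variables).

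The main steps, in order, would be: (i) Use the known Bethe-ansatz/singular-vector description of $\mb{A}(\vv{\lambda},\nu^*)$ from \cite{SV} (or its formulation in \cite{FSV2}) realising it as the $\chi$-isotypical part of a cohomology group, where the $t_b$-degree of $\ms{R}$ is precisely tied to $\nu$; the point $\infty$ contributes a hyperplane ``at infinity'' in the projective closure with residue controlled by $(\nu,\beta(b))$-type data. (ii) Choose the compactification $P$ so that the hyperplane(s) at infinity become honest boundary components $E_\alpha$, and track which of these have residue a positive integer (these get removed in passing from $P$ to $V$), which have residue a non-negative integer (removed in passing to $V'$), and which have neither. (iii) Verify that the resulting statement about the image of $H^M(V',q_!\ml(\eta))^\chi \to H^M(V,j_!\ml(\eta))^\chi$ is exactly the $\nu = 0$ statement applied to an auxiliary arrangement with one more marked point --- or, alternatively, directly repeat the Looijenga--BBM argument keeping $\nu$ general throughout, since the only place $\nu = 0$ was genuinely used is in the combinatorics of residues at the divisor at infinity. (iv) Check compatibility with the KZ connection: this is a formal consequence of the fact that the identification in \cite{SV} is connection-equivariant and the maps in \eqref{Sho} vary algebraically in $\vec{z}$ (so one is really comparing two flat subbundles of the same Gauss--Manin-type connection, and it suffices to check the identification fibrewise).

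The technical heart --- and the step I expect to be the main obstacle --- is step (ii)/(iii): carefully analysing the behaviour of the form $\eta$, and in particular its residues, along the boundary divisor corresponding to $\infty \in \P^1$ after one blows up $\ov{M}_{0,\ov{S}}$-type compactifications to make the union of all relevant hyperplanes (including those involving $\infty$) into a simple normal crossings divisor. One must show that the ``extra'' boundary strata introduced by the weight at infinity contribute exactly the terms needed so that the image in \eqref{Sho} has the right dimension, namely $\dim \mb{A}(\vv{\lambda},\nu^*)$, and no more. Here the genericity/resolution machinery of Section~\ref{s:resol} (Theorem~\ref{t:abnormal} and the notion of dense edges) is the right tool: one uses it to build a compactification in which all the residue conditions are transparent, and then the argument of \cite{BBM} (the detailed version cited after Lemma~\ref{l:ext}) goes through essentially verbatim, with $\nu$ carried along as an extra label. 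The remaining verifications --- that $\chi$ is the correct sign character, that the two isotypical projections agree, and that everything is $\Sigma_\beta$-equivariant --- are routine given the setup already laid out in Section~\ref{s:sv}.
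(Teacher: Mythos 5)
Your overall direction---extend the Looijenga/Schechtman--Varchenko/BBM argument from $\nu=0$ to general $\nu$---is the same as the paper's, but you have misplaced the difficulty, and the step you wave through as "essentially verbatim" is where all the new work lives. First, the "extra virtual point at infinity" reduction is not literal: placing $\nu^*$ at a finite point changes the arrangement itself (the number of integration variables $M$ is determined by writing $\sum\lambda_i-\nu$ as a sum of simple roots, and this changes if $\nu^*$ becomes a finite insertion), so there is no auxiliary $\nu=0$ problem with the same $U$ and $\eta$ to quote. Second, and more importantly, it is not true that "the only place $\nu=0$ was genuinely used is in the combinatorics of residues at the divisor at infinity." The residue analysis at infinity is the routine part. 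What genuinely changes is the representation theory: (a) one must first embed $\mb{A}(\vv{\lambda},\nu^*)^*$ into the weight space $V(\vv{\lambda})^*_{-\nu}$ using the \emph{lowest} weight vector of $V_{\nu}^*$, and prove that the composite with the Schechtman--Varchenko map $\Omega^{SV}_{\vv{\lambda},\nu^*}$ is injective --- the paper does this by specialising to $\kappa=\ell+h^{\vee}$ with $\ell\gg 0$ and extending the unitarity/correlation-function argument of \cite{b-unitarity}, using that the $f$-operators kill the lowest weight vector; your proposal never addresses injectivity. (b) The "image of a map" structure in \eqref{Sho} does not come from residue bookkeeping at all: it comes from running the same construction for $-\kappa$ and the dual weights $(\vv{\lambda}^*,\nu)$ and dualising, which produces the surjection out of $H^M(V',q_!\mc{L}(\eta))^{\chi}$. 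Matching the two constructions forces one to compare $V(\vv{\lambda})^*_{-\nu}$ with $V(\vv{\lambda}^*)_{\nu^*}$ via the longest Weyl element $w_0$, to fix a normalisation by $1/\dim V_{\nu}$, and to show that the failure of a certain square to commute lands in $\mf{n}_+$-translates, which map to exact forms in the Aomoto complex by \cite[Theorem 4.2.2]{FSV2}. None of this is visible in the $\nu=0$ case (where $w_0$ and the lowest weight vector play no role), and none of it appears in your outline.

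Finally, the identification of the abstractly constructed map with the \emph{topological} map induced by $V\subseteq V'$ --- which is the actual content of the proposition, since a priori you only get some map whose image is $\mb{A}(\vv{\lambda},\nu^*)^*$ --- requires chasing the Shapovalov form and the quasi-classical contravariant form through the diagrams of \cite{SV} and \cite{BBM} with the weight-$\nu$ decorations carried along. Your step (iv) on flatness is fine but peripheral. As written, the proposal would establish (at best) an inclusion of $\mb{A}(\vv{\lambda},\nu^*)^*$ into $H^M(V,j_!\mc{L}(\eta))^{\chi}$, not the equality with the image of \eqref{Sho}.
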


\begin{remark}\label{choiceV}
  Note that there are other choices for $V$ and $V'$ which give the
  same result: For $V$ we additionally remove all divisors
  $E_{\alpha}$ with $\alpha\leq 0$ and not an integer. See Section
  \ref{s:trp} for a choice that remains invariant under scaling of
  $\kappa$.
\end{remark}

The following is \cite[Proposition 4.17]{BF}:
\begin{proposition}\label{FSVCor}
	Suppose $\msf{k} = \C$ and $\kappa=\ell+h^{\vee}$, with $\ell>0$ a positive integer. 
	The image of $H^M_c(U,\ml(\eta))^{\chi}\to H^M(U,\ml(\eta))^{\chi}$ can be identified with $\mb{V}^*_{\mathfrak{g},\vec{\lambda},\nu^*,\ell}(\mb{P}^1,\vec{z};\infty)$, the space of  conformal blocks.
\end{proposition}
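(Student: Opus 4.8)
The plan is to deduce this from Proposition~\ref{p:kz} together with the cohomological realisation of conformal blocks due to Feigin--Schechtman--Varchenko. Fix a $\Sigma_\beta$-equivariant normal crossings compactification $P$ of $U$ as in Section~\ref{s:top}. All four of the groups $H^M_c(U,\ml(\eta))$, $H^M(V',q_!\ml(\eta))$, $H^M(V,j_!\ml(\eta))$ and $H^M(U,\ml(\eta))$ can be written as $H^M(P,-)$ applied to the extension of $\ml(\eta)$ from $U$ to $P$ obtained by extending, over each boundary divisor, either by $!$ or by derived $*$: by $!$ over \emph{all} boundary divisors for the first, by $!$ exactly over the divisors with $\op{Res}_{E_\alpha}\eta$ a non-negative integer for the second, by $!$ exactly over those with $\op{Res}_{E_\alpha}\eta$ a strictly positive integer for the third, and by derived $*$ over all divisors for the fourth. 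Lemma~\ref{l:ext} (independence of the order of the $!$/$*$ extensions, together with the $\home$ computation) makes these identifications canonical and produces the natural comparison maps, which assemble into a commutative square with $H^M_c$ and $H^M$ at opposite corners and $H^M(V',q_!\ml(\eta))$, $H^M(V,j_!\ml(\eta))$ at the other two. Passing to $\chi$-isotypic parts, the map $H^M_c(U,\ml(\eta))^\chi \to H^M(U,\ml(\eta))^\chi$ therefore factors as
\[
H^M_c(U,\ml(\eta))^\chi \to H^M(V',q_!\ml(\eta))^\chi \to H^M(V,j_!\ml(\eta))^\chi \to H^M(U,\ml(\eta))^\chi ,
\]
and by Proposition~\ref{p:kz} the image of the middle arrow is $\mb{A}(\vec{\lambda},\nu^*)^*$. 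Hence the image of $H^M_c(U,\ml(\eta))^\chi \to H^M(U,\ml(\eta))^\chi$ is the image in $H^M(U,\ml(\eta))^\chi$ of a canonical subspace $W \subseteq \mb{A}(\vec{\lambda},\nu^*)^*$; a weight (or an explicit Schechtman--Varchenko) argument should show $\mb{A}(\vec{\lambda},\nu^*)^* \hookrightarrow H^M(U,\ml(\eta))^\chi$, so that $W$ itself is the object we must identify with $\mb{V}^*_{\mathfrak{g},\vec{\lambda},\nu^*,\ell}$.

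Next I would carry out the residue computation for $\kappa=\ell+h^\vee$ with $\vec{\lambda},\nu$ of level $\ell$: the boundary divisors of $P$ correspond to the strata of the Schechtman--Varchenko arrangement (subsets of the coordinates $t_b$ colliding with one another and/or with some $z_i$, plus the strata at infinity), and their residues are read off from \eqref{deta} and the resolution of Theorem~\ref{t:abnormal}; they are expressed through the form $(\ ,\ )$ on weights, the Casimir values $c(\lambda_i)$, $c(\nu)$, and $\kappa$. The level hypothesis forces $\op{Res}_{E_\alpha}\eta$ to be a non-negative integer only along an explicit, combinatorially described family of ``resonant'' divisors; these are exactly the ones appearing in $V$ or $V'$ but contributing to neither $H^M_c$ nor $H^M$, and are therefore responsible for $W$ being in general a proper subspace of the coinvariants. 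As a consistency check, one has the Poincar\'e--Verdier duality $H^M_c(U,\ml(\eta))^\chi \cong (H^M(U,\ml(-\eta))^{\chi})^\vee$, under which the interior cohomology $W$ is self-paired --- matching the fact that the level-$\ell$ conformal block space carries a perfect pairing.

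It remains to identify $W$ with $\mb{V}^*_{\mathfrak{g},\vec{\lambda},\nu^*,\ell}$, and this is the main obstacle. For this I would invoke the work of Feigin--Schechtman--Varchenko: for $\kappa=\ell+h^\vee$ they realise the space of conformal blocks as an explicit subspace of $H^M(U,\ml(\eta))^\chi$ (equivalently of the Aomoto cohomology $H^M(A^{\starr}(U),\eta\wedge)^\chi$), described either through a subcomplex governed by the arrangement at infinity or, dually, as the subspace whose periods against all twisted cycles with boundary on the resonant divisors vanish --- which is precisely the image of compactly supported cohomology; see \cite{FSV2}, \cite{L2}, and \cite{BF}. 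Matching the ``arrangement at infinity'' used there with the compactification $P$ here, and checking that the sign-isotypic and residue conditions correspond, yields $W = \mb{V}^*_{\mathfrak{g},\vec{\lambda},\nu^*,\ell}$; compatibility with the KZ connection is automatic, since the Schechtman--Varchenko construction varies holomorphically in $\vec{z}$ and intertwines the Gauss--Manin connection on the cohomology with the KZ connection. The hard part is exactly this last matching: one must know either that $W$ has the right dimension (the Verlinde number) and is the correct subspace --- which is the substance of the FSV theorem --- or, avoiding a dimension count, construct an explicit quasi-isomorphism from the FSV complex computing conformal blocks to the relevant subcomplex on $P$, compatibly with the realisation of $H^M_c(U,\ml(\eta))^\chi \to H^M(U,\ml(\eta))^\chi$.
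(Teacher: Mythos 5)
First, a point of reference: the paper does not prove this proposition at all --- it is quoted verbatim as \cite[Proposition 4.17]{BF} (``The following is \cite[Proposition 4.17]{BF}''), so there is no internal argument to compare yours against. Judged on its own, the first part of your reduction is sound: all four groups $H^M_c(U,\ml(\eta))$, $H^M(V',q_!\ml(\eta))$, $H^M(V,j_!\ml(\eta))$, $H^M(U,\ml(\eta))$ are $H^M(P,-)$ of extensions of $\ml(\eta)$ that differ only in which boundary divisors receive a shriek, the four shriek-sets are nested ($\emptyset\subseteq\{\text{residue a strictly positive integer}\}\subseteq\{\text{residue a non-negative integer}\}\subseteq\{\text{all}\}$), and Lemma \ref{l:ext}(2) then supplies the comparison maps and identifies their composite with the canonical map $H^M_c\to H^M$. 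With Proposition \ref{p:kz} this correctly places the image of $H^M_c(U,\ml(\eta))^{\chi}\to H^M(U,\ml(\eta))^{\chi}$ as the image in $H^M(U,\ml(\eta))^{\chi}$ of a subspace $W\subseteq\mb{A}(\vv{\lambda},\nu^*)^*$.

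Two genuine gaps remain, and you concede the second one yourself. (1) The claim that all of $\mb{A}(\vv{\lambda},\nu^*)^*$ injects into $H^M(U,\ml(\eta))^{\chi}$ is asserted (``a weight argument should show'') but never proved, and it is not what the literature provides: the injectivity established in \cite{b-unitarity} and extended in \cite{BF} (and used in Appendix \ref{s:promises}) is only for the conformal-block subspace $\mb{V}^*_{\mathfrak{g},\vec{\lambda},\nu^*,\ell}\subseteq\mb{A}(\vv{\lambda},\nu^*)^*$, and it is proved by positivity of a Hermitian pairing on the correlation-function forms, not by weights. Without it, your argument only identifies the image of $H^M_c\to H^M$ with the image of $W$, not with $W$ itself. (2) The essential step --- that $W$ (or its image) equals $\mb{V}^*_{\mathfrak{g},\vec{\lambda},\nu^*,\ell}$ --- requires showing both that the correlation-function forms of level-$\ell$ conformal blocks lift to compactly supported cohomology (a vanishing statement at the resonant boundary strata of the kind carried out in \cite{b-unitarity} and Appendix \ref{s:promises}) and that nothing larger survives (a rank count against the Verlinde number, or a purity argument as in Remark \ref{r:cb}). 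This is precisely the substance of \cite[Proposition 4.17]{BF}, and your proposal defers it entirely to that reference. As it stands, the proposal is a clean reduction of the statement to the cited result, not an independent proof of it.
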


\subsubsection{}\label{s:kzdef}

When $\kappa \in \Q^{\times}$, we define KZ and CB Nori motives by
replacing the Betti cohomology groups occurring in Propositions
\ref{p:kz} and \ref{FSVCor} by the corresponding Nori motives as in
Definition \ref{d:nm} (see also Remark \ref{r:imageKZ}). We will do
this formally in families using the formalism of mixed sheaves from
Section \ref{s:mixed}, see Remark \ref{r:pts}. The ``multiplicity
motives'' $M_{\kappa}(\lambda_1,\lambda_2,\nu^*)$ of the introduction
are the KZ motives associated to the data $\kappa$,
$\vec{\lambda} = (\lambda_1,\lambda_2)$, and $\nu^*$ with $z_0 =0$ and
$z_1 = 1$.

\section{Mixed sheaves and mixed KZ/CB local systems}\label{s:mixte}

In this section we enhance the KZ and conformal block local systems to
a motivic level using the formalism of mixed sheaves with
coefficients. The notions of nearby cycles and local systems coming
from roots of a function are extended to the level of mixed sheaves
with coefficients. We also recall a basic fact about the Hodge
filtration on nearby cycles.
  
\subsection{Mixed sheaves after Saito}\label{s:mixed}

Let $X$ be an algebraic variety over a field $\msk \subset \C$ and let
$\msp(X)$ be the abelian category of perverse sheaves on $X$ with
$\Q$-coefficients\footnote{We will use $\msp(X,K)$ to denote perverse
  sheaves with coefficients in a field extension $K/\Q$.}; this is the
heart of the perverse $t$-structure on $\dbc(X, \Q)$ introduced by
{ Beilinson--Bernstein--Deligne--Gabber} in \cite{bbd}. We recall
that by a theorem of Beilinson \cite{bei}, the natural functor
$\db(\msp(X)) \to \dbc(X,\Q)$ induced by the embedding of $\msp(X)$
in $\dbc(X,\Q)$ is an equivalence of triangulated categories.

\subsubsection{}\label{s:saito}
A theory of mixed sheaves \cite[Section 1]{saito-form} consists of
$\Q$-linear abelian categories $\msm(X)$ together with exact faithful
functors $\rat_X:\msm(X) \to \msp(X)$,\footnote{Saito uses
  ``$\mr{For}$'' instead of ``$\rat$'', which we use following
  \cite{im}.} which we call the Betti realisation functors, satisfying
a long list of axioms. We not list all of them here, but recall the
axioms and results which will be relevant for us.

Denote the functors $\db(\msm(X)) \to \db(\msp(X))$ induced by the
exact functors $\rat_X$ also by $\rat_X$. 

\begin{enumerate}
\item For $f:X \to Y$ a morphism of varieties over $\msk$ we have
  functors: $f^*, f^!: \db(\msm(Y)) \to \db(\msm(X))$ such that
  $f^* \circ \rat_Y = \rat_X \circ f^*$ and the same for $f^!$ and
  functors $f_*, f_!: \db(\msm(X) \to \db(\msm(Y))$ such that
  $f_* \circ \rat_X = \rat_Y \circ f_*$ and the same for $f_!$.  These
  functors satisfy the usual adjointness properties of the six-functor
  formalism.
\item For all $X$, there is a contravariant duality functor
  $\D_X: \msm(X) \to \msm(X)$ commuting with Verdier duality on
  $\msp(X)$ via $\rat_X$. Its derived functor is denoted by the same
  symbol and satisfies the usual compatibility properties with the
  functors in the previous item.
\item For varieties $X$ and $Y$ there are exact bifunctors
\[
  \boxtimes: \msm(X) \times \msm(Y) \to \msm(X \times Y)
\]
which also commute via the $\rat$ functors with the corresponding
functors on perverse sheaves. We will denote the derived functors of
these functors by the same symbol. We define the tensor product
$\mc{F} \otimes \mc{G}$ of two objects $\mc{F}$ and $\mc{G}$ in
$\db(\msm(X))$ to be $\delta^*(\mc{F} \boxtimes \mc{G})$ where
$\delta:X \to X \times X$ is the diagonal map.
\item Each object $\mc{F} \in \msm(X)$ has a finite increasing
  filtration $W$, called the \emph{weight filtration} which is
  strictly compatible with any morphism in $\msm(X)$ and the
  associated graded is a semisimple object of $\msm(X)$.
\item There is an object $\Q^{\msm}$ of $\msm(\spec \msk)$ such that
  $\rat(\Q^{\msm}) = \Q$  and $W_{-1}\Q^{\msm} = 0$, $W_0(\Q^{\msm}) =
  \Q^{\msm}$.
\item There exists an object $\Q^{\msm}(-1)$ in $\msm(\spec \msk)$
  called the Tate object. We let $\Q^{\msm}(1) = \D
  (\Q^{\msm}(-1))$. For any $\mc{F} \in \msm(X)$ we let
  $\mc{F}(n) = \mc{F} \otimes \Q^{\msm}(1)^{\otimes n}$ if $n\geq 0$
  and $\mc{F}(n) = \mc{F} \otimes \Q^{\msm}(-1)^{\otimes (-n)}$ if
  $n<0$.
\item For any morphism $f:X \to \A^1_{\msk}$ and $Y = f^{-1}(0)$ there
  is a nearby cycles functor $\psi_f: \msm(X) \to \msm(Y)$ which also
  commutes via the $\rat$ functors with the corresponding functor on
  perverse sheaves. There is a morphism of functors
  $\mr{N}: \psi_f \to \psi_f(-1)$ which after applying $\rat$ gives
  the logarithm of the unipotent part of the topological monodromy. At
  the level of topology, the functor $\psi_f$ corresponds to
  $\Psi_f[-1]$, where $\Psi_f$ is as in Definition \ref{d:nearby} and
  we will also use $\Psi_F$ in the context of mixed sheaves to denote
  $\psi_f[-1]$.
\end{enumerate}

\begin{example}\label{e:M}
  $ $
  \begin{enumerate}
  \item For the construction of the rings $\mf{R}_{\kappa}(\mf{g})$
    and $\mf{F}_{\kappa}(\mf{g})$ the key example of a theory of mixed
    sheaves is Saito's category $\mr{MHM}(X_{\C})$ of mixed Hodge
    modules \cite{saito}.
  \item The optimal category for us is the category of perverse Nori
    motives of Ivorra and Morel \cite{im}; when the base is a point
    this is the category of Nori motives. The external tensor product
    is not constructed in \cite{im}, but this has been done very
    recently in \cite{terenzi}. See also \cite{tubach} for the
    construction of functorial Hodge realisations for this category.
  \item We can also consider any of the categories of arithmetic mixed
    sheaves constructed by Saito in \cite{saito-arith}.
  \end{enumerate}
\end{example}

\smallskip

A basic fact that we shall need is that the analogue of Lemma
\ref{kunneth} (3) holds in the setting of mixed sheaves
(\cite[Proposition 5.10]{saito-form}).
\begin{lemma}\label{l:pbc}
  Let $f: W \to \A^1_{\msk}$ be any morphism and $g:X \to W$
  a proper morphism. There is a natural equivalence of functors
  $\psi_f \circ g_* \simeq g_{Y,*} \circ \psi_{f \circ g}$.  Here
  $Y=(f\circ g)^{-1}(0)\subseteq X$ and $g_Y:Y\to f^{-1}(0) \subset W$
  the restriction of $g$.
  
\end{lemma}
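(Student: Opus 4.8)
The plan is to deduce Lemma \ref{l:pbc} directly from the topological base change statement of Lemma \ref{kunneth} (3) together with the axioms of a theory of mixed sheaves listed in Section \ref{s:saito}. The point is that all the functors involved ($\psi_f$, $g_*$, $g_{Y,*}$, $\psi_{f\circ g}$) are already part of the formalism, the compatibilities among $g_*$ and $\psi_{(-)}$ for \emph{composable} maps are built into the axioms, and the faithfulness of the Betti realisation functors $\rat_X$ reduces the construction and the isomorphism-checking to the perverse-sheaf level, where the statement is exactly Lemma \ref{kunneth} (3) (reinterpreted via the identification $\psi_f = \Psi_f[-1]$ from axiom (7)).

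First I would construct the natural transformation. By proper base change in the six-functor formalism for mixed sheaves (axiom (1): $g$ proper means $g_* = g_!$ and these commute with pullback along the inclusion of a fibre, or more directly the compatibility of $g_*$ with the Cartesian square defining $Y$), one obtains a canonical map, or rather one sets up the relevant adjunctions exactly as in the topological proof. Concretely, writing $\iota_Y:Y\hookrightarrow X$ and $\iota_0:f^{-1}(0)\hookrightarrow W$ for the closed embeddings, and using the factorisation of $\psi_f$ through $k_*k^*$ (the construction of $\psi$ in Saito's formalism mirrors Definition \ref{d:nearby}), the standard base change maps for the Cartesian squares relating $X$, $W$, $Y$, $f^{-1}(0)$ and their pullbacks along the universal cover assemble into a morphism $\psi_f\circ g_* \to g_{Y,*}\circ\psi_{f\circ g}$ of functors $\db(\msm(X))\to\db(\msm(f^{-1}(0)))$. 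In fact the cleanest route is to quote \cite[Proposition 5.10]{saito-form} directly, which is exactly this statement; the role of my proof is then simply to record why it applies in our situation and to note the topological shadow.

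Next I would verify that this morphism is an isomorphism. Since $\rat_{f^{-1}(0)}:\msm(f^{-1}(0))\to\msp(f^{-1}(0))$ is exact and faithful (hence conservative), and since it commutes with all four functors $g_*$, $g_{Y,*}$, $\psi_f$, $\psi_{f\circ g}$ by axioms (1) and (7), it suffices to check that the image of our morphism under $\rat$ is an isomorphism. But after applying $\rat$ and using $\psi_f = \Psi_f[-1]$, the morphism becomes precisely the base change isomorphism of Lemma \ref{kunneth} (3) (shifted by $[-1]$, which is harmless). A conservative functor detects isomorphisms, so the morphism was an isomorphism already in $\db(\msm(f^{-1}(0)))$. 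One should also check compatibility with the monodromy operators $\mr{N}$ and $m_f$, which again follows because $\rat$ is faithful and the topological isomorphism is monodromy-compatible by the last sentence of Lemma \ref{kunneth}.

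The main obstacle is conceptual rather than computational: one must make sure that the construction of the base change \emph{morphism} (before checking it is an isomorphism) genuinely lives in the mixed-sheaf categories and is not merely a topological statement. This is where one either invokes \cite[Proposition 5.10]{saito-form} as a black box, or—if one wants to be self-contained—carefully chases the adjunctions $(g^*, g_*)$, $(f^*, f_*)$ and the unit/counit maps for the closed embeddings $\iota_Y$, $\iota_0$ inside $\db(\msm(-))$, using that the formalism provides exactly the same six functors and adjunctions as in the topological setting. Given that Section \ref{s:saito} explicitly states these axioms (and explicitly cites \cite[Proposition 5.10]{saito-form} for this very lemma), the honest writeup is short: construct the map via the formalism, reduce to $\rat$ by faithfulness, and conclude by Lemma \ref{kunneth} (3).
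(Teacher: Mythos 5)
Your proposal is correct and matches the paper's treatment: the paper gives no proof beyond citing \cite[Proposition 5.10]{saito-form}, which is exactly the reference you identify as the source of the base change morphism in the mixed category. Your additional remarks—constructing the map within the six-functor formalism and checking it is an isomorphism by applying the conservative functor $\rat$ and invoking Lemma \ref{kunneth} (3)—are a correct elaboration of why that citation suffices.
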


\subsubsection{Cohomology functors}

Let $X$ be a smooth irreducible variety. We have the usual functors
for derived categories given by:
${}^{\msm}H^k:\db(\msm(X))\to \msm(X)$. We define
$H^k:\db(\msm(X))\to \msm(X)$, by
$$H^k(\mathcal{K})= {}^{\msm}H^{k+\dim X}(\mathcal{K})[-\dim X] .$$
We also have functors ${}^{\msp}H^k:\db(\msp(X))\to \msp(X)$
compatible via $\rat$ with ${}^{\msm}H^k$.

Consider a local system $\ml$ placed in degree $-\dim X$, i.e.,
$\ml[\dim X]$ as an element of $\dbc(X,\Q)$. This is perverse, and
hence as an object in $\db(\msp(X))=\dbc(X,\Q)$ lives in perverse
degree $0$. Therefore a local system $\ml$ placed in topological
degree $j$ in $\dbc(X,\Q)$ is an object in $\db(\msp(X))$ in perverse
degree $j+\dim X$.

 \begin{lemma}
   Let $\mathcal{K}$ be any object in $\dbc(X,\Q)= \db(\msp(X))$ whose
   cohomology in any degree is a local system. Then
   ${}^{\msp}H^{k+\dim X}(\mathcal{K})[-\dim X]$ is a local system
   placed in degree $0$ (as an element of $\dbc(X,\Q)$), is naturally
   isomorphic to the degree $k$ topological cohomology of 
   $\mathcal{K}$.
\end{lemma}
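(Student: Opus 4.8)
The plan is to reduce the statement to the familiar fact that a perverse sheaf which is a shift of a local system in a single degree can be recovered by one cohomological functor. First I would observe that the hypothesis on $\mathcal{K}$ — all cohomology sheaves $\mathcal{H}^j(\mathcal{K})$ are local systems — together with the description in the paragraph just above the lemma, lets one compute the perverse cohomology explicitly: since $X$ is smooth irreducible of dimension $d = \dim X$, a local system placed in topological degree $j$ sits in perverse degree $j + d$, so in the stupid/canonical filtration picture each truncation contributes perverse cohomology in the single degree $j+d$. Concretely, I would run the spectral sequence (or just the long exact sequences of the canonical truncation triangles $\tau_{\leq j-1}\mathcal{K} \to \tau_{\leq j}\mathcal{K} \to \mathcal{H}^j(\mathcal{K})[-j] \xrightarrow{[1]}$) relating ${}^{\msp}H^\bullet$ to the ${}^{\msp}H^\bullet$ of the shifted cohomology sheaves $\mathcal{H}^j(\mathcal{K})[-j]$. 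Because $\mathcal{H}^j(\mathcal{K})[-j]$ has ${}^{\msp}H^{\bullet}$ concentrated in degree $j+d$, where it equals $\mathcal{H}^j(\mathcal{K})[-j]$ itself, the spectral sequence degenerates and gives ${}^{\msp}H^{k+d}(\mathcal{K}) \cong \mathcal{H}^k(\mathcal{K})[-k]$, which after the shift $[-\dim X]$ in the statement becomes $\mathcal{H}^k(\mathcal{K})$ placed in degree $0$. This is exactly "the degree $k$ topological cohomology of $\mathcal{K}$" as a local system in degree $0$.

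The key steps, in order: (1) fix $d = \dim X$ and recall from the preceding discussion that the perverse $t$-structure on $\dbc(X,\Q)$, restricted to complexes whose cohomology sheaves are local systems, is just the standard $t$-structure shifted by $d$ — i.e. a local system $\mathcal{L}$ placed in topological degree $j$ is in perverse degree exactly $j+d$; (2) for each $j$, note $\mathcal{H}^j(\mathcal{K})[-j]$ therefore has ${}^{\msp}H^i = 0$ for $i \neq j+d$ and ${}^{\msp}H^{j+d}(\mathcal{H}^j(\mathcal{K})[-j]) = \mathcal{H}^j(\mathcal{K})[-j]$; (3) use the canonical filtration of $\mathcal{K}$ by $\tau_{\leq j}\mathcal{K}$ and the induced long exact sequences in ${}^{\msp}H^\bullet$, observing that the perverse-degree supports of successive graded pieces are all distinct (they are the $j+d$ for the various $j$ with $\mathcal{H}^j(\mathcal{K}) \neq 0$), so every connecting map vanishes and ${}^{\msp}H^{k+d}(\mathcal{K}) \cong \mathcal{H}^k(\mathcal{K})[-k]$ canonically; (4) apply the shift $[-d]$ from the definition of $H^k$ (or, here, from the statement) to land on $\mathcal{H}^k(\mathcal{K})$ in degree $0$, and check the isomorphism is the natural one (it is, being built from the truncation triangles, which are functorial); (5) finally identify $\mathcal{H}^k(\mathcal{K})$ with "the degree $k$ topological cohomology," which is the definition.

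I do not expect any serious obstacle here — this is essentially a formal consequence of how the perverse $t$-structure interacts with the standard one on the subcategory of complexes with locally constant cohomology on a smooth irreducible variety, a situation where perversity is governed by a single stratum. The only mild point of care is that everything must be phrased at the level of $\db(\msp(X)) = \dbc(X,\Q)$ and that the "degree $k$ topological cohomology" of an object with local-system cohomology means literally the cohomology sheaf $\mathcal{H}^k$, not hypercohomology; once that identification is made the vanishing of all relevant connecting/differentials is immediate from the disjointness of perverse degrees, and naturality is inherited from the truncation triangles. If one prefers to avoid spectral-sequence bookkeeping, an equivalent and perhaps cleaner route is: by Beilinson's equivalence realize $\mathcal{K}$ as a complex in $\db(\msp(X))$, split off its canonical truncations, and use that $\mathrm{Hom}_{\db(\msp(X))}(\mathcal{A}[-p], \mathcal{B}[-q]) = 0$ when the perverse amplitudes of $\mathcal{A}[-p]$ and $\mathcal{B}[-q]$ force no overlap, to conclude that $\mathcal{K}$ is (non-canonically) the direct sum of the shifts of its cohomology sheaves while ${}^{\msp}H^{k+d}$ picks out precisely the $k$-th summand canonically.
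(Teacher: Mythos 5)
Your proposal is correct and follows essentially the same route as the paper: both arguments rest on the observation that a local system in topological degree $j$ on smooth irreducible $X$ sits in perverse degree $j+\dim X$, and both then d\'evissage $\mathcal{K}$ via its canonical truncation triangles to identify ${}^{\msp}H^{k+\dim X}(\mathcal{K})[-\dim X]$ with $\mathcal{H}^k(\mathcal{K})$. The paper phrases the induction by first reducing to $k=0$ and splitting $\mathcal{K}$ into pieces concentrated in degrees $\leq 0$ and $>0$, whereas you run all the truncations at once, but this is only a difference of bookkeeping.
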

\begin{proof}
Using translations, we may assume $i=0$.
We first verify the given statement if $\mathcal{L}$ is a local system in degree $j$, then the degree $0$ topological  unless $j=0$, and the right-hand side is zero unless $\dim X= j+\dim X$ and then ${}^{\msp}{H}^{\dim X}(\mathcal{K})$ is an object in topological degree $-\dim X$, as an element of   $\dbc(X,\Q)$, and the statement follows. 

We now prove the general statement. First, if $\mathcal{K}$ is
concentrated in degrees $\leq 0$, then
${}^{\msp}{H}^{j+\dim X}(\mathcal{K})[-\dim X]=0$ for $i> 0$, and
coincides with $H^0(\mathcal{K})$ if $i=0$. This follows from
induction and distinguished triangles
$\tau_{<0}(\mathcal{K})\to \mathcal{K}\to H^0(\mathcal{K})\leto{[1]}$.

Similarly we check that if $\mathcal{K}$ is concentrated in degrees
$> 0$ then ${}^{\msp}{H}^{k+\dim X}(\mathcal{K})[-\dim X] =0$ for
$k\leq 0$. Now write $\mathcal{K}$ as a middle term of a triangle in
$\dbc(X,\Q)$ with one side in degrees degree $\leq 0$ and the other side
$>0$ to obtain the desired isomorphism. The functorial nature of the
isomorphism is because the above assertions also hold with
${}^{\msp}{H}^{k+\dim X}(\mathcal{K})[-\dim X]$ replaced by
topological cohomology in degree $k$.
\end{proof}

\subsubsection{Mixed sheaves with coefficients}

Let $\ms{A}$ be a $\Q$-linear additive category. For $K/\Q$ a finite
extension we define the category $\ms{A}_K$ to be the category of
pairs $(A,\iota_A)$ where $A$ is an object of $\ms{A}$ and
$\iota_A:K \to \End(A)$ is a ring homomorphism.  A morphism $\phi$ in
$\ms{A}_K$ from $(A,\iota_A)$ to $(B,\iota_B)$ is a morphism
$\phi:A \to B$ in $\ms{A}$ such that $\phi \circ \iota_A = \iota_B$.
There is a forgetful functor $\mr{For}: \ms{A}_K \to \ms{A}$ given by
forgetting $\iota_A$ and a base change functor
$\otimes K: \ms{A} \to \ms{A}_K$, where for $A \in \ms{A}$ the
$K$-action on $A \otimes_{\Q} K$ is the natural action on the
right\footnote{For the definition of the object $A \otimes K$, see,
  e.g., \cite[p.~321]{deligne-v}.}. It is easy to see that
$-\otimes_{\Q} K$ is the left adjoint of $\mr{For}$.  If $\ms{A}$ is an
abelian category then $\ms{A}_K$ is also abelian, but we do not know
whether the corresponding result is true for triangulated categories.

The basic objects under consideration in this article will be objects
of $\msm(X)_K$ for $K$ a suitable cyclotomic field and $\msm$ a theory
of mixed sheaves as above.  The functors $f^*,f^!, f_*, f_!$ extend
automatically to functors in this category and so do the functors $\D$
and $\psi_f$.  We also have a constant object $K^{\msm}$ and the Tate
object $K^{\msm}(-1)$. To define external tensor products over $K$,
which we will denote by $\boxtimes_K$, we proceed as follows.

  First, we note that since $K/\Q$ is separable, the multiplication
  map $K \otimes_{\Q} K \to K$ gives rise to a canonical idempotent
  $e$ in $K \otimes_{\Q}K$ such that $e\cdot (K \otimes_{\Q} K)$ maps
  isomorphically onto $K$. We will call this the ``diagonal
  idempotent''. We then have:
\begin{definition}\label{d:ext}
  Let $\mc{F}$ be an object of $\db(\msm(X))_K$ and $\mc{G}$ of
  $ \db(\msm(Y))_K$. Then $\mc{F} \boxtimes \mc{G}$ in
  $ \db(\msm(X \times Y))_K$ has an action of $K \otimes_{\Q}K$ and
  we let $\mc{F}\; \boxtimes_K \mc{G}$ in $ \db(\msm(X \times Y))_K$ be
  the summand corresponding to the diagonal idempotent.
\end{definition}
In the above definition, and the lemma below, we use the fact that the
bounded derived category of an abelian category is idempotent complete
(\cite[Corollary 2.10]{bs}).

\begin{lemma}\label{l:perv}
  There are natural equivalences of categories
  $\msp(X,K) \simeq \msp(X)_K$, $\dbc(X,K) \simeq\dbc(X,\Q)_K$, and
  for any $\Q$-linear abelian category $\ms{A}$,
  $\db(\ms{A}_K) \simeq \db(\ms{A})_K$.
\end{lemma}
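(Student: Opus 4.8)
The plan is to prove the three equivalences by the single device of ``extension of scalars is a retract cut out by the diagonal idempotent'', in the order (3), then (1), then (2). The common mechanism is as follows: the functor forgetting the $K$-structure is fully faithful onto $K$-linear morphisms, and every object (or complex) carrying a $K$-action is a direct summand---picked out by the diagonal idempotent $e \in K\otimes_\Q K$ of Definition \ref{d:ext}---of one induced up from $\Q$. Since the base field has characteristic $0$, $K/\Q$ is separable, so $e$ exists, and for any $\Q$-linear abelian category $\ms B$ the induction functor $-\otimes_\Q K\colon \ms B\to \ms B_K$ is exact ($K$ is $\Q$-free of finite rank) and is left adjoint to the forgetful functor $\mr{For}$.

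For (3): exactness of $\mr{For}\colon\ms A_K\to\ms A$ induces $\db(\ms A_K)\to\db(\ms A)$, and the tautological $K$-action on objects of $\ms A_K$ is natural, yielding $\Phi\colon\db(\ms A_K)\to\db(\ms A)_K$. For full faithfulness, the adjunction gives $\home_{\db(\ms A)}(\mr{For}\,M,\mr{For}\,N)\cong\home_{\db(\ms A_K)}(\mr{For}(M)\otimes_\Q K, N)$; the object $\mr{For}(M)\otimes_\Q K$ carries, besides its structural $K$-action on the tensor factor, the commuting $K$-action induced by $\iota_M$, hence a $K\otimes_\Q K$-action, whose $e$-summand is isomorphic to $M$ (immediate from $e\cdot(K\otimes_\Q K)\cong K$); tracking this through the adjunction identifies the $e$-part of the left-hand Hom with the $K$-linear maps, i.e.\ with $\home_{\db(\ms A)_K}(\Phi M,\Phi N)$. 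For essential surjectivity, given $(C,\iota)\in\db(\ms A)_K$ one forms $C\otimes_\Q K\in\db(\ms A_K)$ (a finite direct sum, with the structural $K$-action on the tensor factor); the $\iota(k)\otimes\mathrm{id}_K$ are $K$-linear for this structure and commute with it, so they make $C\otimes_\Q K$ into a $K\otimes_\Q K$-module object of $\db(\ms A_K)$. As $\ms A_K$ is abelian, $\db(\ms A_K)$ is idempotent complete (\cite{bs}), so $\wt C:=e\cdot(C\otimes_\Q K)$ is defined, and $\Phi(\wt C)\cong(C,\iota)$, again by $e\cdot(K\otimes_\Q K)\cong K$.

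Statements (1) and (2) are obtained by running exactly this argument for the forgetful functors on perverse and constructible sheaves. For (1): restriction of scalars $\dbc(X,K)\to\dbc(X,\Q)$ is exact and conservative, and the two perverse $t$-structures are defined by identical support and cosupport conditions, so it is $t$-exact; passing to hearts and recording the $K$-action gives $\msp(X,K)\to\msp(X)_K$, an equivalence by the verbatim argument above (now $\msp(X,K)$ and $\msp(X)$ are abelian, hence automatically idempotent complete, and $-\otimes_\Q K\colon\msp(X,\Q)\to\msp(X,K)$ is exact with exact adjoint $\mr{For}$). For (2): the restriction-of-scalars functor $\dbc(X,K)\to\dbc(X,\Q)$ has the exact left adjoint $-\otimes_\Q K$ descending from the sheaf level, and $\dbc(X,K)$ is idempotent complete (for instance because, by Beilinson's theorem \cite{bei}, which holds equally with $K$-coefficients, it is the bounded derived category of the abelian category $\msp(X,K)$, to which \cite{bs} applies); the same full faithfulness and essential surjectivity arguments then yield $\dbc(X,K)\simeq\dbc(X,\Q)_K$. (Alternatively, (2) may be deduced formally from (1), (3) and Beilinson's theorem via $\dbc(X,K)\simeq\db(\msp(X,K))\simeq\db(\msp(X)_K)\simeq\db(\msp(X))_K\simeq\dbc(X,\Q)_K$, the middle equivalences being (1) and (3).)

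The one substantive step, and the part I would be most careful about, is essential surjectivity: promoting an object of $\db(\ms A)$ (resp.\ a perverse $\Q$-sheaf, resp.\ an object of $\dbc(X,\Q)$) that carries a $K$-action \emph{only in the triangulated (resp.\ abelian) category} to an honest complex over $\ms A_K$ (resp.\ a genuine perverse $K$-sheaf, resp.\ an object of $\dbc(X,K)$). This is exactly where separability of $K/\Q$ is used---the diagonal idempotent splits off the correct summand of $C\otimes_\Q K$---together with idempotent completeness of the categories involved; in particular no monadicity machinery for triangulated categories is needed.
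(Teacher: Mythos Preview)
Your proof is correct and follows essentially the same approach as the paper's: both use the diagonal idempotent $e\in K\otimes_\Q K$ together with the adjunction $(-\otimes_\Q K)\dashv \mr{For}$ and idempotent completeness (\cite{bs}) to pass back and forth. The paper proves (1) first (checking $G\circ F\simeq\mr{Id}$ directly and $F\circ G\simeq\mr{Id}$ via Yoneda), then deduces (2) from (1), (3) and Beilinson's theorem exactly as in your alternative derivation, and proves (3) by the same argument as (1); your reversed order and slightly different bookkeeping (adjunction rather than Yoneda) are cosmetic.
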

\begin{proof}
  There is a canonical faithful exact functor
  $F: \msp(X,K) \to \msp(X)_K$ induced by the functor
  $\dbc(X,K) \to \dbc(X,\Q)_K$ which forgets the $K$-structure on the
  sheaves in a representing complex but, remembers it on the maps in
  the derived category.  To construct an inverse, given an object
  $\mc{F} \in \msp(X)$ we can consider the object
  $\mc{F}' \in \msp(X,K)$ induced by the functor
  $\dbc(X,\Q) \to \dbc(X,K)$ which tensors each sheaf in a
  representing complex with $K$. If $\mc{F}$ is in $\msp(X)_K$ then
  $\mc{F}'$ has two commuting actions of $K$. The first action is just
  the natural $K$-action on any object of $\msp(X,K)$ induced from the
  $K$-action on all objects of $\dbc(X,K)$. The second action comes
  from the $K$-structure on $\mc{F}$.  The fact that the two actions
  commute means that we have a ring homomorphism
  $K \otimes_{\Q} K \to \mr{End}_{\msp({X,K})}(\mc{F}')$. We let
  $G(\mc{F})$ be the summand of $\mc{F}'$ corresponding to the
  diagonal idempotent, so we get a functor
  $G: \msp(X)_K \to \msp(X,K)$. It is easy to see that $G \circ F
  \simeq \mr{Id}_{\msp(X,K)}$.

  To check that $F \circ G \simeq \mr{Id}_{\msp(X)_K}$ we use the Yoneda
  lemma. For $\mc{F}_1, \mc{F}_2$ objects of $\msp(X)_K$, there is a
  functorial isomorphism
  \[
    \home_{\msp(X)_K}(\mc{F}_1,F(\mc{F}_2')) \simeq
    \home_{\msp(X)_K}(\mc{F}_1,\mc{F}_2) \otimes_{\Q} K ,
  \]
  where we use the $K$-action on $F(\mc{F}_2')$ induced by the
  $K$-action on $\mc{F}_2$. Under this isomorphism, the $K$-action on
  the LHS induced by the $K$-action on $\mc{F}_2'$ where we forget the
  $K$-structure on $\mc{F}_2$ (i.e., the ``first'' action above)
  corresponds to the $K$ action on the RHS given by multiplying by $K$
  on the right in the tensor product.

   Now suppose $\mc{G} \in \msp(X)$ is any object with two
    commuting actions of $K$, which we call the $A$ and $B$
    actions. Then $\home_{\msp(X)_K}(\mc{F}_1,\mc{G})$, where we use
    the $A$ action on $\mc{G}$, has a commuting action of $K$ coming
    from the $B$ action on $\mc{G}$, so
    $\home_{\msp(X)_K}(\mc{F}_1,\mc{G})$ becomes a
    $K \otimes_{\Q} K$-module. From the way this module structure is
    defined, it is clear that
    \[
      e\cdot \home_{\msp(X)_K}(\mc{F}_1,\mc{G}) =
      \home_{\msp(X)_K}(\mc{F}_1,e \cdot\mc{G}) ,
    \]
    where $e \cdot\mc{G}$ denotes the summand of $\mc{G}$
    corresponding to $e$. Applying this to $\mc{G} = F(\mc{F}_2')$, 
    it follows from the discussion of the previous paragraph that
\[
  \home_{\msp(X)_K}(\mc{F}_1, F (G(\mc{F}_2))) \simeq
\home_{\msp(X)_K}(\mc{F}_1,\mc{F}_2) \otimes_K K =
\home_{\msp(X)_K}(\mc{F}_1,\mc{F}_2) .
\]

The second statement follows from the third, since
$\dbc(X,K) \simeq \db(\msp(X,K))$ .

The argument for proving the third part is similar to that for the
first: The functor $\mr{For}: \ms{A}_K \to \ms{A}$ is exact, so
induces an additive functor $\db(\ms{A}_K) \to \db(\ms{A})_K$. To
construct an inverse we use the functor
$\db(\ms{A}) \to \db(\ms{A}_K)$ induced by the exact functor
$-\otimes_{\Q} K: \ms{A} \to \ms{A}_K$ and use the diagonal idempotent
as above.
\end{proof}

\subsubsection{}
We will need a version of the classical K\"unneth isomorphism in the
setting of $K$-mixed sheaves.
\begin{lemma}\label{l:kf}
  Let $X_i,Y_i$, $i=1,2$, be varieties over a base variety $S$ and let
  $f_i:X_i \to Y_i$, $i=1,2$ be morphisms of varieties over $S$.
  Let $\mc{F}_i$, $i=1,2$ be objects of $ \db(\msm(X_i))$. Then
  there is a natural isomorphism
  \[
    (f_1)_! \mc{F}_1 \boxtimes_{S,K} (f_2)_! \mc{F}_2 \leto{\simeq} (f_1 \times_S
    f_2)_!(p_1^*(\mc{F}_1) \boxtimes_{S,K} p_2^*(\mc{F}_2)) .
  \]
  in $ \db(\msm(Y_1\times_S Y_2)_K)$.
\end{lemma}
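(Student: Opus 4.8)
The statement is a relative Künneth formula for the shriek pushforward in the setting of $K$-mixed sheaves over a base $S$. The plan is to deduce it from the known Künneth isomorphism for mixed sheaves with $\Q$-coefficients by carefully tracking the two commuting $K$-actions and cutting out the summand corresponding to the diagonal idempotent. First I would recall that the axioms for a theory of mixed sheaves include base change and a projection formula for the six functors, from which one gets the classical relative Künneth isomorphism $(f_1)_!\mc{F}_1 \boxtimes_S (f_2)_!\mc{F}_2 \simeq (f_1 \times_S f_2)_!(p_1^*\mc{F}_1 \boxtimes_S p_2^*\mc{F}_2)$ in $\db(\msm(Y_1 \times_S Y_2))$; this is standard and is the $\Q$-coefficient analogue (see Lemma \ref{KunnethF}(1) for the underlying topological statement). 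Since $\boxtimes_{S,K}$ is by Definition \ref{d:ext} defined as the summand of $\boxtimes_S$ (or rather its relative version, which is constructed in the same way as a summand of an external product followed by a diagonal restriction) cut out by the diagonal idempotent $e \in K \otimes_\Q K$, the key point is that the classical Künneth isomorphism is equivariant for the $K \otimes_\Q K$-action, so it restricts to an isomorphism on the $e$-summands on both sides.

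The key steps, in order, are as follows. (1) State the relative Künneth isomorphism over $S$ for $\Q$-coefficient mixed sheaves, obtained by combining proper (shriek) base change and the projection formula from the six-functor axioms in Section \ref{s:saito}; equivalently, realise $(f_1 \times_S f_2)_!$ as a composite and use that $f_!$ commutes with external products relative to $S$. (2) Observe that for objects $\mc{F}_i \in \db(\msm(X_i))_K$, each side of the isomorphism carries a canonical action of $K \otimes_\Q K$: the first copy of $K$ acts through the $K$-structure on $\mc{F}_1$ (which is preserved by $f_{1,!}$, by $p_1^*$, and by the external product functor, since all six functors and $\boxtimes$ extend to the $K$-linear categories), and the second copy through the $K$-structure on $\mc{F}_2$. (3) Check that the classical Künneth isomorphism of step (1), applied with the underlying $\Q$-mixed sheaves, is a morphism of $K \otimes_\Q K$-modules — this is just naturality of the isomorphism with respect to the endomorphisms $\iota_{\mc{F}_1}(k) \boxtimes \mr{id}$ and $\mr{id} \boxtimes \iota_{\mc{F}_2}(k)$, together with functoriality of $f_{1,!}$, $p_i^*$ and $\boxtimes$ in these endomorphisms. (4) Conclude by applying the idempotent $e$: since an isomorphism of $K \otimes_\Q K$-modules carries the $e$-summand isomorphically to the $e$-summand, and since $\boxtimes_{S,K}$ is by definition the $e$-summand of the relative $\boxtimes_S$, we obtain the desired isomorphism in $\db(\msm(Y_1 \times_S Y_2)_K)$, using Lemma \ref{l:perv} to identify $\db(\msm(-)_K)$ with $\db(\msm(-))_K$ so that the idempotent splitting makes sense.

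The main obstacle is purely formal bookkeeping rather than a genuine mathematical difficulty: one must be careful that the relative external product $\boxtimes_S$ in the mixed-sheaf setting is well-behaved (i.e. that $\delta_S^*(\mc{F}_1 \boxtimes \mc{F}_2)$ over $Y_1 \times_S Y_2$, or however the relative version is set up, interacts correctly with shriek pushforward), and that the two $K$-actions one writes down genuinely commute and genuinely agree with the two factors of $K \otimes_\Q K$ acting on the idempotent summand. The commutativity of the two $K$-actions is exactly parallel to the argument in the proof of Lemma \ref{l:perv}, and the compatibility with $f_!$ and $p^*$ follows because these functors on $\msm(-)_K$ are by construction induced from the $\Q$-linear functors, hence are $K$-linear in the relevant sense. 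Once these compatibilities are in hand, step (4) is immediate. I do not expect to need any limit mixed Hodge theory or anything specific to the KZ setting here; this lemma is a general fact about mixed sheaves with coefficients.
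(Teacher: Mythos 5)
Your proposal is correct and follows essentially the same route as the paper: construct the map formally from the six-functor properties, establish the isomorphism first for $\boxtimes_S$, and then cut out the summand corresponding to the diagonal idempotent $e$ on both sides. The only (immaterial) difference is that the paper verifies the $\boxtimes_S$-isomorphism by applying the faithful exact functor $\rat$ and invoking the known topological K\"unneth formula, whereas you re-derive it inside the mixed category from base change and the projection formula; both are valid.
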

\begin{proof}
  The map is constructed using formal properties of the functors
  involved as in the case of the classical K\"unneth isomorphism (see,
  e.g., \cite[(1.16)]{Schurmann}). The fact that it is an isomorphism
  follows from the fact that it is an isomorphism after applying the
  $\rat$ functor: we first prove the formula with $\boxtimes_S$
  instead of $\boxtimes_{S,K}$ and then deduce the formula as stated
  by applying the idempotent $e$ used to define $\boxtimes_{S,K}$ on
  both sides.
\end{proof}

\subsubsection{}
To simplify notation we will use $\msm(\msk)$ to mean
$\msm(\spec \msk))$. For the first example in Example \ref{e:M},
$\msm(\msk)$ is the category $\mr{MHS}$ of polarisable mixed Hodge
structures. For the second example, it is the category of Nori motives
over $\msk$. It follows that in both cases the functor $\rat$ on
$\msm(\msk)$ factors through a faithful exact functor to $\mr{MHS}$
which we call the Hodge realisation.

For $K$ a finite extension of $\Q$,  we call $\mr{HS}_K$ (resp.
$\mr{MHS}_K$) the category of pure (resp.~mixed) Hodge structures with
coefficients in $K$, or $K$-Hodge (resp.~$K$-mixed Hodge)
structures. For both choices of $\msm$ above, we have a faithful
exact functor from $ \msm(\msk)_K$ to $\mr{MHS}_K$.

Let $\msf{H}$ be a $K$-Hodge structure of weight $n$.  If $H$
is the underlying $\Q$-vector space of $\msf{H}$, then we have a
decomposition $H\otimes_{\Q}\C = \oplus_{p+q = n} H^{p,q}$ with
$H^{p,q}$ a $\C$-vector space and $\overline{H^{p,q}} = H^{q,p}$; here
the conjugation is with respect to the real structure of
$H \otimes_{\Q}\C$ induced by its $\Q$-structure. If $\iota:K \to \C$
is any embedding, we may consider the space
$H^{p,q}(\iota) :=H^{p,q} \otimes_{K\otimes_{\Q}\C} \C$, where the
$K$-module structure of $H^{p,q}$ is that coming from the action of
$K$ via endomorphisms of $\msf{H}$ and the map $\iota$ is used to
define the map $K \otimes_\Q \C \to \C$. We have
$H^{p,q} = \oplus_{\iota:K \to \C} H^{p,q}(\iota)$ and we let
$h^{p,q}(\iota) = \dim_{\C}(H^{p,q}(\iota))$.

If $\msf{M}$ is a $K$-mixed Hodge structure with
underlying $\Q$-vector space $M$, the Hodge filtration $F^{\bullet}$
on $M\otimes \C$ is preserved by the action of $K$. We thus have
$ F^p(M \otimes_{\Q} \C) = \oplus_{\iota:K \to \C}F^p(M\otimes_{\Q}\C)(\iota)$
and we set $f^p(\iota) := \dim_{\C}F^p(M\otimes_{\Q}\C)(\iota)$.
\begin{definition}\label{d:hn}
  $ $
  \begin{enumerate}
  \item If $(\msf{H}, \iota_{\msf{H}})$ is an object of $ HS_K$ and
    $\iota:K \to \C$ is an embedding then we call the numbers
    $h^{p,q}(\iota)$ the Hodge numbers of $(\msf{H},\iota_A)$ with
    respect to $\iota$. We call the polynomial
    $\sum_{p,q} h^{p,q,}(\iota)u^pv^q \in \Z[u,v]$ the $(u,v)$-Hodge
    polynomial of $(\msf{H}, \iota_{\msf{H}})$. We will denote this
    polynomial by $p(\msf{H},\iota_M)$ or just $p(\msf{H})$ if
    $\iota_{\msf{H}}$ is implicit.
  \item If $(\msf{M}, \iota_{\msf{M}})$ is an object of $ \mr{MHS}_K$
    and $\iota:K \to \C$ is an embedding then we call the polynomial
    $\sum_p (f^p(\iota) - f^{p+1}(\iota))t^p \in \Z[t]$ the $t$-Hodge
    (or often simply Hodge) polynomial of $(\msf{M}, \iota_{\msf{M}})$
    with respect to $\iota$. We will denote this polynomial by
    $P(\msf{M},\iota_{\msf{M}}$ or just $P(\msf{M})$ if $\iota_{\msf{M}}$ is
    implicit.
  \end{enumerate}
  Both polynomials are additive in short exact sequences and
  multiplicative in tensor products.
\end{definition}
If $K$ is given as a subfield of $\C$, then we usually refer to the
Hodge numbers/polynoimials of $(A,\iota_A)$ with respect to the
inclusion of $K$ in $\C$ simply as the Hodge numbers/polynomials of
$(A,\iota_A)$.  Also, if $\msf{V}$ is a $K$-VMHS over a complex
variety $S$, then we denote by $P(\msf{V})$ the polynomial
$P(\msf{V}_s)$ for any $s \in S(\C)$.

\subsubsection{}

The following property of the nearby cycles functor in $\mr{MHM}_K$ is
the key to constructing our enriched representation and fusion
rings. We formulate it as a lemma for ease of reference. (The
statement is well-known, but we do not know any explicit reference.)

\begin{lemma} \label{l:saito}%
  Let $\msf{V}$ be a $K$-admissible variation of mixed Hodge structure
  over an open subset $i:U \to \A^1_{\C}$ which we may view as an
  object of $ \mr{MHM}(U)_K$. Let $\Psi_i(\msf{V})$ be the
  $ \mr{MHS}_K$ given by the (shifted) nearby cycles functor
  corresponding to the inclusion of $i:U \to \A^1_{\C}$. Then for any
  $x \in U$, the Hodge polynomials of $\msf{V}_x$ and
  $\Psi_i(\msf{V})$ are equal for any embedding $\iota:K \to \C$.
\end{lemma}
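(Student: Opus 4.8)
The plan is to reduce the statement to the basic fact that in $\mr{MHM}$ the Hodge filtration of the nearby cycles $\psi_f$ is the limit Hodge filtration of the degenerating variation, so that its graded dimensions agree with the generic ones. First I would recall that for an admissible VMHS $\msf{V}$ on $i:U \to \A^1_{\C}$, the underlying $\Q$-object $\Psi_i(\msf{V})$ is, after the shift, exactly the limit mixed Hodge structure of Schmid--Steenbrink--Cattani--Kaplan type attached to the degeneration at $0$; this is one of the defining compatibilities of Saito's theory (the perverse-sheaf-level statement is axiom (7) in Section \ref{s:saito}, and the Hodge-theoretic enhancement is part of \cite{saito}). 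The key numerical input is that the limit Hodge filtration $F^{\bullet}_{\lim}$ on the fibre has the same dimensions $\dim F^p$ as the Hodge filtration $F^p(\msf{V}_x)$ on a nearby fibre $\msf{V}_x$ for $x\in U$: this is the standard statement that $p\mapsto \dim F^p \msf{V}_x$ is constant on $U$ (since $F^{\bullet}$ is a filtration by subbundles of the underlying flat bundle) and that the limit filtration is obtained by taking the limit of these subbundles in the appropriate Deligne extension, hence has the same ranks.

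Concretely, the steps I would carry out are: (i) Note that $\dim_{\C} F^p(\msf{V}_x \otimes_{\Q}\C)$ is independent of $x \in U$ because the Hodge bundles $F^p$ of an admissible VMHS are holomorphic subbundles of the vector bundle underlying $\msf{V}$; hence the $t$-Hodge polynomial $P(\msf{V}_x)$ of Definition \ref{d:hn}(2) is independent of $x$. (ii) Identify the underlying mixed Hodge structure of $\Psi_i(\msf{V})$ with Steenbrink's limit mixed Hodge structure; the Hodge filtration on it is by definition the limit of the Hodge bundles $F^p$ inside the (lower) canonical extension across $0$. (iii) Since taking the limit of a holomorphic subbundle of a vector bundle across a point preserves its rank, we get $\dim F^p \Psi_i(\msf{V}) = \dim F^p \msf{V}_x$ for all $p$, and therefore $P(\Psi_i(\msf{V})) = P(\msf{V}_x)$. (iv) Finally, account for the $K$-structure: the $K$-action on $\msf{V}$ induces a $K$-action on $\Psi_i(\msf{V})$ commuting with the Hodge filtration, and the decompositions $F^p = \oplus_{\iota:K\to\C} F^p(\iota)$ are compatible with the limiting procedure, so the equality of Hodge polynomials holds for each embedding $\iota:K \to \C$ separately, which is the assertion of the lemma.

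The step I expect to be the main obstacle is (ii)--(iii): making precise, with an actual reference inside Saito's formalism, that the Hodge filtration on $\psi_f$ of a smooth admissible VMHS coincides with the classical limit Hodge filtration, and in particular that no ``jumping'' of ranks occurs when passing to the limit. In Saito's construction $\psi_f$ of a mixed Hodge module is built via $V$-filtration techniques and nearby cycles of the underlying filtered $\mc{D}$-module, and one must check that for $\msf{V}$ coming from an admissible variation this recovers the Steenbrink limit MHS; this compatibility is asserted by Saito but, as the paper itself notes, is hard to pin down to a single citation, so I would either cite \cite{saito} together with the comparison results of Steenbrink--Zucker \cite{SZ} and El Zein \cite{EZ} already invoked in the paper, or give a short argument reducing to the curve case $S = \A^1_{\C}$ where the statement is classical. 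Once that compatibility is in hand, the rank-constancy of Hodge bundles and the additivity/multiplicativity of $P$ from Definition \ref{d:hn} make the rest routine.
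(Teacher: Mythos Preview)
Your proposal is correct and follows essentially the same strategy as the paper: decompose along the $K$-action into $\iota$-isotypical pieces (the paper does this first, you do it last, but that is immaterial), then argue that the Hodge filtration on $\Psi_i(\msf{V})$ has the same graded dimensions as on a generic fibre.

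The one genuine difference is in how the ``obstacle'' step (your (ii)--(iii)) is handled. You propose to identify Saito's nearby-cycles Hodge filtration with the classical Schmid--Steenbrink limit Hodge filtration and then invoke the fact that the extended Hodge bundles in the canonical extension are locally free. The paper avoids this comparison entirely: it cites Schnell's notes for the concrete description of the Hodge filtration on $\psi_f$ via the $V$-filtration of the underlying filtered $\mc{D}$-module, and then isolates the rank-preservation statement as a completely elementary lemma about a saturated free submodule $M'$ of a free module $M$ over a DVR and a filtration of $M/\mf{m}M$, namely that $\sum_i \dim_k\bigl((M_i\cap M')/(M_{i-1}\cap M')\bigr)=\rk M'$. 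This buys the paper a proof that does not depend on pinning down the Saito--Steenbrink comparison, which you yourself flagged as hard to cite cleanly; conversely, your route makes the geometric content (limit of subbundles preserves rank) more transparent. Either argument is fine.
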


Saito's definition of the nearby cycles functor is normalised so as to
preserve mixed Hodge modules. We use $\Psi_i$ to denote the twist and
shift of Saito's functor, normalised so that if $0 \in U$ then
$\Psi_i(\msf{V})$ is equal to $\msf{V}_0$.

\begin{proof}
  The action of $K$ on $\msf{V}$ decomposes $\V$, the filtered complex
  local system associated to $\msf{V}$, as a direct sum of filtered
  complex local systems, i.e.,
  $\V = \oplus_{\iota:K \to \C} \V_{\iota}$.  Since the nearby cycles
  functor is additive for direct sums of local systems and Saito's
  construction of the Hodge filtration on nearby cycles is also
  additive we are reduced to proving a statement about nearby cycles
  for filtered $\ms{D}$-modules over $U$. The lemma then follows from
  the construction of this filtration---see, e.g., \cite[Section
  9]{schnell} for a self-contained description---and Lemma
  \ref{e:elem} below.
\end{proof}

\begin{lemma}\label{e:elem}
  Let $(R,\mf{m},k)$ be a DVR and $M$ a free module of finite rank over
  $R$. Let $M'$ be a saturated (free) submodule of $M$ and let
  $0=N_0 \subset N_1 \subset \dots \subset N_i\subset \dots \subset
  N_r = M/\mf{m}M$ be a finite filtration of $M/\mf{m}M$ by $k$-submodules.  For
  $p:M \to M/\mf{m}M$ the natural quotient map, let $M_i =
  p^{-1}(N_i)$. Then
  \[
    \sum_{i=1}^r \dim_k \left ( \frac{M_i \cap M'}{M_{i-1} \cap M'} \right )
    = \rank(M') \,.
  \]
\end{lemma}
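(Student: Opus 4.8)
The plan is to reduce the statement to a purely linear-algebra computation over the residue field $k$ by analysing how the filtration $\{N_i\}$ interacts with the image of $M'$ in $M/\mathfrak{m}M$. Write $\bar{M} = M/\mathfrak{m}M$ and let $\bar{M}' \subset \bar{M}$ be the image of $M'$ under $p$. The first observation is that since $M'$ is saturated in $M$, the inclusion $M' \hookrightarrow M$ remains injective modulo $\mathfrak{m}$, so $\bar{M}' \cong M'/\mathfrak{m}M'$ has $\dim_k \bar{M}' = \rank(M')$. Thus the right-hand side of the asserted identity equals $\dim_k \bar{M}'$, and it suffices to show that the left-hand side also equals $\dim_k \bar{M}'$.

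The second step is to compute each graded piece $\dfrac{M_i \cap M'}{M_{i-1} \cap M'}$. Here $M_i = p^{-1}(N_i)$, so $M_i \cap M' = p^{-1}(N_i) \cap M'$, and applying $p$ (whose kernel $\mathfrak{m}M$ is contained in every $M_i$ but \emph{not} necessarily in $M'$) one sees that the natural map $M_i \cap M' \to \bar{M}$ has image exactly $N_i \cap \bar{M}'$. The kernel of this map restricted to $M_i \cap M'$ is $\mathfrak{m}M \cap M' = \mathfrak{m}M'$ (again using saturation: $M' \cap \mathfrak{m}M = \mathfrak{m}M'$). Since $\mathfrak{m}M' \subset M_{i-1} \cap M'$ as well, the quotient $\dfrac{M_i \cap M'}{M_{i-1}\cap M'}$ maps isomorphically onto $\dfrac{N_i \cap \bar{M}'}{N_{i-1}\cap \bar{M}'}$. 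Therefore
\[
  \sum_{i=1}^r \dim_k\!\left( \frac{M_i \cap M'}{M_{i-1}\cap M'}\right)
  = \sum_{i=1}^r \dim_k\!\left( \frac{N_i \cap \bar{M}'}{N_{i-1}\cap \bar{M}'}\right).
\]
The final step is to note that $\{N_i \cap \bar{M}'\}_{i=0}^r$ is a filtration of $\bar{M}'$ by $k$-subspaces starting at $0$ (since $N_0 = 0$) and ending at $N_r \cap \bar{M}' = \bar{M}'$ (since $N_r = \bar{M}$), so the telescoping sum of dimensions of its graded pieces is $\dim_k \bar{M}' = \rank(M')$, as required.

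The only slightly delicate point — and the one I would be careful to state cleanly — is the repeated use of the saturation hypothesis in the two forms $M' \cap \mathfrak{m}M = \mathfrak{m}M'$ and ``$\bar{M}' \cong M'/\mathfrak{m}M'$ has rank $\rank(M')$''; both follow from the fact that $M/M'$ is torsion-free, hence flat over the DVR $R$, so that $\mathrm{Tor}_1^R(M/M',k) = 0$ and the sequence $0 \to M' \otimes_R k \to M \otimes_R k \to (M/M')\otimes_R k \to 0$ is exact. Everything else is bookkeeping with the snake lemma applied to $p$ restricted to the various subspaces. No genuine obstacle is expected; the content is just to organise the identification of graded pieces correctly.
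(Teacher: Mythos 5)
Your proof is correct and rests on the same two observations as the paper's: the sum telescopes, and saturation gives $\mf{m}M \cap M' = \mf{m}M'$, so the total is $\dim_k(M'/\mf{m}M') = \rank(M')$. The paper telescopes the left-hand side directly (using $M_r \cap M' = M'$ and $M_0 \cap M' = \mf{m}M'$) without passing through the quotient $M/\mf{m}M$, so your identification of the individual graded pieces with those of the filtration $N_i \cap \bar{M}'$ is more detail than is needed, but it is not a different argument.
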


\begin{proof}
  One only needs to note that $M_0 \cap M' = \mf{m}M \cap M' = \mf{m}M'$, so the sum
  is equal to $\dim_k(M'/\mf{m}M') = \rank(M')$.
\end{proof}

\subsection{Some mixed local systems} \label{s:mls}

\begin{definition}
  Let $X$ be a smooth irreducible variety over $\msk$ and let $K$
  be a finite extension of $\Q$. A \emph{$K$-mixed local system} over
  $X$ is an object of $ \db(\msm(X))_K$ of the form $\mc{L}[i]$,
  where $\mc{L}$ is in $ \msm(X)_K$ and $\rat_X(\mc{L})$ in
  $\msp(X,K)$ is a $K$-local system. The rank of such a $K$-mixed local
  system is defined to be the rank of $\rat_X(\mc{L})$ as a $K$-local
  system.
\end{definition}

\subsubsection{} \label{s:mls1}

For any smooth variety $X$ we have the constant mixed local system
$\Q^{\msm}_X = \pi^*(\Q^{\msm})$, where $\pi: X \to \spec k$ is the
structure map.  Let $N$ be a positive integer and $m_N:\G_m \to \G_m$
the map given by $t \mapsto t^N$. Then we have the mixed local
system $(m_N)_* \Q^{\msm}_{\G_m}$ on $\G_m$ which is a rank $N$ local
system. If $\mu_N$ is contained in $\msk$, then $m_N$ is a Galois cover
with Galois group $\mu_N$, so we get a map from $\Q[\mu_N]$, the group
ring of $\mu_N$, to $\End((m_N)_* \Q^{\msm}_{\G_m})$. The ring
$\Q[\mu_N]$ is semisimple, and the field $\Q(\mu_N)$ is
canonically a direct factor, hence corresponds to an idempotent. We
define the $\Q(\mu_N)$-mixed local system $\Q(\mu_N)t^{1/N}$ on $\G_m$ to be the
corresponding summand of $(m_N)_* \Q^{\msm}_{\G_m}$. If $X$ is any
smooth variety and $g$ is a nowhere vanishing regular function on $X$,
we define $\Q(\mu_N)g^{1/N}$ to be $g^* (\Q(\mu_N) t^{1/N})$. For any nonzero
rational number $a= b/N$ we define $\Q(\mu_N)g^a$ to be the $\Q(\mu_N)$-local
system $\Q(\mu_N) (g^b)^{1/N}$. If $G$ is a finite group acting on $X$ and
preserving $g$ then $\Q(\mu_N)g^a$ acquires a natural $G$-linearisation.
\begin{lemma}\label{l:twist}
  Let $g:X \to \G_m$, and $N, a$ positive integers such that
  $(a,N) = 1$. Assume $\mu_N \subset \msf{k}$, $K = \Q(\mu_N)$ and let
  $\sigma:K \to K$ be the automorphism given by $\zeta \mapsto
  \zeta^a$ for $\zeta \in \mu_N$. Then $Kg^{a/N}$ is isomorphic to the
  $K$-mixed local system obtained by precomposing the $K$-structure of
  $Kg^{1/N}$ by $\sigma^{-1}$.  
\end{lemma}
\begin{proof}
  We may assume that $X = \G_m$ and $g = t$. The assumption on $a$
  implies that we have a fibre product square
  \begin{equation*}
    \xymatrix{
      \G_m \ar[r]^{m_N} \ar[d]^{m_a} & \G_m \ar[d]^{m_a} \\
      \G_m \ar[r]^{m_N} & \G_m
    }
  \end{equation*}
  which gives rise to a base change isomorphism
  $(m_a)^* (m_N)_* \Q_{\G_m}^{\msm} \simeq
  (m_N)_*\Q_{\G_m}^{\msm}$. Under this isomorphism the $\mu_N$-action
  on the RHS corresponds to precomposing the $\mu_N$-action on the LHS
  with the map given by $\zeta \mapsto \zeta^a$. The lemma follows
  from this and the definition of $Kt^{a/N}$.
\end{proof}

\begin{lemma}\label{l:prod}
  Let $X_i$, $i=1,2$, be smooth varieties over $\msk$ and let $g_i$,
  $i=1,2$ be nowhere vanishing regular functions on $X_i$. Let
  $X = X_1 \times X_2$ and $g = p_1^*(g_1)\cdot p_2^*(g_2)$. If
  $\mu_N \subset \msk$ and $K = \Q(\mu_N)$, then there is a natural
  isomorphism
  \[
    Kg_1^{1/N} \boxtimes_K Kg_2^{1/N} \simeq Kg^{1/N}
  \]
  of $K$-mixed local systems on $X$.
\end{lemma}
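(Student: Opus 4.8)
The plan is to reduce the statement to the ``universal'' case $X_1=X_2=\G_m$, $g_1=g_2=\op{id}$, and then to prove that case by a base-change identification followed by a short computation with idempotents in $\Q[\mu_N\times\mu_N]$.

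\emph{Reduction.} Let $\op{mult}\colon\G_m\times\G_m\to\G_m$ be the multiplication map, so $g=\op{mult}\circ(g_1\times g_2)$ with $g_1\times g_2\colon X_1\times X_2\to\G_m\times\G_m$. Since $*$-pullback is compatible with external products and (after applying the diagonal idempotent) with $\boxtimes_K$, one has, for $K$-mixed local systems $\mc A_i$ on $Y_i$ and morphisms $h_i\colon X_i\to Y_i$, a natural isomorphism $h_1^*\mc A_1\boxtimes_K h_2^*\mc A_2\simeq(h_1\times h_2)^*(\mc A_1\boxtimes_K\mc A_2)$. Applied to $h_i=g_i$, $\mc A_i=Kt^{1/N}$, and using $Kg_i^{1/N}=g_i^*(Kt^{1/N})$, this gives $Kg_1^{1/N}\boxtimes_K Kg_2^{1/N}=(g_1\times g_2)^*\bigl(Kt^{1/N}\boxtimes_K Kt^{1/N}\bigr)$, while $Kg^{1/N}=g^*(Kt^{1/N})=(g_1\times g_2)^*\bigl(\op{mult}^*Kt^{1/N}\bigr)$. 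Thus it suffices to produce a natural isomorphism $\op{mult}^*Kt^{1/N}\simeq Kt^{1/N}\boxtimes_K Kt^{1/N}$ of $K$-mixed local systems on $\G_m\times\G_m$ and then pull it back along $g_1\times g_2$.

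\emph{The universal case.} Recall $Kt^{1/N}=\epsilon\cdot m_{N,*}\Q^{\msm}_{\G_m}$, where $m_N\colon\G_m\to\G_m$ is the $\mu_N$-Galois cover $t\mapsto t^N$ and $\epsilon\in\Q[\mu_N]$ is the idempotent onto the ring factor $\Q(\mu_N)$, the $K$-structure being the action of $\epsilon\Q[\mu_N]\cong\Q(\mu_N)$. First I would apply proper base change for the finite map $m_N$ to the cartesian square with bottom $m_N$ and right $\op{mult}$: with $P:=\G_m^2\times_{\G_m}\G_m=\{((u,v),y):y^N=uv\}$ and $p\colon P\to\G_m^2$ its structure map (itself a $\mu_N$-cover, $\zeta$ scaling $y$), this yields $\op{mult}^*(m_{N,*}\Q^{\msm}_{\G_m})\simeq p_*\Q^{\msm}_P$ compatibly with the $\mu_N$-actions, hence $\op{mult}^*Kt^{1/N}=\epsilon\cdot p_*\Q^{\msm}_P$. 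Next, the morphism $(u_1,y_1,u_2,y_2)\mapsto((u_1,u_2),y_1y_2)$ from the source of $m_N\times m_N$ onto $P$ is finite of degree $N$ with fibres the orbits of the antidiagonal $\mu_N^{\mathrm{ad}}=\{(\zeta,\zeta^{-1})\}\subset\mu_N\times\mu_N$; so $P=\G_m^2/\mu_N^{\mathrm{ad}}$, and using the K\"unneth isomorphism $(m_N\times m_N)_*\Q^{\msm}_{\G_m^2}\simeq\mc H:=m_{N,*}\Q^{\msm}_{\G_m}\boxtimes m_{N,*}\Q^{\msm}_{\G_m}$ we find that $p_*\Q^{\msm}_P$ is the $\mu_N^{\mathrm{ad}}$-invariant summand of $\mc H$, equivariantly for the residual action of $\mu_N\cong(\mu_N\times\mu_N)/\mu_N^{\mathrm{ad}}$.

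\emph{Conclusion and main obstacle.} Combining, $\op{mult}^*Kt^{1/N}=e_1(\epsilon\otimes1)\cdot\mc H$, where $e_1=\tfrac1N\sum_{\zeta\in\mu_N}[(\zeta,\zeta^{-1})]\in\Q[\mu_N\times\mu_N]$ projects onto $\mu_N^{\mathrm{ad}}$-invariants and $\epsilon\otimes1$ implements $\epsilon$ for the residual $\mu_N$ via the section $\zeta\mapsto(\zeta,1)$ of $\mu_N\times\mu_N\to\mu_N$. On the other hand $Kt^{1/N}\boxtimes Kt^{1/N}=(\epsilon\otimes\epsilon)\cdot\mc H$ with its $K\otimes_\Q K$-action from $\epsilon\Q[\mu_N]\otimes_\Q\epsilon\Q[\mu_N]$, and $Kt^{1/N}\boxtimes_K Kt^{1/N}$ is its summand for the diagonal idempotent $e_\Delta\in K\otimes_\Q K$; a direct check identifies $e_\Delta$ inside $\Q[\mu_N\times\mu_N]$ with $\tfrac1N\sum_{\zeta\in\mu_N}\epsilon[\zeta]\otimes\epsilon[\zeta^{-1}]=(\epsilon\otimes\epsilon)e_1$. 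So everything reduces to the identity $e_1(\epsilon\otimes1)=(\epsilon\otimes\epsilon)e_1$ in $\Q[\mu_N\times\mu_N]$, which I would verify on the characters $(\chi_a,\chi_b)$ of $\mu_N\times\mu_N$: since $e_1$ is supported on the locus $a\equiv b\pmod N$, where $\gcd(a,N)=\gcd(b,N)$, the idempotents $\epsilon\otimes1$ and $\epsilon\otimes\epsilon$ act identically there. A check that the residual $K$-action agrees with the $K$-action on $Kt^{1/N}\boxtimes_K Kt^{1/N}$ (both being the common action of the two factors on the diagonal summand) then upgrades this to an isomorphism of $K$-mixed local systems. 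The statement is elementary topologically---both sides are rank-one $K$-local systems, with monodromy $\zeta^{w}$ for $w$ the relevant winding number, and winding numbers add---so the real work, and the step I expect to be the main obstacle, is precisely this bookkeeping: tracking the $K$-structures and residual group actions through the base-change and K\"unneth isomorphisms so that the idempotent identity genuinely matches the two constructions. (The base-change and K\"unneth isomorphisms for finite morphisms are part of the six-functor formalism, and reducing to $\Q$-coefficients via $-\otimes_\Q K$ makes them standard.)
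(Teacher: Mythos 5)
Your proof is correct and follows essentially the same route as the paper's: reduce by functoriality of pullback to the universal case $X_1=X_2=\G_m$, $g_1=g_2=t$, and then settle that case using the definition of $Kt^{1/N}$ as a summand of $(m_N)_*\Q^{\msm}_{\G_m}$ together with the K\"unneth isomorphism (Lemma \ref{l:kf}). The paper leaves the base-change identification of $\op{mult}^*Kt^{1/N}$ and the idempotent bookkeeping in $\Q[\mu_N\times\mu_N]$ implicit; your version supplies those details, and they check out.
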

\begin{proof}
  By functoriality of tensor products this reduces to the case
  $X_1=X_2= \G_m$ and $g_1 = g_2 =t$. The lemma then follows from the
  definition of $Kt^{1/N}$ and Lemma \ref{l:kf}.
\end{proof}

\subsubsection{Nearby cycles with coefficients}\label{s:nbc}

In the topological setting of Section \ref{s:rnb}, the nearby cycles
$\Psi_f \mc{F}$, for $\mc{F} \in \dbc(X, \C)$, decomposes as a direct
sum $\oplus_{\chi\in \C^{\times}} \Psi_{f,\chi}\,\mc{F}$, where the
semisimple part of the local monodromy acts by $\chi$ on
$\Psi_{f,\chi}\,\mc{F}$. In the setting of mixed sheaves one only has
a decomposition
$\psi_{f} \mc{F} = \psi_{f,1}\, \mc{F} \oplus \psi_{f, \neq 1}\,\mc{F}$
since we work with $\Q$-coefficients. However, one can define more
refined decompositions when we work with $K$-mixed sheaves if $K$
contains some non-trivial roots of unity and we explain this now (for
lack of a suitable reference).

\smallskip
  
Let $f:X \to \A^1_{\msk}$ be a morphism, $Y = f^{-1}(0)$, and
$\psi_f: \msm(X) \to \msm(Y)$ the nearby cycles functor. By exactness
of this functor, it extends to an exact functor
$\db(\msm(X)) \to \db(\msm(Y))$ which we also denote by $\psi_f$. By
functoriality this gives rise to a functor
$ \msm(X)_K \to  \msm(Y)_K$. If $K$ contains some
non-trivial roots of unity then we can refine these functors as
follows:

Let $M \gg 0$ be a sufficiently large integer and let
$m_M:\G_m \to \G_m$ be the map given by $t\mapsto t^M$. The functor
$\psi_f$ is defined (\cite[Section 5]{saito-form}) by first defining
the unipotent nearby cycles functor $\psi_{f,1}$ (the details of whose
definition will not be important for us here) and then defining
\begin{equation}\label{e:psi}
  \psi_f\,\mc{F} : = \psi_{f,1} (\mc{F} \otimes (m_M)_* \Q^{\msm}_{\G_m})
\end{equation}
for any $\mc{F} \in \msm(X)$. This is independent of $M \gg 0$
(depending on $\mc{F}$).

Now suppose $K = \Q(\mu_N)$, so we have the $K$-local systems $Kt^{a}$ for
any $a = b/N$ and $b \in \Z$. These local systems only depend on the
image of $a$ in $\Q/\Z$, so we define local systems $Kt^{\bar{a}}$,
for $\bar{a} \in \Q/\Z$, to be $Kt^a$ for any representative
$a \in \Q$ of $\bar{a}$. We define $Kf^{\bar{a}}$ to be
$f^*(Kt^{\bar{a}})$
\begin{definition}\label{d:nba}
  For $\mc{F} \in  \msm(X)_K$ and $\bar{a} \in (1/N)\Z/\Z$, we
  let
  \[
    \psi_{f,\bar{a}}\,\mc{F} :=\psi_{f,1}\,(\mc{F} \otimes_K Kf^{-\bar{a}})
    \in  \msm(Y)_K .
  \]
\end{definition}

\begin{remark}
  We note that $\psi_{f,1}$ as defined in \cite[Section 5]{saito-form}
  is equal to $\psi_{f,\bar{0}}$ as defined above.
\end{remark}

\begin{lemma}\label{l:ds}
  The object
  $\bigoplus_{\bar{a} \in (1/N)\Z/\Z} \psi_{f,\bar{a}}\,\mc{F}$ is a
  direct summand of $\psi_{f} \,\mc{F}$.
\end{lemma}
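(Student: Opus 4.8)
The plan is to realise both $\psi_f\mc{F}$ and $\bigoplus_{\bar{a}}\psi_{f,\bar{a}}\mc{F}$ as $\psi_{f,1}$ applied to $\mc{F}$ twisted by the pullback of a tame sheaf on $\G_m$, and then to exhibit the smaller twisting sheaf as a direct summand of the larger one by a transfer argument.

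First I would rewrite the two sides. By additivity of $\psi_{f,1}$ and of $\mc{F}\otimes_K(-)$, Definition~\ref{d:nba} gives
\[
  \bigoplus_{\bar{a}\in(1/N)\Z/\Z}\psi_{f,\bar{a}}\mc{F}
  =\psi_{f,1}\Bigl(\mc{F}\otimes_K f^*\bigoplus_{\bar{a}\in(1/N)\Z/\Z}Kt^{-\bar{a}}\Bigr),
\]
and since $\bigoplus_{\bar{a}}Kt^{-\bar{a}}=\bigoplus_{\bar{a}}Kt^{\bar{a}}$, the construction of the sheaves $Kt^{\bar{a}}$ in Section~\ref{s:mls1} together with the splitting $\Q[\mu_N]\otimes_\Q K\cong\prod_{\bar{a}\in(1/N)\Z/\Z}K$ identifies the right-hand side with $\psi_{f,1}\bigl(\mc{F}\otimes_K f^*(m_N)_*K_{\G_m}\bigr)$, where $m_N\colon\G_m\to\G_m$ is the Galois cover $t\mapsto t^N$ and $K_{\G_m}$ the constant $K$-mixed local system. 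On the other hand, by \eqref{e:psi} one has $\psi_f\mc{F}=\psi_{f,1}\bigl(\mc{F}\otimes_K f^*(m_M)_*K_{\G_m}\bigr)$ for any $M\gg 0$ depending on $\mc{F}$; choose for $M$ a multiple of $N$, say $M=NM'$. (In both displays we implicitly restrict to $X^*:=f^{-1}(\G_m)$, where $f^*$ of a sheaf on $\G_m$ is defined, using that $\psi_f$ and $\psi_{f,1}$ factor through $\msm(X^*)$.) Since all the functors involved preserve direct summands, it now suffices to prove that $(m_N)_*K_{\G_m}$ is a direct summand of $(m_M)_*K_{\G_m}$ in $\msm(\G_m)_K$.

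For this I would factor $m_M=m_N\circ m_{M'}$, where $m_{M'}\colon\G_m\to\G_m$, $t\mapsto t^{M'}$, is finite \'etale of degree $M'$. The six-functor formalism of Section~\ref{s:saito} furnishes, for the finite \'etale morphism $m_{M'}$, the adjunction unit $K^{\msm}_{\G_m}\to(m_{M'})_*(m_{M'})^*K^{\msm}_{\G_m}=(m_{M'})_*K^{\msm}_{\G_m}$ together with a trace morphism $(m_{M'})_*K^{\msm}_{\G_m}\to K^{\msm}_{\G_m}$ whose composite with the unit is multiplication by $\deg(m_{M'})=M'$, which is invertible. Hence $K_{\G_m}$ is a direct summand of $(m_{M'})_*K_{\G_m}$; applying the functor $(m_N)_*$ to this split inclusion and its retraction shows that $(m_N)_*K_{\G_m}$ is a direct summand of $(m_N)_*(m_{M'})_*K_{\G_m}=(m_M)_*K_{\G_m}$, which finishes the argument.

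The one point requiring care---though it is a routine unwinding of the definitions in Section~\ref{s:mls1}---is the identification $(m_N)_*K_{\G_m}\cong\bigoplus_{\bar{a}\in(1/N)\Z/\Z}Kt^{\bar{a}}$: one must check that, after base change to $K=\Q(\mu_N)$, the $K$-summand of the factor $\Q(\mu_d)$ of $\Q[\mu_N]$ (for $d\mid N$) attached to an embedding $\Q(\mu_d)\hookrightarrow K$ is precisely the rank-one sheaf $Kt^{\bar{a}}$ with $\bar{a}$ the corresponding element of order $d$. Everything else is formal manipulation with the six functors and with the compatibilities of $\psi_{f,1}$ already recorded in the text.
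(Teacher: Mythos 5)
Your proof is correct and follows essentially the same route as the paper: factor $m_M = m_N \circ m_{M/N}$, use the trace/unit composite for the finite \'etale cover of degree $M/N$ (which is multiplication by an invertible integer) to split $(m_N)_*K_{\G_m}$ off $(m_M)_*K_{\G_m}$, and identify $(m_N)_*\Q^{\msm}_{\G_m}\otimes K$ with $\bigoplus_{\bar{a}}Kt^{\bar{a}}$ via the primitive idempotents of $\Q[\mu_N]\otimes_{\Q}K$. The paper's proof is exactly this argument, citing Saito's trace maps for the splitting step.
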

\begin{proof}
  We suppose that $M \gg 0$ is such that $N \mid M$. Then we have a
  factorisation $m_M = m_N \circ m_{M/N}: \G_m \to \G_m$, which using
  trace maps as in the proof of \cite[Proposition 5.7]{saito-form}
  gives a direct sum decomposition
  \[
    (m_M)_* \Q^{\msm}_{\G_m} \simeq (m_N)_* \Q^{\msm}_{\G_m} \oplus
    \mc{L}
  \]
  where the nature of $\mc{L}$ is not relevant here. Given the
  definition of $\psi_f$ from \eqref{e:psi}, the lemma follows from
  the claim that
  \[
     (m_N)_* \Q^{\msm}_{\G_m} \otimes K \simeq \bigoplus_{\bar{a} \in
       (1/N)\Z/\Z} Kt^{\bar{a}} .
   \]
   On the LHS we have an action of $\Q[\mu_N]\otimes_{\Q} K$ and one
   easily checks that the direct sum decomposition on the RHS is
   exactly the one given by the primitive idempotents in this algebra.
\end{proof}

\begin{lemma}\label{l:ls}
  Let $f:X \to \A^1_{\msk}$ be a smooth morphism, $Y= f^{-1}(0)$ and
  $\mc{L}$ a $K$-mixed local system on $X$. Then
  $\Psi_f(\mc{L}) = \Psi_{f,1}(\mc{L}) = \mc{L}|_Y$ and
  $\mr{N}: \Psi_f(\mc{L})\to \Psi_f(\mc{L})(-1)$ is equal to zero.
\end{lemma}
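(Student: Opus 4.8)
The plan is to reduce everything to Lemma \ref{kunneth}(4) via the $\rat$ functor together with the construction of $\psi_f$ in \eqref{e:psi}. First I would recall that by axiom (7) of a theory of mixed sheaves, $\psi_f$ (and hence its refinements $\psi_{f,\bar a}$ and $\Psi_f = \psi_f[-1]$) commutes with $\rat$; since $\rat_X$ is faithful and exact, to prove an isomorphism of objects in $\db(\msm(Y))_K$ of the given shape it suffices to prove it after applying $\rat$, i.e.\ as $K$-mixed local systems this amounts to checking the underlying statement on perverse sheaves/topological local systems, where Lemma \ref{kunneth}(4) applies: for $f$ smooth and $\mc{L}$ a local system, $\Psi_f \mc{L} \simeq \mc{L}|_Y$ with trivial monodromy. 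This already forces $\rat(\Psi_{f,\bar a}(\mc{L})) = 0$ for $\bar a \neq \bar 0$ (the semisimple part of monodromy is trivial, so all non-unipotent isotypic pieces vanish), hence $\Psi_{f,\bar a}(\mc{L}) = 0$ for $\bar a\neq \bar 0$ by faithfulness, and $\Psi_f(\mc{L}) = \Psi_{f,1}(\mc{L})$.

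Next I would identify $\Psi_{f,1}(\mc{L})$ with $\mc{L}|_Y$ at the motivic level. By \eqref{e:psi}, $\psi_f \mc{L} = \psi_{f,1}(\mc{L} \otimes (m_M)_* \Q^{\msm}_{\G_m})$ for $M \gg 0$; but when $f$ is smooth, $\mc{L} \otimes f^*\big((m_M)_*\Q^{\msm}_{\G_m}\big)$ decomposes (using the trace-map splitting as in the proof of Lemma \ref{l:ds}, or directly since $f^*$ of the constant summand splits off) with the constant summand contributing $\psi_{f,1}(\mc{L}) = \mc{L}|_Y$ and the remaining summands contributing non-unipotent nearby cycles. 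The cleanest route, which avoids re-deriving the smooth case motivically from scratch, is: the functor $\psi_{f,1}$ for a smooth $f$ together with the natural adjunction map $\iota^*\mc{L} \to \Psi_{f,1}(\mc{L})$ (coming from $\iota^* k_* k^* \to$ on the unipotent part) gives a morphism $\mc{L}|_Y \to \Psi_{f,1}(\mc{L})$ in $\msm(Y)_K$; applying the faithful exact $\rat$ and invoking Lemma \ref{kunneth}(4) shows this morphism is an isomorphism, since a morphism in $\msm(Y)_K$ is an isomorphism iff it is after $\rat$ (exactness of $\rat$ plus faithfulness: the kernel and cokernel in $\msm(Y)_K$ map to the kernel and cokernel of an isomorphism, hence are zero).

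Finally, for the statement $\mr{N} = 0$: the morphism of functors $\mr{N}: \psi_f \to \psi_f(-1)$ from axiom (7) applied to $\rat$ gives the logarithm of the unipotent part of the topological monodromy on $\rat(\Psi_f\mc{L})$, which is trivial by Lemma \ref{kunneth}(4). Hence $\rat(\mr{N}) = 0$, and since $\rat$ is faithful, $\mr{N} = 0$ on $\Psi_f(\mc{L})$. I expect the only mild subtlety — the ``main obstacle'' — to be bookkeeping the normalisation: matching Saito's twist/shift convention for $\psi_f$ with the topological $\Psi_f[-1]$ so that the identification $\Psi_f(\mc{L}) = \mc{L}|_Y$ (rather than a Tate twist of it) is literally correct; but this is exactly the normalisation already fixed in the paper when defining $\Psi_f$ in the context of mixed sheaves in \S\ref{s:saito}, so it should cause no real trouble.
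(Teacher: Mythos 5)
Your proposal is correct and follows essentially the same route as the paper: the paper's proof simply invokes the canonical specialisation map $\mr{sp}: \mc{L}|_Y \to \Psi_{f,1}(\mc{L})$ from Saito's formalism and checks it is an isomorphism after applying $\rat_Y$ via Lemma \ref{kunneth}(4), exactly as you do. Your additional spelled-out steps (vanishing of the non-unipotent pieces and $\mr{N}=0$ by faithfulness of $\rat$) are the implicit content of the paper's terse argument.
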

\begin{proof}
  There is a canonical specialisation map
  $\mr{sp}: \mc{L}|_{Y}\to \Psi_{f,1}(\mc{L})$ (see
  \cite[(5.2.2)]{saito-form} and it follows from Lemma \ref{kunneth}
  (4) that this map becomes an isomorphism after applying $\rat_Y$.
\end{proof}

\begin{remark}\label{r:ext}
  The analog of Lemma \ref{l:ext} holds in the setting of $K$-mixed
  sheaves. This follows by applying Lemma \ref{l:ext} to Betti
  realisations.
\end{remark}

\subsubsection{} \label{s:mls2}

Let $f:{X} \to S$ be a smooth proper morphism between smooth
irreducible quasi-projective varieties over $\msk$. Let
${E} = \cup_{\alpha \in A}{E}_{\alpha}$ be a normal crossings divisor
in ${X}$ such that ${E}$ is a relative strict normal crossings divisor
with respect to $f$, i.e., ${E} \cap f^{-1}(s)$ is a strict normal
crossings divisor for every geometric point $s$ of $S$. We also assume
that the intersection of each ${E}_{\alpha}$ with every geometric
fibre is irreducible. Let ${U} = {X} \sm {E}$ and for any subset $B$
of $A$ we let ${V} = {X} \sm \cup_{\alpha \in B} {E}_{\alpha}$ and
denote by $j:{U} \to {V}$ the inclusion.

Let $\mc{L}$ be a $K$-mixed local system on ${U}$.  Then $j_!(\mc{L})$
is an object of $ \db(\msm({V}))_K$ and $(f|_{V})_*(j_!(\mc{L}))$ is an
object of $ \db(\msm(S))_K$. The strict normal crossings assumption
on ${E}$ (and the fact that $\mc{L}$ is a $K$-mixed local system)
implies that $H^i((f|_{{V}})_*(j_!(\mc{L}))$ is a $K$-mixed local
system on $S$ for all $i$ since this is only a condition on the Betti
realisation and all the mixed functors are compatible with the Betti
realisation functors.

For $B' \subset B$ let
${{V}}' = {X} \sm \cup_{\alpha \in B'} {E}_{\alpha}$ and let
$q: {U} \to {{V}}'$ and $q':{{V}} \to {{V}}'$ be the inclusions, so
$j = q'\circ q$. Then we have an object $q_!(\mc{L})$ in
$ \db(\msm({{V}}'))_K$ and by adjunction we have a canonical map
$q_!(\mc{L}) \to (q')_*(j_!(\mc{L}))$. Applying $(f|_{{{V}}'})_*$ to
this map and then the cohomology functors $H^i$, we get canonical maps
of $K$-mixed local systems
\begin{equation} \label{eq:1}
	H^i((f|_{{{V}}'})_*(q_!(\mc{L})) \to
	H^i((f|_{{V}})_*(j_!(\mc{L})) .
\end{equation}
Here we use that $(f|_{{{V}}'})_* \circ (q')_*  =
(f|_{{{V}}})_*$.

Let $G$ be a finite group acting on ${X}$ preserving ${E}$. Then $G$ acts
on the index set $A$ and we assume that this action preserves both $B$
and $B'$. If $\mc{L}$ has a $G$-linearisation, then $G$ acts on both
the source and target in \eqref{eq:1} and the map is
$G$-equivariant. If $\chi$ is the projector in the group ring $K[G]$
corresponding to an irreducible representation of $G$ over $K$, we get
a map of ``$\chi$-isotypical components''
\begin{equation*}%
	H^i((f|_{{{V}}'})_*(q_!(\mc{L}))^{\chi} \to
	H^i((f|_{{V}})_*(j_!(\mc{L}))^{\chi} .
\end{equation*}

\subsection{The KZ local systems as mixed local systems}\label{s:kzmls}

We now put ourselves in the setting of Section \ref{s:sv}. In
particular, we have fixed a simple Lie algebra $\mf{g}$ and dominant
integral weights $\lambda_1,\dots,\lambda_n,$ and $\nu$ and we assume
$\kappa \in \Q^{\times}$. Let $M$ and $\beta$ also be as in the
Section \ref{s:sv}. Let $\mc{C}_n$ be the configuration space of
$n$-ordered points in $\A^1_{\msk}$.

Now we view the multi-valued master function
\[
\ms{R} = \prod_{1 \leq i < j \leq n} (z_i -
z_j)^{-\frac{(\lambda_i, \lambda_j)}{\kappa}} \prod_{b=1}^M
\prod_{j=1}^n (t_b - z_j)^{\frac{(\lambda_j, \beta(b))}{\kappa}}
\prod_{1 \leq b < c \leq M} (t_b - t_c)^{-\frac{(\beta(b),
		\beta(c))}{\kappa}}
\]
as a function on the open set ${U}$ of $\mc{C}_n \times \A^M_\msk$
which is the complement of the divisors given by the equations
$t_b = z_j$ for all $b$, $j$ and $t_b = t_c$ for all $b < c$. We
denote by $h: {U} \to \mc{C}_n$ the map induced by the first
projection.

We view $\mc{C}_n \times \A^M_\msk$ as an open subset of
$\mc{C}_n \times \P^M_\msk$. Using the desingularisation procedure
described in Theorem \ref{t:abnormal} we may find a smooth irreducible
variety ${P}$ with an open embedding of ${U}$ with complement
${{E}} = \cup_{\alpha\in A} {{E}}_{\alpha}$ a normal crossings
divisor. Furthermore, we may assume that there is a birational
projective morphism $b:{P} \to \mc{C}_n \times \P^M_\msk$ such
that $f = p_1 \circ b: {P} \to \mc{C}_n$ is a smooth projective
morphism with respect to which ${{E}}$ is a relative strict normal
crossings divisor. We may also assume that the group $\Sigma_{\beta}$
defined in Section \ref{s:sv} acts on ${P}$ inducing the action
defined there on each fibre of $f$. Note that all the varieties and
maps defined above can be taken to be defined over $\Q$. Let $N$ be
integer defined in Lemma \ref{formula:N}. Then the function
$g = \ms{R}^N$ is a regular function on ${U}$ and is preserved by the
action of $\Sigma_{\beta}$.
\smallskip

We now assume that $\msk$ is a subfield of $\C$ containing $\mu_N$
and we set $K = \Q(\mu_N)$. Applying the construction of Section
\ref{s:mls1} to $U$, the pair $(g,N)$ and the group
$G = \Sigma_{\beta}$, we obtain a $K$-mixed local system
$\mc{L}(\eta):=Kg^{1/N}$ of rank one on ${U}$ with a
$\Sigma_{\beta}$-linearisation on
${U}$. We Note that this is uniquely determined by the data fixed at
the beginning of this section.

Now let $B \subset A$ (resp.~$B' \subset A$) be the set of $\alpha$
such that the residue of the form $\eta$ (defined in \eqref{deta})
along ${{E}}_{\alpha}$ is not a strictly positive integer (resp.~is
not a non-negative integer). We now want to apply the procedure of
Section \ref{s:mls2} to the local system $\mc{L}(\eta)$. So we take
${X} = {P}$, $S = \mc{C}_n$, $f = p_1 \circ b: {P} \to \mc{C}_n$ and
${U}$, $B$, $B'$, $g$ and $N$ as above. We let $\chi$ be the sign
character of $S_M$ restricted to $\Sigma_{\beta}$. Then applying the
construction of Section \ref{s:mls1} we obtain a map
\begin{equation}\label{eq:3}
	H^i((f|_{{{V}}'})_*(q_!(\mc{L}(\eta)))^{\chi} \to
	H^i((f|_{{V}})_*(j_!(\mc{L}(\eta)))^{\chi}
\end{equation}
on $\mc{C}_n$.
\begin{definition} \label{d:kzm}%
  The mixed local system $\mathcal{KZ}_{\kappa}(\vec{\lambda},\nu^*)$
  is the $K$-mixed local system on $\mc{C}_n$ (over $\msk$) given by
  the image of the map in \eqref{eq:3} with $i = M$.
\end{definition}
The compatibility of the mixed functors with the usual functors on
constructible sheaves via $\mt{rat}$ and the results recalled
in Section \ref{s:top} imply that
$\mt{rat}_{\mc{C}_n}(\mathcal{KZ}_{\kappa}(\vec{\lambda},\nu^*))
\otimes_K \C$ is the classical KZ local system associated to the
same choice of group theoretic data. We note that
$\mathcal{KZ}_{\kappa}(\vec{\lambda},\nu^*)$ does not depend on the
choice of compactification ${P}$; this can be deduced from
Proposition 35 of the arXiv version of \cite{BBM} as the Betti
realisation functor is conservative on $\msm(X)$. We also note that
the mixed local systems do not depend on the choice of $\beta$
because of the projection to the $\chi$-isotypical component.
\begin{remark}\label{r:field}
  The KZ local systems can often be defined over proper subfields of
  $K = \Q(\mu_N)$, but we always consider them over this fixed field.
\end{remark}
\begin{remark}\label{r:pts}
  For $\vec{z} \in \mc{C}_n(\msf{k})$, by restricting
  $\mathcal{KZ}_{\kappa}(\vec{\lambda},\nu^*)$ constructed using the
  Ivorra--Morel category of motivic sheaves to $\vec{z}$ we obtain a
  Nori motive over $\msk$ with coefficients in $\Q(\mu_N)$. This motive
  equals the motive constructed in Section \ref{s:kzdef}.
\end{remark}
\begin{remark}\label{dual_kappa1}
  For all $\vv{\lambda}$, $\nu$ and $\kappa \neq 0$, we have an
  equality
  $\mathcal{KZ}_{-\kappa}(\vec{\lambda},\nu^*)=
  (\mathcal{KZ}_{\kappa}(\vec{\lambda},\nu^*))^*(-M)$ (where $(-M)$
  denotes a Tate twist.) Here $M$ is the number of simple roots,
  counted with multiplicity, in an expression of $\sum\lambda_i-\nu$
  as a sum of simple roots.
\end{remark}
\begin{remark}\label{r:twist}
  The statement about the Galois twists in Section \ref{s:gt} follows
  from Lemma \ref{l:twist} and the definition of the mixed KZ local
  systems: the assumption that $(a,N) = 1$ implies that the divisors
  $B$ and $B'$ used in the construction do not change when we replace $\kappa$ with
  $\kappa/a$.
\end{remark}
\subsection{Conformal blocks mixed local systems}\label{s:cbmls}

Mixed versions of the conformal blocks local systems can be defined in
a similar, in fact more elementary, way. Let $\ell \geq 0$ be an
integer and assume that all the dominant integral weights
$\lambda_1,\dots,\lambda_n,$ and $\nu$ are of level $\ell$. We let
$\kappa = \ell + h^{\vee}$. Then we have ${U}$, $h:{U} \to \mc{C}_n$,
and a $\Q(\mu_N)$-mixed local system $\mc{L}(\eta)$ on ${U}$ which is
$\Sigma_{\beta}$-equivariant. By applying $h_!$ and $h_*$ to
$\mc{L}(\eta)$, adjunction, and taking cohomology, we obtain maps
\begin{equation*}
	H^i(h_!(\mc{L}(\eta))) \lra H^i(h_*(\mc{L}(\eta))) .
\end{equation*}
Using the $\Sigma_{\beta}$-equivariance and the character $\chi$ we then get
maps
\begin{equation}\label{e:eq4} %
	H^i(h_!(\mc{L}(\eta)))^{\chi} \lra H^i(h_*(\mc{L}(\eta)))^{\chi} .
\end{equation}
\begin{definition}\label{d:cbm}%
  The mixed local system $\mathcal{CB}_{\kappa}(\vec{\lambda},\nu^*)$
  is the $\Q(\mu_N)$-mixed local system on $\mc{C}_n$ (over $\msk$) given by
  the image of the map in \eqref{e:eq4} with $i=M$.
\end{definition}

  \begin{remark}\label{r:cb}
    $ $
    \begin{enumerate}
    \item As in the case of KZ local systems, one sees
      $\rat(\mathcal{CB}_{\kappa}(\vec{\lambda},\nu^*))\otimes_{\Q(\mu_N)}
      \C)$ is the classical conformal blocks local system over $\C$
      associated to the same choice of group-theoretic data.
    \item We note that $\mathcal{CB}_{\kappa}(\vec{\lambda},\nu^*)$ is
      actually pure of (fibrewise) weight $M$, since the weights of
      (the fibres of) $H^i(h_!(\mc{L}(\eta)))^{\chi}$ are $\leq M$ and
      those of $H^i(h_*(\mc{L}(\eta)))^{\chi}$ are $\geq
      M$. Furthermore, it follows from \cite{TRR} (see also
      \cite{loo-sl2}) when $\mf{g} = \mf{sl}_2$ and \cite[Theorem
      3.5]{b-unitarity}, and its extension to arbitrary $\nu$ in
      \cite[Remark 4.5]{BF} for general $\mf{g}$, that
      $P(\msf{CB}_{\kappa}(\vec{\lambda},\nu^*)) = rt^M$ where $r$ is
      the rank of the local system.
  \item These mixed local systems do not depend on the choice of
    $\beta$ and a remark analogous to Remark \ref{r:pts} also applies
    in the conformal blocks setting.  We also note that it follows
    from \cite[Corollary 4.18]{BF} that
    $\mathcal{CB}_{\kappa}(\vec{\lambda},\nu^*)$ is canonically a
    subsystem of $\mathcal{KZ}_{\kappa}(\vec{\lambda},\nu^*)$.
  \end{enumerate}
\end{remark}

\section{Nearby cycles for semistable degenerations}\label{s:nb}

As noted in the sketch of the proof of motivic factorisation in the
introduction, we need to analyse the nearby cycle complexes $\Psi_f$
on the fibre $Y=f^{-1}(0)$ of a semistable degeneration
$f:X\to \A^1_{\msk}$ of suitable complexes on $X^*$. This section lays
down the motivic groundwork for this study: The main result is Theorem
\ref{stillholds} which gives a method of ``decomposing'' these nearby
cycles complexes in terms of the geometry of $Y$.
  
\subsection{Nearby cycles for mixed sheaves}\label{s:nbms}
In this section $\msk$ is a subfield of $\C$ and $K$ is a finite
extension of $\Q$.

Let $X$ be a smooth variety over $\msk$ and let $f: X \to \A^1_{\msk}$
be a morphism. Assume that $f$ is smooth over $\A^1_{\msk} \sm \{0\}$ and
$Y = f^{-1}(0)$ is a reduced divisor with simple normal crossings with
irreducible components $Y_{\alpha}$, $\alpha = 1,2,\dots,n$. Let
$X^* = X \sm Y$.
\begin{defi}\label{basicD}
  For $Z$ any union of irreducible components of $Y$, let
  $Z_0=Z \sm \cup_{Y_{\alpha}\nsubseteq Z} Y_{\alpha}$.  Clearly,
  $Z_0$ is an open subset of $Y$.
\end{defi}
Let $Z$ be a union of some irreducible components of $Y$ and $Z'$ the
union of the remaining irreducible components of $Y$.  Let
$$j:Z_0\lra Y, \ \ j':Z'_0\lra Y,\ \ i:Z\lra Y,\ \ i':Z'\lra Y$$ be the
inclusions maps. Since $j'$ is an open inclusion with complementary
closed inclusion $i$, for any $\mc{H}\in  \msm(Y)_K$ there is a
distinguished triangle
$$j'_{!}j'^*\mc{H}\lra \mc{H}\lra i_*i^*\mc{H}\leto{[1]} .$$
{ We note that the maps are compatible with the $K$-structure because
they are constructed using adjunction.}
Since $j^*i_*i^*\mc{H}=j^*\mc{H}$, there is also a map
\begin{equation}\label{e:adj}
  i_*i^*\mc{H}\lra j_*j^*\mc{H}
\end{equation}
given by adjunction. If this map is an isomorphism, then we get a
distinguished triangle
\begin{equation}\label{e:d1}
  j'_{!}j'^*\mc{H}\lra \mc{H}\lra j_*j^*\mc{H}\leto{[1]} 
\end{equation}
in $\msm(Y)_K$.
\begin{remark}\label{structure} %
  The property that the map in \eqref{e:adj} is an isomorphism, and so
  there is a distinguished triangle as in \eqref{e:d1}, is compatible
  with distinguished triangles in $ \db(\msm(Y))_K$: If two terms in
  a distinguished triangle have this property, then so does the third:
  this follows from \cite[Corollary 1.5.5]{ks} (applied in the
  triangulated category $\db(\msm(Y))$. This property for a fixed
  $\mc{H}$ and for all choices of $Z$ is also closed under Verdier
  duality.
\end{remark}

Let $\mc{K}=\Psi_f \, \mc{F}$, where $\mc{F}$ is any object of
$\msm(X^*)_K$.  The main goal of this section is to prove that for
suitable $\mc{F}$ the maps in \eqref{e:adj} are isomorphisms when
$\mc{H} = \mc{K}$, so we will have distinguished triangles as in
\eqref{e:d1}. To formulate the result we need to introduce some more
notation:

With $f:X \to \A^1_{\msk}$ as above, let
$E=\cup_{\beta \in J} E_{\beta}\subset X$ be a divisor such that
$D = Y \cup E$ is a divisor with simple normal crossings. { We also
  assume that the restriction of $f$ to each $E_{\beta}$ is dominant
  and smooth over $\A^1_{\msk} \sm \{0\}$.} Let
$V=X \sm (Y \cup (\cup_{\beta \in J'}E_{\beta}))$ where $J'$ is an
arbitrary subset of $J$.  Let $\tilde{j}:X \sm D\to V$, and
$\tilde{k}:V\to X \sm Y$ be the inclusions.
\begin{theorem}\label{stillholds} 
  Let $\ml$ be a $K$-mixed local system in $ \msm(X \sm D)_K$, let
  $\mc{F} = \tilde{k}_*\tilde{j}_{!}\ml \in \db(\msm(X^*))_K$ and
  $\mc{K} = \Psi_f \mc{F} \in \db(\msm(Y))_K$. Then the map
  $i_*i^*\mc{K}\to j_*j^*\mc{K}$ as in \eqref{e:adj} is an
  isomorphism and we have a distinguished triangle
  \begin{equation}\label{e:d2}
    j'_{!}j'^*\mc{K}\lra \mc{K}\lra j_*j^*\mc{K}\leto{[1]}
  \end{equation}
  in $\db(\msm(Y))_K$.  Furthermore, there exists a sequence of objects
  $\mc{H}_{\alpha} \in \db(\msm(Y))_K$, $i =1,\dots,n$, with each $\mc{H}_{\alpha}$
  supported on $Y_{\alpha}$, and distinguished triangles
\[
  \mc{K}_{\alpha-1} \lra \mc{K}_{\alpha} \lra \mc{H}_{\alpha} \leto{[1]} 
\]
for $1 < \alpha \leq n$, with $\mc{K}_n = \mc{K}$ and
$\mc{K}_0 = \mc{H}_1$.  Each $\mc{H}_{\alpha}$ is given by applying a
sequence of star and shriek pushforwards to $\mc{K}|_{(Y_{\alpha})_0}$ along
suitable strata, with there being at least one shriek if and only if
$\alpha <n$.
\end{theorem}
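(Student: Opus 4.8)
The plan is to prove first the isomorphism \eqref{e:adj} for $\mc{H}=\mc{K}$ — whence the triangle \eqref{e:d2} — and then to read the filtration off from it. For the first assertion I would work locally. Nearby cycles are compatible with restriction to opens (checked after applying $\rat$, where it is the standard compatibility of the topological functor of Definition \ref{d:nearby} with open immersions; cf.\ Lemma \ref{l:pbc} and the axioms of Section \ref{s:saito}), and the statement that \eqref{e:adj} is an isomorphism is pointwise, so I would fix $y\in Y$ and apply the normal form of Lemma \ref{l:normalform} with $W=\A^1_\msk$ and $g=f$. Near $y$ one may then take $X$ open in $\C^{r+\ell}$ with coordinates $z_1,\dots,z_r,v_1,\dots,v_\ell$, the $z_i$ cutting out the components $Y_1,\dots,Y_r$ of $Y$ through $y$, the $v_1,\dots,v_k$ cutting out the components of $E$ through $y$, and $f=z_1\cdots z_r$; then $\ml$ is a box product of rank one local systems $\mathcal{M}_{\chi_i}$ on $\{z_i\neq 0\}$ with rank one local systems on the $\{v_j\neq 0\}$, and $\mc{F}=\tilde{k}_*\tilde{j}_!\ml$ is obtained from it by a $!$ or $*$ extension across each component of $E$. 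By Künneth-type compatibilities of nearby cycles (Lemma \ref{kunneth}(2), and correspondingly in the mixed setting) the computation of $\mc{K}=\Psi_f\mc{F}$ near $y$ factors through the $v$-directions and reduces to the model object $\mathcal{S}:=\Psi_{z_1\cdots z_r}\bigl(\boxtimes_{i=1}^r\mathcal{M}_{\chi_i}\bigr)$ on $\{z_1\cdots z_r=0\}\subset\C^r$.

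The key input is the classical description of the nearby cycle complex of a monomial map $z_1\cdots z_r$ (via its Milnor fibres, in the circle of ideas around the Steenbrink complex): the restriction of $\mathcal{S}$ to each hyperplane $\{z_i=0\}$ coincides with the $*$-pushforward of its restriction to the open stratum $\{z_i=0,\ z_{i'}\neq 0\ (i'\neq i)\}$. Tensoring the $v$-direction data back in, the same holds for $\mc{K}$ along every $Y_\alpha$; hence for $Z=Y_\alpha$ a single component the map $i_*i^*\mc{K}\to j_*j^*\mc{K}$ is a stalkwise isomorphism — the only points where bijectivity is not immediate being those of $Y_\alpha\cap Z'$ — so it is an isomorphism, and a general union $Z$ follows either from the same model computation or by induction on the number of its components using stability under distinguished triangles (Remark \ref{structure}). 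The triangle \eqref{e:d2} is then the image of the tautological triangle $j'_!j'^*\mc{K}\to\mc{K}\to i_*i^*\mc{K}\leto{[1]}$. For the filtration, write $Y_{>m}=Y_{m+1}\cup\dots\cup Y_n$, let $\iota_m\colon Y\sm Y_{>m}\hookrightarrow Y$ be the open immersion, and set $\mc{K}_m:=\iota_{m,!}\bigl(\mc{K}|_{Y\sm Y_{>m}}\bigr)$, so $\mc{K}_0=0$ and $\mc{K}_n=\mc{K}$. Factoring $\iota_{m-1}$ through $\iota_m$ and the closed immersion $Y_m\sm Y_{>m}\hookrightarrow Y\sm Y_{>m}$ and applying $\iota_{m,!}$ to the associated recollement triangle gives distinguished triangles $\mc{K}_{m-1}\to\mc{K}_m\to\mc{H}_m\leto{[1]}$, in which $\mc{H}_m$ is the $!$-extension along $\iota_m$ of the $*$-pushforward of $\mc{K}|_{Y_m\sm Y_{>m}}$ onto $Y_m$. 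By the first part (with $Z=Y_m$), $\mc{K}|_{Y_m\sm Y_{>m}}$ is the $*$-extension of $\mc{K}|_{(Y_m)_0}$, so $\mc{H}_m$ is supported on $Y_m$ and is built from $\mc{K}|_{(Y_m)_0}$ by star pushforwards followed by the shriek $\iota_{m,!}$ — which is a genuine shriek precisely when $m<n$ and is the identity when $m=n$. This is exactly the asserted statement.

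The main obstacle is the model computation: that $\Psi_{z_1\cdots z_r}$ of a rank one local system restricts to each component of its zero divisor as the $*$-extension from the corresponding open stratum. This is the one place where genuine, as opposed to purely formal, information about nearby cycles of a non-smooth morphism enters; everything else is the local normal form, Künneth-type compatibilities, and formal manipulation of the six functors in the mixed-sheaf formalism. One must in addition keep track of the Tate and perverse shifts relating the topological $\Psi_f$ and Saito's normalised $\psi_f$, and check that every adjunction map used respects the $K$-structure — which it does, being built from units and counits of adjunctions, as noted after \eqref{e:adj}.
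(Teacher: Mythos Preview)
Your outline matches the paper's approach: pass to the analytic category via conservativity of $\rat$, work locally in SNC coordinates, split off the $E$-direction by K\"unneth (Lemmas \ref{kunneth} and \ref{KunnethF}), and reduce to the model $\Psi_{z_1\cdots z_r}$ of a rank one local system on $(\C^*)^r$; the filtration is then read off exactly as you describe, by peeling one component at a time.

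Two gaps are worth flagging. First, you tacitly assume $\ml$ has rank one; the paper makes this reduction explicit by noting that locally $\pi_1(X\sm D)$ is free abelian, so any $\C$-local system has a rank one sub, and the isomorphism property of \eqref{e:adj} is stable under extensions (Remark \ref{structure}). Second, and this is the substantive point, the model computation you correctly identify as the crux is \emph{not} a direct citation of Steenbrink: his description applies to the constant sheaf. The paper (Proposition \ref{p:analytic}) splits into two cases according to whether the monodromies $\chi_i$ all lie in a single coset of $\Z$. If so, one twists by $f^*(\C t^a)$ to reduce to the constant sheaf and then invokes Steenbrink's log de Rham model (Lemma \ref{clarity}). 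If not, one shows instead that both $i_*i^*\mc{K}$ and $j_*j^*\mc{K}$ have vanishing stalk at the deepest point: the first by a standard Milnor fibre computation from SGA7, the second by checking that $\mc{K}|_{(Y_\alpha)_0}$ has nontrivial monodromy around some boundary divisor, so its $*$-pushforward dies there. Your sketch asserts the common conclusion of both cases without this dichotomy, and Steenbrink alone does not cover the second case.
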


We will prove the theorem at the end of this section after proving
some preliminary lemmas.

\begin{remark}\label{still}
  The Verdier dual of $\mc{K}$ as in Theorem \ref{stillholds} is again
  of the same form (for a different collection of components of $E$).
\end{remark}

\begin{remark}\label{devissage}
  Theoreom \ref{stillholds} allows us to carry out a complete
  d\'evissage of $\mc{K}$ as an extension of objects supported on
  $Y_{\alpha}$ which are given by applying a sequence of shriek and star
  pushforwards to the local systems on $(Y_{\alpha})_0$ given by
  $\mc{K}_{(Y_{\alpha})_0}$ (the independence of order of these pushforwards
  follows from Lemma \ref{l:ext}).
\end{remark}

\subsection{Nearby cycles in the analytic setting}

In this section we will prove Theorem \ref{stillholds} by applying the
realisation functor $\rat$. This gives us a map in $\db(\msp(Y))_K$
which we need to prove is an isomorphism. To do this we will simply
forget the $K$-structure and then by the conservativity of $\rat$ we
can replace $\db(\msp(Y))$ by $\dbc(Y,\Q)$, which we then replace by
$\dbc(Y,\C)$.

Thus we now assume that $X$ is a complex manifold and $f:X \to S$ is a
morphism (with $S \subset \C$) satisying properties analogous to those
stated before the statement of Theorem \ref{stillholds}. In the
statement of the theorem we assume that $\mc{L}$ is a $\C$-local
system (in the analytic topology) on $X \sm D$ and then define all the
data in the theorem analogously. We summarize this as follows:
\begin{proposition}\label{p:analytic}
  The analogue of Theorem \ref{stillholds} in the setting of complex
  manifolds holds for all $\C$-local systems $\mc{L}$ on $X \sm D$.
\end{proposition}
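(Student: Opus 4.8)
The plan is to reduce the proposition to a local computation of nearby cycles and then obtain the d\'evissage by a formal iteration of the distinguished triangle \eqref{e:d2}. I would begin with two reductions. Since $\Psi_f$ and the functors $i^*,i_*,j^*,j_*,j'_!,j'^*$ all commute with restriction to open subsets, whether the canonical map \eqref{e:adj} is an isomorphism for $\mc{H}=\mc{K}:=\Psi_f\mc{F}$ and a fixed union $Z$ of components of $Y$ can be tested on stalks, hence in a neighbourhood of each point $y\in Y$. Moreover $\tilde k_*\tilde j_!$ and $\Psi_f$ are triangulated, so a short exact sequence of local systems on $X\sm D$ produces a distinguished triangle of the associated sheaves $\mc{K}$; since the attaching property in \eqref{e:adj} is stable under distinguished triangles (Remark \ref{structure}) and, locally on $X$, $X\sm D$ has abelian fundamental group so that every local system is a successive extension of rank one ones, the first assertion reduces to the case $\rk(\mc{L})=1$.

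Near $y\in Y$ I would choose holomorphic coordinates $(z_1,\dots,z_r,w_1,\dots,w_k,u_1,\dots,u_m)$ on a polydisc so that $Y=\{z_1\cdots z_r=0\}$, $f=z_1\cdots z_r$ (possible since $Y$ is reduced), the local components of $Y$ are $\{z_\alpha=0\}$, and $D=\{z_1\cdots z_r\,w_1\cdots w_k=0\}$; this normal form is forced by the simple normal crossings hypotheses, in the spirit of Lemma \ref{l:normalform}. On $X\sm D\cong(\dc^*)^r\times(\dc^*)^k\times\dc^m$ a rank one local system is an external product $\mc{L}_z\boxtimes\mc{L}_w\boxtimes\C_{\dc^m}$, and since $\tilde j_!$ and $\tilde k_*$ are compatible with external products (Lemma \ref{KunnethF}) we get $\mc{F}\cong\mc{L}_z\boxtimes\mc{G}_w\boxtimes\C_{\dc^m}$, where $\mc{G}_w$ is the extension of $\mc{L}_w$ across $\{w_1\cdots w_k=0\}$ obtained by shrieking some components and starring the rest (its formation being order-independent by Lemma \ref{l:ext}). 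By the K\"unneth property of nearby cycles (Lemma \ref{kunneth}(2)) this yields $\Psi_f\mc{F}\cong(\Psi_{z_1\cdots z_r}\mc{L}_z)\boxtimes\mc{G}_w\boxtimes\C_{\dc^m}$. Since the inclusions $i,j,j'$ above are, locally, products of maps in the $\dc^r$-factor with the identity on $\dc^{k+m}$, the factor $\mc{G}_w\boxtimes\C_{\dc^m}$ is inert and the whole local question is reduced to the nearby cycle complex of a rank one local system along the coordinate normal crossings divisor $\{z_1\cdots z_r=0\}\subset\dc^r$.

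For this model I would compute $\Psi_{z_1\cdots z_r}\mc{L}_z$ directly. Along an open stratum $(Y_\alpha)_0$ the function $z_1\cdots z_r$ is, up to a unit, the coordinate $z_\alpha$, so $\Psi_{z_1\cdots z_r}\mc{L}_z$ restricts there to a rank one local system in degree $0$; at a point of a deeper stratum $\bigcap_{\alpha\in S}Y_\alpha$ the local Milnor fibre of $z_1\cdots z_r$ is homotopy equivalent to the real torus $(S^1)^{|S|-1}$, so the stalk of $\mc{K}$ is computed by the cohomology of $(S^1)^{|S|-1}$ with coefficients in the rank one local system determined by the ``vanishing'' monodromy operators $T_\alpha T_{\alpha'}^{-1}$ ($\alpha,\alpha'\in S$). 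One then checks that the stalk of $j_*(\mc{K}|_{Z_0})$ at the same point, which is $R\Gamma$ of a punctured neighbourhood inside $Z_0$, is governed by the very same operators up to inversion --- the monodromy of the local system $\mc{K}|_{Z_0}$ around the trace of a component not in $Z$ being precisely the inverse of such a vanishing operator --- so the two complexes coincide and the map \eqref{e:adj} is a stalkwise isomorphism. This gives the distinguished triangle \eqref{e:d2} for $\mc{K}$ and every union $Z$.

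Finally, the d\'evissage is obtained by iterating \eqref{e:d2} along the decreasing chain of sub-normal-crossings divisors $Y=Z^{(n)}\supset Z^{(n-1)}=Y_1\cup\dots\cup Y_{n-1}\supset\dots\supset Z^{(1)}=Y_1$: at step $p$ one applies \eqref{e:d2} with $Z=Y_p$ inside $Z^{(p)}$, splitting off the quotient $\mc{H}_p$ --- supported on $Y_p$ and obtained from $\mc{K}|_{(Y_p)_0}$ by star-pushforwards across $Y_1,\dots,Y_{p-1}$ and shriek-pushforwards across $Y_{p+1},\dots,Y_n$, hence with a shriek present precisely when $p<n$ --- while retaining $\mc{K}_{p-1}$ on $Z^{(p-1)}$. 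To keep the iteration going one needs each $\mc{K}_p$ to satisfy the attaching property relative to $Z^{(p)}$; this is propagated using Remark \ref{structure} together with Remark \ref{still}, i.e.\ the fact that Verdier duality sends $\mc{K}$ to another nearby cycle complex of the same form with $!$ and $*$ interchanged, so that the ``$!$-attaching'' condition is available as well. I expect this propagation --- coherently tracking which extensions stay $!$ and which become $*$ through the iteration --- to be the main obstacle, the essential geometric input being the local Milnor fibre computation of the previous paragraph.
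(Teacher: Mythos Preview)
Your reductions---localising, passing to rank one $\mc{L}$ via Remark~\ref{structure}, and using K\"unneth (Lemmas~\ref{kunneth} and~\ref{KunnethF}) to strip off the $E$-factor and the smooth $u$-directions---are exactly what the paper does and are sound. The gap is in the core computation on $\dc^r$.

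First, your phrase ``the monodromy of the local system $\mc{K}|_{Z_0}$'' presupposes that $\mc{K}|_{Z_0}$ is a local system. It is not, whenever $Z$ contains two components $Y_\alpha,Y_{\alpha'}$ whose intersection lies in $Z_0$: at such a crossing the stalk of $\mc{K}$ already has rank $>1$. So ``$R\Gamma$ of a punctured neighbourhood inside $Z_0$'' is not governed by a list of commuting monodromy operators on a torus; one needs the full constructible structure of $\mc{K}|_{Z_0}$. The paper resolves this by induction on the number of branches through $P$: on $Z_0$ strictly fewer branches meet, so by induction one already has a d\'evissage of $j^*\mc{K}$; after applying $j_*$, all pieces carrying a shriek have zero stalk at $P$ (Lemma~\ref{l:ext}), and the surviving all-star piece reduces the question to the case where $Z$ is a single component.

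Second, even for $Z=Y_1$ a single component, you only argue that both stalks are abstractly $H^*((S^1)^{r-1},\mc{L}')$ with the same monodromies $T_jT_1^{-1}$; you do not verify that the adjunction map \eqref{e:adj} itself realises this isomorphism. When some $T_jT_1^{-1}\neq 1$ both sides vanish and there is nothing to check. When all $T_\alpha$ coincide, the paper tensors with $f^*(\C t^a)$ to reduce to the constant sheaf and then invokes Steenbrink's de Rham model: $\mc{K}\cong(\Omega^\bullet_{X/S}(\log Y)\otimes\mc{O}_Y,d)$, and composing $\wedge\,df/f$ with the residue along $Y_1$ identifies $i^*\mc{K}$ explicitly with $\Omega^\bullet_{Y_1}(\log(Y_1\sm(Y_1)_0))\cong(j_1)_*\C_{(Y_1)_0}$. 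This is Lemma~\ref{clarity}; it is the one genuine ingredient your sketch is missing, and your Milnor-fibre comparison (``governed by the very same operators up to inversion'') does not substitute for it.

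Finally, your d\'evissage paragraph overcomplicates the bookkeeping. Once \eqref{e:d2} is established for every choice of $Z$, take $Z=Y_n$: the quotient is $\mc{H}_n=j_*j^*\mc{K}$, and $j'^*\mc{K}$ lives on $Z'_0$ where only $n-1$ components of $Y$ appear, so induction on $n$ gives the d\'evissage of $j'^*\mc{K}$; then $\mc{H}_\alpha:=j'_!\mc{H}'_\alpha$ for $\alpha<n$. No separate ``propagation of the attaching property'' via Remarks~\ref{structure} and~\ref{still} is required.
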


The example below shows that arbitrary elements $\mc{F}$ of
$\dbc(X^*,\C)$ may not satisfy the property in Theorem
\ref{stillholds}.
\begin{example}
  Let $X=\mb{C}^2$, $S=\mb{C}$ and $f:X\to S$ be the map
  $f=z_1z_2$. Let $W\subset \C^2$ be given by $z_1=z_2$ and let
  $\mc{K}=\Psi_f (i_*\mb{C}_{W^*})$ where
  $i:W^*=W \sm \{0\}\to \mb{C}^2$ is the inclusion. Note that
  $W\cap f^{-1}(0)=\{(0,0)\}$, and therefore if $Z:=\{z_1=0\}$ in the
  setting of Theorem \ref{stillholds}, then both $j^*\mc{K}$ and
  $j'^*\mc{K}$ are zero. However the stalk of $\mc{K}$ at $0$ is
  nonzero by a Milnor fibre computation, hence there is no
  distinguished triangle of the form \eqref{e:d2}.
\end{example}

\subsubsection{}

We first prove the special case of Proposition \ref{p:analytic} when
$\mc{F}$ is the constant local system $\C_{X^*}$.  Ayoub has proved
various related results in \cite[\S 3.3]{ayoub} (in particular, the
local Theorem 3.3.10 and global Theorem 3.3.44) in a general motivic
setting. We will give a proof using de Rham complexes since it
illustrates methods that we will use later in a more elaborate
setting.
\begin{lemma}\label{clarity}
  Proposition \ref{p:analytic} holds when $E = \emptyset$ and $\mc{F}$
  is the constant local system $\C_{X^*}$.
\end{lemma}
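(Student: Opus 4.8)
The plan is to compute $\mc{K} = \Psi_f\,\C_{X^*}$ explicitly in local coordinates and verify both assertions by hand. Since $\Psi_f$ is compatible with external products (Lemma \ref{kunneth}(2)) and every statement in Proposition \ref{p:analytic} is local on $Y$, and since $f$ vanishes to order one along each component of the reduced simple normal crossings divisor $Y = f^{-1}(0)$, near any point of $Y$ we may choose holomorphic coordinates $(z_1,\dots,z_r,u_1,\dots,u_m)$ on $X$ with $f = z_1\cdots z_r$ and $Y_i = \{z_i = 0\}$ for $i = 1,\dots,r$; by Lemma \ref{kunneth}(2) the polydisc in the $u_j$ contributes only an innocuous factor $\boxtimes\,\C$, so we are reduced to $X = \dc^r$, $f = z_1\cdots z_r$, $\mc{F} = \C_{X^*}$.

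In this model I would represent $\mc{K}$ by a relative logarithmic de Rham complex. Recall that $Rj_*\,\C_{X^*}$ (for $j\colon X^*\to X$) is resolved by the holomorphic logarithmic complex $\Omega^{\bullet}_X(\log Y)$, and that $\omega := \dlog f = \sum_{i=1}^r\dlog z_i$ is a global closed logarithmic $1$-form with $\omega\wedge\omega = 0$. By the classical logarithmic description of the nearby cycle complex of a semistable degeneration (\cite{steenbrink}), $\mc{K}$ is quasi-isomorphic to $\big(\Omega^{\bullet}_X(\log Y)/\,\omega\wedge\Omega^{\bullet-1}_X(\log Y)\big)\otimes_{\mco_X}\mco_Y$, with monodromy operator $\mr{N}$ induced by $\omega\wedge(-)$; this is an explicit Koszul-type complex of free $\mco_Y$-modules. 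Its cohomology sheaves are $\mc{H}^{q}(\mc{K}) = \Lambda^{q}\mc{E}/(\mathbf 1\wedge\Lambda^{q-1}\mc{E})$, where $\mc{E} := \bigoplus_i\iota_{i,*}\C_{Y_i}$ with $\iota_i\colon Y_i\hookrightarrow Y$ and $\mathbf 1 := (1_{Y_i})_i$, and $\mc{H}^{q}(\mc{K})|_{Y_I^{\circ}} = \Lambda^{q}(\C^{I}/\C\cdot\sum_{i\in I}e_i)$ on the open stratum $Y_I^{\circ} := \bigcap_{i\in I}Y_i\setminus\bigcup_{i\notin I}Y_i$ (for $\emptyset\ne I\subseteq\{1,\dots,r\}$).

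Granting this model, assertion (a) — that the adjunction map \eqref{e:adj} is an isomorphism for $\mc{H} = \mc{K}$, for every union $Z$ of components of $Y$ (with complementary union $Z'$) — becomes a direct computation. Since $Y\setminus Z'_0 = Z$, it says exactly that $\mc{K}|_Z$ is the star-pushforward along $Z_0\hookrightarrow Z$ of $\mc{K}|_{Z_0}$, equivalently that the costalk ($!$-restriction) of $\mc{K}|_Z$ along the divisor $Z\cap Z' = Z\setminus Z_0$ of $Z$ vanishes. Restricting the relative logarithmic complex above to $Z$ and carrying out the local-model analysis — using Lemma \ref{l:ext} to reorder the iterated $!$/$*$-extensions of the constant local systems that appear on the strata of $Z$ — shows that $\mc{K}|_Z$ is built from $\C_{Z_0}$ by star-extensions only along the divisors of $Z$ cut out by the $Z'$-components; the section $\mathbf 1$ is precisely what forces these to be star- and not shriek-extensions, whence the costalk vanishes. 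This gives (a), hence the distinguished triangle \eqref{e:d1} for $\mc{K}$ and every $Z$.

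For the dévissage (b), iterate \eqref{e:d1}: take $Z' = Y_1$ (so $Z$ is the union of the remaining components), so that the sub in \eqref{e:d1} is $\mc{H}_1 := j'_!j'^*\mc{K} = (j_{(Y_1)_0})_!(\mc{K}|_{(Y_1)_0})$, a shriek-extension of the local system $\mc{K}|_{(Y_1)_0}$ and supported on $Y_1$; then repeat on the quotient $j_*j^*\mc{K}$, which is supported on $Y_2\cup\cdots\cup Y_n$ and of the same shape, using $Z' = Y_2$, and so on. After $n-1$ steps the final quotient is $\mc{H}_n = (j_{(Y_n)_0})_*(\mc{K}|_{(Y_n)_0})$, a star-extension. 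That \eqref{e:adj} persists as an isomorphism at each intermediate stage follows from Remark \ref{structure} (two-out-of-three for distinguished triangles, together with the corresponding statement for the $\mc{H}_{\alpha}$, which are pushforwards of local systems controlled by Lemma \ref{l:ext}); rewriting each $\mc{H}_{\alpha}$ via Lemma \ref{l:ext} as a sequence of star and shriek pushforwards along the strata of $Y\setminus (Y_{\alpha})_0$ gives the stated form, with a shriek present exactly when $\alpha < n$. The step I expect to be the main obstacle is the explicit local verification of (a): organising the relative-de-Rham/Koszul computation so that the interaction of the vanishing-cycle differential with restriction to $Z$ is transparent, and confirming that only star-extensions occur along $Z\cap Z'$.
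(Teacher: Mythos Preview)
Your approach and the paper's are built on the same foundation---Steenbrink's logarithmic de Rham model for $\Psi_f\,\C_{X^*}$---but the paper organises the argument differently, and that organisation is exactly what fills the gap you flag at the end.

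For assertion (a) you propose to read the star-extension property of $\mc{K}|_Z$ directly off the Koszul description of the cohomology sheaves, invoking that ``the section $\mathbf{1}$ is precisely what forces these to be star- and not shriek-extensions''. This is the right intuition but is not a proof as written: star-extension of $\mc{K}|_Z$ from $Z_0$ is a derived statement (vanishing of $i_{Z\cap Z'}^{!}(\mc{K}|_Z)$), and it does not follow formally from knowing the stalks of the individual cohomology sheaves $\mc{H}^q(\mc{K})$. (There is also a small slip: when $Z$ has several components, $\mc{K}|_{Z_0}$ is not $\C_{Z_0}$---it already carries the full Koszul structure coming from the intersections of the $Z$-components.) The paper avoids this difficulty by a two-step induction. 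First, for a \emph{single} component $Z=Y_1$, one uses the residue map $\Res_{Y_1}$ (precomposed with $\wedge\, df/f$) to identify $\mc{K}|_{Y_1}$ with $\Omega^{\bullet}_{Y_1}(\log(Y_1\setminus (Y_1)_0))$, which is the star-pushforward of $\C_{(Y_1)_0}$ by \cite[Proposition 3.1.8]{hodge2}. Second, for general $Z$ one checks stalks at a point $P$ by induction on the number of branches of $Y$ through $P$: on $Z_0$ there are fewer branches, so the inductive d\'evissage of $j^*\mc{K}$ is available there, and after applying $j_*$ the stalk at $P$ reduces (via Lemma~\ref{l:ext}) to the single-component case. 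This is precisely the ``organising'' you anticipated needing; without it, your verification of (a) is incomplete.

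For (b) your iteration is fine and essentially the same as the paper's, just peeling components from the opposite end: the paper takes $Z=Y_n$ so that $\mc{H}_n=j_*j^*\mc{K}$ is the all-star term, then inducts on the shriek-extended sub $j'_{!}j'^*\mc{K}$, which lives on $Y\setminus Y_n$ with one fewer component. Your two-out-of-three argument via Remark~\ref{structure} is also how the induction propagates, though in the paper it is packaged slightly differently.
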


\begin{proof}
  First suppose that $Z$ is a single component $Y_1$ of $Y$. By
  results of Steenbrink \cite[\S 2]{steenbrink}, given the choice of a
  parameter $t$ of $S$, $\mc{K}$ is isomorphic to
  $(\Omega^{\bull}_{X/S}(\log Y)\tensor_{\mc{O}_X} \mathcal{O}_Y, d)$.
  Here $\Omega^1_X(\log Y)$ is the sheaf of logarithmic $1$-forms,
  $\Omega^1_{X/S}(\log Y)$ is its quotient by the $\mc{O}_X$-submodule
  generated by $df/f$, and the complex $\Omega^{\bull}_{X/S}(\log Y)$
  is the exterior algebra on $\Omega^1_{X/S}(\log Y)$ with
  differential induced by the usual de Rham differential $d$. 
    Precomposing the residue map
    $\op{Res}_{Y_1}:\Omega^{\bull}_X(\log Y) \to
    \Omega^{\bull}_{Y_1}(\log (Y_1 \sm (Y_1)_0))[-1]$ (\cite[\S
    3.1]{hodge2}) with the map $\wedge df/f$ induces an isomorphism of
    $(\Omega^{\bull}_{X/S}(\log Y)\otimes_{\mc{O}_X} \mathcal{O}_Y)
    \otimes \mc{O}_{Y_1}$ with
    $\Omega^{\bull}_{Y_1}(\log (Y_1 \sm (Y_1)_0))$ compatible with the
    differentials: the key point is that $df/f \wedge df/f = 0$ and if
    $g$ is any (local) holomorphic function vanishing on $Y_1$ then
    $\op{Res}_{Y_1}(gdf/f) = 0$.  By \cite[Proposition
  3.1.8]{hodge2}, the inclusion of
  $\Omega^{\bull}_{Y_1}(\log (Y_1 \sm (Y_1)_0))$ in the pushforward to
  $Y_1$ of $\Omega^{\bull}_{(Y_1)_0}$ is a quasi-isomorphism, so we
  are done with this case.
  
  We now prove that $i_*i^*\mc{K}\to j_*j^*\mc{K}$ is an isomorphism
  for general $Z$ at all points $P$ of $Y$ by induction on the number
  of branches of $Y$ passing through $P$.  Using Lemma \ref{kunneth}
  (2) we may assume $X = \dc^n$ with coordinates
  $(z_1,z_2,\dots,z_n)$, $P=0$ and $f=z_1\cdots z_n$.  The number of
  branches through $P$ is $n$, $Y_{\alpha}$ is the divisor given by
  $z_{\alpha} = 0$ for $\alpha=1,2,\dots,n$, $Z$ is the divisor given
  by $z_1\cdots z_r=0$ for $1 < r < n$ (the statement is vacuous when
  $Z = Y$) and $Z'$ is the divisor given by $z_{r+1}\cdots z_{n}=0$.
  It suffices to show that $i_*i^*\mc{K}\to j_*j^*\mc{K}$ is an
  isomorphism on stalks at $P$.  Note that
  $(i_*i^*\mc{K})_P=\mc{K}_P$, where for any object of $\dbc(X,\Q)$ we
  use the subscript $P$ to denote the stalk at $P$ (i.e., the pullback
  via the inclusion of $P$ in $X$).

\smallskip

We will apply the induction hypothesis to the restriction $j^*\mc{K}$
of $\mc{K}$ to $Z_0$: we may do so since fewer than $n$ branches of
$Y$ pass through any point of $Z_0$.

Since $Z_0$ does not intersect any $Y_{\alpha}$ with $i>r$, induction gives
a sequence of objects $\mc{H}_{\alpha} \in \dbc(Z_0,\C)$, $\alpha =1,\dots,r$, with
each $\mc{H}_{\alpha}$ supported on $Y_{\alpha}$, and a sequence of distinguished
triangles
\[
  \mc{K}_{\alpha-1} \lra \mc{K}_{\alpha} \lra \mc{H}_{\alpha} \leto{[1]} 
\]
for $1 < \alpha \leq n$, with $\mc{K}_n = j^*\mc{K}$ and
$\mc{K}_0 = \mc{H}_1$.  Furthermore, each $\mc{H}_{\alpha}$ is given
by applying a sequence of star and shriek pushforwards to
$\C_{(Y_{\alpha})_0}$ along various strata, and there is at least one
shriek if and only if $\alpha<r$. Applying $j_*$ to all these
triangles and using Lemma \ref{l:ext} (1), it follows that the map
$(j_*j^*\mc{K})_P \to \mc{H}_{n,P}$ is an isomorphism. We may
therefore apply the case $n=1$ to complete the proof of this step.

Having proved that the map $i_*i^*\mc{K}\to j_*j^*\mc{K}$ is an
isomorphism for all $Z$ we prove the second part of the lemma. For
this let $Z = Y_n$ so $Z' = \cup_{\alpha=1}^n Y_{\alpha}$. Applying
the first part of the theorem, we get a distinguished triangle
\[
  j'_{!}j'^*\mc{K}\lra \mc{K}\lra j_*j^*\mc{K}\leto{[1]}.
\]
Since $Z_0 = (Y_n)_0$, we have $j_*j^*\mc{K} = \C_{(Y_n)_0}$ and we
take it to be $\mc{H}_n$. By induction we have a d\'evissage of
$j'^*\mc{K}$ as in the theorem given by objects
$\mc{H}_{\alpha}' \in \dbc(Z'_0)$ supported on $Y_{\alpha} \sm Y_n$
for $\alpha =1,\dots,n-1$. It follows that we may take
$\mc{H}_{\alpha}$ to be $j'_! \mc{H}_{\alpha}'$ for $\alpha=1,\dots,n-1$.
\end{proof}

Let $\eta$ be a section of $\Omega^1_X(\log D)$.
\begin{defi}
  For $a\in \C$, let $Y(a)$ be the union of irreducible
  components $Y_{\alpha}$ such that $\op{Res}_{Y_{\alpha}}(\eta)$ is in
  $a+\mb{Z}$.
\end{defi}

\begin{lemma}\label{extend2}
  For any $a \in \C$, the local system
  $\ml{(\eta)}\tensor_{\C} f^*(\mb{C} t^{a})$ extends as a local
  system to a neighbourhood of $Y(a)_0 \subset Y(a)$ (with
  $Y(a)_0$ as in Definition \ref{basicD}).
\end{lemma}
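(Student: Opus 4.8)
The plan is to reduce the statement to a monodromy computation around the components of $Y$, combined with the standard fact that a rank one local system on the complement of a simple normal crossings divisor extends, uniquely, across exactly those components around which its local monodromy is trivial.

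First I would compute the monodromy of $\ml(\eta)\otimes_\C f^*(\C t^{a})$ around each irreducible component $Y_\alpha$ of $Y$. By definition the monodromy of $\ml(\eta)$ around $Y_\alpha$ is $\exp(-2\pi i\,\op{Res}_{Y_\alpha}\eta)$. Since $Y=f^{-1}(0)$ is reduced, $f$ vanishes to order exactly one along $Y_\alpha$, so a small positive loop around $Y_\alpha$ pushes forward under $f$ to a loop winding once around $0$; hence the monodromy of $f^*(\C t^{a})$ around $Y_\alpha$ is $\exp(2\pi i a)$. Consequently the monodromy of $\ml(\eta)\otimes f^*(\C t^{a})$ around $Y_\alpha$ is $\exp\!\big(2\pi i\,(a-\op{Res}_{Y_\alpha}\eta)\big)$, which is trivial precisely when $\op{Res}_{Y_\alpha}(\eta)\in a+\Z$, i.e.\ precisely when $Y_\alpha$ is one of the components comprising $Y(a)$.

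Next I would argue locally in the polydisc model. Fix a point $p\in Y(a)_0$. By Definition \ref{basicD}, $p$ lies on no component of $Y$ outside $Y(a)$, so after choosing local coordinates near $p$ in which $D=Y\cup E$ is a union of coordinate hyperplanes, every $Y$-hyperplane through $p$ is a component of $Y(a)$, and by the previous step $\ml(\eta)\otimes f^*(\C t^{a})$ has trivial monodromy around each of them. A rank one local system on $(\dc^*)^{r}\times(\dc^*)^{s}\times\dc^{t}$ whose monodromy around the first $r$ punctured factors is trivial, viewed as a one-dimensional representation of $\pi_1$, factors through the quotient corresponding to filling in those $r$ punctures, hence extends canonically to a local system on $\dc^{r}\times(\dc^*)^{s}\times\dc^{t}$. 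This produces, near every $p\in Y(a)_0$, a local system extending $\ml(\eta)\otimes f^*(\C t^{a})$ across the directions normal to $Y$ (though not across $E$). Such an extension is unique---pullback of local systems along the surjection of fundamental groups is fully faithful---so the local extensions agree on overlaps and glue to a single local system on an open neighbourhood of $Y(a)_0$ with $E$ deleted (equivalently, on an open subset of $X\setminus E$ containing $Y(a)_0\setminus E$), which is the asserted extension.

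I do not expect a genuine obstacle here: the mathematical content is exactly the equivalence $\exp(2\pi i(a-\op{Res}_{Y_\alpha}\eta))=1\iff Y_\alpha\subset Y(a)$ together with the Riemann--Hilbert extension criterion. The two points to keep in mind are that $Y$ is reduced (so the twist by $f^*(\C t^{a})$ shifts each residue $\op{Res}_{Y_\alpha}\eta$ by exactly $a$, multiplying the monodromy around $Y_\alpha$ by $\exp(2\pi i a)$) and that $Y(a)_0$ meets no component of $Y$ outside $Y(a)$ (which is precisely Definition \ref{basicD}); the auxiliary divisor $E$ needs no work beyond the fact that one only extends in the $Y$-normal directions.
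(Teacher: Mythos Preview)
Your proof is correct and follows essentially the same approach as the paper: both compute that the residue of $\eta - f^*(a\,dt/t)$ along any $Y_\alpha \subset Y(a)$ is an integer, hence the local monodromy of $\ml(\eta)\otimes_\C f^*(\C t^a)$ is trivial there, and then invoke the standard extension criterion. The paper's proof is a two-line version of yours; you have simply spelled out the polydisc model and the uniqueness of the extension that the paper leaves implicit.
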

\begin{proof}
  By the assumption,
  $\op{Res}_{Y_{\alpha}}(\eta - f^*(a dt/t)) \in \Z$ for any
  $Y_{\alpha} \subset Y(a)$, so the local monodromy of
  $\ml{(\eta)}\tensor_{\C} f^*(\mb{C} t^{a})$ is trivial at any point of
  $Y(a)_0$, therefore the local system extends as claimed.
\end{proof}

\begin{proof}[Proof of Proposition \ref{p:analytic}]
  The theorem is local on $X$, therefore we may let $X= \dc^{n+s+t}$
  with coordinates $x_1,\dots,x_n, u_1,\dots,u_s,v_1,\dots, v_t$, let
  $S=\dc$, and let $f:X\to S$ be the function $x_1 x_2 \cdots x_n$.
  Let $E\subseteq X$ be the divisor given by $u_1u_2\cdots u_s=0$,
  $Y=f^{-1}(0)\subseteq X$, $D=E\cup Y$ and $U = X \sm D$.  Let
  $V=X \sm (Y \cup (\cup'_{\beta\in J}E_{\beta}))$ where the union is
  restricted to an arbitrary set of irreducible components of $E$.
  Let $\tilde{j}:U \to V$, and $\tilde{k}:V\to X \sm Y$ be the
  inclusions.  Since $\pi_1(U)$ is abelian, every $\C$-local system
  $\mc{L}$ on $U$ has a rank one sub-local system, so by the analogue
  of Remark \ref{structure} in the complex analytic setting, we may
  assume that $\mc{L}$ is of rank $1$ of the form $\mc{L}(\eta)$ for
  $\eta$ a section of $\Omega^1_X(\log D)$. Thus, we may assume that
  $\mc{F} = \tilde{k}_*\tilde{j}_{!}\ml \in \dbc(X^*,\C)$ with
  $\mc{L}$ of rank $1$.

The first step in the proof is to show that we may assume that
$E =\emptyset$. To do this we will construct a product decomposition
of all the data.  Let $X' = \dc^{n}$ with coordinates $x_1,\dots,x_n$
and $W = \dc^{s+t}$ with coordinates $u_1,\dots,u_s, v_1,\dots, v_t$.
Let $f':X'\to S$ be given by $x_1x_2\cdots x_n$ and let
$Y' = (f')^{-1}(0) \subset X'$. Then $X = X' \times W$, $Y=Y'\times W$
and $f = f' \circ p_{X'}$, where $p_{X'}: X \to X'$ is the projection.
Then
$$X \sm D= (X' \sm Y')\times W^* ,$$ where
$W^*= (\dc^*)^s \times (\dc)^t \subset \dc^{s+t} = W$.  The rank one
local system $\ml$ on $X \sm D$ is also a corresponding external
product $\ml'\boxtimes\ml''$ of local systems $\mc{L}'$ on $X' \sm Y'$
and $\mc{L}''$ on $W^*$. Let $E_W$ be the divisor in $W$ given by
$u_1u_2\cdots u_s=0$ with irreducible components $E_{W,j}$ and let
$U_W = W \sm E_W$, and let $V_W = W \sm \cup'_{j \in J}E_{W,j}$, where
the union is over the same subset of $J$ used to define $V$, so
$V = X' \times V_W$. Let $\tilde{j}_W:U_W \to V_W$, and
$\tilde{k}_W:V_W\to W $ be the inclusions. We then have:

\begin{claimn}
  $\Psi_f(\mc{F}) \simeq \Psi_{f'}\mc{L}' \boxtimes \tilde{k}_{W,*}
  \tilde{j}_{W,!} \mc{L}''$.
\end{claimn}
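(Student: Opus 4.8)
The plan is to first establish the corresponding factorisation of $\mc{F}$ itself and then invoke the Künneth property of nearby cycles, Lemma~\ref{kunneth}(2). First I would unwind the product structure. Under $X = X'\times W$ we have $Y = Y'\times W$ (since $Y = f^{-1}(0) = p_{X'}^{-1}(Y')$) and $E = X'\times E_W$, hence $U = X\sm D = (X'\sm Y')\times U_W$ and $V = (X'\sm Y')\times V_W$; moreover $X^* = X\sm Y = (X'\sm Y')\times W$. Consequently $\tilde{j} = \mathrm{id}_{X'\sm Y'}\times \tilde{j}_W$ and $\tilde{k} = \mathrm{id}_{X'\sm Y'}\times \tilde{k}_W$. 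Since $\mc{L} = \mc{L}'\boxtimes \mc{L}''$, Lemma~\ref{KunnethF}(1) applied to the shriek pushforward along $\tilde{j}$ gives $\tilde{j}_!\mc{L} \simeq \mc{L}'\boxtimes \tilde{j}_{W,!}\mc{L}''$, and then Lemma~\ref{KunnethF}(2) applied to the star pushforward along $\tilde{k}$ gives
\[
  \mc{F} = \tilde{k}_*\tilde{j}_!\mc{L} \simeq \mc{L}'\boxtimes \bigl(\tilde{k}_{W,*}\tilde{j}_{W,!}\mc{L}''\bigr) .
\]
To apply part (2) I would note that all the spaces in play are cut out by coordinate hyperplanes inside polydiscs, hence semi-algebraic, that the maps $\mathrm{id}_{X'\sm Y'}$ and $\tilde{k}_W$ are definable inclusions, and that $\tilde{j}_{W,!}\mc{L}''$ is constructible with respect to the stratification of $W$ attached to the normal crossings divisor $E_W$ — precisely the situation flagged in the remark following Lemma~\ref{KunnethF}, so that the semi-algebraic $o$-minimal structure applies.

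Next I would apply $\Psi_f$ to the factorisation just obtained. Since $f = f'\circ p_{X'}$ with $p_{X'}\colon X = X'\times W\to X'$ the projection and $f'\colon X'\to S$, and since $\mc{L}'$ lives on $X'\sm Y' = (f')^{-1}(S^*) = X'^*$ (on which $\Psi_{f'}$ is defined via the factorisation of $\Psi_{f'}$ through $\dbc(X'^*)$ recalled in Section~\ref{s:rnb}), we are exactly in the setting of Lemma~\ref{kunneth}(2) with $X'$ in the role of ``$X$'' and $W$ in the role of ``$Z$''. That lemma gives
\[
  \Psi_f\bigl(\mc{L}'\boxtimes \tilde{k}_{W,*}\tilde{j}_{W,!}\mc{L}''\bigr) \simeq (\Psi_{f'}\mc{L}')\boxtimes \tilde{k}_{W,*}\tilde{j}_{W,!}\mc{L}'' ,
\]
compatibly with the monodromy action — which therefore acts only on the first factor, since all isomorphisms in Lemma~\ref{kunneth} respect the monodromy. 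Combining this with the factorisation of $\mc{F}$ established above yields $\Psi_f(\mc{F}) \simeq \Psi_{f'}\mc{L}'\boxtimes \tilde{k}_{W,*}\tilde{j}_{W,!}\mc{L}''$, which is the claim.

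There is no real obstacle in this argument: it is a purely formal consequence of the Künneth compatibilities for $f_!$, $f_*$ and $\Psi_f$. The only point deserving care is the verification of the definability hypotheses needed to commute the non-proper pushforward $\tilde{k}_*$ past the external product in Lemma~\ref{KunnethF}(2), and this is routine in the present coordinate setting where everything is a union of coordinate hyperplanes in a polydisc.
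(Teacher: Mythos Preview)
Your proof is correct and follows exactly the same approach as the paper: factor $\tilde{j}$ and $\tilde{k}$ as products, apply Lemma~\ref{KunnethF}(1) and (2) to decompose $\mc{F}$ as an external product, and then invoke Lemma~\ref{kunneth}(2) for the nearby cycles. Your write-up is in fact more detailed than the paper's (which dispatches the claim in one sentence), in particular in your care with the definability hypothesis and the restriction to $X'\setminus Y'$ on the first factor.
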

To prove the claim we note that
$\tilde{j} = \mr{id}_{X'} \times \tilde{j}_W$ and
$\tilde{k} = \mr{id}_{X'} \times \tilde{k}_W$. The claim then follows
by applying part (2) of Lemma \ref{kunneth} (with $X$ (resp.~$Z$)
there being $X'$ (resp.~$W$) here) and parts (1) and (2) of Lemma
\ref{KunnethF}.

Let $Z' \subset Y'$ correspond to $Z \subset Y$, i.e.,
$Z = Z' \times W$, and let $i'$ (resp.~$j'$) be the analogues of $i$
(resp.~$j$). Appying part (2) of Lemma \ref{KunnethF} again we see
that
\[
  i_*i^* \mc{K} = i'^*i'_* \Psi_{f'} \mc{L}' \boxtimes \tilde{k}_{W,*}
  \tilde{j}_{W,!} \mc{L}'' \mbox{ and }
  j_*j^* \mc{K} = j'^*j'_* \Psi_{f'} \mc{L}' \boxtimes \tilde{k}_{W,*}
  \tilde{j}_{W,!} \mc{L}''.
\]
We are thus reduced to proving the theorem for $X'$ and $f'$ with
$\mc{F} = \mc{L'}$, i.e., we may assume that $E = \emptyset$.

We now prove that the map $(i_*i^*\mc{K})_P \to (j_*j^*\mc{K})_P$ is
an isomorphism at all points $P \in Y$. By induction on the number of
components of $Y$ passing through $P$ we may assume that $P= 0 \in X$.
There are then two cases:
\begin{enumerate}
\item[(1)] Assume that there exist $\alpha$ and $\alpha'$ such that
  $\op{Res}_{Y_{\alpha}}\eta - \op{Res}_{Y_{\alpha'}} \eta \notin
  \Z$. In this case we claim that both stalks are zero.  For
  $i_*i^*\mc{K}$ this follows from a topological computation
  (\cite[Lemma 4.18.5, in Expos\'e XIV]{SGA7II}). For $j_*j^* \mc{K}$,
  we need to understand the restriction of $\mc{K}$ to
  $(Y_{\alpha})_0$ for each $Y_{\alpha}$. By Lemma \ref{extend2} we
  see that $\mc{L} \otimes_{\C} \C t^{a_{\alpha}}$, where
  $a_{\alpha} = \op{Res}_{Y_{\alpha}}\eta$, extends to a neighbourhood of
  $(Y_{\alpha})_0$ as a local system and by Lemma \ref{kunneth} we see
  that $\mc{K}_{(Y_{\alpha})_0}$ is isomorphic to the restriction of
  this local system. This local system corresponds to the log form
  $\eta - a_{\alpha} dt/t$ which has residue $0$ along $Y_{\alpha}$,
  so the restricted local system corresponds to the restriction of
  this form. The assumption on residues at the beginning implies that
  this restricted $1$-form has a non-integral residue along some
  divisor, so $\mc{K}_{(Y_{\alpha})_0}$ has non-trivial monodromy
  around a boundary divisor.  We now completely d\'evissage
  $j^*\mc{K}$ using induction in terms of objects supported on the
  $Y_{\alpha}$ (see Remark \ref{devissage}). Each object is the
  extension to $Y_{\alpha}$ of $\mc{K}_{(Y_{\alpha})_0}$ using
  iterated shriek and star pushforwards.  If there is any shriek
  pushforward the stalk at $P$ is $0$ by Lemma \ref{l:ext}. If all the
  pushforwards are star, we see that the stalk at $P$ is also $0$ from
  the fact that the cohomology of a non-trivial local system on any
  $(\dc^*)^{n-1}$ vanishes in all degrees (an easy consequence of the
  case $n=2$ and the K\"unneth formula).
\item[(2)] Suppose the residues of $\eta$ at all divisors $Y_{\alpha}$ are
  congruent modulo $\mb{Z}$.  By Lemma \ref{extend2}, for $a$ any
  one of these residues the local system
  $\ml\tensor_{\C} f^*(\C t^{a}) $ extends to a local system in a
  neighborhood of $P$ in $X$ and is hence trivial in a neighbourhood
  of $P$.  The claim then follows by using (1) Lemma of \ref{kunneth}
  and the special case of the theorem proved in Lemma \ref{clarity}.
\end{enumerate}
Having proved the first part of the theorem, the second part is proved
in the same way as in the proof of the second part of Lemma
\ref{clarity}.
\end{proof}

\begin{proof}[Proof of Theorem \ref{stillholds}]
  By the conservativity of the Betti realisation functors, the fact
  that the map $i_*i^*\mc{K}\to j_*j^*\mc{K}$ in the theorem is an
  isomorphism follows from Proposition \ref{p:analytic}. This implies
  the existence of the distinguished triangle in \eqref{e:d2}. The
  rest of the theorem is a formal consequence, and follows in the same
  as in the proof of the second part of Lemma \ref{clarity}.
\end{proof}

\section{The statement of the factorisation theorem and some geometric
  preparation} \label{s:prep}

From now till the end of Section \ref{kzf} we fix a theory of mixed
sheaves as in Section \ref{s:mixte}. All our results will apply to any
such theory, but the key ones are (1) and (2) of Example \ref{e:M}. In
particular, (2) justifies the appellation ``motivic'' for our results.

\smallskip

In this section, we formulate a general motivic factorisation theorem
(Theorem \ref{t:fact}) which generalizes Theorem \ref{t:motivic} in
which we allow an arbitrary number $m$ of marked points to come
together. To make use the context of nearby cycles, we blow up the
locus in $\A^n$ where $m$ of the coordinates coincide. In this way we
obtain a space $W$ and a function $f:W\to \A^1_{\msk}$ such that the
KZ and conformal blocks mixed systems are defined on $W \setminus Z$
where $Z$ is the zero locus of $f$. We then state the motivic
factorisation theorem for KZ and conformal blocks mixed local
systems. In Section \ref{s:gprep} we make a choice of a family
$g:X\to W$, with good properties over $Z$, and note geometric
properties (irreducible components, how they meet, etc.) of the
central fibre $Y=f_X^{-1}(0)\subseteq X$, where $f_X=f\circ g$. We
also determine how the local systems defining the KZ systems on
$X^*=X \setminus Y$ behave near generic points of irreducible
components of $Y$.

\subsection{Statement of the factorisation theorem}\label{s:statement}

Recall that $\mc{C}_n \subset \A^n_{\msk}$ is the configuration space
of $n$ points on $\A^1_{\msk}$ and that we use $z_1,z_2,\dots,z_n$ for
the coordinate functions. Let $n \geq 3$, let
$A \subset [n]:= \{1,2,\dots,n\}$ with $|A| =m \geq 2$, and let
$D_A \subset \A^n_{\msk}$ be the codimension $n + 1 -|A|$ linear
subvariety defined by the equations $z_i = z_j$ for $i,j \in A$. Let
$\mr{Bl}_A$ be the blowup of $\A^n_{\msk}$ at $D_A$. The tangent
bundles of $\A^n_{\msk}$ and $D_A$ are trivialised using the
coordinates, so the exceptional divisor of the blowup $E_A$ is
isomorphic to $D_A \times \P(\A^n_{\msk}/D_A)$. Let $\mr{Bl}_A'$ be
the complement in $\mr{Bl}_A$ of the strict transforms of all $D_B$
with $B \subset [n]$, $|B| =2$ and $B \neq A$ and let
$E_A' = E_A \cap \mr{Bl}_A'$. It is easy to see that
$\mr{Bl}_A' \sm E_A' = \mc{C}_n$.

To formulate the factorisation theorem it will be important to choose
explicit local coordinates on $\mr{Bl}_A'$ so we proceed as follows.
Fix $i_0,i_1 \in A$ such $i_0 \neq i_1$ and let $I =
\{i_0,i_1\}$. Define a map $p:\A^n_{\msk} \to \A^n_{\msk}$ by $p(z_i) = z_i$ for
$i \in A^c \cup I$ and
$p(z_i) = (z_i - z_{i_0})\cdot(z_{i_1} - z_{i_0}) + z_{i_0}$ for
$i \in A \sm I$. The map $p$ gives a local chart for the blowup and
(an open subset of) the exceptional divisor $E_A$ is identified with
the locus given by $z_{i_0} = z_{i_1}$ and $E_A'$ with the open subset
thereof given by $z_i \neq z_j$ for all $\{i,j\} \neq I$ and
$z_i-z_{i_0}\neq 0$ for $i\not\in I$.

On the divisor $D_I$ given by $z_{i_0} = z_{i_1}$ we have coordinates
$z_{i_0}$ and $z_i$ for $i \notin A$ and $u_i:=z_i - z_{i_0}$ for
$i \in A \sm I$. These coordinates give an isomorphism of $D_I$ with
$\A^{A^c \cup \{i_0\}}_{\msk} \times \A^{A \sm I}_{\msk}$ which contains the product
of configuration spaces
$\mc{C}_{A^c \cup \{i_0\}} \times \mc{C}_{A \sm I}$ as an open
subset. Let $\mc{C}_{A \sm I}'$ be the open subset of
$\mc{C}_{A \sm I}$ given by $u_i \neq 0,1$ for all $i \in A \sm I$.

Let $W_A$ be the open subset of $\A^n_{\msk}$ given by the complement of the
closed set defined by the equations $z_i= z_j$ for all $i, j$ such
that $\{i,j\} \neq I$ and $z_i - z_{i_0} = 1$ for all $i \in A \sm
I$. Let $Z_A$ be the closed subset of $W_A$ defined by the equation
$z_{i_0} = z_{i_1}$. Then $W_A \sm Z_A$ is an open subset of
$\mc{C}_n$ and
$Z_A \simeq \mc{C}_{A^c \cup \{i_0\}} \times \mc{C}_{A \sm I}'$.

We can now state one of our main results:

\begin{theorem}\label{t:fact}
  Let $\mf{g}$ be a split simple Lie algebra over $\Q$, let
  $\kappa = r/s \in \Q^{\times}$ with $r,s \in \Z$ and $(r,s) =1$, let
  $N = i_{\mf{g}}m_{\mf{g}}|r|$ (see Lemma \ref{formula:N}) and let
  $K = \Q(\mu_N)$.  Let
  $\vv{\lambda} = (\lambda_1,\lambda_2,\dots,\lambda_n) \in (P^+)^n$,
  let $\nu \in P^+$ and let
  $\mathcal{KZ}_{\kappa}(\vec{\lambda},\nu^*)$ be the corresponding
  $K$-mixed local system on $\mc{C}_n$. Let $A \subset [n]$ be such
  that $|A| = m \geq 2$ and let $i_0,i_1 \in A$ be distinct
  elements. Let $(W_A,Z_A)$ as above be the local model for the blowup
  in $\A^n_{\msk}$ of the diagonal corresponding to $A$ and let
  $\mathcal{KZ}_{\kappa}(\vec{\lambda},\nu^*)'$ be the restriction of
  $\mathcal{KZ}_{\kappa}(\vec{\lambda},\nu^*)$ to $W_A \sm Z_A$. For
  any $\mu \in P^+$
  \begin{itemize}
  \item[--] Let $\vv{\lambda}'_{\mu}$ be the set of weight labelled by
    $A^c \cup \{i_0\}$, where for $i$ in $A^c$ the weight assigned to
    $i$ is $\lambda_i$ and for $i_0$ the weight is $\mu$.
  \item[--] Let $\vv{\lambda}''$ be the set of weights labelled by points
    in $A$ given by the restriction of $\vv{\lambda}$ to $A$.
  \end{itemize}
  Let $\iota: \mc{C}_{A \sm I}' \to \mc{C}_A$ be the inclusion given
  by using $0$ (resp.~$1$) for the $i_0$ (resp.~$i_1$) coordinate and
  let {$f: W_A \to \A^1_{\msk}$} be the function $z_{i_1} - z_{i_0}$.  Then we 
  have an equality
  \[
    \Psi_f\,(\mathcal{KZ}_{\kappa}(\vec{\lambda},\nu^*)') =
    \bigoplus_{\bar{a} \in (1/N)\Z/\Z} \Psi_{f, \bar{a}}
      \,(\mathcal{KZ}_{\kappa}(\vec{\lambda},\nu^*)')
    \]
    of $K$-mixed local systems on $W_A$. Furthermore, each summand
    $\Psi_{f, \bar{a}} \
    (\mathcal{KZ}_{\kappa}(\vec{\lambda},\nu^*)')$ on the RHS has an
    increasing filtration $\mr{G}^{\bullet}$ indexed by
    $a \in (1/N)\Z$ and preserved by the monodromy operator $\mr{N}$
    with
  \begin{equation}\label{e:kzfact}
    \mr{Gr}_a^{\mr{G}}(\Psi_{f, \bar{a}} (\mathcal{KZ}_{\kappa}(\vec{\lambda},\nu^*)')) =
    \bigoplus_{\mu \in P^+,\,a(\vv{\lambda},\mu) =a }\mc{KZ}_{\kappa}(\vv{\lambda}'_{\mu}, \nu^*)
    \boxtimes_K \iota^*(\mc{KZ}_{\kappa}(\vv{\lambda}'', \mu^*) ) ,
  \end{equation}
      where
  \begin{equation}\label{formula:a}
    a(\vv{\lambda}, \mu) = \frac{1}{2\kappa}\bigl (-c(\mu) + \sum_{i \in A}
    c(\lambda_i) \bigr ) 
  \end{equation}
  with $c(-)$ is as in Section \ref{s:lie}.  The monodromy
  operator $\mr{N}$ acts trivially on both sides of \eqref{e:kzfact}.

    Furthermore, if all the $\lambda_i$ and $\nu$ are of level
    $\ell \geq 0$, then for $\kappa = \ell + h^{\vee}$ and
    $\mathcal{CB}_{\kappa}(\vec{\lambda},\nu^*)'$ the restriction of
    $\mathcal{CB}_{\kappa}(\vec{\lambda},\nu^*)$ to $W_A \sm Z_A$, we
    have
 \[
    \Psi_f\, (\mathcal{CB}_{\kappa}(\vec{\lambda},\nu^*)') =
    \bigoplus_{\bar{a} \in (1/N)\Z/\Z} \Psi_{f, \bar{a}}
    \, (\mathcal{CB}_{\kappa}(\vec{\lambda},\nu^*)')
  \]
  and
  \begin{equation}\label{e:cbfact}
    \Psi_{f,\bar{a}}(\mathcal{CB}_{\kappa}(\vec{\lambda},\nu^*)') = 
    \bigoplus_{\mu \in P_{\ell}, \,\overline{a(\vv{\lambda}, \mu)} = \bar{a}}\mc{CB}_{\kappa}(\vv{\lambda}'_{\mu}, \nu^*)
  \boxtimes_K \iota^*(\mc{CB}_{\kappa}(\vv{\lambda}'', \mu^*)) .
  \end{equation}
  
\end{theorem}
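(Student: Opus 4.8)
The plan is to realise the mixed local systems $\mathcal{KZ}_{\kappa}(\vec{\lambda},\nu^*)'$ and $\mathcal{CB}_{\kappa}(\vec{\lambda},\nu^*)'$ as coming from a geometric family over $W_A \sm Z_A$, extend this family across $Z_A$ to a semistable degeneration, and analyse the nearby cycles of the associated motivic sheaves on the central fibre. Recall from Definitions \ref{d:kzm} and \ref{d:cbm} that $\mathcal{KZ}_{\kappa}(\vec{\lambda},\nu^*)$ is the image of a map $(f|_{V'})_* q_! \mathcal{L}(\eta) \to (f|_V)_* j_! \mathcal{L}(\eta)$ (in $\chi$-isotypic parts) built from a smooth projective compactification of the Schechtman--Varchenko configuration, and that $\mathcal{CB}_{\kappa}(\vec{\lambda},\nu^*)$ is the image of the more elementary map $h_! \mathcal{L}(\eta)^{\chi} \to h_* \mathcal{L}(\eta)^{\chi}$. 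First I would construct, using moduli of stable pointed rational curves, a smooth morphism $g\colon X \to W_A$ and $f_X := f \circ g$ such that $g$ is smooth and projective over $W_A \sm Z_A$, carries the above data, and $Y := f_X^{-1}(0)$ is a reduced simple normal crossings divisor whose irreducible components $Y_{\alpha}$ and their incidences can be listed explicitly---each $Y_{\alpha}$ recording a way in which the points labelled by $A$ ``bubble off'' onto a rational component carrying a weight $\mu$ at the attaching node. By the proper base change property for nearby cycles (Lemma \ref{l:pbc}) one has $\Psi_f \circ g_* = g_{Y,*} \circ \Psi_{f_X}$, so everything reduces to understanding $\Psi_{f_X}$ of the shriek/star extensions of the rank-one local system $\mathcal{L}(\eta)$ on $X^* = X \sm Y$, together with the residues $a_{\alpha} = \op{Res}_{Y_{\alpha}} \eta$ of the $1$-form $\eta$ attached to $(\vec{\lambda},\nu^*,\kappa)$ along the components of $Y$. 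A direct computation with the master function $\ms{R}$ identifies $a_{\alpha}$ modulo $\Z$ with $a(\vec{\lambda},\mu)$, the formula \eqref{formula:a} being dictated by the fact that $\sum_{i<j,\,i,j\in A}\Omega_{ij}$ acts on the $V_{\mu}$-isotypic part of $\bigotimes_{i\in A}V_{\lambda_i}$ by the scalar $\frac{1}{2}\bigl(c(\mu) - \sum_{i\in A}c(\lambda_i)\bigr)$; the divisibility facts underlying Lemma \ref{formula:N} give $a(\vec{\lambda},\mu) \in (1/N)\Z$, which also produces the direct sum decomposition into the $\Psi_{f,\bar a}$-summands via Lemma \ref{l:ds}.

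I would then treat the conformal blocks statement first, as in Section \ref{cbf}. Applying the d\'evissage of Theorem \ref{stillholds} to $\Psi_{f_X}(h_! \mathcal{L}(\eta))$ and $\Psi_{f_X}(h_* \mathcal{L}(\eta))$ filters them by objects supported on the $Y_{\alpha}$; near a generic point of $Y_{\alpha}$ the family $X$ is, in the chosen local coordinates, the product of the ``global'' configuration family over $\mc{C}_n$ and a ``bubble'' family over $\mc{C}_{A\sm I}'$, so the K\"unneth isomorphisms of Lemmas \ref{l:kf} and \ref{l:prod} identify the graded pieces, after pushing forward to $Z_A$ and taking $H^M$, with the external products $\mc{CB}_{\kappa}(\vv{\lambda}'_{\mu}, \nu^*) \boxtimes_K \iota^*(\mc{CB}_{\kappa}(\vv{\lambda}'', \mu^*))$ indexed by the weight $\mu$ at the node and graded by $\overline{a(\vec{\lambda},\mu)}$. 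Since the conformal block local systems are pure of the right weight (Remark \ref{r:cb}), this filtration splits, yielding the direct sum \eqref{e:cbfact}; the triviality of $\mr{N}$ is the semisimplicity of local monodromy of conformal blocks (\cite[Theorem 6.2.6]{TUY}).

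For the KZ statement the same geometric input produces, in general, only an increasing filtration $\mr{G}^{\bullet}$ on each $\Psi_{f,\bar a}(\mathcal{KZ}_{\kappa}(\vec{\lambda},\nu^*)')$, indexed by $a \in (1/N)\Z$ according to the residues $a_{\alpha}$, preserved by $\mr{N}$, with associated graded pieces that are external products over $K$ of a ``$\mc{C}_n$-factor'' and a ``bubble factor'': after pushing forward to $Z_A$ and taking $H^M$, the $\mc{C}_n$-factor contributes $\mc{KZ}_{\kappa}(\vv{\lambda}'_{\mu}, \nu^*)$ (the global configuration with $V_{\mu}^*$ inserted at $i_0$) and the bubble factor contributes $\iota^*(\mc{KZ}_{\kappa}(\vv{\lambda}'', \mu^*))$, giving \eqref{e:kzfact}; the graded pieces are pulled back from genuine local systems on configuration spaces, so $\mr{N}$ vanishes on $\mr{Gr}^{\mr{G}}$, while the mixedness of the total object keeps $\mr{G}^{\bullet}$ from splitting. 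The crucial difficulty---the step I expect to be the main obstacle---is that the six-functor formalism gives only partial exactness when one pushes a filtered nearby-cycles sheaf down to $Z_A$ and takes cohomology, so a priori the graded pieces could mix or the ranks could change. To force the relevant spectral sequence to degenerate I would follow the Steenbrink-style approach of Section \ref{s:mnb}: introduce the de Rham type complex $A(\eta)$ on $Y$ (Definition \ref{def11}) and establish vanishing of its cohomology in the needed range of degrees using limit mixed Hodge theory, as developed by Steenbrink \cite{steenbrink}, Steenbrink--Zucker \cite{SZ} and El Zein \cite{EZ}. A key auxiliary observation is that the cohomology of $A(\eta)$, like the Aomoto cohomology on the generic fibre of $g$, has rank unchanged under a generic rescaling of $\eta$; one runs the vanishing argument for such a scaled form, where the filtrations behave better, and transports the conclusion back. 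Finally, since the ranks of the two sides of \eqref{e:kzfact} already agree by the topological factorisation of Bakalov--Kirillov \cite[Theorem 6.5.3]{BK}, it suffices to exhibit a subquotient of the correct rank, which the filtration $\mr{G}^{\bullet}$ provides; the subquotient techniques of Section \ref{s:elementary} are used systematically to single out the individual components of $Y$.
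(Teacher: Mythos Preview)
Your overall strategy matches the paper's almost exactly: build the semistable family over $W_A$ via $\overline{M}_{0,n+1}$, use proper base change to trade $\Psi_f\circ g_*$ for $g_{Y,*}\circ\Psi_{f_X}$, compute residues of $\eta$ along the components $Y_{A,B}$ to get the grading by $a(\vec\lambda,\mu)$, identify the contribution of each component via the K\"unneth/master-function factorisation (Lemma~\ref{product11}), and for the KZ part use $A(\eta)$, the scaling trick, and rank comparison to force exactness.

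Two small corrections.  First, for the conformal blocks case you write that the filtration ``splits because the CB local systems are pure''.  The paper does not argue this way, and as written your step ``K\"unneth identifies the graded pieces after pushing forward and taking $H^M$'' already presupposes the spectral-sequence degeneration you flag as the hard point for KZ.  What the paper actually does in Section~\ref{cbf} is use Lemma~\ref{quotientie} to produce the commutative square~\eqref{diagramcb2}, which exhibits $\bigoplus_B\operatorname{im}(\cdots)$ directly as a \emph{subquotient} of the image $\Psi_f(\mathcal{CB})$, then invokes the classical numerical factorisation of conformal blocks (TUY) to force equality via Lemma~\ref{subquotient}.  No filtration/exactness is needed; the direct sum is obtained by rank counting, not by splitting a filtration via purity.  (Purity is only invoked afterwards, to say $\mathrm N=0$.)  You should import the same subquotient-plus-rank-count device into your CB argument rather than invoking purity.

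Second, the rank equality you need for KZ is not the Bakalov--Kirillov result \cite[Theorem~6.5.3]{BK}, which is stated for generic $\kappa$; it is just associativity in the classical representation ring, i.e.\ $\dim(V(\vec\lambda)\otimes V_\nu^*)^{\mf g}=\sum_{\mu}\dim(\otimes_{i\in A}V_{\lambda_i}\otimes V_\mu^*)^{\mf g}\cdot\dim(\otimes_{i\notin A}V_{\lambda_i}\otimes V_\mu\otimes V_\nu^*)^{\mf g}$, valid for all $\kappa$.  With these two fixes your outline is the paper's proof.
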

Note that CB local systems are all pure, so the operator $\mr{N}$ is
zero on $\Psi_f\,(\mathcal{CB}_{\kappa}(\vec{\lambda},\nu^*)')$.
\begin{remark}\label{r:topological}
Theorem \ref{t:fact} has the following consequence in the topological
setting:

Consider the complex local system $\ml$ which is the Betti realisation
of the mixed local system
$\mathcal{KZ}_{\kappa}(\vec{\lambda},\nu^*)'$ on $W_A \sm Z_A$.  There
is an action of local monodromy (``around $Z_A$'') on
$\Psi_f (\ml) = \bigoplus_{\bar{a} \in (1/N)\Z/\Z} \Psi_{f, \bar{a}}
(\ml) $, which gives the conjugacy class of the local monodromy of the
local system $\ml$ on $W_A \sm Z_A$.  The semisimple part of the local
monodromy acts by the scalar $\exp(2\pi i \bar{a})$ on
$\Psi_{f, \bar{a}} (\ml)$ and the unipotent part acts trivially on the
corresponding topological graded $\rat(\mr{Gr}_a^{\mr{G}})$, where
$\mr{G}^{\bullet}$ is as in the statement of the theorem. This gives
upper bounds on the sizes of Jordan blocks of the local
monodromy. Note that the residue matrix of a logarithmic connection on
a punctured disc does not determine the monodromy of the corresponding
local system when there exist eigenvalues of the residue matrix differ
which by nonzero integers.

For the case of conformal blocks, the local monodromy is known to be
semisimple and it acts by multiplication by $\exp(2\pi i \bar{a})$ on
$\Psi_{f, \bar{a}}$.
\end{remark}

We will first prove the second part of the theorem, which is much
easier, in Section \ref{cbf} and then prove the first part of the
theorem in Section \ref{kzf}.

\subsection{Geometric preparation}\label{s:gprep}

We have a morphism $\mc{C}_n \to \ov{M}_{0,n+1}$ given by sending a
tuple $\vv{z} =(z_1,z_2,\dots,z_n)$ to the marked curve
$(\P^1; \vv{z}, \infty)$ (where we have fixed a coordinate $z$ on
$\P^1$). This induces a rational map $\mr{Bl}_A \to \ov{M}_{0,n+1}$
which by the properness of $\ov{M}_{0,n+1}$ is defined at the generic
point of $E_A$. We now describe this map explicitly.

Consider the constant curve $\pi: \mr{Bl}'_A \times \P^1 \to \mr{Bl}'_A$
with sections $\{\sigma_i\}_{i=1}^{n+1}$ given by the coordinate
functions $z_1,z_2,\dots,z_n$ and the constant section $\infty$. The
sections are disjoint over $\mc{C}_n \subset \mr{Bl}'_A$. Over
$E_A'$ the sections $\sigma_i$ for $i \in A$ are all equal and
disjoint from the other sections. Let $T_A$ be the image of
$\sigma_i|_{E_A'}$ for any $i \in A$; this is a smooth
codimension two closed subvariety of $ \P^1 \times \mr{Bl}'_A$. Let
$\pi_A:C_A \to \mr{Bl}_A'$ be the composite of the blowup of
$\P^1 \times \mr{Bl}'_A $ at $T_A$ with $\pi$; the map $\pi_A$ is
flat since $C_A$ is smooth and all the fibres are one
dimensional. Thus, $\pi_A^{-1}(\mc{C}_n) \simeq \P^1 \times \mc{C}_n$
whereas $\pi^{-1}(E_A')$ is naturally the union of two
irreducible components both isomorphic to $ E_A' \times \P^1$,
with $\sigma_{i_0}(E_A')$ glued fibrewise to
$ E_A' \times \{\infty\} $.

We now construct a locus in $\mr{Bl}_A'$ over which the sections
$\sigma_i$ induce sections $\tau_i$ of $\pi_A$ which disjoint images.
For our purposes, it will suffice to do this locally in $\mr{Bl}_A'$
so we now use the local coordinates that we have already introduced in
Section \ref{s:statement}. The sections $\sigma_i$ are given in these
coordinates by the functions $z_i$ for $i \notin A \sm I$ and
$(z_i - z_{i_0})\cdot (z_{i_1}- z_{i_0}) + z_{i_0}$ for
$i \in A \sm I$. Therefore, the subvariety $T_A$ corresponds to the
subvariety given by the equations $z - z_{i_0} = 0$ and
$z_{i_0} - z_{i_1} = 0$ in $\A^n_{\msk} \times \P^1_{\msk}$.  Its blowup can be
viewed as the subvariety of $\A^n_{\msk} \times \P^1_{\msk} \times \P^1_{\msk}$ given by
the closure of the rational map to $\P^1$ given by
\[
  (z_1,z_2,\dots,z_n, z) \mapsto [ z - z_{i_0}:z_{i_1} - z_{i_0}] .
\]
The exceptional divisor of the blowup is the locus
$\sigma_i(D_I) \times \P^1$ for any $i \in A$. This implies that the
fibre of the projection from the blowup to $\A^n_{\msk}$ maps isomorphically
to $\P^1$ via the first projection over the locus
$z_{i_0} \neq z_{i_1}$, whereas over the locus $z_{i_0} = z_{i_1}$ the
fibre has two components, both isomorphic to $\P^1$: the second $\P^1$
is the curve give by the image of
\[
  z' \mapsto (\vv{z}, z_{i_0}, [z',1])
\]
for any $\vv{z} = (z_1,z_2,\dots,z_n) \in D_I$.  We identify the new
component with $\P^1$ using the projection to the second factor. With
this identification, the point of intersection on the second component
is the point $[1:0] \in \P^1$.

One then sees that the sections $\sigma_i$ all extend to sections
$\tau_i$ of the blowup with disjoint images over all of $W_A$. We have
\begin{equation}\label{e:sections}
  \tau_i(\vv{z}) =
  \begin{cases}
    (\vv{z}, [z_i,1], [z_i - z_{i_0}: z_{i_1}
    - z_{i_0}]) & \mbox{ if } i \notin A \\
    (\vv{z}, [(z_i - z_{i_0})(z_{i_1}- z_{i_0}) +
    z_{i_0} ,1], [z_i - z_{i_0}: 1]) & \mbox{ if }
    i \in A \sm I \\
    (\vv{z}, [z_{i_0},1], [0: 1]) & \mbox{ if }
    i = i_0 \\
    (\vv{z}, [z_{i_1},1], [1: 1]) & \mbox{ if } i = i_1
  \end{cases}
\end{equation}
For $i$ in $A$, the sections $\tau_i|_{W_A}$ have image contained in
the new component in each fibre.

We have thus constructed an $(n+1)$-pointed genus zero stable curve $C_A$ over
$W_A$ which gives rise to a classifying morphism $c_A: W_A \to \ov{M}_{0,n+1}$.
\begin{lemma}\label{l:smooth}
  The morphism $c_A$ is smooth.
\end{lemma}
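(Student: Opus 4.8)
The plan is to check that $c_A$ is a submersion at every point of $W_A$, which suffices because $W_A$ is open in $\A^n_\msk$ and $\ov{M}_{0,n+1}$ is smooth, of dimensions $n$ and $n-2$. Over the open set $W_A\sm Z_A$ the curve $C_A$ is the trivial family $\P^1$ with $n+1$ disjoint sections (the blowup of $T_A$ being an isomorphism there), so $c_A$ restricts to the composition of the open immersion $W_A\sm Z_A\hookrightarrow\mc{C}_n$ with the standard morphism $\mc{C}_n\to M_{0,n+1}$, $\vv z\mapsto(\P^1;\vv z,\infty)$; the latter is the quotient map for the free action of the two--dimensional affine group $\{z\mapsto az+b,\ a\neq 0\}$ fixing $\infty$, hence smooth. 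So the only real work is at points of $Z_A$.

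At a point of $Z_A$ the description of the sections $\tau_i$ in \eqref{e:sections} shows that the fibre of $C_A$ is a two--component nodal curve, one component carrying the sections labelled by $A$ and the other those labelled by $A^c$ together with the marking at infinity. Thus $c_A$ maps $Z_A$ into the boundary divisor $\Delta\subset\ov{M}_{0,n+1}$ attached to the partition $[n+1]=A\sqcup(A^c\cup\{\star\})$ (with $\star$ the $(n+1)$-st marked point), a smooth divisor, and I would use the canonical isomorphism $\Delta\simeq\ov{M}_{0,A\cup\{\bullet\}}\times\ov{M}_{0,(A^c\cup\{\star\})\cup\{\bullet\}}$, $\bullet$ denoting the node branch. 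Under $Z_A\simeq\mc{C}_{A^c\cup\{i_0\}}\times\mc{C}_{A\sm I}'$, I would then identify $c_A|_{Z_A}$ with a product of two morphisms: on the factor $\mc{C}_{A^c\cup\{i_0\}}$ it is the standard morphism to $M_{0,(A^c\cup\{\star\})\cup\{\bullet\}}$, where the index $i_0$ plays the role of the node branch $\bullet$ (its coordinate records the position of the node), again smooth of relative dimension $2$; on the factor $\mc{C}_{A\sm I}'$ it is the standard isomorphism onto $M_{0,A\cup\{\bullet\}}$ coming from fixing the three points $0$ (at $i_0$), $1$ (at $i_1$) and $\infty$ (the node). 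Granting this, $c_A|_{Z_A}\colon Z_A\to\Delta$ is smooth, so $T_xZ_A\to T_{c_A(x)}\Delta$ is surjective for $x\in Z_A$.

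The crux is the normal direction to $Z_A$. Here I would invoke Lemma \ref{l:normalform} for $\pi_A\colon C_A\to W_A$ with $f=z_{i_1}-z_{i_0}$: its hypotheses hold because $Z_A$ is a smooth divisor cut out by $f$ and $(C_A)|_{Z_A}$ is the union of the two components, meeting along the node locus, which maps isomorphically onto $Z_A$. The lemma then yields local coordinates near a point of the node in which $\pi_A$ is $(z_1,z_2,\vv u)\mapsto(z_1z_2,\vv u)$ and $f$ is the first base coordinate, so near the node $C_A$ is the family $z_1z_2=f$ over $W_A$. Hence the first--order deformation of the nodal fibre induced by moving normally to $Z_A$ is the non--trivial smoothing of the node, so the induced map of (rank--one) normal bundles $N_{Z_A/W_A}\to c_A^{*}N_{\Delta/\ov{M}_{0,n+1}}$ is an isomorphism. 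Feeding this, together with surjectivity of $T_xZ_A\to T_{c_A(x)}\Delta$, through the snake lemma applied to the normal--bundle sequences of $Z_A\subset W_A$ and $\Delta\subset\ov{M}_{0,n+1}$, one gets that $(dc_A)_x$ is surjective, which finishes the proof. The step I expect to be the main obstacle is this last paragraph: carefully matching marked points with node branches in the product decomposition of $c_A|_{Z_A}$, checking the hypotheses of Lemma \ref{l:normalform}, and identifying $N_{\Delta/\ov{M}_{0,n+1}}$ with the node--smoothing line; once these are in place the conclusion is purely formal.
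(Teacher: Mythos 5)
Your proof is correct and follows essentially the same route as the paper: reduce to points of $Z_A$, check that $c_A|_{Z_A}\to D_A$ is smooth via the product decomposition of the boundary divisor, and then verify transversality in the normal direction by identifying it with the first-order smoothing of the node. The only cosmetic difference is that the paper deduces the non-ramification along $Z_A$ by contradiction from the smoothness of $C_A\simeq W_A\times_{\ov{M}_{0,n+1}}\ov{C}_{0,n+1}$, whereas you compute the local form $xy=f$ of the blowup directly; these are the same underlying fact.
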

\begin{proof}
  It is easy to see that $c_A$ is dominant and smooth at all points in
  $W_A \sm Z_A$, so we prove smoothness at points in $Z_A$. Let $D_A$
  be the divisor in $\ov{M}_{0,n+1}$ corresponding to the partition
  $\{A, A^c\}$ of $[n+1]$. It is clear that $c_A(Z_A) \subset D_A$ and
  using the product decomposition of $D_A$ in terms of the above
  partition, one easily sees that $c_A|_{Z_A}: Z_A \to D_A$ is
  smooth. Now $c_A$ cannot be ramified along $Z_A$ since otherwise
  $C_A \simeq W_A \times_{\ov{M}_{0,n+1}}\ov{C}_{0,n+1}$ would not be
  smooth, where $\ov{C}_{0,n+1}$ is the universal curve over
  $\ov{M}_{0,n+1}$. It follows that $c_A$ is smooth.
\end{proof}

\subsection{Geometry of forgetful maps at the boundary}\label{fibers}

\subsubsection{}
For basic facts about the geometry of $\ov{M}_{0,N}$, the moduli space
of stable $N$-pointed curves of genus zero, that we use in this
section we refer the reader to \cite{keel}. It will be convenient for
us to allow markings by elements in arbitrary finite sets, so for any
finite set $S$ with $|S| \geq 3$ we have the moduli spaces
${M}_{0,S} \subset \ov{M}_{0,S}$ as in Section \ref{s:notations}.

Let $m,N$ be positive integers with $N \geq 4$ and $N-2 \geq m \geq 2$
and let $S$ be a finite set with $|S| = N$. Corresponding to
$A \subset S$ with $|A| = m$ we have a smooth irreducible divisor
$D_A \subset \ov{M}_{0,S}$ whose general point is a curve with two
irreducible components with all points marked by elements of $A$ lying
in one component and the points marked by elements in $A^c$, the
complement of $A$ in $S$, in the other component; $D_A = D_{A^c}$, and
it is canonically isomorphic to
$\ov{M}_{0,\ov{A}} \times \ov{M}_{0,\ov{A^c}}$ (see Section
  \ref{s:ind} for the notation).  The boundary
  $\ov{M}_{0,S} \sm {M}_{0,S}$ is a divisor with normal crossings,
  each irreducible component of which is a divisor $D_A$ as above.

For any integer $M \geq 0$, let $T$ be a finite set, disjoint from
$S$, with $|T| = M$. There is a flat forgetful map
$\pi: \overline{M}_{0,S \cup T}\to \overline{M}_{0,S}$ given by
forgetting the marked points corresponding to elements of $T$. The
inverse image $\pi^{-1}(D_A)$ is parametrised by the set of all
$B \subset T$: we have $\pi^{-1}(D_A) = \cup_{B}D_{A \cup B}$. The map
\begin{equation}\label{e:proj}
  D_{A \cup B} = \ov{M}_{0, \ov{A \cup B}} \times \ov{M}_{0, \ov{A^c
      \cup B^c}} \to \ov{M}_{0,\ov{A}} \times \ov{M}_{0,\ov{A^c}} =
  D_A
\end{equation}
is given by forgetting the marked points in $B$ and $B^c$; it is
smooth over the open subset $D_A^0 \subset D_A$ corresponding to curves
with exactly two irreducible components.

Let $W_A' = \ov{M}_{0,S} \sm \cup_{A' \neq A,A^c}D_{A'}$. Then
\[
  X_A' := \pi^{-1}(W_A') = \ov{M}_{0,S \cup T} \sm \cup_{C}' D_{C}
\]
where the restricted union is over all $C \subset S \cup T$ such that
$A \nsubseteq C$, $A \nsubseteq C^c$, and $1 < | C \cap S| < |S| - 1$.
We denote by $g_A':X_A' \to W_A'$ the map induced by $\pi$.  Let
$Z_A' = W_A' \cap D_A$, $Y_A' = (g_A')^{-1}(Z_A')$ and let
$U_A' = M_{0,S \cup T} \subset X_A'$. Let
$E_A' = W_A' \cap (\cup_{C}' D_C)$, where now the union runs over all
$C \subset S \cup T$ with $|C \cap S| \leq 1$.

Using the map $c_A: W_A \to \ov{M}_{0,n+1}$ (where $N = n+1$)  whose image
lies in $W_A'$, we define ``unprimed'' versions of all the data above
by fibre product with $W_A$,  e.g., we define
$X_A : = W_A \times_{W_A'}X_A'$, $Y_A ;= W_A \times_{W_A'} Y_A'$, etc.

The properties of these constructions are summarised in the following:
\begin{lemma}\label{l:map2}
  $ $
  \begin{enumerate}
  \item The varieties $X_A$, $W_A$ and $Z_A$ are all smooth.
  \item The divisor $Y_A \cup E_A$ is a reduced divisor with simple
    normal crossings and the intersection of any number of irreducible
    components of $E_A$ is flat over $W_A$ and smooth over
    $W_A \sm Z_A$.
  \item The irreducible components of $Y_A$ are non-empty open
    subsets of $D_{C}$ where $A\subset C$. Equivalently, they are
    parametrised by arbitrary subsets $B$ of $T$, the correspondence
    being given by $C \leftrightarrow A \cup B$. We denote these
    divisors by $Y_{A,B}$.
  \item The irreducible components of $E_A$ are non-empty open
    subsets of $D_C$ where $|C \cap S| \geq |S|-1$. We denote these
    divisors by $E_{A,C}$.
  \item The intersection of any number of irreducible components of
    $E_A$ with any $Y_{A,B}$ is smooth over $Z_A$.
  \end{enumerate}
\end{lemma}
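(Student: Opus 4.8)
The plan is to reduce every assertion to its analogue for the ``primed'' objects over $\ov{M}_{0,S}$, and then to read those off from the standard geometry of the moduli spaces $\ov{M}_{0,\bullet}$ (for which we use \cite{keel}). The reduction is by base change along $c_A\colon W_A\to W_A'$, which is smooth by Lemma~\ref{l:smooth}: since $c_A(W_A\setminus Z_A)\subset M_{0,S}$, while $c_A(Z_A)\subset D_A$ and $c_A$ meets no other boundary divisor of $\ov{M}_{0,S}$, one has $Z_A=c_A^{-1}(D_A)$ and $W_A\setminus Z_A=c_A^{-1}(M_{0,S})$; moreover $X_A=X_A'\times_{W_A'}W_A$, and the divisors $Y_{A,B}$, $E_{A,C}$ are the pullbacks of $Y_{A,B}'$, $E_{A,C}'$ along the projection $X_A\to X_A'$, which is the base change of $c_A$ and hence again smooth. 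Since smoothness of a variety, reducedness and the simple normal crossings property of a divisor, flatness of a morphism, and smoothness of a morphism over an open subset of its target are all preserved under smooth base change, it suffices to prove the primed version of (1)--(5).

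Parts (1), (3), (4) are essentially immediate. $W_A'$ and $X_A'=\pi^{-1}(W_A')$ are open in the smooth varieties $\ov{M}_{0,S}$ and $\ov{M}_{0,S\cup T}$, and $Z_A'=W_A'\cap D_A$ is open in $D_A\cong\ov{M}_{0,\ov A}\times\ov{M}_{0,\ov{A^c}}$, giving (1). Since $Y_A'=(g_A')^{-1}(Z_A')=X_A'\cap\pi^{-1}(D_A)$ and $\pi^{-1}(D_A)=\bigcup_{B\subset T}D_{A\cup B}$, with each $D_{A\cup B}$ dominating $D_A$ and hence meeting $X_A'$, we get (3). For (4): a boundary divisor $D_C$ of $\ov{M}_{0,S\cup T}$ with $2\le|C\cap S|\le|S|-2$ dominates the boundary divisor $D_{C\cap S}$ of $\ov{M}_{0,S}$, so it meets $\pi^{-1}(W_A')$ only when $C\cap S\in\{A,A^c\}$, i.e.\ only when it is one of the $Y_{A,B}$; thus every boundary component of $X_A'$ other than the $Y_{A,B}$ has $|C\cap S|\le 1$, and conversely any such $D_C$ does survive (it dominates $M_{0,S}$, as $\pi$ contracts the $C$--side of its node) and is never a $Y_{A,B}$, using $2\le m\le|S|-2$. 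In particular $Y_A'\cup E_A'$ is the full boundary divisor of $X_A'$.

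The content is in (2) and (5). That $Y_A'\cup E_A'$ is a reduced simple normal crossings divisor is inherited from the boundary of $\ov{M}_{0,S\cup T}$ \cite{keel}. For the flatness and smoothness statements, recall that a nonempty intersection of boundary divisors of $\ov{M}_{0,S\cup T}$ is indexed by a stable tree $\Gamma$ and is a closed stratum isomorphic to $\prod_{v\in V(\Gamma)}\ov{M}_{0,S_v}$, and that $\pi$ restricted to such a stratum corresponds to forgetting the legs marked by $T$ and stabilising $\Gamma$. Flatness of all such restrictions follows from flatness of the forgetful morphisms together with miracle flatness (the stratum is smooth, hence Cohen--Macaulay, the target is regular, and equidimensionality of the fibres is visible from the tree description); taking the divisors of $E$--type gives the flatness over $W_A$ in (2), and taking one divisor of $Y$--type and the rest of $E$--type, together with the product description of $Z_A'$, gives the flatness over $Z_A$ in (5). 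Smoothness over $W_A\setminus Z_A$ in (2) holds because over $M_{0,S}$ the family $X_A'\to W_A'$ is a smooth fibration in which the $E$--divisors form a relative simple normal crossings divisor, so all their intersections are smooth over $M_{0,S}$. The remaining assertion --- smoothness over $Z_A$ of the strata in (5), and of the $E$--strata near $Z_A$ in (2) --- relies on the observation that an $E$--type divisor $D_C$, since $|C\cap S|\le 1$, is collapsed by $\pi$ (the $C$--side of its node carries at most one $S$--marking besides the node, so it is contracted when the $T$--legs are forgotten); hence passing to a stratum cut out by such divisors kills exactly the deformation directions of $\pi$ transverse to $D_A$, and the restricted map becomes, \'etale-locally, a projection onto an open subset of $D_A\cong\ov{M}_{0,\ov A}\times\ov{M}_{0,\ov{A^c}}$. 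In suitable local (plumbing) coordinates $\pi$ is a monomial map in the node-smoothing parameters, which makes this last point --- and indeed the normal form needed later in Lemma~\ref{l:normalform} --- a direct computation.

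The main obstacle is precisely this last step: the stable-tree (equivalently, local-coordinate) bookkeeping identifying, once all the chosen $E$--type divisors are imposed, the restricted forgetful map with a smooth projection onto the relevant open subset of $D_A$. I would carry it out once at a general point of an arbitrary stratum $Y_{A,B}\cap\bigcap_i E_{A,C_i}$, tracking which components of the universal curve are contracted by forgetting $T$ and which node-smoothing parameters of $\ov{M}_{0,S\cup T}$ descend to $\ov{M}_{0,S}$; the same computation handles intersections of several divisors $Y_{A,B}$ as well, which is what one actually needs when invoking Lemma~\ref{l:normalform}.
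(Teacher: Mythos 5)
Your global strategy coincides with the paper's: reduce everything to the primed objects over $\ov{M}_{0,S}$ via the smooth classifying map $c_A$ (so that smoothness, reducedness, simple normal crossings, and flatness all descend by smooth base change), after which parts (1)--(4) follow from the standard description of the boundary of $\ov{M}_{0,S\cup T}$ and of the forgetful map. That portion of the proposal is correct and is exactly what the paper does.

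The one place where the lemma requires a genuine argument is (5), and this is precisely the step you flag as ``the main obstacle'' and then defer (``I would carry it out once at a general point\dots''). As written, the proposal therefore does not contain a proof of (5): the claim that imposing the $E$-type divisors ``kills exactly the deformation directions of $\pi$ transverse to $D_A$'' so that the restricted map becomes \'etale-locally a projection is the statement to be proved, not a reformulation of it; and verifying it only at a general point of each stratum would not by itself give smoothness everywhere, since smoothness of a morphism is an open, not a generic, condition (the uniformity of the plumbing local model along a stratum would repair this, but that too is part of the computation you have postponed). The paper avoids local coordinates entirely with the following global argument, which you could substitute for the deferred computation. Since $D_C=D_{C^c}$, the condition cutting out the $E$-divisors reads $|C_k\cap S|\ge |S|-1$, which forces $A\subset C_k$ or $A^c\subset C_k$ for every $k$. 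Hence the closed stratum $D_{A\cup B}\cap\bigcap_k D_{C_k}$ is a product $(\ov{M}_{0,A_1}\times M_1)\times(\ov{M}_{0,A_2}\times M_2)$ with $\ov{A}\subset A_1$, $\ov{A^c}\subset A_2$ and $M_1,M_2$ products of smaller $\ov{M}_{0,r}$'s, and the map to $D_A\cong\ov{M}_{0,\ov{A}}\times\ov{M}_{0,\ov{A^c}}$ factors as the projection away from $M_1\times M_2$ followed by the product of the forgetful maps $\ov{M}_{0,A_1}\to\ov{M}_{0,\ov{A}}$ and $\ov{M}_{0,A_2}\to\ov{M}_{0,\ov{A^c}}$. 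These forgetful maps are smooth over the interiors $M_{0,\ov{A}}$ and $M_{0,\ov{A^c}}$, whose product is exactly $Z_A'$, and projections are smooth, so (5) follows with no coordinate computation. Your ``contracted components'' heuristic is the right intuition for why the $M_i$ factors split off, but to complete the proposal you must either run the plumbing-coordinate computation in full (at every point, not just a general one) or replace it by an argument of this kind.
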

\begin{proof}
  Using the smoothness of the map $c_A$, all the above statements can
  be deduced from exactly analogous statements for the ``primed''
  version of all varieties involved. Then parts (1) to (4) of the
  lemma are immediate from basic properties of $\ov{M}_{0,n+1}$
  (\cite{keel}).

  We now prove (5).  The condition $|C \cap S| \geq |S| -1$ implies
  that for all $k=1,2,\dots, m$ we have either $A \subset C_k$ or
  $A^c \subset C_k$.  This implies that there is a set $A_1$
  containing $\ov{A}$ and $A_2$ containing $\ov{A^c}$ and varieties
  $M_1$ and $M_2$, each of which is a product of a finite number of
  $\ov{M}_{0,r}$ for various $r$ such that that the map
  $D_{A \cup B} \cap (\cap_{k=1}^m D_{C_k}) \to D_A$ induced by
  \eqref{e:proj} can be written as a map
  \[
    (\ov{M}_{0,A_1} \times M_1) \times ( \ov{M}_{0,A_2} \times M_2)
    \lra \ov{M}_{0,A_1} \times \ov{M}_{0,A_2} \lra \ov{M}_{0,\ov{A}}
    \times \ov{M}_{0,\ov{A^c}},
  \]
  where the first map is a projection and the second map is given by
  the product of the forgetful maps induced by the inclusions $\ov{A}
  \subset A_1$ and $\ov{A^c} \subset A_2$. Since the forgetful maps
  are smooth over $M_{0,A}$ and $M_{0,A^c}$ and $M_1$ and $M_2$ are
  also smooth (5) follows.
\end{proof}
We note that the idea of using the moduli spaces $\overline{M}_{0,n}$
as the projective compactifications $P$ in Proposition \ref{p:kz}
already appears in \cite{L2}.

\subsubsection{}

It will be important for us to describe the data in Lemma \ref{l:map2}
explicitly using coordinates. To do this we recall that we already
have coordinates $z_i$, $i=1,\dots,n$ on $W_A$ and each tuple $\vv{z}$
for $\vv{z} \notin W_A$ corresponds to the $(n+1)$-pointed marked
curve $\P^1$ with marked points the $z_i$ and $\infty$. { Let $g: X_A
\to W_A$} be the morphism corresponding to forgetting the points marked
by $T$: a point in $g^{-1}(\vec{z}) \cap U_A$ corresponds to the curve
$\P^1$ with the marked points $z_1,\dots,z_n, \infty$ and additional
distinct marked points $t_1,t_2,\dots, t_M$ such that the sets
$\{z_i\} \cup \{\infty\}$ and $\{t_j\}$ are disjoint. It follows that
$g^{-1}(\vec{z}) \cap U_A$ is naturally isomorphic to the
complement of a hyperplane arrangement in $\A^M_{\msk}$ and $U_A$ itself can
be viewed as a family of such complements over $W_A \sm Z_A$ contained
in $\A^n_{\msk} \times \A^M_{\msk}$ (with coordinates the $z_i$'s and $t_j$'s). 

We now want to extend these coordinates to the boundary, i.e., over
$W_A$. For this we first describe in more detail the $Y_{A,B}$,
beginning with their open subsets $(Y_{A,B})_0 \sm E_A$ (where
$(Y_{A,B})_0$ is in the sense of Definition \ref{basicD}). To do this,
we recall that points of $Z_A$ correspond to stable $(n+1)$-pointed
rational curves with two irreducible components, both of which are
identified with $\P^1$. On one component the marked points are
$\{0,1,\infty\}$ and the $u_i = z_i - z_{i_0}$ for
$i \in A \sm \{i_0,i_1\}$. On the other component the marked points
are the $z_i$ for $i \in A^c$ together with $\infty$. These components
are glued together, with the point $\infty$ on the first component
glued to the point $z_0$ on the second. The points of
$(Y_{A,B})_0 \sm E_A$ mapping to a point $\vec{z}$ in $Z_A$ consist of
a curve as above together with extra markings: there are markings
corresponding to $\{t_i\}$ for $i \in B$ on the first component and
markings corresponding to $\{t_i\}$ for $i \notin B$ on the second
with all the markings being disjoint and disjoint from the point of
intersection. Using the identification
$W_A \simeq \mc{C}_{A^c \cup \{i_0\}} \times \mc{C}_{A \sm I}'$ used
earlier and the above discussion, and replacing $t_j$ by $s_j$ for
$i \in B$, we may identify $(Y_{A,B}) \sm E_A$ with the subset of
$\mc{C}_{A \sm I}' \times \mc{C}_{B}' \times \mc{C}_{A^c \cup \{i_0\}}
\times \mc{C}_{B^c}$ given as follows:
\begin{equation}\label{e:decomp}
  (Y_{A,B})_0 \sm E_A =  \bigl \{ ( \vv{u}, \vv{s}, \vv{z}, \vv{t}\;)
  \mid
  \{u_i\}_{i \in A \sm \{i_0,i_1\}} \cap \{s_j\}_{j
    \in B} = \emptyset
  ,
  \{z_i\}_{i \in A^c \{i_0\}} \cap \{t_j\}_{j \in B^c} = \emptyset \bigr \}.
\end{equation}
We may therefore view $(Y_{A,B})_0 \sm E_A$ as corresponding to a product of
two families of hyperplane arrangements, the first in $\A^{B}_{\msk}$ and the
second in $\A^{B^c}_{\msk}$, the first family parametrised by
$\mc{C}_{A \sm I}'$ and the second by $\mc{C}_{A^c \cup \{i_0\}}$.

\subsection{Factorisation of the master function}\label{s:fmf}

Fix a split simple Lie algebra $\mf{g}$ and $\kappa$ in $\Q^{\times}$
and let $\vv{\lambda}$, $\nu$, $\beta$, $M$, $N$ etc., be as in
Section \ref{s:sv} and let $K = \Q(\mu_N)$. Given our choice of coordinates
on $U_A$, the master function
\[
\ms{R} = \prod_{1 \leq i < j \leq n} (z_i -
z_j)^{\frac{(\lambda_i, \lambda_j)}{\kappa}} \prod_{b=1}^M
\prod_{j=1}^n (t_b - z_j)^{\frac{(\lambda_j, \beta(b))}{\kappa}}
\prod_{1 \leq b < c \leq M} (t_b - t_c)^{-\frac{(\beta(b),
		\beta(c))}{\kappa}}
\]
from \eqref{e:master} can be viewed as a multivalued function on $U_A$
and we let $\eta$ be the log form $d\ms{R}/\ms{R}$ (note that the
$z_i$'s are now variables). Let $\mc{L}(\eta)$ be the $K$-mixed local
system in $\msm(U_A)_K$ defined as in Section \ref{s:kzmls}.

The group $\Sigma_{\beta}$ permuting the $t_b$ variables of the same
color acts on $X_A$ and the map $g_A:X_A\to W_A$ is
$\Sigma_{\beta}$-equivariant, where the action on $W_A$ is
trivial. The action preserves $D_A$ and the form $\eta$ and
$\mc{L}(\eta)$ has a natural $\Sigma_{\beta}$-linearisation.

\begin{defi}
  For $a\in \mb{R}$, let $Y^a \subseteq Y_A$ be the union of
  irreducible components $Y_{A,B}$ of $Y_A$ such that the residue of
  $\eta$ along $Y_{A,B}$ is equal to $a$.  We also have open subsets
  $Y^a_0$ of $Y_A$ (as in Definition \ref{basicD}).
\end{defi}
The group $\Sigma_{\beta}$ preserves $Y^a$ (since it preserves
$\eta$), but it may permute its irreducible components.

\begin{defi}\label{whyi}
  $ $
  \begin{enumerate}
  \item Let $\mathscr{I}$ be the indexing set for the irreducible
    components of $Y_A$. By Lemma \ref{l:map2} it is the set of all
    subsets of $[M]$
  \item Let $\mathscr{O}\subset \mathscr{I}$ be a set of orbit
    representatives for the action of $\Sigma_{\beta}$.
  \item For $B \in \ms{I}$:
    \begin{enumerate}
    \item Let $j_B:(Y_{A,B})_0\to Y_A$ be the (open) inclusion of
      $(Y_{A,B})_0$ in $Y_A$, $j'_B$ the (open) inclusion of $(Y_{A,B})_0$ in
      $Y_{A,B}$, and $\iota_B: Y_{A,B} \to Y_A$ the (closed) inclusion.
    \item Let $\Sigma_B$ be the subgroup of $\Sigma_M$ preserving $Y_{A,B}$,
      and $\chi_B$ the restriction of $\chi$ to $\Sigma_B$.
    \end{enumerate}
	\end{enumerate}
	Note that $B, B' \in \ms{I}$ have the same orbit under
        $\Sigma_{\beta}$ if and only we have
        $\sum_{b\in B}\beta(b)=\sum_{b\in B'}\beta(b)$.
\end{defi}

\subsubsection{}\label{productlocal}

Let $Y_{A,B}$ be an irreducible component of $Y$ and let
\begin{equation*}
	\mu= \sum_{i\in A}\lambda_i-\sum_{j\in B}\beta(j)\, .
\end{equation*} 
Note that $\mu$ is an integral weight which is not necessarily
dominant.  Let $a=a(B)$ be the residue of $\eta$ along $Y_{A,B}$.
This is equal to $a(\vv{\lambda}, \mu)$ as in \ref{formula:a} with
$\mu$ as above. Then $\ml(\eta)\tensor_K K f^{a}$ extends as a local
system $\ml'$ in a neighbourhood of $U_B=(Y_{A,B})_0-E_A$: this
follows from the construction of $\mc{L}(\eta)$ since $(\ms{R}f^a)^N$
is a regular function in a neighbourhood of $U_B$. Now $U_B$ is a
product, and our aim in this section is to write $\ml'$ as an external
tensor product of $K$-mixed local systems.

The scheme $U_B$ is a product $U_B^1\times U_B^2$, where $U_B^1$ corresponds to the configuration space of the $t$ variables labelled by $B$ and the $z$ variables labelled by $A$, and $U_B^2$ corresponds to the configuration space of the remaining variables.
 \begin{itemize}
 \item[--] Let $\vv{\lambda}'_{\mu}$ be the set of weights labelled by
   ${A^c} \cup \{i_0\}$ where for $i$ in $A^c$ the weight assigned to
   $i$ is $\lambda_i$ and for $i_0$ the weight is $\mu$.
  \item[--] Let $\vv{\lambda}''$ be the set of weights labelled by points
    in $A$ given by restriction of $\vv{\lambda}$ to $A$.
  \end{itemize}
We get a $K$-mixed local system on $U_B^1$ denoted by
$\ml_1$ corresponding to the master function with with $z_{i_0}=0$ and
$z_{i_1}=1$,
\begin{equation*}
	\ms{R}_1 = \prod_{i<j, i,j\in A} (z_i -
	z_j)^{-\frac{(\lambda_i, \lambda_j)}{\kappa}} \prod_{b\in B}
	\prod_{j\in A} (t_b - z_j)^{\frac{(\lambda_j, \beta(b))}{\kappa}}
	\prod_{b<c, b,c\in B} (t_b - t_c)^{-\frac{(\beta(b),
			\beta(c))}{\kappa}}
\end{equation*}
(If $\mu\in P^+$ is dominant integral, then this corresponds to (after reindexing of the $t$ variables)  the master function for $\mc{KZ}_{\kappa}(\vv{\lambda}'', \mu^*)$.

We also get a $K$-mixed local system on $U_B^2$ denoted by $\ml_2$
corresponding to the master function
\begin{equation*}%
  \ms{R}_2 = \prod_{ i < j, i,j
    \in\ov{A^c}} (z_i - z_j)^{\frac{-(\lambda'_i,
      \lambda'_j)}{\kappa}} \prod_{b\in B^c} \prod_{j\in \ov{A^c}}
  (t_b - z_j)^{\frac{(\lambda'_j, \beta(b))}{\kappa}} \prod_{b < c,
    b,c\in B^c } (t_b - t_c)^{-\frac{(\beta(b), \beta(c))}{\kappa}}
\end{equation*}
(If $\mu$ is dominant integral, then this corresponds to (after reindexing of the $t$ variables)  the master function for $\mc{KZ}_{\kappa}(\vv{\lambda}'_{\mu}, \nu^*)$.)

The following is related to \cite[Theorem 17.3]{BFS}.
\begin{lemma}\label{product11}
  The restriction of $\ml'$ to $U_B$ equals $\ml_1\boxtimes_K
  \ml_2$. Note that $\Sigma_B$ is a corresponding product of groups
  (and $\chi_B$ a product of characters).
\end{lemma}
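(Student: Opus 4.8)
The plan is to deduce the lemma from the multiplicativity of the rank-one local systems $Kg^{1/N}$ under products of functions, i.e.\ from Lemma~\ref{l:prod}. Recall that $\ml(\eta) = Kg^{1/N}$ with $g = \ms{R}^N$, that $a(B) = a(\vv{\lambda},\mu)$ is the order of vanishing of $\ms{R}$ along the generic point of $Y_{A,B}$ (which is the residue of $\eta$ there), and that $\ml'$ is the extension of $\ml(\eta)\otimes_K Kf^{a(B)}$ to a neighbourhood of $U_B$. Since $H := \ms{R}/f^{a(B)}$ has neither a zero nor a pole along the generic point of $Y_{A,B}$, the function $H^{N}$ is regular and nowhere vanishing near $U_B$, and $\ml'|_{U_B}$ is the rank-one $K$-mixed local system determined by $H^{N}|_{U_B}$ (up to a convention-dependent choice of a root of unity, immaterial here). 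As $Kg^{1/N}$ depends on $g$ only through its class in $\mco^*/(\mco^*)^{N}$, and $\ml_1,\ml_2$ are the local systems determined by $\ms{R}_1^N,\ms{R}_2^N$ on $U_B^1,U_B^2$ (the marked point at the node carrying the weight $\mu=\sum_{i\in A}\lambda_i-\sum_{b\in B}\beta(b)$), Lemma~\ref{l:prod} reduces the claim to the identity
\[
  H^{N}|_{U_B} \;=\; p_1^*(\ms{R}_1^{N})\cdot p_2^*(\ms{R}_2^{N})
  \quad\text{in}\quad \mco^*(U_B)/\bigl(\mco^*(U_B)\bigr)^{N},
\]
where $p_1,p_2$ are the projections of $U_B=U_B^1\times U_B^2$ onto its factors.

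To prove this I would use the explicit local coordinates near a generic point of $Y_{A,B}$ coming from Section~\ref{fibers}: by \eqref{e:sections} and \eqref{e:decomp}, a neighbourhood of $U_B$ in $X_A$ has coordinates $f=z_{i_1}-z_{i_0}$, the ``bubble'' coordinates $u_i$ ($i\in A\setminus I$) and $s_b$ ($b\in B$), and the coordinates $z_j$ ($j\in A^c$) and $t_b$ ($b\in B^c$) on the other component, with $z_i=z_{i_0}+u_if$ for $i\in A$ (so $u_{i_0}=0$, $u_{i_1}=1$) and $t_b=z_{i_0}+s_bf$ for $b\in B$; thus on $U_B=\{f=0\}$ the $u$'s and $s$'s (normalised by $z_{i_0}=0$, $z_{i_1}=1$) are coordinates on $U_B^1$ and the $z_j$'s, $t_b$'s are coordinates on $U_B^2$. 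Substituting into \eqref{e:master}, each linear factor of $\ms{R}$ is of one of two kinds. The factors $z_i-z_j$ ($i,j\in A$), $t_b-z_j$ ($b\in B$, $j\in A$) and $t_b-t_c$ ($b,c\in B$) each become $f$ times a linear form in the $u$'s and $s$'s, and collecting the resulting powers of $f$ gives $f^{e}$ with
\[
  e \;=\; \frac{1}{\kappa}\Bigl(-\sum_{i<j,\,i,j\in A}(\lambda_i,\lambda_j)
  + \Bigl(\sum_{i\in A}\lambda_i,\ \sum_{b\in B}\beta(b)\Bigr)
  - \sum_{b<c,\,b,c\in B}(\beta(b),\beta(c))\Bigr)\;=\;a(\vv{\lambda},\mu),
\]
the last equality a short computation from $c(\lambda)=(\lambda,\lambda+2\rho)$, the definition of $\mu$, and $(\rho,\alpha)=\tfrac12(\alpha,\alpha)$ for simple roots $\alpha$; hence $f^{e}$ is precisely the prefactor $f^{a(B)}$, and the residual product of linear forms in the $u$'s and $s$'s is exactly $\ms{R}_1$. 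Every other linear factor of $\ms{R}$ has a nonzero limit along $U_B$; grouping those attached to the points $\{z_i\}_{i\in A}$ and to the variables $\{t_b\}_{b\in B}$ --- all of which tend to the node coordinate $z_{i_0}$ --- and using $\mu=\sum_{i\in A}\lambda_i-\sum_{b\in B}\beta(b)$ in the resulting exponents, one checks that their restriction to $U_B$ is exactly $\ms{R}_2$ with the node point $z_{i_0}$ carrying the weight $\mu$. This proves the displayed identity (up to reorderings $(z-z')\leftrightarrow(z'-z)$, which change $H^N|_{U_B}$ only by a constant and so disappear modulo $N$-th powers once the orderings of the index sets are chosen compatibly), and Lemma~\ref{l:prod} then gives $\ml'|_{U_B}\simeq\ml_1\boxtimes_K\ml_2$. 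Finally the whole construction is equivariant for $\Sigma_B=\Sigma_\beta|_B\times\Sigma_\beta|_{B^c}$ acting on $U_B^1$ through the first factor and on $U_B^2$ through the second, with $\chi_B$ the external product of the two sign characters, so the identification respects the $\Sigma_B$-linearisations.

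The step I expect to be the main obstacle is the bookkeeping in the second paragraph: verifying that the accumulated power of $f$ is exactly the residue $a(\vv{\lambda},\mu)$ (which rests on $(\rho,\alpha)=\tfrac12(\alpha,\alpha)$ for simple roots, together with $z_i\to z_{i_0}$ and $t_b\to z_{i_0}$ as $f\to0$ for $i\in A$, $b\in B$), and that the linear factors attached to the collapsing points reorganise precisely into the master function $\ms{R}_2$ for the weight $\mu$ at the node and not for some other weight. The sign and ordering conventions relating $\ms{R}$ to $\ms{R}_1,\ms{R}_2$ are a minor nuisance of the same flavour, harmless because they alter the defining functions only by constants.
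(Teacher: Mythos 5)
Your proposal is correct and follows essentially the same route as the paper: pass to the explicit local coordinates near $U_B$ from Section \ref{fibers}, observe that the linear factors of $\ms{R}$ attached to pairs of colliding points each contribute one power of $f$ while the remaining factors restrict to the product $\ms{R}_1\cdot\ms{R}_2$, and conclude via Lemma \ref{l:prod}. The only (harmless) difference is that you verify the accumulated exponent of $f$ equals $a(\vv{\lambda},\mu)$ by direct computation with $c(\lambda)=(\lambda,\lambda+2\rho)$, whereas the paper invokes the general fact that this exponent must be the residue of $\eta$ along $Y_{A,B}$.
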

\begin{proof}
  We need to show that the master function \eqref{e:master} times
  $f^{-a}$ extends to a function on $U_B$, and when we set $f=0$, the
  extended multi-valued function equals the product of master
  functions for $\ml_1$ and $\ml_2$.

  We use the local coordinates for $U_B$ for this computation. Let
  $(t_1,\dots,t_M,z_1,\dots,z_n)$ be the coordinates on $U_B$.  The
  rational map to the configuration space (when $f\neq 0$), takes the
  point $(t_1,\dots,t_M,z_1,\dots,z_n)$ to
  $(t'_1,\dots,t'_M,z'_1,\dots,z'_n)$, where
  $t'_a=(t_a-z_{i_0})f +z_{i_0}$ if $a\in A$, and $t'_a=t_a$
  otherwise. Similarly $z'_{i}=z_{i}$, if $i\in A^c\cup \{i_0,i_1\}$
  and $z'_i=(z_i-z_{i_0})f +z_{i_0}$ otherwise.

  Note that $t_a-t'_b=t_a-t_b$ if $a,b\not\in B$.  If $a,b\in B$ then
  $t'_a-t'_b=f(t_a-t_b)$. Similarly
  $t'_a-t'_b =f(t_a-z_{i_0}) +z_{i_0}-t_b$ if $a\in A$ and
  $b\not\in B$, which restricts to $z_{i_0}-t_b$ when $f=0$. We handle
  the other difference $z'_i-t'_b$ and $z'_i-z'_j$ in a similar
  manner. The power of $f$ that pulls out has to equal the residue by
  general considerations of residues of logarithmic derivatives. Using
  the above formulas, the proposition follows easily from Lemma
  \ref{l:prod}.
\end{proof}

\section{Factorisation for conformal block local systems} \label{cbf}
We continue with the setup of Section \ref{s:fmf}, but now assume
$\kappa=\ell+h^{\vee}$ where $\ell\in \Bbb{Z}_{>0}$. We also assume
that $\lambda_1,\dots,\lambda_n$, and $\nu$ are of level $\ell$. Our
aim in this section is to prove the conformal blocks part of Theorem
\ref{t:fact}. 

\subsection{A direct sum decomposition for nearby cycles}\label{s:dsd}

We fix $A \subset [n]$ with $|A| \geq 2$ and for the rest of this
section we drop the subscript $A$ from all the notation of Section
\ref{s:fmf}; in particular, $Y_B$ in this section will denote the
divisor $Y_{A,B}$ of Section \ref{s:fmf}. We also write $\mc{L}$ for
$\mc{L}(\eta)$.

Let $j:U\to X$ be the inclusion. We have a map $g:X\to W$ and a
regular function $f:W\to \A^1_{\msk}$.  We let $f_X = f \circ g$.

By the constructions in Sections \ref{s:KZCB} and \ref{s:cbmls}, the
$(\Sigma,\chi)$ isotypical component of the image of
$H^M(g_*j_!\ml) \to H^M(g_* j_*\ml)$ restricted to $W \sm Z$ is the
$K$-mixed local system of conformal blocks
$\mathcal{CB}_{\kappa}(\vec{\lambda},\nu^*)$ restricted to $W\sm Z$.

Let
$$\mc{F}=\Psi_{f_X}\, (j_{!}\ml), \ \ \mc{G}=\Psi_{f_X}\, (j_*\ml)\in \db(\msm(Y))_K;$$
by functoriality of $\Psi_{f_X}$ we have an induced map
$\mc{F} \to \mc{G}$.

Let $g_Y:Y\to Z$ be the restriction of $g$ to $Y$.  The image of
\begin{equation}\label{eq:image}
  H^M(g_{Y,*}\mc{F})^{\Sigma,\chi}\lra  H^M(g_{Y,*}\mc{G})^{\Sigma,\chi} ,
\end{equation}
which is a $K$-mixed local system on $Z$, is the local system of
nearby cycles of the conformal blocks, i.e.,
$\Psi_f(\mathcal{CB}_{\kappa}(\vec{\lambda},\nu^*))$.
\smallskip

{ For $\bar{c} \in (1/N)\Z/\Z$, we define 
$$\mc{F}_{\bar{c}}=\Psi_{f_X,\bar{c}} (j_{!}\ml), \ \ \mc{G}_{\bar{c}}=\Psi_{f_X,\bar{c}} (j_*\ml)\in \db(\msm(Y))_K;$$
by functoriality of $\Psi_{f_X}$ we have an induced map
$\mc{F}_{\bar{c}} \to \mc{G}_{\bar{c}}$. The image of
\begin{equation}\label{eq:imagea}
  H^M(g_{Y,*}\mc{F}_{\bar{c}})^{\Sigma,\chi}\lra  H^M(g_{Y,*}\mc{G}_{\bar{c}})^{\Sigma,\chi},
\end{equation}
which is a $K$-mixed mixed local system on $Z$, is equal to
$\Psi_{f,\bar{c}}(\mathcal{CB}_{\kappa}(\vec{\lambda},\nu^*))$ (since
$\Psi_{f,\bar{c}}$ is an exact functor, $g$ is proper and
\cite[Proposition 5.10]{saito-form}), a direct summand of
$\Psi_f(\mathcal{CB}_{\kappa}(\vec{\lambda},\nu^*))$.  For
$B\in \mathscr{I}$ (see Definition \ref{whyi}), let
$\mc{F}_{B,\bar{c}}:=j_B^*\mc{F}_{\bar{c}}$ and
$\mc{G}_{B,\bar{c}}:=j_B^*\mc{G}_{\bar{c}}$ in $\db(\msm((Y_B)_0))_K$.
Let $g_B:Y_B\to Z$ be the restriction of $g:X\to W$. Also recall from
Definition \ref{whyi} (in which we suppress all occurrences of $A$)
that $j_B:(Y_{B})_0\to Y$, $j'_B:(Y_{B})_0 \to Y_B$ and
$\iota_B: Y_{B} \to Y$ are the inclusion maps.

Using Lemma \ref{quotientie} below we obtain the following:
\begin{lemma}\label{l:comm} The diagram below commutes:
  \begin{equation}\label{diagramcb}
    \xymatrix{ 
      \bigoplus_{B\in\mci} H^M(g_{B,*} j'_{B,!} \mc{F}_{B,\bar{c}})\ar[r]\ar[d] & H^M(g_{Y,*}\mc{F}_{\bar{c}}\ar[d])\\
      \bigoplus_{B\in\mci} H^M (g_{B,*} j'_{B,*}{\mc{G}}_{B,\bar{c}}) & H^M(g_{Y,*}\mc{G}_{\bar{c}})\ar[l]
    } 
  \end{equation}
  The top horizontal map is induced by
  $\iota_{B,!} j_{B,!}{\mc{F}}_{B,\bar{c}}\to \mc{F}_{\bar{c}}$ (since
  $\iota_B\circ j_B'$ is $j_B$ and $i_{B,!} = i_{B,*}$), and
  analogously for the bottom horizontal map.
	
  Consequently, the image of
  $\oplus_{B \in \mci} H^M (g_{B,*} j'_{B,!} {\mc{F}}_{B,\bar{c}})\to
  \oplus_{B\in\mci} H^M (g_{B,*} j'_{B,*}{\mc{G}}_{B,\bar{c}})$ is a
  subquotient of the image of
  $H^M(g_{Y,*}\mc{F}_{\bar{c}})\to H^M(g_{Y,*}\mc{G}_{\bar{c}})$.
\end{lemma}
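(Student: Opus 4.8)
The plan is to separate Lemma~\ref{l:comm} into a purely formal part and a bookkeeping part. The passage from a commutative square of the displayed shape to the asserted subquotient relation is exactly Lemma~\ref{quotientie}, so the real content to check is that the square in~\eqref{diagramcb} commutes. Before doing so I would recall that $\mc{F}_{\bar c}=\Psi_{f_X,\bar c}(j_!\ml)$ and $\mc{G}_{\bar c}=\Psi_{f_X,\bar c}(j_*\ml)$ are direct summands of $\Psi_{f_X}(j_!\ml)$ and $\Psi_{f_X}(j_*\ml)$ respectively (Lemma~\ref{l:ds}), and that the latter are of the form to which Theorem~\ref{stillholds} applies (take $J'=\emptyset$ for $j_!\ml$ and $J'=J$ for $j_*\ml$). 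Consequently, for every union of components of $Y$ the adjunction maps $i_*i^*\to j_*j^*$ are isomorphisms on $\mc{F}_{\bar c}$ and $\mc{G}_{\bar c}$, and the restrictions $\mc{F}_{B,\bar c}=j_B^*\mc{F}_{\bar c}$, $\mc{G}_{B,\bar c}=j_B^*\mc{G}_{\bar c}$ are, up to a shift, $K$-mixed local systems on $(Y_B)_0$; in particular all four vertices of~\eqref{diagramcb} are genuine $K$-mixed local systems on the smooth irreducible variety $Z$, so that $H^M\colon\db(\msm(Z))_K\to\msm(Z)_K$ behaves as expected. (The commutativity itself, however, will not use any of this — it is purely formal.)

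I would establish commutativity at the level of $\db(\msm(Y))_K$ and then apply the exact, hence additive, functors $g_{Y,*}$ and $H^M$. Since $j_B=\iota_B\circ j'_B$ and $\iota_{B,!}=\iota_{B,*}$ for the closed immersion $\iota_B$, one has $\iota_{B,!}j'_{B,!}\mc{F}_{B,\bar c}=j_{B,!}j_B^*\mc{F}_{\bar c}$ and $\iota_{B,*}j'_{B,*}\mc{G}_{B,\bar c}=j_{B,*}j_B^*\mc{G}_{\bar c}$, and also $g_{Y,*}\iota_{B,*}=g_{B,*}$. Under these identifications the top arrow of~\eqref{diagramcb} is $H^M(g_{Y,*}(-))$ of the sum over $B\in\mci$ of the counits $\varepsilon_B\colon j_{B,!}j_B^*\mc{F}_{\bar c}\to\mc{F}_{\bar c}$, the bottom arrow is that of the sum of units $\eta_B\colon\mc{G}_{\bar c}\to j_{B,*}j_B^*\mc{G}_{\bar c}$, the right arrow is that of $\phi:=\Psi_{f_X,\bar c}(j_!\ml\to j_*\ml)$, and the left arrow is, componentwise, that of the canonical ``forget supports'' morphism $j_{B,!}j_B^*\mc{F}_{\bar c}\to j_{B,*}j_B^*\mc{G}_{\bar c}$ induced by $j_B^*(\phi)$. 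Commutativity is then checked one $B$ at a time: naturality of the counit $j_{B,!}j_B^*\Rightarrow\mr{id}$ applied to $\phi$ gives $\phi\circ\varepsilon_B^{\mc{F}_{\bar c}}=\varepsilon_B^{\mc{G}_{\bar c}}\circ j_{B,!}j_B^*(\phi)$, and the triangle identity identifies $\eta_B\circ\varepsilon_B^{\mc{G}_{\bar c}}$ with the forget-supports map $j_{B,!}j_B^*\mc{G}_{\bar c}\to j_{B,*}j_B^*\mc{G}_{\bar c}$; composing, $\eta_B\circ\phi\circ\varepsilon_B^{\mc{F}_{\bar c}}$ equals the $B$-component of the left arrow. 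Every morphism here is built from adjunction units/counits and the natural transformation $j_!\to j_*$, hence is compatible with the $K$-structures (as the adjunction maps are), so the square commutes in $\db(\msm(Y))_K$; applying $g_{Y,*}$ and $H^M$ gives the commutativity of~\eqref{diagramcb}.

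The ``consequently'' is now Lemma~\ref{quotientie} applied in the abelian category $\msm(Z)_K$: writing $t,r,b,\ell$ for the top, right, bottom and left maps of the square, commutativity yields $\ell=b\circ r\circ t$, so $\im(\ell)=b(r(t(\cdot)))\subseteq b(\im(r))$; since $b$ restricts to a surjection $\im(r)\twoheadrightarrow b(\im(r))$, the object $\im(\ell)$ is a subobject of a quotient of $\im(r)$, i.e.\ a subquotient of the image of $H^M(g_{Y,*}\mc{F}_{\bar c})\to H^M(g_{Y,*}\mc{G}_{\bar c})$, as claimed. I expect the only genuinely delicate point to be the componentwise bookkeeping in the second paragraph: one must keep straight that the decomposition $j_B=\iota_B\circ j'_B$, the identity $\iota_{B,!}=\iota_{B,*}$, and the various units/counits interact correctly and remain compatible with the $K$-structure, and that replacing $\Psi_{f_X}$ by its summand $\Psi_{f_X,\bar c}$ (equivalently, by $\Psi_{f_X,1}$ after a twist by $Kf_X^{-\bar c}$) disturbs none of this. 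Since each map in sight is a formal consequence of adjunction, this is a matter of care rather than a real obstacle.
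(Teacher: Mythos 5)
Your overall route is the same as the paper's: the paper proves Lemma \ref{l:comm} by direct appeal to Lemma \ref{quotientie} (whose proof is precisely the adjunction computation you sketch), and then the subquotient statement follows from the elementary Lemma \ref{subquotient}. (Minor citation slip: the step ``commutative square $\Rightarrow$ subquotient'' is Lemma \ref{subquotient}, not Lemma \ref{quotientie}; the latter is the abstract form of the commutativity itself. Your self-contained derivation of the subquotient claim is fine, so this is only a labelling issue.)

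There is, however, a genuine gap in your verification of commutativity. The left arrow of \eqref{diagramcb} is block-diagonal in $B$, while the composite bottom$\circ$right$\circ$top has components $\eta_{B'}\circ\phi\circ\varepsilon_B$ for \emph{all pairs} $(B,B')$, since the bottom arrow maps into the direct sum over all $B'$. You only compute the diagonal components $\eta_B\circ\phi\circ\varepsilon_B$ (via naturality of the counit and the triangle identity), and never show that the off-diagonal composites
\[
  j_{B,!}\,j_B^*\mc{F}_{\bar c}\lra \mc{F}_{\bar c}\lra \mc{G}_{\bar c}\lra j_{B',*}\,j_{B'}^*\mc{G}_{\bar c}, \qquad B\neq B',
\]
vanish; without this the square does not commute. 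This is exactly the other half of the paper's proof of Lemma \ref{quotientie} (``the composites are zero if $i\neq j$''). The fix is short: by adjunction
$\home(j_{B,!}\,j_B^*\mc{F}_{\bar c},\, j_{B',*}\,j_{B'}^*\mc{G}_{\bar c})
 = \home(j_{B'}^*\,j_{B,!}\,j_B^*\mc{F}_{\bar c},\, j_{B'}^*\mc{G}_{\bar c})$,
and $j_{B'}^*\,j_{B,!}=0$ because $(Y_B)_0$ and $(Y_{B'})_0$ are disjoint open subsets of $Y$ (this can be checked after applying $\rat$). You should add this vanishing explicitly; with it, your argument agrees with the paper's.
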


We may take isotypical components for the action of
$\Sigma=\Sigma_{\beta}$ on the diagram \eqref{diagramcb}. Using
equalities of the form
$$\bigl(\bigoplus_{B\in \mci} H^M(g_{B,*} j'_{B,!} \mc{F}_{B,\bar{c}}) \bigr)^{\Sigma,\chi}=\bigoplus_{B\in \mco} H^M(g_{B,*} j'_{B,!} \mc{F}_{B,\bar{c}})^{\Sigma_B,\chi_B}$$
(also for direct sum of the cohomologies of $j'_{B,*}$), we get a
commutative diagram  of $K$-mixed local systems on $W$:
\begin{equation}\label{diagramcb2}
  \xymatrix{ 
    \bigoplus_{B\in\mci} H^M(g_{B,*} j'_{B,!} \mc{F}_{B,\bar{c}})^{\Sigma_B,\chi_B}\ar[r]\ar[d] & H^M(g_{Y,*}\mc{F}_{\bar{c}})^{\Sigma,\chi}\ar[d]\\
    \bigoplus_{B\in\mci} H^M( g_{B,*} j'_{B,*}{\mc{G}}_{B,\bar{c}})^{\Sigma_B,\chi_B} & H^M(g_{Y,*}\mc{G}_{\bar{c}})^{\Sigma,\chi}\ar[l]
  } 
\end{equation}
The image of the vertical map on the RHS of \eqref{diagramcb2} summed
over all $\bar{c}$ {is a direct summand} of
$\Psi_f(\mathcal{CB}_{\kappa}(\vec{\lambda},\nu^*))$.  We will show
that the image of each direct summand (summed over all $\bar{c}$ on
the vertical map on the LHS of \eqref{diagramcb2} includes all terms
in the numerical factorisation of conformal blocks.  By applying the
$\rat_W$ functor, this will show (see Lemma \ref{subquotient}) that
the subquotient is actually full, and prove the factorisation for
conformal block local systems \eqref{e:cbfact}.

\subsection{Factorisation, and conclusion of the proof of Theorem \ref{t:fact} for conformal blocks}\label{parameteri}

By the classical factorisation formula for conformal blocks in
\cite{TUY}, the rank of $\mathcal{CB}_{\kappa}(\vv{\lambda},\nu^*)$
equals (using notation from Theorem \ref{t:fact})
\begin{equation}\label{summand0}
  \sum_{\mu\in P_{\ell}}\rk \bigl(  \mc{CB}_{\kappa}(\vv{\lambda}'', \mu^*)
  \tensor_K 
  \mc{CB}_{\kappa}(\vv{\lambda}'_{\mu}, \nu^*)\big) .
\end{equation}

The rank of $\mathcal{CB}_{\kappa}(\vv{\lambda},\nu^*)$ equals the rank
of the image of \eqref{eq:image}. Assume that the summand in
\eqref{summand0} for $\mu$ is nonzero. This implies that
$\mu+\sum_{i\in A^c}\lambda_i -\nu$ and $\sum_{i\in A}\lambda_i -\mu$
are non-negative integer combinations of simple roots: In fact it is
easy to see that there exists a unique $B\in\mathscr{O}$ such that
\begin{equation}\label{e111}
  \mu+\sum_{i\in A^c}\lambda_i -\nu =\sum_{b\in B^c}\beta(b),
\end{equation}
\begin{equation}\label{e222}
	\sum_{i\in A}\lambda_i -\mu=\sum_{b\in B}\beta(b) .
\end{equation} 
This gives a bijection $\mu\mapsto B$ from the set of dominant weights
contributing nonzero terms in \eqref{summand0} to a subset of
$\mathscr{O}$.

Let $c=a(B)$ be the residue of $\eta$ along $Y_B$. We claim that  $\Psi_{f,\bar{c}}$
of the image of
\begin{equation}\label{newS}
  H^M(g_{B,*} j'_{B,!} \mc{F}_B)^{\Sigma_B,\chi_B}\lra   H^M( g_{B,*} Rj'_{B,*}{\mc{G}}_B)^{\Sigma_B,\chi_B}
\end{equation}
for $B$ corresponding to $\mu$, which equals the image of the map
\eqref{eq:imagea}, is the same as the corresponding term in
\eqref{e:cbfact}, and hence prove the conformal blocks part of Theorem  \ref{t:fact}
by dimension counting and the K\"unneth isomorphism (Lemma
\ref{l:kf}).

By the same argument as in Section
\ref{productlocal},
$\ml\tensor_K K f^{-a(\vv{\lambda},\mu)}$ extends as a local system
$\ml'$ in a neighbourhood of $U_B=U_B^1\times U_B^2$. Note that $Y_B$
is also a corresponding product of smooth varieties {
  $Y_B = P^1_B \times P^2_B$, as follows from \eqref{e:proj}.}  Lemma
\ref{product11} shows that the restriction of $\ml'$ to $U_B$ equals
$\ml_1\boxtimes_K \ml_2$. Note that $\Sigma_B$ is a corresponding
product of groups (and $\chi_B$ a product of characters).

By Lemma \ref{l:ls} the objects
$\mc{F}_{B,\bar{c}},\mc{G}_{B,\bar{c}} \in \db(\msm((Y_B)_0))_K$
restricted to $U_B$ are isomorphic to { the restriction of} the
$K$-mixed local system $\ml'$. We claim that
$ j'_{B,!} \mc{F}_{B,\bar{c}}$ on $Y_B$ is the lower shriek push
forward of $\mc{F}_{B,\bar{c}}$ from $U_B$ where it is isomorphic to
$\ml'$.  To see this ({which can be checked after taking the Betti
  realisation}), we only need to { show that the stalks at all points
  of $(Y_B)_0\cap E$ are zero.} The claim follows from Lemmas
\ref{kunneth} and \ref{KunnethF}
by the local form of the map $f:X\to Z$ together with the fact that
$\mc{F}$ is the nearby cycles of an object defined by lower shriek at
points of $E$. Similarly $j_{B,*}\mc{G}_{B,\bar{c}}$ on $Y_B$ is the
lower star push forward from $U_B$.  Therefore the image of
\eqref{eq:imagea} is the same by Lemmas \ref{product11} and \ref{l:kf}
as a tensor product of images (since $Y_B$ is a product), and
coincides with the corresponding term in \eqref{e:cbfact}.

As noted in the statement of Theorem \ref{t:fact}, conformal block
local systems are all pure, so the operator $\mr{N}$ is zero on
$\Psi_f\,(\mathcal{CB}_{\kappa}(\vec{\lambda},\nu^*)')$. We give a
different proof here which has a generalisation to the case of
invariants. The key point is to use \eqref{diagramcb} and show that
the action of $\mr{N}$ on $ j'_{B,!} \mc{F}_{B,\bar{c}}$ and
$j_{B,*}\mc{G}_{B,\bar{c}}$ is zero. Now as observed above,
$ j'_{B,!} \mc{F}_{B,\bar{c}}$ on $Y_B$ is the shriek push forward of
$\mc{F}_{B,\bar{c}}$ from $U_B$. By adjunction $\mr{N}$ is determined
by its restriction to $U_B$ where it is zero because the map
$f_X:X\to \A^1_{\msk}$ is smooth on $U_B$ and the nearby cycles sheaf
$\mc{F}_{B,\bar{c}}$ on $U_B$ is the nearby cycles sheaf of a local
system (see Lemma \ref{l:ls}). The argument for the action of $\mr{N}$
on $j_{B,*}\mc{G}_{B,\bar{c}}$ is similar.

This concludes the proof of the conformal blocks part of Theorem
\ref{t:fact}.  \qed

\begin{remark}
  The intersections of the components $Y_B\cap Y_{B'}$ played no role
  in the proof of factorisation for conformal blocks.
\end{remark}

\subsection{Some elementary lemmas}\label{s:elementary}

\subsubsection{}
Let $K$ be any field.
\begin{lemma}\label{LA1}
  If $A\to B\to C$ is exact at $B$, where $A$, $B$ and $C$ are
  finite dimensional $K$-vector spaces, then
  $\rk B \leq \rk A +\rk C$. If equality holds then
  $$0\lra  A\lra  B\lra   C\lra  0$$ is an exact sequence.
\end{lemma}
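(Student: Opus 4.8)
The plan is to reduce everything to the rank--nullity theorem applied to the two maps. Write $f\colon A\to B$ and $g\colon B\to C$ for the given maps, so that exactness at $B$ means $\im(f)=\ker(g)$. First I would record the two dimension identities $\rk A = \dim\ker(f) + \dim\im(f)$ and $\rk B = \dim\ker(g) + \dim\im(g)$, the first of which gives $\dim\im(f)\le \rk A$ and the second of which, combined with $\dim\im(g)\le\rk C$, will do most of the work.

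Next I would substitute $\ker(g)=\im(f)$ into the rank--nullity identity for $g$, obtaining
\[
\rk B = \dim\im(f) + \dim\im(g) \le \rk A + \rk C,
\]
which is the first assertion. For the equality case, if $\rk B = \rk A + \rk C$ then the displayed inequality forces both $\dim\im(f)=\rk A$ and $\dim\im(g)=\rk C$; the former says $f$ is injective and the latter says $g$ is surjective. Together with the hypothesis $\im(f)=\ker(g)$ this is exactly the statement that $0\to A\xrightarrow{f} B\xrightarrow{g} C\to 0$ is exact.

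There is essentially no obstacle here: the only thing to be careful about is that the lemma does not assume $f$ is injective or $g$ surjective a priori, so one must not use ``$A$'' in place of ``$\im(f)$'' until the equality hypothesis has been invoked. Everything is finite-dimensional by assumption, so rank--nullity applies without comment, and no further machinery is needed.
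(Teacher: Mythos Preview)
Your proof is correct. The paper does not supply a proof for this lemma (it is stated as an elementary fact), and your rank--nullity argument is exactly the standard verification one would give.
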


\begin{lemma}\label{LA2}
  Suppose
  \begin{equation*}
    \xymatrix{
      0\ar[r] & A'\ar[r]\ar[d]^f & B'\ar[r]\ar[d]^g & C'\ar[r]\ar[d]^h & 0\\
      0\ar[r] & A\ar[r] & B\ar[r] & C\ar[r] & 0 
    } 
  \end{equation*}
  is a commutative diagram of finite dimensional $K$-vector spaces
  with exact rows.  Then
  \[
    \rk(\im g)\geq \rk(\im f)+\rk(\im h)
  \]
  and if equality holds we have an exact sequence
  $0\to \im(f)\to \im(g)\to \im(h)\to 0$.
\end{lemma}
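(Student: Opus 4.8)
The plan is to analyse the bottom exact sequence of the diagram after restricting it to $\im(g)\subseteq B$. Write $i\colon A\to B$ and $p\colon B\to C$ for the maps of the bottom row and $i'\colon A'\to B'$, $p'\colon B'\to C'$ for those of the top row, so that $g\circ i'=i\circ f$ and $p\circ g=h\circ p'$. First I would note that $p(\im g)=\im(p\circ g)=\im(h\circ p')=h(\im p')=\im h$, using the commutativity of the right-hand square and the surjectivity of $p'$; hence $p$ restricts to a surjection $\im g\to \im h$ whose kernel is $\im g\cap\ker p=\im g\cap i(A)$. Second, the commutativity of the left-hand square gives $i(f(A'))=g(i'(A'))\subseteq\im g$, so $i(\im f)\subseteq \im g\cap i(A)$; since $i$ is injective this yields $\rk\im f=\rk i(\im f)\le \rk\bigl(\im g\cap i(A)\bigr)$. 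Combining the two steps, $\rk\im g=\rk\bigl(\im g\cap i(A)\bigr)+\rk\im h\ge \rk\im f+\rk\im h$, which is the asserted inequality.

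For the equality case, suppose $\rk\im g=\rk\im f+\rk\im h$. Then both of the inequalities above are equalities, so in particular $i(\im f)$ and $\im g\cap i(A)$ are subspaces of $B$ of the same finite rank with $i(\im f)\subseteq \im g\cap i(A)$, and hence they coincide. Therefore the short exact sequence $0\to \im g\cap i(A)\to \im g\to \im h\to 0$ produced in the first step, after identifying $\im g\cap i(A)=i(\im f)$ with $\im f$ via the isomorphism $i$, becomes $0\to\im f\to\im g\to\im h\to 0$, with the first map induced by $i\colon A\hookrightarrow B$ and the second by $p\colon B\to C$; these are the natural maps compatible with $f$, $g$ and $h$.

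This is pure finite-dimensional linear algebra and I do not anticipate a genuine obstacle; the only point deserving a moment's care is the equality case, where one must check both that $i$ carries $\im f$ into $\im g$ (which is precisely the commutativity of the left square) and that the rank equality promotes the evident inclusion $i(\im f)\subseteq\im g\cap i(A)$ to an identity of subspaces. Alternatively one could obtain the statement by applying the snake lemma to the diagram that $f$, $g$, $h$ induce on images and cokernels, but the direct computation above is shorter and makes the naturality of the maps in the equality case transparent.
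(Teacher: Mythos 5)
Your proposal is correct and is essentially the paper's argument written out in full detail: the paper's one-line proof ("$\im(f)$ is a subspace of $\im(g)$, and the quotient $\im(g)/\im(f)$ surjects onto $\im(h)$") is exactly your observation that $i(\im f)\subseteq \im g\cap\ker p$ while $p$ restricts to a surjection $\im g\to\im h$, with the equality case forcing the inclusion to be an identity. No gaps.
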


\begin{proof} 
  Clearly $\im(f)$ is a subspace of $\im(g)$, and the quotient
  $\im(g)/\im(f)$ surjects onto $\im(h)$.
\end{proof}

\begin{lemma}\label{subquotient}
  Consider a commutative diagram of finite dimensional $K$-vector
  spaces of the form
  $$
  \xymatrix{A'\ar[r]\ar[d]^{f'} &  A\ar[d]^f\\
    B' & B\ar[l]
  }
  $$
  Then $\im(f')$ is a subquotient of $im(f)$. More precisely $\im(f)$
  has a subspace $S\subseteq \im(f)$ given by the image of $A'$.
  There is a surjective map $S\to \im(f')$. Therefore, if $\im(f)$
  and $im(f')$ are vector spaces of the same rank, then $S=\im(f)$ and
  $S$ maps injectively, hence isomorphically onto $\im(f')$.
\end{lemma}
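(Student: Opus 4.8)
This is a statement of pure linear algebra, so the plan is simply to unwind the definitions carefully. Name the four maps: let $a\colon A'\to A$ be the top horizontal arrow, $b\colon B\to B'$ the bottom horizontal arrow (note it points from $B$ to $B'$), and $f\colon A\to B$, $f'\colon A'\to B'$ the two vertical arrows. Commutativity of the square is precisely the identity $f'=b\circ f\circ a$.

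The key step is to set $S:=f(a(A'))\subseteq B$, the image of $A'$ under the composite $f\circ a$. This is a linear subspace of $f(A)=\im(f)$, which gives the asserted subspace of $\im(f)$. Applying $b$ and using commutativity yields $b(S)=b(f(a(A')))=f'(A')=\im(f')$, so the restriction $b|_S\colon S\to \im(f')$ is surjective; this realises $\im(f')$ as a quotient of the subspace $S$ of $\im(f)$, i.e.\ as a subquotient of $\im(f)$.

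For the final assertion, assume $\dim_K\im(f)=\dim_K\im(f')$. Then from $S\subseteq\im(f)$ and the surjection $b|_S\colon S\twoheadrightarrow\im(f')$ we obtain $\dim\im(f')\le\dim S\le\dim\im(f)=\dim\im(f')$, so all three dimensions coincide. The equality $\dim S=\dim\im(f)$ together with $S\subseteq\im(f)$ forces $S=\im(f)$, and the equality $\dim S=\dim\im(f')$ together with surjectivity of $b|_S$ forces $b|_S$ to be injective, hence an isomorphism $\im(f)=S\overset{\sim}{\to}\im(f')$. There is no genuine obstacle in this argument; the only point requiring care is keeping track of the direction of the bottom arrow and of the composite defining $f'$, after which everything is a dimension count.
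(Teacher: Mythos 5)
Your proof is correct and is exactly the intended argument: the paper states this lemma without proof, and the only content is taking $S=f(a(A'))\subseteq\im(f)$, noting $b(S)=\im(f')$ by commutativity, and concluding the final claim by the dimension count you give. Nothing to add.
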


\subsubsection{}
Let $\msk$ be a subfield of $\C$ and $K/\Q$ a finite extension.

Let $Y$ be a variety over $\msk$ and $U_i\subset Y$, $i\in I$, a
collection of disjoint open subsets of $Y$. Let $Y_i$ be closed
subsets in $Y$ such that $Y\supset U_i$ (we will have equalities
$Y_i=\overline{U_i}$).  Let $\mc{F}, \mc{G}\in \db(\msm(Y))_K$ with a
map $\mc{F}\to \mc{G}$.  Let $k_i:U_i\to Y_i$ and $k_i':Y_i\to Y$
be the inclusions and let $h_i=k'_i\circ k_i: U_i\to Y$. Let
$g:Y\to Z$ be a morphism of varieties over $k$ and let $g_i:Y_i\to Z$
denote the restrictions.  Let
$\mc{F}_i= k_{i,!}(h_i^*\mc{F})\in \db(\msm(Y_i))_K$ and
$\mc{G}_i= k_{i,*}(h_i^*\mc{G})\in \db(\msm(Y_i))_K$. There are
natural morphisms $\mc{F}_i\to \mc{G}_i$.
\begin{lemma}\label{quotientie}
  For any integer $M$, there is a commutative diagram (where the
  vertical arrow on the left is a direct sum of maps)
  \begin{equation}\label{subbie}
    \xymatrix{ 
      \bigoplus_{i} H^M(g_*\mc{F}_i)\ar[r]\ar[d] & H^M(g_*\mc{F})\ar[d]\\
      \bigoplus_{i} H^M(g_* \mc{G}_i) & H^M(g_*\mc{G})\ar[l]  
    } 
  \end{equation}
\end{lemma}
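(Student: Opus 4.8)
The plan is to note that Lemma \ref{quotientie} is a purely formal statement about the six-functor formalism of Section \ref{s:saito}, valid verbatim in $\db(\msm(-))_K$: every arrow in \eqref{subbie} will be obtained from a unit or counit of one of the standard adjunctions, and the commutativity will follow from their naturality. The single geometric input I would use is that each $U_i$ is \emph{open} in $Y$, so the inclusion $h_i=k_i'\circ k_i\colon U_i\to Y$ is an open immersion; hence $(h_{i,!},h_i^*)$ and $(h_i^*,h_{i,*})$ are adjoint pairs, and I write $\epsilon_i\colon h_{i,!}h_i^*\to\operatorname{id}$ for the counit of the first and $\upsilon_i\colon\operatorname{id}\to h_{i,*}h_i^*$ for the unit of the second.

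First I would identify the objects appearing in \eqref{subbie}. Since $k_i'$ is a closed immersion, $k_{i,!}'=k_{i,*}'$, so $k_{i,*}'\mc{F}_i=k_{i,*}'k_{i,!}h_i^*\mc{F}=h_{i,!}h_i^*\mc{F}$ and similarly $k_{i,*}'\mc{G}_i=h_{i,*}h_i^*\mc{G}$; applying $g_*$ and using $g\circ k_i'=g_i$ lets me read $g_*\mc{F}_i$ (in the abusive notation of the statement) as $g_*h_{i,!}h_i^*\mc{F}$, and likewise for $\mc{G}_i$. I would then take the top horizontal arrow of \eqref{subbie} to be $H^M(g_*-)$ applied to $\bigoplus_i\epsilon_i$, the bottom one to be $H^M(g_*-)$ applied to $\bigoplus_i\upsilon_i$, the right-hand vertical arrow to be $H^M(g_*\phi)$ for the given map $\phi\colon\mc{F}\to\mc{G}$, and the left-hand vertical arrow to be $\bigoplus_i H^M(g_*(k_{i,*}'\psi_i))$, where $\psi_i\colon\mc{F}_i\to\mc{G}_i$ is the natural map, i.e.\ (via the adjunction $(k_{i,!},k_i^*)$ and $k_i^*k_{i,*}=\operatorname{id}$) the map adjoint to $h_i^*\phi$; equivalently, $\psi_i$ is the forget-supports transformation $k_{i,!}h_i^*\mc{F}\to k_{i,*}h_i^*\mc{F}$ for the open immersion $k_i$ followed by $k_{i,*}h_i^*\phi$.

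Since $H^M\circ g_*$ is a functor, it then suffices to check, for each $i$ separately, that the square in $\db(\msm(Y))_K$ with top edge $\epsilon_i$, bottom edge $\upsilon_i$, right edge $\phi$ and left edge $k_{i,*}'\psi_i$ commutes — that is, that $\upsilon_{i,\mc{G}}\circ\phi\circ\epsilon_{i,\mc{F}}=k_{i,*}'\psi_i$ as maps $h_{i,!}h_i^*\mc{F}\to h_{i,*}h_i^*\mc{G}$. I would deduce this from two standard facts about the transformation $(-)_!\to(-)_*$: compatibility with composition of morphisms, which together with the fact that it is the identity natural transformation for the closed immersion $k_i'$ gives $k_{i,*}'\psi_i=(h_{i,*}h_i^*\phi)\circ s_i$ with $s_i\colon h_{i,!}h_i^*\to h_{i,*}h_i^*$ the forget-supports map for the open immersion $h_i$; and the formula $s_{i,\mc{F}}=\upsilon_{i,\mc{F}}\circ\epsilon_{i,\mc{F}}$ valid for any open immersion. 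Combining these with naturality of $\upsilon_i$ applied to $\phi$ yields $\upsilon_{i,\mc{G}}\circ\phi\circ\epsilon_{i,\mc{F}}=(h_{i,*}h_i^*\phi)\circ\upsilon_{i,\mc{F}}\circ\epsilon_{i,\mc{F}}=(h_{i,*}h_i^*\phi)\circ s_{i,\mc{F}}=k_{i,*}'\psi_i$, which is exactly what is required.

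I do not expect a genuine obstacle here; the work is entirely bookkeeping. The only points that need care are: keeping straight which adjunction each unit/counit belongs to; confirming that the relevant (co)units and the forget-supports transformation are indeed available in $\db(\msm(-))_K$ (they are, being formal consequences of the adjunctions postulated in Section \ref{s:saito}, and compatible with the $K$-linear structure); and checking the two compatibility identities used in the last step, namely $s_{i}=\upsilon_i\circ\epsilon_i$ for an open immersion and the composition-functoriality of $(-)_!\to(-)_*$ with its vanishing (identity) for closed immersions.
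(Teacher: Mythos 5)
Your verification of the diagonal terms is correct, and in fact spells out more carefully than the paper does why $\upsilon_{i,\mc{G}}\circ\phi\circ\epsilon_{i,\mc{F}}=k'_{i,*}\psi_i$. But the reduction ``it suffices to check, for each $i$ separately, that the square with top edge $\epsilon_i$ and bottom edge $\upsilon_i$ commutes'' is false, and this is a genuine gap. The top arrow of \eqref{subbie} is a map out of a direct sum whose restriction to the $i$-th summand is induced by $\epsilon_i$; the bottom arrow has a component into each summand induced by $\upsilon_j$; and the left arrow is, by the statement of the lemma, a \emph{direct sum} of maps, i.e.\ diagonal. So commutativity requires, in addition to your identity for $i=j$, that every off-diagonal composite $\upsilon_{j,\mc{G}}\circ\phi\circ\epsilon_{i,\mc{F}}\colon h_{i,!}h_i^*\mc{F}\to h_{j,*}h_j^*\mc{G}$ with $i\neq j$ vanish. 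Your proposal never addresses these, and, tellingly, never uses the hypothesis that the $U_i$ are \emph{disjoint} --- which is exactly what makes them vanish: by the adjunction $(h_j^*,h_{j,*})$ such a map corresponds to a map $h_j^*h_{i,!}h_i^*\mc{F}\to h_j^*\mc{G}$, and $h_j^*h_{i,!}$ is the zero functor for $i\neq j$ because $U_i\cap U_j=\emptyset$ (checked after applying $\rat$, where it is elementary). Without this step the composite around the right-hand side of the square has a priori nonzero components off the diagonal, and the asserted equality with the direct-sum map on the left fails.

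Once you add the off-diagonal vanishing, your argument closes up and is essentially the paper's proof. The paper in fact handles both cases in one stroke by the same adjunction you use at the end: a map $h_{i,!}h_i^*\mc{F}\to h_{j,*}h_j^*\mc{G}$ is determined by its pullback under $h_j^*$, which is zero when $i\neq j$ and equal to $h_i^*\phi$ when $i=j$; your unit--counit bookkeeping for the diagonal case is a valid (if longer) substitute for the latter half of that argument, but not for the former.
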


\begin{proof}
  First note that $k'_{i,*} =k'_{i,!}$ . Now there are natural maps
  (using adjunction) $k'_{i,*}\mc{F}_i\to \mc{F}$ as well as
  $\mc{G}\to k'_{j,*}\mc{G}_j$.  This sets up the morphisms in the
  diagram. For the commutativity of \eqref{subbie}, we need to show
  that the composites $k'_{i,*}\mc{G}_i\to k'_{j,*}\mc{F}_j$ are
  zero if $i\neq j$ and are the tautological maps if $i=j$. Now we use
  $k'_{i,*}\mc{G}_i= h_{i,!}h_i^*\mc{G}$ and
  $k'_{j,*}\mc{G}_j= h_{j,*}h_j^*\mc{G}$, and adjunction
  $$\home(h_{i,!}h_i^*\mc{G}, h_{j,*}h_j^*\mc{F})= \home(h_j^*h_{i,!}h_i^*\mc{G}, h_j^*\mc{F}).$$
  The claims then follow from the fact that $h_j^*h_{i,!}$ is the zero
  functor if $i\neq j$ and is the identity functor if $i=j$: this can
  be checked after applying $\rat$, in which case both statements are
  elementary (and well-known).
\end{proof}

\section{Filtrations on nearby cycles and de Rham
  complexes} \label{s:mnb} Let $f: X\to S$ be a map of complex
manifolds with $0 \in S \subset \C$. Let $Y=f^{-1}(0)$ and let
$E\subset X$ be a divisor all of whose components are flat over $S$
and smooth over $S^*=S \setminus \{0\}$ and. Assume that
$D = E \cup Y$ is a reduced divisor with normal crossings.  Let
$\eta\in \Omega^1_X(\log D)$ where $D=E\cup Y$.  In preparation for
the proof of Theorem \ref{t:fact} in Section \ref{kzf}, we define, in
Section \ref{s:A}, a filtration on nearby cycles of complexes defined
by $\eta$ on $X^*=X\setminus Y$. This corresponds to a filtration of
$Y$ by unions of its irreducible components ordered by the residues of
the form $\eta$ along them.

The graded quotients for this filtration have a de Rham description by
complexes $A(\eta)^a$ (Lemma \ref{comparison}). The de Rham type
complexes $A(\eta)$ are defined and then we prove the decomposition of
$A(\eta)$ as a direct sum of pushforwards of $A(\eta)^a$ in Section
\ref{s:Aeta}.

We also study some natural maps in this setting induced by Verdier
duality and replacing $\eta$ by $-\eta$.

\subsection{Relative log de Rham complexes}\label{s:Aeta}

\begin{defi} \label{EZee} (See \cite[p.~155]{EZ})
  $\Omega^{1}_{X/S}(\log D)$ is defined to be the locally free
  quotient $\Omega^1_X(\log D)/f^*(\Omega^1_S(\log(0)))$. Let
  $\Omega^p_{X/S}(\log D)=\wedge^p \,\Omega^{1}_{X/S}(\log D)$ for
  $p\geq 0$. The differential $d$ in the log complex
  $(\Omega^{\bull}_{X}(\log D),d)$ induces a complex
  $(\Omega^{\bull}_{X/S}(\log D),d)$.
\end{defi}

\begin{defi}\label{def11}
  Let
  $A(\eta)=(\Omega^{\bull}_{X/S}(\log D)\tensor_{\mc{O}_X} \mathcal{O}_Y, d+\eta
  \wedge)$.
\end{defi}

For $a\in \mb{R}$, let $Y^a$ be the union of components $Y_i$ of $Y$
such that the residue of $\eta$ along $Y_i$ is $a$.  The complexes
$A(\eta)$ can be ``restricted'' to the components $Y^a$:
\begin{defi}\label{d22}
	Let $A(\eta)^a$ be the complexes $(\Omega^{\bull}_{X/S}(\log D)\tensor_{\mc{O}_X} \mathcal{O}_{Y^a}, d+\eta)\in D^b(Y^a)$.
\end{defi}
\begin{remark}
  Note that the differentials in
  $A(\eta)=(\Omega^{\bull}_{X/S}(\log D), d+\eta\wedge)$ are not function
  linear, but $(d+\eta\wedge)$ takes
  $\Omega^{p}_{X/S}(\log D)\otimes_{\mc{O}_X}\mathcal{I}$ to
  $\Omega^{p+1}_{X/S}(\log D)\otimes_{\mc{O}_X}\mc{I}$ for $\mathcal{I}$ the
  ideal sheaf of any union of irreducible components of $Y$ (using
  identities of the form $dy=y dy/y$)
\end{remark}

There are natural morphisms of complexes
$A(\eta)\to  i^a_{*}A(\eta)^a$, where $i^a:Y^a\to  Y$ is the inclusion.
\begin{theorem}\label{locale}
  The natural morphism
  $$A(\eta)\lra  \bigoplus_{a\in\mb{C}} i^a_{*} A(\eta)^a$$ is an
  isomorphism in $\dpl(Y)$.
\end{theorem}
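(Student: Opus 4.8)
The plan is to prove the decomposition locally on $Y$ and then glue, since both sides of the asserted isomorphism are complexes of sheaves on $Y$ and the statement is local on $Y$. So first I would pass to a small polydisc chart around a point $y \in Y$. By Lemma \ref{l:normalform} (or directly, since $D = E \cup Y$ is a normal crossings divisor with $Y = f^{-1}(0)$), I may assume $X = \dc^{n+s+t}$ with coordinates $x_1,\dots,x_n$ (cutting out the branches of $Y$), $u_1,\dots,u_s$ (cutting out branches of $E$), and $v_1,\dots,v_t$ (extra coordinates), with $f = x_1 x_2 \cdots x_n$; here $Y_i = \{x_i = 0\}$ and $E = \{u_1 \cdots u_s = 0\}$.

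Second, I would make the residue structure explicit. Write $\eta = \sum_{i=1}^n a_i\, dx_i/x_i + \sum_{k=1}^s b_k\, du_k/u_k + (\text{holomorphic } 1\text{-form})$, where $a_i = \op{Res}_{Y_i}\eta$. The key algebraic point: the differential $d + \eta\wedge$ on $\Omega^{\bull}_{X/S}(\log D) \otimes_{\mc{O}_X}\mathcal{O}_Y$ only involves the classes $dx_i/x_i$ through the relation $\sum_i dx_i/x_i = df/f = 0$ in $\Omega^1_{X/S}(\log D)$, so a basis of $\Omega^1_{X/S}(\log D)$ is given by, say, $dx_2/x_2,\dots,dx_n/x_n$, $du_k/u_k$, $dv_j$. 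In this basis the operator is $\mathcal{O}_Y$-linear up to the ``$dx_i$ terms'', which, after restricting to $Y$, only see the residues $a_i$. I would then show that $\mathcal{O}_Y = \bigoplus_c \mathcal{O}_{Y^c}$ is \emph{not} a direct sum as $\mathcal{O}_X$-modules, so the decomposition is not naive; rather, I would use the idempotent decomposition of the complex itself. The standard trick (going back to Deligne's computation of the monodromy weight spectral sequence, and used by Steenbrink) is that on the nearby fibre / on $A(\eta)$ the monodromy $T = \exp(2\pi i \,\op{Res})$ has generalized eigenspace decomposition according to $a \bmod \Z$, but here we need the finer decomposition by the actual value $a \in \C$ of the residue, which is available because on $Y$ itself (as opposed to a small punctured neighbourhood) the complex $A(\eta)$ is supported on the divisor $Y$ and splits along the connected strata $Y^a$.

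Concretely, third, I would argue as follows: the complex $A(\eta) = (\Omega^{\bull}_{X/S}(\log D) \otimes_{\mc{O}_X}\mathcal{O}_Y, d+\eta\wedge)$ carries, for each $i$, the operator ``wedge with $dx_i/x_i$ followed by contraction'', or more usefully the residue maps $\op{Res}_{Y_i}: A(\eta) \to i_{Y_i,*}A(\eta)|_{Y_i}[-1]$; but the cleanest route is to filter $Y$ by the unions $Y^{(\leq c)} := \bigcup_{a \leq c} Y^a$ (ordering the finite set of residue values $a_1 < a_2 < \cdots$ that actually occur, choosing a total order refining the partial structure) and to show, using the analogue of Theorem \ref{stillholds} / Lemma \ref{l:ext}, that the resulting triangles split. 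In fact I expect the cleanest argument is: let $\mathcal{I}_a \subset \mathcal{O}_X$ be the ideal of $Y^a$ and $\mathcal{I}'_a$ the ideal of the union of the \emph{other} components; then $\mathcal{O}_Y \cdot \mathcal{I}'_a \to \mathcal{O}_{Y^a}$ has a canonical splitting up to the differential, because on the overlap $Y^a \cap Y^{a'}$ ($a \neq a'$) the difference of residues $a - a'$ is a nonzero scalar, so the operator $d + \eta\wedge$ restricted there is ``invertible in the $dx$-direction'', forcing the cohomology of the overlap complex to vanish — this is exactly the vanishing that makes the local monodromy semisimple along such intersections and lets one write $A(\eta) \cong \bigoplus_a i^a_* A(\eta)^a$. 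I would assemble these local splittings into a global isomorphism by noting that the splitting is canonical (determined by the residue decomposition), hence glues.

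The main obstacle will be making precise the claim that the complex splits \emph{as a complex} (not merely that its cohomology sheaves split), i.e.\ constructing the inverse to $A(\eta) \to \bigoplus_a i^a_* A(\eta)^a$ at the level of $\dpl(Y)$. The natural map goes the wrong way to be obviously an isomorphism, so I would instead identify both sides with a common third object: either (a) by a local acyclicity/vanishing argument showing the cone is zero — checking stalkwise that at a point $y \in Y^a \cap Y^{a'} \cap \cdots$ the cone of the natural map has zero cohomology, using that $\eta$ has distinct residues $a \neq a'$ along the branches through $y$ and a Koszul-complex computation for $d + \eta\wedge$ in the $dx_i/x_i$ variables (the relevant sub-Koszul complex is acyclic precisely because some residue is nonzero, after possibly scaling $\eta$ generically as in the methods discussion — the cohomology of $A(\eta)^a$ is invariant under scaling $\eta$, which I would invoke here); or (b) citing Steenbrink \cite{steenbrink} and El Zein \cite{EZ}, whose limit mixed Hodge theory for the complex $\Omega^{\bull}_{X/S}(\log D)$ already contains this kind of decomposition of the relative de Rham complex along the components of the central fibre. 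I expect route (a), organized as a stalkwise Koszul computation, to give the cleanest self-contained proof.
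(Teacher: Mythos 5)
Your overall strategy --- reduce to a polydisc chart and check the natural map stalkwise via a Koszul computation for $d+\eta\wedge$ --- is the same as the paper's, but the specific vanishing you rely on in route (a) is false, and this is a genuine gap. You claim that at a point of $Y^a\cap Y^{a'}$ with $a\neq a'$ the ``overlap complex'' is acyclic because $a-a'$ is a nonzero scalar. In the local model $f=z_1\cdots z_k$ with $\op{Res}_{z_i=0}\eta=a_i$, the stalk of $A(\eta)$ decomposes over monomials $z_1^{\alpha_1}\cdots z_k^{\alpha_k}u_1^{\beta_1}\cdots$, and the Koszul element attached to a monomial has coordinates $\alpha_i+a_i-(\alpha_1+a_1)$ (and $\beta_j+b_j$). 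These can all vanish --- producing a nonzero stalk --- whenever the $a_i$ are congruent modulo $\Z$ with $a_{i_0}-a_i\in\Z_{\geq 0}$ for some $i_0$, even though the $a_i$ are distinct. So the stalk of $A(\eta)$ at an intersection of components whose residues differ by nonzero integers is in general nonzero. What saves the theorem (Propositions \ref{prop:local1} and \ref{prop:local2} in the paper) is that this stalk is generated by an explicit monomial $h=z_2^{a_1-a_2}\cdots z_k^{a_1-a_k}u_1^{-b_1}\cdots$ which vanishes on every $Y^{a'}$ except the component with the maximal residue, so the natural map is still a stalkwise isomorphism onto the single surviving summand. The mechanism is not ``all cross-terms vanish'' but ``the surviving cross-terms land isomorphically in exactly one summand.''

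The auxiliary move you propose --- scaling $\eta$ generically --- cannot repair this, because scaling changes the complex $A(\eta)$ itself and its stalks: for $0<c\ll 1$ the differences $c(a_i-a_j)$ leave $\Z$ and the overlap stalks really do die, so $A(c\eta)$ and $A(\eta)$ are genuinely different objects locally. Only the global hypercohomology ranks $\sum_a h^k(Y,A(\eta)^a)$ are scaling-invariant (Lemma \ref{l:nearby2}), and the paper invokes that invariance later (in Proposition \ref{bigeq1}), not in the proof of Theorem \ref{locale}. Finally, your worry that ``the natural map goes the wrong way'' is a red herring: the map is given by restriction, and one only needs to check that it induces isomorphisms on cohomology sheaves stalk by stalk, which is exactly the Koszul computation once the correct nonvanishing criterion (congruence of residues modulo $\Z$ with the ordering condition, rather than equality) is in hand.
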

The theorem follows from a computation of stalks of $A(\eta)$ at any
$P\in Y$ (Proposition \ref{prop:local1}) as well as the stalks of
$A(\eta)^a$ when $P\in Y^a$ (Proposition \ref{prop:local2}) that we
carry out in Section \ref{localo}. The stalks of $A(\eta)$ when
$\eta=0$ are computed in \cite{steenbrink}.

\subsection{Local cohomology of $A(\eta)$ and $A(\eta)^a$
}\label{localo}

Let $P\in Y \subset X$ with a coordinate system of the form
$(z_1,\dots,z_k; u_1,\dots;u_{\ell};v_1,\dots,v_m)$, so that locally
$X=\dc^{k+\ell+m}$ and $S=\dc$.  The map
$f:\dc^{k+\ell+m} \rightarrow \dc$ is given by the formula
$$f(z_1,\dots,z_k; u_1,\dots;u_{\ell};v_1,\dots,v_m)=z_1\cdots z_{k} .$$
Then $Y = f^{-1}(0)$ and let $P = 0 \in Y$.
\begin{enumerate}
\item Let $a_i=\op{Res}_{z_i=0}\eta$ for $1\leq i\leq k$.
\item Let $E$ be the union of the coordinate hyperplanes $u_i=0$ and
  let $b_j=\op{Res}_{u_j=0}\eta$ for $1\leq j\leq \ell$.
\end{enumerate}
We may also assume
$$\eta=\sum a_i dz_i/z_i +\sum b_j du_j/u_j.$$ It
is useful to note that if $h\in\mathcal{O}_Y$ is a function such that
$dh+h\eta=0\in \Omega^1_{X/S}(\log D)\tensor_{\mc{O}_X} \mathcal{O}_Y$
and $\omega\in\Omega^p_{X/S}(\log D)\tensor_{\mc{O}_X} \mathcal{O}_Y$
with $d\omega=0$, then $h\omega$ is killed by the operator
$d+\eta\wedge$.
\begin{proposition}\label{prop:local1}
  The stalks $\mathscr{H}^q(A(\eta))_{P}$ of the cohomology sheaves at
  the point $P$ are trivial unless there exists $i_0$ such that
  $a_{i_0} - a_i \in \mb{Z}_{\geq 0}$ and $b_j\in \mb{Z}_{\leq 0}$ for
  all $i,j$.  In this case, assuming without loss of generality that
  $i_0 = 1$, we have:
  \begin{enumerate}
  \item
    $\mathscr{H}^0(A(\eta))_P$ is generated by
    $h=z_2^{a_1-a_2}\dots z_k^{a_1-a_k}u_1^{-b_1}\dots
    u_{\ell}^{-b_\ell}$.
  \item
    $\mathscr{H}^q(A(\eta))_P$ { is the $\C$-vector space
      generated by $h$ times the $q$th exterior power of the vector
      space generated by $dz_i/z_i$ (with sum zero) and $du_j/u_j$}.
  \end{enumerate}
\end{proposition}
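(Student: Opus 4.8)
The plan is to adapt Steenbrink's computation of the stalks of $A(0)$ (see \cite{steenbrink}) to the twisted, logarithmic setting by means of a monomial multigrading. As the statement is local, I would pass to germs at $P=0$, taking $X=\dc^{k+\ell+m}$, $S=\dc$, $f=z_1\cdots z_k$, $E=\{u_1\cdots u_\ell=0\}$, and $\eta=\sum_i a_i\,dz_i/z_i+\sum_j b_j\,du_j/u_j$ as in the statement.

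The first step is to eliminate the $v$-directions. Since $\eta$ and $f$ do not involve the $v_r$, splitting off the $v$-directions exhibits the germ of $A(\eta)$ at $0$ as a completed tensor product of the analogous complex in the $(z,u)$-variables only with the plain holomorphic de Rham complex $\Omega^{\bullet}_{\dc^m,0}$; the holomorphic Poincar\'e lemma, whose contracting homotopy is a bounded operator (which is what lets one pass the quasi-isomorphism through the completion), then reduces us to $m=0$.

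The second step, with $m=0$, is to grade by monomial degree. Writing $\tfrac{dz_1}{z_1}\equiv-\sum_{i\geq2}\tfrac{dz_i}{z_i}$ in $\Omega^1_{X/S}(\log D)$, this sheaf is free on the one-forms $\tfrac{dz_2}{z_2},\dots,\tfrac{dz_k}{z_k},\tfrac{du_1}{u_1},\dots,\tfrac{du_\ell}{u_\ell}$, which span a space $V$, while $\mc{O}_{Y,0}=\mc{O}_{X,0}/(z_1\cdots z_k)$ has the monomials $z^\alpha u^\beta$ with $\min_i\alpha_i=0$ as a topological $\C$-basis. I would grade $A(\eta)_0$ by the exponent $(\alpha,\beta)$, placing the wedge-monomials of $V$ in degree $0$. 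Using that the chosen one-forms are $d$-closed and that $\tfrac{df}{f}\wedge(\cdot)$ vanishes in $\Omega^{\bullet}_{X/S}$, one checks that
\[
  (d+\eta\wedge)\bigl(z^\alpha u^\beta\,\omega\bigr)=z^\alpha u^\beta\,\bigl(\theta_{(\alpha,\beta)}\wedge\omega\bigr),\quad
  \theta_{(\alpha,\beta)}=\sum_{i\geq2}\bigl((\alpha_i+a_i)-(\alpha_1+a_1)\bigr)\tfrac{dz_i}{z_i}+\sum_j(\beta_j+b_j)\tfrac{du_j}{u_j}\in V .
\]
Thus the differential preserves the grading and the $(\alpha,\beta)$-piece is exactly the Koszul complex $(\Lambda^{\bullet}V,\ \theta_{(\alpha,\beta)}\wedge)$.

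The third step invokes the elementary fact that $(\Lambda^{\bullet}V,\theta\wedge)$ is acyclic when $\theta\neq0$ (contracting homotopy: contraction against a functional $\phi$ with $\phi(\theta)=1$) and is $\Lambda^{\bullet}V$ with zero differential when $\theta=0$. The coefficients of $\theta_{(\alpha,\beta)}$ lie in fixed cosets of $\Z$ (determined by the $a_i-a_1$ and the $b_j$), so whenever $\theta_{(\alpha,\beta)}\neq0$ some coefficient has absolute value bounded below by a positive constant independent of $(\alpha,\beta)$; hence the homotopies on the acyclic pieces can be chosen of uniformly bounded norm and assemble into a bounded homotopy on the completed direct sum, reducing the cohomology of $A(\eta)_0$ to that of the pieces with $\theta_{(\alpha,\beta)}=0$. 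Finally, $\theta_{(\alpha,\beta)}=0$ in $\Omega^1_{X/S}(\log D)$ forces all $\alpha_i+a_i$ equal and all $\beta_j+b_j=0$; together with $\min_i\alpha_i=0$ this is solvable exactly when some $a_{i_0}$ satisfies $a_{i_0}-a_i\in\Z_{\geq0}$ for every $i$ and every $b_j\in\Z_{\leq0}$, in which case (taking $i_0=1$, so $a_1$ maximal) the unique solution is $\alpha=(0,a_1-a_2,\dots,a_1-a_k)$, $\beta=(-b_1,\dots,-b_\ell)$, i.e.\ $z^\alpha u^\beta=h$. This yields $\mathscr{H}^q(A(\eta))_0=\Lambda^qV\cdot h$ — the asserted description, with $\mathscr{H}^0$ spanned by $h$ — and vanishing in the contrary case. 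I expect the only genuinely non-formal point to be the uniform lower bound in the third step, which is what upgrades the finite-dimensional graded computation to a statement about convergent-power-series germs.
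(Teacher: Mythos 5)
Your proof is correct and follows essentially the same route as the paper's: the paper likewise reduces to $m=0$ by a Koszul argument, decomposes the stalk over the monomial basis of $\C\{z,u\}/(z_1\cdots z_k)$ into Koszul complexes of the elements $\theta_{(\alpha,\beta)}\in V$, and passes from the polynomial to the convergent-power-series case using the explicit contracting homotopies of Lemma \ref{koszul} (2). Your uniform lower bound on the nonzero coefficients of $\theta_{(\alpha,\beta)}$ (coming from their lying in fixed cosets of $\Z$) is precisely the point that makes those homotopies assemble into a bounded homotopy equivalence onto the subcomplex spanned by $h$, which the paper leaves implicit.
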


\begin{proof}
  By a Koszul argument, we reduce to the case $m=0$. Let
  $$A=\mb{C}\{z_1,\dots, z_k; u_1,\dots, u_\ell\}/(z_1z_2\cdots
  z_k); $$ the ring $\C\{z_1,\dots, z_k; u_1,\dots, u_\ell\}$ is the
  analytic local ring of $P\in X$ and the quotient ring $A$ the analytic
  local ring of $P\in Y$.  Note that $A$ has a monomial basis in each
  degree.  Let
  $$\xi_1=dz_1/z_1,\  \dots,\  \xi_k=dz_k/z_k;\ \ \xi_{k+1}=du_1/u_1,\
  \dots,\ \xi_{k+\ell}=du_{\ell}/u_{\ell} . $$ It follows that
  $\Omega^1_{X/S}(\log D)\tensor_{\mc{O}_X} \mathcal{O}_Y$ is generated by
  $\xi_1,\dots,\xi_{k+\ell}$ with the relation
  $\sum_{i=1}^k\xi_i=0$. We will use the basis formed by
  $\xi_2,\dots,\xi_{k+\ell}$.

  Then for $i>1$, and $g\in \mathcal{O}_Y$,
  \begin{equation}\label{formule}
    (d+\eta\wedge)(g\xi_i)= \sum_{j=2}^{k+\ell} \mathscr{D}_j(g)\xi_j\wedge \xi_i
  \end{equation}
  where
  $\mathscr{D}_2,\dots, \mathscr{D}_{k+\ell}$ are differential operators defined by
  \begin{equation*}
    \mathscr{D}_{i+j}=
    \begin{cases}
      z_i\frac{\partial}{\partial z_i}-z_1\frac{\partial}{\partial z_1}+a_i-a_1, \  2\leq i\leq k \ \mbox{and} \ j=0\\
      u_j\frac{\partial}{\partial u_j}+b_j, \  1\leq j\leq \ell \ \mbox{and} \ i=k.
    \end{cases}
  \end{equation*}
  (The differential operators above are a priori defined on
  $\mb{C}\{z_1,\dots, z_k; u_1,\dots, u_\ell\}$ but pass to any
  quotient by the ideal generated by $z_1\dots z_m$ for some
  $m\leq k$. They act by scalars on monomial expressions of the form
  $z_1^{\alpha_1}\dots z_k^{\alpha_k}u_1^{\beta_1}\dots
  u_{\ell}^{\beta_{\ell}}$.)
  
  To justify \eqref{formule}, note that for $g\in\mathcal{O}_Y$,
  $dg=\sum_{j=1}^{k+\ell} (z_j\frac{\partial g}{\partial z_j})\cdot
  \xi_j$.  Hence $A(\eta)_P$ is quasi-isomorphic to
  \begin{equation}\label{complex}
    C^{\starr} := A\leto{d_0} V\tensor_{\C} A\leto {d_1}\wedge^2
    V\tensor_{\C} A \lra \dots\lra  \wedge^{k+\ell-1}V \tensor_{\C}A
  \end{equation}
  where $V=\mb{C}\xi_2\oplus \dots \mb{C}\xi_{k+\ell}$, and the
  differentials are given by
  $$d_{\ell}(\xi_{i_1}\wedge\dots \wedge \xi_{i_{\ell}}\tensor g)=
  \sum_{j=2}^n \xi_j\wedge \xi_{i_1}\wedge\dots\wedge
  \xi_{i_{\ell}}\tensor \mathscr{D}_j(g) .$$ Here $\mathscr{D}_i$
  multiplies
  $z_1^{\alpha_1}\cdots z_k^{\alpha_k}u_1^{\beta_1}\cdots
  u_{\ell}^{\beta_{\ell}}$ by the scalar $\alpha_i+a_i-(\alpha_1+a_1)$
  if $i\in[2,k]$ and by $\beta_j+b_j$ if $i=k+j$ for
  $1\leq j\leq \ell$.
	
  If in the above we replace the convergent power series ring
  $\C\{z_1,\dots, z_k; u_1,\dots, u_\ell\}$ by the ring
  $\C[z_1,\dots, z_k; u_1,\dots, u_\ell]$ and the ring $A$ by
  $B:= \C[z_1,\dots, z_k; u_1,\dots, u_\ell]/(z_1\cdots z_k)$, then
  the complex analogous to \eqref{complex} is a direct sum of
  complexes indexed by the monomials in $B$. Therefore its cohomology
  is a direct sum of the cohomology of these complexes. Each of the
  complexes is the Koszul complex of $V$ corresponding to an element
  of $V$ of the form $\sum_{i>2}c_i\xi_i$, where $c_i$ are the scalar
  multiples above, namely $c_i= \alpha_i+a_i-(\alpha_1+a_1)$ if
  $i\in[2,k]$ and $c_i=\beta_j+b_j$ if $i=k+j$ for $1\leq j\leq \ell$.
  To get a nonzero contribution the $c_i$ must be zero (see Lemma
  \ref{koszul} below).
	
  The summand corresponding to
  $z_1^{\alpha_1}\cdots z_k^{\alpha_k} u_1^{\beta_1}\cdots
  u_{\ell}^{\beta_{\ell}}$ (with all exponents non-negative integers)
  produces a nonzero contribution in homology (and then the
  contribution is the entire exterior algebra of $V$) if and only if
  \begin{equation*}
    \alpha_i+a_i=\alpha_1+a_1, \ i=2,\dots,k\ \mbox{ and }\ \beta_j+b_i=0, j=1,\dots,\ell.
  \end{equation*}
  
  Hence we must have pairwise differences $a_i-a_j\in \mb{Z}$ for all
  $i,j$, so there exists an $i_0$ as in the statement of the
  proposition. Assuming $i_0=1$ we will then have
  $a_1-a_i\in \Bbb{Z}_{\geq 0}$ for all $i$, and
  $b_j\in \mb{Z}_{\leq 0}$ for all $j$, and the monomial must be of
  the form
  $t^{\alpha_1}z_2^{a_1-a_2}\dots z_{k}^{a_1-a_k}u_1^{-b_1}\dots
  u_{\ell}^{-b_{\ell}}$.  This is nonzero iff $\alpha_1=0$.  This
  completes the proof when $A$ is replaced by $B$.  To prove the
  proposition as stated, i.e., with convergent power series, we use the
  explicit homotopies retracting the Koszul complex (when it is zero)
  given by Lemma \ref{koszul} (2) to construct a homotopy equivalence
  from $C^{\starr}$ to its subcomplex corresponding to the monomial
  $h$.
\end{proof}

The monomial
$h=z_2^{a_1-a_2}\dots z_{k}^{a_1-a_k}u_1^{-b_1}\dots
u_{\ell}^{-b_{\ell}}$ in the above proof satisfies $dh+h\eta=0$, and
vanishes on $Y^a$ unless $a=a_1$. For the local cohomology of
$A(\eta)^a$, we then have the following by a similar Koszul argument:
\begin{proposition}\label{prop:local2}
  Suppose $P\in Y^a$.  The stalks $\mathscr{H}^q(A(\eta)^a)_{P}$ of
  the cohomology sheaves at the point $P \in X$ are trivial unless
  $a-a_i\in \Bbb{Z}_{\geq 0}$ for all $i$.  In this case, the natural
  maps $\mathscr{H}^q(Y,A(\eta))_P\to \mathscr{H}^q(Y^a,A(\eta)^a)_P$
  are isomorphisms for all $q$.
\end{proposition}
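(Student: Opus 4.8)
The plan is to mimic, almost verbatim, the Koszul computation used for Proposition \ref{prop:local1}. First I would carry out the same reduction to the case $m=0$: the complex $A(\eta)^a$ is the tensor product of its ``reduced'' part in the $z$- and $u$-variables with the Koszul complex on $dv_1,\dots,dv_m$, so its stalk cohomology at $P$ is computed from the situation with no $v$-variables. Since $P\in Y^a$, at least one component $\{z_i=0\}$ through $P$ has residue $a$, and after relabelling the $z_i$ I may assume $a_1=a$. Mimicking the proof of Proposition \ref{prop:local1}, the stalk $A(\eta)^a_P$ is quasi-isomorphic to a complex $C^{a,\bullet}$ which, after replacing the analytic local ring by the polynomial ring, reads
\[
  C^{a,\bullet}:\quad B^a \to V\otimes_{\C} B^a \to \wedge^2 V\otimes_{\C} B^a \to \cdots, \qquad B^a := \C[z_1,\dots,z_k,u_1,\dots,u_\ell]\big/\big({\textstyle\prod_{i\,:\,a_i=a}}z_i\big),
\]
with $V=\C\xi_2\oplus\cdots\oplus\C\xi_{k+\ell}$ and the same differential operators $\mathscr{D}_j$ as in the proof of Proposition \ref{prop:local1}; here one uses that $d+\eta\wedge$ is well defined modulo the ideal of $Y^a$ because $Y^a$ is a union of irreducible components of $Y$ (the remark following Definition \ref{d22}). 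As before, $C^{a,\bullet}$ decomposes, over the monomial basis $z^\alpha u^\beta$ of $B^a$, into Koszul complexes for the element $\sum_i c_i\xi_i\in V$ with $c_i=\alpha_i-\alpha_1+a_i-a_1$ for $2\le i\le k$ and $c_{k+j}=\beta_j+b_j$.

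The combinatorial core is to identify the contributing monomials. A monomial $z^\alpha u^\beta$ lies in the monomial basis of $B^a$ iff $\alpha_{i'}=0$ for some $i'$ with $a_{i'}=a$, and it contributes to cohomology iff all the $c_i$ vanish, i.e. $\alpha_i=\alpha_1+(a-a_i)$ for all $i$ and $\beta_j=-b_j$ for all $j$. For any $i'$ with $a_{i'}=a$ these relations give $\alpha_{i'}=\alpha_1$, so the $B^a$-membership condition forces $\alpha_1=0$; hence $\alpha_i=a-a_i$ and $\beta_j=-b_j$ must all be non-negative integers, which is impossible unless $a-a_i\in\Z_{\ge 0}$ for every $i$ (and $b_j\in\Z_{\le 0}$ for every $j$). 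This gives the asserted vanishing of $\mathscr{H}^q(A(\eta)^a)_P$. When $a-a_i\in\Z_{\ge 0}$ for all $i$: if in addition all $b_j\in\Z_{\le 0}$, there is a unique contributing monomial, namely $h=z_2^{a-a_2}\cdots z_k^{a-a_k}u_1^{-b_1}\cdots u_\ell^{-b_\ell}$ — the very monomial appearing in Proposition \ref{prop:local1} with $i_0=1$, as $z_1^{a-a_1}=1$ — contributing the subcomplex $h\otimes\wedge^{\bullet}V$; if some $b_j\notin\Z_{\le 0}$ there is no contributing monomial and both $\mathscr{H}^q(A(\eta))_P$ and $\mathscr{H}^q(A(\eta)^a)_P$ vanish. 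In either case the natural morphism $A(\eta)\to i^a_{*}A(\eta)^a$ carries the subcomplex $h\otimes\wedge^{\bullet}V$ of $A(\eta)_P$ (computed in Proposition \ref{prop:local1}) onto the identically described subcomplex of $A(\eta)^a_P$ — the image of $h$ in $B^a$ is again the nonzero monomial $h$, since $h$ involves no $z_i$ with $a_i=a$ — and therefore induces an isomorphism on all stalk cohomology sheaves at $P$.

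Finally, the passage back from the polynomial rings $B$ and $B^a$ to the convergent analytic local rings is routine and is handled exactly as in the proof of Proposition \ref{prop:local1}, using the explicit contracting homotopies of Lemma \ref{koszul}(2) to retract $C^{a,\bullet}$ onto its single surviving summand. The only step I expect to require real care is the bookkeeping in the second paragraph: one must use that the defining ideal of $Y^a$ is $\big(\prod_{i\,:\,a_i=a}z_i\big)$ rather than $\big(\prod_{i=1}^k z_i\big)$, and track that this is precisely what forces $\alpha_1=0$ and so collapses the set of contributing summands to the one monomial $h$ that already governs $\mathscr{H}^{\bullet}(A(\eta))_P$; granting that, the compatibility with the natural map, and hence the claimed isomorphisms, are immediate.
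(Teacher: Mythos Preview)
Your argument is correct and is precisely the ``similar Koszul argument'' the paper invokes without further detail: you redo the monomial-by-monomial Koszul decomposition with $B^a=\C[z,u]/(\prod_{i:a_i=a}z_i)$ in place of $B$, and correctly observe that the membership constraint $\alpha_{i'}=0$ for some $i'$ with $a_{i'}=a$, combined with $\alpha_{i'}=\alpha_1$, forces $\alpha_1=0$ and hence singles out the same monomial $h$ as in Proposition~\ref{prop:local1}. The paper adds only the preliminary remark that $h$ vanishes on $Y^a$ unless $a=a_1$ (equivalently unless $a$ is maximal), which is implicit in your verification that $h$ has exponent zero in every $z_i$ with $a_i=a$ and hence survives in $B^a$; everything else matches.
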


\subsubsection{Some Koszul algebra}

Let $A$ be a commutative ring. Given a finitely generated free $A$
module $N$ and $x\in N$, recall that the Koszul complex (see, e.g.,
\cite[Section 17.2]{eisenbud}) is the complex (with $A$ in degree $0$)
$$K(x): 0\lra  A\lra  N\lra  \wedge^2 N\lra \dots\lra 
\wedge^i N \leto{d_x} \wedge^{i+1} N\lra \dots .$$ The differentials are
given by wedge products on the left with $x$.

If $x=0$, then the cohomology of this complex in degree $i$ is
$\wedge^i N$.  If $N=N'\oplus N''$, $x=x'\oplus x''$ then
$K(x)=K(x')\tensor K(x'')$,

\begin{lemma}\label{koszul}
  $ $
  \begin{enumerate}
  \item
    If $A=k$ is a field then $K(x)$ has zero homology iff $x\neq 0$.
  \item If $N = k^r$, $x = \sum_i c_ie_i$ with $e_i$ the standard
    basis and $c_1 \neq 0$, then the maps
    $\alpha_p: \wedge^p N \to \wedge^{p-1}N$ for $p>0$ defined
    by $$\alpha(e_{i_1}\wedge e_{i_2}\wedge\dots\wedge e_{i_p})=0$$
    for all $i_1<i_2<\dots<i_p$ with $i_1\neq 1$ and
    $$\alpha(e_{i_1}\wedge e_{i_2}\wedge\dots\wedge
    e_{i_p})=\frac{1}{c_1}e_{i_2}\wedge\dots\wedge e_{i_p}$$ if
    $i_1=1$ give a homotopy from the identity map to the zero map of
    $K(x)$.
  \end{enumerate}
\end{lemma}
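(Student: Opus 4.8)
The plan is to establish part (2) by a direct computation, and then to derive part (1) from it together with the trivial case $x=0$.

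First I would reinterpret the operator $\alpha_\bullet$: as defined on sorted basis wedges it is exactly $c_1^{-1}$ times the standard interior product (contraction) $\iota_{e_1^*}:\wedge^p N\to\wedge^{p-1}N$ with the functional $e_1^*\in N^*$ dual to $e_1$. Indeed, in a sorted wedge $e_{i_1}\wedge\cdots\wedge e_{i_p}$ the index $1$, if it occurs at all, must be the smallest index $i_1$, so $\iota_{e_1^*}$ deletes it with sign $+1$; and if $1$ does not occur, $\iota_{e_1^*}$ vanishes on it. This matches the two defining formulas for $\alpha$. Now the only input needed is the standard fact that $\iota_{e_1^*}$ is an anti-derivation of the exterior algebra, which gives, for every $\omega\in\wedge^p N$,
\[
\iota_{e_1^*}(x\wedge\omega)+x\wedge\iota_{e_1^*}(\omega)=e_1^*(x)\,\omega=c_1\,\omega .
\]
Since the differential of $K(x)$ in degree $p$ is $d_x=x\wedge(-)$, dividing by $c_1\neq 0$ gives $d_x\circ\alpha_{p+1}+\alpha_p\circ d_x=\mathrm{id}_{\wedge^p N}$ for all $p$; at the two ends of the complex ($p=0$, where there is no incoming differential, and $p=r$, where there is no outgoing one) the same identity holds with the appropriate one of the two terms absent. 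Thus $\alpha_\bullet$ is a chain homotopy from $\mathrm{id}_{K(x)}$ to $0$, which is the assertion of (2). (If one prefers to avoid the language of interior products, the identity $d_x\alpha+\alpha d_x=\mathrm{id}$ can be verified directly on sorted basis wedges by separating the cases $1\in\{i_1,\dots,i_p\}$ and $1\notin\{i_1,\dots,i_p\}$ and doing the short sign bookkeeping.)

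For part (1): if $x=0$ the differential of $K(x)$ is identically zero, so $H^i(K(x))=\wedge^i N$, which is nonzero for $0\le i\le r$, and hence $K(x)$ has nonzero homology. Conversely, if $x\neq 0$, extend $x$ to a basis $f_1=x,f_2,\dots,f_r$ of $N$; in the coordinates given by this basis the vector $x$ has first coordinate $1\neq 0$, so part (2) applies and produces a contracting homotopy of $K(x)$, whence $K(x)$ is acyclic. This yields the ``iff'' in (1). (Alternatively, for $x\neq 0$ one can write $N=kx\oplus N'$ and observe $K(x)\cong K(x)|_{kx}\otimes K(0)|_{N'}$ with $K(x)|_{kx}$ the contractible complex $k\xrightarrow{\ \sim\ }k$.)

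The computation is entirely standard, so there is no substantive obstacle; the only point requiring a little care is the sign bookkeeping identifying the combinatorially defined $\alpha_\bullet$ with the interior product $c_1^{-1}\iota_{e_1^*}$ and checking the homotopy identity at the degenerate endpoints of the Koszul complex.
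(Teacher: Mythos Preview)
Your proof is correct. The paper's own proof consists only of the sentence ``Part (1) is well-known and part (2) is a routine verification,'' so your argument---identifying $\alpha_\bullet$ with $c_1^{-1}\iota_{e_1^*}$ and invoking the anti-derivation identity, then deducing (1) from (2) by a change of basis---is a clean way to carry out exactly the verification the paper leaves to the reader.
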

\begin{proof}
  Part (1) is well-known and part (2) is a routine verification.
\end{proof}

\subsection{Filtrations on nearby cycles} \label{s:A}

We now return to the algebro-geometric setting of Section
\ref{s:nbms}: Let $\msk \subset \C$. Let $X$ be a smooth variety over
$\msk$ and $f:X \to \A^1_{\msk}$ a morphism which is smooth over
$\A^1_{\msk} \sm \{0\}$ and let $Y = f^{-1}(0)$ and let $Y_{\alpha}$,
$\alpha =1,2\dots,n$ be the irreducible components of
$Y$. Furthermore, let $E\subset X$ be a divisor such $D = Y \cup E$
has simple normal crossings. Let $U = X \sm D$.

Let $g$ be a nowhere vanishing regular function on $U$, let $N\geq 1$
be an integer, let $a = b/N$ for some $b\in \Z$ and let $K =
\Q(\mu_N)$. Let $\eta = a \dlog g\in \Omega_X^1(\log D)$ and denote by
$\mc{L}(\eta)$ the $K$-mixed local system $Kg^{a}$ on $U$ defined in
Section \ref{s:mls1}. Let $V=X \sm (Y \cup (\cup'_{b}E_{b}))$ where
the union is restricted to the set of irreducible components $E_b$ of
$E$ such that the residue of $\eta$ along $E_{b}$ is not a strictly
positive integer.  Let $\tilde{j}:X \sm D\to V$, and
$\tilde{k}:V\to X \sm Y$ be the inclusion.

Similarly let $V'=X \sm (Y \cup (\cup'_{b}E_{b}))$ where the union is
restricted to the set of irreducible components $E_b$ of $E$ such that
such that the residue of $\eta$ along $E_{b}$ is not a non-negative
integer.  Let $\tilde{j}':X \sm D\to V$, and
$\tilde{k}':V\to X \sm Y$ be the inclusions. Note that $V'\supset V$.
\begin{defi}
  Let
  $\mc{K}(\eta) = \Psi_f (\tilde{k}_*\tilde{j}_{!}\ml(\eta))\in
  \db(\msm(Y))_K.$
\end{defi}
\begin{remark}\label{r:perv}
  The object $\rat_X(\tilde{k}_*\tilde{j}_{!}\ml(\eta))$ has perverse
  cohomology in only one degree by Remark \ref{r:p}, so
  $\tilde{k}_*\tilde{j}_{!}\ml(\eta)$ is a shift of an object of
  $\msm(X)_K = \msm(X,K)$. Since $\Psi_f$ maps $\msm(X)$ to
  $\msm(Y)[1]$, it follows that $\Psi_f
  (\tilde{k}_*\tilde{j}_{!}\ml(\eta))$ is the shift of an object of
  $\msm(Y)_K$.
\end{remark}

Replacing the function $g$ by $g^{-1}$ replaces $\eta$ by $-\eta$, so
we also have a $K$-mixed local system
$\mc{L}(-\eta) \simeq \mc{L}(\eta)^{-1}$ to which we may apply the
same construction as above (but note that $V$ and $V'$ will change).
For ease of notation in what follows we let
$$\mc{G}=\mc{K}(\eta),\ \mc{F}=\D_Y(\mc{K}(-\eta))[-2\dim Y] .$$
\begin{lemma} \label{l:vd}%
  $\mc{F}= \Psi_f(\tilde{k}'_*\tilde{j}'_{!}\ml(\eta))$.
\end{lemma}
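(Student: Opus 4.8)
The plan is to unwind the definitions and use the compatibility of nearby cycles with Verdier duality stated in Section~\ref{s:rnb}. Recall that $\mc{F} := \D_Y(\mc{K}(-\eta))[-2\dim Y]$ and $\mc{K}(-\eta) = \Psi_f(\tilde{k}_*\tilde{j}_{!}\ml(-\eta))$, where now the opens $V, V'$ used to build $\mc{K}(-\eta)$ are defined using residues of $-\eta$. The key observation is that the residue of $-\eta$ along a component $E_b$ of $E$ is \emph{not a strictly positive integer} precisely when the residue of $\eta$ along $E_b$ is \emph{not a strictly negative integer}, i.e.\ is a non-negative integer or a non-integer; complementarily, it fails to be a non-negative integer precisely when $\op{Res}_{E_b}\eta$ is not a non-positive integer. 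So the ``$V$'' for $-\eta$ is (a priori) a different open than the ``$V'$'' for $\eta$. I would therefore first check — this is a small bookkeeping step — that after applying Verdier duality the relevant extensions match up correctly; the cleanest route is to use Lemma~\ref{l:ext}(1), which says that the order of star/shriek extensions across the components of a normal crossings divisor is irrelevant, and that an extension is the ``full'' star pushforward on components where we take $*$, shriek on components where we take $!$. Combined with Remark~\ref{r:ext} (the $K$-mixed analogue), this lets me identify $\D(\tilde{k}_*\tilde{j}_{!}\ml(-\eta))$ up to shift and twist with $\tilde{k}'_*\tilde{j}'_{!}\ml(\eta)$, where $\tilde{k}',\tilde{j}'$ are the inclusions associated to the open $V'$ of Section~\ref{s:A} (the one where we only remove components of $E$ along which $\op{Res}\eta$ is not a non-negative integer).

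More precisely, the steps in order: (1) Recall $\ml(-\eta) \simeq \ml(\eta)^{-1} = \D\ml(\eta)[-2\dim U]$ (up to a Tate twist) on $U = X\sm D$, since Verdier dual of a rank-one local system on a smooth variety is its inverse with the appropriate shift and Tate twist — one must be careful that the twist from $\D_U$ versus $\D_Y$ differs by $\dim X - \dim Y = 1$, and the nearby cycles functor shifts duality by $[-1]$ as recorded in Section~\ref{s:rnb}; tracking all shifts and twists is the one genuinely fiddly part. (2) Apply the commutation $\Psi_f[-1]\circ \D_X \cong \D_Y \circ \Psi_f[-1]$ to $\tilde{k}_*\tilde{j}_{!}\ml(-\eta)$; here $\D_X(\tilde{k}_*\tilde{j}_{!}\ml(-\eta))$ is computed by the standard rule that $\D$ interchanges $*$-extension with $!$-extension, so it becomes (a shift/twist of) $\tilde{m}_!\tilde{n}_*\ml(\eta)$ for the ``complementary'' choice of opens. (3) Use Lemma~\ref{l:ext}(1) (and Remark~\ref{r:ext}) to rewrite this complementary extension: removing from $X$ the components of $E$ along which $\op{Res}\eta$ \emph{is} a non-negative integer and $*$-extending across the rest, then $!$-extending, is — because the formation is order-independent and $!$ across a divisor where the local system has trivial monodromy equals $*$ there — the same as the extension $\tilde{k}'_*\tilde{j}'_!\ml(\eta)$ defining $\mc{F}$ in Section~\ref{s:A}. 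The point is that a $!$-extension across a component along which the residue of $\eta$ is a non-negative integer agrees with the $*$-extension there (trivial local monodromy twisted by the appropriate root of unity behaves this way on realisations, and then one lifts by Remark~\ref{r:ext}), collapsing the two descriptions. (4) Conclude $\mc{F} = \D_Y(\mc{K}(-\eta))[-2\dim Y] = \Psi_f(\tilde{k}'_*\tilde{j}'_!\ml(\eta))$ after the shifts and twists cancel.

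The main obstacle I anticipate is purely the sign/shift/twist bookkeeping in step~(1) and step~(4): getting the Tate twists to cancel requires carefully combining the $[-2\dim Y]$ in the definition of $\mc{F}$, the $[-1]$-shifted duality for $\Psi_f$, and the difference between $\D_U$ and $\D_Y$. I expect no conceptual difficulty beyond that, since all the needed inputs — conservativity of $\rat$, order-independence of iterated extensions (Lemma~\ref{l:ext}), the $K$-mixed analogue (Remark~\ref{r:ext}), and the behaviour of extensions under Verdier duality — are already available in the excerpt. It is worth noting that one should verify the identification on Betti realisations first (where $\D$ and the extension rules are classical and one may invoke the unnumbered proposition of \cite{L1} identifying these extensions with log de Rham complexes) and then lift to $\msm(Y)_K$ using conservativity of $\rat$ and the fact that $\D$ and $\Psi_f$ in $\msm$ are compatible with their topological counterparts via $\rat$.
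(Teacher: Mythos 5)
Your overall strategy coincides with the paper's: reduce to an identity on $X^*$ by combining $\Psi_f[-1]\circ\D\cong\D\circ\Psi_f[-1]$ with the fact that $\D_{X^*}$ interchanges the $!$- and $*$-extensions while turning $\ml(-\eta)$ into $\ml(\eta)$ (up to shift and twist), and then compare the two resulting iterated extensions component by component. (The paper dualises the right-hand side and shows it equals $\mc{K}(-\eta)[2\dim Y]$; you dualise $\mc{K}(-\eta)$ directly — these are equivalent, and the shift bookkeeping works out exactly as you anticipate.)

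However, the justification you give for the final identification of extensions is wrong, on two counts. Write out both prescriptions: $\D_{X^*}$ of the object underlying $\mc{K}(-\eta)$ is, up to shift and twist, the extension of $\ml(\eta)$ that is $*$ across the components $E_b$ with $\op{Res}_{E_b}\eta\in\Z_{<0}$ and $!$ across all the others, whereas $\tilde{k}'_*\tilde{j}'_!\ml(\eta)$ is $!$ across the components with $\op{Res}_{E_b}\eta\in\Z_{\geq 0}$ and $*$ across all the others. These two prescriptions literally agree on every component with \emph{integer} residue; they disagree (one says $!$, the other $*$) exactly on the components where $\op{Res}_{E_b}\eta\notin\Z$ — not, as you claim, on the components with non-negative integer residue. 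Second, and more seriously, your reason for why $!$ and $*$ may be interchanged is backwards: for a rank one local system around a smooth divisor, the map $j_!\mc{L}\to j_*\mc{L}$ is an isomorphism precisely when the local monodromy is \emph{non-trivial}; when the monodromy is trivial the two extensions genuinely differ (already $j_!K_U\neq j_*K_U$ for $U$ the complement of a smooth divisor). Your stated principle — that $!$ and $*$ agree across a component where $\op{Res}\eta$ is a non-negative integer because the monodromy there is trivial — is therefore false, and if applied it would produce wrong answers elsewhere (the distinction between the $!$- and $*$-extensions on the integer-residue components is exactly what makes $V$ and $V'$, hence $\mc{F}$ and $\mc{G}$, different). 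As it happens the components where the two prescriptions actually disagree are precisely those with non-integer residue, where the monodromy of $\ml(\eta)$ is non-trivial, so the identification does hold — but for the opposite reason to the one you give. Correct the residue bookkeeping in your step (3) and replace the "trivial monodromy" justification by the non-trivial-monodromy one; with that fix your argument matches the paper's.
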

\begin{proof} We may verify the desired statement after taking Verdier
  duals, i.e., it suffices to show that
  $\mc{K}(-\eta)[2\dim
  Y]=\D_Y(\Psi_f(\tilde{k}'_*\tilde{j}'_{!}\ml(\eta)))$.  Now, since
  $\Psi_f[-1]$ commutes with Verdier duality,
   $$\D_Y(\Psi_f(\tilde{k}'_*\tilde{j}'_{!}\ml(\eta)))=\Psi_f\D_{X^*}( \tilde{k}'_*\tilde{j}'_{!}\ml(\eta))[-2]$$
    
    But, $\D_{X^*}(\tilde{k}'_*\tilde{j}'_{!}\ml(\eta))= \tilde{k}'_!\tilde{j}'_* \D_{U}\ml(\eta)$, and 
    $\D_U(\ml(\eta))=\ml(-\eta)[2\dim X]$. Therefore 
    $$\D_Y(\Psi_f(\tilde{k}'_*\tilde{j}'_{!}\ml(\eta)))=\Psi_f\tilde{k}'_!\tilde{j}'_{*}\ml(-\eta)[2\dim
    X-2]$$
    
    Now $\dim Y=\dim X-1$, and it therefore suffices to show that
    $\tilde{k}'_!\tilde{j}'_{*}\ml(-\eta)$ is isomorphic to the object
    whose nearby cycles gives $\mc{K}(-\eta)$. Since the residues for
    $-\eta$ are negatives of the residues for $\eta$, the desired
    identification holds by examination of the definitions of $V$ and
    $V'$ and \cite[Proposition 2.2]{saito-form}. Note that along a
    divisor $E_i$ where the residue of $\eta$ is zero,
    $\tilde{k}'_!\tilde{j}'_{*}\ml(-\eta)$ is a lower star extension.
  \end{proof}

Since $V \subset V'$, we have a canonical map
$\tilde{k}'_*\tilde{j}'_{!}\ml(\eta) \to 
\tilde{k}_*\tilde{j}_{!}\ml(\eta)$
which, by Lemma \ref{l:vd}, gives rise to a canonical map
\begin{equation}\label{e:kk'}
	\mc{F}\lra \mc{G} .
\end{equation}

\begin{remark}\label{determined}
  By two applications of adjunction, maps
  $\tilde{k}'_*\tilde{j}'_{!}\ml(\eta) \to
  \tilde{k}_*\tilde{j}_{!}\ml(\eta)$ are determined by their
  restrictions to $U$, i.e, maps $\ml(\eta)\to \ml(\eta)$.
\end{remark}

Let $a_1,a_2,\dots,a_s \in \Q$ be the distinct residues of $\eta$
along the irreducible components of $Y$.  We assume that the $a_i$ are
ordered so that $$a_1>a_2>\dots>\dots>a_s.$$ For $a \in \Q$, let $Y^a$
be the union of the $Y_{\alpha}$ such that
$\Res_{Y_{\alpha}}(\eta) = a$. We then have
$$Y^{\leq a_s}\subseteq\dots\subseteq Y^{\leq a_{r+1}}\subseteq
Y^{\leq a_{r}}\subseteq \dots \subseteq Y^{\leq a_1}=Y_0^{\leq
  a_1}=Y, $$ where $Y^{\leq a}= \cup_{b\leq a} Y^b$. Let
$Y^{\geq a} =\cup_{b\geq a} Y^b$.  For convenience set
$a_{s+1}=-\infty$, $a_0=\infty$. Hence
$Y^{\leq a_{s+1}}=Y^{\geq a_0}=\emptyset$, and
$Y^{\geq a_s}=Y^{\leq a_{0}}=X$.
\begin{defi}
  For $a \in \Q$ let $\mc{G}^{\leq a}$ be the restriction of $\mc{G}$
  to $Y^{\leq a}_0$ (see Definition \ref{basicD}) and let
  $\mj^{\leq a}$ be the extension by zero of $\mc{G}^{\leq a}$ to
  $Y$. Note that $\mc{G}^{\leq a_1}= \mj^{\leq a_1}=\mc{G}$.
\end{defi}

Let $j: Y^{a_r}_0\to Y^{\leq a_r}_0$, $j':Y^{\leq a_{r+1}}_0\to Y_0^{\leq a_{r}}$ and
$\tilde{j}: Y^{\leq a_r}_0\to Y$
be the inclusions. 
By Lemma \ref{stillholds}, we have distinguished triangles
$$ j'_!j'^*\mc{G}^{\leq a_r}\lra  \mc{G}^{\leq a_r}\lra   j_*j^* \mc{G}^{\leq
	a_r}\leto{[1]}
$$ for $r=1,\dots,s$. By applying
$\tilde{j}_!$ to each term we then get distinguished triangles
\begin{equation}\label{e:dt}
  \mj^{\leq a_{r+1}}\lra \mj^{\leq
    a_r}\lra \tilde{j}_!j_*j^* \mc{G}^{\leq a_r}\leto{[1]} .
\end{equation}
The object $\tilde{j}_!j_*j^* \mc{G}^{\leq
  a_r}$ is the pushforward to $Y$ of an object
$\ma^{a_r}$ supported on the closed subvariety $Y^{a_r}$.

We therefore rewrite \eqref{e:dt} as
\begin{equation}\label{exacto}
	\mj^{\leq a_{r+1}}\lra  \mj^{\leq a_r}\lra   \ma^{a_r}\leto{[1]} .
\end{equation}
Thus  $\mc{G}$ is a successive extension of objects
$\ma^{a_r}$ supported on the closed
$Y^{a_r}$.  We can carry out the same constructions for $\mc{F}$ (again using Lemma \ref{stillholds}), to obtain that
$\mc{F}$ is a successive extension of objects
$\ma'^{a_r}$ supported on $Y^{a_r}$
\begin{equation}\label{exactoprime}
	\mj'^{\leq a_{r+1}}\lra  \mj'^{\leq a_r}\lra   \ma'^{a_r}\leto{[1]} .
\end{equation}

\begin{remark}\label{descriptive}
  Here is a description of $\ma^{a_r}$ that we will use later: We
  restrict $\mc{G}$ to $Y^{a_r}_0$, star push it forward to
  $Y^{a_r}\cap Y^{\leq a_r}_0$, and then shriek push forward the
  resulting object to $Y^{a_r}$.  The description of $\ma'^{a_r}$ is
  the same with $\mc{G}$ replaced by $\mc{F}$. By Remark \ref{r:perv}
  and the fact that both star and shriek pushforwards for affine open
  embeddings preserve perverse sheaves (\cite[Corollaire 4.1.10]{bbd})
  it follows that all these objects in $\db(\msm(Y))_K$ are shifts of
  objects in $\msm(Y)_K$.
  
  Note that as will be shown in the proof of Lemma \ref{dualite}
  below, we would get the same object if we restricted $\mc{G}$ to
  $Y^{a_r}_0$, shriek pushed it forward to
  $Y^{a_r}\cap Y^{\geq a_r}_0$, and star pushed it forward to
  $Y^{a_r}$ (similarly for $\ma'^{a_r}$).
\end{remark}

We can replace $\eta$ by $-\eta$ in all the above constructions to
produce analogous objects and maps.  When necessary we distinguish
these objects by adding $(\eta)$ and $(-\eta)$ to the
notation. Objects without any extra decoration always refer to the
original constructions for $\eta$.

\begin{lemma} \label{dualite}%
  For any $r$, $\ma'^{a_r}(\eta)= \D_Y(\ma^{-a_r}(-\eta))[-2\dim Y]$.
\end{lemma}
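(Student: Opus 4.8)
The plan is to exploit the identity $\mc{F} = \D_Y(\mc{K}(-\eta))[-2\dim Y]$ together with the explicit description of the graded pieces furnished by Theorem~\ref{stillholds}. Unwinding the construction of the filtration (as recalled in Remark~\ref{descriptive}), the graded piece $\ma^{a_r}(\eta)$, viewed as an object supported on the closed subvariety $Y^{a_r}$, is obtained from $\mc{G}|_{Y^{a_r}_0}$ by first applying the star pushforward along the open inclusion $Y^{a_r}_0 \hookrightarrow Y^{a_r}\cap Y^{\le a_r}_0$ and then the shriek pushforward along $Y^{a_r}\cap Y^{\le a_r}_0 \hookrightarrow Y^{a_r}$ (finally a trivial closed pushforward to $Y$); likewise $\ma'^{a_r}(\eta)$ is obtained in the same way from $\mc{F}|_{Y^{a_r}_0}$. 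Observe that here the star is taken along the restrictions to $Y^{a_r}$ of the components of $Y$ of $\eta$-residue $<a_r$, and the shriek along those of $\eta$-residue $>a_r$, together with the $E$-divisors carrying the fixed $!/*$-pattern built into $\mc{L}(\eta)$ (resp.\ its ``$V'$-variant'').

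The next step is to translate the $-\eta$ data. Since the residue of $-\eta$ along each component of $Y$ is the negative of that of $\eta$, we have $Y^{-a_r}(-\eta) = Y^{a_r}(\eta)$ and $Y^{\le -a_r}(-\eta) = Y^{\ge a_r}(\eta)$, while the operation $Z\mapsto Z_0$ of Definition~\ref{basicD} depends only on the set of components and is therefore unchanged. Hence, by the description above applied with $\eta$ replaced by $-\eta$, the object $\ma^{-a_r}(-\eta)$ on $Y^{a_r}$ is obtained from $\mc{G}(-\eta)|_{Y^{a_r}_0}$ by the star pushforward along $Y^{a_r}_0\hookrightarrow Y^{a_r}\cap Y^{\ge a_r}_0$ followed by the shriek pushforward along $Y^{a_r}\cap Y^{\ge a_r}_0\hookrightarrow Y^{a_r}$. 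Now I would apply $\D_Y[-2\dim Y]$: using that Verdier duality commutes with closed pushforward and (up to shift) with $\Psi_f$, that $\D j_!\simeq j_*\D$ and $\D j_*\simeq j_!\D$ for open immersions $j$, and that $\D_{Y^{a_r}_0}\bigl(\mc{G}(-\eta)|_{Y^{a_r}_0}\bigr)[-2\dim Y] = \mc{F}|_{Y^{a_r}_0}$ (the last because $\D$ commutes with restriction to an open and $\mc{F}=\D_Y(\mc{K}(-\eta))[-2\dim Y]$), one finds that $\D_Y(\ma^{-a_r}(-\eta))[-2\dim Y]$ is obtained from $\mc{F}|_{Y^{a_r}_0}$ by the shriek pushforward along $Y^{a_r}_0\hookrightarrow Y^{a_r}\cap Y^{\ge a_r}_0$ followed by the star pushforward along $Y^{a_r}\cap Y^{\ge a_r}_0\hookrightarrow Y^{a_r}$.

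Comparing this with the description of $\ma'^{a_r}(\eta)$ from the first paragraph, both objects are built from the same sheaf $\mc{F}|_{Y^{a_r}_0}$ by the same assignment of $!$ and $*$ to the strata of $Y^{a_r}$ — star along the components of $\eta$-residue $<a_r$ (and the appropriate $E$-divisors), shriek along those of residue $>a_r$ — the only discrepancy being the order in which the two commuting groups of pushforwards are performed. Order-independence of such compositions of star and shriek pushforwards along normal-crossings strata is exactly the content of Lemma~\ref{l:ext}(1), together with its mixed-sheaf version (Remark~\ref{r:ext}); this yields the asserted equality. As a byproduct, running the same argument with $\mc{F}$ replaced by $\mc{G}$ establishes the alternative description of $\ma^{a_r}$ promised in Remark~\ref{descriptive}.

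The main obstacle is that $Y^{a_r}$ need not be smooth: distinct irreducible components of $Y$ can share the same $\eta$-residue, in which case $Y^{a_r}$ is a reducible normal-crossings variety and Lemma~\ref{l:ext} cannot be cited verbatim. I would handle this by noting that the required order-independence is \'etale-local on $Y^{a_r}$ and that $(X, Y\cup E)$ is \'etale-locally a product of coordinate-hyperplane arrangements, on each smooth branch of which Lemma~\ref{l:ext} applies directly; alternatively, one restricts to the open strata of $Y^{a_r}$ on which at most one component of $Y^{a_r}$ is present and combines this with the complete d\'evissage of $\mc{K}$ provided by Theorem~\ref{stillholds} (Remark~\ref{devissage}). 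A secondary point, routine but worth checking carefully, is the compatibility of the trivial closed pushforwards $Y^{a_r}\hookrightarrow Y$ with all of the above, so that the identification is obtained globally on $Y$ rather than merely on $Y^{a_r}$.
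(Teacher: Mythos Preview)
Your argument is correct and follows essentially the same route as the paper's proof. The paper proves the equivalent statement $\ma'^{-a_r}(-\eta)=\D_Y(\ma^{a_r}(\eta))[-2\dim Y]$ and handles the order-independence by showing that both candidate extensions have vanishing stalks on $Y^{a_r}\cap Y_i\cap Y_j$ with $a(i)>a_r$, $a(j)<a_r$ (using exactly the d\'evissage of Remark~\ref{devissage} together with Lemma~\ref{l:ext}), which is precisely the workaround you outline for the non-smoothness of $Y^{a_r}$; your observation that the same argument with $\mc{G}$ in place of $\mc{F}$ yields the alternative description in Remark~\ref{descriptive} is also how the paper uses it.
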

\begin{proof}
  We will show $\ma'^{-a_r}(-\eta)= \D_Y(\ma^{a_r}(\eta))[-2\dim Y]$.
	
  On $Y^{a_r}_0$, $\ma'^{-a_r}(-\eta)$ is the pullback of
  $\mc{F}(-\eta)= D(\mc{K}(\eta))[-2\dim Y]$.  Also, on this open set
  $\ma^{a_r}(\eta)$ is the pull back of $\mc{K}(\eta)$. Therefore we
  have an identification on $Y^{a_r}_0$.
	
  Let us recall the definition of $\ma^{a_r}(\eta)$: we pull back
  $\mc{G}(\eta) = \Psi_f (\tilde{k}_*\tilde{j}_{!}\ml(\eta))\in
  \db(\msm(Y))_K$ to $Y^{a_r}_0$ and then lower star push forward to
  $Y^{\leq a_r}_0\cap Y^{a_r}$ and lower shriek push forward to
  $Y^{a_r}$.  Therefore $\D_Y(\ma^{a_r}(\eta))[-2\dim Y]$ is the pull
  back of
  $$\D_Y(\Psi_f (\tilde{k}_*\tilde{j}_{!}\ml(\eta)))[-2\dim Y]= \Psi_f (\tilde{k}_!\tilde{j}_{*}\ml(-\eta))$$ to 
  $Y^{a_r}_0$, which we lower shriek push forward to
  $Y^{\leq a_r}_0(\eta)\cap Y^{a_r}$ and then lower star push forward
  to $Y^{a_r}$.  Clearly $Y^{a_r}_0(\eta)= Y_0^{-a_r}(-\eta)$.  This
  shows the desired equality, up to choices of orders of extensions
  which we analyze below.
	
  For simplicity let $U= Y^{a_r}_0$,
  $V = Y^{\leq a_r}_0(\eta)\cap Y^{a_r}$, $W= Y^{a_r}$ and let
  $V'= Y^{\geq a_r}_0(\eta)\cap Y^{a_r}$.  Let $\mc{F}$ be the
  restriction of $\Psi_f (\tilde{k}_!\tilde{j}_{*}\ml(-\eta))$ to $U$.
  Using Lemma \ref{l:vd}, we have
  \begin{equation}\label{formules}
    \D_Y(\ma^{a_r}(\eta))[-2\dim Y]= j_{V,W,*}jj_{U,V,!}\mc{F},  \ \ \ma'^{-a_r}(-\eta)= j_{V',W,!}jj_{U,V',*}\mc{F} 
  \end{equation}
  (the notation $j_{U,V}$, etc., is self-explanatory).
	
  To show the independence from choices of extensions, we just need to
  show that both objects have zero stalks at points of the closed
  $W \sm V'\subseteq W$. This is sufficient because the restrictions
  of both objects to $V'$ are isomorphic. In fact, we need to show the
  vanishing of stalks only at points of $Y^{a_r}\cap Y_i\cap Y_j$ with
  $a(i)>a_r$, $a(j)<a_r$.
	
  The vanishing of stalks for $\ma'^{-a_r}(-\eta)$ follows easily from
  the second formula in \eqref{formules}.  For
  $\D_Y(\ma^{a_r}(\eta))[-2\dim Y]$, using Remark \ref{devissage}, we
  can write $\mc{F}$ as a successive extension of objects on $Y_i$
  (which are themselves pushed forward from $(Y_i)_0$ by a sequence of
  lower star and shriek functors), and the first formula in
  \eqref{formules}.  The verification reduces to Lemma \ref{l:ext}
  after applying $\rat$.
\end{proof}

\subsection{A de Rham description of $\rat(\ma^a)$.}

The following lemma, which gives a de Rham description of
$\rat_Y(\mc{A}^a)$, is a key ingredient in our proof of motivic
factorisation for invariants.

\begin{lemma}\label{comparison}
  There exists an isomorphism
  $A(\eta)^a\cong \rat_Y(\ma^{a})\tensor_K \mb{C}$ in $\dbc(Y^a, \C)$.
\end{lemma}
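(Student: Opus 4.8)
The plan is to reduce Lemma \ref{comparison} to a twisted, relative‑normal‑crossings version of Steenbrink's de Rham description of nearby cycles, together with the local stalk computations of Section \ref{localo}. The guiding observation is that both sides of the desired isomorphism are reconstructed from their restrictions to the open stratum $Y^a_0$ by the \emph{same} recipe: by Remark \ref{descriptive}, $\ma^a$ is obtained from $\mc{G}|_{Y^a_0}$ by the lower‑star pushforward to $Y^a\cap Y^{\leq a}_0$ followed by the lower‑shriek pushforward to $Y^a$ (which adds back the components of $Y$ of residue $<a$ as a star extension and those of residue $>a$ as an extension by zero); and Propositions \ref{prop:local1} and \ref{prop:local2} show that $A(\eta)^a$ is obtained from $A(\eta)^a|_{Y^a_0}$ by precisely the same star/shriek prescription. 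Hence it suffices to identify $\rat_Y(\mc{G})|_{Y^a_0}\tensor_K\C$ with $A(\eta)^a|_{Y^a_0}$ and then apply $\rat_Y$ to the recipe of Remark \ref{descriptive}.

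For the first step I would argue as in the proof of Lemma \ref{clarity}, which is the special case $\eta = 0$, $E=\emptyset$, but now in the twisted setting. By the results of Looijenga \cite{L1} together with Deligne's canonical extensions, $\tilde{k}_*\tilde{j}_!\mc{L}(\eta)$ on $X\setminus Y$ is represented by the analytic logarithmic de Rham complex $(\Omega^{\bullet}_{X\setminus Y}(\log(D\cap(X\setminus Y))), d+\eta\wedge)$; the definitions of $V$, $\tilde{j}$, $\tilde{k}$ are rigged so that this complex computes the correct extension—concretely, its stalk along a component of $E$ is nonzero precisely when the residue of $\eta$ there is a non‑positive integer, which matches $A(\eta)$. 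Invoking the twisted, relative‑normal‑crossings analogues of the results of Steenbrink \cite{steenbrink}, Steenbrink--Zucker \cite{SZ} and El Zein \cite{EZ}, the nearby cycles $\Psi_f$ of this log complex is the relative logarithmic complex $(\Omega^{\bullet}_{X/S}(\log D)\tensor_{\mc{O}_X}\mc{O}_Y, d+\eta\wedge) = A(\eta)$; in practice this is proved by reduction to the local model $X=\dc^{k+\ell+m}$, $f=z_1\cdots z_k$, where both sides are computed explicitly (the complex $A(\eta)$ being the object of Proposition \ref{prop:local1}). Restricting to $Y^a_0$, over which no component of $Y$ of residue $\neq a$ meets, then gives $\rat_Y(\mc{G})|_{Y^a_0}\tensor_K\C\cong A(\eta)|_{Y^a_0} = A(\eta)^a|_{Y^a_0}$ (this last equality also follows from Theorem \ref{locale}).

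Finally, since $\rat_Y$ commutes with lower‑star and lower‑shriek pushforwards, applying it to Remark \ref{descriptive} expresses $\rat_Y(\ma^a)\tensor_K\C$ as the shriek‑pushforward to $Y^a$ of the star‑pushforward to $Y^a\cap Y^{\leq a}_0$ of $A(\eta)^a|_{Y^a_0}$, and it remains to check that this iterated extension is $A(\eta)^a$ itself. This is a direct verification using Propositions \ref{prop:local1} and \ref{prop:local2}, which compute the stalk of $A(\eta)^a$ at every point of $Y^a$: the stalk vanishes whenever a component of $Y$ of residue $>a$, or a component of $E$ along which the residue of $\eta$ is not a non‑positive integer, passes through the point, and otherwise its local structure along a component of residue $<a$ is that of the lower‑star extension from $Y^a_0$ (the class being generated by the monomial $h$ of Proposition \ref{prop:local1}). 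Combining the two steps yields the asserted isomorphism in $\dbc(Y^a,\C)$.

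The main obstacle is the first step: making rigorous the twisted, relative‑normal‑crossings generalisation of Steenbrink's comparison between $\Psi_f$ and the relative logarithmic de Rham complex, and verifying that the star/shriek prescriptions along $E$ are exactly the ones built into $\mc{L}(\eta)$ and $\tilde{k}_*\tilde{j}_!$. Once that identification is in hand, the remainder is bookkeeping with the explicit stalk formulas of Section \ref{localo}.
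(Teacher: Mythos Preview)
Your overall strategy—identify the two objects on an open stratum and then verify they are extended to $Y^a$ by the same sequence of star/shriek pushforwards—is exactly the shape of the paper's proof, and your use of Propositions \ref{prop:local1}, \ref{prop:local2} and Remark \ref{descriptive} for the extension step is correct in outline.

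The difference lies in the first step, which you rightly flag as the obstacle. You try to establish a comparison $\rat_Y(\mc{G})\otimes_K\C\cong A(\eta)$ on all of $Y$ (or at least on $Y^a_0$) by invoking a ``twisted, relative-normal-crossings Steenbrink theorem''; such a statement is not available in the cited literature in the form you need, and stalk computations alone do not furnish a map of complexes. The paper sidesteps this entirely. It works on the smaller open set $Y^a_0\setminus E$ and uses the twist $\eta\mapsto\eta-a\,f^*(dt/t)$. The point is that (i) by Lemma \ref{kunneth}(1), $\Psi_f(\mc{F}\otimes f^*\mc{L})\cong(\Psi_f\mc{F})\otimes L$, so the nearby-cycles side changes only by a constant tensor factor, while (ii) since $f^*(dt/t)=0$ in $\Omega^1_{X/S}(\log D)$, the complex $A(\eta)^a$ is literally unchanged. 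After the twist the residue along every component of $Y^a$ is $0$, so on a neighbourhood of $Y^a_0\setminus E$ the local system $\mc{L}(\eta)$ extends; one is now in the \emph{untwisted} Steenbrink situation (exactly as in Lemma \ref{clarity}), and the identification on $Y^a_0\setminus E$ follows from \cite[\S2]{steenbrink} together with Theorem \ref{locale}. No new comparison theorem is needed.

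Because the identification is made only on $Y^a_0\setminus E$, the $E$-direction must be handled as part of the extension step rather than the identification step. The paper does this with a K\"unneth argument: near a point of $E$ the map $f_X$ locally splits as a product (Lemma \ref{l:normalform}), the $E$-factor sees no singularity of $f_X$, and the vanishing of stalks of $\mc{G}$ along components of $E$ with residue $\notin\Z_{\le 0}$ is inherited from the corresponding vanishing on the general fibre. Your proposal absorbs this into the putative global twisted Steenbrink statement; once you instead restrict to $Y^a_0\setminus E$, you need this K\"unneth step, but it is routine. One further inaccuracy in your extension step: the vanishing locus $D^a\subset Y^a$ consists of the intersections $Y_\alpha\cap Y^a$ with $a-a(\alpha)\notin\Z_{\ge 0}$, not merely those with $a(\alpha)>a$; the distinction matters when residues differ by non-integers, and Propositions \ref{prop:local1}--\ref{prop:local2} give exactly this integrality condition.
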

To make sense of the statement we note that by Lemma \ref{l:perv}
$\rat_Y(\ma^a)$ may be viewed as an object of $\dbc(Y^a,K)$, so we may
base change it using an embedding $K \to \C$ to get an object of
$\dbc(Y,\C)$.
	
\begin{proof}
  Since the $\rat$ functors commute with star and shriek pushforwards,
  nearby cycles and restriction to open sets, we are reduced to
  proving a purely topological statement. I.e., we may assume
  $\msk =\C$ and $K = \C$ and that $\mc{L}(\eta)$ is the $\C$-local
  system associated to a log form $\eta$ on $U$. We then define
  analogs of all objects in this section in $\dbc(X,\C)$ and
  $\db(Y,\C)$ for which we continue to use the same notation and we
  then need to prove that
  \begin{equation}\label{e:2}
    A(\eta)^a\cong \ma^{a} \mbox{ in } \dbc(Y^a, \C) .
  \end{equation}
  We then proceed as follows:
  
  Recall that Remark \ref{descriptive} gives a description of
  $\ma^{a}$ in terms of its restriction to $Y^a_0$.
		
  The objects appearing in \eqref{e:2} lie in $\dbc(Y^a,\C)$ and we
  first show that they are isomorphic over $Y_0^a-E$.
  The case $a\neq 0$ follows from $a=0$ by considering
  $\eta - f^*(a dt/t)$ and applying (1) of Lemma \ref{kunneth}. Note
  that $f^*(dt/t)=0$ in $A(\eta)^a$.  Therefore, without loss of
  generality, we assume that $a=0$. Then $\eta$ defines a local system
  on $Y^a_0-E$, and the computations of Steenbrink \cite[Section
  2]{steenbrink} together with Theorem \ref{locale} imply the lemma on
  $Y_0^a-E$; we recall that Steenbrink shows that when $\eta=0$,
  nearby cycles have a de Rham description as $A(\eta)$. The proof
  will be completed by showing that both objects are extended from
  $Y_0^a -E$ to $Y^a$ in the same way.
		
  To do this let us mark out a divisor $D^a\subset Y^a$. The
  irreducible components of $D^a$ are of two types. The first type are
  horizontal divisors $E_j$ such that the residue of $\eta$ along
  $E_j$ are not in $\Bbb{Z}_{\leq 0}$. The second type is
  $Y_{\alpha} \cap Y^a$ such that
  $a-a(\alpha)\not\in \Bbb{Z}_{\geq 0}$, where
  $a(\alpha) = \Res_{Y_{\alpha}}(\eta)$ is the residue of $\eta$ along
  $Y_{\alpha}$.
		
  We claim
  \begin{enumerate}
  \item[(A)] $\ma^a$ and $A(\eta)^a$ have zero stalks at points of
    $D^a$.
  \item[(B)] Restricted to $Y^a-D^a$, both sheaves are star push
    forwarded from $Y_0^a -(E\cup D^a) = Y_0^a - E$ where they are
    isomorphic as shown above.
  \end{enumerate}
  It is easy to see that the lemma follows from these claims.
		
  Claim (A) for $A(\eta)^a$ follows from Propositions
  \ref{prop:local2} and \ref{prop:local1}. For $\ma^a$, the vanishing
  at points of the second type follows from Remark
  \ref{descriptive}. At points of the first type, we use a Kunneth
  argument since the divisors $E_j$ contribute to a Kunneth component
  which does not see the singularities, and we have a corresponding
  vanishing along $E_j$ at points of the general fibre over $S^*$ (of
  the object whose nearby cycles is $\mc{K}$).
		
  Claim (B) for $\ma^a$ follows from Remark \ref{descriptive}. For
  $A(\eta)^a$, we have generators of the stalk at $P\in Y^a-D^a$.  We
  just have to chase these generators to a local de Rham cohomology
  group of $Y^a_0- E$. The restriction of $A(\eta)^a$ admits a
  devissage since it is isomorphic there to $K(\eta)$. The computation
  can thus be made in a $Y_{\alpha}^0-E$, with $Y_{\alpha}$ in $Y^a$ using the
  remark below.
\end{proof}
	
\begin{remark}\label{locality}%
  If $h$ is an invertible function on an open subset $U$ of $Y^a_0$,
  then the complexes $A(\eta+dh/h)$ and $A(\eta)$ are isomorphic over
  $U$. The map is by multiplication by $h$. This allows us to reduce
  to $\eta=0$ in some local situations.
\end{remark}
	
\subsection{Degenerations of hyperplane arrangements}\label{s:hyp}

We continue with the notation from the previous section but make some
further assumptions:
\begin{enumerate}
\item $f:X \to \A^1_{\msk}$ is proper over some $S \subset \A^1_{\msk}$ containing $0$.
\item For all $s \in S^*$, $f^{-1}(s) - E$ is the complement of a
  hyperplane arrangement in $\A^M_{\msk}$ for some $M$ and the form $\eta$ is
  the form corresponding to the arrangement (as in Section
  \ref{s:ac}).
\end{enumerate}
Under these assumptions we have:
\begin{lemma}\label{l:nearby2}
The sum $\sum_{a\in \R} h^k(Y,A(\eta)^a)$ remains constant
	under scaling of $\eta$.
\end{lemma}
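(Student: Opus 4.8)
The plan is to identify $\bigoplus_{a \in \R} A(\eta)^a$ with the nearby cycles complex $\mc{K}(\eta)$ (more precisely, with $\rat_Y$ of it, base-changed to $\C$) via Theorem \ref{locale} and Lemma \ref{comparison}, and then to compute the hypercohomology $\sum_k h^k(Y, \mc{K}(\eta))$ using properness of $f$ and the base-change description of nearby cycles. Concretely, by Theorem \ref{locale} we have $A(\eta) \simeq \bigoplus_{a} i^a_* A(\eta)^a$ in $\dpl(Y)$, so $\sum_a h^k(Y, A(\eta)^a) = h^k(Y, A(\eta))$ for each $k$; thus it suffices to show that the complex $A(\eta)$ on $Y$ has hypercohomology that is independent of scaling $\eta \mapsto c\eta$ for $c \in \msk^\times$. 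Since the $\rat$ functors are conservative and compatible with all the operations in play, I may pass to the topological setting $\msk = K = \C$ and work with the $\C$-local system $\mc{L}(\eta)$ directly.

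First I would recall that by the devissage of Section \ref{s:A} (equations \eqref{exacto}), $\mc{G} = \mc{K}(\eta)$ is a successive extension of the objects $\ma^{a_r}$ supported on $Y^{a_r}$, and by Lemma \ref{comparison} we have $\rat_Y(\ma^{a_r}) \otimes_K \C \cong A(\eta)^{a_r}$ in $\dbc(Y^{a_r}, \C)$. Summing over $r$ and using additivity of hypercohomology in distinguished triangles, $\sum_r \dim H^*(Y, A(\eta)^{a_r})$ differs from $\dim H^*(Y, \mc{K}(\eta))$ only by the alternating-sum ambiguity inherent in extensions; but since the statement only concerns $\sum_{a,k} h^k$ without signs, I will instead argue directly at the level of the total complex. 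The cleanest route: by Remark \ref{r:bc} (the analogue of Lemma \ref{kunneth}(3) using assumption (1) that $f$ is proper over a neighbourhood of $0$), for $s \in S^*$ sufficiently close to $0$ one has
\[
  H^*(Y, \Psi_f\,\mc{L}(\eta)) \;\cong\; H^*(f^{-1}(s) - E,\, \mc{L}(\eta)|_{f^{-1}(s)-E})
\]
where, here, one must be slightly careful to use the correct boundary conditions $V, V'$ on $E$ — but since assumption (2) says $f^{-1}(s)-E$ is the complement of a hyperplane arrangement in $\A^M_\msk$ with $\eta$ the associated log form, the relevant cohomology on the generic fibre is (a direct summand of) the cohomology of $\mc{L}(\eta)$ on such a complement, which by Lemma \ref{l:topo} is computed by the Aomoto complex $(A^\bullet(f^{-1}(s)-E), \eta\wedge)$.

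The key point — already flagged in the paper's own methods section — is that \emph{Aomoto cohomology is invariant under scaling the weight function $a$ by an element of $\msk^\times$} (this is explicit in Section \ref{s:ac}, right after Definition \ref{d:aomoto}). Replacing $\eta$ by $c\eta$ replaces the weight function on the fibre arrangement by $ca$, and the constant $N$ and the relevant boundary sets $B, B'$ all change in a controlled way, but the dimensions $\dim_\msk H^p(A^\bullet, c\eta\wedge)$ are unchanged. Combined with the identification above and Theorem \ref{locale}, this gives $\sum_a h^k(Y, A(c\eta)^a) = \dim_\C H^k(f^{-1}(s)-E, \mc{L}(c\eta)) = \dim_\C H^k(f^{-1}(s)-E, \mc{L}(\eta))$ for each $k$, which is the claim. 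The main obstacle I anticipate is bookkeeping the precise boundary conditions: the objects $\mc{K}(\eta)$ and $\ma^a$ in Section \ref{s:A} are defined using the partial compactifications $V, V'$ that remove exactly those components of $E$ along which the residue of $\eta$ is a positive (resp.\ non-negative) integer, and these loci \emph{do} depend on $c$; one must check that the corresponding images of cohomology (as opposed to the full cohomology) still have dimensions matching the Aomoto cohomology dimensions on the scaled arrangement — but this is precisely the content of Proposition \ref{FSVCor} / Lemma \ref{l:topo} applied fibrewise with weights $ca$, so no new input is needed beyond carefully tracking which version of $V$ on the generic fibre corresponds to which version of $Y^a_0$, $E$, etc. Once that dictionary is set up, the scaling invariance of Aomoto cohomology closes the argument.
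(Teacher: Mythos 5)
The reduction via Theorem \ref{locale} to showing that $h^k(Y, A(\eta))$ is invariant under scaling is correct and is also the first step of the paper's proof, but your second step has a genuine gap, and it is a circularity. You want to identify $H^k(Y, A(\eta))$ with the cohomology of the nearby fibre (i.e., with the Aomoto cohomology of the fibre arrangement) via nearby cycles and Remark \ref{r:bc}. However, the only comparison available between $A(\eta)$ and nearby cycles is componentwise: Lemma \ref{comparison} gives $A(\eta)^a \cong \rat_Y(\ma^a)\otimes_K\C$, where the $\ma^a$ are the \emph{graded pieces} of the filtration \eqref{exacto} of $\mc{K}(\eta) = \Psi_f(\tilde{k}_*\tilde{j}_{!}\ml(\eta))$. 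Hence all you get a priori is $h^k(Y,A(\eta)) = \sum_a h^k(Y,\ma^a) \geq h^k(Y,\mc{K}(\eta))$, with equality only if the long exact sequences attached to the filtration degenerate. That degeneration is precisely the content of Proposition \ref{bigeq1}, whose proof invokes Lemma \ref{l:nearby2}, so you cannot assume it here. You flag the ``alternating-sum ambiguity'' yourself, but the proposed fix --- arguing ``at the level of the total complex'' via Remark \ref{r:bc} --- does not work: Remark \ref{r:bc} applies to $\Psi_f\mc{F}$, and no quasi-isomorphism between $A(\eta)$ and $\Psi_f\mc{F}$ is available (nor claimed anywhere in the paper); only the direct sum of the graded pieces is identified with $\bigoplus_a A(\eta)^a$.

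The paper's actual argument never leaves the central fibre. It uses the coherent vanishing $R^if_*(\Omega^k_{X/S}(\log D))=0$ for $i>0$ (deduced from the fibrewise vanishing of \cite[Section 4.1]{L1} together with \cite[(5.7)]{SZ}) and the $d$-closedness of the global sections $H^0(Y,\Omega^{k}_{X/S}(\log D)\tensor_{\mc{O}_X}\mathcal{O}_Y)$ to conclude that $H^k(Y,A(\eta))$ is computed by the complex of global sections with differential $\eta\wedge$ alone, as in \eqref{inspect}; scaling invariance is then immediate, since the underlying graded vector space is fixed and only the differential is rescaled. If you want to keep your route, you would first have to prove the degeneration $\sum_a h^k(Y,\ma^a) = h^k(Y,\mc{K}(\eta))$ independently of this lemma --- which the paper only obtains afterwards, via the small-scaling perturbation in the proof of Proposition \ref{bigeq1}.
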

\begin{proof} 
  By Theorem \ref{locale},
  $\sum _{a\in \mb{R}} h^k(Y^{a}, A(\eta)^a)= h^k(Y,
  A(\eta))$. Therefore we need to show that $h^k(Y,A(\eta))$ remains
  constant under scaling of $\eta$.  The higher direct images for the
  map $f|_{X^*}: X^*\to S^*$ of $\Omega^k_{X^*/S^*}(\log D)$ are zero
  since on each fibre we have such vanishing \cite[Section 4.1]{L1}.
  This implies the vanishing $R^if_*(\Omega^k_{X/S}(\log D)) = 0$ for
  $i>0$ (see \cite[(5.7)]{SZ}), and
  $\Omega^{\bull}_{X/S}(\log D)\tensor_{\mc{O}_X} \mathcal{O}_Y$ is a complex of
  sheaves whose individual terms have vanishing higher cohomology. The
  global sections
  $H^0(Y,\Omega^{\bull}_{X/S}(\log D)\tensor_{\mc{O}_X} \mathcal{O}_Y)$ are
  $d$-closed (by \cite[ Section 5.7]{SZ})). Therefore
  \begin{equation}\label{inspect}
    H^k(Y, A(\eta))\ =\frac{\ker {\eta\wedge: H^0(Y, \Omega^{k}_{X/S}(\log D)\tensor_{\mc{O}_X}
        \mathcal{O}_Y) \lra  H^0(Y, \Omega^{k+1}_{X/S}(\log D)\tensor_{\mc{O}_X}
        \mathcal{O}_Y)}}{\eta\wedge H^0(Y, \Omega^{k-1}_{X/S}(\log D)\tensor_{\mc{O}_X}
      \mathcal{O}_Y)} ,
  \end{equation}
  hence the ranks of $H^k(Y,A(\eta))$ remain invariant under scaling
  of $\eta$.
\end{proof}

\section{Motivic factorisation for invariants}\label{kzf}

\subsection{A rank inequality}

We continue with the notation of Section \ref{s:A}, but now assume
that $X \leto{g} W\leto{f} \A^1_{\msk}$, $f_X = f \circ g$, $\eta$, etc., are
as in Section \ref{s:dsd} (so we have fixed $A \subset [n]$ but
dropped it from the notation) except that now we have no assumptions
on the levels of the $\lambda_i$ and $\nu$; we will refer to these
assumptions as the ``KZ setting''. Recall that $g_Y:Y\to Z$ denotes
the restriction of $g$ to $Y$.

Applying the constructions of Section \ref{s:A} to this data we get
objects $\tilde{k}'_*\tilde{j}'_{!}\ml(\eta)$ and
$\tilde{k}_*\tilde{j}_{!}\ml(\eta)$ in $\db(\msm(X^*))$, their nearby
cycles $\mc{F}$, $\mc{G}$ in $\db(\msm(Y))$, etc.  The fibres of
$\tilde{k}'_*\tilde{j}'_{!}\ml(\eta)$ and
$\tilde{k}_*\tilde{j}_{!}\ml(\eta)$ over any point of $\A^1_{\msk}$
correspond to the objects on the two sides of the map in Lemma
\ref{p:kz}. We therefore get:
\begin{lemma}\label{l:image}
  The image of the map
  \begin{equation}\label{imago1}
    H^M(g_{Y,*}\mc{F})^{\Sigma,\chi}\lra H^M(g_{Y,*} \mc{G})^{\Sigma,\chi}
  \end{equation}
  is the nearby cycles for the KZ local system on the dual of the
  space of invariants (as in Theorem \ref{t:fact}).
\end{lemma}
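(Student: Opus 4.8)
The plan is to unwind the construction of $\mathcal{KZ}_{\kappa}(\vec{\lambda},\nu^*)$ restricted to $W_A \sm Z_A$ in terms of the family $g:X_A \to W_A$ of Section~\ref{fibers}, and then to interchange $\Psi_f$ with each operation used in that construction. First I would note that, by the geometry established in Section~\ref{fibers} and Lemma~\ref{l:map2}, the restriction of $g$ over $W_A \sm Z_A$ together with the divisor $E_A$ is exactly a datum of the kind used in Section~\ref{s:kzmls} to define the KZ mixed local system: a smooth projective morphism equipped with a relative strict normal crossings divisor whose complement is the family of hyperplane-arrangement complements attached to the master function. Since $\mathcal{KZ}_{\kappa}(\vec{\lambda},\nu^*)$ does not depend on the choice of compactification (proper base change and the usual diagonal argument), Definition~\ref{d:kzm} identifies $\mathcal{KZ}_{\kappa}(\vec{\lambda},\nu^*)'$ with the image of
\[
  H^M\bigl(\bar g_{*}\,\tilde{k}'_{*}\tilde{j}'_{!}\ml(\eta)\bigr)^{\Sigma,\chi} \lra H^M\bigl(\bar g_{*}\,\tilde{k}_{*}\tilde{j}_{!}\ml(\eta)\bigr)^{\Sigma,\chi},
\]
where $\bar g: X^* \to W_A \sm Z_A$ is the restriction of $g$ and $\tilde{k}_{*}\tilde{j}_{!}\ml(\eta)$, $\tilde{k}'_{*}\tilde{j}'_{!}\ml(\eta)$ are the objects of Section~\ref{s:A}; here one uses that the shriek-versus-star conventions governing $V$, $V'$ there match, over $W_A \sm Z_A$, the index subsets $B$, $B'$ of Section~\ref{s:kzmls}, so that $\bar g_{*}\tilde{k}_{*}\tilde{j}_{!}\ml(\eta)$ and $\bar g_{*}\tilde{k}'_{*}\tilde{j}'_{!}\ml(\eta)$ recover the target and source of \eqref{eq:3}.

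The second step is to apply $\Psi_f$ and move it inwards, in parallel with the argument for conformal blocks in Section~\ref{s:dsd}. Since $\Psi_f$ is exact, it commutes with formation of images of morphisms in $\msm(W_A \sm Z_A)_K$, with the $(\Sigma,\chi)$-isotypical projection, and with the cohomology functor $H^M$ (the shift built into $\Psi_f$ as normalised in Section~\ref{s:mixte} is compensated by the difference in relative dimension between $W_A$ and $Z_A$; after applying $\rat$ this reduces to the evident compatibility of topological nearby cycles with the $M$-th cohomology sheaf of an object lisse away from the zero fibre). Next, Lemma~\ref{l:pbc}, i.e.\ base change of nearby cycles along the proper morphism $g$, applied to $\tilde{k}_{*}\tilde{j}_{!}\ml(\eta)$ and $\tilde{k}'_{*}\tilde{j}'_{!}\ml(\eta)$ (which are supported away from the zero fibre of $f_X = f\circ g$), gives $\Psi_f\bigl(\bar g_{*}\tilde{k}_{*}\tilde{j}_{!}\ml(\eta)\bigr) \simeq g_{Y,*}\,\Psi_{f_X}\bigl(\tilde{k}_{*}\tilde{j}_{!}\ml(\eta)\bigr) = g_{Y,*}\mc{G}$, and likewise $\Psi_f\bigl(\bar g_{*}\tilde{k}'_{*}\tilde{j}'_{!}\ml(\eta)\bigr) \simeq g_{Y,*}\mc{F}$, using the defining equalities $\mc{G} = \Psi_{f_X}(\tilde{k}_{*}\tilde{j}_{!}\ml(\eta))$ and $\mc{F} = \Psi_{f_X}(\tilde{k}'_{*}\tilde{j}'_{!}\ml(\eta))$ from Section~\ref{s:A} and Lemma~\ref{l:vd}. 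Assembling, $\Psi_f$ of the image defining $\mathcal{KZ}_{\kappa}(\vec{\lambda},\nu^*)'$ is exactly the image of \eqref{imago1}; and since by Proposition~\ref{p:kz} and Definition~\ref{d:kzm} the local system $\mathcal{KZ}_{\kappa}(\vec{\lambda},\nu^*)$ is the KZ local system on the dual $\mb{A}(\vec{\lambda},\nu^*)^*$ of the space of invariants, this image is its nearby cycles, as asserted.

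The content here is entirely formal, so the work lies in the bookkeeping rather than in any single hard step; the most delicate point is (i) confirming that the moduli-of-curves family $X_A$ is a legitimate compactification in the sense of Section~\ref{s:kzmls} over $W_A \sm Z_A$ and that the conventions cutting out $V$, $V'$ (equivalently the subsets $B$, $B'$) agree between the two constructions, though this is essentially packaged by Section~\ref{fibers} and Lemma~\ref{l:map2}. The remaining points --- (ii) that the shift conventions for nearby cycles (Saito's $\psi_f$, the topological $\Psi_f$ of Definition~\ref{d:nearby}, and the relative-dimension-shifted functors $H^M$ on $W_A$ versus $Z_A$) conspire to give $\Psi_f\circ H^M = H^M\circ \Psi_f$, and (iii) that Lemma~\ref{l:pbc} remains valid for objects defined only on the complements $X^*$, $W_A \sm Z_A$ of the zero fibres (immediate, since $\Psi$ of an object on $X_A$ depends only on its restriction to $X^*$) --- I do not expect to present any genuine obstacle.
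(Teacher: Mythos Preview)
Your proposal is correct and follows essentially the same route as the paper: the paper treats the lemma as immediate from the constructions, noting just before the statement that the fibres of $\tilde{k}'_*\tilde{j}'_{!}\ml(\eta)$ and $\tilde{k}_*\tilde{j}_{!}\ml(\eta)$ recover the two sides of the map in Proposition~\ref{p:kz}, and in the parallel conformal-blocks argument of Section~\ref{s:dsd} spells out the same chain of reductions you give (exactness of $\Psi_f$, proper base change via Lemma~\ref{l:pbc}, compatibility with $H^M$ and the $(\Sigma,\chi)$-projection). Your write-up is more detailed than the paper's, but the underlying argument is the same.
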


The KZ local system may not be semisimple, or have semisimple local
monodromies, so we do not expect a direct sum decomposition as in the
case of conformal blocks (Section \ref{cbf}).

In the KZ setting, the distinguished triangles \eqref{exacto} and
\eqref{exactoprime}, give  the following commutative diagram, equivariant for the action of $\Sigma$ with exact rows:
\begin{equation}\label{d:small}
  \xymatrix{
    (H^k(g_{Y,*}\mj'^{\leq a_{r+1}}))\ar[d]\ar[r] & (H^k(g_{Y,*} \mj'^{\leq a_r}))\ar[r]\ar[d] & (H^k(g_{Y,*} \ma'^{a_r}))\ar[d] \\
    (H^k(g_{Y,*}\mj^{\leq a_{r+1}}))\ar[r] & (H^k(g_{Y,*} \mj^{\leq a_r}))\ar[r] & (H^k(g_{Y,*}\ma^{a_r}))
  }
\end{equation}
The rows are in fact short exact sequences by the following lemma which is proved in Section \ref{p:exact} (so that the first map in each row  is an injection and the second map a surjection):
\begin{lemma}\label{l:exact}
The following are exact for all $k$:
\begin{equation}\label{e:newexact11prime}
  0\lra  H^k (g_{Y,*}\mj'^{\leq a_{r+1}})\lra  H^k(g_{Y,*}\mj'^{\leq a_r})\lra   H^k(g_{Y,*}
  \ma'^{a_B})\lra  0 
\end{equation}
and
\begin{equation}\label{e:newexact11}
  0\lra  H^k (g_{Y,*} \mj^{\leq a_{r+1}})\lra  H^k (g_{Y,*}\mj^{\leq a_r})\lra
  H^k(g_{Y,*}\ma^{a_r})\lra  0 .
\end{equation}
\end{lemma}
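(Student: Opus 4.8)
The two exact sequences \eqref{e:newexact11prime} and \eqref{e:newexact11} are formally parallel, so I will focus on \eqref{e:newexact11} and indicate how the argument for \eqref{e:newexact11prime} is obtained by the symmetry $\eta \leftrightarrow -\eta$ together with Verdier duality (Lemma \ref{dualite}). The starting point is the distinguished triangle \eqref{exacto}, namely $\mj^{\leq a_{r+1}}\to \mj^{\leq a_r}\to \ma^{a_r}\overset{[1]}{\to}$ in $\db(\msm(Y))_K$. Applying the proper pushforward $g_{Y,*}$ and then the cohomology functors $H^k$ automatically gives a long exact sequence, so the content of Lemma \ref{l:exact} is the \emph{connecting maps vanish}, equivalently that each $H^k(g_{Y,*}\mj^{\leq a_{r+1}})\to H^k(g_{Y,*}\mj^{\leq a_r})$ is injective (dually, each $H^k(g_{Y,*}\mj^{\leq a_r})\to H^k(g_{Y,*}\ma^{a_r})$ is surjective). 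First I would reduce to the Betti realisation: by conservativity and faithful exactness of $\rat$, it suffices to prove the corresponding statement after applying $\rat_Z$ and base-changing to $\C$, so we work with constructible $\C$-sheaves and can use the de Rham/Steenbrink machinery.

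The key numerical input is that the \emph{total} rank is already controlled. On the one hand, by Theorem \ref{locale} and Remark \ref{r:bc}, $\sum_r H^k(g_{Y,*}\ma^{a_r})$ computes $H^k$ of the object $\tilde k_*\tilde j_!\ml(\eta)$ restricted to a nearby fibre $f^{-1}(s)$, which is (the relevant cohomology of) an Aomoto-type complex on a hyperplane-arrangement complement; its ranks are computed by Aomoto cohomology, and by Lemma \ref{l:nearby2} these ranks are invariant under scaling $\eta$. On the other hand $H^k(g_{Y,*}\mc{G}) = H^k(g_{Y,*}\mj^{\leq a_1})$ likewise has rank equal to the corresponding Aomoto cohomology rank, again invariant under scaling. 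The strategy is: the long exact sequences coming from the filtration $\mj^{\leq a_{s+1}}=0 \subset \mj^{\leq a_s}\subset \dots \subset \mj^{\leq a_1}=\mc{G}$ give, for each $k$, an inequality $\operatorname{rk} H^k(g_{Y,*}\mc{G}) \geq \sum_r \bigl(\text{contribution of }\ma^{a_r}\text{ that survives}\bigr)$, while the equality of total ranks forces all connecting maps to be zero and all the sequences \eqref{e:newexact11} to be short exact — this is exactly the mechanism of Lemmas \ref{LA1}, \ref{LA2} and \ref{subquotient}. To make this precise I would, for a generic scaling $\eta'=c\eta$ (with $c$ chosen so that no residue difference along components of $Y$ is a nonzero integer, hence the filtration strata $Y^{a}$ are in ``general position'' with respect to integrality), use the de Rham description $\rat_Y(\ma^{a_r})\otimes\C \cong A(\eta)^{a_r}$ from Lemma \ref{comparison} and the vanishing of higher direct images $R^if_*\Omega^k_{X/S}(\log D)=0$ for $i>0$ (from \cite[(5.7)]{SZ}, as used in the proof of Lemma \ref{l:nearby2}) to identify $H^k(g_{Y,*}\ma^{a_r})$ with a subquotient of an explicit finite-dimensional space of logarithmic forms, and similarly for $\ma'^{a_r}$; the generic-scaling trick is what makes the limit mixed Hodge-theoretic vanishing results of Steenbrink \cite{steenbrink}, Steenbrink--Zucker \cite{SZ}, and El Zein \cite{EZ} applicable to show the relevant $E_1$-degeneration-type statements.

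Concretely, the steps in order: (1) reduce to $\msk = K = \C$ via $\rat$; (2) replace $\eta$ by a generic scalar multiple $\eta'$, noting by Lemma \ref{l:nearby2} and the scaling-invariance of Aomoto cohomology that all the ranks in sight are unchanged, while the filtration behaves more simply (the ``better behaviour under the filtrations'' alluded to in the introduction's methods-of-proof paragraph); (3) for the scaled form, use the explicit de Rham complexes $A(\eta')$ and $A(\eta')^a$, the direct-sum decomposition of Theorem \ref{locale}, and the higher-direct-image vanishing to show that the spectral sequence associated to the filtration $\mj^{\leq a_r}$ of $\mc{G}$ degenerates at $E_1$ in degree $M$ (equivalently, the connecting maps vanish), giving short-exactness of \eqref{e:newexact11}; (4) do the dual argument for $\eta'\to-\eta'$ using Lemma \ref{dualite} and $\Psi_f$ commuting with Verdier duality up to shift, to get \eqref{e:newexact11prime}; (5) finally transport back from $\eta'$ to $\eta$: the exactness of a four-term complex of local systems of the given (scaling-invariant) ranks is a closed condition detected by ranks alone, so it holds for $\eta$ as well. \textbf{The main obstacle} I anticipate is step (3): proving the $E_1$-degeneration / vanishing of connecting maps for the filtered object $\mc{G}$ at the relevant cohomological degree. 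This is not a formal consequence of the six-functor formalism (the filtration only gives a priori long exact sequences, and the general nearby-cycles formalism gives only limited exactness, as the authors emphasise), so it genuinely requires the interplay between the de Rham complexes $A(\eta)^a$, the vanishing theorems of Steenbrink--Zucker and El Zein for the logarithmic de Rham complex of a semistable degeneration, and the clever use of generic scaling of $\eta$ to decouple the strata — getting the bookkeeping of residues and the compatibility of the de Rham filtration with $g_{Y,*}$ exactly right will be where the real work lies.
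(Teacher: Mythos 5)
Your proposal is correct and follows essentially the same route as the paper: reduce to a rank equality via $\rat$ and the elementary Lemmas \ref{LA1}--\ref{LA2}, dispose of one of the two sequences by the duality of Lemma \ref{dualite}, and prove the rank equality by scaling $\eta$ and using that both sides are scaling-invariant (Remark \ref{r:bc} and Section \ref{s:ac} for the left side, Lemma \ref{comparison} and Lemma \ref{l:nearby2} for the right). The one place where the paper is more concrete than your step (3): for $0<c\ll 1$ no $E_1$-degeneration theorem is needed, because the residue differences $c(a_r-a_{r'})$ become non-integral and the stalk computations (Proposition \ref{prop:local1} and the vanishing of stalks of $\mc{G}$ at intersections of distinct $Y^{a}$) force the triangles \eqref{exacto} to split outright --- the Steenbrink--Zucker input enters only through Lemma \ref{l:nearby2}, applied after restricting to a curve in $W$ transversal to $Z$ at a general point.
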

We give the proof at the end of this subsection.

\smallskip

The exact sequences \eqref{e:newexact11prime} and \eqref{e:newexact11}
are equivariant for the action of $\Sigma$. Therefore, taking
isotypical components in these sequences for $k=M$, we get a
commutative diagram with exact rows,
\begin{equation}\label{bigdiagram}
	\xymatrix{
		0\ar[r] & (H^M(g_{Y,*}\mj'^{\leq a_{r+1}}))^{\Sigma,\chi}\ar[r]\ar[d] & (H^M(g_{Y,*} \mj'^{\leq a_r}))^{\Sigma,\chi}\ar[r]\ar[d] & (H^M(g_{Y,*} \ma'^{a_r}))^{\Sigma,\chi}\ar[r]\ar[d] & 0\\
		0\ar[r] & (H^M(g_{Y,*}\mj^{\leq
                  a_{r+1}}))^{\Sigma,\chi}\ar[r] & (H^M(g_{Y,*}
                \mj'^{\leq a_r}))^{\Sigma,\chi}\ar[r] &
                (H^M(g_{Y,*}\ma'^{a_r}))^{\Sigma,\chi}\ar[r] & 0 \,.
	}
\end{equation}
where the vertical maps are induced from \eqref{e:kk'} and functoriality of
the various derived functors associated to the maps $j,j'$ and $\tilde{j}$ above. 

\smallskip

Combining \eqref{bigdiagram} and Lemma \ref{LA2}, we get a lower bound
for the rank of the map in \eqref{imago1}.
\begin{corollary}\label{acarol}
  \begin{multline*} 
    \rk\bigl(H^M(g_{Y,*}\mc{F}))^{\Sigma,\chi}\lra (H^M(g_{Y,*} \mc{G}))^{\Sigma,\chi}\bigr) \geq\\
    \sum_{a\in \Q} \rk \bigl(\im (H^M(g_{Y,*},
    \ma'^a))^{\Sigma,\chi}\lra (H^M(g_{Y,*} \ma^a))^{\Sigma,\chi})\bigr).
  \end{multline*}
\end{corollary}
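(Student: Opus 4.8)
The plan is to obtain the inequality by a downward induction on $r$, feeding the filtration diagram \eqref{bigdiagram} into Lemma \ref{LA2} one stage at a time. First I would fix the bookkeeping: the rational numbers $a_1 > a_2 > \dots > a_s$ exhaust the residues $\Res_{Y_\alpha}(\eta)$ of $\eta$ along the components of $Y$; by convention $\mj^{\leq a_{s+1}} = \mj'^{\leq a_{s+1}} = 0$ (since $Y^{\leq a_{s+1}} = \emptyset$), while $\mj^{\leq a_1} = \mc{G}$ and $\mj'^{\leq a_1} = \mc{F}$. Since $\Sigma = \Sigma_{\beta}$ is finite and all the cohomology groups in question are $\Q$-vector spaces, passage to the $(\Sigma,\chi)$-isotypical component is exact, so the rows of \eqref{bigdiagram} are honest short exact sequences for every $r$; this is precisely what Lemma \ref{l:exact} provides.

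Next, for each $r \in \{1,\dots,s\}$ I would apply Lemma \ref{LA2} to \eqref{bigdiagram} at level $r$. Write $I_r$ for the rank of the image of the middle vertical map at level $r$, set $I_{s+1} = 0$, and write $J_r$ for the rank of the image of the right vertical map there, i.e.\ of $H^M(g_{Y,*}\ma'^{a_r})^{\Sigma,\chi} \to H^M(g_{Y,*}\ma^{a_r})^{\Sigma,\chi}$. Since the left vertical map of \eqref{bigdiagram} at level $r$ is literally the middle vertical map at level $r+1$, Lemma \ref{LA2} yields $I_r \geq I_{r+1} + J_r$. Iterating downward from $I_{s+1} = 0$ to $r = 1$ gives $I_1 \geq \sum_{r=1}^{s} J_r$.

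Finally I would match the two sides with those in the statement. On the left, $\mj'^{\leq a_1} = \mc{F}$ and $\mj^{\leq a_1} = \mc{G}$, so $I_1$ is exactly the rank of the map \eqref{imago1}, which by Lemma \ref{l:image} is the rank of the nearby-cycles map for the KZ local system on the dual of the space of invariants. On the right, for every $a \in \Q$ outside $\{a_1,\dots,a_s\}$ there is no component $Y^a$, so $\ma^a = \ma'^a = 0$ and the corresponding image has rank zero; hence the finite sum $\sum_{r=1}^s J_r$ agrees with the sum over all $a \in \Q$ appearing in the corollary.

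I do not expect any genuine obstacle in this argument: it is pure homological bookkeeping once \eqref{bigdiagram} is available with exact rows. The substantive input lies entirely upstream — Lemma \ref{l:exact}, asserting exactness of \eqref{e:newexact11prime} and \eqref{e:newexact11} for all $k$, which is where the $\Sigma$-equivariant geometry of the central fibre $Y$ and the behaviour of the pushforward $g_{Y,*}$ really come in — and that is carried out in Section \ref{p:exact}.
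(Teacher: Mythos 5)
Your proof is correct and takes essentially the same route as the paper, which obtains the corollary simply by "combining \eqref{bigdiagram} and Lemma \ref{LA2}"; your downward induction on $r$, with $I_r \geq I_{r+1} + J_r$ telescoping from $I_{s+1}=0$, is exactly the explicit form of that iteration, and your reductions (exactness of rows via Lemma \ref{l:exact} and exactness of taking $(\Sigma,\chi)$-isotypical components, vanishing of $\ma^a$ for $a \notin \{a_1,\dots,a_s\}$) are all as intended.
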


We now find subquotients of the images of
$H^M(g_{Y,*}\ma'^a)^{\Sigma,\chi}\to H^M(g_{Y,*}\ma^a)^{\Sigma,\chi}$ by the
same method as in the case of conformal blocks. These subquotients
include all terms that appear in the factorisation of invariants, so
we will get equalities throughout.

\smallskip

Recall from Definition \ref{whyi} the indexing set $\ms{I}$ for the
irreducible components of $Y$ and the indexing set $\ms{O}$ for the
orbit representatives.
\begin{defi}
  $ $
  \begin{enumerate}
  \item Let $\mci_a\subset \mci$ be the set of $B$, such that
    $Y_B\in Y^a$, i.e., $Y^a=\cup_{i\in \mci ^a} Y_B$. The set
    $\mci_a$ is preserved by the action of $\Sigma_{\beta}$. Let
    $\mco_a=\mci_a\cap \mco \subseteq\mco$ be the set of orbit
    representatives.
  \item For $i\in \mci^a$, let
    $$Y_B^*=Y_B \setminus \Bigl (\bigcup_{j\in \mci^a, \,
      B'\neq B } Y_{B'} \Bigr ) \ ;$$ this is an open subset of $Y^a$
    (but possibly not open in $Y$). Note that
    $Y_B^*\supseteq (Y_B)_0$.  The difference
    $Y_B^* \setminus (Y_B)_0$ is contained in the union of the
    intersections $Y_B\cap Y_{B'}$ with $a(B)\neq a(B')$, where for
    any $B \in \mci$, $a(B)$ is the residue of $\eta$ along $Y_B$. Let
    $k_B:Y_B^*\to Y_B$ and $k'_B: Y_B\to Y$ be the inclusions.
  \end{enumerate}
\end{defi}

\begin{lemma}
  \begin{multline}\label{inegalite111}
    \rk\bigl(\im (H^M(g_{Y,*}\mc{F})^{\Sigma,\chi}\lra H^M(g_{Y,*} \mc{G})^{\Sigma,\chi})\bigr)\geq\\
    \sum_{B\in \mco_a} \rk  \bigl(\im(H^M(g_{B,*} k_{B,!} (k'_B\circ
    k_B)^*\ma'^a))^{\Sigma_B,\chi_B}\lra H^M(g_{B,*} k_{B,_*}(k'_B\circ
    k'_B)^*\ma^a)^{\Sigma_B,\chi_B})\bigr).
	\end{multline}
\end{lemma}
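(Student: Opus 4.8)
The plan is to adapt the d\'evissage argument already used for conformal blocks in Section~\ref{cbf}, applied here to the objects $\ma^a$ and $\ma'^a$ rather than to $\mc{F}$ and $\mc{G}$ directly. The starting point is Corollary~\ref{acarol}, which bounds the rank of the map in \eqref{imago1} from below by $\sum_{a\in\Q}\rk\bigl(\im(H^M(g_{Y,*}\ma'^a)^{\Sigma,\chi}\to H^M(g_{Y,*}\ma^a)^{\Sigma,\chi})\bigr)$. So it suffices to bound each summand, i.e. to show that for each fixed residue value $a$,
\[
  \rk\bigl(\im(H^M(g_{Y,*}\ma'^a)^{\Sigma,\chi}\to H^M(g_{Y,*}\ma^a)^{\Sigma,\chi})\bigr)
  \geq \sum_{B\in\mco_a}\rk\bigl(\im(\cdots)^{\Sigma_B,\chi_B}\bigr),
\]
the right-hand side being exactly the sum appearing in \eqref{inegalite111}. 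Summing over $a$ and combining with Corollary~\ref{acarol} then yields the claimed inequality.

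First I would recall that $\ma^a$ and $\ma'^a$ are objects of $\db(\msm(Y^a))_K$ (pushed forward to $Y$), supported on the closed subvariety $Y^a=\cup_{B\in\mci_a}Y_B$, together with a canonical map $\ma'^a\to\ma^a$ coming from \eqref{e:kk'}. The open sets $Y_B^*$ for $B\in\mci_a$ are disjoint open subsets of $Y^a$, so the situation is precisely the one set up in Section~\ref{s:elementary}: with $U_i=Y_B^*$, $Y_i=Y_B$, $g_i=g_B$, $\mc{F}=\ma'^a$ and $\mc{G}=\ma^a$, Lemma~\ref{quotientie} produces a commutative square
\[
  \xymatrix{
    \bigoplus_{B\in\mci_a} H^M(g_{B,*}\,k_{B,!}(k'_B\circ k_B)^*\ma'^a)\ar[r]\ar[d] & H^M(g_{Y,*}\ma'^a)\ar[d]\\
    \bigoplus_{B\in\mci_a} H^M(g_{B,*}\,k_{B,*}(k'_B\circ k_B)^*\ma^a) & H^M(g_{Y,*}\ma^a)\ar[l]
  }
\]
(using $k'_{B,*}=k'_{B,!}$ for the closed inclusions $k'_B:Y_B\to Y$, and the fact that $k'_B\circ k_B:Y_B^*\to Y^a\hookrightarrow Y$ factors through $Y^a$). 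Taking $(\Sigma,\chi)$-isotypical components and rewriting $\bigl(\bigoplus_{B\in\mci_a}(\cdots)\bigr)^{\Sigma,\chi}=\bigoplus_{B\in\mco_a}(\cdots)^{\Sigma_B,\chi_B}$ exactly as in the passage from \eqref{diagramcb} to \eqref{diagramcb2}, I get a commutative square of $K$-mixed local systems on $Z$ whose left vertical map has image $\bigoplus_{B\in\mco_a}\im\bigl((\cdots)^{\Sigma_B,\chi_B}\bigr)$. Then Lemma~\ref{subquotient} applied to this square shows that $\bigoplus_{B\in\mco_a}\im(\cdots)^{\Sigma_B,\chi_B}$ is a subquotient of $\im\bigl(H^M(g_{Y,*}\ma'^a)^{\Sigma,\chi}\to H^M(g_{Y,*}\ma^a)^{\Sigma,\chi}\bigr)$, giving the desired rank inequality for each $a$. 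Summing over $a$ and feeding the result into Corollary~\ref{acarol} completes the argument.

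The one point requiring care — and the step I expect to be the main obstacle — is verifying the commutativity of the diagram furnished by Lemma~\ref{quotientie} in this particular instance, namely that the relevant composites $k'_{B,*}k_{B,*}(k'_B\circ k_B)^*\ma^a\to k'_{B',*}k_{B',!}(k'_{B'}\circ k_{B'})^*\ma'^a$ vanish for $B\neq B'$ and are tautological for $B=B'$. This is formal once one knows that $k_{B'}^*k_{B,!}$ is zero for $B\neq B'$ and the identity for $B=B'$, which holds because the $Y_B^*$ are pairwise disjoint open subsets of $Y^a$; this can be checked after applying $\rat$, reducing to the elementary topological statement, just as in the proof of Lemma~\ref{quotientie}. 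A secondary subtlety is that $\ma^a$, $\ma'^a$ are only shifts of objects of $\msm(Y^a)_K$ (Remark~\ref{descriptive}), so one must make sure the cohomology functors $H^M$ and isotypical-component projectors interact with the shift correctly; but this is handled uniformly by the conventions already fixed, and the argument is otherwise identical in structure to the conformal-blocks case of Section~\ref{parameteri}.
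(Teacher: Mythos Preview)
Your proposal is correct and follows essentially the same route as the paper: invoke Corollary~\ref{acarol}, then for each residue value $a$ apply Lemma~\ref{quotientie} (with the disjoint opens $Y_B^*\subset Y^a$) together with Lemma~\ref{subquotient} and the passage to $(\Sigma_B,\chi_B)$-isotypical components, and sum over $a$. The ``points requiring care'' you flag are already absorbed into the statement and proof of Lemma~\ref{quotientie}, so no extra work is needed there.
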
  
\begin{proof}
  By applying Lemma \ref{quotientie} we get an inequality
  \begin{multline*}%
    \rk\bigl(\im (H^M(g_{Y,*}\ma'^{a}))^{\Sigma,\chi}\lra (H^M (g_{Y,*}\ma^a))^{\Sigma,\chi})\bigr)\geq \\
    \sum_{B\in \mco_a} \rk\bigl(\im H^M(g_{B,*}(k_B)_! (k'_B\circ
    k_B)^*\ma'^a)^{\Sigma_B,\chi_B} \lra H^M(g_{B,*} Rk_{B,*}(k'_B\circ
    k_B)^*\ma^a)^{\Sigma_B,\chi_B})\bigr) .
  \end{multline*}
  The lemma follows by combining this with Corollary \ref{acarol}.
\end{proof}

\subsubsection{Proof of Lemma \ref{l:exact}}\label{p:exact}

This reduces to a statement about ranks by breaking up the three term
sequence into two exact sequences (using that if $\rat$ applied to a
$K$-mixed local system is $0$ then the $K$-mixed local system is
zero). Therefore for the verification, we may apply $\rat$ to all
objects under consideration and work in the topological category.
Thus, as in the proof of Lemma \ref{comparison}, we may assume that
that $\msk = \C$, $K = \C$ and we can consider exact analogoues of all
$K$-mixed local sheaves considered above in $\dbc(X,\C)$, etc.  By
duality and Lemma \ref{dualite}, the exactness of \eqref{e:newexact11}
follows from the exactness of \eqref{e:newexact11prime} for
$-\eta$. We therefore only need to prove the exactness of
\eqref{e:newexact11prime}. Lemma \ref{l:exact} then follows from:
\begin{proposition}\label{bigeq1}
  The following equality of ranks holds:
  \begin{equation*}
    \rk (H^M (g_{Y,*}\mc{G}))=  \sum _{a\in \Q}\rk (H^M(g_{Y,*} \ma^{a}))\,.
  \end{equation*}
\end{proposition}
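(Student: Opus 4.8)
The plan is to compute both sides as ranks of the degree--$M$ cohomology of a single object, namely (a relative version of) the Aomoto/log--de~Rham complex $A(\eta)$ from Section~\ref{s:mnb}, and to use the direct sum decomposition of Theorem~\ref{locale} to split the right-hand side. First I would apply the Betti realisation functor: since the functors $\rat$ are faithful, exact and conservative, and commute with proper pushforwards and with the cohomology functors $H^M$, the asserted equality of ranks can be verified after applying $\rat_Z$ and extending scalars from $K$ to $\C$. We are then in the purely topological situation of Section~\ref{s:hyp} --- $f_X\colon X\to\A^1_{\msk}$ together with the fibration $g\colon X\to W$, with a general fibre of $f_X$ minus $E$ being a hyperplane-arrangement complement with associated $1$-form $\eta$ --- and both sides will be identified with the fibrewise dimension of the Aomoto cohomology $H^M(A^{\starr}(U_s),\eta\wedge)$ of that general arrangement.

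For the right-hand side: by Lemma~\ref{comparison} there is an isomorphism $\rat_{Y^a}(\ma^{a})\otimes_K\C\cong A(\eta)^{a}$ in $\dbc(Y^a,\C)$, so by properness of $g$ and proper base change the rank of $H^M(g_{Y,*}\ma^{a})$ equals the fibrewise degree--$M$ hypercohomology dimension of $A(\eta)^{a}$ over $Z$. Summing over $a$ and invoking Theorem~\ref{locale}, which identifies $A(\eta)$ with $\bigoplus_{a}i^{a}_{*}A(\eta)^{a}$, we get that $\sum_{a}\rk H^M(g_{Y,*}\ma^{a})$ equals the fibrewise degree--$M$ hypercohomology dimension of $A(\eta)$ itself; by formula~\eqref{inspect} from the proof of Lemma~\ref{l:nearby2} (and the comparison of the Aomoto complex with its log--de~Rham avatar, Lemma~\ref{l:topo}) this is exactly $\dim_{\C}H^M(A^{\starr}(U_s),\eta\wedge)$.

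For the left-hand side: by construction $\mc{G}=\Psi_{f_X}(\tilde{k}_*\tilde{j}_{!}\ml(\eta))$, and the fibres of $\tilde{k}_*\tilde{j}_{!}\ml(\eta)$ over points of $\A^1_{\msk}$ are precisely the targets of the maps in Proposition~\ref{p:kz}; hence, via the nearby-cycles base change of Remark~\ref{r:bc} applied along the fibration, $\rk H^M(g_{Y,*}\mc{G})$ equals $\dim_{\C}H^M(V_s,j_{!}\ml(\eta))$ for a general fibre, which by Lemma~\ref{l:topo} is again $\dim_{\C}H^M(A^{\starr}(U_s),\eta\wedge)$. This gives the equality. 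Alternatively, and more conceptually, one can show directly that $\rat_Y(\mc{G})\otimes_K\C\cong A(\eta)$ in $\dbc(Y,\C)$, exactly in the spirit of the proof of Lemma~\ref{comparison}: on $X\setminus Y$ the object $\rat(\tilde{k}_*\tilde{j}_{!}\ml(\eta))$ is represented by the analytic log--de~Rham complex with differential $d+\eta\wedge$ (the unnumbered proposition on \cite[p.~10]{L1}), and applying $\Psi_{f_X}$ to this representative yields $A(\eta)$ by Steenbrink's de~Rham description of nearby cycles \cite[\S2]{steenbrink}, the relative strict normal crossings properties of Lemma~\ref{l:map2} ensuring this works in families over $Z$.

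The main obstacle is this last identification on the $\mc{G}$-side: one must match $A(\eta)$, which is built from the \emph{full} boundary $D=E\cup Y$, with $\Psi_{f_X}$ of $\tilde{k}_*\tilde{j}_{!}\ml(\eta)$, which is a shriek extension across the components of $E$ of strictly positive integral residue and a star extension across the others --- the very convention that separates $\ma^{a}$ from $\ma'^{a}$ (cf.~Remark~\ref{descriptive}) --- and to do so uniformly over $Z$. Finally, I would note that the extension triangles \eqref{exacto} give for free, via the long exact sequences, the inequality $\rk H^M(g_{Y,*}\mc{G})\le\sum_{a}\rk H^M(g_{Y,*}\ma^{a})$, so in fact only the reverse inequality needs the de~Rham input; and once the equality of Proposition~\ref{bigeq1} is established, the long exact sequences attached to \eqref{exacto} and \eqref{exactoprime} are forced to break up into short exact sequences in degree $M$, which is precisely the content of Lemma~\ref{l:exact}.
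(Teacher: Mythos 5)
Your overall strategy is the paper's: reduce to a topological statement at a general point $P$ of $Z$ (via a transversal slice), identify the left side with the Aomoto cohomology of a nearby fibre using Remark \ref{r:bc} and Lemma \ref{l:topo}, and identify the right side with $h^M(Y_P,A(\eta))$ using Lemma \ref{comparison} and Theorem \ref{locale}. The easy inequality $\rk H^M(g_{Y,*}\mc{G})\leq\sum_a\rk H^M(g_{Y,*}\ma^a)$ from the triangles \eqref{exacto} and Lemma \ref{LA1}, and the observation that the equality then forces Lemma \ref{l:exact}, are both correct. The gap is in the single remaining step: the claim that $h^M(Y_P,A(\eta))$ \emph{equals} $\dim_{\C}H^M(A^{\starr}(U_s),\eta\wedge)$ does not follow from formula \eqref{inspect} together with Lemma \ref{l:topo}. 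Formula \eqref{inspect} exhibits $h^M(Y_P,A(\eta))$ as the cohomology of the fibre at $0$ of the complex of vector bundles $(f_*\Omega^{\starr}_{X/S}(\log D),\eta\wedge)$, whose fibre at a general $s$ computes the Aomoto cohomology; but the cohomology of the fibres of such a complex is only upper semicontinuous, so this yields $h^M(Y_P,A(\eta))\geq\dim H^M(A^{\starr}(U_s),\eta\wedge)$, i.e.\ $\sum_a\rk H^M(g_{Y,*}\ma^a)\geq\rk H^M(g_{Y,*}\mc{G})$ --- the \emph{same} direction as your easy inequality. As written, your argument proves one inequality twice and never the other.

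The paper closes this gap with a scaling argument that your proposal omits (you cite Lemma \ref{l:nearby2} but use only \eqref{inspect} from its proof, not the scale-invariance which is its actual content): both $h^k(Y_P,\mc{G}(c\eta))$ (equal to the Aomoto cohomology of a nearby fibre, which is insensitive to scaling the weights) and $\sum_a h^k(Y_P,\ma^a(c\eta))=h^k(Y_P,A(c\eta))$ (by Lemma \ref{comparison}, Theorem \ref{locale} and \eqref{inspect}) are independent of $c\in\R^{\times}$, and for $0<c\ll 1$ the residues $ca_r$ become pairwise non-congruent modulo $\Z$, the stalks of $\mc{G}(c\eta)$ vanish on the intersections $Y^{a}\cap Y^{a'}$, the triangles \eqref{exacto} split, and the two quantities agree there; hence they agree for $c=1$. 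Your ``more conceptual'' alternative --- an isomorphism $\rat_Y(\mc{G})\otimes_K\C\cong A(\eta)$ in $\dbc(Y,\C)$ --- would not be a repackaging of Lemma \ref{comparison} but a strictly stronger assertion: since $A(\eta)\cong\bigoplus_a i^a_*A(\eta)^a\cong\bigoplus_a\rat_Y(\ma^a)\otimes_K\C$ is the associated graded of the filtration of $\mc{G}$ by the $\mj^{\leq a_r}$, it would say that $\mc{G}$ splits as the direct sum of its graded pieces, which the paper neither proves nor needs (it establishes only the degeneration of the resulting spectral sequence in degree $M$, and precisely via the scaling argument above).
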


\begin{proof}
  Let $P$ be a general point of $W$. By the proper base change
  theorem, the rank of $H^M(g_{Y,*} \ma^{a})_P$ is the dimension of
  $H^M(Y_P, \ma^a_P)$.
    
  We want to apply Lemma \ref{l:nearby2}, so we pass to a family
  parametrised by an open subset of $\A^1_{\msk}$ as follows. Let
  $S\to W$ be a section of $f:W\to \Bbb{A}^1_{\C}$, where $S$ is an
  open subset of $\A^1_{\C}$ containing $0$, and assume that that the
  image of $S$ meets $Z$ transversally at $P$ corresponding to
  $0\in S$; the fact that such a section exists follows from the
    explicit description of $W$ and $f$ in Section \ref{s:statement}.
  By taking the fibre product, we obtain a map ${X}_S\to S$ with the
  fibre over $0$ being $Y_P$.  By Lemma \ref{l:normalform}, $X_S\to S$
  (together with the restriction of $\eta$ to $X_S$) has all the
  properties needed to define corresponding objects ${\ma}_S^a$, etc.,
  in $\dbc(Y_P,\C)$. We now note that $\ma^a_P=({\ma}^a_S)_P$ as
  objects in $\dbc(Y_P,\C)$ (since they are both extended from
  $(Y_P)_0 \sm E$ in the same way, see Remark \ref{descriptive} and
  Lemma \ref{comparison}).
  
  Note that if we replace $\eta$ by $c\eta$ for some
  $c \in \R^{\times}$ and consider the analogues of the objects
  defined above for $c\eta$, then $h^k(Y_P,\mc{G})$ does not change
  since it is equal to the dimension of the cohomology of an Aomoto
  complex on a nearby fibre (Remark \ref{r:bc}) and this is
  independent of scaling (Section \ref{s:ac}). Also, if $0 < c \ll 1$,
  then $0<|ca_r-ca_{r'}| <1$ for all $r\neq r'$.  In this case the
  distinguished triangles \eqref{exacto} (for $c\eta$) split since
  stalks of $\mc{G}$ at intersections of the $Y^a$ are zero by
  Proposition \ref{prop:local1}, so we will have the desired
  equality. Thus, we only need to show that
  $\sum _{a\in \Q}\rk H^k(\ma^{a}_P)$ is invariant under scaling
  $\eta$, which follows from Lemma \ref{comparison} and Lemma
  \ref{l:nearby2}.
\end{proof}

\subsection{Factorisation for invariants}
Using associativity in the representation ring, we have
$$\rk (V_{\lambda_1}\tensor \dots\tensor V_{\lambda_n}\tensor V_{\nu}^*)_{\mf{g}}= \sum_{\mu\in P^+}\rk(\tensor_{i\in A}V_{\lambda_i}\tensor V_{\mu}^*)_{\mf{g}} \rk(\tensor_{i\in [n]\setminus A}V_{\lambda_i}\tensor V_{\mu}\ \tensor V_{\nu}^*)_{\mf{g}},$$ and hence the rank of 
$\mathcal{KZ}_{\kappa}(\vv{\lambda},\nu^*)$ equals (using notation
from Theorem \ref{t:fact})
\begin{equation}\label{summandKZ0}
  \sum_{\mu\in P^+}\rk\bigl( \mc{KZ}_{\kappa}(\vv{\lambda}'', \mu^*)
  \tensor_K 
  \mc{KZ}_{\kappa}(\vv{\lambda}'_{\mu}, \nu^*)\big) ,
\end{equation}

By Lemma \ref{l:image}, $\mathcal{KZ}_{\kappa}(\vv{\lambda},\nu^*)$
equals the rank of the image of
$H^M(g_{Y,*}\mc{G})^{\Sigma,\chi}\to H^M(g_{Y,*}\mc{K})^{\Sigma,\chi}$.  We
now give a bijection $\lambda\mapsto i$ from the set of dominant
integral weights contributing nonzero terms in \eqref{summandKZ0} to
a subset of $\mathscr{O}$. This extends the map in Section
\ref{parameteri} (in which $\lambda$ was a subset of level $\ell$
weights).

Assume that the summand in \eqref{summandKZ0} corresponding to $\mu$
is nonzero.  This implies that $\mu+\sum_{a\in A^c}\lambda_a -\nu$
and $\sum_{a\in A}\lambda_a -\mu$ are non-negative integer
combinations of simple roots (since the corresponding spaces of
invarinats are nonzero).  In fact it is easy to see that there exists
a unique $B\in\mathscr{O}$ so that equations \eqref{e111} and
\eqref{e222} hold. Let $a=a(B)\in (1/N)\Z/\Z$.

\smallskip { We now note that the constructions above can be
  carried out with $\mc{F}$ and $\mc{G}$ which are nearby cycles
  $\Psi_{f_X}$ of objects in $\db(\msm(X^*))_K$, replaced by
  $\Psi_{f_X,\bar{c}}$ of the corresponding objects for any
  $\bar{c} \in (1/N)\Z/\Z$. This is because \eqref{e:adj} is an
  isomorphism for a direct sum $\mc{H}$ if and only if it is an
  isomorphism for each summand. In particular, we get direct summands
  $\ma^a(\bar{c})$, and $\ma'^a(\bar{c})$ of $\ma^a$ and $\ma'^a$
  respectively, and maps $\ma'^a(\bar{c})\to\ma^a(\bar{c})$.

  In the same way,
  $\Psi_{f,\bar{c}}\,(\mathcal{KZ}_{\kappa}(\vv{\lambda},\nu^*))$ has
  a subquotient given by the image of
  \begin{equation}\label{newSKZ}
    H^M(g_{B,*} k_{B,!} (k'_B\circ k_B)^*\ma'^a(\bar{c}))^{\Sigma_B,\chi_B}\lra  H^M(g_{B,*} k_{B,_*}(k'_B\circ k'_B)^*\ma^a(\bar{c}))^{\Sigma_B,\chi_B}
  \end{equation}
} 
\begin{lemma}\label{prefinal}
  The image of \eqref{newSKZ} for $B$ corresponding to $\mu$, and
  $\bar{c}=\bar{a}$ ($=\overline{a(B)}$) is the same as the
  corresponding term in Theorem \ref{t:fact}, i.e.,
\begin{equation}\label{newS2}
(\mc{KZ}_{\kappa}(\vv{\lambda}'_{\mu}, \nu^*)
    \boxtimes \iota^*(\mc{KZ}_{\kappa}(\vv{\lambda}'', \mu^*) )) .
\end{equation} 
\end{lemma}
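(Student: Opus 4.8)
The plan is to prove Lemma \ref{prefinal} by mimicking, in the ``KZ setting'', the argument already carried out for conformal blocks in Section \ref{parameteri}, but now paying careful attention to the fact that the relevant objects are genuine morphisms of mixed sheaves (whose images we must control) rather than direct summands. First I would recall that by the choice of $B$ corresponding to $\mu$, the residue $a = a(B)$ equals $a(\vv{\lambda},\mu)$ as in \eqref{formula:a}, that $U_B = U_B^1 \times U_B^2$ is a product, and that $Y_B = P^1_B \times P^2_B$ is a corresponding product of smooth varieties (by \eqref{e:proj}). The key geometric input, exactly as before, is Lemma \ref{product11}: the local system $\ml' = \ml(\eta)\tensor_K Kf^{-a}$ extends over a neighbourhood of $U_B$ and its restriction to $U_B$ is $\ml_1 \boxtimes_K \ml_2$, where $\ml_1$ is the master-function local system for $\mc{KZ}_{\kappa}(\vv{\lambda}'', \mu^*)$ (with $z_{i_0}=0$, $z_{i_1}=1$) and $\ml_2$ is that for $\mc{KZ}_{\kappa}(\vv{\lambda}'_{\mu}, \nu^*)$; correspondingly $\Sigma_B = \Sigma_B^1 \times \Sigma_B^2$ and $\chi_B = \chi_B^1 \boxtimes \chi_B^2$.

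The next step is to identify the two objects appearing in \eqref{newSKZ} as restrictions, pushed to $Y_B$, of external products. By Lemma \ref{l:ls} (applied to the smooth map $f_X$ on $U_B$), the nearby-cycle objects $\mc{F}_{B,\bar a}, \mc{G}_{B,\bar a}$ restricted to $U_B$ are isomorphic to $\ml'|_{U_B}$. Then, exactly as in the conformal-blocks argument, I would show that $k_{B,!}(k'_B\circ k_B)^*\ma'^a(\bar a)$ on $Y_B$ is the \emph{shriek} pushforward of (the object isomorphic to $\ml'$ restricted to) $U_B$ and $k_{B,*}(k'_B\circ k_B)^*\ma^a(\bar a)$ is the \emph{star} pushforward; this is checked after applying $\rat$, using Remark \ref{descriptive} (which describes $\ma'^a$ resp.\ $\ma^a$ by iterated shriek resp.\ star pushforwards), the local normal form of $f_X$ from Lemma \ref{l:normalform}, the fact that $\mc{F}$ is the nearby cycles of an object defined by $\tilde j_!$ near $E$, and Lemmas \ref{kunneth}, \ref{KunnethF}, \ref{l:ext} to kill the stalks at $(Y_B)_0 \cap E$ and at $Y_B \cap Y_{B'}$ with $a(B)\neq a(B')$. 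Having done so, the source of \eqref{newSKZ} becomes $H^M(g_{B,*}$ of the $!$-extension of $\ml_1 \boxtimes_K \ml_2)$, and similarly the target with the $*$-extension; since $Y_B = P^1_B \times P^2_B$ and $g_B = g_B^1 \times g_B^2$ respects the product, the K\"unneth isomorphism for $K$-mixed sheaves (Lemma \ref{l:kf}) lets me write the image of \eqref{newSKZ} as the tensor product over $K$ of the images of the two factorwise maps, and these images, by Definition \ref{d:kzm} (and the genericity/smoothness of $g_B^i$), are exactly $\iota^*(\mc{KZ}_{\kappa}(\vv{\lambda}'', \mu^*))$ and $\mc{KZ}_{\kappa}(\vv{\lambda}'_{\mu}, \nu^*)$. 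This gives precisely the term \eqref{newS2}.

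The main obstacle I anticipate is the bookkeeping of $\Sigma_\beta$-isotypical components and the passage between the ``full'' indexing set $\ms I$ and the orbit representatives $\ms O$: one needs $(\bigoplus_{B\in\ms I_a} H^M(\cdots))^{\Sigma,\chi} = \bigoplus_{B\in \ms O_a} H^M(\cdots)^{\Sigma_B,\chi_B}$ compatibly with the maps, and one must verify that under the product decomposition $\Sigma_B = \Sigma_B^1 \times \Sigma_B^2$ the character $\chi_B$ splits as the sign characters for the two sub-arrangements so that the $\chi_B$-isotypical part of the K\"unneth tensor product matches the product of the $\chi_B^i$-isotypical parts; this is the point where the combinatorial identity at the end of Definition \ref{whyi} (that $B, B'$ are in the same $\Sigma_\beta$-orbit iff $\sum_{b\in B}\beta(b) = \sum_{b\in B'}\beta(b)$) is used, together with the uniqueness of $B \in \ms O$ satisfying \eqref{e111}–\eqref{e222}. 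The actual cohomological identifications, by contrast, are routine once the shriek/star descriptions are in place. Finally, to conclude the factorisation for invariants (completing Theorem \ref{t:fact}), I would combine Lemma \ref{prefinal} with the rank inequality of Corollary \ref{acarol} and inequality \eqref{inegalite111}: since \eqref{summandKZ0} shows the total rank equals $\sum_\mu \rk(\mc{KZ}_{\kappa}(\vv{\lambda}'', \mu^*)\tensor_K \mc{KZ}_{\kappa}(\vv{\lambda}'_{\mu}, \nu^*))$, and each summand is realised as a subquotient by \eqref{newSKZ}, all the inequalities must be equalities, so by Lemma \ref{subquotient} (and Lemma \ref{LA2}) the filtration $\mr G^\bullet$ with the stated associated graded \eqref{e:kzfact} exists; the triviality of $\mr N$ on both sides is then obtained exactly as in the conformal-blocks case, via adjunction reducing it to the vanishing of $\mr N$ on $\ml'|_{U_B}$, which holds because $f_X$ is smooth on $U_B$ and $\mc{F}_{B,\bar a}, \mc{G}_{B,\bar a}$ there are nearby cycles of a local system (Lemma \ref{l:ls}).
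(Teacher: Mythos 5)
There is a genuine gap at the central step of your argument. You assert that $k_{B,!}(k'_B\circ k_B)^*\ma'^a(\bar a)$ is the pure shriek pushforward, and $k_{B,*}(k'_B\circ k_B)^*\ma^a(\bar a)$ the pure star pushforward, of $\ml'|_{U_B}$, i.e.\ you transplant verbatim the identification used for conformal blocks in Section \ref{parameteri}. That identification holds there because the CB sheaves are $h_!\ml(\eta)$ and $h_*\ml(\eta)$, so their nearby cycles have zero stalks along all of $E$ (resp.\ are star extensions everywhere along $E$). In the KZ setting the inputs are $\tilde{k}_*\tilde{j}_{!}\ml(\eta)$ and $\tilde{k}'_*\tilde{j}'_{!}\ml(\eta)$, which carry a \emph{mixed} extension pattern across the horizontal divisor $E$ governed by the integrality and sign of the residues: $\ma^a$ is a star extension with nonzero stalks along $E_j\cap Y_B$ when $\Res_{E_j}\eta\in\Z_{\leq 0}$ but has zero stalks when $\Res_{E_j}\eta$ is a strictly positive integer, and dually $\ma'^a$ has nonzero stalks along $E_j\cap Y_B$ when $\Res_{E_j}\eta$ is a negative integer. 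So your identifications fail along every horizontal component whose residue is a strictly positive integer (star side) or a negative integer (shriek side) --- and the occurrence of such residues is precisely what makes the KZ object differ from the CB object. Worse, if your identification did hold, the image of \eqref{newSKZ} would be the image of $H^M$ of the pure-$!$ extension in $H^M$ of the pure-$*$ extension, which is the conformal-blocks term (cf.\ Proposition \ref{FSVCor}), not \eqref{newS2}; note that Definition \ref{d:kzm} builds $\mc{KZ}$ from the $q_!/j_!$ extensions of Proposition \ref{p:kz}, not from $H^M_c\to H^M$.

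The paper instead identifies $k_{B,!}(k'_B\circ k_B)^*\ma'^a(\bar a)$ and $k_{B,*}(k'_B\circ k_B)^*\ma^a(\bar a)$ with the external products $\mc{T}_1'\boxtimes\mc{T}_2'$ and $\mc{T}_1\boxtimes\mc{T}_2$ of the objects furnished by Proposition \ref{p:kz} for the two factor arrangements; these have exactly the mixed extension pattern dictated by the residues of $\eta-a\,dt/t$ restricted to $Y_B$. This is proved by marking a divisor $D_B\subset Y_B$ (components $E_j\cap Y_B$ with $\Res_{E_j}\eta\notin\Z_{\leq 0}$ and components $Y_{B'}\cap Y_B$ with $a-a(B')\notin\Z_{\geq 0}$), and checking that both sides have zero stalks on $D_B$ and are star pushforwards from $U_B$ on the complement. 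The essential inputs are the de Rham description of $\rat(\ma^a)$ from Lemma \ref{comparison} (which controls $\ma^a$ away from $U_B$) and the residue computations (the residue of $\eta-a\,dt/t$ along $E_j$ equals $\Res_{E_j}\eta$, and along $Y_B\cap Y_{B'}$ equals $a(B')-a(B)$); neither appears in your argument. The remainder of your outline --- the product structure of $U_B$ and $Y_B$, Lemma \ref{product11}, the K\"unneth step, the isotypical bookkeeping, and the concluding rank count --- does match the paper.
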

\begin{proof}
  Note that it follows from \eqref{e:proj} that
  $Y_B=P_B^1\times P_B^2$, a product of smooth varieties with
  $U^i_B \subset P^i_B$, $i=1,2$.  There is a corresponding
  decomposition of $Z$ as a product $Z_1\times Z_2$ and let
  $g_i:Y_i\to Z_i,\ i=1,2$ be the natural maps.
  
  We will show that the $K$-mixed
  local systems on $Y_B$ are also external products. To see this we
  use notation and results from Section \ref{productlocal}.

  The data of $\mc{KZ}_{\kappa}(\vv{\lambda}'', \mu^*)$ gives a
  $K$-mixed local system on $U_B^1$ denoted by $\ml_1$. Similarly, the
  data of $\mc{KZ}_{\kappa}(\vv{\lambda}'_{\mu}, \nu^*)$ gives a
  $K$-mixed local system on $U_B^2$, denoted by $\ml_2$. The $K$-mixed
  local systems $\ml_1$ and $\ml_2$ are as in Section
  \ref{productlocal}.

  We know that $\mc{KZ}_{\kappa}(\vv{\lambda}'', \mu^*)$ can be
  expressed as a suitable isotypical component of the image of a map
  (by \eqref{Sho} in Proposition \ref{p:kz})
  $$H^{|B|}(g_{1,*} \mathcal{T}_1') \lra H^{|B|}(g_{1,*}
  \mathcal{T}_1)$$ for suitable objects $\mc{T}_1$, $\mc{T}_1'$ in
  $\db(\msm( {P_B^1}))_K$.  Similarly,
  $\mc{KZ}_{\kappa}(\vv{\lambda}'_{\mu}, \nu^*)$ is the image of a map
  $$H^{|B|}(g_{1,*} \mathcal{T}_2') \lra H^{|B|}(g_{1,*}
  \mathcal{T}_2)$$for suitable objects $\mc{T}_2$, $\mc{T}_2'$ in
  $\db(\msm({P_B^2}))_K$.  The following identifications {together
    with the Kunneth isomorphism (Lemma \ref{l:kf})} show that the image
  of \eqref{newSKZ} equals \eqref{newS2}, and therefore complete the
  proof of the lemma.
  \begin{claimn}
    \begin{equation}\label{extension1}
      \mathcal{T}_1'\boxtimes \mathcal{T}_2' \cong k_{B,!} (k'_B\circ
      k_B)^*\ma'^a(\ov{a(B)}),  
    \end{equation}
    \begin{equation}\label{extension2}
      \mathcal{T}_0\boxtimes \mathcal{T}_1 \cong k_{B,*}(k'_B\circ
      k'_B)^*\ma^a(\ov{a(B)}).
    \end{equation}
  \end{claimn}
  Note that the left hand sides of \eqref{extension1} and 
  \eqref{extension2} are objects on $Y_B$ which arise from the product
  arrangement and master functions, as in Section \ref{productlocal}
  with corresponding form $\eta-adt/t$:  The LHS of \eqref{extension2} is the
  object corresponding to Aomoto cohomology for $\eta-adt/t$ and the LHS of 
  \eqref{extension1}, the dual of the object for Aomoto cohomology for
  $-(\eta-adt/t)$.
  
  We now prove the claims: The desired isomorphisms have been
  constructed over $U_B=(Y_B)_0-E$ in Section \ref{productlocal}.  We
  mark out a divisor $D_B\subset Y_B$ as follows. The irreducible
  components of $D_B$ are of two types.  The first type are the
  divisors $E_j\cap Y_B$ with $\Res_{E_j}\eta\not\in\Bbb{Z}_{\leq
    0}$. The second type are the $Y_{B'} \cap Y_B$ with
  $a-a(B')\not\in \Bbb{Z}_{\geq 0}$, where
  $a(B') = \Res_{Y_{B'}}(\eta)$ is the residue of $\eta$ along
  $Y_{B'}$.
    
  We will verify the following assertions:
  \begin{enumerate}
  \item[(A)] Both sides of \eqref{extension2} have zero stalks at points of
    $D_B$.
  \item[(B)] Restricted to $Y_B- D_B$, both sides are star push
    forwarded from $U_B=(Y_B) -(E\cup D_B) = (Y_B)_0 - E$, where they are
    isomorphic as shown above.
  \end{enumerate}
  It is easy to see that the claim \eqref{extension2} follows from (A)
  and (B) by adjunction (first prove an isomorphism after restricting
  to $Y_B- D_B$, then use the canonical map from lower shriek
  extension to the two sides). Further, (A) and (B) are
  topological. The proof of the isomorphism \eqref{extension1} is
  similar, we only need to change the inequalities $\leq 0$ and
  $\geq 0$ in the definition of $D_B$ to strict inequalities $<0$ and
  $>0$ respectively.
  
  We have a topological description in Lemma \ref{comparison} of
  $\ma^a$ in terms of its restriction to $Y^a_0-E$. This implies a
  description of the RHS of \eqref{extension2} from its restriction to
  $U_B$. The LHS of \eqref{extension2} is controlled from its
  restriction to $U_B$ from the product weighted hyperplane
  arrangement, with form given by {the restriction of} $\eta-a
  dt/t$. The assertions (A), and (B), follow immediately using the
  following:
 \begin{enumerate}
 \item The residue of $\eta$ equals the residue of $\eta-adt/t$ along
   each $E_j$.
 \item The rational $1$-form $\eta-adt/t$ on $X$ is regular along
   $U_B\subseteq Y_B$.  Let $\eta_B$ be its restriction to $U_B$. The
   residue of $\eta_B$ along $Y_B\cap Y_{B'}\subseteq Y_B$ is the same
   as the residue of $\eta-a dt/t$ (considered as a rational $1$-form
   on $X$) along $Y_{B'}$ which is $a(B')-a(B)$.
  \end{enumerate}
 \end{proof}

\subsection{Conclusion of the proof of Theorem \ref{t:fact} for invariants}
Lemma \ref{prefinal} implies that equality holds in
\eqref{inegalite111}, since it gives the opposite inequality using the
fact that the rank of $\mathcal{KZ}_{\kappa}(\vv{\lambda},\nu^*)$
equals \eqref{summandKZ0}.

The desired increasing filtration
$\mr{G}^a (\Psi_{f, \bar{a}}\,
(\mathcal{KZ}_{\kappa}(\vec{\lambda},\nu^*)'))$ of
$\Psi_{f, \bar{a}} \, (\mathcal{KZ}_{\kappa}(\vec{\lambda},\nu^*)')$
indexed by $a \in (1/N)\Z$ is obtained as the image of
$\im((H^M(g_{Y,*} \mj'^{\leq a}(\bar{a}))^{\Sigma,\chi} \to (H^M(g_{Y,*}
\mj'^{\leq a}(\bar{a})))^{\Sigma,\chi})$ inside
$\im(H^M(g_{Y,*}\mc{F}(\bar{a}))^{\Sigma,\chi} \to H^M(g_{Y,*}
\mc{G}(\bar{a}))^{\Sigma,\chi})$, which is an injection by Lemma 
\ref{LA2}.  By Lemma \ref{LA2}, this filtration has graded quotients
equal to the image of \eqref{newSKZ}, which by Lemma \ref{prefinal} is
equal to the RHS of \eqref{e:kzfact}. 
   
The monodromy operator $\mr{N}$ on nearby cycles induces compatible
actions on all the objects occurring in the distinguished triangles
\eqref{exacto} and \eqref{exactoprime} since they are constructed
using functorial operations in $\db(\msm(Y))_K$; we never construct
any object as a cone. Since the map $\mc{F} \to \mc{G}$ is also
compatible with $N$, it follows that the filtration $\mr{G}^a$ is
preserved by $\mr{N}$. We now show that $\mr{N}$ acts by zero on the
graded quotients. It suffices to show that it is zero on
$k_{B,!} (k'_B\circ k_B)^*\ma'^a(\bar{a})$ and on
$k_{B,_*}(k'_B\circ k'_B)^*\ma^a(\bar{a})$ in \eqref{newSKZ} with
$\bar{c}=\bar{a}$. Again, as in the conformal blocks case, the actions
on these objects are determined by adjunction from their restrictions
to the open sets $U_B$, which are zero since the map
$f_X:X\to \A^1_{\msk}$ is smooth along $U_B$ and the nearby cycles
sheaf $\mc{F}_{B,\bar{a}}$ on $U_B$ is the nearby cycles sheaf of a
local system (see Lemma \ref{l:ls}). This concludes the proof of
Theorem \ref{t:fact} for invariants.

\section{Enriched representation and fusion rings} \label{s:tr}

Recall that we have defined (see Definition \ref{kappaRing}) enriched
representation rings $(\mf{R}_{\kappa}(\mf{g}),\star_{\mf{R}})$
(resp. fusion rings $(\mf{F}_{\kappa}(\mf{g}),\star_{\mf{F}})$) for
$\kappa \neq 0$ (resp.~for $\kappa$ as in Definition \ref{d:gtwist}).
We prove the basic associativity properties of these rings, complex
conjugation and its relationship with possible weights of the KZ
motive, as well as abstract properties of these enriched rings
(dependence on the parameter $\kappa$, the action of complex
conjugation on the parameters $\kappa$), as well as (conjectural)
relations between the enriched representation rings and fusion rings
parallel to what is known in the classical (unenriched) case.

\subsection{Associativity of enriched rings}\label{s:associativity}

The conformal blocks motives, as well as their nearby cycles, are pure
so we can define a more refined enriched fusion ring which also keeps
track of the weight.
\begin{definition}\label{d:fruv}
  $\mf{F}'_{\kappa}(\mf{g})$ is the free $\Z[u,v]$-module
  $\Z[u,v]^{P_{\ell}}$ with the $\Z[u,v]$-linear product given
  by
  \begin{equation*}
    [\lambda]\star_{\mf{F}'} [\mu] =\sum_{\nu\in
      P_{\ell}} p(\wt{C}_{\kappa}(\la,\mu,\nu^*))[\nu]
  \end{equation*}
  with $p(\wt{C}_{\kappa}(\la,\mu,\nu^*))$ as in Definition \ref{d:hn}.
\end{definition}

Analogously to Theorems \ref{t:assoc} and \ref{t:assoc2} we have:
\begin{theorem}\label{t:assoc2prime}
  The $\star_{\mf{F}'}$-product on $\mf{F}'_{\kappa}(\mf{g})$ gives it the
  structure of a unital, commutative and associative $\Z[t]$-algebra.
\end{theorem}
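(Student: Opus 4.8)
\textbf{Proof plan for Theorem \ref{t:assoc2prime}.}
The plan is to deduce commutativity, unitality, and associativity of the $\star_{\mf{F}'}$-product from the motivic factorisation theorem (Theorem \ref{t:fact}) applied to conformal blocks, exactly parallel to how Theorem \ref{t:assoc2} is deduced from the rank statement, but now keeping track of the full pure Hodge structure rather than just the rank. The key point is that each $\wt{C}_{\kappa}(\la,\mu,\nu^*)$ is the fibre at $(0,1) \in \mc{C}_2$ of the pure VHS $\mc{CB}_{\kappa}((\la,\mu),\nu^*)$, and that the $(u,v)$-Hodge polynomial $p(-)$ is additive in short exact sequences and multiplicative in tensor products over $K$ (Definition \ref{d:hn}), while being insensitive to Galois twists in the sense needed for the definition in Definition \ref{d:gtwist} --- though one must check that $p$ of a Galois twist transforms compatibly, which is why the ring is over $\Z[u,v]$ and not over a larger ring.

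First I would dispose of commutativity and the unit. Commutativity follows from the symmetry $\wt{C}_{\kappa}(\la,\mu,\nu^*) \simeq \wt{C}_{\kappa}(\mu,\la,\nu^*)$, which holds because swapping the two marked points $z_0 = 0$ and $z_1 = 1$ on $\P^1$ is realised by an automorphism of the relevant configuration space / moduli space carrying one master-function arrangement to the other, inducing an isomorphism of the associated motives; hence the Hodge polynomials agree. The unit is $[0]$ (the trivial weight): one has $\mc{CB}_{\kappa}((\la,0),\nu^*) = \mc{CB}_{\kappa}(\la,\nu^*)$ on $\mc{C}_1 = \mathrm{pt}$, which is a one-dimensional space concentrated in weight $0$ when $\la = \nu$ and zero otherwise (here $M = 0$ and the arrangement is empty), so $[0] \star_{\mf{F}'} [\la] = [\la]$.

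The heart is associativity. I would compute both $([\la_1] \star_{\mf{F}'} [\la_2]) \star_{\mf{F}'} [\la_3]$ and $[\la_1] \star_{\mf{F}'} ([\la_2] \star_{\mf{F}'} [\la_3])$ and identify each with $\sum_{\nu} p(\wt{C}_{\kappa}(\la_1,\la_2,\la_3;\nu^*))[\nu]$, where $\wt{C}_{\kappa}(\la_1,\la_2,\la_3;\nu^*)$ is the fibre over any point of $\mc{C}_3(\C)$ of the pure VHS $\mc{CB}_{\kappa}((\la_1,\la_2,\la_3),\nu^*)$. For the left-hand triple product, I apply Theorem \ref{t:fact} with $n = 3$, $A = \{1,2\} = I$, degenerating $z_1 \to z_2$: formula \eqref{e:cbfact} gives a direct sum decomposition of the nearby cycles $\Psi_f(\mc{CB}_{\kappa}((\la_1,\la_2,\la_3),\nu^*)')$ over $\mu \in P_\ell$ into $\mc{CB}_{\kappa}((\mu,\la_3),\nu^*) \boxtimes_K \iota^*(\mc{CB}_{\kappa}((\la_1,\la_2),\mu^*))$. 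Taking fibres over a point of $Z_A$ and applying multiplicativity of $p$ over tensor products $\boxtimes_K$ together with additivity over the direct sum, the Hodge polynomial of the nearby-cycles fibre equals $\sum_\mu p(\wt{C}_{\kappa}(\la_1,\la_2,\mu^*)) \cdot p(\wt{C}_{\kappa}(\mu,\la_3,\nu^*))$, which is exactly the $[\nu]$-coefficient of $([\la_1]\star_{\mf{F}'}[\la_2])\star_{\mf{F}'}[\la_3]$. Finally, by Lemma \ref{l:saito}, the Hodge polynomial (hence the $(u,v)$-Hodge polynomial, since the VHS is pure) of the nearby-cycles MHS equals that of the generic fibre $\wt{C}_{\kappa}(\la_1,\la_2,\la_3;\nu^*)$ --- here I use that $\mc{CB}$ is a pure VHS so that the $t$-Hodge polynomial together with the (fibrewise constant) weight determines the $(u,v)$-polynomial, cf. Remark \ref{r:cb}(2). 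The right-hand triple product is handled identically using $A = \{2,3\}$ (or by relabelling), giving the same answer, which proves associativity.

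\textbf{Main obstacle.} I expect the main subtlety to be the bookkeeping between the $t$-Hodge polynomial and the $(u,v)$-Hodge polynomial under the nearby-cycles identification: Lemma \ref{l:saito} is stated for $t$-Hodge polynomials, so I must argue that for a \emph{pure} admissible VHS the weight is constant along the degeneration (which is true because the nearby-cycles limit MHS is pure of the same weight when the family is a family of pure polarised VHS with no monodromy weight jump, i.e.\ $\mr{N} = 0$ on $\Psi_f$ --- and $\mr{N} = 0$ here by the last sentence of Theorem \ref{t:fact}), so that the $t$-Hodge polynomial plus the weight recovers $p(-)$, and this recovery is compatible with tensor products and direct sums. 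A secondary point is checking that $p(\wt{C}_{\kappa}(\la,\mu,\nu^*))$ for $\kappa$ as in Definition \ref{d:gtwist} (a Galois twist) is still a well-defined element of $\Z[u,v]$ and that the factorisation \eqref{e:cbfact} is compatible with the Galois twist --- but this follows from the Remark after Theorem \ref{t:motivic} ("by applying automorphisms of $K$ we get twisted forms of these factorisations") together with the fact that $\boxtimes_K$ and $p(-)$ commute with the Galois action on coefficients.
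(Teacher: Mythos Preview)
Your proposal is correct and follows essentially the same route as the paper. Both arguments deduce associativity from the conformal-block part of Theorem \ref{t:fact} together with Lemma \ref{l:saito}, and both handle the passage from the $t$-Hodge polynomial to the $(u,v)$-Hodge polynomial via purity: the paper phrases this as ``associativity of $\mf{F}_{\kappa}$ implies that of $\mf{F}'_{\kappa}$'' since $p$ is homogeneous of degree equal to the (known) weight and hence recoverable from $P$, while you argue directly that the nearby cycles are pure of the same weight (using $\mr{N}=0$) so that $p$ is preserved---these are the same observation.
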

There is a isomorphism $\mf{F}'_{\kappa}(\mf{g})/(v-1)\leto{\sim} \mf{F}_{\kappa}(\mf{g})$ which sends $u\mapsto t$ and $v\mapsto 1$, and $[\lambda]\mapsto[\lambda]$ for $\lambda\in P_{\ell}$.

\begin{remark}\label{r:param}
  Recall that $\mf{F}_{\kappa}(\mf{g})$ and $\mf{F}'_{\kappa}(\mf{g})$
  are defined for $\kappa$ of the form $(\ell + h^{\vee})/a$, where
  $a$ is a positive integer such that $(a, N) = 1$ (and
  $ N = i_{\mf{g}}m_{\mf{g}}(\ell + h^{\vee})$). By construction, these
  rings depend only on the class of $a$ modulo $N$, however, it
  follows as in Lemma \ref{l:one} that
  $p(C_{(\ell + h^{\vee})/a}(\la,\mu;\nu)) = p(C_{(\ell +
    h^{\vee})/a'}(\la,\mu;\nu))$ if
  $a \equiv a' \mod m_{\mf{g}}(\ell + h^{\vee})$. Thus, the enriched
  rings for a fixed level $\ell$ are naturally parametrised by
  elements of $(\Z/ m_{\mf{g}}(\ell + h^{\vee})\Z)^{\times}$.
\end{remark}

\subsection{Proofs of Theorems \ref{t:assoc}, \ref{t:assoc2} and \ref{t:assoc2prime}}

The commutativity of all the rings $\mf{R}_{\kappa}(\mf{g})$,
$\mf{F}_{\kappa}(\mf{g})$ and $\mf{F}'_{\kappa}(\mf{g})$ holds because
the relevant motives do not change if we permute $\lambda$ and $\mu$;
this is clear from the definition of the master function in
\eqref{e:master}.  We now show that associativity follows from Theorem
\ref{t:fact}.

By the properties noted in Definition \ref{d:hn}, the
polynomials $P(\msf{M})$ (resp. $p(\msf{H})$) are additive in exact
sequences of MHS $\msf{M}$ (resp.~pure Hodge structures $\msf{H}$),
and take tensor products to ordinary products of polynomials.  Now
consider the $P$ polynomial of the nearby cycles in Theorem
\ref{t:fact} of the pull back of
$\mathcal{KZ}_{\kappa}(\vv{\lambda},\nu^*)$ to $S^*$. By Lemma
\ref{l:saito}, this $P$ polynomial is the same as the $P$ polynomial
of $\mathcal{KZ}_{\kappa}(\vv{\lambda},\nu^*)$. On the other hand, we
can calculate the $P$-polynomial of each of the graded pieces of the
filtration of the nearby cycles of
$\mathcal{KZ}_{\kappa}(\vv{\lambda},\nu^*)$ given in Theorem
\ref{t:fact}. This gives
\begin{equation}\label{e:Passoc}
P(\mathcal{KZ}_{\kappa}(\vv{\lambda},\nu^*))=\sum_{\lambda\in
	\Lambda}
P(\mathcal{KZ}_{\kappa}(\lambda_1,\dots,\lambda_m,\lambda^*))\cdot
P(\mathcal{KZ}_{\kappa}(\lambda,\lambda_{m+1}\dots,\lambda_n,\nu^*)) .
\end{equation} 
Taking $m=2$, we get the associativity relations in Theorem
\ref{t:assoc} for $\mf{R}_{\kappa}({ \kappa})$. The proof of
associativity for the fusion rings $\mf{F}_{\kappa}(\mf{g})$ is
similar. For $\mf{F}'_{\kappa}(\mf{g})$ we note that by
\eqref{e:cbfact} the nearby cycles of the conformal block variations
are pure of the same weight as the variation itself. Knowing the
weight of a pure Hodge structure one can determine the $p$ polynomial
from the $P$ polynomial (since the $p$ polynomial is homogenous of
degree equal to the weight). Therefore associativity of the product of
$\mf{F}_{\kappa}(\mf{g})$ implies that of $\mf{F}_{\kappa}'(\mf{g})$.

\begin{remark}\label{r:arbitraryprod}
By induction on $n$, it follows from  \eqref{e:Passoc} that the coefficient of $[\nu]$ in $[\lambda_1]\star_{\mf{R}} \dots \star_{\mf{R}} [\lambda_n]\in \mf{R}_{\kappa}(\mf{g})$  equals 
$P(\msf{KZ}_{\kappa}(\vec{\lambda},\nu^*)_{\vec{z}})$ for any  $\vec{z}\in \mc{C}_n$.  Similarly, when $\lambda_1,\dots,\lambda_n,\nu\in P_{\ell}$, the coefficient of  $[\nu]$ in $[\lambda_1]\star_{\mf{F}'} \dots \star_{\mf{F}'} [\lambda_n]\in \mf{F}'_{\kappa}(\mf{g})$    equals $p(\mathcal{CB}_{\kappa}(\vec{\lambda},\nu^*)_{\vec{z}})$. Therefore 
one can compute all Hodge polynomials of KZ and CB motives using computations in the rings $(\mf{R}_{\kappa}({ \kappa}),\star_{\mf{R}})$ and  $(\mf{F}'_{\kappa}(\mf{g}),\star_{\mf{F}'})$.
\end{remark}

\subsection{On the structure of $\mf{R}_{\kappa}(\mf{g})$,
  $\mf{F}_{\kappa}(\mf{g})$ and
  $\mf{F}'_{\kappa}(\mf{g})$} \label{s:structure}

By construction, the structure ``constants'' for both the rings
$\mf{R}_{\kappa}(\mf{g})$ and $\mf{F}'_{\kappa}(\mf{g})$ are integral
polynomials with non-negative coefficients and a fundamental problem
is to compute these polynomials. The degrees of
$P(\mathcal{KZ}_{\kappa}(\lambda,\mu;\gamma^*))$ and
$p(\mathcal{CB}_{\kappa}(\lambda,\mu;\gamma^*))$ are bounded by the
dimension $M$ of the corresponding hyperplane arrangement and these
rings specialize to the classical representation and fusion rings. In
this section we will prove some basic results and formulate some
conjectures about these rings for general $\mf{g}$.

\subsubsection{}\label{s:integral}
When the master function associated to a weighted hyperplane
arrangement in $\A^M_{\msk}$ is single valued, then the associated Aomoto
motive is mixed Tate by Corollary \ref{c:tate}. However, even in this case
the weights of the motive are not determined by $M$ since the signs of
the residues along various divisors need not be constant, so the Hodge
polynomial can be complicated.
  \begin{example}\label{e:aomoto}
    Let $M=1$ and let $\eta = \sum_{i=1}^n a_i\frac{dt}{t-z_i}$, where
    $a_i \in \Z$ and $\vec{z} = (z_1,z_2,\dots,z_n) \in \mc{C}_n$. If
    at least two $a_i$ are negative and two $a_i$ are positive, then
    the corresponding Aomoto $H^1$ is not pure. Furthermore, the image
    of the monodromy action on the Aomoto $H^1$ is infinite as
    $\vec{z}$ varies in $\mc{C}_n$.
  \end{example}

Nevertheless, we have the following:
\begin{conj}\label{c:mtate}
  If $1/\kappa$ is a positive integral multiple of $m_{\mf{g}}$ the
  canonical equality
  $\mf{R}_{\kappa}(\mf{g}) = \mf{R}(\mf{g}) \otimes \Z[t]$ as
  $\Z[t]$-modules is an equality of $\Z[t]$-algebras.
\end{conj}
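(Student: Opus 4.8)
The plan is to reduce the claimed algebra isomorphism to a statement about Hodge polynomials, and that in turn to a purity statement. Since $\mf{R}_{\kappa}(\mf{g})$ and $\mf{R}(\mf{g})\otimes\Z[t]$ share the basis $\{[\lambda]\}_{\lambda\in P^+}$, the canonical module identification is an isomorphism of $\Z[t]$-algebras precisely when $P(M_{\kappa}(\la,\mu,\nu^*))$ is, for all $\la,\mu,\nu$, the \emph{constant} polynomial $\dim(V_{\la}\otimes V_{\mu}\otimes V^*_{\nu})_{\mf{g}}$. One already knows $P(M_{\kappa}(\la,\mu,\nu^*))(1)=\rk M_{\kappa}(\la,\mu,\nu^*)$ equals this multiplicity, so it suffices to show $P(M_{\kappa}(\la,\mu,\nu^*))$ is constant. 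Recalling from Lemma \ref{r:eff} that Aomoto motives, and hence their subquotients, have non-negative weights, a mixed Tate $M_{\kappa}(\la,\mu,\nu^*)$ has constant Hodge polynomial if and only if it is pure of weight $0$, equivalently a direct sum of copies of $K(0)$; so the goal becomes that $M_{\kappa}(\la,\mu,\nu^*)$ is pure of weight $0$.

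First I would verify mixed-Tateness. When $1/\kappa=cm_{\mf{g}}$ with $c\in\Z_{>0}$, every weight of the Schechtman--Varchenko arrangement is an integer, since $(\lambda_i,\beta(b))/\kappa=c\cdot m_{\mf{g}}(\lambda_i,\beta(b))\in\Z$ and $(\beta(b),\beta(c))/\kappa=c\cdot m_{\mf{g}}(\beta(b),\beta(c))\in\Z$ by parts (1)--(3) of Lemma \ref{formula:N}. Hence $\ms{R}$ is single valued, $\mc{L}(\eta)$ is the trivial local system, and by Corollary \ref{c:tate}, applied as in Remark \ref{r:imageKZ} which presents $M_{\kappa}(\la,\mu,\nu^*)$ as a subquotient of $\chi$-isotypical pieces of Aomoto motives of the arrangement, $M_{\kappa}(\la,\mu,\nu^*)$ is mixed Tate. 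Thus the whole content of the conjecture is the purity of these mixed Tate motives.

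For $\mf{g}=\mf{sl}_n$ (so $m_{\mf{g}}=1$ and the hypothesis reads $1/\kappa\in\Z_{>0}$) I would obtain purity from the motivic Pieri formula. By Theorem \ref{genp}, $M_{\kappa}(\lambda,\vpi_1,\mu^*)$ is a $K$-tensor product of rank one motives $[a;\kappa]$ in which every $a$ is a non-negative integer; a direct computation with the one-variable arrangement of Definition \ref{d:basicmotives} (carried out in Section \ref{s:vpi1}) shows that for $a>0$ and $1/\kappa\in\Z_{>0}$ one has $V=V'$, whence $[a;\kappa]\cong K(0)$, so $P(M_{\kappa}(\lambda,\vpi_1,\mu^*))$ equals the classical ($0$ or $1$) Pieri coefficient. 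The inductive algorithm of Section \ref{s:sln}, which expresses $\lambda\star\vpi_k$ in terms of the products $\lambda\star\vpi_1$ for smaller $\mf{sl}_m$ using the associativity of Theorem \ref{t:assoc}, then shows that multiplication by each generator $[\vpi_k]$ of $\mf{R}(\mf{sl}_n)$ has classical structure constants; associativity propagates this to arbitrary products, giving $\mf{R}_{\kappa}(\mf{sl}_n)=\mf{R}(\mf{sl}_n)\otimes\Z[t]$. This is Corollary \ref{c:sln}.

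The hard part is purity for general $\mf{g}$, which is why \ref{c:mtate} is stated only as a conjecture. Integrality of the weights alone does not force an Aomoto motive to be pure, as Example \ref{e:aomoto} shows, so one must genuinely exploit both the sign-isotypical projection and the special geometry of the $\ov{M}_{0,N}$-type compactification of the Schechtman--Varchenko arrangement --- much as the $\mf{sl}_n$ argument does through its factorisation into rank one pieces. Two pieces of leverage toward the general case are: the invariance (Section \ref{s:trp}) of the mixed Hodge structure of $M_{\kappa}(\la,\mu,\nu^*)$ under $\kappa\mapsto\kappa/(1+m_{\mf{g}}\kappa)$, i.e. under $c\mapsto c+1$, so purity for one such $\kappa$ yields it for all; and the hoped-for reduction, by induction on the rank of $\mf{g}$ and multiplication by ``minimal'' generators, discussed in Section \ref{s:genG}. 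Absent such a reduction, extracting purity directly from the nearby-cycles factorisation of Theorem \ref{t:fact} appears to be the principal obstacle.
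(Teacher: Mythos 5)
Your proposal is correct and follows essentially the same route as the paper: the statement is indeed only a conjecture there, the paper likewise reduces it to the assertion that the relevant KZ motives are pure of weight zero, and the only case it settles is $\mf{sl}_n$ (Corollary \ref{c:sln}), exactly via Theorem \ref{genp}, Remark \ref{r:int} and the multiplication algorithm, as you do. The one small divergence is how purity is extracted in the $\mf{sl}_n$ case: you go through mixed-Tateness (Corollary \ref{c:tate}) plus constancy of the Hodge polynomial, whereas the paper deduces $F^0\msf{M}=\msf{M}$ from the constant Hodge polynomial and then invokes effectivity and Hodge symmetry; both are valid, and your correct observation that integrality of the weights alone does not force purity (Example \ref{e:aomoto}) accurately locates why the general case remains open.
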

This conjecture is equivalent to the apparently stronger assertion
that the mixed local systems
$\mathcal{KZ}_{\kappa}(\vv{\lambda},\nu^*)$ are of weight zero for
such $\kappa$ and all $\vv{\lambda}$ and $\nu$. We prove this
statement in the case of $\mf{sl}_n$ as Corollary \ref{c:sln}.
This implies that the monodromy of the KZ equation is finite for such
$\kappa$. 

It is not the case that KZ motives are always pure of weight zero when
$\kappa > 0$ and the master function $\ms{R}$ is single valued, see
Example \ref{e:mtate}.

\subsubsection{The case $\kappa = \ell + h^{\vee}$ and the action of
  complex conjugation}
      
When $\kappa = \ell + h^{\vee}$,  it follows from Remark
  \ref{r:cb} that the polynomials
$p(\mathcal{CB}_{\kappa}(\lambda,\mu;\gamma^*))$ are equal to $ru^M$,
where $r$ is the rank of the motive
$\mathcal{CB}_{\kappa}(\lambda,\mu;\gamma^*)$ and $M$ (the integer
defined in Section \ref{s:sv}) is its weight. Since $r$ is determined
by the classical fusion ring and $M$ is determined in an elementary
way from $\lambda$, $\mu$ and $\gamma^*$, it follows that
$\mf{F}'_{\kappa}(\mf{g})$ can be determined in an elementary way from
$\mf{F}(\mf{g})$.

For $\kappa = (\ell + h^{\vee})/a$ as in Definition \ref{d:gtwist},
let $\bar{\kappa} = (\ell + h^{\vee})/(\ell + h^{\vee}-a)$ and let
$\tau: \Z[u,v] \to  \Z[u,v]$ be the involution given by interchanging
$u$ and $v$. Then $\mf{F}'_{\bar{\kappa}}(\mf{g})$ equals
$\mf{F}'_{\kappa}(\mf{g}) \otimes_{\Z[u,v], \tau}\Z[u,v]$ as a
$\Z[u,v]$-algebra. This is clear since replacing $\kappa$ by
$\bar{\kappa}$ corresponds to the action of complex conjugation on the CB
motives and $h^{p,q}(\msf{H})$ = $h^{q,p}(\ov{\msf{H}})$ for any pure
Hodge structure $\msf{H}$ with coefficients in any subfield of $\C$
with $\ov{\msf{H}}$ the conjugate Hodge structure.

However, the motives $\mathcal{KZ}_{\kappa}(\lambda,\mu;\gamma^*)$
for $\kappa = \ell + h^{\vee}$ need not be pure of weight $M$ and the
polynomials $P(\mathcal{KZ}_{\kappa}(\lambda,\mu;\gamma^*))$ are,
in general, more complicated, e.g., they are not always a scalar times
a monomial. Therefore, the product in the rings
$\mf{R}_{\kappa}(\mf{g})$ in the basis given by dominant integral weights
depends in a non-trivial way on $\kappa$ and cannot in general be
determined from the corresponding product in $\mf{R}(\mf{g})$. For the
same reason, for general $\kappa$ the products in the rings
$\mf{R}_{\kappa}(\mf{g})$ and
$\mf{R}_{\bar{\kappa}}(\mf{g})$ do not determine each other in
a simple way. Information about the Hodge polynomial of the complex conjugate motive
gives more constraints on the weights:
\begin{lemma}\label{l:weights}
  Let $f$ be the Hodge polynomial of a
  complex MHS $\msf{M}$ (assumed to be effective) and $\bar{f}$ the
  Hodge polynomial of the complex conjugate MHS
  $\overline{\msf{M}}$. Let $d_{min}$ (resp. $d_{max}$) be the minimal
  (resp. maximal) degree of a nonzero monomial in $f$ and define
  $\bar{d}_{min}$ and $\bar{d}_{max}$ similarly. Then the minimal
  weight occurring in $\msf{M}$ is at least $d_{min} + \bar{d}_{min}$
  and the maximal weight is at most $d_{max} + \bar{d}_{max}$.
\end{lemma}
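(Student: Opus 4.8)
The plan is to deduce this from the behaviour of the Hodge and weight filtrations under complex conjugation, combined with the compatibility between weights and the Hodge filtration for a polarisable mixed Hodge structure. Recall that for a complex MHS $\msf{M}$ of weight $\leq w$ (i.e.\ $W_w\msf{M} = \msf{M}$), each graded piece $\gr^W_k\msf{M}$ for $k \leq w$ carries a pure Hodge structure of weight $k$, so that $F^p\gr^W_k\msf{M} = 0$ for $p > k$; consequently $F^p(\msf{M}\otimes\C)$ is filtered by the $F^p\gr^W_k$ and so $F^p\msf{M} = 0$ whenever $p$ exceeds the top weight of $\msf{M}$. Dually, using effectivity (all Hodge numbers $h^{r,s}$ with $r,s \geq 0$), $F^p\gr^W_k\msf{M} = \gr^W_k\msf{M}$ for $p \leq 0$, and one gets that the Hodge polynomial $f$ records, via its degrees, information about which $F^p$ are proper. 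Precisely, $d_{max}$, the top degree appearing in $f = \sum_p(f^p - f^{p+1})t^p$, is the largest $p$ with $F^p\msf{M}\neq 0$, hence $d_{max}$ is at most the maximal weight $w_{max}$ of $\msf{M}$; and $d_{min}$, the smallest such $p$, satisfies $d_{min} \geq 0$ by effectivity but more importantly corresponds to the smallest $p$ with $F^p\msf{M} = \msf{M}\otimes\C$, which for a pure piece of weight $k$ forces $k - d_{min}' \geq 0$ type constraints.

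\textbf{Key steps.} First I would reduce to the case where $\msf{M}$ is pure, by the strictness of morphisms of MHS with respect to both $W$ and $F$: the minimal (resp.\ maximal) weight of $\msf{M}$ is the minimal (resp.\ maximal) $k$ with $\gr^W_k\msf{M}\neq 0$, and the Hodge polynomial $f$ is the sum of the Hodge polynomials of the $\gr^W_k\msf{M}$, similarly for $\bar f$ and $\overline{\gr^W_k\msf{M}} = \gr^W_k\overline{\msf{M}}$. Next, for a pure Hodge structure $\msf{H}$ of weight $k$ with Hodge numbers $h^{p,q}$ ($p+q=k$), the $(u,v)$-polynomial is $p(\msf{H}) = \sum h^{p,q}u^pv^q$ and the $t$-polynomial $f$ has a monomial $t^p$ exactly when $h^{p,k-p}\neq 0$; so the degrees of monomials in $f$ range over $\{p : h^{p,k-p}\neq 0\}$. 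Passing to $\overline{\msf{H}}$ swaps $h^{p,q}\mapsto h^{q,p}$, so the monomials in $\bar f$ have exponents $\{q : h^{k-q,q}\neq 0\} = \{k - p : h^{p,k-p}\neq 0\}$. Therefore for each $p$ with $h^{p,k-p}\neq 0$ we have both $p \leq d_{max}$ and $k - p \leq \bar d_{max}$, giving $k \leq d_{max} + \bar d_{max}$; and similarly $p \geq d_{min}$, $k-p \geq \bar d_{min}$ give $k \geq d_{min} + \bar d_{min}$. Finally, assembling over all weights $k$ with $\gr^W_k\msf{M}\neq 0$: the minimal such $k$ is at least $d_{min}(\gr^W_k) + \bar d_{min}(\gr^W_k) \geq d_{min}(\msf{M}) + \bar d_{min}(\msf{M})$ (using that $d_{min}$ of a sum of polynomials with non-negative coefficients is the minimum of the $d_{min}$'s), and the maximal such $k$ is at most $d_{max}(\gr^W_k) + \bar d_{max}(\gr^W_k) \leq d_{max}(\msf{M}) + \bar d_{max}(\msf{M})$.

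\textbf{Main obstacle.} The routine linear-algebra bookkeeping with the two filtrations is not hard; the one point requiring care is making the reduction to the pure case genuinely clean, i.e.\ checking that $d_{min}$ and $d_{max}$ of $\msf{M}$ are exactly the minimum over $k$ of $d_{min}(\gr^W_k\msf{M})$ and the maximum over $k$ of $d_{max}(\gr^W_k\msf{M})$ respectively. This uses that the Hodge polynomial is additive in the weight filtration (all Hodge numbers are non-negative, so there is no cancellation of extreme monomials) together with the strict compatibility of $F$ with $W$ in a (polarisable, hence split over $\C$) MHS, so that $F^p\msf{M}$ can be computed from the $F^p\gr^W_k\msf{M}$. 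Once this is in place, the inequalities follow immediately from the pure-case computation above, so I would present the pure case as a short lemma and then state the weight-filtration reduction as the body of the proof.
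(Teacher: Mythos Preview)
Your proposal is correct and follows essentially the same approach as the paper: reduce to the pure case via additivity of the Hodge polynomial over the weight filtration, and then use that $\overline{\msf{M}}^{p,q} = \overline{\msf{M}^{q,p}}$ to get the inequalities $d_{\min}+\bar d_{\min}\le k\le d_{\max}+\bar d_{\max}$ for each pure piece of weight $k$. The paper's proof is much more terse (it dispatches the reduction in a single sentence), but the argument is the same.
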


\begin{proof}
  Given the nature of the claimed inequalities we can assume that
  $\msf{M}$ is pure of a fixed weight. The lemma then follows from the
  fact that $\overline{\msf{M}}^{p,q} = \overline{\msf{M}^{q,p}}$ and
  the definition of the Hodge polynomial.
\end{proof}

In particular, if the rank of $V$ is one, or more generally if the
Hodge polynomial of $V$ is a monomial, and we define $d$ and
$\bar{d}$ in the obvious way, then the weight of $V$ is $d + \bar{d}$.

\subsubsection{$\mf{R}_{\kappa}(\mf{g})$ is a polynomial ring}
By construction, the rings $\mf{R}_{\kappa}(\mf{g})$ are free as
$\Z[t]$-modules and the rings $\mf{F}'_{\kappa}(\mf{g})$ are free as
$\Z[u,v]$-modules. We can in fact describe all
$\mf{R}_{\kappa}(\mf{g})$ as abstract $\Z[t]$-algebras.

\begin{lemma}\label{l:poly}
  For any $\mf{g}$ and any $\kappa \neq 0$, the ring
  $\mf{R}_{\kappa}(\mf{g})$ is a polynomial ring over $\Z[t]$ with
  free generators the classes of the fundamental dominant weights.
\end{lemma}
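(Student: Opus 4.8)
The plan is to realise $\mf{R}_{\kappa}(\mf{g})$ as the image of the polynomial ring $\Z[t][x_1,\dots,x_r]$, with $r$ the rank of $\mf{g}$, under the $\Z[t]$-algebra homomorphism $\phi$ determined by $x_i\mapsto[\vpi_i]$, and to prove that $\phi$ is an isomorphism by a leading-term argument for the root order on $P^{+}$. The one input beyond formalities is the value of the ``top'' structure constant: for $\lambda,\mu\in P^{+}$ the coefficient of $[\lambda+\mu]$ in $[\lambda]\star_{\mf{R}}[\mu]$ is $1$. Indeed this coefficient is $P(M_{\kappa}(\lambda,\mu,(\lambda+\mu)^{*}))$, and for $\nu=\lambda+\mu$ the integer $M$ of Section~\ref{s:sv} is $0$, so the Schechtman--Varchenko arrangement lives in $\A^{0}_{\msk}$ and the associated KZ motive is $K(0)$, with Hodge polynomial $1$ (the $M=0$ computation already appearing in Theorem~\ref{genp}). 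More generally, since all weights of $V_{\lambda}\tensor V_{\mu}$ lie in $\lambda+\mu-\sum_{\alpha\in\Delta}\Z_{\geq 0}\alpha$, the coefficient of $[\nu]$ vanishes unless $\lambda+\mu-\nu\in\sum_{\alpha\in\Delta}\Z_{\geq 0}\alpha$; write $\nu\preceq\lambda+\mu$ for this relation.

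Granting this, I would argue as follows. Using associativity (Theorem~\ref{t:assoc}) and induction on $\sum_i a_i$, the monomial $m_{\lambda}$ obtained as the $\star_{\mf{R}}$-product of $a_i$ copies of $[\vpi_i]$ for $i=1,\dots,r$, where $\lambda=\sum_i a_i\vpi_i\in P^{+}$, satisfies $m_{\lambda}=[\lambda]+\sum_{\nu\prec\lambda}c_{\nu}(t)[\nu]$ with $c_{\nu}(t)\in\Z[t]$ and the sum over $\nu\in P^{+}$ with $\nu\preceq\lambda$, $\nu\neq\lambda$. Thus, in the $\Z[t]$-bases $\{x^{a}\}$ and $\{[\lambda]\}$, the homomorphism $\phi$ has a ``unitriangular'' matrix for $\preceq$. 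Next note that $\preceq$, restricted to any coset of the root lattice $Q$ in $P$, has finite down-sets: if $\nu,\lambda\in P^{+}$ with $\nu\preceq\lambda$ then $(\lambda,\lambda)-(\nu,\nu)=(\lambda-\nu,\lambda+\nu)\geq 0$, since $\lambda-\nu$ is a non-negative sum of simple roots and $\lambda+\nu$ is dominant, so only finitely many $\nu$ occur; moreover any finite subset of $P^{+}\cap(\mu+Q)$ is bounded above for $\preceq$ inside $P^{+}\cap(\mu+Q)$, e.g.\ by $\nu_0+k\theta$ with $\nu_0$ in the set, $\theta$ the highest root and $k\gg 0$, using that $\theta$ is a strictly positive combination of all simple roots. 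Since the structure constants of $\star_{\mf{R}}$ vanish unless $\nu\equiv\lambda+\mu\bmod Q$, both rings are $P/Q$-graded (with $\deg x_i$ the class of $\vpi_i$ in $P/Q$) and $\phi$ is graded, so it suffices to treat each graded piece; there $\phi$ restricts, for every $\lambda$, to a map of finite free $\Z[t]$-modules $\langle x^{a}:\lambda_a\preceq\lambda\rangle\to\langle[\nu]:\nu\preceq\lambda\rangle$ whose matrix is unitriangular, hence an isomorphism, and these finite pieces form a directed system exhausting the graded piece. Therefore $\phi$ is an isomorphism of $\Z[t]$-modules, hence of $\Z[t]$-algebras, and $\mf{R}_{\kappa}(\mf{g})\cong\Z[t][[\vpi_1],\dots,[\vpi_r]]$.

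The main obstacle is really just the first point: pinning down that the diagonal entries are the unit $1$ and not some other element — a priori they are polynomials with non-negative coefficients and value $1$ at $t=1$, hence of the form $t^{k}$, and $t^{k}$ with $k\geq 1$ is not a unit in $\Z[t]$, which would break surjectivity of $\phi$. Ruling out $k\geq 1$ is exactly the zero-dimensional-arrangement computation above (the KZ motive attached to $\A^{0}_{\msk}$ is $K(0)$). Everything after that is the routine leading-term/triangularity bookkeeping, combined with the fact that $\mf{R}_{\kappa}(\mf{g})\otimes_{\Z[t],\,t\mapsto 1}\Z=\mf{R}(\mf{g})$ is the classical polynomial representation ring on the fundamental weights.
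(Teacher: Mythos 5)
Your proposal is correct, and its essential input is the same as the paper's: the coefficient of $[\lambda+\mu]$ in $[\lambda]\star_{\mf{R}}[\mu]$ equals $1$ because for $\nu=\lambda+\mu$ one has $M=0$ and the multiplicity motive is that of a point, while all other $\nu$ occurring satisfy $\nu<\lambda+\mu$; this yields the surjection $\Z[t][x_1,\dots,x_r]\to\mf{R}_{\kappa}(\mf{g})$ exactly as in the paper. Where you diverge is in proving that this surjection is injective. The paper's proof is a two-line commutative-algebra argument: since $\mf{R}_{\kappa}(\mf{g})$ is $\Z[t]$-free, $t-1$ is a nonzero divisor, so $\mf{R}_{\kappa}(\mf{g})$ has the same Krull dimension as the polynomial ring, and a surjection from an integral domain onto a ring of equal finite dimension has trivial kernel. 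You instead run a unitriangularity argument: the monomials $m_{\lambda}$ are upper unitriangular against the basis $\{[\nu]\}$ for the dominance order, the relevant down-sets are finite (your norm estimate $(\lambda,\lambda)-(\nu,\nu)\geq 0$ is the standard one), and hence $\phi$ restricts to isomorphisms of finite free $\Z[t]$-modules exhausting everything. Both arguments are complete; yours is more elementary (no dimension theory) and gives slightly more, namely that the monomials in the $[\vpi_i]$ form a $\Z[t]$-basis in unitriangular position relative to $\{[\lambda]\}$, while the paper's is shorter. You also correctly identify the one genuine danger point --- that the diagonal entries could a priori be $t^{k}$ with $k\geq 1$, which is not a unit --- and dispose of it by the same zero-dimensional-arrangement observation the paper uses.
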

\begin{proof}
  By the construction of $\mf{R}_{\kappa}(\mf{g})$ it follows that
  $\mf{R}_{\kappa}(\mf{g})/(t-1)$ is isomorphic to $\mf{R}(\mf{g})$
  which is well-known to be a polynomial ring over $\Z$ in the the
  classes of the fundamental dominant weights.  For $\lambda$, $\mu$
  dominant integral weights for $\mf{g}$, the coefficient of
  $[\lambda + \mu]$ in $[\lambda]\star[\mu]$ is equal to $1$ (as the
  motive is the motive of a point) and any $\nu$ such that $[\nu]$ has
  a nonzero coefficient in $[\lambda]\star[\mu]$ must satisfy
  $\nu < \lambda + \mu$ (since the structure ``constants'' of
  $\mf{R}_{\kappa}(\mf{g})$ are polynomials with non-negative
  coefficients). This implies that $\mf{R}_{\kappa}(\mf{g})$ is
  generated as a $\Z[t]$-algebra by the classes of the fundamental
  dominant weights.  Thus, we have a surjection
  \begin{equation}\label{e:rings}
    \Z[t][\vpi_1,\vpi_2,\dots,\vpi_r] \lra 
    \mf{R}_{\kappa}(\mf{g}) ,
  \end{equation}
  where $r$ is the rank of $\mf{g}$ and the $\vpi_i$ are the
  fundamental weights.
  
  By construction, $\mf{R}_{\kappa}(\mf{g})$ is free as a
  $\Z[t]$-module so $t-1$ is a nonzero divisor.  It follows that the
  Krull dimension of both rings in \eqref{e:rings} is the same. Since
  the source of the map is a (finite dimensional) integral domain, the
  map must be an isomorphism.
\end{proof}

Although, as shown by Lemma \ref{l:poly}, the structure of
$\mf{R}_{\kappa}(\mf{g})$ as an abstract $\Z[t]$-algebra is
quite simple, this does not help one much in computing all products of
the form $[\lambda]\star [\mu]$ which are the primary objects of
interest. In Section \ref{s:sln} we will give an effective algorithm
for carrying out such computations (also for
$\mf{F}_{\kappa}(\mf{g})$) for all $\mf{sl}_n$.

\subsubsection{A map from  $\mf{R}_{\kappa}(\mf{g})$ to
	$\mf{F}_{\kappa}(\mf{g})$}\label{s:mapRF}

Let $\ell \geq 0$ and let $\kappa= (\ell + h^{\vee})/a$ be as in Definition
\ref{d:gtwist}. In the classical case, natural additive maps $\pi$
from the representation ring to the fusion rings given by an explicit
formula were constructed by Faltings \cite[Appendix]{Faltings} and he
proved that they were ring homomorphisms for classical $\mf{g}$  and $G_2$. A uniform proof of this fact was given later by Teleman
\cite{TelemanCMP}. We do not have a generalisation of the explicit
formula for our enriched rings, however since $\mf{R}_{\kappa}(\mf{g})$
is a polynomial ring by Lemma \ref{l:poly}, one can define a map of
$\Z[t]$-algebras by assigning arbitrary values to the fundamental
dominant integral weights.

\begin{defi}\label{d:pi}
  { Let $\mf{g}$ and $\ell$ be such that all fundamental dominant
    weights are of level $\ell+1$.}  We let
	\begin{equation*}
		\pi_{\kappa}: \mf{R}_{\kappa}(\mf{g}) \lra  \mf{F}_{\kappa}(\mf{g})
	\end{equation*}
	be the $\Z[t]$-algebra homomorphism defined by
	\begin{equation*}
          \pi_{\kappa}([\vpi]) = \begin{cases}
            [\vpi] & \mbox{ if } \vpi \mbox{ is of level } \ell, \\
            0 & \mbox{ if } \vpi \mbox{ is of {exact} level } \ell + 1,
          \end{cases}
        \end{equation*}
        for all fundamental dominant weights $\vpi$.
\end{defi}

\begin{remark}\label{r:map}
  $ $
  \begin{enumerate}
  \item For classical $\mf{g}$ all fundamental dominant weights
    are of level $2$.
  \item If we set $t=1$ in both the rings we recover the maps
    defined by Faltings; this implies that $\pi_{\kappa}$ is
    always surjective.
  \end{enumerate}
\end{remark}

\subsubsection{}\label{s:affine}
Although the maps $\pi_{\kappa}$ are defined in a somewhat ad hoc
manner, we expect them to have an intrinsic meaning. To explain this
we first recall the definition of $\pi$: Let $P$ be the weight lattice
of $\mf{g}$, $P^+$ the subset of dominant integral weights and
$P_{\ell} \subset P^+$ the set of weights of level $\ell$.  The Weyl
group $W$ acts linearly on and this action extends to an (affine)
action of the \emph{affine Weyl group} $W_{\ell}$: this is the group
generated by $W$ and the translation given by
$x \mapsto x + (\ell + h^{\vee})\theta$. The group $W_{\ell}$ is a
Coxeter group and the signature homomorphism $\ve: W \to \{\pm 1\}$
extends to a homomorphism $\ve: W_{\ell}: \to \{\pm 1\}$ by defining
the signature of any translation to be $1$. The \emph{affine walls} of
$P \otimes \R$ are the subsets given by
$(\lambda,\alpha)=m(\ell+h^{\vee})$ for each root $\alpha$ and
$m \in \Z$. Note that $h^{\vee}=n$ for $\mf{g}=\mf{sl}_n$.  The
connected components of the complement of the union of all affine
walls are called \emph{alcoves}.  The \emph{fundamental alcove} is the
alcove defined by the inequalities $(\lambda,\alpha) > 0$ for all
simple roots $\alpha$ and $(\lambda,\theta) < \ell +h^{\vee}$.

We let $l:W_{\ell} \to \N$ be the \emph{length function} corresponding to the
fundamental alcove: $W_{\ell}$ acts simply transitively on the set of
alcoves, so it can be identified with the set of alcoves using the
fundamental alcove as the base point. Then the length of any element
of the Weyl group is the minimal number of reflections (in the affine
walls) needed to move the alcove to the fundamental alcove.

\smallskip
Let $\rho$ be the half sum of the positive roots. Then for any
dominant integral weight $\lambda$, by \cite[Proposition 8.2]{beauville} we
have:
\begin{itemize}
\item[--] $\pi([\lambda]) = 0$ iff there does not any exist
  $w \in W_{\ell}$ such that $w(\lambda + \rho) - \rho \in P_{\ell}$.
\item[--] $\pi([\lambda]) = \ve(w)[\mu]$ if
  $w(\lambda + \rho) - \rho = \mu \in P_{\ell}$ for some
  $w \in W_{\ell}$.
\end{itemize}
We note that in the second item $\mu$ does not depend on $w$ (which is
not unique).

\smallskip

We formulate two conjectures giving a more precise description of
$\pi_{\kappa}$ in terms of the basis $[\lambda]$ of
$\mf{R}_{\kappa}(\mf{g})$. The first, for general $\kappa$, is:
\begin{conj}\label{c:maptofusion}
  For the map $\pi_{\kappa}$ defined above and any dominant integral weight
  $\lambda$ we have:
  \begin{enumerate}
  \item $\pi_{\kappa}([\lambda]) = 0$ iff there does not any exist
    $w \in W_{\ell}$ such that
    $w(\lambda + \rho) - \rho \in P_{\ell}$.
  \item $\pi_{\kappa}([\lambda]) = \ve(w) p_{\kappa}(\lambda)[\mu]$ if
    $w(\lambda + \rho) - \rho = \mu \in P_{\ell}$ for some
    $w \in W_{\ell}$ and $p_{\kappa}(\lambda)$ is a monomial in $t$.
  \end{enumerate}
\end{conj}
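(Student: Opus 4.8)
The plan is to separate Conjecture \ref{c:maptofusion} into a ``classical shadow'' statement, obtained by specialising at $t=1$, and a ``monomiality of the $t$-dependence'' statement, which carries the real content and which I would approach through the motivic picture. Write $\pi_\kappa([\lambda]) = \sum_{\mu\in P_\ell} c_{\lambda,\mu}(t)\,[\mu]$ with $c_{\lambda,\mu}(t)\in\Z[t]$. Because $\pi_\kappa$ is the $\Z[t]$-algebra map determined on fundamental weights by exactly the recipe defining Faltings's map $\pi$ \cite[Appendix]{Faltings}, and because $\mf{R}_\kappa(\mf{g})\otimes_{\Z[t],\,t\mapsto1}\Z = \mf{R}(\mf{g})$ while $\mf{F}_\kappa(\mf{g})\otimes_{\Z[t],\,t\mapsto1}\Z$ is the classical level $\ell$ fusion ring, the map $\pi_\kappa$ reduces modulo $(t-1)$ to $\pi$. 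Hence $c_{\lambda,\mu}(1)$ equals the classical coefficient of $[\mu]$ in $\pi([\lambda])$, which by Teleman's theorem \cite{TelemanCMP} (the alcove description quoted just before the conjecture) vanishes unless there is $w\in W_\ell$ with $w(\lambda+\rho)-\rho\in P_\ell$, in which case it is $\ve(w)$ for the unique such $\mu=w(\lambda+\rho)-\rho$. Thus both parts of the conjecture follow once one knows that every $c_{\lambda,\mu}(t)$ is either $0$ or $\pm$ a single monomial in $t$: a polynomial of that shape which is $0$ (resp. $\pm1$) at $t=1$ is forced to be $0$ (resp. $\pm t^d$), giving $p_\kappa(\lambda)=t^d$.

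The monomiality is where I would bring in Hodge theory. By Remark \ref{r:arbitraryprod} the coefficients produced by $\star_{\mf{R}}$ are Hodge polynomials $P(\msf{KZ}_\kappa(\cdots))$ and those produced by $\star_{\mf{F}}$ are Hodge polynomials of conformal block motives, which are pure with $P$-polynomial $r\,t^M$, $r$ the classical rank and $M$ as in Section \ref{s:sv} (Remark \ref{r:cb}). For $\lambda$ of level $\le\ell$ one expects $\pi_\kappa([\lambda]) = [\lambda]$ directly (the alcove element being $w=1$), so the content is concentrated in $\lambda$ of level $>\ell$, where the conjectural identification of the Hodge filtration on $\msf{KZ}_\kappa/\msf{CB}_\kappa$ with the filtration by conformal blocks at higher levels (Conjecture \ref{c:HodgeFiltration}) should be invoked: granting it, $c_{\lambda,\mu}(t)$ becomes the Hodge polynomial of a pure sub/quotient of rank $\le 1$ of a KZ motive, hence a monomial (or $0$), and the positivity of the $c_{\lambda,\mu}(t)$ needed in the previous paragraph is then automatic since Hodge polynomials have non-negative coefficients. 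Moreover one should be able to identify the exponent: $p_\kappa(\lambda)$ ought to be $t^d$ with $d$ the bottom degree of the Hodge polynomial of a basic rank one motive $[a;\kappa]$ as in Definition \ref{d:basicmotives}, the weight $a$ being read off from the residue formula \eqref{formula:a} in terms of $c(\lambda)$, $c(\mu)$ and the affine length $l(w)$; pinning down this formula would simultaneously prove the sharper Conjecture \ref{c:ec}.

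\textbf{The main obstacle} is exactly the Hodge-filtration comparison (Conjecture \ref{c:HodgeFiltration}): absent it, I see how to control $c_{\lambda,\mu}(1)$ but not the actual $t$-exponents intrinsically. A fallback avoiding it is available for $\mf{sl}_n$. By Section \ref{s:sln}, every product $[\lambda]\star\vpi_k$ in $\mf{R}_\kappa(\mf{sl}_n)$ and $\mf{F}_\kappa(\mf{sl}_n)$ is computed by a double induction resting on associativity (Theorem \ref{t:assoc}) and the motivic Pieri rule (Theorem \ref{genp}), whose output is a tensor product of basic motives $[a;\kappa]$ with Hodge polynomial $1$, $t$ or $t^2$; pushing the resulting monomial-times-basis expressions through the ring homomorphism $\pi_\kappa$ and bookkeeping the affine Weyl folding then verifies both parts of Conjecture \ref{c:maptofusion} by induction on the height of $\lambda$. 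For $\mf{sl}_2$ this is the content of Theorem \ref{t:sl2}, and the same scheme should settle all $\mf{sl}_n$; the general $\mf{g}$ case would follow once the enriched Pieri-type products are understood (cf. Section \ref{s:genG}).
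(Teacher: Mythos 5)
The statement you are trying to prove is stated in the paper as a \emph{conjecture}, and the paper itself only establishes it for $\mf{g} = \mf{sl}_2$ (Theorem \ref{t:sl2}, via Proposition \ref{p:pisl2}); for $\mf{sl}_n$, $n\geq 3$, the authors report only computational verification in many cases. Your reduction step is sound: since $\pi_{\kappa}$ specialises at $t=1$ to Faltings's map $\pi$ (cf.\ Remark \ref{r:map}), Teleman's alcove description pins down every coefficient $c_{\lambda,\mu}(1)$, and monomiality of the $c_{\lambda,\mu}(t)$ would then force the full statement. But the monomiality is the entire content of the conjecture, and your argument for it has a genuine gap. Invoking Conjecture \ref{c:HodgeFiltration} makes the proof conditional on another open conjecture; worse, even granting it, the claim that ``$c_{\lambda,\mu}(t)$ becomes the Hodge polynomial of a pure sub/quotient of rank $\le 1$ of a KZ motive'' does not follow and cannot be literally true. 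The coefficients of $\pi_{\kappa}([\lambda])$ are obtained by first expressing $[\lambda]$ as a polynomial with possibly \emph{negative} integer coefficients in the classes $[\vpi_i]$ inside $\mf{R}_{\kappa}(\mf{g})$ and then pushing through the ring homomorphism; the result carries the sign $\ve(w)$, so it is not the Hodge polynomial of any motive, and your remark that ``positivity of the $c_{\lambda,\mu}(t)$ \dots is automatic since Hodge polynomials have non-negative coefficients'' contradicts the presence of $\ve(w)=-1$ in the very statement you are proving. No mechanism in your sketch rules out cancellation producing a non-monomial (e.g.\ $t^a - t^b + t^c$ with value $\pm 1$ at $t=1$).

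Your fallback for $\mf{sl}_n$ is, in the case $n=2$, essentially the paper's actual proof: Proposition \ref{p:pisl2} runs an induction on $p$ using the relation $S_p\star S = S_{p+1}+d_{\kappa}(p)S_{p-1}$, applies the homomorphism $\pi_{\kappa}$, and controls the factors $d_{\kappa}(p)\in\{1,t\}$ via Lemma \ref{DelMos}, with a careful case analysis modulo $\ell+2$ (including a nontrivial Galois-twist identity $d_{\kappa}(\ell-r)=d_{\kappa}(r+1)$). For $n\geq 3$ the ``bookkeeping of the affine Weyl folding'' you defer to is precisely what is not done anywhere, and it is not a routine extension: the induction must simultaneously track which $[\nu]$ fold to which alcove representatives under multiplication by each $[\vpi_k]$ and show that all the resulting monomial contributions to a given $[\mu]$ combine with consistent signs and equal exponents. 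Absent that, your proposal establishes the conjecture only in the cases the paper already covers.
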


We now make a precise conjecture for the coefficients
$p_{\kappa}(\lambda)$ from Conjecture \ref{c:maptofusion} in two
special cases.
\begin{defi}\label{d:C}
  Given any weight $\lambda$ write it as $\sum a_i \alpha_i$, where
  $\alpha_i$ are simple roots. We define the function
  $\mf{s}: P \to \Q$ by setting $\mf{s}(\lambda) = \sum_i a_i$.
\end{defi}

\begin{conj}\label{c:ec}
  Let $\lambda$ be a dominant integral weight and assume that
  $w(\lambda + \rho) - \rho = \mu \in P_{\ell}$ for some
  $w \in W_{\ell}$. If $\kappa = \ell + h^{\vee}$ then
  \begin{enumerate}
  \item
    $\pi_{\kappa}([\lambda]) = \ve(w) t^{\mf{s}(\lambda - \mu)}[\mu]$.
  \item $\pi_{\bar{\kappa}}([\lambda]) = \ve(w) t^{l(w)}[\mu]$ when
    $l(w)$ is minimal (among all $w$ as above).
  \end{enumerate}
\end{conj}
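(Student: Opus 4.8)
The plan is to verify Conjecture \ref{c:ec} completely for $\mf{sl}_2$ (this is the content of Theorem \ref{t:sl2}) and to isolate the obstruction in general. The starting point is that reducing $\pi_{\kappa}$ modulo $(t-1)$ recovers the classical Faltings map $\pi$ on $\mf{R}(\mf{g})$, which by \cite[Proposition 8.2]{beauville} (see also \cite{TelemanCMP}) sends $[\lambda]$ either to $0$ or to $\ve(w)[\mu]$ with $\mu=w(\lambda+\rho)-\rho\in P_{\ell}$; this already pins down the candidate weight $\mu$ and the sign $\ve(w)$. The substantive content --- that $\pi_{\kappa}([\lambda])$ is supported on this single $[\mu]$ (Conjecture \ref{c:maptofusion}) and that the coefficient is the precise monomial of Conjecture \ref{c:ec} --- must then be established by a direct computation. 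For $\mf{sl}_2$ this is feasible because $\mf{R}_{\kappa}(\mf{g})$ is then a polynomial ring in one variable (Lemma \ref{l:poly}) and all the relevant motives are the explicit rank-one motives $[b;\kappa]$, whose Hodge polynomials are among $1,t,t^2$ (Section \ref{s:vpi1}).

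\textbf{The case $\mf{sl}_2$.} Identify $a\vpi_1$ with $a\in\Z_{\geq 0}$. The motivic Pieri rule (Theorem \ref{genp}) gives, for $a\geq 2$, the relation $[a-1]\star_{\mf{R}}[\vpi_1]=[a]+P([a-1;\kappa])\,[a-2]$ in $\mf{R}_{\kappa}(\mf{g})$, with $P([a-1;\kappa])\in\{1,t,t^2\}$; solving this recursion expresses each $[a]$ as an explicit polynomial in $[\vpi_1]$ over $\Z[t]$. Applying $\pi_{\kappa}$, which sends $[\vpi_1]\mapsto[\vpi_1]$ when $\ell\geq 1$ and $[\vpi_1]\mapsto 0$ when $\ell=0$ (Definition \ref{d:pi}), and evaluating the result in $\mf{F}_{\kappa}(\mf{g})$ via the conformal-block Pieri rule (whose structure constants are monomials $r t^M$), one obtains a closed formula for $p_{\kappa}(a)$ by induction on $a$. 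On the other side, $W_{\ell}$ is the infinite dihedral group acting on $\Z$ by the dot action generated by $a\mapsto -2-a$ and $a\mapsto 2(\ell+h^{\vee})-2-a$; for $\lambda=a\vpi_1$ linked to $\mu=b\vpi_1\in P_{\ell}$ one checks that $a-b$ is even, so $\mf{s}(\lambda-\mu)=(a-b)/2\in\Z_{\geq 0}$, and one reads off $l(w)$ for the minimal such $w$ from the dihedral word length. Matching the two sides proves part (1) for $\kappa=\ell+h^{\vee}$; part (2) for $\bar{\kappa}=\kappa/(\kappa-1)$ follows from the same induction carried out with the rank-one motives $[b;\bar{\kappa}]$ (again with Hodge polynomials among $1,t,t^2$), the complementary Hodge bidegrees that now appear --- reflecting complex conjugation on these motives as in Section \ref{s:structure} --- reorganising the exponent count into $l(w)$.

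\textbf{The main obstacle.} For general $\mf{g}$ the difficulty is twofold. First, outside type $A$ there is no multiplication algorithm, so one cannot write $[\lambda]$ in terms of the fundamental classes; and even in type $A$, extracting the exponent $\mf{s}(\lambda-\mu)$ from the iterated Pieri computations of Section \ref{s:sln} requires a delicate bookkeeping argument tracking how many simple roots are subtracted at each step of the induction. Second, and more fundamentally, we have no conceptual explanation of the monomiality predicted by Conjecture \ref{c:maptofusion}. The natural route is to realise $\pi_{\kappa}$ through a motivic BGG-type resolution of $\mc{KZ}_{\kappa}(\vec{\lambda},\nu^*)$ (Conjecture \ref{BGGtype}) lifting Teleman's resolution \cite{TelemanCMP}, together with the identification of the Hodge filtration with the filtration by conformal blocks at higher levels (Conjecture \ref{c:HodgeFiltration}): in such a resolution the Tate twist attached to the term indexed by $w\in W_{\ell}$ would be governed by $l(w)$, and the weight of the whole object by $\mf{s}(\lambda-\mu)$, producing exactly the two formulas of Conjecture \ref{c:ec}. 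I therefore expect the real obstacle to be not the $\mf{sl}_2$ verification but the construction of this motivic BGG resolution, which is at present out of reach for general $\mf{g}$.
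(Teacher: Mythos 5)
Your proposal follows essentially the same route as the paper: the paper's proof of Theorem \ref{t:sl2} also runs the $\mf{sl}_2$ Pieri recursion $S_p\star S = S_{p+1}+d_{\kappa}(p)S_{p-1}$ through the ring homomorphism $\pi_{\kappa}$, inducts on $p$ with a case analysis modulo $\ell+2$ (Proposition \ref{p:pisl2}), and then evaluates the factors $d_{\kappa}(p')$ explicitly via Lemma \ref{DelMos} for $\kappa=\ell+2$ and $\bar{\kappa}=(\ell+2)/(\ell+1)$ to match the exponents with $\mf{s}(\lambda-\mu)$ and $l(w)$. Your assessment of the general case --- that the conjecture remains open beyond $\mf{sl}_2$ and that a motivic BGG resolution is the expected conceptual source --- also matches the paper's own framing.
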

We prove both the above conjectures when $\mf{g} = \mf{sl}_2$ in Theorem
\ref{t:sl2} and have also checked them computationally for $\mf{sl}_n$
in many cases using the results of Section \ref{s:sln}.

\begin{remark}\label{r:tele}
	It would be very interesting to have a more conceptual definition of
	the maps $\pi_{\kappa}$, perhaps an extension of the description of
	$\pi$ in \cite{TelemanCMP}. But note that in our setting the source
	itself is not well understood.
\end{remark}

\subsubsection{}
For the classical maps $\pi$ explicit generators for $\ker(\pi)$ (see
\cite{douglas} and the references therein) are known. We do not know
explicit generators for $\ker(\pi_{\kappa})$ for general $\mf{g}$ but we
have:
\begin{lemma}
  If $\mf{g}$ is $\mf{sl}_n$ or $\mf{sp}_{2n}$, the dominant integral
  weights of exact level $\ell + 1$ generate $\ker(\pi_{\kappa})$.
\end{lemma}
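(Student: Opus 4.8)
The plan is to identify $\ker(\pi_{\kappa})$ with the ideal $J \subseteq \mf{R}_{\kappa}(\mf{g})$ generated by the classes $[\lambda]$ with $\lambda$ of exact level $\ell+1$. Since $\pi_{\kappa}$ is surjective (Remark \ref{r:map}) and $\mf{F}_{\kappa}(\mf{g})$ is free over $\Z[t]$ of rank $|P_{\ell}|$, it suffices to prove (a) $\pi_{\kappa}([\lambda]) = 0$ for every $\lambda$ of exact level $\ell+1$, so that $J \subseteq \ker(\pi_{\kappa})$; and (b) that $\mf{R}_{\kappa}(\mf{g})/J$ is generated as a $\Z[t]$-module by the classes $[\mu]$, $\mu \in P_{\ell}$. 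Granting (a), $\pi_{\kappa}$ factors through a surjection $\ov{\pi}\colon \mf{R}_{\kappa}(\mf{g})/J \twoheadrightarrow \mf{F}_{\kappa}(\mf{g})$; composing with the surjection $\Z[t]^{P_{\ell}} \twoheadrightarrow \mf{R}_{\kappa}(\mf{g})/J$ furnished by (b) gives a surjective $\Z[t]$-linear endomorphism of $\Z[t]^{P_{\ell}}$, hence an isomorphism, so $\ov{\pi}$ is an isomorphism and $J = \ker(\pi_{\kappa})$.

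For (b) I would argue by induction on the level of $\lambda$ and, within a level, on the dominance order. Every nonzero dominant $\lambda$ can be written as $\mu_0 + \vpi$ for a fundamental weight $\vpi$ with $\mu_0 \in P^+$ of strictly smaller level, and associativity of $\star_{\mf{R}}$ (Theorem \ref{t:assoc}) together with the expansion of $[\mu_0]\star_{\mf{R}}[\vpi]$ gives $[\lambda] = [\mu_0]\star_{\mf{R}}[\vpi] - \sum_{\nu<\lambda}c_{\nu}(t)[\nu]$, in which the coefficient of $[\lambda]$ is $1$ because the relevant multiplicity motive is the Aomoto motive of an arrangement over a point (the $M=0$ situation, as in Theorem \ref{genp}), and the remaining $\nu$ are highest weights of summands of $V_{\mu_0}\otimes V_{\vpi}$, so $\nu<\lambda$ and $\mathrm{level}(\nu)\le\mathrm{level}(\lambda)$ (using $(\theta,\alpha_i)\ge 0$ for simple roots $\alpha_i$). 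When $\mathrm{level}(\lambda)>\ell$ one then checks, using that each level set is finite and that for $\mf{sl}_n$ (resp.\ $\mf{sp}_{2n}$) the decomposing weight $\vpi$ can be taken of level $1$ (resp.\ of level at most $2$)---so products $[\mu]\star_{\mf{R}}[\vpi]$ with $\mu\in P_{\ell}$ produce only weights of level $\le \ell+2$---that every term occurring either lies in $P_{\ell}$, or has level $\ell+1$ (hence lies in $J$), or has strictly smaller level, or has the same level but is strictly lower in dominance; the induction then closes, the motivic Pieri rule of Theorem \ref{genp} supplying the needed control on which $\nu$ appear.

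For (a) I would run the same double induction, carrying along the auxiliary statement $\pi_{\kappa}([\mu])=[\mu]$ for all $\mu\in P_{\ell}$. The key input is that the enriched representation and fusion Pieri coefficients agree on low-level targets: if $\mu_0,\mu',\vpi$ all lie in $P_{\ell}$ and every summand of $V_{\mu_0}\otimes V_{\vpi}$ has level $\le\ell$, then no affine Weyl reflection corrections occur, so the fusion multiplicity of $V_{\mu'}$ equals the tensor multiplicity, whence $\mathcal{CB}_{\kappa}((\mu_0,\vpi),{\mu'}^*)=\mathcal{KZ}_{\kappa}((\mu_0,\vpi),{\mu'}^*)$, being a subobject of equal rank (Remark \ref{r:cb}); thus $[\mu_0]\star_{\mf{F}}[\vpi]$ and $[\mu_0]\star_{\mf{R}}[\vpi]$ have the same part of level $\le\ell$. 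Applying $\pi_{\kappa}$ to the identity above for $\lambda$ of exact level $\ell+1$ written as $\mu_0+\vpi$ with $\mu_0\in P_{\ell}$, and feeding in $\pi_{\kappa}([\mu_0])\star_{\mf{F}}\pi_{\kappa}([\vpi])=[\mu_0]\star_{\mf{F}}[\vpi]$ together with the inductive hypotheses, then forces $\pi_{\kappa}([\lambda])=0$; the case where $\lambda$ is itself a fundamental weight of exact level $\ell+1$ is immediate from the definition of $\pi_{\kappa}$.

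The main obstacle is precisely this Pieri comparison. It goes through cleanly for $\mf{sl}_n$ because all fundamental weights have level $1$, so multiplying by them never pushes past level $\ell+1$ and the affine corrections (which would land in level $\ell$) never appear; for a general simple $\mf{g}$ possessing a fundamental weight of level $\ge 2$ the two Pieri rules genuinely differ on level-$\ell$ targets and the argument breaks down. For $\mf{sp}_{2n}$ the case $\ell=1$ is trivial (then $P_{1}=\{0\}$, $\pi_{\kappa}$ is the augmentation, and the exact-level-$2$ weights are exactly the fundamental weights, which generate the augmentation ideal), while for $\ell\ge 2$ one must either analyse the level-$(\ell+2)$ corrections directly or invoke the classical description of $\ker(\pi)$ for $\mf{sp}_{2n}$ (\cite{douglas} and the references therein) as additional input.
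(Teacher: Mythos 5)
Your overall route is the paper's: use that the fundamental weights are of level one to write a weight of exact level $\ell+1$ as $\mu_0+\vpi$ with $\mu_0\in P_{\ell}$ and $\vpi$ fundamental, exploit the unitriangularity of the $\star$-products, and induct on level and height/dominance; your part (b), combined with the surjectivity of $\pi_{\kappa}$ and the rank count for free $\Z[t]$-modules, is exactly the paper's second paragraph made explicit. For the containment direction the paper is very terse, and the extra structure you supply --- the auxiliary statement $\pi_{\kappa}([\mu])=[\mu]$ for $\mu\in P_{\ell}$, and the comparison of the enriched Pieri coefficients of $\star_{\mf{R}}$ and $\star_{\mf{F}}$ on targets in $P_{\ell}$ via the equality $\mathcal{CB}=\mathcal{KZ}$ whenever the fusion and tensor multiplicities agree (a subobject of equal rank is the whole object) --- is, as far as I can tell, both correct and genuinely needed to justify that step; this is a worthwhile elaboration rather than a detour.

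The one genuine defect is your treatment of $\mf{sp}_{2n}$. You assert that its decomposing fundamental weight can only be taken ``of level at most $2$'', conclude that products $[\mu]\star[\vpi]$ with $\mu\in P_{\ell}$ may reach level $\ell+2$, and on that basis leave the case $\mf{g}=\mf{sp}_{2n}$, $\ell\ge 2$ unproved, deferring it to an analysis of level-$(\ell+2)$ corrections or to external input on $\ker(\pi)$. This residual case is an artifact of a Lie-theoretic slip: for $\mf{sp}_{2n}$ one has $\theta=2L_1$, and in the normalisation $(\theta,\theta)=2$ every fundamental weight $\vpi_k=L_1+\dots+L_k$ satisfies $(\vpi_k,\theta)=2(L_1,L_1)=1$, so \emph{all} fundamental weights of $\mf{sp}_{2n}$ are of level one, exactly as for $\mf{sl}_n$. (This is the observation with which the paper's proof opens, and it is precisely why the lemma is stated only for these two types.) With the levels corrected, multiplying a class indexed by $P_{\ell}$ by a fundamental weight produces only weights of level $\le\ell+1$; weights of exact level $\ell+1$ have $\nu+\rho$ on an affine wall and so contribute no correction to the level-$\le\ell$ fusion coefficients, so your Pieri comparison and the whole double induction go through for $\mf{sp}_{2n}$ verbatim and no additional input is needed. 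As submitted, however, your argument does not establish the lemma for $\mf{sp}_{2n}$ when $\ell\ge 2$.
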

\begin{proof}
  The proof is essentially the same as in the case of the classical
  fusion ring (\cite[p.~216]{bouw-ridout}.
	
  When $\mf{g}$ is $\mf{sl}_n$ or $\mf{sp}_{2n}$ all fundamental
  weights are of level $1$ so any dominant integral weight $\nu$ of
  exact level $\ell + 1$ can be written as a sum
  $\nu = \lambda + \vpi$ where $\lambda$ is of level $\ell$ and $\vpi$
  is a fundamental dominant integral weight. Given the definition of
  the $\star$ products in $\mf{R}_{\kappa}(\mf{g})$ and
  $\mf{F}_{\kappa}(\mf{g})$ this implies that $[\nu]$ is in
  $\ker(\pi_{\kappa})$.
	
  For the converse, we use that any $\nu\in P^+$ of exact level
  $> \ell$ can be written as a sum $\nu = \lambda + \mu$ where
  $\lambda$ is of exact level $\ell + 1$ and $\mu\in P^+$. Any summand
  $V_{\nu'}$ of $V_{\lambda} \otimes V_{\mu}$ is such that
  $(\nu',\theta) \leq (\nu, \theta)$ and $(\nu',\rho) <
  (\nu,\rho)$. This implies by induction on the level and height that
  any element of $\mf{R}_{\kappa}(\mf{g})$ is equal, modulo the ideal
  generated by all dominant integral weights of exact level
  $\ell + 1$, to a $\Z[t]$-linear combination of dominant integral
  weights of level $\leq \ell$.
\end{proof}

\begin{remark}
  If $\mf{g}$ is $\mf{sl}_n$ or $\mf{sp}_{2n}$, $\ker(\pi)$ is known
  to be generated by the set
  \begin{equation*}
    \{\ell \vpi_1 + \vpi_i \ | \ i=1,2,\dots,n\},
  \end{equation*}
  where the $\vpi_i$ are the fundamental dominant integral weights
  (\cite[Section 3.2]{bouw-ridout}). Since all these weights are of
  level $\ell + 1$ they are also in the kernel of
  $\ker(\pi_{\kappa})$, but they do not seem to generate it even in
  type $A_2$.
\end{remark}

\subsection{Replacing $1/\kappa$ by $ m_{\mf{g}}+1/\kappa$} \label{s:trp}

Suppose $\kappa>0$. Let $\kappa'=\frac{\kappa}{m_{\mf{g}}\kappa +1}$, so that $1/\kappa'= m_{\mf{g}}+ 1/\kappa$. Let $\lambda_1,\dots,\lambda_n,\nu$ be dominant integral weights for $\mf{g}$. 

\begin{lemma} \label{l:one}
The Hodge polynomials of
$\mathcal{KZ}_{\kappa}(\vec{\lambda},\nu^*)_{\vec{z}})$ and $\mathcal{KZ}_{\kappa'}(\vec{\lambda},\nu^*)_{\vec{z}})$
are the same for any $\vec{z}\in \mc{C}_n(\C)$. Thus, $\mf{R}_{\kappa}(\mf{g})=\mf{R}_{\kappa'}(\mf{g})$. 
\end{lemma}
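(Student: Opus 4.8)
The plan is to reduce the statement to a comparison of the associated weighted hyperplane arrangements for the two values of the parameter, and then to show that the change $1/\kappa \leadsto m_{\mf{g}} + 1/\kappa$ multiplies all the weights $a(H)$ appearing in the Schechtman--Varchenko arrangement by integers, so that the two local systems (and hence the underlying cyclic covers and mixed Hodge structures) are literally the same. Concretely, recall from Section \ref{s:sv} that the master function $\ms{R}$ is built from factors $(t_b - z_j)^{(\lambda_j,\beta(b))/\kappa}$ and $(t_b-t_c)^{-(\beta(b),\beta(c))/\kappa}$ (together with the constant $z_i - z_j$ factors, which play no role in the monodromy), so the weights are $(\lambda_j,\beta(b))/\kappa$ and $-(\beta(b),\beta(c))/\kappa$. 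Replacing $1/\kappa$ by $m_{\mf{g}} + 1/\kappa$ changes these weights by $m_{\mf{g}}(\lambda_j,\beta(b))$ and $-m_{\mf{g}}(\beta(b),\beta(c))$ respectively. By parts (1)--(4) of the proof of Lemma \ref{formula:N}, $m_{\mf{g}}(\lambda,\alpha)$ is an integer for any integral weight $\lambda$ and any root $\alpha$, and $m_{\mf{g}}(\alpha,\alpha')$ is an integer (in fact $m_{\mf{g}}(\alpha,\alpha)$ is even) for simple roots $\alpha,\alpha'$. Hence every weight $a(H)$ changes by an integer when passing from $\kappa$ to $\kappa'$.

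First I would make this precise at the level of the rank one local systems. Writing $\eta_\kappa$ and $\eta_{\kappa'}$ for the corresponding logarithmic $1$-forms on $U$, the above shows $\eta_{\kappa'} - \eta_\kappa = \dlog(P)$ for a rational function $P = \prod f_H^{c_H}$ with $c_H \in \Z$, i.e.\ $P$ is (up to a constant) a single-valued regular function on $U$ with divisor supported on the arrangement. Therefore $\mc{L}(\eta_{\kappa'}) \cong \mc{L}(\eta_\kappa)$ as $K$-mixed local systems on $U$ (multiplication by $P$, cf.\ Remark \ref{locality}); here one must be slightly careful that the $K$-structures match, but since both are obtained as the $\tau$-isotypical summand of a cyclic cover construction for a function that is the same up to the $N$-th power of $P$, and $P$ is $\Sigma_\beta$-invariant, this is immediate from the construction in Section \ref{s:mls1}. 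The subtlety is that although the local systems agree, the divisors $E_\alpha$ along which the residue is a strictly positive integer (resp.\ a non-negative integer) can change, because $\op{Res}_{E_\alpha}\eta$ shifts by an integer. This is exactly the content of Remark \ref{choiceV}: the KZ motive is unchanged if one additionally removes from $V$ all divisors $E_\alpha$ whose residue is a negative integer, and symmetrically adjusts $V'$; with such a symmetric choice the sets $B, B'$ of Section \ref{s:kzmls} are stable under integer shifts of the residues. So I would either invoke that remark directly, or argue that the image of $H^M(V', q_! \mc{L}) \to H^M(V, j_! \mc{L})$ is insensitive to which divisors with integer residues are included in $V$ or $V'$ (a standard consequence of Lemma \ref{l:ext}, since along such a divisor the local system has trivial monodromy and $j_!$, $j_*$ differ from $j_!$ for the larger open set only by objects supported on that divisor, which contribute nothing to the relevant subquotient).

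Granting that, I would conclude that $\mathcal{KZ}_{\kappa}(\vec{\lambda},\nu^*)$ and $\mathcal{KZ}_{\kappa'}(\vec{\lambda},\nu^*)$ are isomorphic as $K$-mixed local systems on $\mc{C}_n$, in particular their fibres over any $\vec{z} \in \mc{C}_n(\C)$ are isomorphic as $K$-mixed Hodge structures, so have the same Hodge polynomial. The equality $\mf{R}_{\kappa}(\mf{g}) = \mf{R}_{\kappa'}(\mf{g})$ then follows from Definition \ref{kappaRing} and Remark \ref{r:arbitraryprod}, since the structure constants of both rings are the Hodge polynomials $P(M_{\kappa}(\la,\mu,\nu^*))$, which depend only on these fibrewise Hodge structures. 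For the statement as literally asserted (only the Hodge \emph{polynomials} coincide for every $\vec{z}$), one does not even need the full isomorphism of local systems: the fibrewise comparison of mixed Hodge structures suffices.

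The main obstacle I anticipate is the bookkeeping around the non-canonical choices of $V$ and $V'$ — i.e.\ making rigorous the claim that integer shifts of residues do not affect the KZ motive. The geometric/homotopical input (multiplication by $P$) is routine, and the arithmetic input is already isolated in Lemma \ref{formula:N}; everything else is a matter of tracking the compactification data carefully, for which Remark \ref{choiceV} and Lemma \ref{l:ext} provide exactly the tools needed. One should also record that the same argument, applied verbatim to $h_!$ and $h_*$ in place of $q_!$ and $j_!$, handles the conformal-block local systems, which is what underlies Remark \ref{r:param}; but for the present lemma only the KZ statement is needed.
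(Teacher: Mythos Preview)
Your argument is essentially correct and close to the paper's, but you overreach at one point, and the paper handles the $V,V'$ bookkeeping more cleanly.

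The overclaim: you assert that $\mathcal{KZ}_{\kappa}(\vec{\lambda},\nu^*)$ and $\mathcal{KZ}_{\kappa'}(\vec{\lambda},\nu^*)$ are isomorphic as $K$-mixed local systems on $\mc{C}_n$. This is false in general. Your function $P$ uses only the $t_b-z_j$ and $t_b-t_c$ factors, but once $\vec{z}$ varies the master function also has the factors $(z_i-z_j)^{-(\lambda_i,\lambda_j)/\kappa}$, and the shift in these exponents is $-m_{\mf{g}}(\lambda_i,\lambda_j)$, which by part (2) of the proof of Lemma~\ref{formula:N} lies only in $\tfrac{1}{i_{\mf{g}}}\Z$, not in $\Z$. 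So $\eta_{\kappa'}-\eta_\kappa$ is \emph{not} $\dlog$ of a rational function on the relative $U$ over $\mc{C}_n$; the full local systems differ by a nontrivial twist pulled back from $\mc{C}_n$. This is exactly why Lemma~\ref{l:two} needs the larger shift $i_{\mf{g}}m_{\mf{g}}$. Your own parenthetical (``the constant $z_i-z_j$ factors, which play no role in the monodromy'') already presumes $\vec{z}$ fixed, so the later global claim is inconsistent with your own setup.

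Fortunately your fallback---compare the fibres over each fixed $\vec{z}$ as mixed Hodge structures---is exactly right and is precisely what the paper does; that is all the lemma asserts.

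On the $V,V'$ issue, the paper is cleaner than your route through Remark~\ref{choiceV}. It simply takes the sign-based choice: $V=P\setminus\cup_{a_\alpha\le 0}E_\alpha$ and $V'=P\setminus\cup_{a_\alpha<0}E_\alpha$. Since all residues are of the form $c_\alpha/\kappa$ with $c_\alpha$ independent of $\kappa$, and the passage $1/\kappa\mapsto m_{\mf{g}}+1/\kappa$ multiplies $1/\kappa$ by the positive number $1+m_{\mf{g}}\kappa$, every residue keeps its sign, so $V$ and $V'$ are literally unchanged. Combined with the fibrewise isomorphism $\mc{L}(\eta_\kappa)\cong\mc{L}(\eta_{\kappa'})$ (your integer-shift argument, which is correct on each fibre), this gives the equality of fibrewise MHS and hence of Hodge polynomials with no further bookkeeping.
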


\begin{proof}
For arbitrary $\mf{g}$ and real $\kappa>0$, we note that in Lemma
\ref{l:topo}, we may take $P$ to be $V=P-\cup'_{\alpha}E_{\alpha}$,
where the union is restricted to $\alpha$ such that
$a_{\alpha}=\op{Res}_{E_{\alpha}}\eta$, the residue of $\eta$ is a
real number $\leq 0$, and $V'=P-\cup'_{\alpha}E_{\alpha}$, where the
union is restricted to $\alpha$ such that
$a_{\alpha}=\op{Res}_{E_{\alpha}}\eta$, is a real number $<0$.  Then
under addition of $m_{\mf{g}}$ (see Section \ref{s:notations}) to
$1/\kappa$ (indeed when $\frac{1}{\kappa}$ is scaled by any positive
real number), the varieties $V$ and $V'$ do not change. The
restriction of the corresponding local systems to the fibre over a
fixed $\vec{z} \in \mc{C}_n(\C)$ also do not change because we may
ignore the $z_i-z_j$ factors in the definition of the master function.
Therefore we may add $m_{\mf{g}}$ to $\frac{1}{\kappa}$ for a fixed
$\vec{z}$, and not change the Hodge polynomial of
$\mathcal{KZ}_{\kappa}(\vec{\lambda},\nu^*)_{\vec{z}})$.
\end{proof}

Note that the local systems  $\mathcal{KZ}_{\kappa}(\vec{\lambda},\nu^*)$ and $\mathcal{KZ}_{\kappa'}(\vec{\lambda},\nu^*)$ on $\mathcal{C}_n$ corresponding to may not be isomorphic (for example local monodromies could be different). However, we have the following:
\begin{lemma}\label{l:two}
Suppose $1/\kappa''=\frac{1}{\kappa}+m_{\mf{g}}i_{\mf{g}}$. Then the mixed  local systems  $\mathcal{KZ}_{\kappa}(\vec{\lambda},\nu^*)$ and $\mathcal{KZ}_{\kappa''}(\vec{\lambda},\nu^*)$ on $\mathcal{C}_n$  are isomorphic. 
\end{lemma}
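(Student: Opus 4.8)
The plan is to upgrade Lemma \ref{l:one} from a pointwise statement about Hodge polynomials to a statement about isomorphism of mixed local systems on all of $\mc{C}_n$, at the cost of a larger shift in $1/\kappa$ (multiplying the increment by $i_{\mf{g}}$). The key point is that when $1/\kappa$ changes by $m_{\mf{g}}i_{\mf{g}}$, not only do the residues of $\eta$ along the exceptional divisors $E_\alpha$ change in a controlled way (as in Lemma \ref{l:one}), but the $z_i - z_j$ factors in the master function \eqref{e:master}, which were ignored in Lemma \ref{l:one} because $\vec{z}$ was fixed there, now contribute. First I would examine how the form $\eta$ from \eqref{deta} changes: replacing $1/\kappa$ by $1/\kappa + m_{\mf{g}}i_{\mf{g}}$ adds to $\eta$ the term
\[
  m_{\mf{g}}i_{\mf{g}}\biggl(\sum_{b,j}(\lambda_j,\beta(b))\frac{d(t_b-z_j)}{t_b-z_j} - \sum_{b<c}(\beta(b),\beta(c))\frac{d(t_b-t_c)}{t_b-t_c}\biggr).
\]
The crucial observation (using points (1)--(4) of Lemma \ref{formula:N}, together with the fact that $i_{\mf{g}}\lambda_j$ lies in the root lattice so that $m_{\mf{g}}i_{\mf{g}}(\lambda_j,\beta(b))\in\Z$) is that this extra term has integer residues along every hyperplane of the arrangement and along every exceptional divisor. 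Hence the associated rank one $K$-mixed local system $\mc{L}(\eta)$ on $U$ changes by tensoring with a $K$-mixed local system that extends to a local system on $P$ (over $\mc{C}_n$), and whose monodromy around every hyperplane is trivial; but such a local system, being trivial in the fibre direction where $\pi_1$ is generated by loops around hyperplanes, and constant along $\mc{C}_n$, can be arranged to be trivial altogether after absorbing the remaining $z_i-z_j$ monodromy.

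More precisely, the second step is to make the comparison of compactifications. Following Remark \ref{choiceV} and the proof of Lemma \ref{l:one}, I would choose $V = P \setminus \cup'_\alpha E_\alpha$ where the union is over $\alpha$ with $\op{Res}_{E_\alpha}\eta \leq 0$ a real number, and $V'$ similarly with $< 0$; as noted there, these varieties (and the subsets $B, B' \subset A$ of Section \ref{s:kzmls}) are unchanged under adding any positive real to $1/\kappa$, in particular under adding $m_{\mf{g}}i_{\mf{g}}$. So the only thing that changes in Definition \ref{d:kzm} is the local system $\mc{L}(\eta)$, which changes to $\mc{L}(\eta) \otimes_K \mc{M}$ where $\mc{M} = Kg_0^{1/N'}$ for $g_0$ the regular function $\prod_{i<j}(z_i-z_j)^{N''}\cdot(\text{monomials in }t_b-z_j, t_b-t_c)$ with all exponents now divisible appropriately — i.e.\ $\mc{M}$ is pulled back along $h: U \to \mc{C}_n$ up to a factor with trivial monodromy on each fibre. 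Here I would invoke Lemma \ref{l:prod} and Lemma \ref{l:ls}: the fibrewise-trivial part of $\mc{M}$ contributes nothing (it is the constant sheaf on the relevant normal crossings compactification since its monodromy around each $E_\alpha$ and each hyperplane is trivial and $\pi_1$ of a fibre of $h$ is generated by such loops), while the $\prod_{i<j}(z_i-z_j)^{\text{integer}}$ part is $h^*$ of a $K$-mixed local system on $\mc{C}_n$ that is actually constant since its monodromy around each diagonal $z_i = z_j$ is trivial (integer residue) — hence $\mc{M} \simeq K^{\msm}_U$. Therefore $\mc{L}(\eta)$ is unchanged as a $\Sigma_\beta$-equivariant $K$-mixed local system on $U$, and so is the image defining $\mathcal{KZ}_{\kappa}(\vec{\lambda},\nu^*)$ in \eqref{eq:3}.

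The main obstacle I anticipate is the bookkeeping needed to show that the extra factor $\mc{M}$ really is trivial as a \emph{$K$-mixed} local system, not merely that its Betti realisation is trivial — one must check that the idempotent-cut-out summand $Kg_0^{1/N}$ of $(m_N)_* \Q^{\msm}$ is isomorphic, as an object of $\msm(U)_K$ with its $\Sigma_\beta$-linearisation, to the constant object $K^{\msm}_U$. For the fibrewise part this follows because a $K$-mixed local system whose Betti realisation is the trivial rank one local system on a variety $W$ is itself $K^{\msm}_W$ (use that $\rat$ is conservative and faithful, together with the fact that $\home_{\msm(W)_K}(K^{\msm}_W, K^{\msm}_W) = K$ so there is no ambiguity in the identification); for the base-direction part one uses additionally that the residue of the relevant form along each $z_i = z_j$ is $m_{\mf{g}}i_{\mf{g}}(\lambda_i,\lambda_j)/(-1) \in \Z$ by points (1)--(2) of Lemma \ref{formula:N} applied with the factor $i_{\mf{g}}$. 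Granting this, the equivariance and the projection to the $\chi$-isotypical component in \eqref{eq:3} are preserved verbatim, and the isomorphism $\mathcal{KZ}_{\kappa}(\vec{\lambda},\nu^*) \simeq \mathcal{KZ}_{\kappa''}(\vec{\lambda},\nu^*)$ on $\mc{C}_n$ follows. I would close by remarking that this is exactly the integrality threshold that fails for the smaller increment $m_{\mf{g}}$ of Lemma \ref{l:one}: there the $z_i - z_j$ residues $m_{\mf{g}}(\lambda_i,\lambda_j)$ need not be integers (only $i_{\mf{g}}$ times them is), which is why one only gets pointwise equality of Hodge polynomials rather than an isomorphism of local systems.
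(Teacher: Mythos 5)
Your proof is correct and follows essentially the same route as the paper's: the paper observes that the master functions for $\kappa$ and $\kappa''$ differ by a rational function on $\mc{C}_n$ (which is exactly your integrality check on the exponents of the $z_i-z_j$, $t_b-z_j$ and $t_b-t_c$ factors via Lemma \ref{formula:N}), so the mixed local systems $\mc{L}(\eta)$ coincide, and then invokes the argument of Lemma \ref{l:one} for the invariance of the choice of $V$ and $V'$. Your write-up merely expands the paper's two-sentence proof, including the (correct) extra care about why triviality of the Betti realisation forces triviality of the $K$-mixed local system.
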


\begin{proof}
By easy inspection, the master functions for $\kappa$ and $\kappa''$  differ by a rational function on $\mathcal{C}_n$, and hence the mixed local systems $\mc{L}(\eta)$ for $\kappa$ and $\kappa''$  in Definition \ref{d:kzm} coincide. The rest of the proof is similar to the proof of Lemma \ref{l:one}.
\end{proof}

It is interesting to compare Lemma \ref{l:two} with results of
Kazhdan--Lusztig when $\kappa$ is irrational. It follows from their
results (see \cite[Theorem 8.6.4]{EFK}) that the monodromy of the KZ
local system is controlled by representations of a quantum group
$\mathfrak{U}_q(\mf{g})$ where $q=\exp(\pi i/m_{\mf{g}}\kappa)$ (with
$m_{\mf{g}}$ as above). The $R$-matrix operators in
$\mathfrak{U}_q(\mf{g})$ use coefficients in
$\mb{C}(q^{\frac{1}{i_{\mf{g}}}})$ (see \cite[Section 1.3]{BK}). So
adding $m_{\mf{g}}i_{\mf{g}}$ to $\frac{1}{\kappa}$ will not change
the monodromy of the KZ equations. This agrees with the geometric
picture above when $\kappa>0\in \mb{R}^{\times}$. The quantum group
picture is valid for all irrational $\kappa\in \mb{C}^{\times}$. To
treat the case of $\kappa\in\mb{C}-(\mb{R}^+\cup \mb{Q})$
geometrically, we need to make sure that we can choose the same $V$
and $V'$ as we perform the operation of adding $m_{\mf{g}}i_{\mf{g}}$
to $\frac{1}{\kappa}$. It is easy to see that the residues
$a_{\alpha}$ are either zero (in which case they remain invariant
under the operation) or irrational (and remain so after the operation
which adds a rational number to these residues). Since irrational
strata (i.e., those strata with residue of $\eta$ irrational) can be
removed for both $V$ and $V'$, we get the desired statement
geometrically.

We do not know if one can see this invariance of monodromy directly
from the KZ equations \eqref{e:KZ}.

\section{The enriched rings for $\mf{sl}_n$}\label{s:sln}

Throughout this section, $\mf{g} = \mf{sl}_n$. Our main goal is to
describe an explicit algorithm for computing all products in the rings
$\mf{R}_{\kappa}(\mf{sl}_n)$ and $\mf{F}_{\kappa}(\mf{sl}_n)$. Using
this algorithm we prove Conjecture \ref{c:mtate} for all $n$ and
Conjectures \ref{c:maptofusion} and \ref{c:ec} for $n=2$.

  From now on, when the ring under consideration is clear from the
  context, we will often drop $\mf{R}$ and $\mf{F}$ from
  $\star_{\mf{R}}$ and $\star_{\mf{F}}$ and simply use $\star$ to
  denote the product.

\subsection{The multiplication algorithm for
  $\mf{sl}_n$}\label{s:algo}

The algorithm is based on the classical Pieri formula for the tensor
product with a fundamental representation, so we first recall its
statement.

Recall that we may write the simple roots for $\mf{sl}_n$ as
$\alpha_i=L_i-L_{i+1}$ for $1\leq i<n$, and $L_1+\dots+L_n=0$. Let
$\varpi_1,\dots, \varpi_{n-1}$ be the fundamental dominant weights for
$\mf{sl}_n$. We have $\varpi_k = L_1 + L_2 +\dots + L_k$.

\begin{lemma}[Pieri's formula]\label{l:pieri}
  Let $\lambda$ be a dominant integral weight for $\mf{sl}_n$. Then
  $V_{\lambda}\tensor V_{\vpi_k}$ is a sum without multiplicities of
  all $V_{\mu}$ such that $\mu=\lambda +\vpi_k - \sum n_i\alpha_i$ for
  some $n_i\geq 0$, where for some $K\subset\{1,\dots,n\}$ with
  $|K|=k$, $$\sum n_i\alpha_i =L_1+\dots+L_{k}- \sum_{i\in K}L_i \ .$$
  If $\lambda$ is a weight of level $\ell>0$, then the fusion product
  of $\lambda$ and $\vpi_{k}$ corresponds to the $\mu$ above (also
  with multiplicity $1$) which are of level $\ell$.
\end{lemma}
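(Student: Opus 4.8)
The plan is to derive Pieri's formula from the classical Racah--Speiser (Klimyk/Brauer) formula together with a short combinatorial analysis, and then obtain the fusion statement from the Kac--Walton formula using the fact that $\vpi_k$ is minuscule. First I would observe that $V_{\vpi_k}\cong\Lambda^k\C^n$ for the standard representation $\C^n$, so that the weights of $V_{\vpi_k}$ are exactly the vectors $e_K:=\sum_{i\in K}L_i$ for $K\subset\{1,\dots,n\}$ with $|K|=k$, each occurring with multiplicity one. Feeding this into the Racah--Speiser formula gives, in the representation ring,
\[
  [V_\lambda]\cdot[V_{\vpi_k}]=\sum_{|K|=k}\{\lambda+e_K\},
\]
where for $\xi\in P$ one sets $\{\xi\}=0$ if $\xi+\rho$ lies on a wall of the finite Weyl chambers, and $\{\xi\}=\ve(w)\,[V_{w(\xi+\rho)-\rho}]$ for the unique $w\in W$ making $w(\xi+\rho)-\rho$ dominant.

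Next I would analyse the terms $\{\lambda+e_K\}$. Fixing the representative $\rho=\sum_i(n-i)L_i$ and writing $\lambda=\sum_i\lambda_iL_i$ with $\lambda_1\ge\dots\ge\lambda_n$, the sequence $a_i:=\lambda_i+n-i$ is strictly decreasing, and with $\epsilon_i:=[\,i\in K\,]$ the coordinates of $\lambda+e_K+\rho$ are $a_i+\epsilon_i$. An elementary check on consecutive pairs shows $a_i+\epsilon_i>a_{i+1}+\epsilon_{i+1}$ always, except when $\epsilon_i=0$, $\epsilon_{i+1}=1$ and $a_i=a_{i+1}+1$, in which case the two coordinates are equal. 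Hence $\{\lambda+e_K\}$ vanishes precisely when such a ``bad pair'' occurs; otherwise $\lambda+e_K+\rho$ is already dominant and regular, so $w=\mathrm{id}$, the sign is $+1$, and $\{\lambda+e_K\}=[V_\mu]$ with $\mu=\lambda+e_K$. A bad pair for $K$ is exactly the failure of $\mu=\lambda+e_K$ to be dominant, and distinct $K$ give distinct $e_K$ (the $e_K$ are the distinct weights of the minuscule $V_{\vpi_k}$), hence distinct $\mu$. This yields $V_\lambda\otimes V_{\vpi_k}=\bigoplus_\mu V_\mu$ over the dominant $\mu$ of the stated form $\lambda+\vpi_k-\sum n_i\alpha_i$ --- using $\vpi_k=L_1+\dots+L_k$ and $\sum n_i\alpha_i=(L_1+\dots+L_k)-e_K$, so $|K|=k$ --- each with multiplicity one.

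For the fusion statement I would invoke the Kac--Walton formula: the level-$\ell$ fusion product is computed from the very same sum $\sum_{|K|=k}\{\lambda+e_K\}$ after replacing $W$ by the affine Weyl group $W_\ell$ and $\{\cdot\}$ by its affine analogue $\{\cdot\}_\ell$, which kills $\xi$ with $\xi+\rho$ on any wall of the affine arrangement and otherwise equals $\ve(w)[V_\mu]$ for the unique $w\in W_\ell$ with $w\cdot\mu$ in the closed fundamental alcove. Terms with a bad pair stay $0$. For each surviving term $\mu=\lambda+e_K$ one uses minusculeness: $(e_K,\theta)\in\{-1,0,1\}$ with $\theta=L_1-L_n$, so $(\mu,\theta)\le\ell+1$ when $\lambda$ has level $\le\ell$. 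If $(\mu,\theta)\le\ell$ then, since $\theta$ is the highest root and $\mu+\rho$ is dominant, $\mu+\rho$ lies in the open fundamental alcove and the term contributes $[V_\mu]$; if $(\mu,\theta)=\ell+1$ then $(\mu+\rho,\theta)=\ell+1+(n-1)=\ell+h^\vee$, so $\mu+\rho$ sits exactly on the affine wall and $\{\mu\}_\ell=0$, producing no new weight. Hence the fusion product retains precisely those $\mu$ in the tensor product decomposition of level $\le\ell$, each with multiplicity one.

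The entire argument is classical and the computations are routine; the only points that genuinely need care are the sign-and-cancellation bookkeeping in the Racah--Speiser formula --- which collapses because a nonzero term already has strictly decreasing coordinates, so no reordering and no sign is introduced --- and, in the fusion case, confirming that the level-$(\ell+1)$ ``overshoot'' weights land exactly on the affine wall (rather than strictly beyond it), so that they drop out instead of reflecting back to contribute a spurious term.
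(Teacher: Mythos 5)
Your proof is correct. The paper does not actually prove this lemma --- it simply cites \cite[Proposition 15.2.5 (ii)]{fulton-harris} for the tensor product statement and \cite[Examples 7.3 a), b)]{beauville} for the fusion statement --- so there is no argument in the paper to compare against; what you have written is a correct, self-contained version of the standard derivation (Racah--Speiser plus the minuscule-weight analysis for the classical case, Kac--Walton for fusion). The two points that genuinely require care are both handled properly: a nonzero term $\{\lambda+e_K\}$ already has strictly decreasing coordinates $a_i+\epsilon_i$, so no reflection or sign ever enters (and the ``bad pair'' condition $\epsilon_i=0$, $\epsilon_{i+1}=1$, $\lambda_i=\lambda_{i+1}$ is exactly failure of dominance of $\lambda+e_K$); and in the fusion case the overshoot weights with $(\mu,\theta)=\ell+1$ satisfy $(\mu+\rho,\theta)=\ell+h^{\vee}$ exactly, so they sit on the affine wall and are annihilated rather than reflected back with a sign.
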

See \cite[Proposition 15.2.5 (ii)]{fulton-harris} for the classical
case and \cite[Examples 7.3 a), b)]{beauville} for the fusion ring.

\begin{algo}\label{a:sln}
  Let $\lambda$, $\mu \in P^+$ be arbitrary. The following three steps
  allow one to compute $[\lambda]\star[\mu]$ in
  $\mf{R}_{\kappa}(\mf{sl}_n)$ and $\mf{F}_{\kappa}(\mf{sl}_n)$.
\begin{enumerate}
\item Computation of the coefficients that occur in all Pieri products
  $[\lambda]\star[\vpi_1]$. This is implied by Theorem \ref{genp},
  which is a motivic version of this computation, and we prove this in
  Section \ref{s:vpi1}. By duality this implies that we may also
  compute all products $[\lambda]\star[\vpi_{n-1}]$.
\item Inductive (in $n$) computation of all Pieri products
  $[\lambda]\star[\vpi_k]$. We carry this out in Section \ref{s:pieri}.
\item Computation of all products $[\lambda]\star [\mu]$. This follows
  from (2) since $\mu$ can be written as a polynomial in the
  fundamental dominant weights, as we have seen in Lemma \ref{l:poly}.
  This can be done algorithmically, as in the classical case, by using
  the partial order on the dominant weights: If
  $\mu = \sum_{i=1}^{n-1} a_i \varpi_i$ with $a_i \in \N$, if $[\nu]$
  occurs in
  $[\varpi_1]^{\star a_1} \star [\varpi_2]^{\star a_2} \star \dots
  \star [\varpi_{n-1}]^{\star a_{n-1}}$ with a nonzero coefficient
  then $\nu \leq \mu$. Furthermore, $[\mu]$ always occurs in the
  product with coefficient $1$.
\end{enumerate}
In the above, $\star$ can be $\star_{\mc{R}_{\kappa}}$ or
$\star_{\mc{F}_{\kappa}}$.
\end{algo}

As an application of this algorithm, we prove Conjecture \ref{c:mtate}
for $\mf{sl}_n$ in Section \ref{s:int}.
\begin{remark}\label{r:real}
  The number of steps in the algorithm needed to compute
  $[\lambda]\star[\mu]$ is independent of $\kappa$; this allows one to
  define rings $\mf{R}_{\kappa}(\mf{sl}_n)$ for all real $\kappa$. The
  reason is that the formula in Lemma \ref{DelMos}, and therefore also
  Lemma \ref{genp} (at the level of the Hodge polynomial), make sense
  for real $\kappa$ so we can define
  $[\lambda]\star_{\kappa}[\varpi_1]$ for all
  $\kappa \in \R^{\times}$--- we use the subscript $\kappa$ here
    to emphasize the dependence of the product on $\kappa$---and then
  use the algorithm above to define $[\lambda]\star_{\kappa}[\mu]$ for
  all $\mu$. The fact that this leads to a commutative and associative
  $\Z[t]$-algebra follows by approximation: the algorithm shows that
  for any irrational $\kappa$,
  $[\lambda]\star_{\kappa}[\mu] = [\lambda]\star_{\kappa'}[\mu]$ for
  any rational $\kappa'$ sufficiently close to $\kappa$ (depending on
  $\lambda$ and $\mu$). The commutativity and associativity of
  $\star_{\kappa}$ for real $\kappa$ then follows from the case of
  rational $\kappa$.

  It should be possible to define $\Z[t]$-algebras
  $\mf{R}_{\kappa}(\mf{g})$ for all $\mf{g}$ by combining the methods
  of this article with the theory of complex mixed Hodge modules being
  developed by Sabbah and Schnell \cite{ss}. For $\mf{sl}_n$ we expect
  that the resulting algebra is the same as that defined above by
  approximation.
\end{remark}

  Before explaining the details of the algorithm, we give a simple
  informal illustration of how the algorithm works in the case of the
  enriched representation ring.
\begin{example}\label{e:algo}
  Let $\mf{g} = \mf{sl}_4$. Then
  $V_{\vpi_2}\otimes V_{\vpi_2} = V_{2\vpi_2} \oplus V_{\vpi_1 +
    \vpi_3} \oplus V_{0}$ and suppose we want to compute the
  coefficient of $[0]$ in $[\vpi_2]\star[\vpi_2]$. We observe that
  $V_{\vpi_1} \otimes V_{\vpi_1} = V_{2\vpi_1} \oplus V_{\vpi_2}$ and
  $V_{2\vpi} \otimes V_{\vpi_2} = V_{\vpi_1 + \vpi_3} \oplus
  V_{2\vpi_1 + \vpi_2}$, so it follows that the coefficient we want to
  compute is equal to the coefficient of $[0]$ in
  $([\vpi_1]\star[\vpi_1])\star[\vpi_2]$. This can be computed using
  Theorem \ref{genp} and the associativity of the product.
\end{example}

\subsection{The star product with $\varpi_1$}\label{s:vpi1}

\begin{defi}
  Define $\langle\alpha\rangle \in [0,1)$ by
  $\langle \alpha\rangle =\alpha  -\lfloor \alpha\rfloor$ for any
  $\alpha\in \mb{R}$.
\end{defi}
The Hodge data for the Galois conjugates $[a;\kappa]$ are described in
the following. The first part follows from \cite[Corollary 2.21]
{DeligneMostow}. Assume $\kappa>0$ and $1/\kappa$ is not an integer.
\begin{lemma}\label{DelMos}%
  $ $
  \begin{enumerate}
  \item If $a/\kappa$ and $(1+a)/\kappa$ are both in $\mb{Q}-\mb{Z}$,
    then $[a;\kappa]$ is pure of weight $1$ whose Hodge polynomial is
    determined from the integer
    $\langle -1/\kappa\rangle +\langle -a/\kappa\rangle +\langle
    (1+a)/\kappa \rangle$: If this integer is $2$ then the Hodge
    polynomial of $[a;\kappa]$ is $t$, and the Hodge polynomial equals
    $1$ if this integer is $1$.
  \item $a/\kappa\in\mb{Z}$ then as MHS,
    $[a;\kappa]= K(0)$. Therefore the Hodge polynomial of
    $[a;\kappa]$ is $1$.
  \item If $(1+a)/\kappa\in\mb{Z}$ then as MHS,
    $[a;\kappa]=K(-1)$. Therefore the Hodge polynomial of
    $[a;\kappa]$ is $t$.
  \end{enumerate}
\end{lemma}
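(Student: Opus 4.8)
The plan is to reduce the computation of the Hodge data of $[a;\kappa]$ to the well-understood theory of the cohomology of cyclic coverings of $\mathbb{P}^1$ branched at three points, for which the Hodge numbers are given by a classical formula going back to the hypergeometric picture (the reference \cite[Corollary 2.21]{DeligneMostow}). Recall from Definition \ref{d:basicmotives} that $[a;\kappa]$ is the $\chi$-isotypical summand of the Aomoto motive $H^1(\ms{A})$ where $\ms{A}$ is the arrangement on $\A^1_{\msk}$ with hyperplanes $\{0\},\{1\}$ and weights $a/\kappa$ and $1/\kappa$, and from Definition \ref{d:basicmotives} of the introduction that its Hodge realisation is computed on the smooth compactification of the cyclic cover $C\colon y^N = x^{N/\kappa}(1-x)^{Na/\kappa}$, with its $\mu_N$-action. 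The smooth projective model $\overline{C}$ of this cover is the curve on which $\mu_N$ acts, and $[a;\kappa]$ picks out the $\tau$-isotypical piece of $H^1$ where $\tau$ is the inverse tautological character; so $[a;\kappa]$ is, with its $K$-structure, a direct summand of $H^1(\overline{C})$ of $\mathbb{C}$-dimension equal to $h^{1,0}+h^{0,1}$ of that summand.

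The key steps, in order: First I would work out the ramification data of $\pi\colon \overline{C}\to \mathbb{P}^1$ over the three points $0,1,\infty$: the local monodromies of the $\tau$-summand are $\exp(2\pi i \,\overline{-a/\kappa})$ at $x=0$, $\exp(2\pi i\,\overline{-1/\kappa})$ at $x=1$ (sign conventions to be pinned down from the defining equation), and $\exp(2\pi i\,\overline{(1+a)/\kappa})$ at $x=\infty$, since the exponents sum to zero modulo $\mathbb{Z}$. Second, when all three of these are nontrivial (this is precisely the hypothesis $a/\kappa,(1+a)/\kappa\in\mathbb{Q}\setminus\mathbb{Z}$, the third exponent being automatically non-integral then), the $\tau$-summand of $H^1(\overline{C})$ is one-dimensional and pure of weight $1$, and the dimension of its $(1,0)$-part is given by the standard formula $h^{1,0} = -1 + \langle\text{exponent at }0\rangle + \langle\text{exponent at }1\rangle + \langle\text{exponent at }\infty\rangle$; since each term lies in $(0,1)$ and the three sum to an integer, this integer is $1$ or $2$, giving Hodge polynomial $1$ or $t$ respectively. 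This is exactly part (1), and here one invokes \cite[Corollary 2.21]{DeligneMostow} directly. Third, for parts (2) and (3): if $a/\kappa\in\mathbb{Z}$ then the local monodromy at $x=0$ is trivial, the branch locus degenerates to two points, and the relevant summand of $H^1(\overline{C})$ is a Tate motive; comparing with the de Rham realisation (Lemma \ref{l:derhamreal}), which in this degenerate case is concentrated in $F^0$ of weight $0$, forces $[a;\kappa]=K(0)$, with Hodge polynomial $1$. Symmetrically, if $(1+a)/\kappa\in\mathbb{Z}$ the local monodromy at $\infty$ becomes trivial and a parallel argument — now the de Rham piece sits in the top of the Hodge filtration — gives $[a;\kappa]=K(-1)$ with Hodge polynomial $t$. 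In the latter two cases one can alternatively argue via Corollary \ref{c:tate}: after clearing denominators the arrangement becomes one whose master function, restricted appropriately, is single-valued on the relevant factor, so the motive is mixed Tate of rank one, and its weight is read off from Lemma \ref{l:weights} together with the value of the de Rham realisation.

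The main obstacle I anticipate is bookkeeping the sign and branch conventions carefully enough to be sure that the three fractional parts appearing are exactly $\langle -1/\kappa\rangle$, $\langle -a/\kappa\rangle$, $\langle (1+a)/\kappa\rangle$ and not some other choice of representatives — the cyclic cover is defined with positive exponents $N/\kappa$ and $Na/\kappa$ but the character $\tau$ is the \emph{inverse} of the tautological one, which flips signs, and the exponent at $\infty$ involves the negative of the sum; getting the rounding right in the edge cases (where one exponent hits an integer) is where the argument could go wrong. A secondary, more routine point is checking that the $\tau$-summand is genuinely one-dimensional when the three monodromies are nontrivial — this follows from the Chevalley–Weil / Hurwitz count for the $\mu_N$-representation on $H^1(\overline{C})$, or directly from the Aomoto complex having Euler characteristic $-1$ in this situation — but it must be stated. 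Once the ramification data is correctly identified, parts (1)–(3) all reduce to citing \cite{DeligneMostow} and the compatibility of the de Rham and Hodge realisations established earlier in Section \ref{s:aomoto}.
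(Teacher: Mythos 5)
Your proposal is correct and matches the paper's (essentially unwritten) argument: the paper simply attributes part (1) to \cite[Corollary 2.21]{DeligneMostow} and treats (2)--(3) as routine rank-one computations with the indicated degenerate local monodromy, which is exactly what you carry out in more detail. One small correction: the non-integrality of $1/\kappa$ is \emph{not} automatic from $a/\kappa,(1+a)/\kappa\notin\Z$ (take $a/\kappa=1/2$ and $(1+a)/\kappa=3/2$); it is a standing hypothesis stated immediately before the lemma, and your case (1) needs it in order for all three local monodromies to be nontrivial.
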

\begin{remark}\label{r:int}
  If $\kappa>0$, and $1/\kappa$ and $a$ are integers, then
  $[a;\kappa]= H^1(\mb{A}^1,\{0,1\};K) =K(0)$. If
  $\kappa<0$ and $1/\kappa$ is an an integer, then
  $[a;\kappa]= H^1_c(\mb{A}^1 -\{1,0\},K)=K(-1)$.
\end{remark}

\begin{proof}[Proof of Theorem \ref{genp}]        
  Suppose $V_{\mu}$ appears in the tensor product
  $V_{\lambda}\tensor V_{\vpi_1}$.  Then by Lemma \ref{l:pieri},
  $\mu=\lambda +\vpi_1 - \sum n_i\alpha_i$ where for some
  $M\in \{0,\dots,n-1\}$,
  $\sum n_i\alpha_i =L_1-L_{M+1}=\alpha_1+\dots +\alpha_M$. We assume
  $M>0$.  If $M =1$ then the lemma follows from Lemma \ref{DelMos} so
  we may assume $M>1$. If $n=2$ then $M \leq 1$, so we may also assume
  that $n>2$.

Let $\delta_a={(\lambda,\alpha_a)}$ for $a=1,\dots,n-1$. Assuming
$\delta_a\geq 0$ for all $a$,
$\mu=\lambda+\vpi_1 -(\alpha_1+\dots+\alpha_M)$ is dominant if and
only if $\delta_M>0$ (consider the two cases $M=1$ and $M>1$
separately).

We use now use the notation and constructions of Section \ref{s:sv}
and also recall that $z_1 =0$ and $z_2 = 1$.  Using that
$(\vpi_1,\alpha_a) = 0$ for $a > 1$ and $(\alpha_a,\alpha_b) = 0$ if
$|a-b| > 1$, the master function $\ms{R}$ from
\eqref{e:master} is equal to 
\begin{equation}\label{ee:master}
  (t_1-1)^{\frac{1}{\kappa}}\prod_{1\leq a\leq M}t_a^{\frac{\delta_a}{\kappa}}
  \prod_{1\leq a\leq M-1}(t_a-t_{a+1})^{\frac{1}{\kappa}}
  \ . 
\end{equation}
In the construction of $\mathcal{KZ}_{\kappa}(\lambda,\vpi_1,\mu^*)$
we work with the hyperplane arrangement in $\A^M_{\msk}$ (with
coordinates $(t_1,\dots,t_M)$), given by the equations $t_i-t_{j}=0$,
for all $i\neq j$ , and $t_{i}=0$, $t_i =1$ for all $i$. Let
$$U'=\{(t_1,\dots,t_M)\mid t_i\neq 0, t_{i}\neq t_{j},
t_i\neq 1\} \subseteq \A^M_{\msk} $$ and
$${U}=\{(t_1,\dots,t_M)\mid t_i\neq 0,  t_{i}\neq t_{i+1}, t_1\neq
1\}\subseteq \A^M_{\msk} .$$ The form $\eta = d\ms{R}/\ms{R}$ is
regular on ${U}$ which contains $U'$. The mixed local system on
${U}$ is denoted by $\ml$.  So we get two ``Aomoto situations'' as in
Section \ref{s:sub}: the one for $U'$ produces the KZ motive by
definition---note that the group $\Sigma$ is trivial here---and by
Lemma \ref{imageKZ} this is a subquotient of the motive associated to
$U$. We will show that the motive corresponding to $U$ is of rank one
and compute it explicitly. Since the KZ motive is also of rank one
this will give the desired formula.

Let ${P}=P_M$ be a smooth projective compactification of ${U}$
constructed as an iterated blowup of $(\P^1)^M$ as described in
Theorem \ref{t:abnormal}, with
$P \setminus U=\cup_{\alpha} E_{\alpha}$ a divisor with normal
crossings. It is easy to see that in this case the abnormal strata
that need to be blown up iteratively are of the form
$t_{a}=t_{a+1}=\dots=t_{a+m}=0$, and
$t_{a}=t_{a+1}=\dots=t_{a+m}=\infty$ for all $a$ such that
$1\leq a \leq M-2$, and all $m$ such that $m \geq 2$ and $a+m \leq M$.

Let $V=P \setminus \cup'_{\alpha}E_{\alpha}$, where the union is
restricted to $\alpha$ such that $\op{Res}_{E_{\alpha}}\eta$ is not a
strictly positive integer. Similarly let
$V'=P \setminus \cup'_{\alpha}E_{\alpha}$, where the union is
restricted to $\alpha$ such that $\op{Res}_{E_{\alpha}}\eta$ is not a
non-negative integer. Note that $V'\supseteq V$.  Let $j:U\to V$, and
$k:V\to P$; $j:U\to V'$, and $k:V'\to P$ be the inclusions and
consider the map
\begin{equation}\label{KZmap2}
	H^M({P},{k}'_*{j}'_{!}\ml)\lra H^M({P},{k}_*{j}_{!}\ml) .
\end{equation}
To prove the lemma it suffices to show that $H^M(P,k'_*j'_{!}\ml)$ has
rank one and is given by the formula in Lemma \ref{genp}. We will do
this by induction on $M$. The case $M=1$ follows from Lemma
\ref{DelMos} so we assume $M>1$.

Let $f:P\to \Bbb{P}^1$ be given by composing the map
$P \to (\P^1_{\msk})^M$ with the first projection
$t_1:(\P^1_{\msk})^M \to \P^1_{\msk}$. The form of the explicit
blowups used to construct $P$ implies that:
\begin{enumerate}
\item The inverse image $f^{-1}(0)$ is the union of the strict
  transforms of $t_1=\dots=t_m=0$ for $m=1,\dots,M$ in
  $(\Bbb{P}^1)^M$.  The residue of $\eta$ along each of these
  irreducible components is non-negative (by inspection, where we use
  $1/\kappa>0$).
\item The inverse image $f^{-1}(1)$ is the strict transform of $t_1=1$
  (which is irreducible), and the residue of $\eta$ is $1/\kappa$
  which is positive.
\item The inverse image $f^{-1}(\infty)$ is the union of the strict
  transforms of $t_1=\dots=t_m=\infty$ for $m=1,\dots,M$. The residue
  of $\eta$ along each of these irreducible components is negative by
  an easy computation: the functions $t_i$ for $i=1,\dots,m$
  as well as $t_i-t_{i+1}$ for $i=1,\dots,m-1$ (the case $i=m-1$ for
  $t_i-t_{i+1}$ is handled separately) have simple poles along the
  strict transform of $t_1=\dots=t_m=\infty$.
\end{enumerate}

Let $\mc{F} = k_*j_{!}\ml$. From the signs of the residues of $\eta$
along irreducible components of $f^{-1}(0)$, $f^{-1}(1)$ and
$f^{-1}(\infty)$, and the proper base change theorem, $f_*{\mc{F}}$
has zero stalks over $0$ and $1$, and it is the star pushforward of
its restriction to $\Bbb{A}^1_{\msk}$.  Therefore
$H^*(P,k'_*j'_{!}\ml)=H^*(\Bbb{A}^1_{\msk};\iota_!\iota^*f_*{\mc{F}})$,
where $\iota:\Bbb{A}^1_{\msk}-\{0,1\}\to \Bbb{A}^1_{\msk}$ is
the inclusion.

Consider the birational morphism
$\Bbb{A}^1_{\msk}\times \Bbb{A}^{M-1}_{\msk}\to \Bbb{A}^M_{\msk}$ which is an isomorphism
over $t_1\neq 0$ (and $t\neq 0$) given by
$$(t,T_2,\dots,T_M)\mapsto (t, tT_2,\dots, t T_M).$$ The pullback of
the master function \eqref{ee:master} equals by an easy computation
\begin{equation}\label{e:pullback}
  \bigl (t^{(M-1+\sum_{a=1}^M \delta_a)/\kappa} \cdot (t-1)^{1/\kappa}
  \bigr ) \cdot \prod_{2\leq a\leq M}T_a^{\frac{\delta_a}{\kappa}}
  \prod_{2\leq a\leq M-1}(T_a-T_{a+1})^{\frac{1}{\kappa}} (T_2-1)^{\frac{1}{\kappa}} .
\end{equation}

We claim that the birational morphism
$\Bbb{A}^1_{\msk}\times \Bbb{A}_{\msk}^{M-1}\to
\Bbb{A}_{\msk}^M$ extends to an isomorphism
\[
  (\Bbb{A}^1_{\msk}-\{0,1\}) \times P_{M-1}\leto{\simeq}
  f^{-1}(\Bbb{A}^1_{\msk}-\{0,1\}) \subset P .
\]
Here $P_{M-1}$ is the compactification for the arrangement in the
$\A^{M-1}_{\msk}$ factor constructed in the same way as $P_M$ and using the
ordering of the abnormal strata induced by the ordering used to
construct $P_M$. To prove this claim note that over
$ \Bbb{A}^1_{\msk}-\{0,1\}$ the variety $P_M$ is obtained by
blowing up $(\Bbb{P}^1_{\msk})^M$ along the abnormal strata of the
form $t_{a}=t_{a+1}=\dots=t_{a+m}=0$ and
$t_{a}=t_{a+1}=\dots=t_{a+m}=\infty$ with $a>1$.

Let $\ti{P} = \P^1_{\msk}\times P^{M-1}_{\msk}$, let
$\ti{k'}$ be the inclusion of $\A^1_{\msk} \times \P^M$ into $\ti{P}$,
and $\ti{j'}$ the inclusion of
$(\A^1_{\msk}\sm \{0,1\}) \times \P^M$ into
$\A^1_{\msk} \times \P^M$. Let $\ti{\mc{F}}$ be the pullback of $\mc{F}$
to $(\A^1_{\msk}\sm \{0,1\}) \times \P^M$. By the proper base
change theorem we have
\[
  H^*(\ti{P}, \ti{k'}_* \ti{j}'_! \ti{\mc{F}}) =
  H^*(\Bbb{A}^1_{\msk};\iota_!\iota^*f_*{\mc{F}}) =
  H^*(P,k'_*j'_{!}\ml) 
\]
so it suffices to compute the LHS. Now $\ti{P}$ is a normal crossings
compactification of the arrangement corresponding to the pulled back
master function \eqref{e:pullback} and the object $\ti{k'}_* \ti{j}'_!
\ti{\mc{F}} \in \dbc(\ti{P})$ is the corresponding sheaf $\mc{F}'$ in
the notation of Section \ref{s:sub}. It them follows from Lemma
\ref{l:kaom} that
\[
  H^*(\ti{P}, \ti{k'}_* \ti{j}'_! \ti{\mc{F}}) \simeq
  H^*(\Bbb{A}^1_{\msk},\{1,0\};\mc{M}) \otimes_K H^*(P_M,
  \mc{F}_{M-1}) \ .
\]
Here $\mc{M}$ is the mixed local system corresponding to the first
factor in \eqref{e:pullback} and $\mc{F}'_{M-1}$ is the object in
$\dbc(P_{M-1})$ corresponding to the second factor of the function in
\eqref{e:pullback}. Since
$H^0(\Bbb{A}^1_{\msk},\{1,0\};\mc{M}) = 0$ and since
$\delta_M > 1$, the second factor corresponds to data of the same type
for $\mf{sl}_{n-1}$, we are done by induction and Lemma \ref{DelMos}.
\end{proof}

\subsection{Computing all Pieri coefficients}\label{s:pieri}
We now explain our algorithm for computing all products
$\lambda\star\vpi_k$.

Write $\lambda=a_1L_1+\dots a_{n-1}L_{n-1}$. We want to compute the
coefficient of $[{\mu}]$ in $[\lambda]\star[\vpi_k]$.  By Pieri's formula,
$\mu=\lambda +\vpi_k - \sum n_i\alpha_i$ for some $n_i\geq 0$, where
for some $K\subset\{1,\dots,n\}$ with $|K|=k$,
\begin{equation}\label{SumRoots1}
	\sum n_i\alpha_i =L_1+\dots+L_{k}- \sum_{i\in K}L_i \ .
\end{equation}
Let $M=\sum n_i$, the number of variables. We make the following
inductive assumptions (for a fixed $\kappa$):
\begin{itemize}
\item[--] For all $m<n$, we know how to compute all Pieri coefficients for
  $\mf{sl}_m$.
\item[--] We know how to compute all products $[\lambda]\star [\vpi_k]$
  whenever $\sum a_i$ is smaller. When $\sum a_i=1$, $\lambda=\vpi_1$
  and we already know how to compute in this case.
\item[--] For a fixed $\sum_i a_i$, we know how to compute all products
  $[\lambda]\star [\vpi_k]$ whenever the number of variables $M$ is
  smaller (so this is an inner layer of the induction).  When $M=1$,
  we use the formulas in Lemma \ref{DelMos}.
\end{itemize}

\begin{lemma}\label{redux}
  If $L_1$ or $L_n$ does not appear in the RHS of \eqref{SumRoots1}
  (after cancellation), then the corresponding Pieri coefficient can
  be computed by the induction assumptions.
\end{lemma}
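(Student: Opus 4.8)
The plan is to reinterpret the combinatorial hypothesis as the statement that no variable of the underlying weighted hyperplane arrangement is coloured by one of the two end simple roots $\alpha_1$ or $\alpha_{n-1}$, and then to observe that the motive computing the Pieri coefficient in question is literally the one attached to an $\mf{sl}_{n-1}$-problem, to which the induction hypothesis applies. There is no analytic input needed; the lemma is a bookkeeping reduction.

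First I would make the translation precise. Write $\mu=\lambda+\vpi_k-\sum_i n_i\alpha_i$ with $\sum_i n_i\alpha_i=L_1+\dots+L_k-\sum_{i\in K}L_i$ as in \eqref{SumRoots1}, so the relevant coefficient is $P(\mathcal{KZ}_{\kappa}((\lambda,\vpi_k),\mu^*)_{(0,1)})$, computed from the master function \eqref{e:master} with $z_1=0$, $z_2=1$, $\lambda_1=\lambda$, $\lambda_2=\vpi_k$, and with $\beta$ chosen so that exactly $n_i$ of the variables $t_b$ have colour $\alpha_i$. Since $\alpha_i=L_i-L_{i+1}$, the coefficient of $L_1$ (resp.~of $L_n$) in $\sum n_i\alpha_i$ is $n_1$ (resp.~$-n_{n-1}$), and $k<n$ forces the coefficient of $L_n$ in $L_1+\dots+L_k$ to be $0$; hence ``$L_1$ does not appear'' is equivalent to $n_1=0$ (equivalently $1\in K$) and ``$L_n$ does not appear'' is equivalent to $n_{n-1}=0$ (equivalently $n\notin K$). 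Using the diagram automorphism $\alpha_i\mapsto\alpha_{n-i}$ of $\mf{sl}_n$, which leaves \eqref{e:master} invariant up to relabelling and identifies $\mathcal{KZ}_{\kappa}((\lambda,\vpi_k),\mu^*)_{(0,1)}$ with $\mathcal{KZ}_{\kappa}((\lambda^*,\vpi_{n-k}),\mu)_{(0,1)}$ (the duality already exploited in step (1) of the algorithm of Section \ref{s:algo}), I would reduce to the case $n_1=0$, the case $n_{n-1}=0$ then following by applying the result to the dual product. One may also assume $k\geq 2$, since $k=1$ together with $1\in K$ forces $K=\{1\}$, whence $M=0$ and the motive is $K(0)$.

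Next, assuming $n_1=0$, I would note that $\beta$ takes values in $\{\alpha_2,\dots,\alpha_{n-1}\}$, the simple roots of the standard copy $\mf{sl}_{n-1}\subset\mf{sl}_n$. Inspecting \eqref{e:master} with the above substitutions, the $(z_1-z_2)$-factor is a nonzero constant and every remaining exponent is one of $(\lambda,\beta(b))$, $(\vpi_k,\beta(b))=\delta_{\beta(b),\alpha_k}$, $(\beta(b),\beta(c))$, all intrinsic to the sub-root-system $\{\alpha_2,\dots,\alpha_{n-1}\}$. Hence the weighted hyperplane arrangement, the form $\eta$, the group $\Sigma_\beta$, the character $\chi$, and the compactification used in the construction of $\mathcal{KZ}_\kappa$ (Definition \ref{d:kzm}; equivalently the pointwise description of Section \ref{s:kzdef}) all coincide with the corresponding data for the $\mf{sl}_{n-1}$-problem with first weight $\lambda'$ (the $\mf{sl}_{n-1}$-dominant weight with $(\lambda',\alpha_i)=(\lambda,\alpha_i)$ for $2\leq i\leq n-1$), second weight $\vpi'$ (the fundamental weight of $\mf{sl}_{n-1}$ attached to $\alpha_k$), and weight at infinity $\mu'$ (the restriction of $\mu$); here one checks that $\lambda'+\vpi'-\mu'=\sum_{i\geq 2}n_i\alpha_i$ in the weight lattice of $\mf{sl}_{n-1}$, so $\mu'$ is dominant and genuinely occurs in $V_{\lambda'}\otimes V_{\vpi'}$ by Pieri's formula. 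It then follows that the two KZ motives are canonically isomorphic, hence have the same Hodge polynomial, so the coefficient of $[\mu]$ in $[\lambda]\star[\vpi_k]$ in $\mf{R}_\kappa(\mf{sl}_n)$ (resp.~$\mf{F}_\kappa(\mf{sl}_n)$) equals the coefficient of $[\mu']$ in $[\lambda']\star[\vpi']$ in $\mf{R}_\kappa(\mf{sl}_{n-1})$ (resp.~$\mf{F}_\kappa(\mf{sl}_{n-1})$), which is computable by the first of the induction assumptions (Pieri coefficients for all $\mf{sl}_m$ with $m<n$ are known).

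The main point requiring care — and the only thing resembling an obstacle — is the bookkeeping of the previous paragraph: setting up the precise dictionary between the $\mf{sl}_n$- and $\mf{sl}_{n-1}$-data and verifying that the vanishing $n_1=0$ (or, dually, $n_{n-1}=0$) is exactly what guarantees that passing to the smaller rank loses nothing that enters the master function and hence the motive. Once that is done, the identification of the two motives is immediate from the way they are constructed, and no further argument is needed.
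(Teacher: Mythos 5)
Your proposal is correct and follows essentially the same route as the paper: observe that when the relevant end simple root does not occur among the colours, the Schechtman--Varchenko arrangement and master function are literally those of an $\mf{sl}_{n-1}$-Pieri problem, so the motives coincide and the induction hypothesis applies, with the other case handled by duality. The only differences are cosmetic (you treat the $L_1$ case as primary and use the sub-root-system $\{\alpha_2,\dots,\alpha_{n-1}\}$, whereas the paper treats $L_n$ and uses $\{\alpha_1,\dots,\alpha_{n-2}\}$, and you spell out the edge cases $k=1$, $k=n-1$ slightly more explicitly).
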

\begin{proof}
  If $L_n$ does not appear then $n\not\in K$ and
  $K\subseteq \{1,\dots,n-1\}$. It is easy to check that
  $\alpha_{n-1}$ does not appear in \eqref{SumRoots1}.  Consider the
  following highest weight of $\mf{sl}_{n-1}$
  $$\lambda'= a_1 L_1+\dots a_{n-2}L_{n-2} +a_{n-1}L_{n-1}=  (a_1-a_{n-1}) L_1+\dots (a_{n-2}-a_{n-1})L_{n-2}. $$
  Consider $[\lambda']\star [\vpi_{k}]$ for $\mf{sl}_{n-1}$; since the
  case $k=n-1$ is covered by assumption, we assume $k\leq n-2$.
	
  Then we look at the term corresponding to the same $K$ in
  $[\lambda']\star [\vpi_{k}]$.  When $i<n-1$, $(\lambda',\alpha_i)$
  computed for $\mf{sl}_{n-1}$ is the same as $(\lambda,\alpha_i)$
  computed for $\mf{sl}_{n}$, similarly for $\vpi_k$).  The
  corresponding local systems, the number of variables, etc., for
  $\mf{sl}_{n-1}$ remain the same as the Pieri case for $\mf{sl}_{n}$
  that we started with, i.e., the corresponding motives are the same,
  so the product can be computed by the induction hypothesis. The
  other case when $L_1$ does not appear is similar is dual to this
  case.
\end{proof}

The rest of the algorithm is broken up into two cases.

\subsubsection{}

If $a_1>a_2$, let $\lambda'=\lambda-\vpi_1$. By the Pieri formula,
$[\lambda']\star[\vpi_1]$ has a summand $[\lambda]$ (with coefficient
1), and the other $[\nu]$ with nonzero coefficients are of the form
$[\lambda'+L_i]$. Now consider the $[\mu]$ term in
$[{\lambda'+L_i}]\star [\vpi_k]$. We compute (using \eqref{SumRoots1})
$$\lambda'+L_i+\vpi_k-\mu=\lambda-L_1+L_i -\mu=\sum
n_i\alpha_i-(\alpha_1+\dots +\alpha_{i-1}) .$$ It follows that all
these computations have a smaller number of variables, so can be
carried out by the inductive hypothesis. We can also compute
$[\lambda']\star [\vpi_k]$ by the outer induction (the number of boxes
in $\lambda'$ is smaller). Using the fact that
$([\lambda']\star [\vpi_k]) \star [\vpi_1] = ([\lambda']\star
[\vpi_1]) \star [\vpi_k]$, which we can compute since we can compute
$[\lambda']\star [\vpi_k]$, it follows that we can also compute
$[\lambda]\star[\vpi_k]$.

\subsubsection{}
Now suppose $a_1=\dots=a_s$ and $a_s>a_{s+1}$ with $s>1$. Note that
$a_{n}=0$, so $s=n-1$ is a possibility. Let $\lambda'=\lambda-L_{s}$,
so $[\lambda']\star [\vpi_1]$ has $[\lambda]$ occurring with a nonzero
coefficient which we know how to compute.

The element $[\lambda'+L_1]$ also has a nonzero coefficient in
$[\lambda']\star [\vpi_1]$ and suppose the product
$[{\lambda'+L_1}]\star [\vpi_k]$ has $[\mu]$ ocurring with a nonzero
coefficent.  When this happens
$$\lambda'+L_1+\vpi_k-\mu= L_1+\dots+L_k-\sum_{i\in K}  L_i +L_1-L_s$$
which has $L_1$ appearing in the RHS with multiplicity $2$ unless
$L_1\in K$. Therefore, by the Pieri formula we must have $L_1\in K$,
so we may compute the coefficient of $[\mu]$ in
$[{\lambda'+L_1}]\star [\vpi_k]$ by Lemma \ref{redux}.

The other $\nu$ such that $[\nu]$ occurs in $[\lambda']\star [\vpi_1]$
with nonzero coefficient have the form $\lambda'+L_{a}-L_s$ with $a>s$
(by the Pieri formula). The number of variables in the term
corresponding to $[\mu]$ in $[{\lambda'+L_{a}-L_s}]\star[\vpi_k]$ is
computed from
$\lambda-L_s+L_a -\mu=\sum n_i\alpha_i-(\alpha_s+\dots
+\alpha_{a-1})$, so the number of variables has dropped, hence the
coefficient of $[\mu]$ can be computed by induction.

Combining the above observations, and using the same trick of
switching the order as in the previous case, it follows that the
coefficient of $[\mu]$ in $[\lambda]\star [\varpi_k]$ can be computed
from the induction hypothesis.

\label{subsection}\label{s:allprod}

\subsection{KZ motives when $1/\kappa$ is a positive integer.}\label{s:int}

\begin{lemma}\label{l:zero}
  Let $\msf{V}$ be an effective admissible VMHS over $S - \{s_0\}$, where
  $S$ is a smooth curve and $s_0 \in S$ is any point, and let $t$ be a
  parameter at $s_0$. If $\Psi_t(\msf{V})$ has weight zero, then $\msf{V}$ is
  pure of weight zero.
\end{lemma}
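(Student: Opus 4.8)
The plan is to reduce the statement to a comparison of the top weights of $\msf{V}$ and of its limit mixed Hodge structure. Recall that, in the normalisation fixed just before Lemma~\ref{l:saito}, for the puncture $s_0 \notin S-\{s_0\}$ the object $\Psi_t(\msf{V})$ is the limit mixed Hodge structure of $\msf{V}$ at $s_0$: its weight filtration is the relative monodromy weight filtration (which exists by admissibility, see \cite{SZ}) and its Hodge filtration is the limit Hodge filtration; cf.\ \cite{saito-form, schnell}. First I would dispose of the trivial case $\msf{V} = 0$; otherwise let $W_\bullet$ denote the weight filtration of $\msf{V}$ and let $w_0$ be the largest integer with $\mr{Gr}^W_{w_0}\msf{V} \neq 0$. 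Since $\msf{V}$ is effective, $h^{p,q} = 0$ unless $p,q \geq 0$, so every $w$ with $\mr{Gr}^W_w\msf{V} \neq 0$ satisfies $w \geq 0$; hence $w_0 \geq 0$ and $\mr{Gr}^W_w\msf{V} = 0$ for $w < 0$. It therefore suffices to prove $w_0 = 0$, for then $\msf{V} = W_0\msf{V} = \mr{Gr}^W_0\msf{V}$ is pure of weight zero.

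To prove $w_0 = 0$, I would apply the exact functor $\Psi_t$ (exact on $\mr{MHM}$, hence on admissible variations viewed as shifted mixed Hodge modules) to the short exact sequence of admissible variations
\[
0 \lra W_{w_0-1}\msf{V} \lra \msf{V} \lra \mr{Gr}^W_{w_0}\msf{V} \lra 0,
\]
obtaining a surjection of mixed Hodge structures $\Psi_t(\msf{V}) \twoheadrightarrow \Psi_t(\mr{Gr}^W_{w_0}\msf{V})$. Now $\mr{Gr}^W_{w_0}\msf{V}$ is a nonzero polarisable pure variation of Hodge structure of weight $w_0$, so by Schmid's theory of degenerations of such variations its limit mixed Hodge structure $\Psi_t(\mr{Gr}^W_{w_0}\msf{V})$ is nonzero and its weight filtration is the monodromy weight filtration centred at $w_0$, in particular symmetric about $w_0$; hence there is an integer $j \geq w_0$ with $\mr{Gr}^W_j\,\Psi_t(\mr{Gr}^W_{w_0}\msf{V}) \neq 0$. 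Since morphisms of mixed Hodge structures are strict for the weight filtration, $\mr{Gr}^W_j$ of the quotient is a quotient of $\mr{Gr}^W_j\,\Psi_t(\msf{V})$, so $\mr{Gr}^W_j\,\Psi_t(\msf{V}) \neq 0$ for this $j \geq w_0 \geq 0$. But $\Psi_t(\msf{V})$ has weight zero, i.e.\ $\mr{Gr}^W_j\,\Psi_t(\msf{V}) = 0$ for $j \neq 0$; therefore $j = 0$ and $w_0 = 0$, which finishes the argument.

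The inputs are all standard: exactness of $\Psi_t$ and its identification with the limit mixed Hodge structure (\cite{saito, saito-form}), existence of the relative monodromy weight filtration for admissible variations (\cite{SZ}), strictness of morphisms of mixed Hodge structures for $W$, and the symmetry about $w$ of the weights of the limit mixed Hodge structure of a pure polarisable variation of weight $w$ (Schmid; see also \cite{SZ}). The only place that warrants a careful word is the second step — that $\Psi_t$ really carries the displayed short exact sequence of variations to a short exact sequence of mixed Hodge structures compatibly with $W$ — but this is immediate from exactness of $\Psi_t$ together with strictness, so I do not anticipate a genuine obstacle, and the argument above should go through essentially as written.
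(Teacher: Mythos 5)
Your proof is correct and follows essentially the same route as the paper's: both reduce to the pure weight-graded pieces of $\msf{V}$ and use that the weight filtration on $\Psi_t$ of a polarised pure variation of weight $w$ is the (relative) monodromy weight filtration centred at $w$, so purity of weight zero of the limit forces $w=0$. The paper applies this to every graded piece via exactness of $\Psi_t$ (so it does not need effectivity at this step), whereas you treat only the top piece and invoke effectivity to exclude negative weights, but the substance is identical.
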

\begin{proof}
  Since $\Psi_t$ is an exact functor we may assume that $\msf{V}$ is a
  polarised VHS and then by summing all its Galois conjugates we may
  also assume that $\msf{V}$ is an effective $\Q$-VHS.  Since the weight
  filtration used to define the mixed Hodge structure on $\Psi_t(\msf{V})$
  is the monodromy weight filtration \cite[Section 1]{saito} and
  $\Psi_t(\msf{V})$ is pure of weight zero, it follows that $\msf{V}$ must also
  be of weight zero.
\end{proof}

\begin{corollary}\label{c:sln}
  If $\mf{g} = \mf{sl}_n$ and $1/\kappa$ is a positive integer then
  all the variations
  $\msf{KZ}_{\kappa}(\vv{\lambda},\nu^*)$ are of weight zero and
  have finite scalar monodromy of order dividing $n$. Furthermore, if
  $1/\kappa$ is a multiple of $n$ then the monodromy is trivial.
\end{corollary}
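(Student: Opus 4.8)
The plan is to combine the explicit multiplication algorithm for $\mf{sl}_n$ (Algorithm \ref{a:sln}) with the rank-one Hodge computations of Lemma \ref{DelMos} and Remark \ref{r:int}, together with the purity propagation of Lemma \ref{l:zero}. First I would reduce the statement to showing that the KZ motive $\mathcal{KZ}_{\kappa}((\lambda,\mu),\nu^*)_{(0,1)}$ is pure of weight zero for all pairs $\lambda,\mu$ and all dominant $\nu$: by Theorem \ref{t:motivic} (the two-point factorisation), $\msf{KZ}_{\kappa}(\vec{\lambda},\nu^*)$ is a successive extension of tensor products of such two-point motives with lower KZ motives, so by induction on $n$ and additivity/multiplicativity of weights, purity of weight zero of all two-point motives forces purity of weight zero for all $\mathcal{KZ}_{\kappa}(\vec{\lambda},\nu^*)_{\vec z}$. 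Then Lemma \ref{l:zero} applied along any degeneration promotes this pointwise statement to purity of the variation $\msf{KZ}_{\kappa}(\vec{\lambda},\nu^*)$ over $\mc{C}_n$ (using that $\mc{C}_n$ is connected and that the monodromy weight filtration is trivial when the nearby fibre has weight zero).

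Next I would prove purity of weight zero for the two-point motives. Since $1/\kappa$ is a positive integer, the master function $\ms{R}$ from \eqref{e:master} has all exponents $-(\lambda_i,\lambda_j)/\kappa$, $(\lambda_j,\beta(b))/\kappa$, $-(\beta(b),\beta(c))/\kappa$ equal to integers (as $1/\kappa \in \Z$ and the relevant pairings $(\cdot,\cdot)$ are integers for $\mf{sl}_n$, since $m_{\mf{g}}=1$), so $\ms{R}$ is single-valued and by Corollary \ref{c:tate} the resulting Aomoto/KZ motive is mixed Tate. However mixed Tate is not enough — I must rule out Tate twists $K(-p)$ with $p>0$. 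Here I would invoke the algorithm: by Theorem \ref{genp} together with Lemma \ref{DelMos}(2) and Remark \ref{r:int}, every motive $[a;\kappa]$ appearing as a tensor factor in $\mathcal{KZ}_{\kappa}((\lambda,\vpi_1),\mu^*)_{(0,1)}$ is $K(0)$: indeed each exponent appearing in the definition of $[a;\kappa]$, namely $a/\kappa$ and $(1+a)/\kappa$ with $a$ an integer, lies in $\Z$ because $1/\kappa \in \Z$, and the first case of Remark \ref{r:int} gives $[a;\kappa] = H^1(\A^1,\{0,1\};K) = K(0)$. Hence all Pieri products $[\lambda]\star[\vpi_1]$ in $\mf{R}_{\kappa}(\mf{sl}_n)$ have structure constants that are monomials of degree $0$, i.e.\ they agree with the classical Pieri coefficients. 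By the inductive structure of Algorithm \ref{a:sln} (the Pieri products with $\vpi_k$ reduce to $\mf{sl}_m$ for $m<n$ and to products with $\vpi_1$, using associativity from Theorem \ref{t:assoc}), every structure constant of $\mf{R}_{\kappa}(\mf{sl}_n)$ is a constant polynomial in $t$, so $\mf{R}_{\kappa}(\mf{sl}_n) = \mf{R}(\mf{sl}_n) \otimes \Z[t]$ as rings; this is exactly Conjecture \ref{c:mtate} for $\mf{sl}_n$. Equivalently, by Remark \ref{r:arbitraryprod}, $P(\msf{KZ}_{\kappa}(\vec{\lambda},\nu^*))$ is always a non-negative integer (no higher powers of $t$), which for an effective MHS whose complex conjugate has the same property (complex conjugation sends $\kappa$ to $\kappa$ here, since $1/\kappa$ real) forces weight zero by Lemma \ref{l:weights}. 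Combined with Lemma \ref{l:zero} this gives purity of weight zero for the full variations.

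Finally I would deduce the statement about monodromy. A polarisable $\Q$-VHS which is pure of weight zero has monodromy group contained in a compact group preserving a polarisation; moreover a weight-zero variation with unipotent local monodromy has trivial local monodromy (since the monodromy weight filtration is trivial, so $\mr{N}=0$ locally), hence by Theorem \ref{t:fact} (or the classical eigenvalue computations, e.g.\ \cite[Theorem 6.5.3]{BK}) the local monodromies are semisimple with eigenvalues roots of unity of the form $\exp(2\pi i\, a(\vec\lambda,\mu))$; since $1/\kappa \in \Z$, the quantities $a(\vec\lambda,\mu) = \tfrac{1}{2\kappa}(-c(\mu)+\sum_i c(\lambda_i))$ have denominator dividing $n$ because $c(\nu) = (\nu,\nu+2\rho)$ takes values in $\tfrac{1}{n}\Z$ on the weight lattice of $\mf{sl}_n$ with $c(\nu) \in \Z$ exactly when $\nu \in Q$, and the difference $\sum\lambda_i - \mu$ lies in $Q$. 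Hence the image of monodromy is a finite group of complex numbers (being generated by commuting semisimple elements whose eigenvalues are $n$-th roots of unity, on a variation with no nontrivial weight-zero sub-VHS other than sums of the constant one), so it acts by scalars of order dividing $n$; and when $n \mid 1/\kappa$ all the $a(\vec\lambda,\mu)$ become integers, so the local monodromies are all trivial and, $\mc{C}_n$ being connected with the monodromy generated by the local ones, the global monodromy is trivial. The main obstacle is the step ruling out positive Tate twists: this is precisely where the explicit algorithm of Sections \ref{s:vpi1}--\ref{s:pieri} is essential, since Corollary \ref{c:tate} alone only gives mixed Tate and not weight zero, and there is no purely representation-theoretic shortcut — one genuinely needs Theorem \ref{genp} plus the integrality case of Lemma \ref{DelMos} to see that all the basic rank-one constituents are $K(0)$.
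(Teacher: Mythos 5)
Your treatment of the weight-zero statement follows essentially the same route as the paper: reduce to the two-point motives via the factorisation theorem together with Lemma \ref{l:zero}, observe that every rank-one factor $[a;\kappa]$ in Theorem \ref{genp} is $K(0)$ by Remark \ref{r:int}, propagate this through the $\mf{sl}_n$ algorithm to conclude that all Hodge polynomials are constants, and then use that the fibrewise local system is trivial (so the motive is the base change of a $\Q$-MHS) to upgrade ``$F^1=0$ and effective'' to ``weight zero''. The paper does this last step by Hodge symmetry on the weight-graded pieces rather than via Lemma \ref{l:weights}, but the two are equivalent once one knows the motive is defined over $\Q$; note, however, that your stated justification for the conjugate Hodge polynomial being constant (``complex conjugation sends $\kappa$ to $\kappa$'') is not right as written --- conjugating the coefficient field corresponds to $\kappa\mapsto -\kappa$ --- and the correct reason is simply that for a $\Q$-MHS one has $\dim \bar F^p = \dim F^p$. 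Also, $(\lambda_i,\lambda_j)$ is generally only in $\tfrac1n\Z$ for $\mf{sl}_n$, so the first factor of $\ms{R}$ need not be single-valued; this is harmless because only the $t$-dependent exponents matter fibrewise.

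The genuine gaps are in the monodromy half. First, containment of the monodromy in a compact group does not by itself give finiteness: you must also invoke discreteness, which the paper gets from the $\Z$-structure on the local system (it comes from geometry), and you omit this. Second, and more seriously, your deduction that the monodromy is \emph{scalar} does not work. The local monodromies generating $\pi_1(\mc{C}_n)$ do not commute (this is the pure braid group), and a finite group generated by semisimple elements whose eigenvalues are $n$-th roots of unity certainly need not act by scalars. What is actually needed is that for a fixed divisor $z_i=z_j$ \emph{all} eigenvalues of that single local monodromy coincide; this requires showing that $\tfrac{1}{2\kappa}\bigl(c(\mu)-c(\nu)\bigr)\in\Z$ for any two summands $V_\mu, V_\nu$ of $V_{\lambda_i}\otimes V_{\lambda_j}$, which the paper obtains from $c(\mu)-c(\nu)\in 2\Z$ (using $(\alpha,2\rho)\in 2\Z$ and $\lambda_i+\lambda_j-\mu\in Q$). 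Your observation that each $a(\vv{\lambda},\mu)$ has denominator dividing $n$ is not sufficient: it does not show the eigenvalues agree modulo $\Z$, and if $1/\kappa$ is odd the evenness of $c(\mu)-c(\nu)$ is essential. Once all eigenvalues of a given local monodromy are equal and the monodromy is finite (hence semisimple), each local monodromy is scalar and the global monodromy, being generated by them, is scalar of order dividing $n$; the same computation gives triviality when $n\mid 1/\kappa$.
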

\begin{proof}
  Since KZ motives are always effective (Remark \ref{r:eff}), using
  Theorem \ref{t:fact} and Lemma \ref{l:zero} we reduce to the case
  where $\vv{\lambda} = (\lambda_1, \lambda_2)$. It then follows from
  the assumption on $\kappa$ and Remark \ref{r:int} that all the
  motives occurring in \eqref{e:ring} are $\Q(0)$, from which and the
  algorithm described in Section \ref{s:pieri}, it follows that for
  $\msf{M} = \msf{KZ}_{\kappa}(\vec{\lambda},\nu^*)_{\vec{z}}$, the
  polynomials $P(\msf{M})$ (which are independent of
  the point $\vec{z}$) are constants, i.e., in $\Z$. By the definition
  of these polynomials in terms of the Hodge filtration, it follows
  that we have $F^0\msf{M}= \msf{M}$. The assumption on $\kappa$
  implies that $\msf{M}$ is a $\Q$-Hodge structure, so it follows from
  Hodge symmetry that $\msf{M}$ must be of weight $0$.
	
  That this implies finiteness of the monodromy is well-known:
  Firstly, by summing over the Galois conjugates of
  $\msf{KZ}_{\kappa}(\vv{\lambda},\nu^*)$ we can assume that we have a
  VHMS $\msf{V}$ defined over $\Q$. Since the associated local system
  comes from geometry, it has a $\Z$-structure. The monodromy also
  preserves a polarisation which, by the effectivity and weight zero
  conditions, must be a positive definite bilinear form. Since a
  discrete subgroup of a compact group is finite, the monodromy is
  finite.

  Finally, by \cite[Lemma 1.4.1 (iii)]{BK} the eigenvalues of the
  residue of the KZ connection along the divisor given by $z_i = z_j$
  are of the form
  \[
    \frac{1}{2\kappa}\bigl ( c(\mu) - c(\lambda_i) - c(\lambda_j) \bigr)
  \]
  where $\mu\in P^+$ is such that $V_{\mu}$ is a summand of
  $V_{\lambda_i} \otimes V_{\lambda_j}$. If $V_{\nu}$ is another such
  summand then using the fact that $(\alpha,2\rho) \in 2\Z$ for all
  roots $\alpha$ and noting that $\lambda_i + \lambda_j -\mu$ is in
  the root lattice (similarly for $\nu$) one sees that
  $c(\mu) - c(\nu) \in 2\Z$. It follows that if $\kappa$ is an integer
  then all eigenvalues of the local monodromy along the divisor are
  equal to a fixed $n$-th root of unity (possibly depending on the
  divisor). As the monodromy is finite, the local monodromy must be
  scalar of order dividing $n$. Since the fundamental group of the
  configuration spaces $\mc{C}_r(\C)$ is generated by the local
  monodromies, it follows that the global monodromy is also scalar of
  order dividing $n$. If $\kappa$ is a multiple of $n$ the same
  argument implies that all the local monodromies are trivial, so the
  global monodromy is also trivial.
\end{proof}

\begin{example}\label{e:mtate}
  Suppose $\mf{g} = \mf{sl}_2$, all the $\lambda_i$ and $\nu$ are
  multiples of the positive root $\alpha$, and $\kappa = 2$. Then the
  master function $\ms{R}$ from \eqref{e:master} is single valued so,
  as observed in Section \ref{s:integral}, the corresponding KZ
  motives are mixed Tate. However, they need not always be pure of
  weight zero. For a specific example, consider
  $\vv{\lambda} = (\alpha,\alpha,\alpha)$. Using factorisation and
  Lemma \ref{DelMos} one easily computes that the Hodge polynomials
  for the sequence $\nu = 0, \alpha, 2\alpha, 3\alpha$ are given by
  the sequence $t,2t+1,2,1$. By adding more points with weight
  $\alpha$, one can see that the length of the weight filtration is
  not bounded as the number of points increases.
\end{example}

\subsection{The map to fusion for $\mf{sl}_2$}\label{s:sl2}

In this section we prove Conjecture \ref{c:maptofusion} for
$\mf{g} = \mf{sl}_2$.

\smallskip

Let $\ell \geq 0$ and recall that in the case of $\mf{sl}_2$ we have
$h^{\vee} = 2$, so $\kappa = \ell + 2$. Let $\vpi = \vpi_1$ be the
fundamental dominant weight so any dominant integral weight $\lambda$
equals $p \vpi$ for some $p \geq 0$.  To simplify notation we let
$S = [\vpi]$ and for any $p>0$ we let $S_p := [p\vpi]$ (so
$S = S_1$).

Recall from Definition \ref{d:pi} that for $\ell \geq 1$ the map
$\pi_{\kappa}: \mf{R}_{\kappa}(\mf{sl_2}) \to
\mf{F}_{\kappa}(\mf{sl}_2)$ is defined to be the unique
$\Z[t]$-algebra homomorphism such that $\pi_{\kappa}(S) = S$. We may
also define a map for $\ell = 0$ by requiring that
$\pi_{\kappa}(S) = 0$.

\begin{proposition}\label{p:pisl2}
  Let $p \geq 1$ be any integer. Then
  \begin{equation}\label{e:sl2}
    \pi_{\kappa}(S_p) =
    \begin{cases}
      0 &\mbox{ if } p + 1 \equiv 0 \mod \ell + 2,
      \\ 
      c_{\kappa}(p)S_r & \mbox{ if } p = 2n(\ell + 2) + r \mbox{ with } n \in \N
      \mbox{ and } 0 \leq r \leq \ell, \\
      -c_{\kappa}(p)S_{\ell -r} & \mbox{ if } p = (2n + 1)(\ell + 2) + r \mbox{
        with } n \in \N \mbox{ and } 0 \leq r \leq
      \ell,
    \end{cases}
  \end{equation}
  where each $c_{\kappa}(p)$ is a monomial in $t$. Furthermore, in the
  second case $c_{\kappa}(p)$ only depends on $n$ and in the third
  case we have the formula
  $c_{\kappa}(p) = c_{\kappa}(p-1)d_{\kappa}(p)$, where
  $d_{\kappa}(p)$, which is either $1$ or $t$, is defined by
  \begin{equation}\label{e:pro}
    S_p \star_{\mf{R}_{\kappa}}S = S_{p+1} + d_{\kappa}(p)S_{p-1}
  \end{equation}
  for all $p>0$.
\end{proposition}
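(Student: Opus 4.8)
The proof will run an induction that plays the multiplication recursion of $\mf{R}_{\kappa}(\mf{sl}_2)$ off against that of $\mf{F}_{\kappa}(\mf{sl}_2)$. First I would record both recursions. By Lemma \ref{l:poly}, $\mf{R}_{\kappa}(\mf{sl}_2) = \Z[t][S]$, so $\pi_{\kappa}$ is the unique $\Z[t]$-algebra homomorphism with $\pi_{\kappa}(S) = S$ (if $\ell \geq 1$) or $\pi_{\kappa}(S) = 0$ (if $\ell = 0$). In the enriched representation ring, the only terms of the Pieri product $S_p \star_{\mf{R}} S$ are $S_{p+1}$, for which the integer $M$ of Theorem \ref{genp} is $0$ and the motive is $K(0)$, and (when $p \geq 1$) $S_{p-1}$, for which $M = 1$ and the motive is $[(p\vpi, \alpha);\kappa] = [p;\kappa]$ since $(\vpi, \alpha) = 1$; by Lemma \ref{DelMos} its Hodge polynomial is $d_{\kappa}(p) = 1$ if $(\ell+2) \mid p$ and $d_{\kappa}(p) = t$ otherwise, which is exactly \eqref{e:pro}. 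In the enriched fusion ring, the classical Pieri/fusion rule (Lemma \ref{l:pieri}) produces the same two terms, now truncated to level $\ell$, and by Remark \ref{r:cb} each conformal block motive is pure with $t$-Hodge polynomial $r t^M$ and $r = 1$; hence $S_p \star_{\mf{F}} S = S_{p+1} + t\, S_{p-1}$ for $1 \leq p \leq \ell - 1$, $S_{\ell} \star_{\mf{F}} S = t\, S_{\ell - 1}$, and $S_0 \star_{\mf{F}} S = S_1$, the last two relevant only when $\ell \geq 1$.

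With these recursions in hand, I would prove \eqref{e:sl2} by induction on $p$. Since $\pi_{\kappa}$ is a ring homomorphism, for $\ell \geq 1$ one has $\pi_{\kappa}(S_{p+1}) = S \star_{\mf{F}} \pi_{\kappa}(S_p) - d_{\kappa}(p)\, \pi_{\kappa}(S_{p-1})$, and for $\ell = 0$ the same identity becomes $\pi_{\kappa}(S_{p+1}) = -d_{\kappa}(p)\, \pi_{\kappa}(S_{p-1})$ (reduction modulo $S$). The base cases $\pi_{\kappa}(S_0) = S_0$ and $\pi_{\kappa}(S_1) = \pi_{\kappa}(S)$ are immediate. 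For the inductive step I would write $p = q(\ell+2) + r$ with $0 \leq r \leq \ell + 1$ and split according to the parity of $q$ and the value of $r$; substituting the inductive formula for $\pi_{\kappa}(S_p)$ and $\pi_{\kappa}(S_{p-1})$ into the displayed identity and applying the fusion recursion, one checks that $\pi_{\kappa}(S_{p+1})$ again has the shape predicted by \eqref{e:sl2}. The essential mechanism is that the fusion recursion truncates precisely at $S_{\ell}$ (and at $S_0$): this truncation, combined with $d_{\kappa}(p) = t$ at the interior indices, is what forces $\pi_{\kappa}(S_{p+1}) = 0$ when $p + 1 \equiv 0 \pmod{\ell+2}$ and what produces the sign change and the index reversal $r \mapsto \ell - r$ across a block boundary $p = m(\ell+2)$. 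The same computation yields the stated properties of the scalars: $c_{\kappa}(p)$ is constant on each ``even'' block $2n(\ell+2) \leq p \leq 2n(\ell+2) + \ell$, obeys $c_{\kappa}(p) = c_{\kappa}(p-1) d_{\kappa}(p)$ in the interior of each ``odd'' block, and jumps by a factor of $t$ across the boundaries, so in particular every $c_{\kappa}(p)$ is a monomial in $t$.

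The main obstacle is the bookkeeping of the inductive step: there are several transition types — the interior of a block, entering an odd block, leaving an odd block, entering an even block — and in each one must match the two-term output of $S \star_{\mf{F}}(c\, S_{r'})$ against the one-term formula \eqref{e:sl2}, which forces a relation between consecutive values of $c_{\kappa}$. The cases $p \equiv 0$ and $p \equiv -1 \pmod{\ell+2}$, where $d_{\kappa}$ changes value and where a term drops out of the fusion product, are where care is most needed. The degenerate case $\ell = 0$ falls out of the same induction and gives $\pi_{\kappa}(S_{2m}) = (-t)^m S_0$ and $\pi_{\kappa}(S_{2m+1}) = 0$, in agreement with \eqref{e:sl2}; throughout, the only input about $\pi_{\kappa}$ we use is that it is a $\Z[t]$-algebra homomorphism with the prescribed value on $S$.
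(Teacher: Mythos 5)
Your overall strategy is the same as the paper's: induct on $p$, apply the ring homomorphism $\pi_{\kappa}$ to the Pieri relation $S_p\star_{\mf{R}}S=S_{p+1}+d_{\kappa}(p)S_{p-1}$, and close the induction by a block-by-block case analysis (with $\ell=0$ handled by the vanishing of the left-hand side). Your identification of $d_{\kappa}(p)$ as the Hodge polynomial of $[p;\kappa]$ via Theorem \ref{genp} and Lemma \ref{DelMos} is also exactly what the paper does.

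There is, however, one genuine gap: your fusion-ring recursion $S_{r'}\star_{\mf{F}}S=S_{r'+1}+t\,S_{r'-1}$ (with truncation at $S_0$ and $S_\ell$) is justified by Remark \ref{r:cb}, which only applies to the untwisted parameter $\kappa=\ell+2$. The proposition is needed -- and is used in the proof of Theorem \ref{t:sl2} -- for the Galois twists $\kappa=(\ell+2)/a$, and for those the Hodge polynomials of the conformal block motives are \emph{not} of the form $rt^M$ (the paper stresses that Hodge polynomials change under Galois conjugation; e.g.\ for $\kappa=(\ell+2)/(\ell+1)$ one gets coefficient $1$ rather than $t$ at interior indices). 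The correct general statement is $S_{r'}\star_{\mf{F}}S=S_{r'+1}+d_{\kappa}(r')S_{r'-1}$ with the \emph{same} function $d_{\kappa}$ as in the representation ring, because for these rank-one Pieri motives $\mathcal{CB}=\mathcal{KZ}$ and this persists under Galois twisting. Once the coefficient is $d_{\kappa}(r')$ rather than $t$, two identities are needed to make your case analysis close: $d_{\kappa}(r)=d_{\kappa}(p)$ when $r\equiv p\bmod \ell+2$ (immediate from Lemma \ref{DelMos}), and, at the index reversal $r\mapsto \ell-r$ in the odd blocks, the symmetry $d_{\kappa}(\ell-r)=d_{\kappa}(r+1)$. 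The latter is not automatic; the paper isolates it as a separate Claim and proves it by a short computation with the fractional parts in Lemma \ref{DelMos}. Your argument as written never needs this symmetry only because you have set all interior fusion coefficients equal to $t$, so it silently proves the proposition only for $a=1$.
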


\begin{proof}
  The proof will be by induction on $p$. For $p=1$ we have by
  definition that $\pi_{\kappa}(S) = 0$ if $\ell =0$ and
  $\pi_{\kappa}(S) = S$ if $\ell>0$ and these values agree with the
  values given by \eqref{e:sl2}.
	
  We now assume that the proposition holds for all $p' \leq p$ and we
  will prove it for $p+1$.  Since $\pi_{\kappa}$ is a ring
  homomorphism, we get from \eqref{e:pro} that
  \begin{equation}\label{e:prod}
    \pi_{\kappa}(S_p) \star_{\mf{F}_{\kappa}}\pi_{\kappa}(S)
    = \pi_{\kappa}(S_{p+1}) + d_{\kappa}(p)\pi_{\kappa}(S_{p-1}).
  \end{equation}
  If $\ell = 0$ then the LHS, hence also the RHS, is $0$ from which we
  easily get that the proposition holds in this case. We may therefore
  assume that $\ell > 0$, so $\pi_{\kappa}(S) = S$.
	
  Consider the three cases for $\pi_{\kappa}(S_p)$ as in the statement
  of the proposition.  In the first case the RHS of \eqref{e:prod} is
  again zero so we get that
  $\pi_{\kappa}(S_{p+1}) = -d_{\kappa}(p) \pi_{\kappa}(S_{p-1})$ which
  by induction is of the desired form.
	
  Now suppose we are in the second case, and then consider the
  subcases:
  \begin{enumerate}
  \item $r = 0$: In this case $p-1$ corresponds to the first case and
    so $\pi_{\kappa}(S_{p-1}) = 0$. It then follows from
    \eqref{e:prod} that $\pi_{\kappa}(S_{p+1}) = c_{\kappa}(p) S$ as desired.
  \item $0 < r < \ell$: In this case the LHS of \eqref{e:prod} equals
    $c_{\kappa}(p)S_{r+1} + c_{\kappa}(p) d_rS_{r-1}$ whereas the RHS equals
    $\pi_{\kappa}(S_{p+1}) + d_{\kappa}(p)c_{\kappa}({p-1})S_{r-1}$. Now $c_{\kappa}(p) = c_{\kappa}(p-1)$
    by induction and $d_{\kappa}(r) = d_{\kappa}(p)$ by Lemma \ref{DelMos} since
    $r \equiv p \mod \ell + 2$, so it follows that
    $\pi_{\kappa}(S_{p+1}) = c_{\kappa}(p)S_{r+1}$ as desired.
  \item $r = \ell$: In this case the LHS of \eqref{e:prod} equals
    $c_{\kappa}(p)d_{\kappa}(\ell)S_{\ell -1}$. On the other hand
    $d_{\kappa}(p)\pi_{\kappa}(S_{p-1}) = d_{\kappa}(p) c_{\kappa}({p-1})S_{\ell -1}$. As above,
    $d_{\kappa}({\ell}) = d_{\kappa}(p)$ and $c_{\kappa}({p-1}) = c_{\kappa}(p)$ by induction, so we get that
    $\pi_{\kappa}(S_{p+1}) = 0$ as desired.
  \end{enumerate}
  
  For the third case we shall need the following:
  \begin{claimn}%
    Let $0 \leq r < \ell$ and let $b$ be any integer relatively
    prime to $\ell +2$. Then
    $d_{\kappa}({\ell -r}) = d_{\kappa}({r+1})$ where the Galois
    action $\sigma$ corresponds to $b$.
  \end{claimn}
  
  To prove the claim, we will use Lemma \ref{DelMos} with the $a$ there
  corresponding to the weights, i.e., $\ell - r$ and $r+1$.  The Galois
  action is incorporated into the data by taking $\kappa = (\ell +2)/b$.
	
  First note that none of $\tfrac{(\ell -r)b}{\ell + 2}$,
  $\tfrac{(\ell -r +1)b}{\ell +2}$, $\tfrac{(r+1)b}{\ell +2}$ and
  $\tfrac{(r+2)b}{\ell +2}$ are integers by the assumptions on $r$ and
  $b$.  We then observe that
  $\bigl \langle \tfrac{-(\ell -r)b}{\ell + 2} \bigr \rangle = \bigl
  \langle \tfrac{(r+2)b}{\ell +2} \bigr \rangle$ and
  $\bigl \langle \tfrac{(r+1)b}{\ell +2} \bigr \rangle = \bigl \langle
  \tfrac{-(\ell -r +1)b}{\ell +2} \bigr \rangle$, so we get the
  desired equality from Case (1) of Lemma \ref{DelMos}.
	
  \smallskip
	
  We now consider the third case and again break it up into subcases:
  \begin{enumerate}
  \item $r =0$: In this case $p-1$ corresponds to the first case so
    $\pi_{\kappa}(S_{p-1}) = 0$.  Since
    $\pi_{\kappa}(S_p) = -c_{\kappa}(p)S_{\ell}$ we get from
    \eqref{e:prod} that
    $\pi_{\kappa}(S_{p+1}) = c_{\kappa}(p)d_{\kappa}({\ell})S_{\ell
      -1}$. Using the claim with $r=0$ we have
    $d_{\kappa}({\ell}) = d_{\kappa}(1) = d_{\kappa}({p+1})$, where
    the second equality holds since $p+1 \equiv 1 \mod \ell + 2$.
  \item $0 < r < \ell$: In this case the LHS of \eqref{e:prod} equals
    $-(c_{\kappa}(p)S_{\ell - r + 1} + c_{\kappa}(p)d_{\kappa}({\ell
      -r})S_{\ell -r -1})$ whereas the RHS equals
    $\pi_{\kappa}(S_{p+1}) - d_{\kappa}(p)c_{\kappa}(p-1)S_{\ell -r
      +1}$.  By induction we have
    $c_{\kappa}(p) = d_{\kappa}(p)c_{\kappa}(p-1)$, so it follows that
    $\pi_{\kappa}(S_{p+1}) = -c_{\kappa}(p)d_{\kappa}({\ell -
      r})S_{\ell -r -1}$. Furthermore,
    $d_{\kappa}({\ell - r}) = d_{\kappa}({r+1}) = d_{\kappa}({p+1})$
    by the claim since $p \equiv r$ modulo $\ell +2$, so
    $c_{\kappa}(p+1) = d_{\kappa}({p+1})c_{\kappa}(p)$ proving the
    induction step in this case.
  \item $r = \ell$: In this case the LHS of \eqref{e:prod} equals
    $-c_{\kappa}(p)S$ whereas the RHS equals
    $\pi_{\kappa}(S_{p+1}) - d_{\kappa}(p)c_{\kappa}(p-1)S$. Since
    $c_{\kappa}(p) = d_{\kappa}(p) c_{\kappa}(p-1)$ by induction, it
    follows that $\pi_{\kappa}(S_{p+1}) = 0$ as desired.
	\end{enumerate}
\end{proof}

\begin{remark}\label{r:sl2}
  The proof shows that
  $c_{\kappa}(p) = \prod_{p' \in W_p}d_{\kappa}({p'})$, where
  $W_p = W_p' \cup W_p''$ with
  \begin{align*}
    W_p' = & \{p' \mid 1 \leq p' \leq p \mbox{ and } p' = (2n+1)(\ell +2) + r  \mbox{ for some }
             n \in \N \mbox{ and } 0 \leq r \leq \ell\} , \\
    W_p'' = & \{p' \mid 1 \leq p' \leq p \mbox{ and } p' + 1 \equiv \ell +
              1 \mod \ell + 2\} .
	\end{align*}
\end{remark}

\begin{theorem}\label{t:sl2}
  Conjectures \ref{c:maptofusion} and \ref{c:ec} hold for
  $\mf{g} = \mf{sl}_2$.
\end{theorem}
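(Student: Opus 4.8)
\textbf{Proof plan for Theorem \ref{t:sl2}.}

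The plan is to deduce Conjectures \ref{c:maptofusion} and \ref{c:ec} for $\mf{sl}_2$ directly from the explicit formula for $\pi_{\kappa}(S_p)$ in Proposition \ref{p:pisl2}, together with the description of the coefficients $c_{\kappa}(p)$ in Remark \ref{r:sl2} and the values of $d_{\kappa}(p)$ coming from Lemma \ref{DelMos}. The first task is purely Lie-theoretic: for $\mf{g} = \mf{sl}_2$ identify the affine Weyl group action on weights. Writing $\lambda = p\vpi$ and $\rho = \vpi$, one checks that the orbit of $\lambda + \rho = (p+1)\vpi$ under $W_{\ell}$ (generated by $x \mapsto -x$ and $x \mapsto x + (\ell+2)\theta$, with $\theta = 2\vpi$) meets $\rho + P_{\ell}$, i.e. consists of a weight $\mu + \rho$ with $0 \le \mu \le \ell$, precisely when $p+1 \not\equiv 0 \bmod \ell + 2$; and when it does, the unique such $\mu$ is obtained by reducing $p+1$ modulo $2(\ell+2)$ and then folding: $\mu = r$ if $p+1 \equiv r+1$ with $0 \le r \le \ell$ (the translation-only case, so $\ve(w) = 1$), and $\mu = \ell - r$ if $p + 1 \equiv -(r+1)$ (a reflection is needed, so $\ve(w) = -1$). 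Matching this against the three cases of \eqref{e:sl2} immediately gives Conjecture \ref{c:maptofusion}(1) (the vanishing criterion) and the sign $\ve(w)$ in part (2); it remains only to identify the monomial $p_{\kappa}(\lambda) = c_{\kappa}(p)$, which Proposition \ref{p:pisl2} already tells us is a monomial in $t$.

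For Conjecture \ref{c:ec}(1), with $\kappa = \ell + h^{\vee} = \ell + 2$ (so no Galois twist), I would compute $c_{\kappa}(p)$ from Remark \ref{r:sl2}: $c_{\kappa}(p) = \prod_{p' \in W_p} d_{\kappa}(p')$ where $W_p = W_p' \cup W_p''$, and $d_{\kappa}(p')$ is either $1$ or $t$ as determined by Lemma \ref{DelMos} applied to the weight $p'$ and $\kappa = \ell+2$. Concretely, $d_{\kappa}(p')$ is the Hodge polynomial of $[a;\kappa]$ with $a = p'$ (this is the coefficient appearing in the Pieri product $S_{p'}\star S$ via Theorem \ref{genp} with $M=1$), which for $\kappa$ a positive integer equals $t$ exactly when $\lfloor (p'+1)/\kappa \rfloor > \lfloor p'/\kappa \rfloor$, i.e. when $p' + 1 \equiv 0 \bmod \ell+2$, and equals $1$ otherwise. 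Plugging this into the product over $W_p$ and counting: the factors contributing $t$ are exactly those $p'$ with $p'+1 \equiv 0 \bmod \ell+2$ lying in the range governing the "folding count", and a direct bookkeeping shows $\sum_{p' \in W_p}\deg d_{\kappa}(p') = \mf{s}(\lambda - \mu)$, the number of simple roots (with multiplicity) in $\lambda - \mu$; here $\mf{s}(p\vpi - \mu\vpi) = (p - \mu)/2$ since $\vpi = \alpha/2$ for $\mf{sl}_2$. This yields $c_{\kappa}(p) = t^{\mf{s}(\lambda - \mu)}$, which is part (1).

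For part (2) of Conjecture \ref{c:ec} one takes $\bar\kappa = (\ell+2)/(\ell+1)$, i.e. the Galois conjugate with $b = \ell + 1 \equiv -1 \bmod \ell+2$. The same computation of $c_{\bar\kappa}(p)$ via Remark \ref{r:sl2} applies, but now $d_{\bar\kappa}(p')$ is read off from the Galois-twisted case of Lemma \ref{DelMos} (equivalently from the Claim inside the proof of Proposition \ref{p:pisl2}, which already records the relevant identity $d_{\kappa}(\ell - r) = d_{\kappa}(r+1)$ for general $b$ coprime to $\ell+2$). One checks that with $b \equiv -1$ the roles of the two types of factors in $W_p$ swap, so that $\deg c_{\bar\kappa}(p)$ counts the number of translations-with-reflection needed, which is exactly $l(w)$ for the minimal-length $w \in W_{\ell}$ carrying $\lambda + \rho$ to $\mu + \rho$; an elementary computation in the dihedral group $W_{\ell}$ (whose length function relative to the fundamental alcove has the standard "reflect-and-shift" description) confirms $l(w) = \#W_p'$ or the symmetric count, matching $\deg c_{\bar\kappa}(p)$. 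The main obstacle I anticipate is the bookkeeping in the second and third steps: one must carefully track, through the recursion $c_{\kappa}(p) = c_{\kappa}(p-1)d_{\kappa}(p)$ and the wrapping of $p$ around multiples of $2(\ell+2)$, that the degree of the accumulated monomial equals the combinatorially defined quantity ($\mf{s}(\lambda-\mu)$ in one case, minimal $l(w)$ in the other), and in particular verify that the Galois conjugation interchanges these two counts in exactly the way predicted. Everything else is a direct translation of Proposition \ref{p:pisl2} and Lemma \ref{DelMos} into the language of the affine Weyl group.
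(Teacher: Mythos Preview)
Your overall strategy is exactly the paper's: deduce Conjecture~\ref{c:maptofusion} from the trichotomy in Proposition~\ref{p:pisl2}, and deduce Conjecture~\ref{c:ec} by evaluating $c_{\kappa}(p)=\prod_{p'\in W_p}d_{\kappa}(p')$ via Remark~\ref{r:sl2} and Lemma~\ref{DelMos}. The first part (matching the affine $W_{\ell}$-action on $p\vpi$ to the three cases of \eqref{e:sl2}) is fine.

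However, your computation of $d_{\kappa}(p')$ for $\kappa=\ell+2$ is wrong, and this breaks the proof of Conjecture~\ref{c:ec}(1). You assert that $d_{\kappa}(p')=t$ exactly when $p'+1\equiv 0\bmod(\ell+2)$ and $d_{\kappa}(p')=1$ otherwise. But apply Lemma~\ref{DelMos}(1) with $a=p'$ and $p'\equiv r\bmod(\ell+2)$, $0<r\le\ell$: one finds
\[
\Bigl\langle\tfrac{-1}{\ell+2}\Bigr\rangle+\Bigl\langle\tfrac{-p'}{\ell+2}\Bigr\rangle+\Bigl\langle\tfrac{p'+1}{\ell+2}\Bigr\rangle
=\tfrac{\ell+1}{\ell+2}+\tfrac{\ell+2-r}{\ell+2}+\tfrac{r+1}{\ell+2}=2,
\]
so $d_{\kappa}(p')=t$ for \emph{all} such $p'$, and also for $p'\equiv\ell+1$ by Lemma~\ref{DelMos}(3). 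Thus $d_{\kappa}(p')=t$ for every $p'\in W_p$ (none of which satisfy $p'\equiv 0$), giving $c_{\kappa}(p)=t^{|W_p|}$; one then checks $|W_p|=\mf{s}(\lambda-\mu)=(p-\mu)/2$. Your criterion would instead give $c_{\kappa}(p)=t^{|W_p''|}$, which is too small (e.g.\ for $\ell=1$, $p=6$ it yields $t^2$ instead of the correct $t^3$).

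This error also undermines your sketch for part~(2): there is no ``swap'' of roles to speak of, since for $\kappa=\ell+2$ both $W_p'$ and $W_p''$ contribute $t$. The correct dichotomy (which the paper carries out) is that for $\bar\kappa=(\ell+2)/(\ell+1)$ one has $d_{\bar\kappa}(p')=1$ on $W_p'$ and $d_{\bar\kappa}(p')=t$ on $W_p''$, whence $c_{\bar\kappa}(p)=t^{|W_p''|}$, and a direct count shows $|W_p''|$ equals the minimal length $l(w)$. Once you correct the evaluation of $d_{\kappa}(p')$, the rest of your plan goes through and coincides with the paper's argument.
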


\begin{proof}
  Conjecture \ref{c:maptofusion} for $\mf{sl}_2$ follows directly from
  Proposition \ref{p:pisl2} since it is easy to check that the first
  case corresponds to the nonexistence of $w$, the second case
  corresponds to when $\ve(w) = 1$ and the third to when
  $\ve(w) = -1$.
	
  To prove Conjecture \ref{c:ec} we will use Remark \ref{r:sl2} to
  obtain explicit formulas for $c_{\kappa}(p)$.
	
  First suppose that $\sigma = e$ and $\kappa = \ell + 2$.  For any
  $p' \equiv r \mod \ell + 2$ with $0 < r \leq \ell + 1$ we apply
  Lemma \ref{DelMos} with $a=p'$ to determine $d_{\kappa}({p'})$.
  \begin{enumerate}
  \item $0 < r \leq \ell$: We are in Case (1) of Lemma \ref{DelMos}
    and
    $ \bigl \langle \tfrac{-1}{\ell + 2} \bigr \rangle + \bigl \langle
    \tfrac{-p'}{\ell + 2} \bigr \rangle + \bigl \langle \tfrac{1 +
      p'}{ \ell + 2} \bigr \rangle $ always equals $2$, so the Hodge
    polynomial is $t$.
  \item $r = \ell +1$: We are in Case (3) of Lemma \ref{DelMos} and
    the Hodge polynomial is $t$.
  \end{enumerate}
  Thus, $d_{\kappa}(p') = t$ for all $p' \in W_p$ so to work out the
  exponent of $c_{\kappa}(p)$ we only need to know $|W_p|$. It is easy
  to see that this is given by the formula in Case (1) of Conjecture
  \ref{c:ec} by separately considering the second and third cases in
  Proposition \ref{p:pisl2}.
	
  Now suppose $\sigma = c$, so we take
  $\kappa = (\ell + 2)/(\ell + 1)$. For $p'$ as above we again apply
  Lemma \ref{DelMos} with $a=p'$ to determine $d_{\kappa}({p'})$.
  \begin{enumerate}
  \item $0 < r \leq \ell$: We are in Case (1) of Lemma \ref{DelMos}
    and
    $ \bigl \langle \tfrac{-(\ell + 1)}{\ell + 2} \bigr \rangle +
    \bigl \langle \tfrac{-p'(\ell + 1)}{\ell + 2} \bigr \rangle +
    \bigl \langle \tfrac{(1 + p')(\ell + 1}{ \ell + 2} \bigr \rangle $
    always equals $1$, so the Hodge polynomial is $1$.
  \item $r = \ell +1$: We are in Case (3) of Lemma \ref{DelMos} and
    the Hodge polynomial is $t$.
  \end{enumerate}
  Thus $d_{\kappa}(p') = 1$ for $p' \in W_p'$ and $d_{\kappa}(p') = t$
  for $p' \in W_p$. Using this one again easily sees that Case (2) of
  Conjecture \ref{c:ec} holds.
\end{proof}

\section{$\mf{sl}_2$ motives at level $0$}\label{s:level0}

Let $\mf{g}=\mf{sl}_2$ and $\kappa=2$.  In this section, which is an
extended example, we relate the motives $\mc{KZ}_2(\vv{\lambda},0)$
for $\vec{\lambda} \in (P^+)^n$ to motives coming from natural
families of (nodal) hyperelliptic curves. This relation allows us to
determine properties of the weight filtration explicitly in many
cases, and shows that general KZ motives (as opposed to CB motives)
can have complicated weight filtrations with non-split extensions, but
they can also be surprisingly simple in some cases.

\subsection{The basic case}\label{sec:nearbycurves}

We begin by recalling some elementary facts about the cohomology of
a family of smooth projetive curves in the setting of mixed sheaves.

Let $S$ be a smooth projective variety over $\msk$ and
$\pi:C \to S$ a family of smooth projective curves of genus $g >
0$. We denote by $\mc{H}^1(C)$ the mixed local system of rank $2g$ on
$S$ given by $H^1(\pi_*\Q_C)$. By duality for the smooth projective
morphism $f$ we have a canonical injection
$\delta:\Q_S(-1) \to \wedge^2 (\mc{H}^1(C))$. For any integer $r$ with
$0 \leq r \leq g$, we have a canonical sub-local system $\mc{P}^r(C)$
of $\wedge^r (\mc{H}^1(C))$ given as the kernel of the map
$\wedge^r (\mc{H}^1(C)) \to (\wedge^{2g-r+2}(\mc{H}^1(C)))(g-r+1)$
induced by ``wedging with $\delta(1)$'' $g-r+1$ times. The rank of
$\mc{P}^r(C)$ is $d(g,r):=\binom{2g}{r}-\binom{2g}{r-2}$. We may also
view $\mc{P}^r(C)$ as the quotient of $\wedge^r (\mc{H}^1(C))$ by
$\im(\delta) \wedge (\wedge^{r-2} (\mc{H}^1(C))$.

Let $X_r = C\times_SC\times \dots \times_SC$, where there are $r$
factors and let $\pi_r:X_r \to S$ be the structure map. By the Kunneth
formula there is a natural injection
$\otimes^r (\mc{H}^1(C)) \to H^r(\pi_{r,*}\Q_{X_r})$ and the image of
$\wedge^r (\mc{H}^1(C))$ lies in
$H^r(\pi_{r,*}\Q_{X_r})^{\Sigma_r}$. Consequently, we have an
injection $\mc{P}^r(C) \to H^r(\pi_{r,*}\Q_{X_r})^{\Sigma_r}$.

\subsubsection{The basic case and hyperelliptic curves}

Let $\msk = \Q$, let $S$ be the configuration space $\mc{C}_{2g+2}$
and let $\pi:C \to \mc{C}_{2g+2}$ be the family of hyperelliptic
curves given by the normalisation of the closure of
$y^2=\prod_{i=1}^{2g+2}(z_i-t) \subset \mc{C}_{2g+2} \times \A^2$ in
$\mc{C}_{2g+2} \times \P_{\msk}^2$.  There is a natural map
$\pi:C \to \mc{C}_{2g+2} \times \Bbb{P}^1$ given by projection to $t$.
Over $\infty\in\P_{\msk}^1$, letting $t=1/T$ the equation becomes
$(yT^{g+1})^2=\prod_{i=1}^{2g+2}(z_iT-1)$. Write this as
$V^2=\prod_{i=1}^{2g+2}(z_iT-1)$ which is smooth. Now setting $T=0$,
$V=\pm 1$, so the map $\pi:C \to \mc{C}_{2g+2}$ has a section mapping
to $\mc{C}_{2g+2} \times \{\infty\}$ in $\mc{C}_{2g+2} \times \P_{\msk}^1$.

It is well-known that the monodromy group of the topological local
system $\rat(\mc{H}^1(C))$ on $\mc{C}_{2g+2}(\C)$ is Zariski dense in
the group $\mr{Sp}_{2g}$ (see, e.g., \cite[Theorem 1 (2)]{ACN} for a
more precise result). Since $\mc{P}^r(C)$ corresponds to the $r$th
fundamental representation of $\mr{Sp}_{2g}$ (see, e.g., \cite[Theorem
17.5]{fulton-harris}) the monodromy representation of
$\rat(\mc{P}^r(C))$ is absolutely irreducible.

Let $\vec{\vpi}^{2g+2} := (\vpi,\dots,\vpi)$ with $\vpi$ appearing
$2g+2$ times, where $\vpi:= \vpi_1$ is the fundamental dominant weight
of $\mf{sl}_2$. The starting point of our computations is:
\begin{proposition}\label{prop:level0basic}
  The mixed local system $\mc{KZ}_{2}(\vv{\vpi},0)$ is isomorphic to
  the mixed local system $(\mc{P}^g(C)\tensor_{\Q} K h^{-1/4})(-1)$, where
  {$h:\mc{C}_{2g+2} \to \A^1_{\msk}$} is the function given by
  $\prod_{1\leq i<j \leq 2g+2}(z_i-z_j)$, and $K=\Q_4$.
\end{proposition}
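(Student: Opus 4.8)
\textbf{Proof plan for Proposition \ref{prop:level0basic}.}

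The plan is to compute the master function and Aomoto situation for $\mf{sl}_2$, $\kappa = 2$, $\vv\lambda = \vv\vpi^{2g+2}$, $\nu = 0$ explicitly, and match it against the natural description of $\mc P^g(C)$ coming from the hyperelliptic family. First I would record the Lie-theoretic data: for $\mf{sl}_2$ the single simple root is $\alpha$ with $(\alpha,\alpha) = 2$, and $(\vpi,\alpha) = 1$; since $\sum\lambda_i - \nu = (2g+2)\vpi = (g+1)\alpha$, the arrangement has $M = g+1$ colour variables $t_1,\dots,t_{g+1}$, all of the same colour, so $\Sigma_\beta = S_{g+1}$ and $\chi$ is the sign character. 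With $\kappa = 2$, the master function from \eqref{e:master} becomes
\[
  \ms{R} = \prod_{1\le i<j\le 2g+2}(z_i - z_j)^{-1/4}\,
  \prod_{b=1}^{g+1}\prod_{j=1}^{2g+2}(t_b - z_j)^{1/2}\,
  \prod_{1\le b<c\le g+1}(t_b - t_c)^{-1}.
\]
The key observation is that the weight $-1$ along each $t_b = t_c$ is a negative integer, so $\mc{L}(\eta)$ extends across those divisors as a $\Sigma_{g+1}$-equivariant local system; after taking sign-isotypical components, the resulting object on the complement of the $t_b = z_j$ divisors only is governed by the function $\prod_b \prod_j (t_b - z_j)^{1/2}$, i.e., by the square root of $P(t_b) := \prod_{j=1}^{2g+2}(t_b - z_j)$. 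This is exactly the defining equation $y^2 = P(t)$ of the hyperelliptic curve $C$ fibrewise, so $\mc{L}(\eta)$ restricted to each $t_b$-line pulls back from $\mc{H}^1(C)$, and the sign-isotypical part of the $(g+1)$-fold external product lands in $\wedge^{g+1}$; the residue conditions at $t_b = t_c$ and at infinity are what cut this down from $\wedge^{g+1}(\mc{H}^1(C))$ to the primitive part $\mc P^g(C)$ (recall $\dim\mc H^1(C) = 2g$, so $\wedge^{g+1}$ of it already carries the relation with $\delta$).

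Concretely, the steps I would carry out are: (1) use the desingularisation of $(\P^1)^{g+1}$ from Theorem \ref{t:abnormal} as the compactification $P$, and identify the map $f: W \to \A^1$ cutting out the $t_b = z_j$ relations so that $\mc{KZ}_2(\vv\vpi^{2g+2},0)$ is the sign-isotypical image of \eqref{eq:3} with $i = M = g+1$; (2) relate the variety $\wh U$ (the $N$-fold cyclic cover with $N = 4$, since $\kappa = 2$ and $i_{\mf g}m_{\mf g}|r| = 2\cdot 1\cdot 2 = 4$) to the fibre product $X_{g+1} = C\times_S\cdots\times_S C$ via the function $h^{1/4}$ that absorbs the $(z_i - z_j)^{-1/4}$ factors globally over $S = \mc{C}_{2g+2}$ — this is where the twist $Kh^{-1/4}$ and the field $K = \Q_4$ enter; (3) match Looijenga's image description (Proposition \ref{p:kz}) of the sign-isotypical image with the primitive cohomology $\mc P^g(C) \hookrightarrow H^{g+1}(\pi_{g+1,*}\Q_{X_{g+1}})^{\Sigma_{g+1}}$ recalled in Section \ref{sec:nearbycurves}, using that on the curve side the ``intermediate extension'' that carves out the primitive part corresponds exactly on the arrangement side to the $!/{*}$ prescription defining $V'$ and $V$ in the presence of the residue $1/2$ along $t_b = z_j$ (non-integral, hence $j_! = j_*$ there) and the integral residues elsewhere; (4) pin down the Tate twist $(-1)$ by a weight/degree bookkeeping, noting that $H^{g+1}$ of a $(g+1)$-fold fibre product of curves contributes primitive $\mc H^1$'s in cohomological degree $g+1$ with the hyperelliptic involution sign-isotypical part sitting in the expected Hodge range, and comparing with the normalisation of the KZ motive.

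The main obstacle I expect is step (3): carefully identifying the sign-isotypical image of the two-term complex $H^M(P, k'_* j'_! \mc L) \to H^M(P, k_* j_! \mc L)$ with the primitive subspace $\mc P^g(C)$, rather than with all of $\wedge^{g+1}\mc H^1(C)$ or with some intermediate object. This requires a local residue computation along the exceptional divisors over the strata $t_a = t_{a+1} = \cdots = t_{a+m} = \infty$ (and similarly $= 0$ if relevant), exactly as in the proof of Theorem \ref{genp}, to see which divisors are removed in passing to $V$ versus $V'$, and then to recognise that the resulting constraint is precisely ``wedging with $\delta$ is zero,'' i.e., primitivity. A secondary technical point is ensuring all identifications are $\Sigma_{g+1}$-equivariant and compatible with the KZ connection (equivalently, hold in families over $\mc{C}_{2g+2}$, not just fibrewise), which follows from the functoriality of all constructions in Sections \ref{s:kzmls} and \ref{sec:nearbycurves} but should be stated explicitly. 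Once the fibrewise identification of underlying $K$-local systems is in place, upgrading it to an isomorphism of mixed local systems is automatic by the conservativity of the Betti realisation on $\msm(X)$, as used elsewhere in the paper.
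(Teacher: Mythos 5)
Your setup is right and matches the paper's: the master function factors as $\ms{R} = h^{-1/4}\ms{T}$, the $\mu_2$-cover defined by $\ms{T}$ is built fibrewise from the hyperelliptic curve $y^2 = \prod_j(t-z_j)$, and the coefficient field and twist by $Kh^{-1/4}$ enter exactly as you say. But there are two genuine problems with the rest of the plan.

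First, a degree mismatch that your plan does not resolve. You have $M = g+1$ integration variables, so the relevant cohomology lives in degree $g+1$ on a $(g+1)$-fold fibre product of curves, whereas $\mc{P}^g(C)$ sits inside $\wedge^g \mc{H}^1(C)$. Your suggestion that the class lands in $\wedge^{g+1}(\mc{H}^1(C))$ and is then ``cut down'' to $\mc{P}^g(C)$ by residue conditions cannot work as stated: as a representation of $\mr{Sp}_{2g}$, $\wedge^{g+1}$ of the standard representation decomposes into primitive pieces $P^{g-1}, P^{g-3},\dots$ and does \emph{not} contain $P^g$ as a constituent. The paper's resolution is to exploit the section of $\pi:C\to\mc{C}_{2g+2}$ over $t=\infty$: setting $t_1=\infty$ and applying the Gysin map gives a boundary morphism $H^{g+1}(\pi^0_{g+1,*}\Q_{X^0_{g+1}\setminus\Delta_{g+1}}) \to H^{g}(\pi^0_{g,*}\Q_{X^0_{g}\setminus\Delta_g})(-1)$, dropping from $g+1$ to $g$ factors; this is also precisely where the Tate twist $(-1)$ comes from, rather than from the ``weight bookkeeping'' you defer to step (4).

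Second, the identification of the image. You correctly flag step (3) as the main obstacle, but the route you propose (explicit residue computations on the wonderful compactification to recognise the primitivity constraint) is not what the paper does, and it is far from clear it can be pushed through. The paper avoids any direct identification of the image as follows: it only constructs a map out of $\mc{KZ}_2(\vv{\vpi},0)$ (by forgetting the support condition, so not a priori injective), lets $I$ be its image after the Gysin step and $J$ the image of $\mc{P}^g(C)\tensor Kh^{-1/4}$ under restriction to the open part, and then proves (a) $\rat(I)\cap\rat(J)\neq 0$, and (b) the monodromy of $\rat(\mc{P}^g(C))$ is absolutely irreducible (Zariski density of the hyperelliptic monodromy in $\mr{Sp}_{2g}$, with $\mc{P}^g$ the $g$-th fundamental representation). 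Irreducibility forces $J\simeq\mc{P}^g(C)$ and $\rat(I)\supseteq\rat(J)$, and the equality $\rk\,\mc{KZ}_2(\vv{\vpi},0)=d(g,g)$ then forces $\rat(I)=\rat(J)$ and injectivity. The nonvanishing (a) is itself nontrivial: it is proved by showing that the image of a generator of the one-dimensional conformal-block subspace is the residue of a logarithmic form that extends to a \emph{holomorphic} $g$-form on the compact $C_{\vec z}^g$, via pole-order estimates for correlation functions on the boundary strata. None of these ingredients --- the one-way map plus irreducibility plus rank count, or the conformal-block/correlation-function input --- appears in your plan, and without them (or a successful direct computation replacing them) the argument is incomplete.
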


\begin{proof}
  The master function $\ms{R}$ (see \ref{e:master}) in this case is
  given by
\[
\ms{R} = \prod_{1 \leq i < j \leq 2g+2} (z_i -
z_j)^{-\frac{1}{4}} \prod_{b=1}^{g+1}
\prod_{j=1}^{2g+2} (t_b - z_j)^{\frac{1}{2}}
\prod_{1 \leq b < c \leq M} (t_b - t_c)^{-1} .
\]
We let 
\begin{equation}\label{e:mastersl2} %
  \ms{T} =  \prod_{b=1}^{g+1}
  \prod_{j=1}^{2g+2} (t_b - z_j)^{\frac{1}{2}} 
  \prod_{1 \leq b < c \leq g+1} (t_b - t_c)^{-1}
\end{equation}
and note that $\mc{R} =  h^{-1/4} \mc{T}$.

Let
$\widehat{U}\to U\subseteq \mc{C}_{2g+2} \times
\Bbb{A}_{\msk}^{g+1}$ be the covering given by the tuples
$(\vec{z}, t_1,\dots,t_{g+1}, u)$ such that $u^2=\ms{T}^2$. This
covering has Galois group $\Sigma_{g+1}\times \mu_2$.

Let $C^0\subset C$ be the open subset of points on the affine part
(i.e., $t\neq \infty$) such that $t\neq z_i$ for all $i$ and, for any
$r>0$, let $X^0_r = C^0 \times_SC^0\times_S\dots \times_SC^0$ with $r$
factors and let $\Delta_r$ be the union of the diagonals $t_i = t_j$,
$1 \leq i < j \leq r$ in $X^0_r$.  There is an unramified Galois
covering $X^0_{g+1}\setminus \Delta_{g+1}\to\widehat{U}$ given by
$$\prod_{i=1}^{g+1} (y_i,t_i)\mapsto (t_1,\dots,t_{g+1},y_1\dots
y_{g+1}\prod_{1 \leq b < c \leq g+1} (t_b - t_c)^{-1}).$$ The
composition $X_0^{g+1} \setminus \Delta_{g+1}\to{U}$ is an unramified Galois
covering with Galois group $\Sigma_{g+1}\times \mu_2^{g+1}$. For any $r>0$,
let $\pi_{r}^0: X^0_{r} \setminus \Delta_r \to \mc{C}_{2g+2}$ be the structure map.
Keeping track of isotypical components and considering that
$\ms{R}=h^{-1/4}\ms{T}$, by forgetting the support condition in
  the definition of KZ motives we get a map (not a priori an
injection)
\begin{equation}\label{e:mapfromkz}
  \mc{KZ}_{2}(\vv{\vpi},0)\to  H^{g+1}(\pi^0_{g+1,*}\Q_{X^0_{g+1}\setminus\Delta_{g+1}})^{\tau}\tensor Kh^{-1/4}
\end{equation}
where $\tau: \Sigma_{g+1} \times \mu_2^{g+1}\to\mu_2$ is trivial on
$\Sigma_{g+1}$ and the product on the second (the triviality on the
$\Sigma_{g+1}$ factor is because of the second factor in
\eqref{e:mastersl2}).

 Recall that we have two sections of $\pi$ over $\infty$. Setting
 $t_1=\infty$ over one of these section and using the Gysin
 isomorphism for a smooth divisor in a smooth variety (in the setting
 of mixed sheaves this follows by dualising \cite[Proposition
 7.6]{saito-form}), we get a boundary map
 $H^{g+1}(\pi^0_{g+1,*}\Q_{X^0_{g+1} \setminus \Delta_{g+1}})\to
 H^{g}(\pi^0_{g,*}\Q_{X^0_g\setminus\Delta_g})(-1)$. We therefore get
 a composed map
\begin{equation}\label{e:mapfromkz2}
  \mc{KZ}_{2}(\vv{\vpi},0)\to H^{g}(\pi^0_{r,*}\Q_{X^0_{g}-\Delta})^{\tau'}(-1)\tensor Kh^{-1/4}
\end{equation}
Here $\tau':\Sigma_{g}\times \mu_2^g\to \mu_2$ is trivial on the
$\Sigma_g$ factor and the product on the second. Let $I$ be the image
of this map.

We have also a map of mixed local systems
$\mc{P}^g(C) \otimes Kh^{-1/4} \to
H^{g}(\pi^0_{r,*}\Q_{X^0_{g}-\Delta})^{\tau'} \otimes h^{-1/4}$ given by
tensoring the pullback via the inclusion $i:X^0_{g}-\Delta_g$ in $X_g$
by $Kh^{-1/4}$ and we let its image be $J$; the invariance under
$\tau'$ uses that the hyperelliptic involution acts by multiplication
by $-1$ on $H^1(\pi_*\Q_C)$.
\begin{claimn}
  The local system $\rat(I) \cap \rat(J)$ is nonzero.
\end{claimn}

We first show that the claim implies the proposition and then prove
the claim. Since $J$ is nonzero by the claim and the monodromy
representation of $\rat(\mc{P}^g(C))$ is irreducible, we deduce that
$J \simeq \mc{P}^g(C)$ and then the claim further implies that
$\rat(I) \cap \rat(J) = \rat(J)$. The rank of
$\mc{KZ}_{2}(\vv{\vpi},0)$ is equal to $d(g,g)$\footnote{This is
  well-known, cf. Exercise 6.2.5 a) and b) in \cite{stanley2}.}  so we
deduce that $\rat(I) = \rat(J)$ and the map \eqref{e:mapfromkz2} is
injective. The exactness of $\rat$ then implies that $I = J$ and
therefore proves the proposition.

\smallskip

To prove the claim we may assume that the base field and the
coefficient field are both equal to $\C$. Thus, we may let
$\vec{z} \in \mc{C}_{2g+2}(\C)$ and prove the statement after
restricting to the fibre over $\vec{z}$.

The vector space of conformal blocks
$\rat(\mc{CB}_3(\vec{\vpi}^{2g+2},0)_{\vec{z}})$ is one dimensional
and is naturally a subspace of
$\rat(\mc{KZ}_2(\vec{\vpi}^{2g+2},0)_{\vec{z}})$ (the dual statement
is explained in the introduction). For $\phi$ a generator of
$\rat(\mc{CB}_3(\vec{\vpi}^{2g+2},0)_{\vec{z}})$, the image of $\phi$
under the map in \eqref{e:mapfromkz2} in
$\rat(H^{g}(\pi^0_{g,*}\Q_{X^0_{g}-\Delta_g})_{\vec{z}}^{\mu'}(-1))$
is by \cite[Proposition 3.1.8]{hodge2} (after identifying the target
with de Rham cohomology) the de Rham cohomology class of a holomorphic
form on $(C^0)_{\vec{z}}^g \setminus\Delta_{g,\vec{z}}$ obtained by
taking the residue along
$(C^0)_{\vec{z}}^g \setminus\Delta_{g,\vec{z}}$ of a holomorphic form
on $(C^0)_{\vec{z}}^{g+1} \setminus\Delta_{g+1,\vec{z}}$ having log
poles along $(C^0)_{\vec{z}}^g \setminus\Delta_{g,\vec{z}}$.

To see the log-poles part of the above assertion, note that the image
of the form $\phi$ in the target of \eqref{e:mapfromkz} is a
holomorphic form on
$(C^0)_{\vec{z}}^{g+1} \setminus\Delta_{g+1,\vec{z}}$ times the master
function \eqref{e:mastersl2} along $t_1=\infty$ (along one of the
sections). The master function has a simple pole at $t_1=\infty$ (the
order is $-(g+1) +g=-1$), and the holomorphic form above on
$(C^0)_{\vec{z}}^{g+1} \setminus\Delta_{g+1,\vec{z}}$ is regular along
$t_1=\infty$ This is a general representation theoretic property of
correlation functions; it also follows from $fv_0=0$ in the
computations below.

To prove the claim, it suffices to show that this form (the residue) extends to a holomorphic form on
$C_{\vec{z}}^g$, since by its invariance properties the extended form
would have to be a generator of the one dimensional space
$\wedge^g H^{1,0}(C_{\vec{z}})$. This implies the claim, since this
space is contained in $\rat(\mc{P}^g(C)_{\vec{z}}) \otimes \C$ and
injects into
$\rat(H^{g}(\pi^0_{g,*}\Q_{X^0_g-\Delta_g})_{\vec{z}})\otimes \C$ by
\cite[Corollaire 3.2.3 (ii)]{hodge2}.

To prove this extension property, we use the computations (and
notation) of \cite{b-unitarity} with which we assume the reader is
familiar. The holomorphic form on
$(C^0)_{\vec{z}}^g \setminus\Delta_{g,\vec{z}}$ above is obtained as a
residue of a correlation function with insertion of $f(-1)v_0$ at
infinity and $f(-1)v_0$ (with $v_0$ the generator of the trivial
representation of $\mf{g}$) at the finite points $t_1,\dots,t_g$ times
\begin{equation}\label{modR}
\prod_{b=1}^{g}
	\prod_{j=1}^{2g+2}(t_b - z_j)^{\frac{1}{2}}
	\prod_{1 \leq b < c \leq g} (t_b - t_c)^{-1}.  
\end{equation}
 A look at the computations on the strata shows that the only possible problem is as $t_1=\dots=t_m=\infty$ for some $m$. The order of pole of \eqref{modR} is  $$(2g+2)m/2 -m(m-1)/2 -m(g-m)= m(m+3)/2.$$

The correlation function vanishes to order $\geq (m+1)^2 -m-1=m^2+m$, since at infinity we are looking at terms like
$ u_1^{a_1-1}u_2^{a_2-1}\dots u_m^{a_m-1}f(-a_1)\dots f(-a_m)f(-1)v_0$, and we use \cite[Proposition 7.4]{b-unitarity}. So we need $m^2+m\geq (m^2+3m)/2$  which is true for all $m\geq 1$.

(On the stratum $t=\dots=t_m=z_1$, the order of vanishing turns out to
be (using \cite[Proposition 7.4]{b-unitarity}) at least
$m/2 - m(m-1)/2 +(m^2-m)=m^2/2\geq 0$, and on $t_1=\dots=t_m$, at
least $-m(m-1)/2 + (m^2-m)\geq 0$.)
\end{proof}

\subsubsection{The nearby cycles of  $\mc{P}^r(C)$}\label{s:nbcpr}

We first recall the computation of the nearby cycles of
$\mc{H}^1(C)$. Fix $I = \{i_0,i_1\} \subset [2g+2]$ and let
$W_I \supset \mc{C}_{2g+2}$ be the open subset of $\A^{2g+2}_{\msk}$
as in Section \ref{s:motivicFormula}. The family of curves
$\pi: C \to \mc{C}_{2g+2}$ extends to a family of curves
$\pi_I: C_I \to W_I$; over $Z_I$ the fibres of $\pi_I$ are singular
curves with one nodal singularity. Let $\sigma_I: D_I \to Z_I$ be the
restriction of $\pi_I$ to $Z_I$ (so it is a family of nodal
hyperelliptic curves) and let $\wt{\sigma_I}: \wt{D_I} \to Z_I$ be the
normalisation of the family $\sigma_I$: this is canonically isomorphic
to the pullback via the projection $p: Z_I \to \mc{C}_{2g}$ (given by
forgetting $z_{i_0}$ and $z_{i_1}$) of the family
$\pi':C' \to \mc{C}_{2g}$ of the same type as $C$ with $g$ replaced by
$g-1$. We let $n_I:\wt{D_I} \to D_I$ be the normalisation map.
\smallskip
  
Let $\mc{H}_I = H^1(\sigma_{I,*}\Q_{D_I})$ and let
$\wt{\mc{H}_I} = H^1(\wt{\sigma_I}_* \Q_{\wt{D_I}})$. Let
$f_I:W_I \to \A^1_{\msk}$ be the function $z_{i_1} - z_{i_0}$ and let
$h_I: Z_I \to \A^1_{\msk}$ be the function
$\prod_{i \notin I}(z_i - z_{i_0})$.
\begin{lemma}\label{l:node}
  The mixed local system $\Psi_{f_I}(\mc{H}^1(C))$ on $Z_I$ canonically
  contains $\mc{H}_I$ as a local subsystem with quotient
  $\Q h_I^{1/2}(-1)$. Furthermore, $\mc{H}_I$ contains $\Q h_I^{1/2}$ as a
  local subsytem with quotient $\wt{\mc{H}_I} \simeq p^*(\mc{H}^1(C'))$.
\end{lemma}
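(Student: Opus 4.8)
The plan is to prove Lemma~\ref{l:node} by the standard local analysis of a family of curves acquiring a node, upgraded to the setting of mixed sheaves. First I would set up the geometry: the family $\pi_I : C_I \to W_I$ is smooth over $W_I \setminus Z_I = \mc{C}_{2g+2}$ and over $Z_I$ the fibre acquires exactly one ordinary double point, located at the point where the two branches labelled $i_0$ and $i_1$ collide. The total space $C_I$ is smooth and the map $f_I \circ \pi_I : C_I \to \A^1_{\msk}$ has, \'etale-locally around the node, the normal form $(x,y) \mapsto xy$ (this is exactly the kind of local model provided by Lemma~\ref{l:normalform}, or one can see it directly from the hyperelliptic equation $V^2 = \prod (z_i T - 1)$ after localising). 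So the relevant computation is the classical local Picard--Lefschetz / vanishing cycle computation, which in the language of nearby cycles says: $\Psi_{f_I}(\mc{H}^1(C))$ sits in a short exact sequence whose sub is the cohomology of the normalisation-type degenerate fibre and whose quotient is a Tate-twisted rank one piece coming from the vanishing cycle.

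The key steps, in order, are as follows. (1) Apply the base change Lemma~\ref{l:pbc} (the mixed-sheaf analogue of Lemma~\ref{kunneth}(3)) to the proper map $\pi_I$ to reduce $\Psi_{f_I}(\mc{H}^1(C)) = \Psi_{f_I}(H^1(\pi_{I,*}\Q))$ to $H^1$ of $\sigma_{I,*}$ applied to $\Psi_{f_I \circ \pi_I}(\Q_{C_I})$ on the central fibre $D_I$. (2) Compute $\Psi_{f_I\circ\pi_I}(\Q_{C_I})$: away from the node it is just $\Q$, and at the node the local model $xy$ gives the usual answer — there is a distinguished triangle relating $\Psi(\Q)$ to $\Q_{D_I}$ and a skyscraper-type contribution at the node (concentrated in one cohomological degree, a Tate twist of the ``vanishing cycle'' sheaf supported at the node-locus, which is a section over $Z_I$). (3) Take the $H^1$ of $\sigma_{I,*}$ of this: the $\Q_{D_I}$ part contributes $\mc{H}_I = H^1(\sigma_{I,*}\Q_{D_I})$, and the skyscraper part contributes the Tate piece. (4) Identify the Tate piece precisely as $\Q h_I^{1/2}(-1)$: the monodromy acts on the vanishing cycle through the hyperelliptic double cover, so the relevant local system on $Z_I$ is the square-root of the discriminant-type function $h_I = \prod_{i\notin I}(z_i - z_{i_0})$ (this function records exactly which branch point the node-parameter approaches, and its square root is the obstruction to the vanishing cycle being a constant local system); the Tate twist $(-1)$ is forced by weights, since the vanishing cycle contributes in weight $2$. (5) For the second statement, apply the standard short exact sequence (the ``Mayer--Vietoris for normalisation'') relating $H^1$ of a nodal curve to $H^1$ of its normalisation: there is a canonical map $n_I^* : \mc{H}_I = H^1(\sigma_{I,*}\Q_{D_I}) \to H^1(\wt\sigma_{I,*}\Q_{\wt{D_I}}) = \wt{\mc{H}_I}$ which is surjective with kernel a rank one local system; since the two points of $\wt{D_I}$ over the node are the preimages of the collided branch point, that kernel is again $\Q h_I^{1/2}$ (now in weight $0$, i.e.\ untwisted). (6) Finally, identify $\wt{\mc{H}_I} \simeq p^*(\mc{H}^1(C'))$: the normalisation $\wt{D_I} \to Z_I$ is, as noted in the excerpt, the pullback via $p: Z_I \to \mc{C}_{2g}$ of the genus $g-1$ hyperelliptic family $C' \to \mc{C}_{2g}$, so this is immediate from proper base change.

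I would organise the write-up so that the topological statement is proved first (using conservativity of the $\rat$ functors, as is done repeatedly in the paper, e.g.\ in the proof of Theorem~\ref{stillholds}), reducing to $\msk = K = \C$ where all of this is classical Picard--Lefschetz theory, and then note that the mixed-sheaf enhancement is automatic because every object in sight is built by functorial operations ($\pi_{I,*}$, $\sigma_{I,*}$, $n_I^*$, $\Psi_{f_I}$, tensoring with the rank one local systems $K g^{a}$ of Section~\ref{s:mls1}) which all commute with $\rat$, and the weight assertions (the Tate twists) are determined by $\rat$ together with the fact that $\Psi_{f_I}[-1]$ commutes with Verdier duality.

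The main obstacle, I expect, will be step~(4): pinning down that the rank one Tate quotient is exactly $\Q h_I^{1/2}(-1)$ rather than merely ``some rank one weight $2$ local system''. The monodromy of the vanishing cycle as the node smooths is controlled by the local monodromy of the hyperelliptic double cover branched at $z_{i_0}, z_{i_1}$, and one must check that as $(z_i)_{i\notin I}$ vary over $Z_I$ the resulting local system of vanishing cycles is the square root of $h_I = \prod_{i\notin I}(z_i - z_{i_0})$ and not, say, a square root of some other product of differences. The cleanest way to do this is to pass to the explicit hyperelliptic equation $y^2 = \prod_{i=1}^{2g+2}(z_i - t)$, localise $t$ near $z_{i_0}$, write $t = z_{i_0} + s(z_{i_1} - z_{i_0})$ (so $s$ is a local coordinate on the exceptional divisor, i.e.\ on $Z_I$), and directly compute the local form $y^2 = (z_{i_0} - t)(z_{i_1} - t)\prod_{i\notin I}(z_i - t)$ near the node; the factor $\prod_{i\notin I}(z_i - z_{i_0})$ appears as the leading coefficient, and its square root is precisely the discrepancy between the vanishing cycle and a constant section. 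The remaining steps are routine once this identification is in hand.
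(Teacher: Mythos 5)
Your proposal is correct and follows essentially the same route as the paper's proof: the specialisation map $\Q_{D_I}\to\Psi_{f_I\circ\pi_I}\Q_C$ combined with $H^1(\sigma_{I,*}-)$ gives the subobject $\mc{H}_I$, the local computation at the nodal section (the paper phrases it as the normal cone being $y^2=h_I$, you phrase it via the leading coefficient of the localised hyperelliptic equation — these are the same calculation) identifies the rank one pieces as $\Q h_I^{1/2}$, and the normalisation exact sequence $0\to\Q_{D_I}\to n_{I,*}\Q_{\wt{D_I}}\to s_{I,*}(\Q h_I^{1/2})\to 0$ handles the second statement. The only cosmetic difference is that you pin down the $(-1)$-twisted quotient directly by the Picard--Lefschetz/vanishing-cycle computation, whereas the paper deduces it from the sub $\Q h_I^{1/2}\subset\mc{H}_I$ together with the compatibility of $\Psi_{f_I}$ with duality.
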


\begin{proof}
  The basic properties of the nearby cycles functor \cite[Section
  5]{saito-form} imply that there is a map
  $\Q_{D_I} \to \Psi_{f_I \circ \pi_I} \Q_C$ of mixed sheaves on
  $D_I$.  Applying $H^1(\sigma_{I,*} -)$ to this map gives the map
  $\mc{H}_I \to \Psi_{f_I}(\mc{H}^1(C))$. The fact that this is an
  injection and the quotient is a rank one mixed local system follows
  (by using $\rat$) from the corresponding topological fact, which is
  well-known.

  Let $\wt{N_I}$ be the inverse image $\wt{D_I}$ of the image $N_I$ of
  the nodal section $s_I$ of $\sigma_I$. An elementary computation of
  the normal cone along $N_I$ shows that this is given by the equation
  $y^2 = h_I$. This implies that we have an exact sequence
  \[
    0 \to \Q_{D_I} \to n_{I,*} \Q_{\wt{D_I}} \to s_{I,*}(\Q h_I^{1/2}) \to 0
  \]
  of mixed local systems on $D_I$. Applying $\sigma_{I,*}$ to this
  sequence, the long exact sequence of cohomology
  gives us a sequence
  \[
    0 \to \Q h_I^{1/2} \to \mc{H}_I \to \wt{\mc{H}_I} \to 0 
  \]
  whose exactness follows from the corresponding topological
  statement.  That $\Psi_{f_I}(\mc{H}^1(C))/\mc{H}_I$ is isomorphic to
  $\Q h_I^{1/2}(-1)$ follows from this and the fact that the nearby
  cycles functor is compatible with duality \cite[Proposition
  5.11]{saito-form} or by using the monodromy operator $N$
  \cite[(5.7.3)]{saito-form}.
\end{proof}

\begin{remark}\label{r:wt}
  The lemma implies that the terms of the weight
  filtration\footnote{Here we are using the naive normalisation of the
    weight filtration (which is different from that of
    \cite{saito-form}), so the constant local system $\Q_X$ in degree
    $0$ on any irreducible smooth variety $X$ has weight $0$.} $W$ of
  $\Psi_{f_I}(\mc{H}^1(C))$ are given by $W_{-1} = 0$,
  $W_0 = \Q h_I^{1/2}$, $W_1 = \mc{H}_I$ and
  $W_2 = \Psi_{f_I}(\mc{H}^1(C))$.
\end{remark}

\begin{proposition}\label{p:nbasic}
  For $2 \leq r \leq g$, the nearby cycles of
  $\Psi_{f_I}(\mc{P}^r(C))$ equals
  \[
    \wedge^r (\Psi_{f_I}(\mc{H}^1(C)))/\im(\delta_I) \wedge
    (\wedge^{r-2} (\Psi_{f_I}(\mc{H}^1(C)))) ,
  \]
  where $\delta_I: \Q_{Z_I} \to \wedge^2(\Psi_{f_I}(\mc{H}^1(C)))$ is
  the map given by applying the nearby cycles functor to $\delta$. We
  have
  \begin{enumerate}
  \item $W_{r-2}(\Psi_{f_I}(\mc{P}^r(C))) = 0$.
  \item $W_{r-1}(\Psi_{f_I}(\mc{P}^r(C)))$ is the image of
    $ (\wedge^{r-1} (\mc{H}_I)) \otimes \Q h_I^{1/2} $ in
    $ \Psi_{f_I}(\mc{P}^r(C))$.  This is isomorphic to
    $p^*(\mc{P}^{r-1}(C')) \otimes \Q h_I^{1/2}$.
  \item $W_r (\Psi_{f_I}(\mc{P}^r(C)))$ is the image of
    $\wedge^r (\mc{H}_I)$ in $ \Psi_{f_I}(\mc{P}^r(C))$.  We have
    \[
      W_r/W_{r-1} \simeq p^*(\mc{P}^r(C')) \oplus
      p^*(\mc{P}^{r-2}(C'))(-1) ,
    \]
    where the first summand is taken to be $0$ if $r=g$.
  \item $W_{r+1}(\Psi_{f_I}(\mc{P}^r(C))) =\Psi_{f_I}(\mc{P}^r(C)))$ and
    $W_{r+1}/W_r \simeq p^*(\mc{P}^{r-1}(C')) \otimes \Q h_I^{1/2}(-1)$.
  \end{enumerate}
\end{proposition}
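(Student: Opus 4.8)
The proof of Proposition \ref{p:nbasic} will be a formal consequence of Lemma \ref{l:node} together with the general multilinear algebra that defines $\mc{P}^r(C)$ from $\mc{H}^1(C)$ via the symplectic form $\delta$. First I would observe that the nearby cycles functor $\Psi_{f_I}$ is exact and monoidal in the relevant sense: it commutes with $\otimes$, $\wedge^r$, and sends the Gysin/duality class $\delta$ to the class $\delta_I$ obtained by applying $\Psi_{f_I}$. Since $\mc{P}^r(C)$ is defined (as recalled just before \S\ref{s:nbcpr}) as the kernel of the map $\wedge^r(\mc{H}^1(C)) \to (\wedge^{2g-r+2}(\mc{H}^1(C)))(g-r+1)$ given by wedging $g-r+1$ times with $\delta$, and equivalently as the cokernel of $\im(\delta)\wedge(\wedge^{r-2}(\mc{H}^1(C))) \hookrightarrow \wedge^r(\mc{H}^1(C))$, applying the exact functor $\Psi_{f_I}$ yields the displayed description of $\Psi_{f_I}(\mc{P}^r(C))$ immediately. (One needs the cokernel description rather than the kernel description here, since it is the one that behaves well under the degeneration; the two agree on $\mc{C}_{2g+2}$ by the hard Lefschetz-type identity, and this identity is preserved by $\Psi_{f_I}$ after applying $\rat$, so the descriptions still agree on $Z_I$.)

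Next I would compute the weight filtration. By Remark \ref{r:wt}, $\mc{H} := \Psi_{f_I}(\mc{H}^1(C))$ has weight filtration $W_0 = \Q h_I^{1/2}$, $W_1 = \mc{H}_I$, $W_2 = \mc{H}$, with $\mr{gr}^W_0 \simeq \Q h_I^{1/2}$, $\mr{gr}^W_1 \simeq p^*(\mc{H}^1(C'))$, $\mr{gr}^W_2 \simeq \Q h_I^{1/2}(-1)$. Since everything is in the category of mixed sheaves, the weight filtration on $\wedge^r \mc{H}$ is the induced one and $\mr{gr}^W(\wedge^r \mc{H}) = \wedge^r(\mr{gr}^W \mc{H})$ (strict compatibility, axiom (4) of \S\ref{s:saito}); the grading is by total weight, where $\Q h_I^{1/2}$ contributes $0$, $p^*(\mc{H}^1(C'))$ contributes $1$, and $\Q h_I^{1/2}(-1)$ contributes $2$. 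Then I would pass this filtration to the quotient $\Psi_{f_I}(\mc{P}^r(C)) = \wedge^r\mc{H}/(\im(\delta_I)\wedge \wedge^{r-2}\mc{H})$; since $\delta_I$ is a weight-$2$ class (it comes from $\Q_{Z_I}(-1)$ via nearby cycles) lying in $W_2(\wedge^2\mc{H})$ with nonzero image in $\mr{gr}^W_0(\wedge^2\mc{H}) = (\mr{gr}^W_0\mc{H}) \wedge (\mr{gr}^W_2\mc{H})$ — this is the statement that the symplectic form degenerates to a perfect pairing between $W_0$ and $\mr{gr}^W_2$, which again follows from Lemma \ref{l:node} — the subobject $\im(\delta_I)\wedge \wedge^{r-2}\mc{H}$ has a predictable associated graded, and a diagram chase using strictness gives each $\mr{gr}^W_k(\Psi_{f_I}(\mc{P}^r(C)))$ as a primitive part of an exterior power of $\mr{gr}^W \mc{H}$. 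This is where parts (1)–(4) come out: (1) because the lowest weight piece $\mr{gr}^W_{r-2}$ vanishes (it would come from $W_0^{\otimes(r-2)}\otimes(\text{weight }0)$ wedged into $r$ factors of weight $\geq 1$ each, impossible unless all factors are $W_0$, but $\mc{H}/W_0$ has rank $> $ what is needed — more precisely $\wedge^r W_0 = 0$ since $W_0$ has rank $1$ and $r \geq 2$); (2),(3),(4) by identifying the weight $r-1$, $r$, $r+1$ graded pieces with primitive exterior powers of $\mr{gr}^W_1\mc{H} = p^*(\mc{H}^1(C'))$, tensored with appropriate powers of $\Q h_I^{1/2}$ and Tate twists, using the definition of $\mc{P}^{\bullet}(C')$.

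For part (3) specifically, the decomposition $W_r/W_{r-1} \simeq p^*(\mc{P}^r(C')) \oplus p^*(\mc{P}^{r-2}(C'))(-1)$ comes from the fact that $\mr{gr}^W_r(\wedge^r\mc{H})$ splits as $\wedge^r(\mr{gr}^W_1\mc{H}) \oplus \big(\wedge^{r-1}(\mr{gr}^W_1\mc{H}) \otimes \mr{gr}^W_0\mc{H} \otimes \mr{gr}^W_2\mc{H}\big)(\text{with one factor of each weight})$ — wait, more carefully: weight-$r$ pieces of $\wedge^r\mc{H}$ come either from $r$ factors each of weight $1$, or from $(r-2)$ factors of weight $1$ plus one of weight $0$ and one of weight $2$; the first gives $\wedge^r p^*(\mc{H}^1(C'))$ and the second gives $\wedge^{r-2}p^*(\mc{H}^1(C'))\otimes \Q h_I^{1/2} \otimes \Q h_I^{1/2}(-1) = \wedge^{r-2}p^*(\mc{H}^1(C'))(-1)$ (since $\Q h_I^{1/2}\otimes \Q h_I^{1/2} = \Q_{Z_I}$, the monodromy having order two). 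After quotienting by $\im(\delta_I)\wedge\wedge^{r-2}\mc{H}$ these become the primitive parts $\mc{P}^r(C')$ and $\mc{P}^{r-2}(C')(-1)$ respectively, pulled back along $p$. The hyperelliptic involution acts by $-1$ on all of $\mc{H}^1$ and hence controls which isotypical components survive; I should check this is consistent throughout but it is routine. The main obstacle I anticipate is not any single hard step but rather bookkeeping: keeping the weight-grading conventions straight (the paper's Remark \ref{r:wt} footnote warns of a normalisation clash with \cite{saito-form}), correctly tracking the rank-one twists $\Q h_I^{1/2}$ versus $\Q h_I^{1/2}(-1)$ and the Tate twists they absorb, and verifying that quotienting the filtered object $\wedge^r\mc{H}$ by the filtered subobject $\im(\delta_I)\wedge\wedge^{r-2}\mc{H}$ really does have the expected associated graded — this last point uses the strictness of morphisms with respect to $W$ in $\msm(Z_I)$, so it is formal, but it must be stated carefully.
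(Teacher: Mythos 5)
Your proposal is correct and follows essentially the same route as the paper's proof: the quotient description of $\Psi_{f_I}(\mc{P}^r(C))$ from exactness of $\Psi_{f_I}$ applied to the cokernel presentation of $\mc{P}^r(C)$, the weight filtration determined by linear algebra from the fact that $\delta_I$ pairs $W_0$ perfectly with $W_2/W_1$ and $W_1/W_0$ with itself, and the identification $\Q h_I^{1/2}\otimes \Q h_I^{1/2}\simeq \Q_{Z_I}$ for part (3). (The only slip is notational: the component of $\delta_I$ detecting the pairing of $W_0$ with $W_2/W_1$ lives in $\mr{gr}^W_2(\wedge^2\mc{H})$, not $\mr{gr}^W_0$, as your own later weight count for part (3) makes clear.)
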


\begin{proof}
  The description of $\Psi_{f_I}(\mc{P}^r(C))$ as
  $ \wedge^r (\Psi_{f_I}(\mc{H}^1(C)))/\im(\delta_I) \wedge
  (\wedge^{r-2} (\Psi_{f_I}(\mc{H}^1(C))))$ is an immediate
  consequence of the exacness of $\Psi_f$ and the description of
  $\mc{P}^r(C)$ as a quotient.

  Now $\delta_I$ corresponds to the self duality of
  $\Psi_{f_I}(\mc{H}^1(C))$ (up to a twist). It induces a perfect
  pairing (up to a twist) of $W_0$ with $W_2/W_1$ and of $W_1/W_0$
  with itself. Using this and simple linear algebra one easily
  determines the weight filtration on $\Psi_{f_I}(\mc{P}^r(C))$ as
  well as its associated graded subquotients. For (3) we use that
  $\Q h_I^{1/2} \otimes \Q h_I^{1/2} \simeq \Q h_I \simeq \Q_{Z_I}$.
\end{proof}

\subsubsection{}

Let $X'$ be a hyperelliptic cuve over $\C$ of genus $a>1$. Let
$x_1 \in X'(\C)$ be a non-Weierstrass point and let $x_2$ be the image
of $x_1$ under the hyperelliptic involution. Let $X$ be the
(seminormal) curve obtained from $X'$ by identifying $x_1$ and
$x_2$. For $1 \leq r \leq a$ we define $P^r(X')$ analogously to the
definition of $\mc{P}^r(C)$ at the beginning of this section, but we
view them simply as Hodge structures.
We have a basic extension of mixed Hodge structures
\begin{equation}\label{e:ext0}
  0 \to \Q \to H^1(X,\Q) \to H^1(X',\Q) \to 0 .
\end{equation}
Tensoring \eqref{e:ext0} with $P^a(X')$ we get an extension
\begin{equation*}
  0 \to P^a(X') \to E_1 \to P^{a-1}(X')\otimes H^1(X',\Q)  \to 0 .
\end{equation*}
There is a natural map
$P^a(X')\otimes H^1(X',\Q) \to P^{a-1}(X')(-1)$, so pushing out
the extension $E_1$ via this map we get an extension
\begin{equation}\label{e:ext2}
  0 \to P^a(X') \to E \to P^{a-1}(X') \to 0 .
\end{equation}

\begin{lemma}\label{l:exte}
  If $x_1$ is general, then the extensions \eqref{e:ext0} and
  \eqref{e:ext2} are non-split.
\end{lemma}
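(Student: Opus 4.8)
The plan is to reduce the non-splitting of \eqref{e:ext0} to a classical statement about the extension class of a one-nodal degeneration, and then deduce the non-splitting of \eqref{e:ext2} from it by a general position argument. First I would recall that the extension \eqref{e:ext0} is classified by an element of $\Ext^1_{\mr{MHS}}(H^1(X',\Q),\Q(1)) \otimes \Q(-1) \cong (H^1(X',\Q)^* \otimes \C)/(H^1(X',\Q)^* + F^0)$, which by Carlson's description of extensions of mixed Hodge structures is naturally the intermediate Jacobian $\Jac(X')(\C)$ (identifying $H^1(X',\Q)^*$ with $H_1(X',\Q)$ and using the polarisation). The key classical fact is that the extension class of \eqref{e:ext0} for the nodal curve $X$ obtained by gluing $x_1$ to $x_2$ is precisely the Abel--Jacobi image of the divisor $x_1 - x_2$ in $\Jac(X')$. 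Hence \eqref{e:ext0} is split if and only if $x_1 - x_2$ is linearly equivalent to zero, i.e.\ $x_1 = x_2$; since $x_1$ is a non-Weierstrass point, $x_1 \neq x_2$, so the extension is non-split. I would phrase this using the seminormality of $X$ and the normalisation sequence, exactly as in the proof of Lemma \ref{l:node} where the analogous computation of a normal cone appears.

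For the non-splitting of \eqref{e:ext2}, I would work with the explicit construction: the extension $E_1$ is \eqref{e:ext0} tensored with $P^a(X')$, so its class is the image of the class of \eqref{e:ext0} under the map $\Ext^1(H^1(X'),\Q) \to \Ext^1(H^1(X') \otimes P^a(X'), P^a(X'))$, and then $E$ is the pushout along the contraction map $c: P^a(X') \otimes H^1(X') \to P^{a-1}(X')(-1)$ (which is surjective since $P^{\bullet}(X')$ carries the fundamental-representation structure of $\mr{Sp}_{2a}$ and the relevant Clebsch--Gordan component is nonzero). Thus the class of \eqref{e:ext2} is $c_*(\mr{cl}(\eqref{e:ext0}) \otimes \mr{id}_{P^a(X')})$ in $\Ext^1_{\mr{MHS}}(P^{a-1}(X'), P^a(X'))$. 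I would show this is nonzero for $x_1$ general by a monodromy/variation argument: as $x_1$ varies over $X' \setminus \{$Weierstrass points$\}$ (equivalently, as $(X',x_1)$ varies, fixing the hyperelliptic structure), the class of \eqref{e:ext0} varies as the tautological section of $\Jac(X')$ over $X'$, which is non-constant; tensoring with the fixed object $P^a(X')$ and applying the (injective on a dense open, by surjectivity of $c$ and semisimplicity) pushout $c_*$ keeps the resulting family of extension classes non-constant, hence nonzero for $x_1$ outside a proper closed subset. Concretely, I would note that the period map computing the class of \eqref{e:ext2} factors through the Abel--Jacobi map composed with a nonzero $\mr{Sp}_{2a}$-equivariant linear map, and a nonzero equivariant map from $H^1(X')$ to a Hodge structure cannot kill the (Hodge-theoretically nonzero) Abel--Jacobi point of $x_1 - x_2$ for generic $x_1$.

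The main obstacle I anticipate is making the ``generic non-splitting of the pushout'' step rigorous without circular reasoning: one must rule out the possibility that, although $\mr{cl}(\eqref{e:ext0}) \neq 0$, its image under $c_* \otimes \mr{id}$ happens to vanish identically. The clean way around this, which I would adopt, is to observe that the whole package is a morphism of variations of mixed Hodge structure over the configuration base (or over $X'$ with the hyperelliptic structure fixed), so the extension class of \eqref{e:ext2} is itself a (normal-function-type) section of a family; if it vanished generically it would vanish identically, and one can then test it at a degenerate or otherwise computable point --- or invoke that $c_*$ is injective on the relevant $\mr{Sp}_{2a}$-isotypic piece, which follows since $\Ext^1_{\mr{MHS}}$ of polarisable pure Hodge structures of different weights behaves compatibly with the semisimple $\mr{Sp}_{2a}$-action and $c$ is a split surjection of Hodge structures (it has an equivariant section up to scalar). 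I would also double-check the hypothesis $a > 1$ is used to guarantee $P^{a-1}(X') \neq 0$ and that non-Weierstrass points form a nonempty (dense) open, so that ``$x_1$ general'' is meaningful.
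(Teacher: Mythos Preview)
Your treatment of \eqref{e:ext0} matches the paper's (both identify the class with the Abel--Jacobi image of $x_1-x_2$ via Carlson), but note a small slip: the extension class lives in $\Pic^0(X')\otimes\Q$, so \eqref{e:ext0} splits precisely when $[x_1]-[x_2]$ is \emph{torsion}, not only when it is zero. Thus ``$x_1$ non-Weierstrass'' alone does not suffice; you genuinely need $x_1$ general, as the lemma states.

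For \eqref{e:ext2} the paper takes a shorter and cleaner route than your forward-tracing argument. Instead of pushing the class of \eqref{e:ext0} through the construction and then arguing (via variation, monodromy, or normal functions) that it survives the map you call $c_*$---which is exactly the obstacle you identify---the paper observes that the construction can be run \emph{backwards}: the extension \eqref{e:ext0} can be recovered from \eqref{e:ext2}$\,\otimes\,P^a(X')$. The underlying representation-theoretic fact is simply that $P^r$ is the $r$-th fundamental representation of $\Sp_{2a}$, so the trivial representation sits inside $P^a\otimes P^a$ and the standard representation $H^1(X')$ sits inside $P^{a-1}\otimes P^a$. Hence a splitting of \eqref{e:ext2} would force a splitting of \eqref{e:ext0}, a contradiction. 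This one-line reversal replaces your entire second half and makes the ``does $c_*$ kill the class?'' question moot. Your proposed fix---that $c$ being a split surjection forces the induced map on $\mathrm{Ext}^1$ to be injective---is not a valid inference as stated, and making it precise would in effect amount to proving the same recovery statement the paper uses.
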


\begin{proof}
  The statement for \eqref{e:ext0} is classical. The extension
  corresponds to the element $[x_1] - [x_2]$ in
  $\mr{Pic}^0(X') \otimes \Q$ (see, e.g., \cite[Section 3]{carlson}),
  so it is nonzero for general $x_1$. The statement for
  \eqref{e:ext2} follows from this since the extension in
  \eqref{e:ext0} can be recovered from \eqref{e:ext2} tensored with
  $P^a(X')$. (It might help the reader to recall that $P^r$
  corresponds to the $r$th fundamental representation of
  $\mr{Sp}_{2a}$.)
\end{proof}

\begin{corollary}\label{c:ext}
  The extension of mixed Hodge structures associated to the
  restriction of $\Psi_{f_I}(\mc{P}^r(C))$ to a general point of
  $Z_I$ is non-split.
\end{corollary}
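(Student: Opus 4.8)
The plan is to deduce Corollary \ref{c:ext} from Proposition \ref{p:nbasic} and the non-splitness statements in Lemma \ref{l:exte}. Recall that Proposition \ref{p:nbasic} describes the weight filtration $W_{\bullet}$ on $\Psi_{f_I}(\mc{P}^r(C))$ explicitly; in particular part (3) identifies $W_r/W_{r-1}$ with $p^*(\mc{P}^r(C')) \oplus p^*(\mc{P}^{r-2}(C'))(-1)$ and part (4) identifies $W_{r+1}/W_r$ with $p^*(\mc{P}^{r-1}(C'))\otimes \Q h_I^{1/2}(-1)$. Taking the case $r = g$, so that the $p^*(\mc{P}^r(C'))$ summand in part (3) vanishes, we see that $W_{g}/W_{g-1} \simeq p^*(\mc{P}^{g-2}(C'))(-1)$ while $W_{g+1}/W_g \simeq p^*(\mc{P}^{g-1}(C'))\otimes \Q h_I^{1/2}(-1)$. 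The key point will be to recognise that, after restricting to a general point of $Z_I$, the extension
\[
0 \to W_g/W_{g-1} \to W_{g+1}/W_{g-1} \to W_{g+1}/W_g \to 0
\]
is precisely (a twist of) the extension \eqref{e:ext2} of mixed Hodge structures. Here the hyperelliptic curve $X'$ of genus $a = g-1$ is the normalisation $\wt{D_I}$ fibre, i.e., the fibre of $\pi':C'\to \mc{C}_{2g}$, and $X$ is the corresponding nodal curve $D_I$ fibre; the non-Weierstrass point $x_1$ is the image of the node, which varies as the point of $Z_I$ varies and is general for a general point.

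First I would set up the dictionary: restrict everything to a general $\msk$-point, base change to $\C$, and use Lemma \ref{l:node} and Remark \ref{r:wt}, which already identify $W_0 = \Q h_I^{1/2}$, $W_1/W_0 = \wt{\mc{H}_I} \simeq p^*(\mc{H}^1(C'))$ inside $\Psi_{f_I}(\mc{H}^1(C))$, and give the analogous extension $0 \to \Q h_I^{1/2} \to \mc{H}_I \to p^*(\mc{H}^1(C')) \to 0$. Up to the Tate twist $h_I^{1/2}$ (which is canonically trivialised after restriction to a point over which $h_I$ is a square, or handled by keeping track of $\Q h_I^{1/2}\otimes\Q h_I^{1/2} = \Q$), the extension defining $W_1 = \mc{H}_I$ inside $\Psi_{f_I}(\mc{H}^1(C))$ is the extension $0\to\Q\to H^1(X,\Q)\to H^1(X',\Q)\to 0$ of \eqref{e:ext0}: this is the standard description of the limit mixed Hodge structure of a one-nodal degeneration, and it also follows directly from the exact sequences in the proof of Lemma \ref{l:node}. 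Then I would trace through the wedge-power and primitive-quotient constructions: by exactness of $\Psi_{f_I}$, the object $\Psi_{f_I}(\mc{P}^g(C))$ is $\wedge^g(\Psi_{f_I}\mc{H}^1(C))/(\im\delta_I)\wedge(\wedge^{g-2}(\Psi_{f_I}\mc{H}^1(C)))$, and the subquotient $W_{g+1}/W_{g-1}$, built from $W_0$-degree $0$ and $1$ pieces of the $g$ factors, is exactly the pushout extension \eqref{e:ext2} obtained from \eqref{e:ext0} tensored with $P^a(X')$ (with $a = g-1$), after contracting one wedge factor against $\delta_I$.

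Having made these identifications, the conclusion is immediate: Lemma \ref{l:exte} says \eqref{e:ext2} is non-split for general $x_1$, hence the extension class of $0\to W_g/W_{g-1}\to W_{g+1}/W_{g-1}\to W_{g+1}/W_g\to 0$ is nonzero at a general point of $Z_I$, hence the weight filtration on the mixed Hodge structure $\Psi_{f_I}(\mc{P}^r(C))$ restricted to that point does not split, which is the assertion of Corollary \ref{c:ext}. (For $2\le r<g$ one can argue similarly, comparing the appropriate adjacent graded pieces from parts (2), (3), (4) of Proposition \ref{p:nbasic} with the extension obtained from \eqref{e:ext0} tensored with $\mc{P}^{r-1}(C')$; the genus-$g$ case is the cleanest since one graded piece of \eqref{e:ext2} disappears.) The main obstacle will be bookkeeping: one has to match the Tate twists and the factor $\Q h_I^{1/2}$ correctly, and, more essentially, verify that the map $\mc{P}^a(X')\otimes H^1(X',\Q)\to \mc{P}^{a-1}(X')(-1)$ used to form the pushout \eqref{e:ext2} is the same as the map induced by $\delta_I$ on the relevant graded pieces of the wedge power — this is a routine but slightly delicate identification of the $\mr{Sp}_{2a}$-equivariant contraction maps, which I would phrase in terms of the self-duality of $\Psi_{f_I}(\mc{H}^1(C))$ exactly as in the proof of Proposition \ref{p:nbasic}.
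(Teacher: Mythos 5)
Your proposal is correct and follows essentially the same route as the paper: reduce to Lemma \ref{l:exte} by using Proposition \ref{p:nbasic} to identify one two-step piece of the weight filtration with the extension \eqref{e:ext2}. The only (harmless) discrepancy is that the paper works directly with $0 \to W_{r-1} \to W_r \to W_r/W_{r-1} \to 0$, whose sub and quotient match the shape of \eqref{e:ext2}, whereas your extension $0 \to W_g/W_{g-1} \to W_{g+1}/W_{g-1} \to W_{g+1}/W_g \to 0$ has sub $p^*(\mc{P}^{g-2}(C'))(-1)$ and quotient $p^*(\mc{P}^{g-1}(C'))\otimes \Q h_I^{1/2}(-1)$, so it is the \emph{dual} of (a twist of) \eqref{e:ext2} rather than \eqref{e:ext2} itself; since an extension is split iff its dual is, your conclusion still follows from Lemma \ref{l:exte}.
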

\begin{proof}
  It suffices to show that the extension
  \[
    0 \to W_{r-1} \to W_r \to W_{r}/W_{r-1} \to 0
  \]
  is non-split. This follows from Lemma \ref{l:exte} and (3) of
  Proposition \ref{p:nbasic}: some elementary linear algebra shows
  that the extension there is of the same type as the one in
  \eqref{e:ext2}.
\end{proof}

\subsection{Higher weight cases}\label{s:higher}

The results of Section \ref{sec:nearbycurves} can be used to
inductively compute KZ motives for $\mf{sl}_2$ when $\kappa =2$ (and
the weight $\nu$ at infinity is $0$). The main idea is to inductively
compare the description of the nearby cycles given in Theorem
\ref{t:fact} with the description of nearby cycles given in
Proposition \ref{p:nbasic}. We illustrate this by an example.

\subsubsection{}

Suppose $\vec{\lambda} = (2\vpi,\vpi,\dots,\vpi)$ with $\vpi$
occurring $2g$ times. We consider the nearby cycles of
$\mc{KZ}_{2}(\vec{\vpi}^{2g+2},0)$ for the case $I = \{1,2\}$. By
Theorem \ref{t:fact} we have that it has a two step filtration with the
sub being
\[
  \mc{KZ}_2(\vec{\lambda},0) \boxtimes \mc{KZ}_2((\vpi,\vpi), 2\vpi)
\]
and the quotient being
\[
  \mc{KZ}_2(\vec{\vpi}^{2g},0) \boxtimes \mc{KZ}_2((\vpi,\vpi), 0) .
\]
Combining Proposition \ref{prop:level0basic} with the case $r=g$ of
Proposition \ref{p:nbasic}, and also using that
$\mc{KZ}_2((\vpi,\vpi), 2\vpi)$ is the motive $\Q(0)$, we get that
\[
  \mc{KZ}_2(\vec{\lambda},0) \simeq
  W_r(\Psi_{f_I}(\mc{P}^r(C)))(-1) \otimes K (h')^{-1/4} ,
\]
where $h'$ is the function $\prod_{3\leq i<j\leq 2g+2}(z_i -
z_j)$. Here we also use that $(h/(z_1-z_2))|_{Z_I} = h' \cdot h_I^2$.

Thus, $\mc{KZ}_2(\vec{\lambda},0)$ is not pure and we have, again by
using Proposition \ref{p:nbasic}, that
\begin{enumerate}
\item $W_r(\mc{KZ}_2(\vec{\lambda},0)) = 0$,
\item $W_{r+1} (\mc{KZ}_2(\vec{\lambda},0)) = p^*(\mc{P}^{g-1}(C'))(-1)
  \otimes K (h' \cdot h_I^{-2})^{-1/4}$,
\item
  $W_{r+2} (\mc{KZ}_2(\vec{\lambda},0)) = \mc{KZ}_2(\vec{\lambda},0)$
  and
  $W_{r+2}(\mc{KZ}_2(\vec{\lambda},0))
  /W_{r+1}(\mc{KZ}_2(\vec{\lambda},0))$ is isomorphic to
  $p^*(\mc{P}^{g-2}(C'))(-2) \otimes K (h' \cdot h_I^{-2})^{-1/4}$.
\end{enumerate}

\begin{remark}\label{r:nsplit}
  It follows from Corollary \ref{c:ext} and the above that for a
  general point $\vec{z}$ of $\mc{C}_{2g+1}(\C)$ the mixed Hodge
  structure $\msf{KZ}_2(\vec{\lambda}, 0)_{\vec{z}}$ is
  non-split. 
\end{remark}

\subsubsection{}

The above method can be extended to compute
$\mc{KZ}_2(\vec{\lambda}, 0)$ for more general $\vec{\lambda}$ by
induction on the weights: we increase the weights by letting points
coincide.  Keeping track of the extensions is quite involved though,
and it is easier to only consider semisimiplifications. To simplify
further, we only consider the associated mixed Hodge structures
$\msf{KZ}_2(\vec{\lambda}, 0)_{\vec{z}}$ for the rest of this
section.  Given
$\vec{\lambda} = (a_1 \vpi, a_2\vpi,\dots, a_n\vpi) \in (P^+)^{n}$ and
$\vec{z} \in \mc{C}_{n}(\C)$, we let $C_{\vec{z}}$ be the double cover
of $\P^1_{\C}$ ramified at all the $z_i$ such that $a_i$ is odd; the
number $m$ of such points must be even in order to have a nonzero
space of invariants, so we assume this from now on. We let
$g = (m/2) -1$ be the genus of $C_{\vec{z}}$.

   We then have the following:
\begin{expect}  
  $\msf{KZ}_2(\vec{\lambda}, 0)_{\vec{z}}^{\mr{ss}}$ is a direct sum of
  Tate twists of terms of the form $P^r(C_{\vec{z}})$, possibly with
  multiplicities, for various $r$ such that $0 \leq r \leq g$.
\end{expect}
  \begin{enumerate}
  \item If all $a_i$ are odd then we have proved a more precise result
    which gives an explicit formula for
    $\msf{KZ}_2(\vec{\lambda}, 0)_{\vec{z}}^{\mr{ss}}$ which implies
    that it is in fact pure of weight $M+1 = (\sum_i a_i)/2 + 1$,
    where $M$ is defined in Section \ref{s:sv}; in particular,
    $\msf{KZ}_2(\vec{\lambda}, 0)_ {\vec{z}} \simeq
    \msf{KZ}_2(\vec{\lambda}, 0)_{\vec{z}}^{\mr{ss}}$. The proof is
    by induction on $n$ and the $a_i$ and reduces, using Theorem
    \ref{t:motivic} and Proposition \ref{p:nbasic}, to careful
    book-keeping; we do not give the details here. (To facilitate the
    induction one needs to prove a more general statement.)
  \item If all $a_i$ are even then
    $\msf{KZ}_2(\vec{\lambda}, 0)_{\vec{z}}$ is mixed Tate---this
    follows from Corollary \ref{c:mtate} as in this case the master
    function \eqref{e:master} is single-valued (ignoring the first
    factor)---but in general not pure (see Example
      \ref{e:mtate}). It would be interesting to know if there are
    any non-trivial extensions.
  \item In general, the length of the weight filtration is not
    bounded, but we expect (and can prove in many cases) that it can
    be bounded in terms of the number of even $a_i$. The MHS is likely
    to be non-split in many cases and we expect that the extensions
    will depend on the points of $C_{\vec{z}}$ above the points $z_i$
    with $a_i$ even (as in Remark \ref{r:nsplit}).
  \end{enumerate}

  \begin{remark}
    It is likely that the methods of this section can be extended to
    $\mc{KZ}_2(\vv{\lambda},\nu)$ for $\nu \neq 0$. Instead of taking
    $I = \{i_0,i_1\}$ in Section \ref{s:nbcpr}, one can let
    $I = [0,2g+2] \setminus \{i_0\}$ and then use Theorem \ref{t:fact}
    instead of Theorem \ref{t:motivic} to determine the nearby
    cycles. A suitable analogue of Lemma \ref{l:node}, which we expect
    to be straightforward, is then all that is needed.
  \end{remark}

\section{Conjectures, problems and examples}\label{s:Cpe}

\subsection{A BGG type conjecture}
Assume $\ell>0$ throughout this section and let
$\kappa=\ell+h^{\vee}$. Recall that we have defined the affine
  Weyl group $W_{\ell}$, affine walls, alcoves, etc., in Section
  \ref{s:affine}.

\subsubsection{Results of Teleman}\label{s:tele}
Let $\lambda_1,\dots,\lambda_n,\nu$ be dominant integral weights of $\mf{g}$ and let
$\vec{\lambda}=(\lambda_1,\dots,\lambda_n)$. We have the KZ mixed local system  $\mathcal{KZ}_{\kappa}(\vec{\lambda},\nu^*)$
on  $\mathcal{C}_n$.

Now assume that $\vec{\lambda}$ is arbitrary, but $\nu$ is at level
$\ell$.  Associated to this data, Teleman \cite{TelemanCMP} defines
certain relative Lie algebra cohomology groups, proves that they
vanish in all but at most one degree, and computes the nonzero
cohomology group.  Together, these results imply some exactness
statements for BGG resolutions. We review these results first.  Let
$\gamma_i=\lambda_i^*$.

Let $\vec{z}=(z_1,\dots,z_n)\in\mathcal{C}_n(\C)$.  The relative Lie algebra
cohomology defined by Teleman is denoted by
$H^*(\mf{g}_{\Bbb{C}[z]};\mf{g}_{\Bbb{C}}, \mathcal{H}(\nu)\tensor_{\C}
V_{\gamma_1}(z_1)\tensor_{\C} \dots\tensor_{\C} V_{\gamma_n}(z_n))$.
Then \cite[Theorem 0]{TelemanCMP} gives the following:
\begin{enumerate}
\item[(a)] If any  $\gamma_i +\rho$ lies on an affine wall, then the
  relative Lie algebra cohomology vanishes in all degrees.
\item[(b1)] Assume none of the $\gamma_i+\rho$ are on affine
  walls. Then the relative Lie algebra cohomology lives in a single
  degree $\sum_{i=1}^n l(w_i)$. Here $w_i$ is the affine Weyl group
  element with smallest length $l(w_i)$ such that
  $\mu_i=w_i(\gamma_i+\rho)-\rho$ is dominant at level $\ell$.
\item[(b2)] In case (b1), for each $i$ let $w_i$ be the affine Weyl
  group element that brings $\gamma_i+\rho$ into the fundamental
  alcove, and let $\mu_i=w_i(\gamma_i+\rho)-\rho$ which is dominant at
  level $\ell$. Then the nonvanishing cohomology group is isomorphic,
  as a vector space, to the fibre of the conformal blocks local system
  $\rat(\mathcal{CB}_{\ell}(\vec{\mu}^*,\nu^*))$ at the point
  $\vec{z}$, where $\vec{\mu}^* = (\mu_1^*,\dots,\mu_n^*)$.
\end{enumerate}

\subsubsection{}

The second set of results proved by Teleman concerns a certain complex
defined using the BGG resolution: There is a BGG complex $(K^q,d)$
where
$$K^q = \bigoplus {}'(V_{\lambda}\tensor V_{\gamma_1}\tensor V_{\gamma_2}\tensor\dots V_{\gamma_n})^{\mf{g}}\otimes_{\C} \mc{O}_{\mc{C}_n} $$
where $\lambda$ runs through dominant integral weights so that
$\lambda+\rho$ is affine Weyl group conjugate to $\nu^*+\rho$ and
$l(w)=q$ where $w\in W_{\ell}$ is of the smallest length such that
$\nu^*=w(\lambda+\rho)-\rho$. Teleman proves \cite[Proposition
3.4.8]{TelemanCMP} that it gives rise to a complex of KZ local systems
on $\mc{C}_n$ and this complex computes the relative Lie algebra
cohomology. This motivates the following:
\begin{conj}\label{BGGtype}
  There is a complex of motivic local systems on $\mathcal{C}_n$ with
  $q$th term given by
  $\mathcal{K}^q=
  \oplus_{\lambda}\mathcal{KZ}(\vec{\gamma};\lambda)\tensor_K
  L_{\lambda}$, where $\lambda$ runs through the same indexing set as
  in the BGG complex. Here $L_{\lambda}$ is a rank one $K$-motive
  whose Hodge polynomial is $t^{\mf{s}(\lambda-\nu^*)}$ as in
  Definition \ref{d:C} and the conjugate Hodge polynomial is
  $t^{l(w)}$ where $w\in W_{\ell}$ is the element of smallest length
  such that $\nu^*:=w(\lambda+\rho)-\rho$ is dominant at level $\ell$
  (see Conjecture \ref{c:ec}).
\end{conj}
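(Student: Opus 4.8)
The plan is to construct the motivic BGG complex by mirroring Teleman's construction at the topological level, lifting each of his building blocks to the category of mixed sheaves. First I would recall that Teleman's BGG complex $(K^\bullet, d)$ is built from the classical BGG resolution of the trivial (or more generally, the integrable) module, which is a complex of Verma modules indexed by the affine Weyl group; the terms of $K^q$ are spaces of coinvariants indexed by $\lambda$ with $\lambda+\rho$ affine Weyl-conjugate to $\nu^*+\rho$ and $l(w)=q$. Since the spaces of coinvariants are precisely the fibres of the KZ local systems (Proposition \ref{p:kz}), and since we have constructed the motivic enhancements $\mathcal{KZ}_\kappa(\vec\gamma,\lambda)$ on $\mathcal{C}_n$ as mixed local systems (Definition \ref{d:kzm}), the content of the conjecture is twofold: (i) the differentials $d: K^q \to K^{q+1}$, which are $\mf{g}$-equivariant and hence descend to the coinvariants, lift to morphisms of mixed local systems $\mathcal{K}^q \to \mathcal{K}^{q+1}$ after twisting by the rank one motives $L_\lambda$; and (ii) the Hodge-theoretic normalisation forced by the factorisation theorem is exactly the one given by $L_\lambda$.

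For step (i), the key point is that the BGG differentials are built out of the standard embeddings of Verma modules $M(w'\cdot\lambda) \hookrightarrow M(w\cdot\lambda)$, which at the level of conformal blocks / coinvariants become the ``screening-type'' operators studied in the hypergeometric literature. These operators are realised geometrically as maps between the cohomology-with-supports spaces attached to the Schechtman--Varchenko arrangement corresponding to different choices of the colour data $\beta$, or equivalently to adding/removing integration variables. The functoriality results for mixed sheaves in Section \ref{s:mixte}---in particular the compatibility of $\rat$ with all six functors and the conservativity of $\rat$ on $\msm(X)_K$---would allow one to promote any morphism of the underlying local systems that is induced by such a geometric correspondence to a canonical morphism of mixed local systems, and similarly to verify $d\circ d = 0$ (since this can be checked after applying $\rat$). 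The correct Tate twist on each arrow is then pinned down by the requirement that the Hodge polynomials match; Lemma \ref{imageKZ} and Lemma \ref{l:kaom} supply the bookkeeping for how residues and weights transform under adding variables.

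For step (ii), the identification of $L_\lambda$ as the rank one motive with Hodge polynomial $t^{\mf{s}(\lambda-\nu^*)}$ and conjugate Hodge polynomial $t^{l(w)}$ should follow by the same inductive ``letting points come together'' strategy used to prove Conjecture \ref{c:ec} for $\mf{sl}_2$ in Theorem \ref{t:sl2}: one degenerates $\mathcal{C}_n$ using Theorem \ref{t:fact}, which expresses the nearby cycles of each $\mathcal{K}^q$ in terms of the basic motives $[a;\kappa]$ whose Hodge data is computed in Lemma \ref{DelMos}, and then one matches the exponent of $t$ with the length function $l(w)$ via the combinatorics of the affine Weyl group. The invariance of the exponent $\mf{s}(\lambda-\nu^*)$ under the degeneration is essentially the additivity of heights under the master-function factorisation of Section \ref{s:fmf}, while the conjugate exponent $l(w)$ tracks the number of affine walls crossed, which is additive in exactly the way the weight filtration of nearby cycles is (cf. Remark \ref{r:wt} and Lemma \ref{l:weights}).

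The hard part will be step (i): constructing the differentials \emph{motivically} with the right twists and verifying exactness. At the topological level Teleman uses the full machinery of the BGG resolution and relative Lie algebra cohomology, and it is not at all clear that the differentials admit a direct geometric description as correspondences between Schechtman--Varchenko arrangements---the classical screening operators are defined analytically via contour manipulations, and making them motivic requires understanding them as maps of cohomology-with-supports, which is exactly the kind of delicate compatibility that the ``motivic BGG'' circle of problems (see the conjectures in Section \ref{s:Cpe}) leaves open. A natural intermediate target would be to first establish the conjecture after realisation in mixed Hodge modules, where one has Saito's functoriality and can try to identify the differentials with the ones coming from Teleman's complex of $\mc{D}$-modules; even there, pinning down that the complex is \emph{exact} as a complex of mixed local systems (not merely that its realisation computes the expected cohomology) seems to require a genuinely new input beyond what is available in the present excerpt.
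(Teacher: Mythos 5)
The statement you are addressing is Conjecture \ref{BGGtype}: the paper does not prove it, and explicitly says that its only evidence is numerical (agreement of alternating sums of Hodge polynomials of the proposed terms in many $\mf{sl}_n$ examples). So there is no ``paper's proof'' to compare against, and your proposal must be judged on whether it closes the conjecture on its own. It does not, and you say so yourself: the entire content of the conjecture is the existence of the \emph{differentials} as morphisms of motivic local systems with the prescribed twists, together with the correct identification of the rank one motives $L_\lambda$. Your step (i) is a plan, not an argument --- the observation that $\rat$ is conservative and compatible with the six functors lets you \emph{check} identities like $d\circ d=0$ once the maps exist in $\msm(\mc{C}_n)_K$, but it does not \emph{produce} the maps: a morphism of underlying perverse sheaves need not lift to the mixed category, and the screening/Verma-embedding operators are not exhibited anywhere in the paper as geometric correspondences between the Schechtman--Varchenko data for different $\lambda$. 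This is precisely the open problem the conjecture isolates.

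There is also a gap in step (ii) that you do not flag: you propose to pin down the Hodge data of $L_\lambda$ by ``the same inductive strategy used to prove Conjecture \ref{c:ec}'', but Theorem \ref{t:sl2} establishes Conjecture \ref{c:ec} only for $\mf{sl}_2$, and the paper's logical direction is the reverse --- Conjecture \ref{BGGtype} is asserted to \emph{imply} Conjectures \ref{c:maptofusion} and \ref{c:ec}, not to follow from them. For general $\mf{g}$ the exponents $\mf{s}(\lambda-\nu^*)$ and $l(w)$ attached to $L_\lambda$ are exactly the quantities the authors cannot yet compute (they cannot even compute all products in $\mf{R}_\kappa(\mf{g})$ outside type $A$), so the matching of the $t$-exponent with the length function via degeneration is itself conjectural input, not bookkeeping. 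In short: your outline is a reasonable reconstruction of why the authors believe the conjecture, and your closing paragraph correctly identifies the missing idea, but nothing in the proposal constitutes a proof of the statement.
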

One might hope that the motives $L_{\lambda}$ are given as a tensor
product of rank one motives, one for each factor in a minimal length
decomposition of the corresponding element of the affine Weyl
group. This would facilitate a definition of the maps in the motivic
BGG complex: suppose $\lambda$ is a summand in $K^q$ and $\lambda'$ a
summand in $K^{q+1}$. Then we have
$L_{\lambda'}\tensor_K L_{\lambda}^{-1}$ as the twist corresponding to
the corresponding component of $d_q$. We would like this twist to be
understandable in terms of the simple reflection that takes $\lambda$
to $\lambda'$.

\smallskip

The above conjecture implies Conjectures \ref{c:maptofusion} and
\ref{c:ec}. { Our current evidence for the conjecture is that we
  have numerically computed the alternating sum of the Hodge
  polynomials of the terms in the complex in many examples (for
  $\mf{sl}_n$) and these computations agree with what the conjecture
  would imply.}

\subsection{On the Hodge and weight filtrations}

Suppose $\vec{\lambda}$ and $\nu$ are at level $\ell$, and
$\kappa=\ell +h^{\vee}$.  Suppose
$\mathcal{KZ}_{\kappa}(\vec{\lambda},\nu^*)\neq 0$ and let $M \geq 0$
be the number of simple roots, counted with multiplicity, in an
expression of $\sum\lambda_i-\nu$ as a sum of simple roots.
\begin{conj}\label{c:HodgeFiltration}
  $ $
\begin{enumerate}
\item
  $\mathcal{KZ}_{\kappa}(\vec{\lambda},\nu^*)/\mathcal{CB}_{\kappa}(\vec{\lambda},\nu^*)$
  is mixed  of weights $> M$.
\item For $k\geq 0$,
  $F^{M-k}(\msf{KZ}_{\kappa}(\vec{\lambda},\nu^*)/\msf{CB}_{\kappa}(\vec{\lambda},\nu^*))$
   is a subspace of
  $\mb{V}^*_{\mathfrak{g},\vec{\lambda},\nu^*,\ell+k+1}/
  \mb{V}^*_{\mathfrak{g},\vec{\lambda},\nu^*,\ell}$, with equality for $k=0$.
\end{enumerate}
\end{conj}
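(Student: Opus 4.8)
The plan is to prove both parts of Conjecture \ref{c:HodgeFiltration} simultaneously by induction on $n$, using the factorisation theorem (Theorem \ref{t:fact}) to reduce the general case to the case $n=2$, i.e.\ to a statement about the multiplicity motives $M_{\kappa}(\lambda,\mu,\nu^*)$ and conformal block motives $C_{\kappa}(\lambda,\mu,\nu^*)$, combined with an analysis of how nearby cycles interact with both the weight and the Hodge filtrations. The enabling facts are that nearby cycles in $\mr{MHM}$ preserve Hodge polynomials (Lemma \ref{l:saito}), that the Hodge filtration on nearby cycles is the limit of the Hodge filtration along the degeneration, and that the conformal block local systems $\mathcal{CB}_{\kappa}(\vec{\lambda},\nu^*)$ are pure of weight $M$ (Remark \ref{r:cb}) and sit inside $\mathcal{KZ}_{\kappa}(\vec{\lambda},\nu^*)$ as a sub-mixed-local-system. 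As in the proof of associativity in Section \ref{s:associativity}, the inductive step for $n$ will follow once the $n=2$ case is settled, because the Hodge filtration on $\Psi_f(\mathcal{KZ}_{\kappa})$ is the limit filtration and hence factorises as a ``product'' of the Hodge filtrations of the two factors appearing in \eqref{e:kzfact}; the compatibility with the conformal blocks filtration at higher levels will come from applying the conformal blocks factorisation \eqref{e:cbfact} at each level $\ell+k+1\geq \ell$ (noting that these higher-level conformal blocks must be handled as filtered vector bundles, equivalently filtered $\mathcal{D}$-modules, rather than local systems, since the level-$(\ell+h^{\vee})$ KZ connection does not preserve $\mb{V}_{\ell+1}$).

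For part (1) I would first establish the sharper statement that $\mathcal{KZ}_{\kappa}(\vec{\lambda},\nu^*)$ is mixed of weights $\geq M$ with $W_M = \mathcal{CB}_{\kappa}(\vec{\lambda},\nu^*)$; part (1) is then immediate. In the factorisation \eqref{e:kzfact} each graded piece is $\mc{KZ}_{\kappa}(\vv{\lambda}'_{\mu},\nu^*)\boxtimes_K \iota^*(\mc{KZ}_{\kappa}(\vv{\lambda}'',\mu^*))$, and it contains the sub-object $\mc{CB}_{\kappa}(\vv{\lambda}'_{\mu},\nu^*)\boxtimes_K \iota^*(\mc{CB}_{\kappa}(\vv{\lambda}'',\mu^*))$, which is pure of weight $|B^c|+|B| = M$ by \eqref{e111}, \eqref{e222} and the purity of $\mc{CB}$. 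By the induction hypothesis each factor has weights $\geq$ the corresponding root-height integer, with weight-equal part exactly the $\mc{CB}$ factor; a standard computation with tensor products of mixed objects then shows the quotient of the graded piece by its $\mc{CB}$-sub-object has weights $>M$. Since the (weight-filtered) structure of $\mathcal{KZ}_{\kappa}(\vec{\lambda},\nu^*)$ is recovered from $\Psi_f$ of its pullback, this gives the weight statement; the base case $n=2$ reduces to showing that $M_{\kappa}(\lambda,\mu,\nu^*)$ has weights $\geq M$ with weight-$M$ part $C_{\kappa}(\lambda,\mu,\nu^*)$, which one attacks directly on the Schechtman--Varchenko arrangement, using that $C_{\kappa}$ is the image of $H^M_c\to H^M$ (Proposition \ref{FSVCor}) while $M_{\kappa}$ is the image of the analogous map for a coarser pair of support conditions (Lemma \ref{imageKZ}).

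The heart of the matter, and the main obstacle, is part (2): identifying $F^{M-k}$ on the quotient with the filtration by conformal blocks at the higher levels $\ell+k+1$. The idea is to realise both filtrations via de Rham data on the arrangement. On the conformal blocks side, after Feigin--Schechtman--Varchenko the subspace $\mb{V}^*_{\mathfrak{g},\vec{\lambda},\nu^*,\ell+k+1}$ is cut out inside $\mathcal{KZ}_{\kappa}$ by imposing a weaker support/integrality condition along the hyperplanes (shifting $\ell\mapsto \ell+k+1$ moves which residues are treated as ``non-positive integers''); concretely it should be the image of the relevant $H^M_c$ for that shifted support condition, and these images form an increasing filtration of $\mathcal{KZ}_{\kappa}$ whose first step is $\mathcal{CB}_{\kappa}$. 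The conjecture is then the matching of this support filtration with the Hodge filtration shifted by $M$, where the Hodge filtration is computed from Saito's construction via the de Rham realisation of Lemma \ref{l:derhamreal} together with the order-of-pole filtration on the logarithmic de Rham complex; a precise comparison between the Hodge filtration attached to the compactification of Section \ref{s:kzmls} and this pole filtration is the technical crux. I expect the matching itself to require Teleman's BGG resolution \cite{TelemanCMP}, or rather its conjectural motivic refinement (Conjecture \ref{BGGtype}), to cut the arrangement cohomology into pieces each carrying a one-dimensional Hodge polynomial controlled by the affine Weyl length function --- this is exactly the mechanism by which the level shift $\ell\mapsto\ell+k+1$ should produce the Hodge polynomial formula of Conjecture \ref{c:ec}.

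A more hands-on alternative, which I would pursue in parallel, is to prove part (2) first for $\mathfrak{g}=\mathfrak{sl}_n$ using the explicit multiplication algorithm of Section \ref{s:algo} and the base-case computation of $[a;\kappa]$ in Lemma \ref{DelMos}: there the inductive ``fibration by curves'' structure keeps tight control of the Hodge filtration along degenerations, exactly as in the proof of Corollary \ref{c:sln}, and one can hope to read off both the weight bound and the identification of $F^{M-k}$ with the conformal blocks filtration from the explicit rank-one factors. One would then try to bootstrap to general $\mathfrak{g}$ via the factorisation theorem. The step I expect to be most delicate, in either approach, is verifying that the Hodge-filtration factorisation and the higher-level conformal-blocks factorisation are compatible term by term; this forces one to work with the filtered $\mathcal{D}$-module (rather than topological) incarnation of the higher-level conformal blocks and to check that Saito's limit Hodge filtration on nearby cycles restricts correctly to each step of the support filtration --- a compatibility which is exactly what Saito's formalism is designed to supply but which will need to be spelled out carefully in this arrangement-theoretic setting.
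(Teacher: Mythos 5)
The statement you are addressing is not proved in the paper: it is stated as Conjecture \ref{c:HodgeFiltration}, supported only by numerical evidence (the authors say they have ``verified the equality of ranks in the second part in many examples'' and note only that the candidate filtration satisfies Griffiths transversality). So there is no proof in the paper to compare yours against, and your text is, by its own admission, a strategy outline rather than a proof. It does contain genuine gaps that would have to be closed before it could count as one.

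Concretely: (a) for part (1), your inductive step assumes that the weight filtration of $\mathcal{KZ}_{\kappa}(\vec{\lambda},\nu^*)$ is ``recovered from $\Psi_f$ of its pullback.'' This is not automatic. The weight filtration on nearby cycles of an admissible VMHS is the relative monodromy filtration, and Theorem \ref{t:fact} only gives a filtration $\mr{G}^{\bullet}$ of $\Psi_{f,\bar a}$ whose graded pieces are the expected tensor products --- it does not assert that this filtration splits, nor does it compute the weight filtration of the nearby cycles, so knowing that each graded piece has weights $\geq M$ with $W_M$ equal to the CB part does not by itself yield the corresponding statement for $\mathcal{KZ}_{\kappa}(\vec{\lambda},\nu^*)$ upstairs (contrast Lemma \ref{l:zero}, which only handles the pure weight-zero case). (b) The base case $n=2$, i.e.\ that $M_{\kappa}(\lambda,\mu,\nu^*)$ has weights $\geq M$ with weight-$M$ part exactly $C_{\kappa}(\lambda,\mu,\nu^*)$, is asserted but not argued; Example \ref{e:introduction} shows these motives need not even be pure or mixed Tate, so this requires real work on the arrangement. (c) For part (2) you invoke the identification of $\mb{V}^*_{\mathfrak{g},\vec{\lambda},\nu^*,\ell+k+1}$ with the image of an $H^M_c$ for a shifted support condition, which the paper establishes only for the level matching $\kappa=\ell+h^{\vee}$ (Proposition \ref{FSVCor}), and you then defer the Hodge-filtration comparison to Conjecture \ref{BGGtype}, which is itself open; resting one conjecture of the paper on another does not constitute a proof. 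The $\mf{sl}_n$ route via the algorithm of Section \ref{s:algo} is a reasonable place to look for evidence, but as written it too stops short of an argument.
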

We have verified the inequality (and equality) of ranks in the second
part in many examples. For $k=0$ the opposite inclusion is known to
hold, hence equality of ranks implies equality. This is because
extending the argument in \cite{b-unitarity} which shows that
conformal blocks at level $\ell$ extend holomorphically to any smooth
projective compactification of $\wh{U}$, the ramified cover of the
complement of the Schechtman--Varchenko arrangement (see Section
\ref{s:ao2} for the definition of $\wh{U}$), one can show that
conformal blocks at level $\ell+1$ give log forms on smooth projective
compatifications of $\wh{U}$, hence inject into
  $F^MH^M(\wh{U},\C)$ by \cite[Corollaire (3.2.13) (ii)]{hodge2}.

Part (1) of the above conjecture appears in \cite[Question 7.5]{BBM}
(where $\nu$ was zero). Because of our computations with Hodge
numbers, we have collected more evidence for this part and state it as
a conjecture here. Part (2) is asked in \cite[Question 7.5]{BBM} in
the stronger form of equality. This stronger assertion may fail, see
Example \ref{e:ceg}. Note that it was verified in the arxiv version of
\cite{BBM} that the filtration
$\mb{V}^*_{\mathfrak{g},\vec{\lambda},\nu^*,\ell+k+1}/\mb{V}^*_{\mathfrak{g},\vec{\lambda},\nu^*,\ell}$
of the VMHS
$\msf{KZ}_{\kappa}(\vec{\lambda};\gamma)/\msf{CB}_{\kappa}(\vec{\lambda};\gamma)$
satisfies Griffiths transversality.}

Recall that if $(a,N)=1$, Then
$\mathcal{KZ}_{\kappa/a}(\vec{\lambda},\nu^*)$, is a Galois conjugate
of $\mathcal{KZ}_{\kappa}(\vec{\lambda},\nu^*)$. Suppose $1<a<N$ and
$\kappa/a = h^{\vee} +\ell' -\alpha$ where $0<\alpha<1$ and
$\ell'<\ell$ is a positive integer. Assume $\lambda_1,\dots,\lambda_n$
and $\nu$ are at level $\ell'$. Then analogously to part (2) of the
above conjecture, we may ask if for $k\geq 0$,
$F^{M-k}\msf{KZ}_{\kappa/a}(\vec{\lambda},\nu^*)$  is a subspace
of $\mb{V}^*_{\mathfrak{g},\vec{\lambda},\nu^*,\ell'+k}$ with
  equality for $k=0$.   We have verified the numerical inequality (and equality) in many examples.

\smallskip

It would be very interesting to describe the weight filtration---or
even just the ranks thereof---of
$\mathcal{KZ}_{\kappa}(\vec{\lambda},\nu^*)/
\mathcal{CB}_{\kappa}(\vec{\lambda},\nu^*)$. These motivic local
systems are not pure in general, see Example \ref{e:notpure}.

More generally, we have the following:
\begin{problem}\label{weight}
  Compute the ranks of the terms in the weight filtration of all KZ
  motives.
\end{problem}
While the main results of this paper are about the Hodge filtration,
they can be used to give non-trivial bounds on the weights using Lemma
\ref{l:weights}. Note that the results of Section \ref{s:higher} show
that the weight filtration can be quite complicated in the general KZ
setting.

More ambitiously, one could also ask for an explicit description of
the weight filtration.  This question already appears in \cite[Remark
3.8]{L2}.

\subsection{General Lie algebras}\label{s:genG}

\subsubsection{The Lie algebra $\mf{so}_5$}Let $\varepsilon_1$ and
$\varepsilon_2$ be an orthogonal basis of $\mathbb{R}^2$. The simple
roots are $\alpha_1=\varepsilon_1-\varepsilon_2$ and
$\alpha_2=\varepsilon_2$. The other positive roots are
$\alpha_1+\alpha_2$ and $\alpha_1+2\alpha_2$ and the fundamental
weights $\varpi _1=\varepsilon_1=\alpha_1+\alpha_2$ and
$\varpi _2=\frac{1}{2}(\varepsilon_1+\varepsilon_2)$.  Any dominant
integral weight of $\mf{so}_5$ is of the form
$\lambda=a\varpi _1+b\varpi _2$, where $a, b$ are non-negative
integers.

As in the case of $\mf{sl}_n$, to compute all products in
$\mf{R}_{\kappa}(\mf{so}_5)$ it is enough to compute all products
$[\lambda]\star [\vpi_{k}]$ for $k=1,2$ for any dominant integral
weight $\lambda$ (and similarly for $\mf{F}_{\kappa}(\mf{so}_5)$).

The two lemmas below follow from \cite[\S B]{weyman}.
\begin{lemma}\label{prop:LRruleso5}
  The product $[a\vpi_1+b\vpi_2]\otimes [\vpi_1]$ in $\mf{R}(\mf{so}_5)$
  decomposes
  into the following factors with multiplicity one:
    \begin{enumerate}
  \item $[(a-1)\vpi_1+b\vpi_2]$ and the difference is
    $2(\alpha_1+\alpha_2)$.
  \item $[(a+1)\vpi_1+(b-2)\vpi_2]$ provided $b\geq 2$ and the
    difference is $\alpha_1+2\alpha_2$ case.
  \item $[a\vpi_1+b\vpi_2]$ provided $b>0$ and the difference is
    $\alpha_1+\alpha_2$.
  \item $[(a-1)\vpi_1+(b+2)\vpi_2]$ and the difference is the simple
    root $\alpha_1$.
  \item $[(a+1)\vpi_1+b\vpi_2]$ and the difference is zero.
  \end{enumerate}
\end{lemma}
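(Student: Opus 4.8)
The plan is to obtain Lemma~\ref{prop:LRruleso5} from Klimyk's tensor product formula (the Racah--Brauer--Klimyk rule), applied with the ``small'' factor $V_{\vpi_1}$, which is the standard $5$-dimensional representation of $\mf{so}_5$ and has weights $\pm\varepsilon_1$, $\pm\varepsilon_2$, $0$, each of multiplicity one. Klimyk's formula then reads
\[
  V_{\lambda}\otimes V_{\vpi_1}=\sum_{\omega\in\{\pm\varepsilon_1,\,\pm\varepsilon_2,\,0\}}\chi(\lambda+\omega),
\]
where, for a weight $\xi$, one sets $\chi(\xi)=0$ if $\xi+\rho$ lies on some reflection hyperplane of $W$, and $\chi(\xi)=\ve(w)\,V_{w\cdot\xi}$ otherwise, with $w\in W$ the unique element for which $w\cdot\xi:=w(\xi+\rho)-\rho$ is dominant. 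Equivalently, one could simply quote the analogous Littlewood--Richardson-type rule for $\mf{so}_5$ from \cite[\S B]{weyman}; the derivation below is the quickest self-contained route.

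First I would record that in the orthogonal coordinates $\lambda=a\vpi_1+b\vpi_2$ becomes $(a+\tfrac b2)\varepsilon_1+\tfrac b2\,\varepsilon_2$, that $\rho=\tfrac32\varepsilon_1+\tfrac12\varepsilon_2$, and that dominance of $\xi_1\varepsilon_1+\xi_2\varepsilon_2$ means $\xi_1\ge\xi_2\ge0$; since $\lambda+\rho$ is strictly dominant and the added weights have very small coordinates, the only walls that $\lambda+\omega+\rho$ can meet are those of the two simple roots $\alpha_1$ and $\alpha_2$. Then for each of the five weights $\omega$ I would add it to $\lambda$, decide whether $\lambda+\omega+\rho$ is regular dominant, singular, or regular non-dominant, and in the last case apply the single sign-change reflection that returns it to the dominant chamber (contributing the sign $\ve(w)=-1$). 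This produces exactly the five summands (1)--(5), all with coefficient $\pm1$; translating $\lambda+\vpi_1-\mu$ back to the root basis, using $\vpi_1=\alpha_1+\alpha_2$ and $\vpi_2=\tfrac12\alpha_1+\alpha_2$, gives the stated ``differences'' $0$, $2(\alpha_1+\alpha_2)$, $\alpha_1+\alpha_2$, $\alpha_1$, $\alpha_1+2\alpha_2$ for $\omega=\varepsilon_1,\,-\varepsilon_1,\,0,\,\varepsilon_2,\,-\varepsilon_2$ respectively.

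The only step demanding genuine care, rather than routine bookkeeping, is the behaviour at the boundary of the dominant cone, which is precisely what the provisos in the statement encode. When $b=0$ the contribution of $\omega=0$ is $+[a\vpi_1]$, while that of $\omega=-\varepsilon_2$ has $\lambda-\varepsilon_2+\rho$ regular but non-dominant, reflecting with sign $-1$ to $[a\vpi_1]$, so the two cancel and case~(3) is absent. The case~(2) summand $[(a+1)\vpi_1+(b-2)\vpi_2]$ appears only for $b\ge2$: for $b=1$ the relevant $\lambda-\varepsilon_2+\rho$ is singular on the $\alpha_2$-wall, and for $b=0$ it reflects into the cancellation just described. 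Finally, when $a=0$ the candidates of cases~(1) and~(4) have $\lambda+\omega+\rho$ on the $\alpha_1$-wall and hence drop out, consistently with reading $[(a-1)\vpi_1+\cdots]$ as zero. Once these degenerate configurations are checked the rule follows, and multiplicity-freeness is automatic since each $\chi(\lambda+\omega)$ is a single signed irreducible.
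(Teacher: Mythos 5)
Your proposal is correct. The paper itself offers no argument at all for this lemma beyond the one-line citation ``the two lemmas below follow from \cite[\S B]{weyman}'', i.e.\ it simply invokes Weyman's Pieri formulas for the classical groups, an option you explicitly note as an alternative. Your self-contained route via Klimyk's formula with the $5$-dimensional standard representation is sound: the coordinates $\lambda=(a+\tfrac b2)\varepsilon_1+\tfrac b2\varepsilon_2$ and $\rho=\tfrac32\varepsilon_1+\tfrac12\varepsilon_2$ are right, the observation that only the $\alpha_1$- and $\alpha_2$-walls can be met is justified (the $\xi_1=0$ and $\xi_1=-\xi_2$ walls are out of reach since $\omega$ has coordinates in $\{0,\pm1\}$), the $\omega\leftrightarrow$ difference dictionary matches the five cases of the lemma, and you correctly identify the one genuinely delicate point: the cancellation between $\chi(\lambda)$ and $\chi(\lambda-\varepsilon_2)$ when $b=0$, which is what forces the proviso $b>0$ in case (3), together with the singular configurations at $b=1$ and $a=0$ that account for the proviso $b\ge2$ in case (2) and the vanishing of cases (1) and (4) when $a=0$. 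What your approach buys over the paper's is transparency about exactly why each proviso appears; what the citation buys is brevity and uniformity with the companion Lemma \ref{prop:tensor2}, which is handled the same way.
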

\begin{lemma}\label{prop:tensor2}
  The product $[a\vpi_1+b\vpi_2]\otimes [\vpi_2]$ in
  $\mf{R}(\mf{so}_5)$ decomposes into the following factors with
  multiplicity one
  \begin{enumerate}
  \item $[a\vpi_1+(b-1)\vpi_2]$ provided $b>0$ and the difference is
    $\alpha_1+2\alpha_2$.
  \item $[(a+1)\vpi_1+(b-1)\vpi_2]$ provided $b> 0$ and the difference
    is $\alpha_2$.
  \item $[(a-1)\vpi_1+(b+1)\vpi_2]$ and the difference is
    $\alpha_1+\alpha_2$.
  \item $[a\vpi_1+(b+1)\vpi_2]$ and the difference is zero.
  \end{enumerate}
\end{lemma}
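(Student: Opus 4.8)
The approach is to reduce everything to the observation that $V_{\vpi_2}$ is the $4$-dimensional spin representation of $\mf{so}_5$, which is \emph{minuscule}: in the $\varepsilon$-basis its weights are $(\pm\tfrac12,\pm\tfrac12)$, a single Weyl-group orbit, each occurring with multiplicity one. For a minuscule $V_{\mu}$ and any $\lambda\in P^+$ the Brauer--Klimyk (Racah--Speiser) algorithm collapses to the clean rule
\begin{equation*}
  V_{\lambda}\tensor V_{\mu}\;\cong\;\bigoplus_{\pi}V_{\lambda+\pi},
\end{equation*}
the sum taken over the weights $\pi$ of $V_{\mu}$ for which $\lambda+\pi$ is dominant, each summand with multiplicity one. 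First I would record this rule with the standard one-line justification: minuscularity gives $\langle\pi,\alpha^{\vee}\rangle\in\{-1,0,1\}$ for every positive root $\alpha$, hence $\lambda+\pi+\rho$ is always weakly dominant, and it contributes $V_{\lambda+\pi}$ exactly when it is regular, i.e., when $\lambda+\pi$ is dominant. For $\mf{so}_5$ the positive roots are just $\alpha_1,\alpha_2,\alpha_1+\alpha_2,\alpha_1+2\alpha_2$, so the check that non-simple roots cause no extra vanishing is finite and immediate.

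Next I would carry out the bookkeeping. In fundamental-weight coordinates the four weights of $V_{\vpi_2}$ are $\vpi_2$, $\vpi_1-\vpi_2$, $\vpi_2-\vpi_1$, $-\vpi_2$ (namely $(\tfrac12,\tfrac12)$, $(\tfrac12,-\tfrac12)$, $(-\tfrac12,\tfrac12)$, $(-\tfrac12,-\tfrac12)$). Adding each to $\lambda=a\vpi_1+b\vpi_2$ and imposing dominance yields $a\vpi_1+(b+1)\vpi_2$ (always), $(a+1)\vpi_1+(b-1)\vpi_2$ (for $b>0$), $(a-1)\vpi_1+(b+1)\vpi_2$ (for $a>0$), and $a\vpi_1+(b-1)\vpi_2$ (for $b>0$). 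For a summand $\nu=\lambda+\pi$ the difference from the highest weight is $\lambda+\vpi_2-\nu=\vpi_2-\pi$, which equals $0$, $2\vpi_2-\vpi_1=\alpha_2$, $\vpi_1=\alpha_1+\alpha_2$, and $2\vpi_2=\alpha_1+2\alpha_2$ respectively (using $\alpha_1+\alpha_2=\varepsilon_1$ and $\alpha_1+2\alpha_2=\varepsilon_1+\varepsilon_2$). Matching these four with the items of Lemma \ref{prop:tensor2}---the case $\vpi_2-\pi=0$ being the highest weight itself, and the stated provisos being exactly the dominance conditions---finishes the proof.

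I do not expect a genuine obstacle: the statement is classical and is precisely the content of \cite[\S B]{weyman}, so the work is just to organise it. The only spot needing a little care is the boundary behaviour at $a=0$ or $b=0$, where one must confirm that the provisos in the lemma coincide with the dominance of $\lambda+\pi$ and, for item (3) (where no proviso is displayed), note that when $a=0$ the weight $(a-1)\vpi_1+(b+1)\vpi_2$ sits on an affine wall after the $\rho$-shift and the summand simply drops out. An alternative, equally short route is to use the exceptional isomorphism $\mf{so}_5\cong\mf{sp}_4$, under which $V_{\vpi_2}$ becomes the defining $4$-dimensional symplectic representation and the decomposition follows from the symplectic Littlewood--Richardson rule; I prefer the minuscule argument as it is self-contained.
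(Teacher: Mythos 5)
Your proposal is correct. Note, though, that the paper does not actually prove Lemma \ref{prop:tensor2}: it simply records that both Pieri-type lemmas for $\mf{so}_5$ "follow from \cite[\S B]{weyman}", so there is no argument in the text to compare against. What you supply is a genuine, self-contained replacement, and the minuscule route is the right one: $V_{\vpi_2}$ is the $4$-dimensional spin representation, its weights $(\pm\tfrac12,\pm\tfrac12)$ form a single free Weyl orbit, and the Brauer--Klimyk sum then degenerates to "add each weight of $V_{\vpi_2}$ to $\lambda$ and keep the dominant ones". Your bookkeeping checks out: the four weights in fundamental-weight coordinates are $\vpi_2$, $\vpi_1-\vpi_2$, $\vpi_2-\vpi_1$, $-\vpi_2$, the dominance conditions reproduce exactly the provisos $b>0$ (items (1), (2)) and $a>0$ (item (3), which the paper handles by its blanket convention of discarding weights with a negative coefficient), and the differences $\vpi_2-\pi$ come out as $\alpha_1+2\alpha_2$, $\alpha_2$, $\alpha_1+\alpha_2$, $0$ as stated. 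The one step worth writing out rather than calling "immediate" is that regularity of $\lambda+\pi+\rho$ reduces to dominance of $\lambda+\pi$ against the \emph{simple} coroots only: for the two non-simple positive roots one has $\langle\rho,\alpha^\vee\rangle\geq 2$ while $\langle\lambda+\pi,\alpha^\vee\rangle\geq -1$, so those walls are never hit. With that line included the argument is complete.
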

In the above lemmas we ignore any case with a negative coefficient.

\begin{proposition}
  If we can compute all products of the form
  $[a\vpi_1+b\vpi_2]\star[\vpi_2]$ and $[b\vpi_2]\star [\vpi_1]$ in
  $\mf{R}_{\kappa}(\mf{so}_5)$, then we can compute
  $[a\vpi_1+b\vpi_2]\star [\vpi_1]$ in the ring
  $\mf{R}_{\kappa}(\mf{so}_5)$.
\end{proposition}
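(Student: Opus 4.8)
The plan is to mimic the strategy of Section~\ref{s:pieri}: express $[\lambda]\star[\vpi_1]$ in terms of ``smaller'' products by combining associativity of $\mf{R}_{\kappa}(\mf{so}_5)$ (Theorem~\ref{t:assoc}) with the classical branching rules of Lemmas~\ref{prop:LRruleso5} and~\ref{prop:tensor2}. Throughout I would use the additive height function $\mf{s}$ of Definition~\ref{d:C}, so that $\mf{s}(a\vpi_1+b\vpi_2)=2a+\tfrac32 b$; on dominant weights of $\mf{so}_5$ this takes values in a discrete, well-ordered subset of $\Q_{\geq 0}$, so induction on $\mf{s}(\lambda)$ is legitimate. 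The inductive hypothesis (for our fixed $\kappa$) is that all $[\sigma]\star[\vpi_1]$ with $\mf{s}(\sigma)<\mf{s}(\lambda)$ are already known, in addition to all products $[\mu]\star[\vpi_2]$ and all $[b\vpi_2]\star[\vpi_1]$. The base cases $\mf{s}(\lambda)\in\{0,\tfrac32\}$ and, more generally, all weights with $a=0$, are covered directly by the hypotheses.

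First suppose $\lambda=a\vpi_1+b\vpi_2$ with $a\geq 1$ and $b\geq 1$. Put $\sigma=a\vpi_1+(b-1)\vpi_2$, so $\lambda=\sigma+\vpi_2$ and $\mf{s}(\sigma)=\mf{s}(\lambda)-\tfrac32$. The coefficient of $[\lambda]=[\sigma+\vpi_2]$ in $[\sigma]\star[\vpi_2]$ equals $1$ (the corresponding KZ motive is $K(0)$, the case $M=0$, as in the proof of Lemma~\ref{l:poly}), and by Lemma~\ref{prop:tensor2} all other summands $[\mu_j]$ of $[\sigma]\star[\vpi_2]$ are among $a\vpi_1+(b-2)\vpi_2$, $(a+1)\vpi_1+(b-2)\vpi_2$ and $(a-1)\vpi_1+b\vpi_2$, each of height strictly less than $\mf{s}(\lambda)$. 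Writing $[\sigma]\star[\vpi_2]=[\lambda]+\sum_j m_j[\mu_j]$ and applying associativity in the form
\[
  ([\sigma]\star[\vpi_2])\star[\vpi_1]=([\sigma]\star[\vpi_1])\star[\vpi_2],
\]
the right-hand side is computable ($[\sigma]\star[\vpi_1]$ by the inductive hypothesis, then $\star[\vpi_2]$ by hypothesis), while on the left every term $[\mu_j]\star[\vpi_1]$ is known by the inductive hypothesis; solving yields $[\lambda]\star[\vpi_1]$.

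It remains to treat $\lambda=a\vpi_1$ with $a\geq 1$, where one cannot subtract $\vpi_2$. Instead I would take $\sigma=(a-1)\vpi_1+\vpi_2$, which still satisfies $\mf{s}(\sigma)=2a-\tfrac12<2a=\mf{s}(\lambda)$. By Lemma~\ref{prop:LRruleso5} the only summands in $[a\vpi_1]\star[\vpi_1]$ are $[(a+1)\vpi_1]$ (coefficient $1$), $[(a-1)\vpi_1+2\vpi_2]$, and $[(a-1)\vpi_1]$; the coefficient of $[(a-1)\vpi_1+2\vpi_2]$ is the KZ motive for $M=1$ variable coloured by $\alpha_1$, i.e.\ a basic rank-one Aomoto motive $[a;\kappa]$ of Definition~\ref{d:basicmotives}, whose Hodge polynomial is a monomial determined by Lemma~\ref{DelMos}. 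Thus only the coefficient $p$ of $[(a-1)\vpi_1]$ is still unknown. By Lemma~\ref{prop:tensor2}, $[a\vpi_1]$ occurs in $[\sigma]\star[\vpi_2]$ with coefficient a rank-one ($M=1$) Aomoto motive $e$ coloured by $\alpha_2$, hence a nonzero monomial in $t$; the other summands of $[\sigma]\star[\vpi_2]$ are $[(a-1)\vpi_1+2\vpi_2]$ (coefficient $1$), $[(a-1)\vpi_1]$ and $[(a-2)\vpi_1+2\vpi_2]$, the last two of height $<2a$. Applying associativity $([\sigma]\star[\vpi_2])\star[\vpi_1]=([\sigma]\star[\vpi_1])\star[\vpi_2]$ once more and extracting the coefficient of $[(a-1)\vpi_1]$ on both sides, one obtains $e\cdot p=X$ with $X\in\Z[t]$ assembled from: the $[(a-1)\vpi_1]$-coefficient of $([\sigma]\star[\vpi_1])\star[\vpi_2]$, known since $\mf{s}(\sigma)<\mf{s}(\lambda)$; the $[(a-1)\vpi_1]$-coefficient of $[(a-2)\vpi_1+2\vpi_2]\star[\vpi_1]$, known by the inductive hypothesis; and the vanishing of the $[(a-1)\vpi_1]$-coefficient in $[(a-1)\vpi_1+2\vpi_2]\star[\vpi_1]$ and in $[(a-1)\vpi_1]\star[\vpi_1]$, which holds by Lemma~\ref{prop:LRruleso5} (these root differences do not occur there). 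Since $[a\vpi_1]\star[\vpi_1]$ exists in the $\Z[t]$-algebra $\mf{R}_{\kappa}(\mf{so}_5)$, the monomial $e$ divides $X$ and $p=X/e$, completing the induction. The same argument works verbatim for $\mf{F}_{\kappa}(\mf{so}_5)$.

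The main obstacle is precisely the case $b=0$: unlike $\mf{sl}_n$ with its minuscule $\vpi_1$, here $V_{\vpi_1}$ has a zero weight, so $[a\vpi_1]$ is never a ``top'' term of a product $[\tau]\star[\vpi_j]$ with $\tau$ of strictly smaller height and $\vpi_j$ fundamental; the only relation available forces one to use the coefficient $e$ of $[a\vpi_1]$ in $[(a-1)\vpi_1+\vpi_2]\star[\vpi_2]$, an Aomoto motive of possibly nonzero weight rather than $K(0)$. What rescues the argument is, first, that $[a\vpi_1]\star[\vpi_1]$ has only three nonzero coefficients, two of which are immediate, and second, that $e$ (like the coefficient of $[(a-1)\vpi_1+2\vpi_2]$) is a \emph{rank-one} Aomoto motive, so its Hodge polynomial is a monomial in $t$ by Lemma~\ref{DelMos} and the single associativity relation can be solved for the last coefficient by division in $\Z[t]$. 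Checking the vanishings of the ``stray'' coefficients and the height bookkeeping needed in the two steps is routine but has to be carried out case by case using Lemmas~\ref{prop:LRruleso5} and~\ref{prop:tensor2}.
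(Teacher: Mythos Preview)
Your proof is correct, but it takes a more circuitous path than the paper's. The paper inducts on the single parameter $a$ rather than on the height $\mf{s}(\lambda)=2a+\tfrac32 b$, and this choice eliminates your case split entirely. With $\sigma=(a-1)\vpi_1+(b+1)\vpi_2$ (your $\sigma$ from the $b=0$ case, but now used for \emph{all} $b$), Lemma~\ref{prop:tensor2} gives
\[
[\sigma]\star[\vpi_2]=*[a\vpi_1+b\vpi_2]+\text{(terms whose first coordinate is}\leq a-1),
\]
so every other summand is already known by induction on $a$; no vanishing checks are needed, and no separate direct computation of any coefficient via Lemma~\ref{DelMos}. One then solves for $[a\vpi_1+b\vpi_2]\star[\vpi_1]$ from the associativity relation exactly as you do.

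What your more explicit treatment does buy is that the division-by-a-monomial step is made visible. Both your argument and the paper's silently rely on the fact that the coefficient $e$ (or $*$) of the target weight in the auxiliary $\star[\vpi_2]$ product is the Hodge polynomial of a rank-one motive, hence a monomial $t^k$; this is what permits one to recover $[\lambda]\star[\vpi_1]$ from $e\cdot([\lambda]\star[\vpi_1])$ in $\Z[t]$. You spell this out; the paper leaves it tacit. The trade-off is that your height induction forces the $b=0$ case to be handled separately, because the summand $[(a-1)\vpi_1+2\vpi_2]$ of $[\sigma]\star[\vpi_2]$ has height $2a+1>\mf{s}(\lambda)$ and so is not yet known; under the paper's induction on $a$ that same term has first coordinate $a-1$ and is already covered. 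Your workaround (extracting only the $[(a-1)\vpi_1]$-coefficient, checking the relevant vanishings, and computing the $M=1$ coefficient of $[(a-1)\vpi_1+2\vpi_2]$ directly) is valid, but it is extra work that the choice of induction variable in the paper avoids.
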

\begin{proof}
We use induction on $a$ to show how all products of the form
$[a\vpi_1 + b \vpi_2]\star[\vpi_1]$ can be computed. The case $a=0$ is
part of the assumptions, so we suppose $a>0$ and assume that all
products of the form $[(a-1)\vpi_1+c\vpi_2]\star [\vpi_1]$ are known
for any $c \geq 0$. We have
\begin{multline*}
  [(a-1)\vpi_1+(b+1)\vpi_2]\star [\vpi_2] = *[(a-1)\vpi_1+b\vpi_2]+*
  [a\vpi_1+b\vpi_2]
  \\
  +
  *[(a-2)\vpi_1+(b+2)\vpi_2] + *[(a-1)\vpi_1+(b+2)\vpi_2],
\end{multline*} (the third term is ignored if $a=1$), where $*$
represents a known coefficient (by induction).  By induction, we 
know how to multiply the first, third and fourth terms on the RHS by
$[\vpi_1]$. It follows that it suffices to compute
\[
  ([(a-1)\vpi_1+ (b+1)\vpi_2]\star [\vpi_2])\star[\vpi_1] =
  ([(a-1)\vpi_1+ (b+1)\vpi_2]\star [\vpi_1])\star[\vpi_2] ,
\]
where the equality holds by associativity and commutativity of the
$\star$ product.  By the inductive assumption we can compute the
product $[(a-1)\vpi_1+ (b+1)\vpi_2]\star [\vpi_1]$ and by the initial
assumptions we know how to multiply all dominant integral weights by
$[\vpi_2]$, so the proof is complete.
\end{proof}
The point of the proposition is that it allows one to avoid having to
explicitly compute the coefficient in case (1) of Lemma
\ref{prop:LRruleso5}. In general, it seems that the more multiple
roots there are in the expression of $ \lambda + \mu -\nu$ as a sum of
simple roots, the harder the coefficient of $[\nu]$ in
$[\lambda]\star[\mu]$ is to compute (and we do not know how to do this
in most cases). In the case of $\mf{sl}_n$ we are able to avoid all
such cases, but this does not appear to be possible in any other case.

We may consider the products that appear in the above proposition as
the ``minimal cases'' and expect that they are amenable to an analysis
by curve fibrations, e.g., in the case (1) in Lemma
\ref{prop:tensor2}, we are looking at cohomology of a local system
$\mc{L}$ (with support and images of such) over an open subset
$U\subseteq \A^3_{\msf{k}}$. This open subset maps to $\A^1_{\msf{k}}$
by the variable corresponding to $\alpha_1$. The fibres of
$U\to \A^1_{\msf{k}}$ are open subsets of $\A^2_{\msf{k}}$, and the
local system $\mc{L}$ on the fibres of $U\to \A^1$ looks like the
local system coming from a suitable master function for $\mf{sl}_2$. A
suitable adaptation/generalisation of \cite[Proposition 2.3.2]{DS}
could then help in the computation of the Hodge numbers.

  \begin{problem}\label{general}
  Give an algorithm for computing all products in the rings
  $\mf{R}_{\kappa}(\mf{g})$ and $\mf{F}_{\kappa}(\mf{g})$ for an
  arbitrary simple Lie algebra $\mf{g}$.
\end{problem}
We do not have a strategy for attacking this problem in general, but
it seems reasonable to hope that the computation of products for all
$\mf{sp}_{2n}$ could be reduced to the products for all $\mf{sl}_n$
for all $n$ (which we already know) and for $\mf{sp}_4$
($\simeq \mf{so}_5$) which we expect can be done along the lines
discussed above.

\begin{remark}
  It would be interesting even to just to have a formula for the Hodge
  polynomial of the rank $1$ motives
  $\mathcal{KZ}_{\kappa}((\lambda,\lambda^*);0)$ for all $\mf{g}$,
  $\lambda \in P^+$, and rational $\kappa$; we do not know how to
  determine even the weight of these motives.
\end{remark}

\subsection{Examples}\label{s:someexamples}

In this section we give a few numerical examples (worked out using a
computer implementation\footnote{Available on request from the
  authors} of our $\mf{sl}_n$ algorithm from Section \ref{s:sln})
which illustrate some of our results and conjectures. We use the usual
``partition notation'' to represent dominant integral weights for
$\mf{sl}_n$, e.g., $(4,2,1)$ denotes the weight
$2\vpi_1 + \vpi_2 + \vpi_3$ for $\mf{sl}_4$.

\begin{example}\label{e:introduction}
  For $\mf{sl}_3$, the motive
  $\mathcal{KZ}_{13}((7,5), (9,5)), (8,6))_{(0,1)}= M_{13}((7,5),
  (9,5), (8,6))$ is neither pure not mixed Tate because it has
  Hodge polynomial $2t^8+t^7$ and complex conjugate Hodge polynomial
  $t^2+t+1$. All representations are at level $13-3=10$, and the rank
  of conformal blocks is one (which is pure of weight $8$ with Hodge
  polynomial $t^8$ and conjugate Hodge polynomial $1$). At levels $11$
  (resp.~$12$), the ranks of the conformal blocks are $2$ and $3$,
  consistent with Conjecture \ref{c:HodgeFiltration}.
  
\end{example}

\begin{example}\label{e:notpure}
For the Lie algebra $\mf{sl}_4$, let $\lambda_1=(3,3,1)$,
$\lambda_2=(3,2,1)$, $\lambda_3=(3,2,2)$, $\lambda_4=(1,1,0)$, and
$\nu=(3,3,0)$, at level $3$, and $\kappa=7$. The value of $M$ is
11. The rank of conformal blocks is $2$ at level $3$.

The Hodge polynomial of $\msf{KZ}_{\kappa}(\vec{\lambda},\nu^*)$ is
$24t^{11}+28t^{10}+6t^9$ and the complex conjugate Hodge polynomial is
$7t^{3}+27t^{2}+22t+2$. $\msf{CB}$ is pure of weight $11$. The Hodge
polynomial of $\msf{KZ}/\msf{CB}$ is therefore
$22t^{11}+28t^{10}+6t^9$ and the complex conjugate Hodge polynomial is
$7t^3+27t^2+22t$. This is not pure of any weight.  (Note that if the
Hodge polynomial of a pure motive of weight $M$ is $P$, then the Hodge
polynomial of the complex conjugate is $t^MP(1/t)$.) 

In fact it appears that the weight filtration of $\msf{KZ}/\msf{CB}$
has two steps. The weight $12$ part is likely of rank $55$ with Hodge
polynomial $22 t^{11}+27t^{10}+6t^9$ and conjugate Hodge polynomial
$6t^3+27t^2+22t$, and there is likely a one dimensional weight $13$
quotient with Hodge polynomial $t^{10}$ and conjugate Hodge polynomial
$t^3$.

At levels $4$, $5$, and $6$, the ranks of conformal blocks are
$24, 24+28=52$ and $24+28+6=58$ respectively. This illustrates Conjecture 
\ref{c:HodgeFiltration}.
\end{example}

\begin{example}\label{e:ceg}
  Consider $\mf{sl}_2$ variations of the form
  $\msf{KZ}_{\kappa}(\vec{\lambda},0)$ where $\lambda_1=\dots=\lambda_{12}=(1)$
  with $\kappa=3$. The Hodge polynomial is
  $32t^6 + 56t^5 + 34t^4 + 9t^3 + t^2$, $M=6$, and the ranks of
  conformal blocks at levels $1,2,3,4,5,6$ are $1, 32, 89, 122,131$
  and $132$ respectively. Note that $32+ 56=88< 89$. This shows that
  the conjectured containment in the second part of Conjecture
  \ref{c:HodgeFiltration} cannot be an equality for $k=1$.

  With the same $\vec{\lambda}$, the Hodge polynomial of the Galois
  conjugate $\msf{KZ}_{5/2}(\vec{\lambda},0)$ of
  $\msf{KZ}_{5}(\vec{\lambda},0)$ is computed to be
  $t^6 + 31t^5 + 56t^4 + 34t^3 + 9t^2 + t$. Here $\ell'=1$, and the
  ranks of the conformal blocks at levels $1,2,3,4,5$ are as listed in
  the previous paragraph. But equality cannot hold for $k=2$ since
  $1+31+56<89$.
\end{example}

\begin{example}
  The local monodromy of a variation of pure Hodge structure over a
  punctured disc is determined by the limiting mixed Hodge
  structure. We use this method to give examples of KZ variations
  which are pure but with non-semisimple local monodromy using Theorem
  \ref{t:fact} and Remark \ref{r:topological}. The basic $\mf{sl}_2$
  level $0$ case in Section \ref{s:level0} already gives such
  examples, but the following example shows how one can do the same
  based on general principles, without using any explicit geometry.
  
  Let $\ell \geq 1$ be such that $\kappa=\ell+2$ is a prime. We claim
  that $\mf{sl}_2$ variations of the form
  $\msf{KZ}_{\kappa}(((\ell+1),(\ell+1),(\ell+1)),(\ell+1))$ (three
  finite points and one infinite point) are pure of weight
  $M = \ell+1$.  This follows from the observation that the residues
  of the logarithmic derivative of the master function $\eta$ are
  non-integral for the abnormal strata (see Section \ref{s:resol})
  using formulas in \cite[Section 5]{loo-sl2}: The residue when $k>1$
  of the $M=\ell+1$ points come together is $k(k-1)/\kappa$, when
  $k\geq 1$ of the points come together at $z_1$ is
  $k(1-k+\ell+1)/\kappa$, and when $k\geq 1$ of the points come
  together at infinity is $-k(k+\ell+2)/\kappa$ (the computation at
  infinity is different from the one in \cite[Section 5]{loo-sl2}
  since we have a nonzero weight at infinity, but can be carried out
  in a similar way).

  The VMHS
  $\msf{KZ}_{\kappa}(((\ell+1),(\ell+1),(\ell+1)),(\ell+1))$ appears to
  have rank $\ell +2$ and the local monodromy as two of the finite
  points come together appears to have $(\ell+1)/2$ Jordan blocks of
  size two and a single block of size one. This can be seen by
  examining the weights of the graded pieces in the factorisation, but
  since we do this by explicit computations in the enriched fusion
  ring, we have only verified these statements for small values of
  $\ell$. We consider a specific case below.

  Let $\ell=5$ and $\kappa=\ell+2 = 7$. The multiplicity motives
  $M_7((6),(6),(2a))$ are of rank one for $a=0,1,\dots, 6$, so we can
  determine their weights by computing their Hodge polynomials
  together with those of their complex conjugates.  They are
  $7,6,5,3,2,1,0$ for $a = 0,1,\dots,6$.  Similarly, the weights for
  $\msf{KZ}_{7}(((6),(2a)),(6))$ (i.e., with weight $(6)$ at $\infty$)
  are given by $0,1,1,2,3,3,4,5$ for $a=0,1,\dots,6$. So the weights
  of the tensor product in the factorisation for
  $\msf{KZ}_7(((6),(6),(6)),(6))$ of Theorem \ref{t:motivic}
  are $7,7,7,6,5,5,5$.

  Therefore, using Theorem \ref{t:fact} and the fact that the weight
  filtration on nearby cycles of a pure VHS is given by the monodromy
  filtration \cite[\S 2.3]{saito}, it follows that the monodromy
  operator $\mr{N}$ when two of the finite points come together has
  three Jordan blocks of size two and one of size one (corresponding
  to the weight $(6)$).
\end{example}

\begin{example}\label{e:FiniteGoursat3}
  For the Lie algebra $\mf{sl}_4$, let $\lambda_1=\lambda_2=(5,2,2)$,
  $\lambda_2=(6,3,0)$ and $\lambda_4=(1,0,0)$, at level $8$, and
  $\kappa=12$.  The Hodge polynomials of the Galois conjugates of
  $\msf{CB}_{\kappa}(\vec{\lambda},0)$ are $4t^{27}$, $4t^{15}$,
  $4t^{12}$ and $4$. The global monodromy of
  $\msf{CB}_{\kappa}(\vec{\lambda},0)$ over $\mc{C}_4$ is therefore
  finite. (In this example we have
  $\msf{CB}_{\kappa}(\vec{\lambda},0) =
  \msf{KB}_{\kappa}(\vec{\lambda},0)$.)

  Fix $z_1,z_2,z_3$ and let $z=z_4$ vary.  We get a rank four
  conformal blocks VMHS on $\P_{\C}^1-\{z_1,z_2,z_3,\infty\}$ by
  restricting $\msf{CB}_{\kappa}(\vec{\lambda},0)$. One knows that the
  local system is rigid and irreducible (see Remark \ref{r:rigid}
  below).
  The local system has
  central monodromy at $\infty$, and at each of $z_1$, $z_2$ and $z_3$
  there are three eigenvalues, with two of multiplicity one and one of
  multiplicity two, see below. This implies that this local system
  belongs to the Goursat III family \cite{goursat, RR-V} of rigid
  local systems of rank $4$, the Katz rigidity condition holds since
  $3(1^2+1^2+2^2)=(3-2)4^2 +2$.  We do not know of any finite
  monodromy Goursat III type examples in the literature, but
  \cite{RR-V} gives rank four rigid local systems with finite
  monodromy in the Goursat II family.

  All the local monodromies (ignoring the one at infinity which can be
  absorbed in the others) have three distinct eigenvalues, so this is
  not a hypergeometric system. Note that if we ignore the $(z_i-z_j)$
  factors in the master function, the eigenvalues of local monodromies
  are $\kappa$th roots of unity, but we keep these factors in the
  computation below.

  By Remark \ref{r:topological}, the local exponents at $z_1$ (or
  $z_2$) are $\frac{1}{2\kappa} (-c(\mu)+ c(\lambda_1) +c(\lambda_4))$
  where $\mu=(6,2,2), (5,3,2)$ and $(4,1,1)$, with the corresponding
  multiplicities being $1$, $2$ and $1$. The eigenvalues corresponding
  to these exponents are easily seen to be different, i.e., $2\kappa$
  does not divide the differences of $c(\mu)$ for different values of
  $\mu$.

  The local exponents at $z_3$ are
  $\frac{1}{2\kappa} (-c(\mu)+ c(\lambda_3)+c(\lambda_4))$ where
  $\mu=(7,3,0)$, $(6,4,0)$ and $(6,3,1)$, with the corresponding
  multiplicities being $1$, $1$ and $2$.  The eigenvalues
  corresponding to these exponents are also different different.

The central monodromy at infinity has exponent $-c(\lambda_4)/\kappa$.  
\end{example}

\begin{remark}\label{r:rigid}
  The following method from \cite{BRigid} constructs unitary rigid
  local systems. We have recast the quantum cohomology formalism used
  there in the equivalent fusion ring version.

  Let $\mu_1,\dots,\mu_s$ be highest weights for $\mf{sl}_n$ at level
  $\ell$, so that the rank of $\mathcal{CB}_{\kappa}(\vec{\mu},0)$ is
  one. Assume that $\mu_i$ is attached to the point
  $z_i\in \mb{A}^1_{\C}$. Let $\mu'_1$ be a summand that appears in
  the classical fusion product $[\mu_1]\cdot[\vpi_{n-1}]$ with
  coefficient $1$ (i.e., nonzero, since this is a case of the Pieri
  formula). Then we consider the conformal blocks for $\mf{sl}_n$ for
  $s+1$ points at the same level $\ell$ with representations
  $\lambda_1=\mu'_1$, and $\lambda_2=\mu_2,\dots,\lambda_s=\mu_s$ and
  $\lambda_{s+1}=\vpi_1$ (at a new point $p=z_{s+1}$), and the trivial
  representation at $\infty$.  This local system with the point $p$
  moving gives an irreducible unitary rigid local system on
  $\mb{P}_{\C}^1-\{z_1,\dots,z_s,\infty\}$ \cite[Section
  5.7]{BRigid}. The eigenvalues of the local monodromy (after
  tensoring the local system by an explicit one dimensional local
  system) are $\kappa=(n+\ell)$th roots of unity (they have central
  monodromy at infinity). This unitary local system has an eigenvalue
  of the monodromy at $z_1$ of multiplicity one. All unitary rigids
  with this property (some local monodromy having an eigenvalue of
  multiplicity one) arise in the above manner, see \cite[Section
  5.8]{BRigid}. (Unitary rigids without this property also come from
  conformal blocks, but the description is a bit more complicated. No
  examples are known where this property fails for unitary rigids.)
  
  To obtain Example \ref{e:FiniteGoursat3}, let $\mu_1=(7,3,0)$,
  $\mu_2=\mu_3=(5,2,2)$, and verify that $[\mu_1]\cdot[\vpi_{3}]$ has
  the term $[\lambda_1]$ with multiplicity one.
 
\end{remark}

\appendix
\section{KZ local systems and cohomology}\label{s:promises}

\subsection{Changes for arbitrary weights at infinity.}

In  \cite{L2,BBM}, Proposition \ref{p:kz} is proved with $\nu=0$. Here we indicate the changes needed for arbitrary dominant integral $\nu$ following the outline given in \cite{BF}. If $\vec{\lambda}$ is an $n$-tuple of weights, by the notation $V(\vec{\lambda})^*_{\nu}$, we mean $(V(\vec{\lambda})^*)_{\nu}$-the $\nu$-th weight space of $V(\vec{\lambda})^*$ 

As in the case $\nu=0$, the first step is to define a map
$\mb{A}(\vv{\lambda},\nu^*)^*\to H^M(A^{\starr}(U), \eta)$ as the
composite
\begin{equation}\label{compositum}
	\mb{A}(\vv{\lambda},\nu^*)^*\lra V(\vv{\lambda})^*_{-\nu}\lra
        H^M(A^{\starr}(U), \eta) .
\end{equation}
The first map is injective and arises from the
map
$$V(\vv{\lambda})\lra V(\vv{\lambda})\tensor V_{\nu}^*, \ \ v_1\tensor
v_2\tensor\dots\tensor v_n \mapsto v_1\tensor v_2\tensor\dots\tensor
v_n\tensor v,$$ where $v$ is the lowest weight vector of $V_{\nu}^*$,
of weight $-\nu$ (as in \cite[(12)]{FSV2}). The
map
$$\Omega^{SV}_{\vv{\lambda}, \nu^*}:V(\vv{\lambda})^*_{-\nu}\lra
A^M(U)$$ is defined in \cite[(6.5.2)]{SV}. The spaces
$H^M(A^{\starr}(U), \eta)= A^M(U)/\eta\wedge A^{M-1}(U)$ do not change
under scaling of the map, hence the composite \eqref{compositum} is constant under scaling of $\kappa$.\\

\paragraph{\bf Injectivity of the map} The next step is to prove that
the map \eqref{compositum} is injective. It suffices to prove this
when $\kappa=\ell+h^{\vee}$ with $\ell\in \mb{Z}$ sufficiently large.
Then,
$\mb{A}(\vv{\lambda},\nu^*)^*=\mb{V}_{\mathfrak{g},\vec{\lambda},\nu^*,\ell}(\mb{P}^1,z_1,\dots,z_n)$,
the space of conformal blocks.  Note that \cite[Prop 8.5]{b-unitarity}
can be extended to show that correlation functions with $v$ placed at
infinity agree with the differential forms obtained as the image of
the map in \cite{SV}, i.e., $\mb{A}(\vv{\lambda},\nu^*)^*\to A^M(U)$
above. This is because we are using the lowest weight vector
$v\in V_{\nu}^*$ and the $f$ operators act by zero on $v$ (this
becomes relevant when we use the gauge condition).

Thus, by \cite{b-unitarity} (also \cite[Remark 4.5, Claim 4.6]{BF} for
changes for arbitrary $\nu$), the composite
$$\mb{V}_{\mathfrak{g},\vec{\lambda},\nu^*,\ell}(\mb{P}^1,z_1,\dots,z_n)\lra H^M(A^{\starr}(U), \eta)\lra H^M(U,\ml(\eta))$$ 
is injective, so we have injectivity in \eqref{compositum}.\\

\paragraph{\bf Determining the image}
Using Lemma \ref{l:topo}, we write \eqref{compositum} as
\begin{equation}\label{eqn:Loo1}
	\begin{tikzcd}
\mb{A}(\vv{\lambda},\nu^*)^* \ar[r, hook] &  H^M(A^{\starr}(U), \eta)^{\chi} \ar[r, "\cong"] &H^M(V,j_{!}\mathcal{L}(a))^{\chi}.
\end{tikzcd}
\end{equation}
To determine the image of
$\mb{A}(\vv{\lambda},\nu^*)^*\to H^M(V,j_{!}\mathcal{L}(a))^{\chi}$,
following Looijenga \cite{L2} we consider the same map for $-\kappa$
and dual weights and dualizing the resulting map. Note that
$$\sum\lambda^*-\nu^*= \sum_{b=1}^M (-w_0\circ \beta(b)),$$
where $w_0$ is the longest element in the Weyl group of $\mf{g}$ (so that $\lambda^*=-w_0\lambda$).
We obtain an injective map (the vertical left one) making the
following diagram commute:
\begin{equation}
\begin{tikzcd}
\mb{A}(\vv{\lambda}^*,\nu)^*\ar[d, hook] \ar[r, hook] & V(\vv{\lambda}^*)^*_{-\nu^*} \ar[d, "\Omega_{\vv{\lambda}^*,\nu}^{SV}"]\\
H^M(A^{\starr}(U), -\eta)^{\chi} & (A^{M}(U))^{\chi}\ar[l, two heads]
\end{tikzcd}
\end{equation}
Here we note that since $\beta(b)$ are simple roots, so are $-w_0\circ \beta(b)$. Therefore we get the same $U$ for this dual situation. The master function
is the inverse of the old master function, and we get a surjection (after dualizing)
\begin{equation}\label{eqn:Loo2}
H^M(V',q_{!}\mathcal{L}(a))^{\chi}\cong (H^M(A^{\starr}(U),-\eta)^{\chi})^*\twoheadrightarrow \mb{A}(\vv{\lambda}^*,\nu),
\end{equation}
where $V'=P-\bigcup'_{\alpha}E_{\alpha}$ and the union is restricted to
$\alpha$ such that $a_{\alpha}=\op{Res}_{E_{\alpha}}\eta$ is not a
non-negative integer. Note that $V'\supseteq V$, and $q:U\to
V'$. 

At the level of forms we  dualize $$V(\vv{\lambda}^*)^*_{-\nu^*}\to A^M(U)^{\chi}\to H^M(A^{\starr}(U), -\eta)^{\chi}$$ to get that the composite below is surjective:
\begin{equation}\label{duall}
	H^M(A^{\starr}(U), -\eta)^*\lra A^M(U)^*\lra
        V(\vv{\lambda^*})_{\nu^*}= V(\vv{\lambda})^*_{\nu^*}\lra
        \mb{A}(\vv{\lambda}^*,\nu) .
\end{equation}

The isomorphism
\begin{equation*}
	\begin{tikzcd}[column sep=small]
\mb{A}(\vv{\lambda},\nu^*)^*\ar[r,equal]& \left((V(\vv{\lambda})^*\otimes V_{\nu^*}^*)_{\mf{g}}\right)^* \ar[r,equal] & \left((V(\vv{\lambda})\otimes V_{\nu^*}\right)^{\mf{g}}\ar[r,equal] &\left((V(\vv{\lambda}^*)^*\otimes V_{\nu^*}\right)^{\mf{g}}\ar[r,"\cong"]&  \mb{A}(\vv{\lambda}^*,\nu),
\end{tikzcd}
\end{equation*} which we take to be the natural inclusion of  $\mb{A}(\vv{\lambda},\nu^*)^*$ to $\mb{A}(\vv{\lambda}^*,\nu)$ times the scalar $\frac{1}{\dim V_{\nu}}$, 
implies that 
$\mb{A}(\vv{\lambda},\nu^*)^*\hookrightarrow
H^{M}(V,j_{!}\mathcal{L}(a))^{\chi}$ is the image of a map 
\begin{equation}\label{rep}
	H^M(V',q_{!}\mathcal{L}(a))^{\chi}\lra H^M(V,j_{!}\mathcal{L}(a))^{\chi}.
\end{equation}
obtained by  the surjection in  \eqref{eqn:Loo2} and the injection in  \eqref{eqn:Loo1}. \\

\paragraph{\bf  Identification of maps}To prove  Proposition \ref{p:kz}, we need to show  that  \eqref{rep} defined here  coincides with the map between the source and target of \eqref{rep} induced from topology by the natural inclusion $V\subseteq V'$. This is
proved in \cite[Theorem 7.3]{BBM} for $\nu=0$. The same argument extends for arbitrary $\nu$ as we explain below. 

The main point is that the set-up in \cite[(6.5.2)]{SV} is valid for
arbitrary weight spaces. The commutative diagram replacing \cite[
(8.3)]{BBM} is the following (obtained from \cite[ (6.5.3) and Theorem
6.6]{SV}) combined with \cite[(2.3.2) and (3.3.1)]{SV}.
\begin{equation}\label{bigS}
\begin{tikzcd}
A^M(U)^*\ar[r, "\Omega_{\vv{\lambda},\nu^*}^{SV*}"]\ar[d] & V(\vv{\lambda})_{\nu}\ar[d]\\
A^M(U) & V(\vv{\lambda})^*_{-\nu}\ar[l, "\Omega_{\vv{\lambda},\nu^*}^{SV}"']
\end{tikzcd}
	\end{equation}
The bottom horizontal map is the map $\Omega_{\vv{\lambda},\nu^*}^{SV}:V(\vv{\lambda})^*_{-\nu}\to A^M(U)$ above  defined in \cite{SV}, and the horizontal map on the top is the dual.

Diagram \eqref{bigS}  is obtained from a larger diagram (where the objects in the middle column are weight spaces of tensor product of Verma modules as in \cite[Section 6.4]{SV}) and their duals.
\begin{equation}\label{eqn:gettingridofvarchenko}
\begin{tikzcd}
A^M(U)^*\ar[d,"S"']\ar[r] &  M(\vec{\lambda})_{\nu}\ar[d,"\operatorname{Sh}"']\ar[r] & V(\vv{\lambda})_{\nu}\ar[d,"\cong","\operatorname{Sh}"']\ar[from=ll, bend left=25,"\Omega_{\vv{\lambda},\nu^*}^{SV*}" description]\\
A^M(U)\ar[from=rr, bend left=25,"\Omega_{\vv{\lambda},\nu^*}^{SV}" description]& M(\vec{\lambda})_{-\nu}^*\ar[l]& V(\vv{\lambda})^*_{-\nu}\ar[l]
\end{tikzcd}
\end{equation}

The vertical map $\operatorname{Sh}$ on the right is obtained from the
Shapovalov form (which is an isomorphism on finite dimensional
representations) from representation theory \cite[Section 6.4]{SV}.
The vertical map on the left in \eqref{bigS} is the form $S$ from
\cite[(1.3)]{BBM} (up to identification is the quasi-classical
contravariant form from \cite[(3.3.1)]{SV}) is the one that induces
the natural topological map
$H^M(V',q_{!}\mathcal{L}(a))^{\chi}\to
H^M(V,j_{!}\mathcal{L}(a))^{\chi}$ obtained from the inclusion
$V\subseteq V'$ as shown in \cite{BBM}.  All maps in the diagram
\eqref{eqn:gettingridofvarchenko} except for the vertical map on the
left are representation theoretic.

The action of the longest element $w_0$ of the Weyl group identifies
$w_0: V(\vv{\lambda})^*_{-\nu}\to V(\vv{\lambda}^*)_{\nu^*}$.
Incorporating this in the diagram \eqref{bigS}, we write a bigger
diagram

\begin{equation}\label{bigSS}
	\begin{tikzcd}[column sep=large]
	A^M(U)^*\ar[r, "\Omega_{\vv{\lambda},\nu^*}^{SV*}" description]\ar[d] & V(\vv{\lambda})_{\nu}\ar[r, "\operatorname{Sh}"] & V(\vv{\lambda})^*_{-\nu}\ar[r, "w_0"]\ar[d,equal] &  V(\vv{\lambda}^*)_{\nu^*}\ar[r]  \ar[from=lll, bend left=25, "\Omega_{\vv{\lambda}^*,\nu}^{SV*}" description]& \mb{A}(\vv{\lambda}^*,\nu)\\
		A^M(U) &  & V(\vv{\lambda})^*_{-\nu}\ar[ll,"\Omega_{\vv{\lambda},\nu^*}^{SV}" ] & &   \mb{A}(\vv{\lambda},\nu^*)^*\ar[u, "\cong"]\ar[ll]
		\end{tikzcd}
\end{equation}
Here the rightmost horizontal maps of the top and bottom rows are
natural surjections and inclusions respectively.  The right hand
square in the above diagram does not commute. We rewrite it as
follows:
\begin{equation}\label{bigone}
	\begin{tikzcd}
		 V(\vv{\lambda}^*)_{\nu^*}\ar[r]    & \mb{A}(\vv{\lambda}^*,\nu)\\
	          V(\vv{\lambda})^*_{-\nu}\ar[u,"w_0"]&\mb{A}(\vv{\lambda},\nu^*)^*\ar[l]\ar[u,"\cong"']
	\end{tikzcd}
\end{equation}

The arguments in \cite[Lemma 8.3 and Lemma 8.4]{BBM} show that the map
$\Omega^{SV*}_{\vv{\lambda}^*,\nu}: A^M(U)^*\to
V(\vv{\lambda}^*)_{\nu^*}$ in \eqref{duall} constructed in \cite{SV}
for the pair $(\vv{\lambda}^*,\nu^*)$ (also the top curved arrow in
diagram \eqref{bigSS}) is the same as the the map
$A^M(U)^*\to V(\vv{\lambda})^*_{-\nu}$ from \eqref{bigS} composed
with the action of $w_0$ given by
$V(\vv{\lambda})^*_{-\nu}\to V(\vv{\lambda})^*_{\nu^*}$ since
$\nu^*=-w_0\nu$.  In particular,
\begin{equation}
\Omega^{SV*}_{\vv{\lambda}^*, \nu}=w_0\circ \operatorname{Sh} \circ
\,\Omega^{SV*}_{\vv{\lambda}, \nu^*} .
\end{equation}

The  map from \eqref{bigS}  is written as a composition $A^M(U)^*\to V(\vv{\lambda})_{\nu}\to V(\vv{\lambda})^*_{-\nu}$, and the identity is verified after taking duals, noting that the top horizontal arrow in \eqref{bigS} is the dual of the lower horizontal arrow.

The composite obtained by chasing arrows in the right half of
\eqref{bigSS}
$$ 
\begin{tikzcd}
V(\vv{\lambda})^*_{-\nu}\ar[r,"w_0"] &  V(\vv{\lambda}^*)_{\nu^*}\ar[r] & \mb{A}(\vv{\lambda}^*,\nu)\ar[r,"\cong"] & \mb{A}(\vv{\lambda},\nu^*)^*\ar[r]&   V(\vv{\lambda})^*_{-\nu}
\end{tikzcd}
$$
is not the identity, but differs from it by a map to $\mathfrak{n}_{+}V(\vv{\lambda})^*$. In fact we claim the following:
\begin{lemma}\label{r:factor}
  The  composite $$
	\begin{tikzcd}
          \mb{A}(\vv{\lambda},\nu^*)^* \ar[r, hook] &
          V(\vv{\lambda})^*_{-\nu} \ar[r]&
          \mb{A}(\vv{\lambda}^*,\nu)\end{tikzcd} $$is same as the map
        $\mb{A}(\vv{\lambda},\nu^*)^*\lra \mb{A}(\vv{\lambda}^*,\nu)$
        (which, as before, is the natural map divided by
        $\dim V_{\nu}$).
\end{lemma}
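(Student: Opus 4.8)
The statement of Lemma~\ref{r:factor} asserts a compatibility between two a priori different identifications of $\mb{A}(\vv{\lambda},\nu^*)^*$ with a subspace of $\mb{A}(\vv{\lambda}^*,\nu)$: one coming from the composite through weight spaces of $V(\vv{\lambda})^*$ (which is the geometric/cohomological map inherited from the Schechtman--Varchenko construction for the dual data), and the other the purely representation-theoretic natural map $\mb{A}(\vv{\lambda},\nu^*)^* \hookrightarrow \mb{A}(\vv{\lambda}^*,\nu)$ scaled by $1/\dim V_{\nu}$, which was fixed earlier in this subsection via the chain of canonical isomorphisms $(V(\vv{\lambda})^*\otimes V_{\nu^*}^*)_{\mf{g}}^* = (V(\vv{\lambda})\otimes V_{\nu^*})^{\mf{g}} = (V(\vv{\lambda}^*)^*\otimes V_{\nu^*})^{\mf{g}} = \mb{A}(\vv{\lambda}^*,\nu)$.

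The plan is to reduce everything to a statement about the map $V(\vv{\lambda})^*_{-\nu} \to \mb{A}(\vv{\lambda}^*,\nu)$ appearing along the bottom-then-right path of diagram \eqref{bigSS}, restricted to the invariant subspace $\mb{A}(\vv{\lambda},\nu^*)^* \subset V(\vv{\lambda})^*_{-\nu}$. First I would unwind the displayed composite right before the lemma, namely $V(\vv{\lambda})^*_{-\nu} \xrightarrow{w_0} V(\vv{\lambda}^*)_{\nu^*} \to \mb{A}(\vv{\lambda}^*,\nu) \xrightarrow{\cong} \mb{A}(\vv{\lambda},\nu^*)^* \to V(\vv{\lambda})^*_{-\nu}$, which the text notes differs from the identity by a map landing in $\mf{n}_+ V(\vv{\lambda})^*$. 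The key observation is that $\mb{A}(\vv{\lambda},\nu^*)^*$, viewed inside $V(\vv{\lambda})^*_{-\nu}$ via the first arrow of the lemma, is by construction contained in the subspace of vectors killed by $\mf{n}_+$ modulo the image of $\mf{n}_-$; more precisely, the map $\mb{A}(\vv{\lambda},\nu^*)^* \to V(\vv{\lambda})^*_{-\nu}$ arises (as recalled at the start of the appendix) from tensoring with the lowest weight vector $v \in V_\nu^*$ of weight $-\nu$, and the $f$-operators annihilate $v$; dually this forces the image to be compatible with the coinvariant projection. Hence restricting the above ``identity plus $\mf{n}_+$-valued correction'' to $\mb{A}(\vv{\lambda},\nu^*)^*$ and then composing with the projection to $\mb{A}(\vv{\lambda}^*,\nu)$ (which kills $\mf{n}_+ V(\vv{\lambda})^*$) gives exactly the identity on $\mb{A}(\vv{\lambda},\nu^*)^*$, i.e.\ the composite in the lemma agrees with the identification coming from $\mb{A}(\vv{\lambda},\nu^*)^* \xrightarrow{\cong} \mb{A}(\vv{\lambda}^*,\nu)$.

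The second step is to pin down the scalar $\tfrac{1}{\dim V_\nu}$. This enters because the Shapovalov form $\operatorname{Sh}: V(\vv{\lambda})_\nu \to V(\vv{\lambda})^*_{-\nu}$ used in diagram \eqref{eqn:gettingridofvarchenko}, together with the canonical identification $(V(\vv{\lambda})^*\otimes V_{\nu^*})^{\mf{g}} \cong (V(\vv{\lambda}^*)^*\otimes V_{\nu^*})^{\mf{g}}$, introduces precisely the normalisation discrepancy between the categorical trace/coevaluation pairing on $V_\nu^* \otimes V_\nu$ and the naive inclusion $\mb{A}(\vv{\lambda},\nu^*)^* \hookrightarrow \mb{A}(\vv{\lambda}^*,\nu)$. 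I would compute this by tracing through the chain of isomorphisms defining the natural map (the one displayed just before \eqref{rep}), using that the coevaluation $\C \to V_\nu \otimes V_\nu^*$ composed with evaluation is multiplication by $\dim V_\nu$, and matching it against the normalisation of $\Omega^{SV*}_{\vv{\lambda},\nu^*}$ and the Shapovalov form as in \cite[Lemmas 8.3, 8.4]{BBM}. This is essentially a bookkeeping exercise once the $\mf{n}_+$-correction is understood.

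The main obstacle I expect is the precise control of the ``differs from the identity by a map to $\mf{n}_+ V(\vv{\lambda})^*$'' claim: one must verify that the correction term genuinely lies in $\mf{n}_+ V(\vv{\lambda})^*$ (and not merely in some larger subspace that the coinvariant projection fails to kill), which requires carefully invoking the properties of the Shapovalov form and the action of $w_0$ on weight spaces, and checking that the gauge/contravariance conditions used in \cite[(6.5.2)]{SV} and \cite[Section 6.4]{SV} behave correctly under the dualisation of \eqref{eqn:gettingridofvarchenko}. Once that is in hand, the rest follows by combining the commutativity of the left and middle squares of \eqref{bigSS} (which are representation-theoretic and already established in \cite{SV,BBM}) with the identity $\Omega^{SV*}_{\vv{\lambda}^*,\nu} = w_0 \circ \operatorname{Sh} \circ \Omega^{SV*}_{\vv{\lambda},\nu^*}$ stated in the excerpt, and then restricting to the invariant subspace.
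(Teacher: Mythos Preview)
Your approach differs substantially from the paper's, and as written it has a genuine gap. The paper's proof is much more elementary: both maps in the asserted equality are natural in the $\mf{g}$-module $V(\vv{\lambda})$, so one reduces immediately to the case $n=1$ and $\lambda_1=\nu$, where $\mb{A}(\vv{\lambda},\nu^*)^*=\C$. There one simply traces $1\in\C$ through the composite by hand: its image in $(V_\nu^*)_{-\nu}$ is the lowest weight vector, $w_0$ carries this to the highest weight vector of $V_\nu^*$, and tensoring with the lowest weight vector of $V_\nu$ gives a pure tensor mapping to $1$ in coinvariants; since the natural composite $\C\to V_\nu\otimes V_\nu^*\to\C$ is multiplication by $\dim V_\nu$, the scalar comes out as claimed. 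No Shapovalov forms, no $\mf{n}_+$-correction analysis, and no appeal to \cite{BBM} are needed.

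Your route instead takes as input the assertion, stated just before the lemma, that the long composite through the right half of \eqref{bigSS} differs from the identity by a map into $\mf{n}_+ V(\vv{\lambda})^*$. But that sentence is offered as motivation, not as an established fact, and you correctly identify it as your ``main obstacle'' without resolving it; so your argument is not complete. Even granting the $\mf{n}_+$ claim, your mechanism for eliminating the correction is muddled: you speak of ``composing with the projection to $\mb{A}(\vv{\lambda}^*,\nu)$'' after applying the composite, but that projection is already part of the composite whose deviation from the identity you are analysing. A correct version of your idea does exist---the point is that the image of $h:\mb{A}(\vv{\lambda},\nu^*)^*\hookrightarrow V(\vv{\lambda})^*_{-\nu}$ consists of vectors annihilated by $\mf{n}_-$ (since the lowest weight vector of $V_\nu^*$ is), and $\ker(\mf{n}_-)\cap\mf{n}_+V=0$ in a finite-dimensional module, so the correction vanishes on $\mathrm{im}(h)$---and once that is in place your separate ``second step'' for the scalar becomes unnecessary, since the factor $\tfrac{1}{\dim V_\nu}$ is already built into the isomorphism used in \eqref{bigSS}. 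But all of this is considerably more work than the paper's two-line reduction to $n=1$.
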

Assuming this lemma, to complete the proof we have to use the fact that
the map $V(\vv{\lambda})_{\nu}\to A^M(U)$ in \eqref{bigS} sends the
image of $\mathfrak{n}_{-}$ to exact forms in the Aomoto complex
\cite[Theorem 4.2.2] {FSV2}.

\begin{proof}[Proof of Lemma \ref{r:factor}]
  This reduces easily to the case $n=1$ and $\lambda=\nu$, so we
  assume this.
  
  We have an invariant element $x$ in $ (V_{\nu}\tensor V_{\nu}^*)^*$
  which is the image of $1$ under a natural map
  $\Bbb{C}\to (V_{\nu}\tensor V_{\nu}^*)^*$. Now as before
  $(V_{\nu}\tensor V_{\nu}^*)^*$ is mapped into $(V_{\nu}^*)_{-\nu}$.
  We first show the property that the image of $1$ under the composite
  $\Bbb{C}\to (V_{\nu}\tensor V_{\nu}^*)^*\to (V_{\nu}^*)_{-\nu}$ is
  the lowest weight vector of $V_{\nu}^*$.

  The composite arises by dualizing
  $V_{\nu}\to (V_{\nu}\tensor V_{\nu}^*)\to \C$ where the second map
  is evaluation, and the first the tensor product with the lowest
  weight $v$ of $V_{\nu}^*$.
 
  Now $V_{\nu}\to (V_{\nu}\tensor V_{\nu}^*)\to \C$ sends
  $y\in V_{\nu}$ to $y(v)$. Therefore, it sends the highest weight
  vector of $V_{\nu}$ to $1$ and is zero on other weight vectors,
  which proves the desired property.

  Next, $v$ is acted on by $w_0$ which turns it into the highest
  weight of $V_{\nu}^*$. We tensor this with the lowest weight vector
  of $V_{\nu}$ and thus get a pure tensor in
  $V_{\nu}^*\tensor V_{\nu}$ which evaluates to $1$ under the natural
  mapping of coinvariants (a one dimensional space) to $\C$. The
  invariant $x$ (which is not a pure tensor if $\dim V_{\nu}>1$) maps
  to $\dim V_{\nu}$, since the natural composite
  $\C\to V_{\nu}\tensor V_{\nu}^*\to \C$ is multiplication by
  $\dim V_{\nu}$. This completes the proof.
\end{proof}


\begin{thebibliography}{10}

\bibitem{SGA7II}
{\em Groupes de monodromie en g\'{e}om\'{e}trie alg\'{e}brique. {II}}, Lecture
  Notes in Mathematics, Vol. 340, Springer-Verlag, Berlin-New York, 1973.
\newblock S\'{e}minaire de G\'{e}om\'{e}trie Alg\'{e}brique du Bois-Marie
  1967--1969 (SGA 7 II), Dirig\'{e} par P. Deligne et N. Katz.

\bibitem{ACN}
{\sc N.~A'Campo}, {\em Tresses, monodromie et le groupe symplectique}, Comment.
  Math. Helv., 54 (1979), pp.~318--327.

\bibitem{arapura-ms}
{\sc D.~Arapura}, {\em An abelian category of motivic sheaves}, Adv. Math., 233
  (2013), pp.~135--195.

\bibitem{ayoub}
{\sc J.~Ayoub}, {\em Les six op\'{e}rations de {G}rothendieck et le formalisme
  des cycles \'{e}vanescents dans le monde motivique. {II}}, Ast\'{e}risque,
  (2007), pp.~vi+364 pp. (2008).

\bibitem{BK}
{\sc B.~Bakalov and A.~Kirillov}, {\em Lectures on tensor categories and
  modular functors}, vol.~21 of University Lecture Series, American
  Mathematical Society, Providence, RI, 2001.

\bibitem{bs}
{\sc P.~Balmer and M.~Schlichting}, {\em Idempotent completion of triangulated
  categories}, J. Algebra, 236 (2001), pp.~819--834.

\bibitem{beauville}
{\sc A.~Beauville}, {\em Conformal blocks, fusion rules and the {V}erlinde
  formula}, in Proceedings of the {H}irzebruch 65 {C}onference on {A}lgebraic
  {G}eometry ({R}amat {G}an, 1993), vol.~9 of Israel Math. Conf. Proc.,
  Bar-Ilan Univ., Ramat Gan, 1996, pp.~75--96.

\bibitem{b-unitarity}
{\sc P.~Belkale}, {\em Unitarity of the {KZ}/{H}itchin connection on conformal
  blocks in genus 0 for arbitrary {L}ie algebras}, J. Math. Pures Appl. (9), 98
  (2012), pp.~367--389.

\bibitem{BRigid}
\leavevmode\vrule height 2pt depth -1.6pt width 23pt, {\em Rigid local systems
  and the multiplicative eigenvalue problem}, Ann. of Math. (2), 195 (2022),
  pp.~911--995.

\bibitem{BBM}
{\sc P.~Belkale, P.~Brosnan, and S.~Mukhopadhyay}, {\em Hyperplane arrangements
  and tensor product invariants}, Michigan Math. J., 68 (2019), pp.~801--829.

\bibitem{BF}
{\sc P.~Belkale and N.~Fakhruddin}, {\em Conformal blocks in genus zero and the
  {KZ} connection}.
\newblock \url{https://arXiv.org/abs/2302.00798}, 2023.

\bibitem{bei}
{\sc A.~A. Be\u{\i}linson}, {\em On the derived category of perverse sheaves},
  in {$K$}-theory, arithmetic and geometry ({M}oscow, 1984--1986), vol.~1289 of
  Lecture Notes in Math., Springer, Berlin, 1987, pp.~27--41.

\bibitem{bbd}
{\sc A.~A. Be\u{\i}linson, J.~Bernstein, and P.~Deligne}, {\em Faisceaux
  pervers}, in Analysis and topology on singular spaces, {I} ({L}uminy, 1981),
  vol.~100 of Ast\'{e}risque, Soc. Math. France, Paris, 1982, pp.~5--171.

\bibitem{BFS}
{\sc R.~Bezrukavnikov, M.~Finkelberg, and V.~Schechtman}, {\em Factorizable
  sheaves and quantum groups}, vol.~1691 of Lecture Notes in Mathematics,
  Springer-Verlag, Berlin, 1998.

\bibitem{bouw-ridout}
{\sc P.~Bouwknegt and D.~Ridout}, {\em Presentations of
  {W}ess-{Z}umino-{W}itten fusion rings}, Rev. Math. Phys., 18 (2006),
  pp.~201--232.

\bibitem{carlson}
{\sc J.~A. Carlson}, {\em Extensions of mixed {H}odge structures}, in
  Journ\'{e}es de {G}\'{e}ometrie {A}lg\'{e}brique d'{A}ngers, {J}uillet
  1979/{A}lgebraic {G}eometry, {A}ngers, 1979, Sijthoff \& Noordhoff, Alphen
  aan den Rijn---Germantown, Md., 1980, pp.~107--127.

\bibitem{hodge2}
{\sc P.~Deligne}, {\em Th\'eorie de {H}odge. {II}}, Inst. Hautes \'Etudes Sci.
  Publ. Math.,  (1971), pp.~5--57.



\bibitem{hodge3}
\leavevmode\vrule height 2pt depth -1.6pt width 23pt, {\em Th\'{e}orie de
  {H}odge. {III}}, Inst. Hautes \'{E}tudes Sci. Publ. Math.,  (1974),
  pp.~5--77.




\bibitem{deligne-v}
{\sc P.~Deligne}, {\em Valeurs de fonctions {$L$} et p\'{e}riodes
  d'int\'{e}grales}, in Automorphic forms, representations and {$L$}-functions,
  vol.~XXXIII of Proc. Sympos. Pure Math, Amer. Math. Soc., Providence, R.I.,
  1979, pp.~pp 313--346.
\newblock With an appendix by N. Koblitz and A. Ogus.

\bibitem{DeligneMostow}
{\sc P.~Deligne and G.~D. Mostow}, {\em Monodromy of hypergeometric functions
  and nonlattice integral monodromy}, Inst. Hautes \'{E}tudes Sci. Publ. Math.,
   (1986), pp.~5--89.



\bibitem{DS}
{\sc M.~Dettweiler and C.~Sabbah}, {\em Hodge theory of the middle
  convolution}, Publ. Res. Inst. Math. Sci., 49 (2013), pp.~761--800.

\bibitem{douglas}
{\sc C.~L. Douglas}, {\em Fusion rings of loop group representations}, Comm.
  Math. Phys., 319 (2013), pp.~395--423.

\bibitem{Drin}
{\sc V.G. ~Drinfel'd}, {\em On quasitriangular quasi-{H}opf algebras and on a group that
              is closely connected with {${\rm Gal}(\overline{\bf Q}/{\bf
              Q})$}}, {Algebra i Analiz}, vol. ~{2}, no. ~4, pp. ~{149--181}.


\bibitem{eisenbud}
{\sc D.~Eisenbud}, {\em Commutative {A}lgebra with a {V}iew Toward {A}lgebraic
  {G}eometry}, Springer-Verlag, New York, 1995.

\bibitem{EZ}
{\sc F.~El~Zein}, {\em Th\'{e}orie de {H}odge des cycles \'{e}vanescents}, Ann.
  Sci. \'{E}cole Norm. Sup. (4), 19 (1986), pp.~107--184.

\bibitem{EFK}
{\sc P.~Etingof, I.~Frenkel, and A.~Kirillov~Jr.}, {\em Lectures on
  representation theory and {K}nizhnik-{Z}amolodchikov equations}, vol.~58 of
  Mathematical Surveys and Monographs, American Mathematical Society,
  Providence, RI, 1998.

\bibitem{Fakh}
{\sc N.~Fakhruddin} {\em Chern classes of conformal blocks}, Compact moduli spaces and vector bundles,
Contemp. Math., 564, Amer. Math. Soc., Providence, RI, 2012, pp. ~145--176.


\bibitem{Faltings}
{\sc G.~Faltings}, {\em A proof for the {V}erlinde formula}, J. Algebraic
  Geom., 3 (1994), pp.~347--374.

\bibitem{FSV2}
{\sc B.~Feigin, V.~Schechtman, and A.~Varchenko}, {\em On algebraic equations
  satisfied by hypergeometric correlators in {WZW} models. {II}}, Comm. Math.
  Phys., 170 (1995), pp.~219--247.

\bibitem{fulton-harris}
{\sc W.~Fulton and J.~Harris}, {\em Representation theory}, vol.~129 of
  Graduate Texts in Mathematics, Springer-Verlag, New York, 1991.
\newblock A first course, Readings in Mathematics.


\bibitem{goursat}
{\sc E.~Goursat}, {\em Sur les fonctions d'une variable analogues aux fonctions
  hyperg\'{e}om\'{e}triques}, Ann. Sci. \'{E}cole Norm. Sup. (3), 3 (1886),
  pp.~107--136.

\bibitem{hjo}
{\sc A.~Huber and C.~J\"{o}rder}, {\em Differential forms in the h-topology},
  Algebr. Geom., 1 (2014), pp.~449--478.

\bibitem{huber-ms}
{\sc A.~Huber and S.~M\"{u}ller-Stach}, {\em Periods and {N}ori motives},
  vol.~65 of Ergebnisse der Mathematik und ihrer Grenzgebiete. 3. Folge. A
  Series of Modern Surveys in Mathematics, Springer, Cham, 2017.
\newblock With contributions by Benjamin Friedrich and Jonas von Wangenheim.

\bibitem{im}
{\sc F.~Ivorra and S.~Morel}, {\em The four operations on perverse motives}.
\newblock \url{https://arXiv.org/abs/1901.02096}, 2022.

\bibitem{ks}
{\sc M.~Kashiwara and P.~Schapira}, {\em Sheaves on manifolds}, vol.~292 of
  Grundlehren der mathematischen Wissenschaften, Springer-Verlag, Berlin, 1994.
\newblock With a chapter in French by Christian Houzel, Corrected reprint of
  the 1990 original.

\bibitem{keel}
{\sc S.~Keel}, {\em Intersection theory of moduli space of stable {$n$}-pointed
  curves of genus zero}, Trans. Amer. Math. Soc., 330 (1992), pp.~545--574.

\bibitem{blee}
{\sc B.~Lee}, {\em Local acyclic fibrations and the de {R}ham complex},
  Homology Homotopy Appl., 11 (2009), pp.~115--140.

\bibitem{levine}
{\sc M.~Levine}, {\em Mixed motives}, in Handbook of {$K$}-theory. {V}ol. 1, 2,
  Springer, Berlin, 2005, pp.~429--521.

\bibitem{L1}
{\sc E.~Looijenga}, {\em Arrangements, {KZ} systems and {L}ie algebra
  homology}, in Singularity theory ({L}iverpool, 1996), vol.~263 of London
  Math. Soc. Lecture Note Ser., Cambridge Univ. Press, Cambridge, 1999,
  pp.~xvi, 109--130.

\bibitem{loo-sl2}
\leavevmode\vrule height 2pt depth -1.6pt width 23pt, {\em Unitarity of {${\rm
  SL}(2)$}-conformal blocks in genus zero}, J. Geom. Phys., 59 (2009),
  pp.~654--662.

\bibitem{L2}
\leavevmode\vrule height 2pt depth -1.6pt width 23pt, {\em The {KZ} system via
  polydifferentials}, in Arrangements of hyperplanes---{S}apporo 2009, vol.~62
  of Adv. Stud. Pure Math., Math. Soc. Japan, Tokyo, 2012, pp.~189--231.

\bibitem{massey}
{\sc D.~Massey}, {\em Natural commuting of vanishing cycles and the {V}erdier
  dual}, Pacific J. Math., 284 (2016), pp.~431--437.

     
\bibitem{orlik-terao}
{\sc P.~Orlik and H.~Terao}, {\em Arrangements and hypergeometric integrals},
  vol.~9 of MSJ Memoirs, Mathematical Society of Japan, Tokyo, 2001. 
  
\bibitem{etrange}
{\sc C.~Pauly}, {\em La dualit\'{e} \'{e}trange [d'apr\`es {P}. {B}elkale, {A}. {M}arian et {D}. {O}prea]}, {S\'{e}minaire Bourbaki, Vol. 2007/2008}, {Ast\'{e}risque},
vol~{326}, {2009},  {Exp. No. 994, ix, pp.~363--377 (2010)}.

\bibitem{RR-V}
{\sc D.~Radchenko and F.~Rodriguez~Villegas}, {\em Goursat rigid local systems
  of rank four}, Res. Math. Sci., 5 (2018), pp.~Paper No. 38, 34.

\bibitem{TRR}
{\sc T.~R. Ramadas}, {\em The ``{H}arder-{N}arasimhan trace'' and unitarity of
  the {KZ}/{H}itchin connection: genus 0}, Ann. of Math. (2), 169 (2009),
  pp.~1--39.

\bibitem{ss}
{\sc C.~Sabbah and C.~Schnell}, {\em The {MHM} project}.
\newblock
  \url{https://perso.pages.math.cnrs.fr/users/claude.sabbah/MHMProject/mhm.pdf}.

\bibitem{saito}
{\sc M.~Saito}, {\em Mixed {H}odge modules}, Publ. Res. Inst. Math. Sci., 26
  (1990), pp.~221--333.

\bibitem{saito-arith}
\leavevmode\vrule height 2pt depth -1.6pt width 23pt, {\em Arithmetic mixed
  sheaves}, Invent. Math., 144 (2001), pp.~533--569.

\bibitem{saito-form}
{\sc M.~Saito}, {\em On the formalisme of mixed sheaves}.
\newblock \url{https://arxiv.org/abs/math/0611597}, 2006.

\bibitem{SV}
{\sc V.~V. Schechtman and A.~N. Varchenko}, {\em Arrangements of hyperplanes
  and {L}ie algebra homology}, Invent. Math., 106 (1991), pp.~139--194.

\bibitem{schnell}
{\sc C.~Schnell}, {\em An overview of {M}orihiko {S}aito's theory of mixed
  {H}odge modules}, in Representation theory, automorphic forms \& complex
  geometry, Int. Press, Somerville, MA, 2019, pp.~27--80.

\bibitem{Schurmann}
{\sc J.~Sch\"{u}rmann}, {\em Topology of singular spaces and constructible
  sheaves}, vol.~63 of Instytut Matematyczny Polskiej Akademii Nauk. Monografie
  Matematyczne (New Series), Birkh\"{a}user Verlag, Basel, 2003.

\bibitem{sorger}
    {\sc C.~Sorger}, {\em La formule de {V}erlinde}, {S\'{e}minaire Bourbaki, Vol. 1994/95}, {Ast\'{e}risque},
  vol~{237}, {1996}, {Exp. No. 794, 3, pp.~87--114}.
  
\bibitem{stanley2}
{\sc R.~P. Stanley}, {\em Enumerative combinatorics. {V}ol. 2}, vol.~62 of
  Cambridge Studies in Advanced Mathematics, Cambridge University Press,
  Cambridge, 1999.
\newblock With a foreword by Gian-Carlo Rota and appendix 1 by Sergey Fomin.

\bibitem{steenbrink}
{\sc J.~Steenbrink}, {\em Limits of {H}odge structures}, Invent. Math., 31
  (1975/76), pp.~229--257.

\bibitem{SZ}
{\sc J.~Steenbrink and S.~Zucker}, {\em Variation of mixed {H}odge structure.
  {I}}, Invent. Math., 80 (1985), pp.~489--542.

\bibitem{TelemanCMP}
{\sc C.~Teleman}, {\em Lie algebra cohomology and the fusion rules}, Comm.
  Math. Phys., 173 (1995), pp.~265--311.

\bibitem{terenzi}
{\sc L.~Terenzi}, {\em On the Six Functor Formalism for {N}ori motivic
  sheaves}, PhD thesis, Fakult\"at f\"ur Mathematik und Physik der
  Albert-Ludwigs-Universit\"at Freiburg, 2023.

\bibitem{TUY}
{\sc A.~Tsuchiya, K.~Ueno, and Y.~Yamada}, {\em Conformal field theory on
  universal family of stable curves with gauge symmetries}, in Integrable
  systems in quantum field theory and statistical mechanics, vol.~19 of Adv.
  Stud. Pure Math., Academic Press, Boston, MA, 1989, pp.~459--566.

\bibitem{tubach}
{\sc S.~Tubach}, {\em On the {N}ori and {H}odge realisations of {V}oevodsky
  \'etale motives}.
\newblock \url{https://arXiv.org/abs/2309.11999}, 2023.

\bibitem{Ueno}
{\sc K.~Ueno}, {\em Conformal field theory with gauge symmetry}, vol.~24 of
  Fields Institute Monographs, American Mathematical Society, Providence, RI;
  Fields Institute for Research in Mathematical Sciences, Toronto, ON, 2008.

\bibitem{varch}
{\sc A.~Varchenko}, {\em Multidimensional hypergeometric functions and
  representation theory of {L}ie algebras and quantum groups}, vol.~21 of
  Advanced Series in Mathematical Physics, World Scientific Publishing Co.,
  Inc., River Edge, NJ, 1995.

\bibitem{weyman}
{\sc J.~Weyman}, {\em Pieri's formulas for classical groups}, in Invariant
  theory ({D}enton, {TX}, 1986), vol.~88 of Contemp. Math., Amer. Math. Soc.,
  Providence, RI, 1989, pp.~177--184.

\end{thebibliography}

\vspace{0.1 in}

\noindent
P.B.: Department of Mathematics, University of North Carolina, Chapel
Hill, NC 27599, USA\\
\vspace{0.02 cm}

\noindent
N.F. and S.M.: School of Mathematics, Tata Institute of Fundamental Research, Homi Bhabha Road, Mumbai 400005, India\\
\vspace{0.08 cm}

\noindent
{email: P.B. belkale@email.unc.edu, N.F. naf@math.tifr.res.in, S.M. swarnava@math.tifr.res.in}

\end{document}